\numberwithin{equation}{section}
\newcommand{\ZZ}{\mathbb{Z}}
\newcommand{\FF}{\mathbb{F}}
\newcommand{\CC}{\mathbb{C}}
\newcommand{\QQ}{\mathbb{Q}}
\newcommand{\RR}{\mathbb{R}}
\newcommand{\EE}{\mathbb{E}}
\newcommand{\KK}{\mathbb{K}}
\newcommand{\LL}{\mathbb{L}}
\newcommand{\NN}{\mathbb{N}}
\newcommand{\TT}{\mathbb{T}}
\newcommand{\ev}{\operatorname{ev}}
\newcommand{\tildegamma}{\widetilde{\Gamma}}
\newcommand{\GL}{\operatorname{GL}}
\newcommand{\Angle}[1]{\langle\underline{#1}\rangle}
\newcommand{\Ring}[2]{#1\langle\underline{#2}\rangle}
\newcommand{\Rring}[2]{#1\langle\!\langle\underline{#2}\rangle\!\rangle}
\newcommand{\Rringcirc}[2]{#1^{\circ}\langle\!\langle\underline{#2}\rangle\!\rangle}
\newcommand{\Rringbullet}[2]{#1^{\bullet}\langle\!\langle\underline{#2}\rangle\!\rangle}
\newcommand{\Tame}[1]{#1\langle\!\langle\underline{e}\rangle\!\rangle^b}
\newcommand{\Tamecirc}[1]{#1^\circ\langle\!\langle\underline{e}\rangle\!\rangle^b}
\newcommand{\Exp}[2]{\mathcal{T}_{\underline{#1},#2}}
\newtheorem{Theorem}{Theorem}[section]
\newtheorem{Lemma}[Theorem]{Lemma}
\newtheorem{MainData}[Theorem]{Data}
\newtheorem{Proposition}[Theorem]{Proposition}
\newtheorem{Question}[Theorem]{Question}
\newtheorem{Corollary}[Theorem]{Corollary}
\newtheorem{Definition}[Theorem]{Definition}
\newtheorem{Remark}[Theorem]{Remark}
\newtheorem{Conjecture}[Theorem]{Conjecture}
\title{The analytic theory of vectorial Drinfeld modular forms}
\date{\today}
\author{F. Pellarin}
\address{Federico Pellarin\\
Institut Camille Jordan, UMR 5208\\
Site de Saint-Etienne \\
23 rue du Dr. P. Michelon \\
42023 Saint-Etienne, France}
\begin{document}

\maketitle

\begin{abstract}
In this text we generalize the notion of Drinfeld modular form for the group $\Gamma:=\operatorname{GL}_2(\FF_q[\theta])$ to a vector-valued setting, where the target spaces are certain modules over positive characteristic Banach algebras over which are defined what we call the 'representations of the first kind'. Under quite reasonable restrictions, we show that the spaces of such modular forms are finite-dimensional,  endowed with certain generalizations of Hecke operators, with differential operators \`a la Serre etc. The crucial point of this work is the introduction of a 'field of uniformizers', a field extension of the valued field of formal Laurent series $\CC_\infty((u))$ where $u$ is the usual uniformizer for Drinfeld modular forms, in which we can study the expansions at the cusp infinity of our modular forms and which is wildly ramified and not discretely valued. Examples of such modular forms are given through the construction of Poincar\'e and Eisenstein series. 

After the discussion of these fundamental properties, the paper continues with a more detailed analysis of the special case of modular forms associated to a restricted class of representations $\rho_\Sigma^*$ of $\Gamma$ which has more importance in arithmetical applications. More structure results are given in this case, and a harmonic product formula is obtained which allows, with the help of conjectures on the structure of an $\FF_p$-algebra of $A$-periodic multiple sums, multiple Eisenstein series etc., to produce conjectural formulas for Eisenstein series. Other properties such as integrality of coefficients of Eisenstein series, specialization at roots of unity etc. are included as well.
\end{abstract}

\tableofcontents

%

\section{Introduction}\label{introduction}

The aim of this volume is to revisit the analytic theory of Drinfeld modular forms for the {\em Drinfeld modular group} $\GL_2(\FF_q[\theta])$, initiated by Goss in his Ph. D. Thesis (see \cite{GOS2}) and continued in the work of Gekeler \cite{GEK}, and then in the work of several other authors. Presently, modular forms with values in positive characteristic fields (\footnote{With $\FF_q((\theta^{-1}))^{sep}$ a separable closure of the local field $\FF_q((\theta^{-1}))$ and $\widehat{\cdot}$ denoting the completion.}) such as
$$\CC_\infty:=\widehat{\FF_q((\theta^{-1}))^{sep}},$$
is an active domain of research with deep
developments for more general groups $\GL_n(A)$ ($n\geq 2$), with $A$ ring of functions over a smooth projective geometrically irreducible curve regular away from an infinity point, its congruence subgroups,  leading to an algebraic and analytic theory of modular forms and to compactification problems
as in the works of Pink and
Basson, Breuer and Pink \cite{PIN,BBP1,BBP2,BBP3}, Gekeler \cite{GEK-DMF1,GEK-DMF2,GEK-DMF3,GEK-DMF4}, H\"aberli \cite{HAB}, Hartl and Yu \cite{HAR&YU}. The arithmetic theory of Drinfeld modular forms, if compared with that
 of classical modular forms, also has a different flavor. We mention the investigations related to Galoisian representations and the cohomological theory of crystals by B\"ockle \cite{BOE1,BOE2} and aspects of $P$-adic continuous families of Drinfeld modular forms by Hattori \cite{HAT1} and 
Nicole and Rosso \cite{NIC&ROS1}. These are few illustrations of how the theory ramifies deeply in a multitude of directions and the reader is warned that the above list of references is not completely representative of the great and valuable effort of a large community. More references can be found in the above mentioned works.
 
In the present volume, we voluntarily restrict our attention to the simplest case of the group $\GL_2(\FF_q[\theta])$ and we follow yet another direction of research which, as far as we can see, has not been deeply investigated yet. We want to begin the study of analytic properties of modular forms associated with an {\em extended notion of type}, as introduced and considered by Gekeler in \cite{GEK}.
The {\em type} of a Drinfeld modular form for the group 
$$\Gamma:=\GL_2\Big(\FF_q[\theta]\Big)$$ can be viewed as a one-dimensional representation of $\Gamma.$ We are interested in certain higher dimensional representations of this group, naturally behaving in rigid analytic families at the infinity place. One of the principal initial reasons for our endeavor comes from remarks on analytic families of modular forms first raised in the paper \cite{PEL0}, and later, in \cite{PEL&PER}.

There are $p$-adic families of classical ($\CC$-valued) modular forms containing sequences of modular forms $(f_n)_n$ with $f_n\in\Gamma(p^{i_n})$
where $\Gamma(N)$ denotes the principal congruence subgroup of $\operatorname{SL}_2(\ZZ)$ of level $N\in\NN^*=\ZZ_{>1}$, $p$ is a prime number, and $(i_n)_n$ is a sequence of positive integers tending to infinity. This is a well known feature, and analogous families indeed occur in the theory of Drinfeld modular forms, which take values in complete, algebraically closed fields of positive characteristic. Less known is the existence of certain non-trivial $\infty$-adic families, where $\infty$ is the infinity place of the field $\FF_q(\theta)$, of Drinfeld modular forms containing modular forms for $\Gamma(P)$ and 
$P$ varying in an infinite subset of irreducible polynomials of $\FF_q[\theta]$. The most frequently, these families are isobaric, that is, weights do not vary. They have been first observed in the paper \cite{PEL3} and they do not seem to have analogues in the classical theory (over $\QQ$). The reader can find a study in our \S \ref{structureofvmf}. 

This work grew up with the purpose of defining appropriate analytic tools to ease the study of these structures. The modular forms studied here are vector functions with values in certain positive characteristic ultrametric Banach algebras, close in their functional behavior, to $\CC$-valued vector modular forms, but with the important difference that the values can themselves be non-constant rigid analytic functions. 
 
To study congruences or $p$-adic analytic families of modular forms it is very useful to `tame' the behavior of modular forms at a cusp. For instance, Gekeler's seminal paper \cite{GEK} uses `Fourier series' of modular forms (we could say `$u$-expansions') in an essential way.  Our case does not make exception to this principle. However, the problem of `taming' the behavior of modular forms at a cusp is here more difficult. For this reason we introduce a {\em field of uniformizers} (Definition \ref{definition-field-uniformizers}) described by $$\mathfrak{K}=\Tamecirc{\KK}((u))=\Big\{\sum_{i\geq i_0}f_iu^i:f_i\in \Tamecirc{\KK}\Big\}.$$ The adopted notation, and the field $\mathfrak{K}$, will be described in \S \ref{tameseriestheory}. Here note that for all $i$, $f_i$ is a {\em tame series}, a class of $\KK$-entire functions of the variable $z$ over $\CC_\infty$ with $\KK$ a certain Banach $\CC_\infty$-algebra with a specified growth condition in the neighborhood of $u=u(z)=0$ (the $\KK$-vector space of such functions is denoted by the apparently awkward notation $\Tamecirc{\KK}$).

The field $\mathfrak{K}$ is a wildly ramified, non-discretely valued field extension of $\CC_\infty((u))$ (the latter corresponds to the case of $f_i$ a constant $\CC_\infty$-valued function for all $i$) and can be viewed as the main new tool of the present work. Hence, it is not possible to choose a single uniformizer, the powers of which generate a field of series expansions of modular forms. 
On another side the Drinfeld modular forms considered in the present volume have a series expansion in $\mathfrak{K}$ which is uniquely determined and is, in general, very difficult to describe. 

We present a spectrum of aspects which is limited to the construction of the analytic uniformization at the cusp infinity and some consequences going from the proof that vector spaces of modular forms are of finite dimension to the construction of explicit examples of Eisenstein and Poincar\'e series and the analysis of their expansions in $\mathfrak{K}$. This already offered challenges and rich pictures and we decided to confine our attention only to those aspects which are tangible by an appropriate generalization of the viewpoint of Gekeler's seminal paper \cite{GEK}. 

The volume is essentially self-contained. It presents the foundations to study new aspects of Drinfeld modular forms and to do this, it presents new tools with an elementary approach. It is enriched with several questions, problems and conjectures. Other crucial aspects such as the interpretation of modular forms of our settings as sections of algebraic vector bundles, link with the theory of harmonic cocycles \`a la Teitelbaum etc. will be developed in other texts.

\subsection{Description of the basic objects}
Let $q=p^e$ be a power of a
prime number $p$ with $e>0$ an integer, let $\FF_q$ be the finite field with $q$ elements and characteristic $p$, and $\theta$ an indeterminate over $\FF_q$. All along this text, we denote by $A$ the $\FF_q$-algebra $\FF_q[\theta]$. We set $K=\FF_q(\theta)$.
 On $K$, we consider the multiplicative valuation
$|\cdot|$ defined by $|a|=q^{\deg_\theta(a)}$, $a$ being in $K$, so that
$|\theta| = q$.  Let $K_\infty :=\FF_q((1/\theta))$ be the local field
which is the completion of $K$ for this absolute value, let $K_\infty^{\text{sep}}$ be a
separable algebraic closure of $K_\infty$, let $\CC_\infty$ be the completion of
$K_\infty^{\text{sep}}$ for the unique extension of $|\cdot|$ to $K_\infty^{\text{sep}}$.
Then, the field $\CC_\infty$ is at once algebraically closed and complete for $|\cdot|$ with valuation group $q^\QQ$ and residual field $\FF_q^{sep}$, an algebraic closure of $\FF_q$. 

The 'Drinfeld half-plane' $\Omega=\CC_\infty\setminus K_\infty$, with the usual rigid analytic structure in the sense of \cite[Definition 4.3.1]{FP}, carries an action of $\Gamma=\GL_2(A)$ and $\widetilde{\Gamma}=\operatorname{PGL}_2(A)$ by homographies: if $\gamma=(\begin{smallmatrix}a & b \\ c & d\end{smallmatrix})\in\tildegamma$, and $z\in\Omega$, $$\gamma(z):=\frac{az+b}{cz+d}.$$ Denote by 
$$J_{(\begin{smallmatrix} * & * \\ c & d\end{smallmatrix})}(z)=cz+d$$ the usual factor of automorphy 
$\Gamma\times\Omega\rightarrow\CC_\infty^\times$. Let us consider $w,m\in\ZZ$; then, if $w\equiv 2m\pmod{q-1}$, the map $(\gamma,z)\mapsto J_\gamma(z)^w\det(\gamma)^{-m}$ defines a factor of automorphy for $\tildegamma$. There is a bijection between these factors of automorphy and the couples $(w,m)\in\ZZ\times \ZZ/(q-1)\ZZ$ submitted to the above congruence.

We thus suppose that $w\in\ZZ$ and $m\in\ZZ/(q-1)\ZZ$ are such that $w\equiv 2m\pmod{q-1}$. 
We recall the definition of Drinfeld modular forms (as considered by Gekeler and Goss, see \cite[Definition (5.7)]{GEK}). 

\begin{Definition}\label{classical-Drinfeld-modular} {\em A {\em Drinfeld modular form of weight $w\in\ZZ$ and type $m\in\ZZ/(q-1)\ZZ$} for the group $\Gamma$ is a rigid analytic function $\Omega\xrightarrow{f}\CC_\infty$
such that $$f(\gamma(z))=J_\gamma(z)^w\det(\gamma)^{-m}f(z)\quad \forall z\in\Omega,\quad \forall \gamma\in\tildegamma$$ and such that additionally, 
there exists $0<c<1$ with the property that if $z\in\Omega$ is such that $|u(z)|\leq c$, where 
\begin{equation}\label{Goss-uniformiser}
u(z)=\frac{1}{\widetilde{\pi}}\sum_{a\in A}\frac{1}{z-a},\end{equation}
$\widetilde{\pi}\in\CC_\infty\setminus K_\infty$ being a fundamental period of Carlitz's module (\footnote{'Our' analogue of $2\pi i$, see (\ref{pitilde}).}),
 then there is a uniformly convergent series expansion
\begin{equation}\label{regularity}
f(z)=\sum_{n\geq 0}f_nu(z)^n,\quad f_n\in\CC_\infty.\end{equation}
We say that a function $f$ in (\ref{regularity}) is {\em regular at the infinity cusp}.}\end{Definition} 

Note that (\ref{regularity}) is not the only formulation of the regularity at the infinity cusp (\footnote{Note also that classically,
a {\em modular form for $\operatorname{SL}_2(\ZZ)$} (or for a subgroup of $\operatorname{SL}_2(\RR)$ which is commensurable with it) can be also defined as a holomorphic function $f:\mathcal{H}=\{z=x+\sqrt{-1}y\in\CC:x,y\in\RR,y>0\}\rightarrow\CC$ satisfying a well known family of functional 
relations and such that, if $z=x+\sqrt{-1}y$ with $x,y\in\RR$, there exists $c\in\RR$ such that $f(x+iy)=\mathcal{O}(y^c+y^{-c})$ (compare with Miyake's \cite[Theorem 2.1.4]{MIY}).}). We can restate
(\ref{regularity}) equivalently by asking that  the set of real numbers $|f(z)|$ is bounded if we choose $z\in\Omega$ such that 
$|u(z)|$ is small.

The {\em type} in Definition \ref{classical-Drinfeld-modular} corresponds to a representation 
\begin{equation}\label{scalarrepresentations}
\Gamma\xrightarrow{\det^{-m}}\operatorname{GL}_1(\FF_q),\quad m\in\ZZ/(q-1)\ZZ.
\end{equation} 
In dimension $>1$ it happens that certain representations of $\Gamma$ naturally have non-trivial analytic deformations, and this makes it natural to consider functions with values in positive-dimensional Tate algebras or in similar ultrametric Banach algebras. 
We consider $\Sigma\subset\NN^*$ a finite subset. Let $\FF_q(\underline{t}_\Sigma)$ be the field of rational fractions with coefficients in $\FF_q$ in the set of independent variables $\underline{t}_\Sigma:=(t_i:i\in\Sigma)$. We choose a representation
\begin{equation}\label{prototype-representation}
\Gamma\xrightarrow{\rho}\operatorname{GL}_N\Big(\FF_q(\underline{t}_\Sigma)\Big).
\end{equation}
Let $w\in\ZZ$ be such that the map $(\gamma,z)\mapsto J_\gamma(z)^w\rho(\gamma)$ defines a factor of automorphy $$\tildegamma\times \Omega\rightarrow\operatorname{GL}_N\Big(\FF_q(\underline{t}_\Sigma)\Big).$$ The necessary and sufficient condition for this is that
\begin{equation}\label{condition-degree}
\rho(\mu I_2)=\mu^{-w} I_N,\quad \mu\in\FF_q^\times,
\end{equation}
as it comes out after a simple computation.

We consider the field $$\KK_\Sigma=\CC_\infty(\underline{t}_\Sigma)^{\wedge}=\widehat{\CC_\infty(\underline{t}_\Sigma)}$$ (the completion for the Gauss norm) (\footnote{Observe the notation $(\cdot)^\wedge$ that will be used when the other notation will lead to a too large hat.}) so that $\GL_N(\FF_q(\underline{t}_\Sigma))\subset\GL_N(\KK_\Sigma)$. We denote by $\|\cdot\|$ the multiplicative valuation of $\KK_\Sigma$, extending $|\cdot|$ of $\CC_\infty$. We further extend this to a norm on matrices with entries in $\KK_\Sigma$ in the usual way by taking the supremum of the multiplicative valuations of the entries. In 
\S \ref{holomorphicfunc} we discuss the notion of rigid analytic functions with values in $\KK_\Sigma$. Taking this notion into account:

\begin{Definition}\label{modularform}
{\em A rigid analytic function
$$\Omega\xrightarrow{f}\KK_\Sigma^{N\times 1}$$
such that 
\begin{equation}\label{functional}
f(\gamma(z))=J_\gamma(z)^w\rho(\gamma)f(z)\quad \forall z\in\Omega,\quad \forall \gamma\in\tildegamma,\end{equation} is called {\em modular-like of weight $w$ for $\rho$}. Additionally, we say that such a function $f={}^t(f_1,\ldots,f_N)$ is:
\begin{enumerate}
\item A {\em weak modular form of weight $w$ for $\rho$} if there exists $M\in\ZZ$ such that $\|u(z)^Mf(z)\|$ is bounded as $0<|u(z)|<c$ for some $c<1$.
\item A {\em modular form of weight $w$ (for $\rho$)} if $\|f(z)\|$ is bounded as $0<|u(z)|<c$ for some $c<1$.
\item A {\em cusp form of weight $w$} if $\|f(z)\|\rightarrow0$ as $u(z)\rightarrow 0$.
\end{enumerate}}
\end{Definition}

Let $B$ be a $\CC_\infty$-sub-algebra of $\KK_\Sigma$. We suppose that $\rho$ as in (\ref{prototype-representation}) has image in $\GL_N(B)$.
We denote by $M^!_w(\rho;B)$ (resp. $M_w(\rho;B)$, $S_w(\rho;B)$) the $B$-modules 
of weak modular forms (resp. modular forms, cusp forms) of weight $w$ for $\rho$ such that their images are contained in $B^{N\times 1}$. We have that $$S_w(\rho;B)\subset M_w(\rho;B)\subset M^!_w(\rho;B).$$ If $B=\CC_\infty$, $N=1$ and $\rho=\det^{-m}$, these $\CC_\infty$-vector spaces coincide with the correspondent spaces of 'classical' Drinfeld modular forms of weight $w$, type $m$ in the framework of Definition \ref{classical-Drinfeld-modular}.

To be relevant, Definition \ref{modularform} must deliver certain primordial properties such as
the finite dimensionality of the modules $M_w(\rho;B)$, or their invariance under the action of variants of Hecke operators. We are far from being able to return satisfactory answers in such a level of generality. However, there is a class of representations (called {\em representations of the first kind}, introduced and discussed in \S \ref {representations-first-kind}) which looks suitable for our investigation because they contain a variety of arithmetically interesting examples. An explicit example of such representations is, with $t$ a variable, the one which  
associates to a matrix $\gamma=(\begin{smallmatrix} a & b \\ c & d\end{smallmatrix})\in\Gamma$, the matrix 
\begin{equation}\label{rhot}
\rho_t(\gamma)=\begin{pmatrix} \chi_t(a) & \chi_t(b) \\ \chi_t(c) & \chi_t(d)\end{pmatrix}\in\operatorname{GL}_2(\FF_q[t]),
\end{equation} where $\chi_t$
is the unique $\FF_q$-algebra morphism $\FF_q[\theta]\rightarrow\FF_q[t]$ sending $\theta$ to $t$. Another interesting example is $\rho_t^*:={}^t\rho^{-1}$, investigated in \cite{PEL1,PEL&PER3}; in the latter case, we have explicitly described the module structure of $M_w(\rho_t^*;\widehat{\CC_\infty[t]})$ (the values are in the Tate algebra $\widehat{\CC_\infty[t]}$ completion of $\CC_\infty$ for the Gauss valuation above the valuation of $\CC_\infty$) and proved that these $\widehat{\CC_\infty[t]}$-modules are endowed with endomorphisms given by a natural generalization of Hecke operators.

\subsection{Suitability of the use of the field of uniformizers}\label{necessity}
The strong point of Definition \ref{modularform} is its simplicity but in practice it does not allow to manipulate Drinfeld modular forms. If compared with Definition \ref{classical-Drinfeld-modular}, we clearly miss here a valuation at the infinity cusp, available at least in the case of classical Drinfeld modular forms by considering the order in $u$ in (\ref{regularity}). This problem is already mentioned in \cite{PEL&PER3}. The entries of the Eisenstein series that we describe in the present volume (see the defining expression (\ref{Eisenstein-series-definition}) and \S \ref{eisensteinseries}) in association with the representations $\rho^*_\Sigma$ as in (\ref{varphidefi}) can be expanded in certain convergent series (see Proposition \ref{seriesofEisensteinseries}) involving {\em Perkins' series} (as in \S \ref{theweightofperkins}). It is important to mention that 
Perkins' investigations borrow heavily from the notion of {\em quasi-periodic functions} of Gekeler (as in \cite{GEK01}). The necessity of introducing tame series and the field of uniformizers $\mathfrak{K}$ already appears, implicitly, in the works \cite{PER,PEL&PER}. Perkins proved formulas such as
\begin{equation}\label{perkins-formulas}
\sum_{a\in A}\frac{a(t_1)\cdots a(t_s)}{z-a}=\frac{\widetilde{\pi}\prod_{i=1}^s\sum_{j_i\geq 0}\exp_C\Big(\frac{\widetilde{\pi}z}{\theta^{j_i+1}}\Big)t_i^{j_i}}{\omega(t_1)\cdots\omega(t_s)\exp_C\Big(\widetilde{\pi}z\Big)}, \quad s<q,\quad z\in\CC_\infty\setminus A.
\end{equation}
On the left-hand side we have, with $\Sigma=\{1,\ldots,s\}$, a Perkins' series, a meromorphic function $\CC_\infty\setminus A\rightarrow\TT_\Sigma:=\widehat{\CC_\infty[\underline{t}_\Sigma]}$ (the target space is the Tate algebra in the variables
$\underline{t}_\Sigma$, the completion is for the Gauss valuation) which is represented, in the right-hand side as the ratio of two entire functions $\CC_\infty\rightarrow\TT_\Sigma$ expressed in terms of the Carlitz exponential $\exp_C$, the Anderson-Thakur function $\omega(t)$ (all these items will be reviewed in \S \ref{preliminaries-etc}). This ratio typically expresses an element of $\mathfrak{K}$. First of all, it is well known that
$\exp_C(\widetilde{\pi}z)^{-1}=u(z)$. As for the numerator in the fraction, it is a very basic example of 
tame series (Definition \ref{deftameseries}). So we can write that the left-hand side belongs to $\mathfrak{K}$.
More explicitly, it is a series $\sum_if_iu^i$ with a unique non-zero monomial, of the form 
$f_1u^1$ where
$$f_1=\frac{\widetilde{\pi}}{\omega(t_1)\cdots\omega(t_s)}\prod_{i=1}^s\sum_{j_i\geq 0}\exp_C\Big(\frac{\widetilde{\pi}z}{\theta^{j_i+1}}\Big)t_i^{j_i},$$
and the additive valuation $v$ extending that of $\CC_\infty((u))$, evaluated on the left-hand side of (\ref{perkins-formulas}), equals 
$$v(u)+v(f_1)=1-sv(\exp_C(\widetilde{\pi}z/\theta))=1-\frac{s}{q},$$ where the leading term of the tame series $f_1$ is proportional to $\exp_C(\widetilde{\pi}z/\theta)^s$. This allows to compute $v$-valuations of the entries of our Eisenstein series. Unfortunately, the formulas (\ref{perkins-formulas}) do not hold if $s\geq q$. Perkins in \cite{PER} succeeded in obtaining explicit formulas up to $s\leq 2(q-1)$ and sparse explicit formulas can be given for even higher values of $s$ up to non-rewarding computational efforts. In the work \cite{PEL&PER3}
the authors were convinced that somehow, Perkins' series would have been appropriate analogues of 
the uniformizer at infinity $u$ (which also is a Perkins' series) but this is also a non-rewarding approach. 

We hope that at this point the reader can appreciate the reason of introducing tame series and the field of uniformizers. It is extremely difficult to compute products of Perkins series and obtain a generalization of (\ref{perkins-formulas}). At once, the construction of the field $\mathfrak{K}$, a $v$-valued field in which the product is itself difficult to compute (just as difficult as in the field $\FF_q((\frac{1}{\theta}))^{sep}$, see \cite{KED}) nevertheless warrants the presence of an environment in which computations with our Drinfeld modular forms are virtually possible. 

Thanks to this formalism we are able, without much additional effort, to reach most of the results of the first part of the present paper.
The reader may find the preliminary material \S \ref{preliminaries-etc}, \ref{tameseriestheory} and \ref{quasiperiodicfunctions} heavy  but this reflect the complexity of the given settings. 
It is perhaps possible to get rid of the field $\mathfrak{K}$ and work more directly, starting with Definition \ref{modularform} but $\mathfrak{K}$ is the natural field in which one can study series expansions at infinity of our modular forms and also allows to introduce notions of rationality and integrality of the coefficients etc. for modular forms and reduction. The difficulty of multiplying formal series in $\mathfrak{K}$ mirrors the complexity of the behavior at the cusp infinity of Drinfeld modular forms in our generalized setting.

\subsection{Results of the text}

The paper is organized in ten sections. These sections can be roughly divided in three principal parts.
\begin{itemize}
\item[I.] Sections \ref{preliminaries-etc} to \ref{differential-perkins}. We present the foundations of the theory.
\item[II.] Sections \ref{eisensteinseries} and \ref{modular-certain-repr}. We study modular forms for the representations $\rho^*_\Sigma$. We discuss the structure of {\em strongly regular modular forms}.
\item[III.] Sections \ref{shufflerelations} and \ref{Some-conjectures}. We discuss arguments related to the harmonic product for multiple sums and we present open problems.
\end{itemize}
Part III can be read quite independently of the previous ones. Reading Part II is possible without reading all proofs in Part I. The following synthesis summarizes the content of the paper and our results (more precise statements will be formulated along the text). 
We proceed in the order suggested by Parts I to III. 

\subsubsection*{Content of Part I}
The key environment is the {\em field of uniformizers} $\mathfrak{K}$ (remember \S \ref{necessity}) with valuation $v$, additive valuation group 
$\ZZ[\frac{1}{p}]$, residual field $\cup_\Sigma\KK_\Sigma$, valuation ring $\mathfrak{O}$ and maximal ideal $\mathfrak{M}$, to which the entire \S \ref{tameseriestheory} is devoted. 
The field $\mathfrak{K}$ is constructed explicitly in \S \ref{tameseriestheory} by taking the fraction field of an integral ring of entire functions that we call the {\em ring of tame series}. 
The next result is proved:

\medskip

\noindent{\bf Theorem A.}
{\em Let $\Sigma\subset\NN^*$ be a finite subset and $\rho:\Gamma\rightarrow\GL_N(\FF_q(\underline{t}_\Sigma))$ be a representation of the first kind, let $w\in\ZZ$ be such that $(\gamma,z)\mapsto J_\gamma(z)^w\rho(\gamma)$ is a factor of automorphy for $\widetilde{\Gamma}$.
The following properties hold.
\begin{enumerate}
\item There is a natural embedding of $\KK_\Sigma$-vector spaces $M^!_w(\rho;\KK_\Sigma)\xrightarrow{\iota_\Sigma}\mathfrak{K}^{N\times 1}$.

\item The image by $\iota_\Sigma$ of the $\KK_\Sigma$-vector space of modular forms $M_w(\rho;\KK_\Sigma)$ can be identified with $\iota_\Sigma(M_w^!(\rho;\KK_\Sigma))\cap\mathfrak{O}^{N\times 1}$. 
\item The vector space of cusp forms $S_w(\rho;\KK_\Sigma)$ can be identified with the sub-vector space of $M_w(\rho;\KK_\Sigma)$ which is sent to $\mathfrak{M}^{N\times 1}$ by the embedding $\iota_\Sigma$.
\item We have that $\CC_\infty((u))$ naturally embeds in $\mathfrak{K}$ and $v$ restricts to the 
$u$-adic valuation.
\item The vector spaces $M_w(\rho;\KK_\Sigma),S_w(\rho;\KK_\Sigma)$ are endowed with Hecke operators $T_{\mathfrak{a}}$ associated to ideals $\mathfrak{a}$ of $A$, which provide a totally multiplicative system of
endomorphisms reducing, in the case $\Sigma=\emptyset$, to the classical Hecke operators acting on classical scalar Drinfeld modular forms and cusp forms.
\item We have $\KK_\Sigma$-linear maps $\partial_w^{(n)}:M_w(\rho;\KK_\Sigma)\rightarrow S_{w+2n}(\rho\det^{-n};\KK_\Sigma)$, defined for all $n\geq 0$ and generalizing Serre's derivatives.
\end{enumerate}
}

\medskip

The corresponding results in the body of the text are more precise and cover a wider spectrum of applications. 
The main examples of modular forms (construction of Poincar\'e series etc.) and the basic results concerning the spaces $M_w(\rho;\KK_\Sigma)$ and $S_w(\rho;\KK_\Sigma)$ are contained in \S \ref{structureofvmf}. 
Parts (1), (2), (3) will be proved in Theorem \ref{theorem-u-expansions} and (4) is an obvious consequence of the above (so, when $\rho=\boldsymbol{1}$ is the trivial representation (sending every element of $\Gamma$ to $1\in\GL_1$), our construction specialises to the known setting, and $M=\oplus_wM_w(\boldsymbol{1};\CC_\infty)$ is the well known algebra of $\CC_\infty$-valued Drinfeld modular forms for $\Gamma$ (of type $0$). 
We will introduce Poincar\'e series in \S \ref{poincareseries} as a first non-trivial class of modular forms.
Part (5) is our Theorem \ref{theo-hecke-operators}; the proof is very simple, thanks to the flexibility of the use of the field of uniformizers, and we can say the same about part (6), which corresponds to our Theorem \ref{serre-derivatives}. 

A non-complete field $\LL_\Sigma$ intermediate between $\KK_\Sigma$ and the fraction field of $\TT_\Sigma$ will be needed in the next Theorem; it is defined in \S \ref{non-completefields}. 

\medskip

\noindent{\bf Theorem B.}
{\em The following properties hold, for $\rho$ a representation of the first kind.
\begin{enumerate}
\item For all $w\in\ZZ$, the $\LL_\Sigma$-vector space $M_w(\rho;\LL_\Sigma)$ has finite dimension. The dimension is zero if $w<0$.
\item The dimension of the space $M_1(\rho;\LL_\Sigma)$ does not exceed the dimension of  the $\LL_\Sigma$-vector space of common eigenvectors in $\LL_\Sigma^{N\times 1}$ of all the matrices $\rho(\gamma)$ with $\gamma$ in the Borel subgroup of $\Gamma$.
\end{enumerate}
}

\medskip

The matrices $\rho(\gamma)$ have all the eigenvalues equal to $1$.
Note that (1) of Theorem B only deals with modular forms with values in $\LL_\Sigma$. One reason for this restriction comes from the fact that we use, in the proof, a specialisation property at roots of unity which is unavailable in the general case of $\KK_\Sigma$-valued functions. This result corresponds to Theorem \ref{theorem-structure-tate-algebras}. 


\subsubsection*{Content of Part II}

As we have mentioned, a scalar Drinfeld modular form for $\Gamma$ as in Definition \ref{classical-Drinfeld-modular} has a unique $u$-expansion (\ref{regularity}) in $\CC_\infty[[u]]$ and combining part (2) of Theorem A and Proposition \ref{uniquedigitexpansion}, one sees that every entry $f$ of a given element of $M_w(\rho;\KK_\Sigma)$ has a unique series expansion
$$f=\sum_{i\geq 0}f_iu^i$$
where for all $i\geq 0$, $f_i$ is an entire function $\CC_\infty\rightarrow\KK_\Sigma$ of the variable $z\in\Omega$ of tame series described in \S \ref{tameseriessect} (and additionally, $f_0$ is constant in $\KK_\Sigma$). This generalizes the case of Definition \ref{classical-Drinfeld-modular}, where the coefficients $f_i$ are all constant functions, in $\CC_\infty$. 
It is in general very difficult to describe the coefficients $f_i$ but we make some attempts in this part. For instance, 
something can be done with Eisenstein series for the representations $\rho^*_\Sigma$ (see \S \ref{eisensteinseries}) by using {\em Perkins' series} as in \S \ref{theweightofperkins}; see Proposition \ref{seriesofEisensteinseries}. 

We fix a subset $\Sigma\subset\NN^*$ of cardinality $s$ and we consider, for all $i\in\Sigma$, \begin{equation}\label{varphidefi}\rho^*_{t_i}(\gamma)={}^t\begin{pmatrix}a(t_i) & b(t_i) \\ c(t_i) & d(t_i)\end{pmatrix}^{-1},$$ and $$\rho^*_\Sigma:=\bigotimes_{i\in\Sigma}\rho^*_{t_i}.\end{equation}
This is indeed a representation of the first kind of degree $s$ where $N=2^{|\Sigma|}$. Additionally,
$\rho^*_\Sigma$ is an irreducible representation of $\Gamma$ in $\operatorname{GL}_N(\FF_q[\underline{t}_\Sigma])$ (see \cite{PEL2}). 
An important feature of this class of representations is that it allows to construct certain Eisenstein series in \S \ref{eisensteinseries}. If $s\equiv w\pmod{q-1}$ and $w>0$ we have the Eisenstein series of weight $w$:
\begin{equation}\label{Eisenstein-series-definition}
\mathcal{E}(w;\rho^*_\Sigma)(z):=\sum_{(a,b)\in A^2\setminus\{(0,0)\}}(az+b)^{-w}\bigotimes_{i\in\Sigma}\binom{a(t_i)}{b(t_i)},\end{equation}
which is a non-zero holomorphic function $\Omega\rightarrow \TT_\Sigma^{N\times 1}$, where 
$\TT_\Sigma=\widehat{\CC_\infty[\underline{t}_\Sigma]}$ is the standard Tate algebra in the variables $\underline{t}_\Sigma$. These series generalize the usual scalar Eisenstein series for $\Gamma$ (case of $\Sigma=\emptyset$).
We have $\mathcal{E}(w;\rho^*_\Sigma)\in M_w(\rho^*_\Sigma;\TT_\Sigma)\setminus S_w(\rho^*_\Sigma;\TT_\Sigma)$. Writing
$\mathcal{E}(w;\rho^*_\Sigma)={}^t(\mathcal{E}_1,\ldots,\mathcal{E}_N)\in\mathfrak{O}_\Sigma^{N\times 1}$ we can prove that $\mathcal{E}_1,\ldots,\mathcal{E}_{N-1}\in\mathfrak{M}$
and $\mathcal{E}_{N}\in\mathfrak{O}\setminus\mathfrak{M}$ (we recall that $\mathfrak{O}$ and $\mathfrak{M}$ are respectively the valuation ring and the maximal ideal of the field of uniformizers). It turns out that $$\mathcal{E}_N\equiv-\zeta_A(1;\sigma_\Sigma)\pmod{\mathfrak{M}}$$ where 
\begin{equation}\label{zeta-value-tate}
\zeta_A(n;\sigma_\Sigma)=\sum_{a\in A^+}a^{-n}\sigma_\Sigma(a),\quad n\in\NN^*
\end{equation}
$\sigma_\Sigma(a)=\prod_{i\in\Sigma}\chi_{t_i}(a)$, are the zeta values in Tate algebras introduced in \cite{PEL1} and studied in 
\cite{ANG&PEL,ANG&PEL2,APTR} as well as in other papers.
These Eisenstein series seem to be the crossroad of several interesting features that we gather in the next result (but see the text for more precise results). To begin, we must point out that in \S \ref{v-valuation-eisenstein}, we construct an indexation $(\mathcal{E}^J)_{J\subset\Sigma}$ of the entries $\mathcal{E}_i$ of an Eisenstein series $\mathcal{E}=\mathcal{E}(w;\rho^*_\Sigma)$ by the subsets $J$ of $\Sigma$. 
With this indexation, the first entry $\mathcal{E}_1$ of $\mathcal{E}$ equals $\mathcal{E}^\emptyset$
and the last entry $\mathcal{E}_N$ equals $\mathcal{E}^\Sigma$. We have the next result.

\medskip

\noindent{\bf Theorem C. }{\em The following properties hold for the Eisenstein series $\mathcal{E}(w;\rho^*_\Sigma)$:
\begin{enumerate}
\item If $w=1$ and $J\subsetneq\Sigma$ is such that $|J|=(m-1)(q-1)+l$ with $m>0$ and $1\leq l\leq q-1$
or $m=0$ and $l=q-1$ then $v(\mathcal{E}^J)=1-q^{-m}(q-l)$ and $v(\mathcal{E}^\Sigma)=0$.
\item If $w>0$, $\mathcal{E}(w;\rho^*_\Sigma)$ is $\mathfrak{v}$-integrally definable  (after Definition \ref{integral-forms}) for valuations $\mathfrak{v}$ of $K(\underline{t}_\Sigma)$ associated with a non-zero prime ideal $\mathfrak{p}$ of $A$, and this for all but finitely many $\mathfrak{p}$.
\item Evaluating the first entry of $\mathcal{E}(w;\rho^*_\Sigma)$ at $t_i=\theta^{q^{k_i}}$ for all $i\in\Sigma$ with $k_i\in\NN$ yields, up to a scalar factor, a Drinfeld quasi-modular form in the sense of \cite{BOS&PEL} with an $A$-expansion as in \cite{PET} and all these series occur in this process.
\end{enumerate}
}

Part (1) can be generalized to some cases in which $\ell_q(w)<q$ (the sum of the digits of the $q$-ary expansion of $w$ is $<q$) thanks to Theorem \ref{moregeneralcomputation}, a result that describes the $v$-valuation of Perkins series as in \S \ref{theweightofperkins}. The question of the computation
of these $v$-valuations in full generality, related to the computation of the $v$-valuation of all Perkins' series is, we should say, not easy, and still open. It is related to a similar question on $v$-valuations of Perkins' series and therefore of generalizations of Goss' polynomials. The recent work of Gekeler \cite{GEK2} let think that this is accessible but difficult.

Part (2) generalizes the properties of integrality of the coefficients of the $u$-expansion of scalar Eisenstein series as in \cite[(6.3)]{GEK}. Note that our result is more recondite in the case $\Sigma\neq\emptyset$. Indeed a notion of integrality of the coefficients of a series $\sum_{i}f_iu^i$ with coefficients $f_i$ which are tame series has to be introduced, and this is exactly what we do, and it is not a triviality. Hence, Theorem C would not be meaningful without our investigations of \S \ref{tameseriestheory}. As for part (3), it answers a question by Goss. A quick description of properties related to $\mathfrak{v}$-adic modular forms is given in \S \ref{v-adic-modular}.

In general, we do not control the dimensions and we are unable to construct bases of the spaces $M_w(\rho;\KK_\Sigma)$ except when $w=1$ and $\rho=\rho^*_\Sigma$. We have proved:

\medskip

\noindent{\bf Theorem D. }
{\em If $|\Sigma|\equiv1\pmod{q-1}$ the vector space $M_1(\rho_\Sigma^*;\LL_\Sigma)$ is one-dimensional, generated by $\mathcal{E}(1;\rho^*_\Sigma)$.}

\medskip

This is Theorem \ref{coro-pel-per}. Part (2) of Theorem B (see Theorem \ref{rankoneforweighone}) also includes an upper bound for the dimensions of the $\LL_\Sigma$-vector spaces, and implies a positive answer to the question raised by \cite[Problem 1.1]{PEL&PER3} thanks to Theorem D.
The proofs of (2) of Theorem B and of Theorem D are easy but use a natural isomorphism between (scalar) Drinfeld modular forms for congruence subgroups of $\Gamma$ and spaces of {\em automorphic functions} (harmonic cocycles) over the Bruhat-Tits tree of 
$\Omega$, and the same specialisation properties in terms of the variables $t_i$ used in the proof of (1). When we do this with the entries of the elements of $M_1(\rho;\LL_\Sigma)$ span scalar Drinfeld modular forms of weight one for congruence subgroups of $\Gamma$. The proof of this result is thus based on a crucial earlier remark of Gekeler (which can be found in Cornelissen's paper  \cite{COR}).

From \S \ref{modular-certain-repr} on, the paper exclusively focuses on structure properties of modular forms for the representations $\rho^*_\Sigma$. We introduce here the notion of {\em strongly regular modular form} (see Definition \ref{defstronglyholomorphic}). A strongly regular modular form $f={}^t(f_1,\ldots,f_N)$ (transpose) is a Drinfeld modular form (in our generalized setting) which satisfies certain conditions on the $v$-valuations of its entries. Theorem \ref{theorem2} allows a completely explicit structure description for these modular forms which can be stated as follows
(more precise results can be found in the text).

\medskip

\noindent{\bf Theorem E. }{\em Every strongly regular modular form associated to the representation $\rho_\Sigma^*$ can be constructed combining Eisenstein series $\mathcal{E}(1;\rho_{t_i})$ and $\mathcal{E}(q;\rho_{t_i})$ for $i\in\Sigma$ by using the Kronecker product, and scalar Eisenstein series. In particular, the $M\otimes_{\CC_\infty}\KK_\Sigma$-module of $\KK_\Sigma$-valued strongly regular modular forms is free of rank $N=2^s$ where $s=|\Sigma|$.}

\medskip

The advantage of focussing on strongly regular modular forms is that to study them we do not need the full strength of the tools developed in Part I of this text, namely, the field of uniformizers and the theory of quasi-periodic matrix functions. To prove Theorem E, we only need appropriate generalizations of the arguments of
\cite{PEL&PER3}.

The continuous $\FF_q(\underline{t}_\Sigma)$-linear automorphism $\tau$ of $\KK_\Sigma$ induced by the automorphism $c\mapsto c^q$ of $\KK_\Sigma$ induces injective maps $M_w(\rho^*_\Sigma;\KK_\Sigma)\rightarrow
M_{qw}(\rho^*_\Sigma;\KK_\Sigma)$.
We show, in Theorem \ref{regularitystrongregularity} that for every $w$ there exists $k\in\NN$ such that $\tau^k(f)$ is {\em strongly regular} for every $f\in M_w(\rho^*_\Sigma;\KK_\Sigma)$. This shows that 
Drinfeld modular forms in $M_w(\rho^*_\Sigma;\KK_\Sigma)$ are not too distant from strongly regular modular forms and this allows to deduce:

\medskip

\noindent{\bf Theorem F. }{\em The $\KK_\Sigma$-vector spaces $M_w(\rho^*_\Sigma;\KK_\Sigma)$ have finite dimensions.}

\medskip

Note that the functions of Theorem F have values in $\KK_\Sigma^{N\times 1}$, not just in $\LL_\Sigma^{N\times 1}$ so that the methods of proof of Theorems B and D do not apply for Theorem F.
After Theorem E for every modular form $f\in M_w(\rho_\Sigma^*;\KK_\Sigma)$ there is $k$ such that $\tau^{k}(f)$ can be constructed
combining Eisenstein series, and the coefficients in the construction are in $\KK_\Sigma$. In full generality, it seems difficult to overcome the use of the field $\mathfrak{K}$ and prove Theorem F for any representation of the first kind.

\subsubsection*{Content of Part III}
This work ends with \S \ref{shufflerelations} and \S \ref{Some-conjectures} which are more speculative and contain a description of further perspectives of research. This part can be read quite independently of the previous ones. We present here the harmonic product for multiple sums, the interaction with multiple sums \`a la Thakur, multiple Eisenstein series, and we propose conjectures based on identities between Eisenstein series and many explicit formulas.

In \S \ref{shufflerelations} we prove (see Theorem \ref{anewsumshuffle}) a variant of a harmonic product formula for certain $A$-periodic multiple sums and we apply it to compute several explicit formulas relating Eisenstein series for $\rho^*_\Sigma$. Some of these formulas have been conjectured in earlier works. In \S \ref{shufflerelations} we state Conjecture \ref{conj-mult-eisentein}, where we evoke the potential existence of an $\FF_p$-algebra of {\em multiple Eisenstein series} and an $\FF_p$-isomorphism with an $\FF_p$-algebra of {\em multiple zeta values in Tate algebras}. Additionally, we speculate that 
a multiple Eisenstein series is a modular form for $\rho^*_\Sigma$ in our settings if and only if the 
multiple zeta values in Tate algebras corresponding to it, which also is related to its constant term, is {\em eulerian} following our Definition \ref{eulerianity}. We describe in \S \ref{conjecture-tate-algebras} a conjecture on certain identities involving zeta values in Tate algebras a particular case of which has been recently proved by Hung Le and Ngo Dac in \cite{HUN&NGO} and we end the work with analogue conjectural identities involving our Eisenstein series $\mathcal{E}(w;\rho_\Sigma^*)$. These identities are so complicate that are essentially undetectable by numerical experiments. They do not seem to have analogues in the classical setting of 
$\CC$-valued Eisenstein series for the group $\operatorname{SL}_2(\ZZ)$.

\subsubsection*{Acknowledgements} This section will be completed later. But I'm very grateful to the colleagues that  already sent me feedbacks on this paper, allowing me to improve it.


\section{Preliminaries}\label{preliminaries-etc}

\subsubsection*{Most commonly used notation}

\begin{itemize}
\item $\NN=\{0,1,\ldots\}$ the set of natural integers.
\item $\NN^*=\{1,\ldots\}$ the set of positive natural integers.
\item $B^{M\times N}$: $M$-row, $N$-column arrays with coefficients in the set $B$.
\item $I_r$: the $r\times r$ identity matrix.
\item $\sqcup$ disjoint union.
\item $\operatorname{Diag}(*,\ldots,*)$ diagonal matrix.
\item $\ell_q(n)$ sum of the digits of the base-$q$ expansion of the positive integer $n$.
\item $A=\FF_q[\theta]$, $K=\FF_q(\theta)$, $K_\infty=\FF_q((\frac{1}{\theta}))$, $\CC_\infty=\widehat{K_\infty^{sep}}$
\item $\Gamma=\operatorname{GL}_2(A)$
\item $\widetilde{\Gamma}=\Gamma/\FF_q^\times$.
\item $\boldsymbol{1}$ the trivial representation sending $\Gamma$ to $1\in\FF_q^\times=\GL_1(A)$.
\item $J_\gamma(z)$ the usual factor of automorphy.
\item $\Omega=\CC_\infty\setminus K_\infty$ the Drinfeld half-plane.
\item $u$ the uniformizer at infinity of $\Omega$.
\item $S_w,M_w$, spaces of cusp forms and modular forms of weight $w$.
\item $\Sigma$ a finite subset of $\NN^*$.
\item $\TT_\Sigma$ Tate algebra in the variables $\underline{t}_\Sigma=(t_i:i\in\Sigma)$.
\item $\KK_\Sigma$ the completion of the fraction field of $\TT_\Sigma$.
\item $\LL_\Sigma$ an intermediate field $\TT_\Sigma\subset\LL_\Sigma\subset\KK_\Sigma$ (see \S \ref{non-completefields}).
\item $\mathfrak{K}$ field of uniformizers, with valuation $v$, valuation ring $\mathfrak{O}$, maximal ideal $\mathfrak{M}$, residual field $\cup_\Sigma\KK_\Sigma$.
\item $\Tamecirc{B}$ the $B$-module of tame series with coefficients in $B$.
\item $\omega$ the function of Anderson and Thakur.
\end{itemize}

\subsubsection*{Overview of the section}
In this section we collect the basic objects over which we are going to build our theory. In \S \ref{completion-fraction} and \ref{non-completefields} we describe the fields $\KK_\Sigma,\LL_\Sigma$ used in the introduction, we give in \S \ref{holomorphicfunc} a quick account of analytic functions with values in certain non-archimedean countably cartesian Banach algebras, of which we are going to study the first examples, useful for what follows. For example, Proposition \ref{entire} is an analogue in our settings of Liouville's Theorem stating that a bounded entire function is constant.
In \S \ref{carlitzexttate} we also provide the reader with the basic tools related to Drinfeld modules and allied functions, such as the exponential and the logarithm. In \S \ref{Carlitz-exponential} we discuss other relevant functions, notably certain generalizations of Anderson and Thakur omega function, and generalizations 
of the entire map $\chi_t:\CC_\infty\rightarrow\TT$ that interpolates the map $A\ni a\mapsto a(t)\in\FF_q[t]$. These functions arise naturally when one studies quasi-periodic matrix functions in \S \ref{quasiperiodicfunctions}.

We make use of the following basic notations.
In all the following, $\NN$ denotes the set of non-negative integers, and $\NN^*$ the subset of positive integers. We choose once and for all independent variables $t_i$ with $i\in\NN^*$ and we work in rings such as $R[t_i:i\in\NN^*]$ where $R$ is a commutative ring with unit. If $\Sigma$ is a finite subset of $\NN^*$ of cardinality $|\Sigma|=s$ we denote by 
$\underline{t}_\Sigma$ the family of variables $(t_i:i\in\Sigma)$. Then, $R[\underline{t}_\Sigma]$
denotes the $R$-algebra $R[t_i:i\in\Sigma]$ in the $s$ variables $\underline{t}_\Sigma$, embedded in $R[t_i:i\in\NN^*]$ in the usual way. If $\Sigma=\{i\}$ is a singleton, then we will often simplify 
our notations by writing $t$ instead of $t_i$.

\subsection{Rings, fields, modules}\label{Rings-fields-modules}

For the general settings on valued rings and fields and local fields, we refer to the author's \cite[\S 2]{PEL3}, from which we borrow the basic notation, and the books \cite{CAS,SER1}.
Let $L$ be a valued field of positive characteristic which is complete for a multiplicative valuation $$L\xrightarrow{|\cdot|}\RR_{\geq 0}.$$ Equivalently, we can consider an additive valuation
$$L\xrightarrow{v}\RR\cup\{\infty\}$$ with the property that $|\cdot|=c^{-v(\cdot)}$, for some $c>1$.
We denote by $\mathcal{O}_L,\mathcal{M}_L$ and $k_L$ respectively the valuation ring, the maximal ideal, and the residual field of $L$, that we suppose to be countable in all the following. If $x\in \mathcal{O}_L$ we denote by $\overline{x}$
its image in $k_L$ by the morphism of reduction modulo $\mathcal{M}_L$. Since $L$ is of positive characteristic we can view $k_L$ as a subfield of $L$. More precisely, we have an embedding $k_L\hookrightarrow \{x\in L:|x|=1\}\cup\{0\}$ that we fix.

\subsubsection{Banach $L$-vector spaces and algebras}

\begin{Definition}\label{Banach-vector-space}
{\em A {\em Banach $L$-vector space} $(B,|\cdot|_B)$ is the datum of an $L$-vector space $B$ together with a map $$|\cdot|_B:B\rightarrow\RR_{\geq 0}$$
such that
\begin{enumerate}
\item for all $x,y\in B$, $|x+y|_B\leq \max\{|x|_B,|y|_B\}$,
\item for all $x\in B$ and $\lambda\in L$, $|\lambda x|_B=|\lambda||x|_B$,
\item If $x\in B$, $|x|_B=0$ if and only if $x=0$,
\end{enumerate}
and such that $B$ is complete for the metric induced by $|\cdot|_B$.

We say that two Banach $L$-vector spaces $(B_1,|\cdot|_{B_1})$ and $(B_2,|\cdot|_{B_2})$
are isometrically isomorphic if there exists an isomorphism of vector spaces $\varphi:B_1\rightarrow B_2$
such that $|\varphi(x)|_{B_2}=|x|_{B_1}$ for all $x\in B_1$.
}
\end{Definition}

\subsubsection*{The spaces $c_I(L)$} Let $I$ be a countable set. We denote by $c_I(L)$
the set of sequences $(x_i)_{i\in I}\in L^I$ such that $x_i\rightarrow 0$
where the limit is for the Fr\'echet filter of $I$, that is, the filter of the complements of finite subsets of $I$ (we shall more simply write $i\rightarrow\infty$). The set $c_I(L)$ is an $L$-vector space. We set $\|(x_i)_{i\in I}\|=\sup_{i\in I}\{|x_i|\}$ for $(x_i)_{i\in I}\in c_I(L)$. Then,
the supremum is a maximum and $(c_I(L),\|\cdot\|)$ carries a structure of Banach $L$-vector space. Note that $\|c_I(L)\|=|L|$; the image of $\|\cdot\|$ equals the image of $|\cdot|$ in $\RR_{\geq 0}$.

\begin{Definition}\label{countably-cartesian}
{\em 
A Banach $L$-vector space $B$ is said {\em countably cartesian} if it is isometrically isomorphic to a space $c_I(L)$ with $I$ countable. Let $\mathcal{B}=(b_i)_{i\in I}$ be a family of elements of $B$. We say that $\mathcal{B}$ is an {\em orthonormal basis} if $|b_i|_B=1$ for all $i$ and if every element 
$f\in B$ can be expanded in a unique way in a series 
$$f=\sum_{i\in I}f_ib_i,\quad f_i\in L,\quad f_i\rightarrow0,$$ 
so that $|f|_B=\max_i|f_i|.$}
\end{Definition}  
Compare with \cite[Chapter 2]{BGR}.
\begin{Definition}\label{algebra-countably-cartesian}
{\em 
A Banach $L$-vector space $(B,|\cdot|_B)$ with a structure of commutative unital $L$-algebra   
 is said to be a {\em Banach $L$-algebra} if $|1|_B=1$ and $|\cdot|_B$ is sub-multiplicative:
 for all $x,y\in B$, $|xy|_B\leq |x|_B|y|_B$. We identify $L$ with $L\cdot 1 \subset B$. A Banach $L$-algebra is {\em countably cartesian}
 if the underlying Banach $L$-vector space is so.
}
\end{Definition}  

\subsubsection{Tate algebras, affinoid algebras}

We consider $\Sigma$ a finite subset of $\NN^*$ and a sub-multiplicative norm $|\cdot|'$ on $L[\underline{t}_\Sigma]$ which restricts to $|\cdot|$ on $L\subset L[\underline{t}_\Sigma]$. 
We denote by $$\widehat{L[\underline{t}_\Sigma]}_{|\cdot|'}\text{ or }L[\underline{t}_\Sigma]_{|\cdot|'}^\wedge$$ the completion of $L[\underline{t}_\Sigma]$ for $|\cdot|'$ (\footnote{The last notation is used in those circumstances where the hat in the first displayed formula is too large.}). It is a Banach $L$-algebra in the sense of Definition \ref{algebra-countably-cartesian}.

For example, we can take $|\cdot|'=\|\cdot\|$ the {\em Gauss valuation} over 
 $L[\underline{t}_\Sigma]$, that is, the unique norm of $L[\underline{t}_\Sigma]$ which extends $|\cdot|$, such that 
 $$\|\underline{t}_\Sigma^{\underline{i}}\|=1$$ for all $\underline{i}=(i_j:j\in\Sigma)\in\NN^\Sigma$, where $$\underline{t}_\Sigma^{\underline{i}}=\prod_{j\in\Sigma}t_j^{i_j}.$$ It is easy to see that $\|\cdot\|$ is multiplicative. In this case we write $$\TT_{L,\Sigma}:=\widehat{L[\underline{t}_\Sigma]}_{\|\cdot\|}.$$ We usually drop the reference to $L$ if it is algebraically closed or if its choice is clear in the context, hence writing in a more compact way $\TT_\Sigma$. This is the {\em Tate algebra} (or {\em standard affinoid algebra}) of dimension 
$s=|\Sigma|$. If $\Sigma=\{i\}$ is a singleton we prefer the simpler notation $\TT_L$ or $\TT$ for this algebra, with variable $t$. Note that if $\Sigma'\subset\Sigma$ then the natural embedding $L[\underline{t}_{\Sigma'}]\subset L[\underline{t}_{\Sigma}]$ induces an embedding
$\TT_{L,\Sigma'}\subset\TT_{L,\Sigma}$. 

The Tate algebra $\TT_{L,\Sigma}$ is isomorphic to the sub-$L$-algebra of the formal series
$$f=\sum_{\begin{smallmatrix}i_j\geq0\forall j\in\Sigma \\ \underline{i}=(i_j:j\in\Sigma)\end{smallmatrix}}f_{\underline{i}}\underline{t}_\Sigma^{\underline{i}}\in L[[\underline{t}_\Sigma]]$$
which satisfy $$\lim_{\min\{i_j:j\in\Sigma\}\rightarrow\infty}f_{\underline{i}}=0.$$ Thus, we have, for $f$ a formal series of $\TT_{L,\Sigma}$ expanded as above
and non-zero, that
$$\|f\|=\sup_{\underline{i}}|f_{\underline{i}}|=\max_{\underline{i}}|f_{\underline{i}}|$$
and $\TT_{L,\Sigma}$ is countably cartesian (Definition \ref{algebra-countably-cartesian}). 
 It is well known that $\TT_{L,\Sigma}$ is a ring which is Noetherian and it is also a unique factorisation domain, normal, of Krull dimension $s$ (see \cite[Theorem 3.2.1]{FP} for a more general collection of properties, see \cite{BGR} for the general theory of these algebras). We will also 
 use the $L$-sub-algebra $\EE_{L,\Sigma}$ of $\TT_{L,\Sigma}$ of the series 
 $f$ as above with the property that for all $r\in|L^\times|$, 
 $$\lim_{\min\{i_j:j\in\Sigma\}\rightarrow\infty}|f_{\underline{i}}|r^{i_1+\cdots+i_n}=0.$$
 If $L$ is complete and algebraically closed, this can be identified with the $L$-algebra of {\em entire functions} in the variables $\underline{t}_\Sigma$. If $\Sigma$ is a singleton, we will write
 $\EE_L$ or $\EE$ for this algebra, and we will use the variable $t$. 
 
 An {\em affinoid $L$-algebra} $\mathcal{A}$ is the datum of a topological $L$-algebra $\mathcal{A}$ together with a surjective $L$-algebra morphism
 \begin{equation}\label{affinoid-algebras}
 \TT_{L,U}\xrightarrow{\psi}\mathcal{A}.
 \end{equation} Every affinoid $L$-algebra comes equipped with a Banach
 $L$-algebra structure, with the norm $$\|g\|=\inf_{\psi(f)=g}\|f\|,\quad g\in\mathcal{A}.$$
 The kernel of $\psi$ is closed and we have the next result where we assume that $L$ is algebraically closed.
 \begin{Lemma}\label{affinoid-is-cartesian}
 Every affinoid $L$-algebra is countably cartesian.
 \end{Lemma}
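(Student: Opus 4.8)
The plan is to reduce the claim to the structure theory of affinoid algebras over an algebraically closed complete field, and in particular to the behaviour of the supremum (residue) norm. Recall that an affinoid $L$-algebra $\mathcal{A}$ is, by definition, a quotient $\TT_{L,U}\xrightarrow{\psi}\mathcal{A}$ equipped with the residue norm $\|g\|=\inf_{\psi(f)=g}\|f\|$. First I would observe that, since $L$ is algebraically closed (hence in particular maximally complete is \emph{not} needed, only algebraic closedness and the countability of the residue field $k_L$ assumed throughout \S \ref{Rings-fields-modules}), the Tate algebra $\TT_{L,U}$ is itself countably cartesian: this was shown in the discussion preceding the lemma, where the monomials $\underline{t}_U^{\underline{i}}$ form an orthonormal basis indexed by the countable set $\NN^U$, and $\|f\|=\max_{\underline{i}}|f_{\underline{i}}|$. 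So the model case is settled, and the task is to propagate the countably cartesian property through the quotient map $\psi$.

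The key step is to produce an orthonormal basis of $\mathcal{A}$ in the sense of Definition \ref{countably-cartesian}, i.e.\ a countable family $(b_j)_j$ with $\|b_j\|=1$ such that every $g\in\mathcal{A}$ has a unique expansion $g=\sum_j g_j b_j$ with $g_j\in L$, $g_j\to 0$, and $\|g\|=\max_j|g_j|$. I would argue as follows. The kernel $\mathfrak{a}=\ker\psi$ is a closed ideal of $\TT_{L,U}$; since $\TT_{L,U}$ is Noetherian, $\mathfrak{a}$ is finitely generated, and in fact every ideal of a Tate algebra is closed. Now I invoke the structure of closed subspaces of countably cartesian Banach $L$-spaces: when $L$ is \emph{spherically complete} one gets orthonormal complements for free, but since $\CC_\infty$ is not spherically complete one must instead use that the value group $|L^\times|=q^\QQ$ is \emph{dense} in $\RR_{>0}$ and the residue field is a field (so every nonzero element of norm $\le 1$ can be rescaled to norm exactly $1$ by an element of $L$). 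Concretely: the residue norm on $\mathcal{A}$ takes values in $|L^\times|\cup\{0\}$ — this uses that $L$ is algebraically closed so $|L^\times|=\RR_{>0}\cap q^\QQ$ is divisible and that the infimum defining $\|g\|$ is attained up to arbitrarily small factors, hence, by density of the value group and a standard successive-approximation argument, actually attained. Once $\|\mathcal{A}\|\subseteq |L|$, one builds the orthonormal basis by a Gram–Schmidt-type / lifting procedure: choose a countable $k_L$-basis of the graded residue algebra $\bigoplus_{r}\{\|g\|\le r\}/\{\|g\|<r\}$ (countable because $k_L$ is countable and $\mathcal{A}$, being a quotient of the separable-type algebra $\TT_{L,U}$, has countable such graded pieces), lift each basis vector to an element of $\mathcal{A}$ of the prescribed norm, rescale to norm $1$, and check that the resulting family is orthonormal and total by the usual $u$-adic/filtration completeness argument, using that $\mathcal{A}$ is complete.

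The main obstacle I anticipate is precisely the point that $L=\CC_\infty$ is not spherically complete, so the naive "any closed subspace of $c_I(L)$ is orthocomplemented" statement fails and one cannot simply quote a splitting of $0\to\mathfrak{a}\to\TT_{L,U}\to\mathcal{A}\to 0$ as Banach $L$-spaces. The remedy is to work at the graded (residue) level and exploit two features special to this setting: the value group is dense and divisible, and the residue field is a field of countable cardinality. Together these guarantee (i) that $\|\mathcal{A}\|\subseteq|L|$ so that one can speak of elements of norm exactly $1$, and (ii) that each graded piece is a countable-dimensional $k_L$-vector space admitting a basis that lifts. This is essentially the argument that Tate algebras and their quotients are "strictly affinoid with surjective residue norm"; the countably cartesian conclusion then follows because an orthonormal basis indexed by a countable set gives an isometric isomorphism with $c_I(L)$ by sending $g=\sum_j g_j b_j$ to $(g_j)_j$.

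A cleaner alternative route, if one prefers to avoid the graded bookkeeping, is inductive on the number of generators: write $\mathcal{A}=\TT_{L,U}/(f_1,\dots,f_r)$ and treat one relation at a time, using that $\TT_{L,U}/(f_1)$ is again a countably cartesian Banach $L$-algebra (Weierstrass preparation after a change of variables reduces $f_1$ to a monic polynomial in one variable, exhibiting $\TT_{L,U}/(f_1)$ as a finite free module over a Tate algebra in fewer variables, hence countably cartesian with an explicit orthonormal basis given by monomials times $1,\dots,t^{\deg f_1-1}$), and then iterating. The obstruction is the same — one must know the residue norm stays inside $|L|$ at each stage, which again rests on divisibility of the value group — but the combinatorics is more transparent. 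I would present the proof along this inductive Weierstrass line, flagging the spherical-incompleteness subtlety as the one place where the hypotheses on $L$ (algebraically closed, value group $q^\QQ$, countable residue field) are genuinely used.
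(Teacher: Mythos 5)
Your proposal correctly isolates the danger point (spherical incompleteness of $L$), but neither of your two routes actually gets past it, and the paper does something much shorter: it quotes \cite[\S 1.3, Theorem 6]{BOSC}, which says that for $L$ algebraically closed and $\mathfrak{J}$ an ideal of $\TT_{L,U}$ there is an orthonormal basis $(b_i)_{i\in I}$ of $\TT_{L,U}$ and a subset $J\subset I$ with $(b_j)_{j\in J}$ an orthonormal basis of $\mathfrak{J}$; then $(\psi(b_i))_{i\in I\setminus J}$ is visibly an orthonormal basis of $\mathcal{A}$ for the residue norm. Everything you are trying to prove by hand is concentrated in that cited theorem, and your substitutes for it do not close.

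Concretely: in your graded-lifting route, the step ``check that the resulting family is orthonormal and total by the usual filtration completeness argument'' is exactly where the argument fails. Given $g$ with $\|g\|=1$, subtracting the lift of its residue leaves a remainder of norm $|c_1|<1$; iterating gives remainders of norms $|c_1|>|c_1c_2|>\cdots$, and since $|L^\times|=q^{\QQ}$ is \emph{dense}, this strictly decreasing sequence need not tend to $0$ (e.g.\ $|c_n|=q^{-2^{-n}}$), so the partial sums need not converge to $g$. Density and divisibility of the value group do not repair this --- on the contrary, they are why the problem exists; over a non--spherically complete field there are Banach spaces of countable type with norm values in $|L|$ and no orthonormal basis, so no argument using only the residue module can work. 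The actual proof of the cited theorem uses the specific structure of affinoid quotients (stability of algebraically closed fields, Noether normalization, cartesianness of finite modules over Tate algebras). Your Weierstrass alternative has the same defect one step later: $\TT_{L,U}/(f_1)$ is indeed isometrically a finite free module over a smaller Tate algebra, but the image of $f_2$ then lives in that finite extension, which is no longer a Tate algebra, so Weierstrass preparation cannot be applied again and the induction ``one relation at a time'' does not iterate. Either supply the Noether-normalization argument in full, or do what the paper does and cite the adapted-orthonormal-basis theorem.
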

 \begin{proof} We consider $\mathcal{A}$ and affinoid algebra, with $\psi$ and $\TT_{L,U}$ as
 in (\ref{affinoid-algebras}).
 If $L$ is algebraically closed and $\mathfrak{J}$ is an ideal of $\TT_{L,U}$, by \cite[\S 1.3 Theorem 6]{BOSC}, there exists an orthonormal basis 
 $(\mathcal{b}_i)_{i\in I}$ of $\TT_{L,U}$ and a subset $J\subset I$ such that 
 $(b_j)_{j\in J}$ is an orthonormal basis of $\mathfrak{J}$. Then, $(\psi(b_i))_{i\in I\setminus J}$ defines an orthonormal basis of $\mathcal{A}$. 
\end{proof}
The general case is also true, where $L$ is not necessarily algebraically closed. 
 Note that if $\mathcal{A}$ is the affinoid algebra associated to an affinoid subset of $\mathbb{P}^{1,an}_L$ (with $\mathbb{P}^{1,an}_L$ the rigid analytic affine line over $L$), with its spectral norm, then it is countably cartesian also as an easy consequence of the Mittag-Leffler decomposition \cite[Proposition 2.2.6]{FP}.

\subsubsection{The completion $\KK_{L,\Sigma}$ of the fraction field of $\TT_{L,\Sigma}$}\label{completion-fraction} Let $L$ be a valued field, complete, containing $\FF_q$.
The fraction field of $\TT_{L,\Sigma}$ is not complete, unless $\Sigma=\emptyset$. We write $\KK_{L,\Sigma}$ for its completion. It is easy to see that this is also equal to the completion of $L(\underline{t}_\Sigma)$, for the extension of the Gauss norm. If $L$ is a local field, so that
$L=\FF((\pi))$ with $\FF$ a finite field containing $\FF_q$ and $\pi$ a uniformiser, then
$\KK_\Sigma=k_L(\underline{t}_\Sigma)((\pi))$.
The residual field $k_{\KK_{L,\Sigma}}$ of $\KK_{L,\Sigma}$ is $k_L(\underline{t}_\Sigma)$. If ${\Sigma'}\subset\Sigma$, we have an isometric embedding $\KK_{L,\Sigma'}\subset\KK_{L,\Sigma}$. 

\begin{Lemma}\label{lemma-di-serre}
Let ${\Sigma'}$ be a subset of $\Sigma$. 
Let $\mathcal{B}=(b_i)_{i\in I}$ be a family of elements of $\mathcal{O}_{\KK_{L,\Sigma}}$ such that 
$(\overline{b}_i)_{i\in I}$ is a basis of the $k_L(\underline{t}_{\Sigma'})$-vector space $k_L(\underline{t}_\Sigma)$. Then, every element $f$ of $\KK_{L,\Sigma}$ can be expanded, in a unique way, as a converging series
$$f=\sum_{i\in I}f_ib_i,\quad f_i\in\KK_{\Sigma'},\quad f_i\rightarrow0,$$
and $\|f\|=\max_{i\in I}\|f_i\|$.
\end{Lemma}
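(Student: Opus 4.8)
The plan is to reduce the statement to the standard completeness/orthonormality machinery for Banach spaces over $\KK_{L,\Sigma'}$, treating $\KK_{L,\Sigma}$ as a Banach $\KK_{L,\Sigma'}$-vector space. First I would observe that the Gauss norm makes $\KK_{L,\Sigma}$ a Banach $\KK_{L,\Sigma'}$-algebra, and that the residue field of $\KK_{L,\Sigma}$ is $k_L(\underline{t}_\Sigma)$ while that of $\KK_{L,\Sigma'}$ is $k_L(\underline{t}_{\Sigma'})$, both with trivial valuation restricted from $\KK$; the ramification and residue extensions here are "all in the residue field". So a $k_L(\underline{t}_{\Sigma'})$-basis $(\overline b_i)_{i\in I}$ of $k_L(\underline{t}_\Sigma)$ is exactly the kind of datum that, by a Serre-type lifting argument (this is why the lemma is named "di Serre"), produces an orthonormal Schauder basis of $\mathcal{O}_{\KK_{L,\Sigma}}$ over $\mathcal{O}_{\KK_{L,\Sigma'}}$. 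Concretely, I would show: given $f\in\mathcal{O}_{\KK_{L,\Sigma}}$, reduce mod $\mathcal{M}$ to get $\overline f=\sum_{i\in F}\overline{c_i}\,\overline{b_i}$ a finite $k_L(\underline{t}_{\Sigma'})$-combination; lift the $\overline{c_i}$ to $c_i\in\mathcal{O}_{\KK_{L,\Sigma'}}$ (possible since the valuation is trivial on the residue field and $k_L(\underline{t}_{\Sigma'})$ embeds in $\KK_{L,\Sigma'}$ as norm-$1$ elements or $0$); then $f-\sum_{i\in F}c_ib_i$ has norm $<1$, and iterating on successive "digits" of the Gauss valuation (which here takes values in $|L^\times|$, a discretely-indexed or at least well-ordered-in-decreasing-steps set when $L$ is a local field, and in general still allows the geometric-decay estimate) yields a convergent expansion $f=\sum_i f_ib_i$ with $f_i\in\KK_{L,\Sigma'}$, $f_i\to 0$, and $\|f\|=\max_i\|f_i\|$.

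The key steps in order: (i) verify that the hypothesis "$(\overline b_i)$ a $k_L(\underline t_{\Sigma'})$-basis of $k_L(\underline t_\Sigma)$" forces $\|b_i\|=1$ and that finite $\KK_{L,\Sigma'}$-combinations $\sum c_i b_i$ satisfy $\|\sum c_i b_i\|=\max\|c_i\|$ — i.e. the $b_i$ are orthogonal over $\KK_{L,\Sigma'}$ — which follows by reducing mod $\mathcal{M}$ and using $k_L$-linear independence of the $\overline b_i$ together with multiplicativity of the Gauss norm; (ii) prove existence of the expansion by the successive-approximation argument above, using completeness of $\KK_{L,\Sigma}$ (and of $\KK_{L,\Sigma'}$) to sum the series; (iii) prove uniqueness: if $\sum f_i b_i=0$ with $f_i\to 0$ and not all zero, pick $i_0$ with $\|f_{i_0}\|$ maximal, divide through by a scalar of that norm, reduce mod $\mathcal{M}$, and contradict linear independence of the $\overline b_i$ over $k_L(\underline t_{\Sigma'})$; (iv) the norm formula $\|f\|=\max_i\|f_i\|$ is then immediate from orthogonality in (i) and a limiting argument.

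The main obstacle is step (ii) in the generality where $L$ is not a local field, so that $|L^\times|\subset q^{\QQ}$ is densely ordered rather than discrete: the naive "digit by digit" iteration does not terminate after one step per valuation jump. The fix is to run the approximation with a chosen sequence of radii $r_n\to 0$ in $|L^\times|$: having approximated $f$ to within norm $r_n$ by a finite combination $g_n=\sum_{i\in F_n}f_i^{(n)}b_i$, write $r_n^{-1}(f-g_n)$ — after rescaling by a scalar of norm $r_n$ — as something of norm $\le 1$, reduce mod $\mathcal{M}$, lift, and append new terms to get $g_{n+1}$ with $\|f-g_{n+1}\|\le r_{n+1}$; the partial sums converge, and one must check that for each fixed $i$ the coefficients $f_i^{(n)}$ stabilize as $n\to\infty$ to the claimed $f_i\in\KK_{L,\Sigma'}$ with $f_i\to 0$ over the Fréchet filter on $I$. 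This is routine but is where the non-discreteness of the value group has to be handled with care; alternatively, for the application in this paper one may restrict attention to $L$ a local field (as in \S\ref{completion-fraction}, where $\KK_\Sigma=k_L(\underline t_\Sigma)((\pi))$), in which case step (ii) is literally the usual base-$\pi$ digit expansion over the field $k_L(\underline t_{\Sigma'})((\pi))=\KK_{L,\Sigma'}$ with respect to the residue-field basis $(\overline b_i)$, and there is nothing further to prove.
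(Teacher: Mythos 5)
Your steps (i), (iii) and (iv) --- orthogonality of finite combinations $\sum_i c_ib_i$ by reduction modulo the maximal ideal, uniqueness, and the norm formula --- are correct and coincide with what the paper does. The genuine problem is in your step (ii), and you have put your finger on exactly the right difficulty without actually resolving it. A single reduce-and-lift step applied to an element of norm $1$ only produces a remainder of norm \emph{strictly less than} $1$; it gives no quantitative control. Your proposed fix (``run the approximation with a chosen sequence of radii $r_n\to0$ \dots\ reduce mod $\mathcal{M}_{\KK_{L,\Sigma}}$, lift, and append new terms to get $g_{n+1}$ with $\|f-g_{n+1}\|\le r_{n+1}$'') silently assumes that one such step lands below the prescribed threshold $r_{n+1}$, which is precisely what can fail when $|L^\times|$ is dense: iterating only yields a strictly decreasing sequence of remainder norms which may stagnate above some $s>0$, in which case the appended terms have norms tending to $s$ and the series does not even converge. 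Nor can you retreat to the local-field alternative you offer: the lemma is used throughout the paper with $L=\CC_\infty$, whose value group is $q^{\QQ}$, so the densely valued case is the one that matters.

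The paper's way around this (following Serre) is not to iterate on an arbitrary $f$ but to first produce the expansion on the dense subfield $L(\underline{t}_\Sigma)$, where it can be made explicit with built-in geometric decay: after normalizing $\|f\|=1$ one writes $f=\beta/\alpha$ with $\|\alpha\|=1$, splits $\alpha=\alpha_0+\alpha_1$ with $\overline{\alpha}_1\in k_L[\underline{t}_\Sigma]\setminus\{0\}$ and $\|\alpha_0\|<1$, and expands $\alpha^{-1}=\alpha_1^{-1}\sum_{j\geq0}(-\alpha_0/\alpha_1)^j$. The $j$-th term has norm at most $\|\alpha_0\|^j$, so the successive corrections are summable by construction rather than by an unjustified stagnation-free iteration. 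This yields an isometric embedding $L(\underline{t}_\Sigma)\rightarrow c_I(\KK_{L,\Sigma'})$ which, by completeness of $c_I(\KK_{L,\Sigma'})$ and your estimate (i), extends to all of $\KK_{L,\Sigma}$ and produces the expansion of an arbitrary $f$. To repair your argument you must inject this (or an equivalent) use of the explicit structure of $\KK_{L,\Sigma}$ as the completion of a field of rational functions; the general principle ``a basis of the residue extension lifts to an orthonormal basis'' is not available over a non-discretely valued base.
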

In the above lemma $I$ is countable (this follows from the fact that $k_L$ is countable).
If we choose ${\Sigma'}=\emptyset$ we see that Lemma \ref{lemma-di-serre} implies that 
$\KK_{L,\Sigma}$ is countably cartesian as in Definition \ref{algebra-countably-cartesian}. In other words, the Banach $L$-vector space $\KK_{L,\Sigma}$ is endowed with an orthonormal basis providing us with an isometric isomorphism with a Banach $L$-space $c_{I}(L)$. The proof that we present is essentially the same as Serre's in \cite[Lemma 1, Proposition 1]{SER}.

\begin{proof}[Proof of Lemma \ref{lemma-di-serre}] One sees easily that $\|\KK_{L,\Sigma}\|=|L|$, therefore it suffices to show the lemma for $f\in \KK_{L,\Sigma}$ with $\|f\|=1$. Let us consider $\alpha\in L[\underline{t}_\Sigma]$ with $\|\alpha\|=1$. We can decompose (in a unique way) $\alpha=\alpha_0+\alpha_1$ with $\alpha_i\in L[\underline{t}_\Sigma]$, 
$\overline{\alpha}_1\in k_L[\underline{t}_\Sigma]\setminus\{0\}$, and $\|\alpha_0\|<1$. For any multi-index $\underline{k}=(k_i:i\in\Sigma)\in\NN^{\Sigma}$ we have, in $\KK_{L,\Sigma}$ (with $\underline{t}_\Sigma^{\underline{k}}=\prod_{i\in\Sigma}t_i^{k_i}$):
$$\underline{t}_\Sigma^{\underline{k}}\alpha^{-1}=\frac{\underline{t}_\Sigma^{k}}{\alpha_1}\left(1-\frac{\alpha_0}{\alpha_1}+\frac{\alpha_0^2}{\alpha_1^2}-\cdots\right)$$
(the series converges because $\|\alpha_0\|<1$). For every $\underline{k}$ and $j\geq 0$, the image of $\underline{t}_\Sigma^{\underline{k}}\alpha_1^{-j}$ in $k_L(\underline{t}_\Sigma)$ for the reduction map can be expanded in the basis $(\overline{b}_i)_{i\in I}$. We deduce that any element $f=\frac{\beta}{\alpha}\in L(\underline{t}_\Sigma)$, $\alpha\neq0$, can be expanded as a convergent series:
$$f=\sum_{i\in I}f_ib_i,\quad f_i\rightarrow0,\quad f_i\in\KK_{\Sigma'}.$$ This expansion is unique because otherwise, there would exist a non-trivial relation
$$0=\sum_{i\in I}f_ib_i$$ such that for some $i\in I$, $\|f_i\|=1$, in contradiction with the fact that $(\overline{b}_i)_{i\in I}$ is a basis of $k_L(\underline{t}_\Sigma)$ over $k_L(\underline{t}_{\Sigma'})$.
This means that there is an isometric embedding $L(\underline{t}_\Sigma)\rightarrow c_{I}(\KK_{L,\Sigma'})$. Completing, we are left with an isometric isomorphism of Banach $L$-vector spaces $\KK_{L,\Sigma}\cong c_{I}(\KK_{L,\Sigma'})$ which terminates the proof.
\end{proof}

\subsubsection{The non-complete fields $\LL_{L,\Sigma}$}
\label{non-completefields}
Let $\Sigma,L,\ldots$ as in \S \ref{completion-fraction}. 
In this paper we also need certain fields intermediate between the fraction field of $\TT_{L,\Sigma}$ and $\KK_{L,\Sigma}$.
For any $d\in k_L[\underline{t}_\Sigma]\setminus\{0\}$ we have the affinoid $L$-algebra (completion for the Gauss norm)
$\widehat{\TT_{L,\Sigma}[d^{-1}]}$ which is a Banach $L$-sub-algebra of $\KK_{L,\Sigma}$ which also is countably cartesian. 
 We consider
 $$\LL_{L,\Sigma}=\bigcup_{d\in k_L[\underline{t}_\Sigma]\setminus\{0\}} \widehat{\TT_{L,\Sigma}[d^{-1}]}.$$
 \begin{Lemma}
 $\LL_{L,\Sigma}$ is a subfield of $\KK_{L,\Sigma}$.
 \end{Lemma}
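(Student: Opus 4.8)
The assertion is that $\LL_{L,\Sigma}$ is a subfield of $\KK_{L,\Sigma}$, and the only non-formal content is closure under inversion. First I would record that $\LL_{L,\Sigma}$ is a commutative $L$-subalgebra of $\KK_{L,\Sigma}$: for $d_1,d_2\in k_L[\underline{t}_\Sigma]\setminus\{0\}$ the Gauss norm identifies $\widehat{\TT_{L,\Sigma}[d_1^{-1}]}$ and $\widehat{\TT_{L,\Sigma}[d_2^{-1}]}$ with Banach $L$-subalgebras of $\widehat{\TT_{L,\Sigma}[(d_1d_2)^{-1}]}$, so any two elements of $\LL_{L,\Sigma}$ already lie in a common piece $\widehat{\TT_{L,\Sigma}[d^{-1}]}$ and so do their sum and product; together with $0,1\in\LL_{L,\Sigma}$ and $L\subset\LL_{L,\Sigma}$ this gives a subring, hence a domain. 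Since $\|\LL_{L,\Sigma}\|\subset\|\KK_{L,\Sigma}\|=|L|$, for the inversion statement I may replace the element $x\neq0$ to be inverted by a scalar multiple and assume $\|x\|=1$; fix $d\in k_L[\underline{t}_\Sigma]\setminus\{0\}$ with $x\in\widehat{\TT_{L,\Sigma}[d^{-1}]}$.

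The heart of the argument is to enlarge the denominator $d$ until $x$ becomes a \emph{unit} in the corresponding affinoid piece. I would argue on the residue field: the inclusion $\widehat{\TT_{L,\Sigma}[d^{-1}]}\hookrightarrow\KK_{L,\Sigma}$ is isometric, so it induces an embedding of residue fields, and $\|x\|=1$ forces $\overline{x}$ to be a nonzero element of $k_L(\underline{t}_\Sigma)$. Writing $\overline{x}=\overline{p}/\overline{q}$ with $\overline{p},\overline{q}\in k_L[\underline{t}_\Sigma]\setminus\{0\}$, and letting $p,q\in L[\underline{t}_\Sigma]$ be the polynomials obtained from $\overline{p},\overline{q}$ through the fixed embedding $k_L\hookrightarrow\{x\in L:|x|=1\}\cup\{0\}$ (so $\|p\|=\|q\|=1$ and they reduce to $\overline{p},\overline{q}$), I would pass to $R:=\widehat{\TT_{L,\Sigma}[e^{-1}]}$ with $e:=d\,\overline{p}\,\overline{q}\in k_L[\underline{t}_\Sigma]\setminus\{0\}$. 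In $R$ the element $e=dpq$ is invertible, hence $d,p,q$ are all units, so $y:=x\,q\,p^{-1}\in R$; by multiplicativity of the Gauss norm $\|y\|=1$ and $\overline{y}=\overline{x}\,\overline{q}\,\overline{p}^{-1}=1$, whence $\|1-y\|<1$ and $y^{-1}=\sum_{n\geq0}(1-y)^n$ converges in the Banach algebra $R$. Therefore $x^{-1}=y^{-1}p^{-1}q\in R\subset\LL_{L,\Sigma}$, and undoing the scaling (using $L\subset\LL_{L,\Sigma}$) finishes the proof.

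The main obstacle is precisely the production of a single denominator $e$ that inverts $x$; the device of reducing modulo the maximal ideal, factoring $\overline{x}$ in $k_L(\underline{t}_\Sigma)$, and lifting numerator and denominator to honest units of an affinoid localisation via the section $k_L\hookrightarrow L$ is what makes this painless. Everything else is routine and already available from the earlier parts of this section: that the localisations $\TT_{L,\Sigma}[e^{-1}]$ carry the (multiplicative) Gauss norm and that their completions are Banach $L$-subalgebras of $\KK_{L,\Sigma}$, that $\TT_{L,\Sigma}[d^{-1}]\subset\TT_{L,\Sigma}[e^{-1}]$ with the induced inclusion of completions, and the ultrametric convergence of the geometric series. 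One could instead avoid residue fields by approximating $x$ by a fraction $f/d^{n}$ with $f\in\TT_{L,\Sigma}$ and using that $\TT_{L,\Sigma}$ is a unique factorisation domain to move a suitable factor of $f$ into the denominator, but the reduction argument above is shorter.
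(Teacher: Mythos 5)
Your proof is correct and follows essentially the same route as the paper: reduce to $\|x\|=1$, split off the residual part of $x$ as a ratio of polynomials over $k_L$ (the paper's $f=\alpha_1-\alpha_0$ with $\alpha_1\in k_L[\underline{t}_\Sigma][d^{-1}]$ and $\|\alpha_0\|<1$), enlarge the denominator so that this residual part becomes a unit, and invert the remaining $1+(\text{small})$ factor by a geometric series converging in some $\widehat{\TT_{L,\Sigma}[\widetilde{d}^{-1}]}\subset\LL_{L,\Sigma}$. Your version merely makes explicit how the decomposition is produced (reduction modulo the maximal ideal and lifting through the fixed embedding $k_L\hookrightarrow L$), which the paper leaves implicit.
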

\begin{proof}
The relation of divisibility in $k_L[\underline{t}_\Sigma]$ induces a filtration of $\LL_{L,\Sigma}$
by Banach $L$-sub-algebras of the form $\widehat{\TT_{L,\Sigma}[d^{-1}]}$ so that $\LL_{L,\Sigma}$ is an 
$L$-sub-algebra of $\KK_{L,\Sigma}$. We still need to show that every non-zero element $f$ of $\LL_{L,\Sigma}$ is invertible; we follow the same ideas of Lemma \ref{lemma-di-serre}; there is no loss of generality if we suppose that $\|f\|=1$. There exists $d\in k_L[\underline{t}_\Sigma]\setminus\{0\}$ such that 
$f\in\TT_{L,\Sigma}[d^{-1}]^{\wedge}$. We can write
$f=\alpha_1-\alpha_0$ where $\alpha_1\in k_L[\underline{t}_\Sigma][d^{-1}]\setminus\{0\}$ and where $\alpha_0\in\TT_{L,\Sigma}[d^{-1}]^\wedge$
is such that $\|\alpha_0\|<1$. Therefore, in $\KK_{L,\Sigma}$:
$$\frac{1}{f}=\frac{1}{\alpha_1}\left(1-\frac{\alpha_0}{\alpha_1}\right)^{-1}=
\frac{1}{\alpha_1}\sum_{i\geq0}\left(\frac{\alpha_0}{\alpha_1}\right)^i$$
 and the series converges in $\TT_{L,\Sigma}[\widetilde{d}^{-1}]^{\wedge}\subset\LL_{L,\Sigma}$, for some element $\widetilde{d}\in k_L[\underline{t}_\Sigma]$.
\end{proof} 
Note that $\LL_{L,\Sigma}$ contains 
the fraction field of $\TT_\Sigma$ and is not complete, unless $\Sigma=\emptyset$. The fields $\LL_{L,\Sigma}$ and $\KK_{L,\Sigma}$ both have residual field 
$k_L(\underline{t}_\Sigma)$ and $\KK_{L,\Sigma}$ is the completion of $\LL_{L,\Sigma}$ for the Gauss norm.

\subsection{Analytic functions with values in non-archimedean Banach algebras}\label{holomorphicfunc}

In this subsection we suppose that $L$ is an algebraically closed valued field with multiplicative valuation $|\cdot|$, complete for this valuation. We choose $(B,|\cdot|_B)$ a Banach $L$-algebra which is countably cartesian in the sense of Definition \ref{algebra-countably-cartesian}. 

Let $X/L$ be a rigid analytic variety which is the datum 
of $(X,T,\mathcal{O}_X)$ with $X$ a set, a $G$-topology $T$ and a structure sheaf $\mathcal{O}_X$ of $L$-algebras.
In all the following, we denote by $\mathcal{O}_{X/B}$ the presheaf of $B$-algebras defined, for $\mathcal{U}=(U_i)_i$
an affinoid covering of $X$, by $$\mathcal{O}_{X/B}(U_i)=\widehat{\mathcal{O}_X(U_i)\otimes_{L}B}\cong\widehat{\mathcal{O}_X(U_i)\otimes_{L}c_I(L)},$$
the completion being taken for the spectral (sub-multiplicative) norm on $U_i$ (see \cite[\S 3.2]{BGR}), and where $\cong$ indicates an isometric isomorphism of Banach $L$-vector spaces. 

An {\em analytic  function} (also called {\em holomorphic function}) from $X$ to $B$ is by definition
an element of $\mathcal{O}_{X/B}(X)$. Equivalently, an analytic function $f:X\rightarrow B$ is a function such that for every rational subset $Y\subset X$, 
the restriction $f|_Y$ is the uniform limit over $Y$ of a sequence of elements of $\mathcal{O}_X(Y)\otimes_{L}B$. As an alternative notation, we write $$f\in\operatorname{Hol}_B(X\rightarrow B).$$
Let $\mathcal{B}=(b_i:i\in\mathcal{I})$ be a countable orthonormal basis of $B$. In other words, for all $i\in I$ we have $|b_i|_B=1$, and moreover, for all $g\in B$ there is a unique convergent series expansion
$$g=\sum_{i\in\mathcal{I}}g_ib_i,\quad g_i\in L,$$
with $|g_i|\rightarrow0$ as $i\rightarrow\infty$ and $|g|_B=\max_i|g_i|$.
Then, every element $f\in \operatorname{Hol}_B(X\rightarrow B)$ can be expanded, in a unique way, as
$$f=\sum_{i\in I}f_ib_i$$
where $f_i\rightarrow0$ for the spectral norm associated to any rational subset $Y$ of $X$. For example, we can take $B=\KK_\Sigma$ or $B=\TT_\Sigma[d^{-1}]^\wedge$ with $d\in k_L[\underline{t}_\Sigma]\setminus\{0\}$.
Let $C$ be a sub-$L$-algebra of $B$ (not necessarily complete). We write $$\operatorname{Hol}_B(X\rightarrow C)$$ for the 
$C$-algebra of holomorphic (or analytic) functions from $X$ to $B$ such that the image is contained in $C$, and we omit the subscript if $B=C$. For instance, 
we can take $C=\LL_\Sigma\subset\KK_\Sigma=B$. We denote by $$\mathcal{O}_{X/B/C}$$ the 
presheaf of $C$-algebras determined by $\mathcal{O}_{X/B/C}(Y)=\operatorname{Hol}_B(Y\rightarrow C)$ for $Y$ rational subset of $X$. Since $C$ is an $L$-algebra, for every $U$ affinoid subdomain, $\mathcal{O}_{X/B/C}(U)$ is a $\mathcal{A}$-module
and we can define, for $M$ a finitely generated $\mathcal{A}$-module, the pre-sheaf $\mathcal{M}_{B/C}$ on $X$ by 
$$\mathcal{M}_{B/C}(U)=M\otimes_{\mathcal{A}}\mathcal{O}_{X/B/C}(U).$$

Tate's acyclicity theorem \cite[Theorem 4.2.2]{FP} is easily seen to extend to this framework and we have the next result which we do not prove in full (because anyway it will not be exploited in the paper) but which is useful to understand the nature of the class of analytic functions we use:

\begin{Lemma}\label{Tate-acyclicity}
The presheaf $\mathcal{M}_{B/C}$ is a sheaf of $C$-algebras.
\end{Lemma}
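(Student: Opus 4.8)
The plan is to mimic the classical proof of Tate's acyclicity theorem, reducing the sheaf statement for $\mathcal{M}_{B/C}$ to the already-established sheaf property of the structure sheaf $\mathcal{O}_X$ after a flat base change, and then tracking the subalgebra $C$ separately. First I would recall that for a rational covering (or more generally an admissible affinoid covering) $\mathcal{U}=(U_i)$ of an affinoid $U$, Tate's acyclicity for $\mathcal{O}_X$ gives exactness of the augmented \v{C}ech complex
\[
0\longrightarrow \mathcal{O}_X(U)\longrightarrow \prod_i\mathcal{O}_X(U_i)\longrightarrow \prod_{i,j}\mathcal{O}_X(U_i\cap U_j)\longrightarrow\cdots
\]
in the category of Banach $L$-spaces, and in fact this complex is \emph{split} in that category (the splitting is built from the partitions of unity / Mittag--Leffler decompositions underlying the Tate--Gerritzen presentation). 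The key point is that a split exact complex of Banach $L$-vector spaces stays split exact after applying the functor $M\mapsto \widehat{M\otimes_L B}$, because each $\mathcal{O}_X(U_i)$ is countably cartesian and $B\cong c_I(L)$ is countably cartesian, so completed tensor products behave like coefficient extension with convergent coefficients. This yields exactness of the \v{C}ech complex for $\mathcal{O}_{X/B}$, hence the sheaf property for $\mathcal{O}_{X/B}$, and then tensoring over $\mathcal{A}$ with a finitely generated module $M$ (using that $M$ has a finite presentation and $\mathcal{O}_{X/B}$ is already a sheaf, so the cokernel presentation is respected) gives the sheaf property for $\mathcal{M}_{B}$.

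Second, I would carry the subalgebra $C$ along. The point is that $\operatorname{Hol}_B(Y\to C)$ is simply the subset of $\operatorname{Hol}_B(Y\to B)$ consisting of functions whose values lie in $C$, and the restriction maps in the \v{C}ech complex are the restrictions of those for $\mathcal{O}_{X/B}$. Given a family $f_i\in\mathcal{O}_{X/B/C}(U_i)$ agreeing on overlaps, the already-proven sheaf property for $\mathcal{O}_{X/B}$ produces a unique $f\in\mathcal{O}_{X/B}(U)$ with $f|_{U_i}=f_i$; one must then check $f$ takes values in $C$. This is where one uses that $C$ is a sub-$L$-algebra and that an analytic function is the uniform limit on each rational subset of elements of $\mathcal{O}_X(Y)\otimes_L C$: expanding $f=\sum_i f^{(i)}b_i$ in an orthonormal basis of $B$, the coefficient functions $f^{(i)}$ are scalar analytic functions, and the condition "values in $C$" for the examples we care about ($C=\LL_\Sigma\subset\KK_\Sigma=B$, or $C=\TT_\Sigma[d^{-1}]^\wedge$) is a closed, pointwise-on-$Y$ condition compatible with gluing: if $f|_{U_i}$ has values in $C$ for each $i$ and the $U_i$ cover $U$, then $f(x)\in C$ for every point $x$ of $U$, hence $f\in\mathcal{M}_{B/C}(U)$.

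I would finally note the separated-ness (injectivity) half of the sheaf axiom is immediate: if $f\in\mathcal{M}_{B/C}(U)$ restricts to $0$ on every $U_i$ of a covering, then it is $0$ as an element of $\mathcal{O}_{X/B}(U)$ by Tate acyclicity for $\mathcal{O}_{X/B}$, hence $0$ in the submodule $\mathcal{M}_{B/C}(U)$; and the presheaf axioms (functoriality of restriction, restriction to $U$ being the identity) are formal. Assembling these gives that $\mathcal{M}_{B/C}$ is a sheaf of $C$-algebras on $X$.

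The main obstacle is the base-change step: one needs the \v{C}ech complex of $\mathcal{O}_X$ to be not merely exact but \emph{admissibly} (equivalently, split) exact as a complex of Banach $L$-spaces, so that completed tensoring with the countably cartesian algebra $B$ preserves exactness — ordinary exactness of Banach spaces is not preserved by $\widehat{-\otimes_L B}$ without such a splitting. For standard rational coverings this splitting is classical (Gerritzen--Grauert, or the explicit calculations behind \cite[Theorem 4.2.2]{FP}), and the countably cartesian hypothesis on $B$, together with the fact that each $\mathcal{O}_X(U_i)$ is itself countably cartesian (true for affinoid subdomains of $\mathbb{P}^{1,an}_L$ by the Mittag--Leffler decomposition, as already remarked in the text), is exactly what makes the completed tensor product well-behaved. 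This is why the statement is asserted here only for the rigid analytic varieties relevant to us rather than proved in full generality, consistent with the remark that it will not be exploited later in the paper.
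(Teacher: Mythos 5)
Your proposal is correct and follows essentially the same route as the paper, which itself only sketches the argument: expand sections in an orthonormal basis of the countably cartesian algebra $B$, glue coefficient-by-coefficient using the (strict) exactness of the scalar \v{C}ech complex, and observe that the glued section takes values in $C$ because that is a pointwise condition already verified on each member of the covering. The only cosmetic difference is that you package the base-change step as preservation of a split exact complex under $\widehat{\otimes}_L B$, whereas the paper works directly with the orthonormal-basis expansion of $B$ and the isometry $\sup_j|\alpha(f)|_{U_j}=|f|_U$; both are the same use of the countably cartesian hypothesis.
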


We omit the details of the proof because the proof of Tate's acyclicity theorem given in the above reference can be easily adapted to our framework,  thanks to the hypothesis that $B$ is countably cartesian. We will be content to focus on few aspects, in the case of $\mathcal{M}$ trivial. If $X=\operatorname{Spm}(\mathcal{A})$ with $\mathcal{A}$ an affinoid $L$-algebra, $U$ an affinoid subdomain of $X$ and $(U_j)_{j\in J}$ an admissible covering of $U$ (with $J$ a finite set), saying that $\mathcal{O}_X$ is a sheaf of $L$-algebras amounts to saying that there is an exact sequence of $L$-algebras
$$0\rightarrow\mathcal{O}_X(U)\xrightarrow{\alpha}\prod_{j\in J}\mathcal{O}_X(U_j)\xrightarrow{\beta}\prod_{j,k\in J}\mathcal{O}_X(U_j\cap U_k)$$ where $\alpha$ is
defined by the restrictions on the $U_j$'s and $\beta((f_j)_{j\in J})=(f_j|_{U_j\cap U_k}-f_k|_{U_j\cap U_k})_{j,k\in J}$. Taking $(\cdot)\widehat{\otimes}_{L}B$ determines an exact sequence of $B$-algebras because, denoting by $\alpha$ and $\beta$ the resulting maps, with $|\cdot|_U$ the spectral norm over $U$, $\sup_{j}|\alpha(f)|_{U_j}=\max_{j}|\alpha(f)|_{U_j}=|f|_U$
($\alpha$ is an isometry) so that if $(f_j)_{j\in J}$ is an element of $\prod_j\mathcal{O}_X(U_j)\widehat{\otimes}_LB$
such that $\beta((f_j)_j)=0$ then, writing $f_j=\sum_{i\in I}f_j^{(i)}b_i$ with $f_j^{(i)}\rightarrow0$ as $i\rightarrow\infty$ (expansion in the orthonormal basis $(b_i)_{i\in I}$ of $B$), for all $i\in I$ there exists $f^{(i)}\in\mathcal{O}_{X}(U)$
with $\alpha(f^{(i)})=(f_j^{(i)})_{j\in J}$ for all $i$, and $f^{(i)}\rightarrow0$ for $|\cdot|_U$ and therefore, $f=\sum_if^{(i)}b_i$ defines an element of $\mathcal{O}_{X}(U)\widehat{\otimes}_LB$
such that $\alpha(f)=(f_j)_j$. Now, the maps $\alpha$ and $\beta$ define $C$-algebra
maps between $\mathcal{O}_{X/B/C}(U),\prod_j\mathcal{O}_{X/B/C}(U_j)$ etc. and the map resulting from $\alpha$ is injective, while the element $f\in\mathcal{O}_{X/B}(U)$ constructed above clearly belongs to $\mathcal{O}_{X/B/C}(U)$ if $f_j\in \mathcal{O}_{X/B/C}(U_j)$ for all $j$.

\subsubsection{Structure of $\mathcal{O}_{X/B/C}$ with $X$ a curve}\label{structure-XBC}

We consider $B$ a Banach $L$-algebra which is countably cartesian and we suppose that $\Lambda$ is a partially ordered countable set, with partial order $\prec$ such that there is a family $(B_\lambda)_{\lambda\in\Lambda}$ of Banach sub-$L$-algebras of $B$ with the following two properties:
\begin{enumerate}
\item If $\lambda\prec\lambda'$ then $B_\lambda\subset B_{\lambda'}$,
\item For all $\lambda,\lambda'\in\Lambda$ such that $\lambda\prec\lambda'$ there exists an orthonormal basis $(b_i)_{i\in I}$ of $B$ (depending on $\lambda$) and subsets $J\subset J'\subset I$ with $(b_i)_{i\in J}$ an orthonormal basis of $B_\lambda$ and $(b_i)_{i\in J'}$
an orthonormal basis of $B_{\lambda'}$. 
\end{enumerate}
We set $C=\cup_\lambda B_\lambda$. This is a sub-$L$-algebra of $B$. We have the next Lemma.

\begin{Lemma}\label{precise-image}
Let $X$ be a rigid analytic curve over $L$. The following identity holds:
$$\operatorname{Hol}_{B}(X\rightarrow C)=\bigcup_{\lambda\in\Lambda}\operatorname{Hol}_{B_\lambda}(X\rightarrow B_\lambda).$$
\end{Lemma}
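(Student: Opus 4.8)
The plan is to prove the two inclusions separately, the harder one being $\subseteq$. For the inclusion $\supseteq$, suppose $f\in\operatorname{Hol}_{B_\lambda}(X\rightarrow B_\lambda)$ for some $\lambda$. Since $B_\lambda$ is a Banach sub-$L$-algebra of $B$, the isometric inclusion $B_\lambda\hookrightarrow B$ (the orthonormality hypothesis (2) guarantees the norm on $B_\lambda$ is the restriction of the norm on $B$) induces, after completing tensor products over $L$, compatible maps $\mathcal{O}_X(Y)\widehat{\otimes}_L B_\lambda\to\mathcal{O}_X(Y)\widehat{\otimes}_L B$ for every rational subset $Y$; hence $f$, viewed in $B$, is still analytic and has image in $B_\lambda\subset C$. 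This gives $f\in\operatorname{Hol}_B(X\to C)$.

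For the inclusion $\subseteq$, take $f\in\operatorname{Hol}_B(X\rightarrow C)$. The point is to produce a single $\lambda$ that works. Since $X$ is a rigid analytic curve, it admits a countable admissible affinoid covering $(X_n)_{n\geq 1}$ (e.g. by the Mittag-Leffler / Gerritzen-Grauert structure of curves, or directly because $X$ is quasi-separated with a countable base; one can even reduce, by gluing, to the case where $X$ is affinoid or is an increasing union of affinoid subdomains $X_1\subset X_2\subset\cdots$). On each affinoid piece $X_n=\operatorname{Spm}(\mathcal{A}_n)$, the restriction $f|_{X_n}$ lies in $\mathcal{O}_{X/B}(X_n)=\mathcal{A}_n\widehat{\otimes}_L B$ and has image contained in $C=\cup_\lambda B_\lambda$. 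I claim that then $f|_{X_n}\in\mathcal{A}_n\widehat{\otimes}_L B_{\lambda(n)}$ for some $\lambda(n)\in\Lambda$: expand $f|_{X_n}=\sum_{i\in I}f^{(i)}b_i$ in an orthonormal basis of $B$ adapted (via hypothesis (2)) to a cofinal chain of the $B_\lambda$, with $f^{(i)}\in\mathcal{A}_n$ and $\|f^{(i)}\|_{X_n}\to 0$; because the image of $f|_{X_n}$ lies in $C$ and $C=\cup_\lambda B_\lambda$ is a \emph{countable} increasing union of sub-$L$-algebras that are closed in $B$, a Baire-category / pointwise-evaluation argument forces all but finitely many indices $i$ to contribute a zero coefficient outside some fixed $B_{\lambda(n)}$, equivalently $f^{(i)}=0$ for $i\notin J'(\lambda(n))$. (More concretely: the values $f(x)$, $x\in X_n(L)$, all lie in some $B_{\lambda_x}$; the sets $\{x: f(x)\in B_\lambda\}$ are closed in $X_n$, their union is $X_n$, so by Baire one of them has nonempty interior, and since $f|_{X_n}$ is analytic and the $B_\lambda$ are "coordinate subspaces" this spreads to all of $X_n$.)

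Having chosen $\lambda(n)$ for each $n$, I would then promote the local statement to a global one. If $X$ is an increasing union $X_1\subset X_2\subset\cdots$, the restriction $B_{\lambda(n+1)}$-valued form on $X_{n+1}$ restricts on $X_n$ to the $B_{\lambda(n)}$-valued form, and by uniqueness of the orthonormal expansion the coefficients are forced to be supported on $J'(\lambda(n))\cap J'(\lambda(n+1))$; directedness of $\Lambda$ (guaranteed by (1)-(2), since any two $B_\lambda,B_{\lambda'}$ sit inside a common one via a shared adapted basis) lets me replace the $\lambda(n)$ by a single increasing chain, and in the limit the coefficient support stabilizes inside one $J'(\lambda_\infty)$ with $\lambda_\infty=\sup_n\lambda(n)$ — unless $\Lambda$ has no such sup, in which case one argues that the support is already finite at a finite stage because each $f^{(i)}$ that is nonzero persists. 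Either way one lands in $\operatorname{Hol}_{B_{\lambda_\infty}}(X\to B_{\lambda_\infty})$. For a general curve one glues these pieces using Lemma~\ref{Tate-acyclicity} (the sheaf property of $\mathcal{M}_{B/C}$), which guarantees the locally-defined $B_{\lambda(n)}$-valued forms patch to a global element of $\operatorname{Hol}_{B_\lambda}(X\to B_\lambda)$.

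\textbf{Main obstacle.} The delicate point is the claim that a $B$-valued analytic function on an affinoid curve whose \emph{values} all lie in the union $\cup_\lambda B_\lambda$ must already be a $B_{\lambda_0}$-valued analytic function for a single $\lambda_0$: a priori different points could require larger and larger $\lambda$, and one must rule this out. This is exactly where the countability of $\Lambda$ (hence of $\Lambda$ and of $k_L$, via Lemma~\ref{lemma-di-serre}) and the "adapted orthonormal basis" hypothesis (2) — which says the $B_\lambda$ are not merely subalgebras but are cut out by coordinate conditions — are essential: together they let a Baire-category argument on the affinoid (or a direct argument on the countably-many expansion coefficients $f^{(i)}\in\mathcal{A}_n$, each of which, being a function on a curve, either vanishes identically or vanishes only on a proper analytic subset) collapse the union. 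Handling the case where $\Lambda$ lacks suprema, so that $\lambda_\infty$ need not exist, requires the extra observation that the support of the expansion is eventually finite, so no genuine limit is needed.
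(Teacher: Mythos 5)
Your core mechanism is the same as the paper's: exploit the countability of $\Lambda$ against the uncountability of the points of an integral affinoid to find a single $\lambda$ and an \emph{infinite} set $X_0$ with $f(X_0)\subset B_\lambda$, then expand $f$ in a basis adapted to $B_\lambda$ via hypothesis (2) and kill the excess coefficients $f^{(i)}$, $i\notin J$, using the fact that an analytic function on an integral affinoid curve with infinitely many zeroes vanishes identically. (The Baire-category detour is unnecessary: a plain pigeonhole on the map $x\mapsto\lambda_x$ already gives an infinite fibre, which is all the identity theorem needs.) The $\supseteq$ inclusion is fine.

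The weak point is your globalization. Your fallback for the case where $\sup_n\lambda(n)$ does not exist --- ``the support is already finite at a finite stage'' --- is false: the coefficient support of $f$ in the orthonormal basis is in general countably infinite, and nothing forces it to be finite. But the whole chain $\lambda(1)\prec\lambda(2)\prec\cdots$ and the limit $\lambda_\infty$ are a detour you do not need. The correct use of your own observation that ``each nonzero $f^{(i)}$ persists'' is its contrapositive: if $f\in\operatorname{Hol}_B(Y\to C)$ on an integral affinoid curve $Y$ and $f|_X$ lands in $B_\lambda$ for $X$ an (infinite) affinoid subdomain, then every coefficient $f^{(i)}$ with $i\notin J(\lambda)$ vanishes on $X$, hence identically on $Y$, so the $\lambda$ found on the \emph{first} piece already works on the ambient affinoid. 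This is exactly how the paper passes from an affinoid subdomain to the whole affinoid, after which the admissible covering is handled by the sheaf property with no limit over $n$ ever being taken. With that repair your argument coincides with the paper's.
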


\begin{proof}
We first show the lemma when $X=\operatorname{Spm}(\mathcal{A})$ where $\mathcal{A}$ is an integral affinoid $L$-algebra. In this case, if $f:X\rightarrow L$ is holomorphic with infinitely many zeroes, then it is identically zero. Now, let $f$ be a global section of $\mathcal{O}_{X/B/C}$. For all $x\in X$ there exists $\lambda\in\Lambda$ such that 
$f(x)\in B_\lambda$. Therefore, there exists 
a map $$X\xrightarrow{\Phi}\Lambda,$$
defined by associating to every $x\in X$ a choice of $\lambda\in\Lambda$ such that 
$f(x)\in B_\lambda$.

 Since the underlying set of $X$ is uncountable (because $L$ is uncountable, due to the fact that it is complete) while the target set is countable, there exists an infinite subset $X_0\subset X$ and $\lambda\in\Lambda$, such that
$\Phi(x)=\lambda$ for all $x\in X_0$. Then $f(X_0)\subset B_\lambda$. We expand $f$ in an 
orthonormal basis $(b_i)_{i\in I}$ of $B$ such that for some $J\subset I$, $(b_j)_{j\in J}$ is an orthonormal basis of $B_\lambda$:
$$f=\sum_{j\in J}f_jb_j+\sum_{i\in I\setminus J}f_ib_i$$ (with $f_j\rightarrow0$ as $j\rightarrow\infty$).
Since for all $i\in I\setminus J$, $f_i(x)=0$ for all $x\in X_0$, $f_i\in \mathcal{O}_{X}(X)$ has infinitely zeroes and therefore vanishes identically and we deduce that 
$f\in\mathcal{O}_{X/B_\lambda}(X)$.

Now, suppose that $X$ is an affinoid subdomain of the curve $Y=\operatorname{Spm}(\mathcal{A})$. Let $f$ be in $\operatorname{Hol}_{B}(Y\rightarrow C)$. Then we can find 
$\lambda,\lambda'\in\Lambda$ such that $\lambda\prec\lambda'$ and 
$f\in\mathcal{O}_{Y/B_\lambda'}(Y)$, $f|_X\in \mathcal{O}_{Y/B_\lambda}(X)$. Writing 
$$f=\sum_{j'\in J'\setminus J}f_{j'}b_{j'}+\sum_{j\in J}f_jb_j$$ we note that for all $j\in J'\setminus J$, $f_{j'}(x)=0$ for all $x\in X$ which is infinite, and $f_{j'}$ vanishes identically. This means that
$f\in\mathcal{O}_{Y/B_\lambda}(Y)$. The lemma follows easily working on an admissible covering of a rigid analytic curve by affinoids.
\end{proof}

\subsubsection{Entire functions} We look at $B$-valued analytic functions on polydisks, where $(B,|\cdot|_B)$ is a Banach $L$-algebra which is countably cartesian.
If $X$ is the polydisk $$D_{L}(0,r)^n=\{\underline{x}=(x_1,\ldots,x_n)\in L^n;|x|\leq r\}$$ with $r\in|L|$ and with the usual structure sheaf of converging series, then $\operatorname{Hol}_B(X\rightarrow B)$ equals the ring of series $\sum_{\underline{i}\geq 0}f_{\underline{i}}\underline{x}^{\underline{i}}$ where $\underline{i}=(i_1,\ldots,i_n)$ with $i_j\geq 0$ for all $j$, where $\underline{x}^{\underline{i}}=x_1^{i_1}\cdots x_n^{i_n}$, and 
where $f_{\underline{i}}\in B$ are such that $|f_{\underline{i}}|_Br^{i_1+\cdots+i_n}\rightarrow0$ as $\underline{i}\rightarrow\infty$. We deduce that the
$B$-algebra $\operatorname{Hol}_B(\mathbb{A}^{n,an}_L\rightarrow B)$, with $\mathbb{A}^{n,an}_L$ the analytic $n$-dimensional affine space over $L$, is equal to the $B$-algebra of 
the functions $L^n\rightarrow B$ defined by the 
formal series $\sum_{\underline{i}\geq 0}f_{\underline{i}}\underline{x}^{\underline{i}}\in B[[x_1,\ldots,x_n]]$ such that $|f_{\underline{i}}|_Br^{i_1+\cdots+i_n}\rightarrow0$ for all $r\in|B|_B$.
It is also easy to see that a function
$f:L^n\rightarrow B$ belongs to $\operatorname{Hol}_B(\mathbb{A}^{n,an}_L\rightarrow B)$ 
if, on every bounded subset $U$ of $L$,
$f$ can be obtained as a uniform limit of polynomial functions $f_i\in B[x_1,\ldots,x_n]$,
$f_i: U\rightarrow B$. These functions are called $B$-\emph{entire} (or simply \emph{entire} if the reference to $B$ is understood). 
The following property is easily checked. Let $(f_i)_{i\geq 0}$ be a sequence of $B$-entire functions. If for every such $r$, the sequence $(f_i)_{i\geq 0}$ converges uniformly over $D_L(0,r)^n$, then the limit function $L^n\rightarrow B$ is a $B$-entire function.

The next result is a simple generalization of the analogue of Liouville's theorem which can be found in Schikhof's \cite[Theorems 42.2 and 42.6]{SCH}. See also \cite[Proposition 8]{PEL&PER}.

\begin{Proposition}[$B$-analogue of Liouville's Theorem]\label{entire}
Assuming that the Banach $L$-algebra $B$ is countably cartesian, any bounded $B$-entire function is constant.
\end{Proposition}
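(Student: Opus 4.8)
The plan is to reduce the statement to the scalar case by expanding everything in a fixed orthonormal basis of $B$. Since $B$ is countably cartesian, fix an isometric isomorphism $B\cong c_I(L)$ with orthonormal basis $(b_i)_{i\in I}$, so that every $g\in B$ has a unique expansion $g=\sum_{i\in I}g_i b_i$ with $g_i\in L$, $g_i\to 0$, and $|g|_B=\max_i|g_i|$. By the description of $\operatorname{Hol}_B(\mathbb{A}^{1,an}_L\to B)$ recalled just before the statement, a $B$-entire function $f:L\to B$ can be written $f=\sum_{i\in I}f_i b_i$ where each $f_i:L\to L$ is an ordinary $L$-entire function and $f_i\to 0$ uniformly on every disk $D_L(0,r)$; equivalently, writing $f=\sum_{n\geq 0}c_n x^n$ with $c_n\in B$ and $c_n=\sum_{i\in I}c_{n,i}b_i$, we have $f_i=\sum_{n\geq 0}c_{n,i}x^n$.

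Next I would observe that $|f|_B$ bounded, say $|f(x)|_B\leq M$ for all $x\in L$, forces $|f_i(x)|\leq M$ for all $x\in L$ and all $i\in I$, because $|f(x)|_B=\max_i|f_i(x)|$. Thus each $f_i$ is a bounded $L$-entire function in the classical sense. Now I invoke the classical non-archimedean Liouville theorem (Schikhof, \cite[Theorems 42.2 and 42.6]{SCH}), which applies since $L$ is algebraically closed and complete: each $f_i$ is constant, i.e. $f_i\equiv c_{0,i}$ and $c_{n,i}=0$ for all $n\geq 1$. Therefore $c_n=\sum_{i\in I}c_{n,i}b_i=0$ for all $n\geq 1$, and $f\equiv c_0\in B$ is constant. (One should check $c_0\in B$ genuinely, i.e. $(c_{0,i})_{i\in I}\in c_I(L)$, but this is automatic since $c_0=f(0)\in B$.)

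The only point that needs a little care — and what I would flag as the main obstacle, though it is minor — is justifying the coefficientwise expansion $f=\sum_i f_i b_i$ with each $f_i$ a genuine entire function, and in particular that the bound on $|f|_B$ transfers to a uniform bound on each $|f_i|$ independent of $i$ and $x$. This is exactly where countable cartesianity is used: it gives the identity $|g|_B=\max_i|g_i|$, which makes the reduction to coordinates an isometry rather than merely a bounded map, so no constants are lost when passing from $f$ to the $f_i$. Once this is in place the argument is immediate from the classical result. I would also remark that no separate treatment of $n>1$ variables is needed here since the statement is phrased for $\mathbb{A}^{n,an}_L$ in the surrounding discussion but the Proposition itself only asserts the one-variable case implicitly through ``$B$-entire''; if the multivariable version is wanted, the same coefficientwise reduction combined with the several-variable non-archimedean Liouville theorem works verbatim.
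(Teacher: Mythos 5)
Your proof is correct, but it takes a genuinely different route from the paper's. You reduce to the scalar case: expanding $f=\sum_{i\in I}f_ib_i$ in an orthonormal basis of $B\cong c_I(L)$, observing that each coordinate $f_i$ is a classical bounded $L$-entire function because $|f(x)|_B=\max_i|f_i(x)|$, and invoking the scalar non-archimedean Liouville theorem. The one point you flag --- that $f(x)=\sum_i f_i(x)b_i$ really is the orthonormal expansion of $f(x)$, with each $f_i$ entire --- is indeed the only thing to check, and it goes through: writing $f=\sum_n c_nx^n$ with $c_n=\sum_i c_{n,i}b_i$, the double family $(c_{n,i}x^n b_i)$ is unconditionally summable since $|c_{n,i}x^n|\leq |c_n|_B|x|^n\to 0$ uniformly in $i$ and $c_{n,i}\to 0$ for each fixed $n$, so the two sums may be interchanged and $|c_{n,i}|r^n\leq|c_n|_Br^n\to0$ shows each $f_i$ is entire. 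The paper proceeds differently: it first proves a $B$-valued maximum modulus principle (Lemma \ref{in-disks}, $|f|_{B,\sup}=\|f\|_B$ on the unit polydisk) by induction on the number of variables, rerunning Schikhof's one-variable argument with $|\cdot|_B$ in place of $|\cdot|$ (choosing $x$ with $|x^N|$ close to $1$ so that the first coefficient of norm $1$ dominates), and then deduces Liouville by rescaling and letting $r\to\infty$. Your argument is shorter and outsources the analytic content to the known scalar theorem; the paper's yields as a by-product the exact growth identity $\sup_{\underline{x}\in D_L(0,r)^n}|f(\underline{x})|_B=\max_{\underline{i}}|f_{\underline{i}}|_Br^{i_1+\cdots+i_n}$, a maximum principle of independent interest. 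One small correction: the paper's notion of $B$-entire and its proof are explicitly in $n$ variables, not just one, but as you note your coordinatewise reduction is insensitive to this, so nothing is lost.
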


Although the principles of the proof are completely elementary, we prefer to give all the details. Let $n$ be a positive integer and $f:D_{L}(0,1)^n\rightarrow B$ a $B$-analytic function, so that, with 
$\underline{x}=(x_1,\ldots,x_n)\in D_{L}(0,1)^n$,
$$f(\underline{x})=\sum_{\underline{i}}f_{\underline{i}}\underline{x}^{\underline{i}},\quad f_{\underline{i}}\in B,$$
where $\underline{x}=x_1^{i_1}\cdots x_n^{i_n}$ if $\underline{i}=(i_1,\ldots,i_n)$ and $|f_{\underline{i}}|_B\rightarrow0$ as $\underline{i}\rightarrow\infty$. We set 
$$|f|_{B,\sup}:=\sup_{\underline{x}\in D_L(0,1)^n}|f(\underline{x})|_B.$$ We also set 
$\|f\|_B=\sup\{|f_{\underline{i}}|_B:\underline{i}\in\NN^n\}=\max\{|f_{\underline{i}}|_B:\underline{i}\in\NN^n\}$.

\begin{Lemma}\label{in-disks}
We have $|f|_{B,\sup}=\|f\|_B$.
\end{Lemma}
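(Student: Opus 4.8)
The statement to prove is Lemma \ref{in-disks}: for a $B$-analytic function $f = \sum_{\underline{i}} f_{\underline{i}} \underline{x}^{\underline{i}}$ on the unit polydisk $D_L(0,1)^n$ with $B$ countably cartesian, one has $|f|_{B,\sup} = \|f\|_B$, where $\|f\|_B = \max_{\underline{i}} |f_{\underline{i}}|_B$. The inequality $|f|_{B,\sup} \le \|f\|_B$ is immediate from the ultrametric inequality: for any $\underline{x} \in D_L(0,1)^n$, $|f(\underline{x})|_B \le \max_{\underline{i}} |f_{\underline{i}}|_B |\underline{x}^{\underline{i}}| \le \max_{\underline{i}} |f_{\underline{i}}|_B = \|f\|_B$. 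So the content is the reverse inequality $\|f\|_B \le |f|_{B,\sup}$.

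My plan is to reduce to the classical one-variable scalar case via the orthonormal basis. First I would fix a countable orthonormal basis $(b_k)_{k \in \mathcal{I}}$ of $B$ and expand each coefficient $f_{\underline{i}} = \sum_{k} f_{\underline{i},k} b_k$ with $f_{\underline{i},k} \in L$, so that $|f_{\underline{i}}|_B = \max_k |f_{\underline{i},k}|$ and $f(\underline{x}) = \sum_k \big(\sum_{\underline{i}} f_{\underline{i},k}\, \underline{x}^{\underline{i}}\big) b_k = \sum_k g_k(\underline{x}) b_k$, where each $g_k : D_L(0,1)^n \to L$ is an ordinary ($L$-valued) analytic function with $\|g_k\|_{\sup} = \sup_{\underline{x}} |g_k(\underline{x})|$. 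Because the $b_k$ form an orthonormal basis, $|f(\underline{x})|_B = \max_k |g_k(\underline{x})|$ for every $\underline{x}$, hence $|f|_{B,\sup} = \sup_{\underline{x}} \max_k |g_k(\underline{x})| \ge \sup_k \sup_{\underline{x}} |g_k(\underline{x})| = \sup_k \|g_k\|_{\sup}$. Now I invoke the classical maximum modulus principle for $L$-valued analytic functions on a polydisk over an algebraically closed complete nonarchimedean field (this is exactly Schikhof, \cite[Theorem 42.2]{SCH}, in one variable, iterated over the $n$ coordinates, or one can cite it directly in several variables): $\|g_k\|_{\sup} = \max_{\underline{i}} |f_{\underline{i},k}|$. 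Therefore $|f|_{B,\sup} \ge \sup_k \max_{\underline{i}} |f_{\underline{i},k}| = \sup_{\underline{i}} \max_k |f_{\underline{i},k}| = \sup_{\underline{i}} |f_{\underline{i}}|_B = \|f\|_B$, and combined with the trivial inequality this gives equality.

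The one subtlety to check carefully is the interchange of suprema and the passage from the scalar Gauss-norm statement to the Banach-valued one, namely that $|f(\underline{x})|_B = \max_k |g_k(\underline{x})|$ genuinely holds pointwise — this is just the defining property of an orthonormal basis applied to the element $f(\underline{x}) \in B$, and that $\sup_k \sup_{\underline{x}} = \sup_{\underline{x}} \sup_k$ for a sup over a product (always valid). One should also note that the series $\sum_k g_k(\underline{x}) b_k$ converges in $B$ with $|g_k(\underline{x})|_B = |g_k(\underline{x})| \to 0$ as $k \to \infty$, uniformly in $\underline{x}$ on the polydisk since $|f_{\underline{i}}|_B \to 0$; this legitimizes writing $f = \sum_k g_k b_k$ as a $B$-analytic identity. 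The main obstacle, such as it is, is purely bookkeeping: making sure the several-variable maximum-modulus principle over $L$ is applied in a form that is literally available (iterating the one-variable result of \cite{SCH} coordinate by coordinate suffices, using that $L$ is algebraically closed), and confirming that the coefficientwise decomposition respects both norms. There is no deep difficulty; the proof is essentially a two-line reduction once the orthonormal-basis formalism is in place.
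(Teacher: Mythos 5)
Your proof is correct, but it takes a genuinely different route from the paper's. The paper argues by induction on the number of variables $n$: it views $f$ as a one-variable function with values in the (again countably cartesian) Banach algebra $B'=\operatorname{Hol}_B(D_L(0,1)^{n-1}\rightarrow B)$, and then proves the one-variable \emph{Banach-valued} case directly by reproducing Schikhof's argument (normalize $\|f\|_B=1$, take the smallest $N$ with $|f_N|_B=1$, pick $x$ with $1-\epsilon<|x^N|<1$ using the density of $|L^\times|$, and show $|f(x)|_B=|x^N|$). You instead decompose $f=\sum_k g_k b_k$ along an orthonormal basis of $B$, so that $|f(\underline{x})|_B=\max_k|g_k(\underline{x})|$ pointwise, and reduce to the classical scalar statement that the Gauss norm equals the sup norm on the closed unit polydisk over an algebraically closed complete field. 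Both arguments are sound. Yours uses the countably cartesian hypothesis more structurally (it is exactly what furnishes the basis and the identity $|f_{\underline{i}}|_B=\max_k|f_{\underline{i},k}|$) and delegates all the analysis to a citable scalar result, at the cost of the summability bookkeeping you correctly flag (the interchange $\sum_{\underline{i}}\sum_k=\sum_k\sum_{\underline{i}}$, justified because for each $\epsilon>0$ only finitely many pairs $(\underline{i},k)$ satisfy $|f_{\underline{i},k}|\geq\epsilon$, and the fact that $\sup_{\underline{i}}|f_{\underline{i},k}|\rightarrow 0$ so that $\sum_kg_k(\underline{x})b_k$ really is the orthonormal expansion of $f(\underline{x})$). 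The paper's induction avoids the double-series manipulation but has to assert that $B'$ is again countably cartesian and to redo the one-variable maximum-modulus argument in the Banach-valued setting; note that the paper's own remark after the lemma (that for $B$ a field the statement is in \cite[\S 5.1.4]{BGR}) is precisely the scalar input your reduction relies on.
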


\begin{proof} There is no loss of generality to suppose that $\|f\|_B=1$. Indeed, $|B|_B=|L|$ because $B$ is countably cartesian.
It is easy to see that $|f|_{B,\sup}\leq\|f\|_B$ and we only need to prove the opposite inequality.
We proceed by induction on $n>0$. Let us write $\underline{x}=(x_1,\underline{x}')$ (concatenation). We note that
$$|f|_{B,\sup}\geq\sup_{x_1\in D_L(0,1)}\Bigg(\sup_{\underline{x}'\in D_L(0,1)^{n-1}}|f(x_1,\underline{x}')|_B\Bigg)=\sup_{x_1}|f(x_1,\cdot)|_{B,\sup}=\sup_{x_1}\|f(x_1,\cdot)\|_B$$
by the induction hypothesis. Let $B'$ be the $L$-algebra 
$$\operatorname{Hol}_B(D_L(0,1)^{n-1}\rightarrow B)$$ with the norm $|\cdot|_{B,\sup}=\|\cdot\|_B$. It is easy to see that $B'$ is a Banach $L$-algebra which is countably cartesian. Then, we can identify $f$ with a $B'$-analytic
function $\widetilde{f}:D_L(0,1)\rightarrow B'$, where $\widetilde{f}=\sum_{i\geq 0}f_ix_1^i$, $f_i\in B'$, $f_i\rightarrow0$. We see that $\sup_{x_1}\|f(x_1,\cdot)\|_B=\|\widetilde{f}\|_{B'}$, the latter norm equals $\|f\|_B$. Hence $|f|_{B,\sup}=\|f\|_B$. It remains to prove the case $n=1$ of the Lemma. For this, we follow \cite[Lemma 42.1]{SCH}.

Let us therefore consider an element $f\in\operatorname{Hol}_B(D_L(0,1)\rightarrow B)$ with $f(x)=\sum_{i\geq 0}f_ix^i$, $f_i\rightarrow0$. Of course $|f|_{B,\sup}\leq \|f\|_B$ and we can 
again suppose that $\|f\|_B=1$. If $|f_0|_B=1$ then 
$1=|f(0)|_B\leq|f|_{B,\sup}\leq \|f\|_B=1$ and we are done. Otherwise, let $N$ be the smallest integer $j$ such that $|f_j|_B=1$. We have $N>0$. Let $\epsilon>0$ be such that $\epsilon<1-\max\{|f_i|_B:0\leq i<N\}$. Since $|L^\times|$ is dense in $\RR_{>0}$ there exists $x\in L^\times$ such that $1-\epsilon<|x^N|<1$. We claim that $|f(x)|_B=|x^N|>1-\epsilon$. To see this note that 
$\max\{|f_i|_B:0\leq i<N\}<1-\epsilon$ so that $|f_0+\cdots+f_{N-1}x^{N-1}|_B<1-\epsilon$. On the other hand the sequence $(|x^i|)_{i\geq N}$ is strictly decreasing so that $|\sum_{i\geq N}f_ix^i|_B=|f_Nx^N|_B=|x^N|$. Hence $$|f(x)|_B=\max\Bigg\{\Big|f_0+\cdots+f_{N-1}x^{N-1}\Big|_B,\Big|\sum_{i\geq N}f_ix^i\Big|_B\Bigg\}=\Big|\sum_{i\geq N}f_ix^i\Big|_B>1-\epsilon.$$ The claim follows by letting $\epsilon$ tend to $0$ and the proof of the lemma is complete.
\end{proof}
\begin{Remark}{\em If $B$ is an algebraically closed field, Lemma \ref{in-disks} is contained in the arguments of \cite[\S 5.1.4]{BGR}.}\end{Remark}
\begin{proof}[Proof of Proposition \ref{entire}] Let $f$ be $B$-entire (in $n$ variables). If $r\in|L^\times|$ we can choose $\underline{\alpha}=(\alpha_1,\ldots,\alpha_n)\in (L^\times)^n$ so that
$|\alpha_1|=\cdots=|\alpha_n|=r$ and apply Lemma \ref{in-disks} to the $B$-entire function
$f(\alpha_1x_1,\cdots,\alpha_nx_n)$. We deduce that 
$$\sup_{\underline{x}\in D_L(0,r)^n}|f(\underline{x})|_B=\max_{\underline{i}}|f_{\underline{i}}|_Br^{i_1+\cdots+i_n}.$$ Assume now that $|f|_B$ is bounded, say, by $M>0$. Then
$\max_{\underline{i}}|f_{\underline{i}}|_Br^{i_1+\cdots+i_n}\leq M$ for all $r\in|L^\times|$. This means that $|f_{\underline{i}}|_B=0$ for all $\underline{i}\neq 0$ and $f$ is a constant map $\mathbb{A}^{n,an}_N\rightarrow B$ that can be identified with its constant term $f_{\underline{0}}$.
\end{proof}

\subsection{Drinfeld modules and exponential functions}\label{carlitzexttate}

We denote by $\FF_q$ the finite field with $q=p^e$ elements and characteristic $p$. Let $\theta$ be an indeterminate. We write $A$ for $\FF_q[\theta]$, the $\FF_q$ algebra of polynomials in $\theta$. We denote by $K$ its fraction field and by $K_\infty=\FF_q((\frac{1}{\theta}))$ the local field which is its completion at the infinity place or, which is the same, the completion for $|\cdot|$ the multiplicative valuation of $K$ normalised by $|\theta|=q$. Finally, we denote by $\CC_\infty$ the completion of an algebraic closure $K^{ac}$ of $K$. We recall that the residual field $k_{\CC_\infty}$ of $\CC_\infty$ is $\FF_q^{ac}$, an algebraic closure of $\FF_q$, that we can view as a subfield of $\CC_\infty$. From now on we set $L=\CC_\infty$ and we consider $\TT_{L,\Sigma}=\TT_\Sigma,\LL_{L,\Sigma}=\LL_\Sigma,\KK_{L,\Sigma}=\KK_\Sigma,$ etc.

The automorphism $c \mapsto c^q$ of $\CC_\infty$ extends in a unique way to an $\FF_q[\underline{t}_\Sigma]$-linear automorphism $\tau$ of $\CC_\infty[\underline{t}_\Sigma]$ and therefore, to each of the three $\CC_\infty$-algebras $\TT_\Sigma\subset\LL_\Sigma\subset\KK_\Sigma$ defined in \S \ref{Rings-fields-modules}, being continuous and open on the first and the third. Recall that $\|\cdot\|$ denotes the unique extension of the Gauss norm to $\KK_\Sigma$. Recall that by Lemma \ref{lemma-di-serre}, $(\KK_\Sigma,\|\cdot\|)$ is a Banach $\CC_\infty$-algebra which is countably cartesian.
For all $f\in \KK_\Sigma$, we have that $\|\tau(f)\|=\|f\|^q$. The sub-ring $\TT_\Sigma^{\tau=1}$ of the elements $f\in\TT_\Sigma$ such that $\tau(f)=f$ is the polynomial ring $\FF_q[\underline{t}_\Sigma]$, and we have the identities of fixed subfields 
\begin{equation}\label{fixed-subfields}
\FF_q(\underline{t}_\Sigma)=\LL_\Sigma^{\tau=1}=\KK_\Sigma^{\tau=1}.
\end{equation}
We also consider the non-commutative $\KK_\Sigma$-algebras
$\KK_\Sigma[\tau]$ and $\KK_\Sigma[[\tau]]$ (the multiplication is defined by the commutation rule $\tau f=\tau(f)\tau$ for $f\in \KK_\Sigma$). 

\subsubsection{The exponential of a Drinfeld module}

For the background on Drinfeld modules, lattices and exponential functions we refer to \cite{GOS} and \cite[\S 3]{PEL5}. Let $\phi$ be a Drinfeld $A$-module of rank $r$ defined over $\CC_\infty$, let $\exp_\phi$ be its exponential function and $\Lambda_\phi=\operatorname{Ker}(\exp_\phi)\subset\CC_\infty$
be its lattice period, a free, rank $r$ module over $A$ which is discrete for the metric of $\CC_\infty$ induced by $|\cdot|$. We recall that $\exp_\phi$ is an $\FF_q$-linear entire function
$\CC_\infty\rightarrow\CC_\infty$ that can be computed by means of the following everywhere convergent 
Weierstrass product
\begin{equation}\label{weierstrassproduct}\exp_\phi(Z)=Z\sideset{}{'}\prod_{\lambda\in\Lambda_\phi}\left(1-\frac{Z}{\lambda}\right),\quad Z\in\CC_\infty\end{equation}
(the dash $'$ indicates that the product runs over $\Lambda_\phi\setminus\{0\}$).
This product expansion also shows that locally at $0$, $\exp_\phi$ induces an isometric $\FF_q$-linear isomorphism. Indeed, if $\rho_\phi:=\min_{\lambda\in\Lambda_\phi\setminus\{0\}}|\lambda|$, 
$\exp_\phi$ induces an $\FF_q$-linear automorphism of $$D^\circ_{\CC_\infty}(0,\rho_\phi)=\{z\in\CC_\infty:|z|<\rho_\phi\}$$
such that for all $z\in D^\circ_{\CC_\infty}(0,\rho_\phi)$, $|\exp_\phi(z)|=|z|$. In fact
it can be proved that $\exp_\phi$ induces an isomorphism of $\CC_\infty$-rigid analytic
spaces $\mathbb{A}^{1,an}_{\CC_\infty}/\Lambda_\phi\cong\mathbb{A}^{1,an}_{\CC_\infty}$.
With $\phi(\CC_\infty)$ the $A$-module induced
by $\phi$, there is an exact sequence of $A$-modules
$$0\rightarrow\Lambda_\phi\rightarrow\CC_\infty\xrightarrow{\exp_\phi}\phi(\CC_\infty)\rightarrow0$$
($\exp_\phi$ is uniquely determined by the condition of being an entire $A$-module morphism 
with first dernivative $\exp_\phi'=1$).
We fix a finite subset $\Sigma\subset\NN^*$ and a Drinfeld module $\phi$ defined over $\CC_\infty$. There is a unique structure of $A\otimes_{\FF_q}\FF_q(\underline{t}_\Sigma)$-module $\phi(\KK_\Sigma)$ over $\KK_\Sigma$ which is defined by extending the operators $\phi_a$ (of multiplication by $a\in A$ in the $A$-module $\phi(\CC_\infty)$) $\FF_q(\underline{t}_\Sigma)$-linearly to $\KK_\Sigma$ along the extension of the map $(x\mapsto x^q):\CC_\infty\rightarrow\CC_\infty$ to the map $\tau:\KK_\Sigma\rightarrow\KK_\Sigma$.
Since $\KK_\Sigma$ is complete we have a $\FF_q(\underline{t}_\Sigma)$-linear map $\exp_\phi:\KK_\Sigma\rightarrow\KK_\Sigma$ which induces a morphism of $A\otimes_{\FF_q}\FF_q(\underline{t}_\Sigma)$-modules
$$\KK_\Sigma\xrightarrow{\exp_\phi}\phi(\KK_\Sigma)$$ such that
$\Lambda_\phi\otimes_{\FF_q}\FF_q(\underline{t}_\Sigma)\subset\operatorname{Ker}(\exp_\phi)$. 
It is obvious that $\exp_\phi$ induces an isometric $\FF_q(\underline{t}_\Sigma)$-linear automorphism of
$D^\circ_{\KK_\Sigma}(0,\rho_\phi)=\{f\in\KK_\Sigma:\|f\|<\rho_\phi\}.$
Moreover, we have:

\begin{Proposition}\label{propositionontatealgebras-ksigma}
Let $\phi$ be a Drinfeld $A$-module with exponential $\exp_\phi$. The map $\exp_\phi$ induces an exact sequence of $A\otimes_{\FF_q}\FF_q(\underline{t}_\Sigma)$-modules:
\begin{equation}\label{exactsequencetatemodule}
0\rightarrow\Lambda_\phi\otimes_{\FF_q}\FF_q(\underline{t}_\Sigma)\rightarrow\KK_\Sigma\xrightarrow{\exp_\phi} \phi(\KK_\Sigma)\rightarrow0.\end{equation}
\end{Proposition}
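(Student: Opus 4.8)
The plan is to prove the three assertions of Proposition~\ref{propositionontatealgebras-ksigma} (surjectivity, computation of the kernel, exactness in the middle) by reducing everything to the classical over $\CC_\infty$ exact sequence
$$0\rightarrow\Lambda_\phi\rightarrow\CC_\infty\xrightarrow{\exp_\phi}\phi(\CC_\infty)\rightarrow0$$
together with the orthonormal-basis description of $\KK_\Sigma$ furnished by Lemma~\ref{lemma-di-serre} (applied with $\Sigma'=\emptyset$, so that $\KK_\Sigma\cong c_I(\CC_\infty)$ as a Banach $\CC_\infty$-vector space with an orthonormal basis $(b_i)_{i\in I}$ coming from a $\CC_\infty$-basis of $k_{\CC_\infty}(\underline{t}_\Sigma)$). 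The compatibility that makes this work is that $\tau$ acts $\FF_q(\underline{t}_\Sigma)$-linearly, and that one can choose the $b_i$ to lie in the fixed subalgebra $\FF_q[\underline{t}_\Sigma]=\TT_\Sigma^{\tau=1}$; then $\exp_\phi$, being the $\FF_q(\underline{t}_\Sigma)$-linear extension of the $\CC_\infty$-exponential along $\tau$, acts on $f=\sum_i f_i b_i$ (with $f_i\in\CC_\infty$, $f_i\to 0$) coefficientwise: $\exp_\phi(f)=\sum_i\exp_\phi(f_i)\,b_i$, the right-hand series converging because $|\exp_\phi(f_i)|\to 0$ (the Weierstrass product \eqref{weierstrassproduct} shows $\exp_\phi$ is an isometry near $0$ and in general has polynomially-in-$|z|$ controlled growth, so $f_i\to0$ forces $\exp_\phi(f_i)\to0$).

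\textbf{Surjectivity.} Given $g\in\phi(\KK_\Sigma)$, expand $g=\sum_i g_i b_i$ with $g_i\in\CC_\infty$, $g_i\to0$. Each $g_i\in\CC_\infty=\phi(\CC_\infty)$ as a set, so by the classical surjectivity there is $f_i\in\CC_\infty$ with $\exp_\phi(f_i)=g_i$; moreover one may take $f_i$ with $|f_i|$ bounded in terms of $|g_i|$ (using that $\exp_\phi$ induces an isomorphism of rigid analytic spaces $\A^{1,an}_{\CC_\infty}/\Lambda_\phi\cong\A^{1,an}_{\CC_\infty}$, hence a preimage can be chosen of controlled size; concretely, pick the preimage of smallest absolute value). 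Then $f_i\to0$, so $f=\sum_i f_i b_i\in\KK_\Sigma$ and $\exp_\phi(f)=g$. This is the step I expect to be the main obstacle: one must argue that preimages under $\exp_\phi$ can be chosen to tend to $0$, i.e.\ that the section of $\exp_\phi$ near infinity is suitably continuous/bounded — this is exactly where the rigid-analytic isomorphism statement recalled in \S\ref{carlitzexttate} is needed, and it deserves to be spelled out rather than asserted.

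\textbf{Kernel.} If $\exp_\phi(f)=0$ with $f=\sum_i f_i b_i$, then $\sum_i\exp_\phi(f_i)b_i=0$, and orthonormality forces $\exp_\phi(f_i)=0$ for every $i$, i.e.\ $f_i\in\Lambda_\phi$ for all $i$. Since $\Lambda_\phi$ is discrete in $\CC_\infty$ and $f_i\to0$, all but finitely many $f_i$ vanish, so $f$ is a \emph{finite} $\CC_\infty$-linear — in fact $A\otimes\FF_q(\underline t_\Sigma)$-linear — combination $\sum f_i b_i$ with $f_i\in\Lambda_\phi$, which is precisely an element of $\Lambda_\phi\otimes_{\FF_q}\FF_q(\underline t_\Sigma)$ (the $b_i$ being an $\FF_q(\underline t_\Sigma)$-basis of the coefficient field): this gives $\operatorname{Ker}(\exp_\phi)=\Lambda_\phi\otimes_{\FF_q}\FF_q(\underline t_\Sigma)$, upgrading the inclusion $\subset$ already noted before the Proposition. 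Finally, exactness in the middle is automatic once the kernel is identified and surjectivity is established, and the maps are $A\otimes_{\FF_q}\FF_q(\underline t_\Sigma)$-linear by construction of $\phi(\KK_\Sigma)$; this completes the proof. (One should also remark that the argument is insensitive to the choice of orthonormal basis, and that the same reasoning applied to $\LL_\Sigma$ or to $\TT_\Sigma$ would require the corresponding completeness/cartesian input, which is why the statement is phrased over $\KK_\Sigma$.)
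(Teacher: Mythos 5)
Your proof is correct, but it follows a genuinely different route at the key technical step. You choose the orthonormal basis of $\KK_\Sigma$ inside the $\tau$-fixed subfield, so that $\exp_\phi$ (an $\FF_q(\underline{t}_\Sigma)$-linear series in $\tau$) acts diagonally, $\exp_\phi\bigl(\sum_i f_ib_i\bigr)=\sum_i\exp_\phi(f_i)b_i$, and the whole statement reduces coefficientwise to the classical exact sequence over $\CC_\infty$, together with the local isometry on $D^\circ_{\CC_\infty}(0,\rho_\phi)$ (to keep preimages tending to $0$ for surjectivity, and, combined with discreteness of $\Lambda_\phi$ and $f_i\to0$, to force all but finitely many lattice coefficients to vanish for the kernel). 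The paper instead takes an arbitrary orthonormal basis coming from an $\FF_q^{ac}$-basis of the residue field $\FF_q^{ac}(\underline{t}_\Sigma)$, on which $\tau$ acts nontrivially; its key step (Lemma \ref{locally-we-have-this}) isolates finite $\tau$-stable blocks $\underline{b}_J$ with $\tau(\underline{b}_J)=M_J\underline{b}_J$ and invokes Lang's theorem to produce $U$ with $\tau(U)=M_JU$, i.e.\ to untwist each block into a $\tau$-fixed frame $U^{-1}\underline{b}_J$ — exactly the frame you postulate globally from the outset. Your route is shorter and avoids Lang's theorem; what it costs is a justification you only half-supply. First, the fixed ring of $\tau$ on $\KK_\Sigma$ is $\FF_q(\underline{t}_\Sigma)$ by (\ref{fixed-subfields}), not $\FF_q[\underline{t}_\Sigma]$, and no basis of $\FF_q^{ac}(\underline{t}_\Sigma)$ over $\FF_q^{ac}$ lies in the polynomial ring: you must take an $\FF_q$-basis of $\FF_q(\underline{t}_\Sigma)$ (all of whose elements have Gauss norm $1$). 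Second, you must check that such a family satisfies the hypothesis of Lemma \ref{lemma-di-serre} with $\Sigma'=\emptyset$, i.e.\ that its reduction is an $\FF_q^{ac}$-basis of $\FF_q^{ac}(\underline{t}_\Sigma)$; this holds because $\FF_q^{ac}(\underline{t}_\Sigma)=\FF_q^{ac}\otimes_{\FF_q}\FF_q(\underline{t}_\Sigma)$ ($\FF_q(\underline{t}_\Sigma)$ is regular over $\FF_q$, and every denominator in $\FF_q^{ac}[\underline{t}_\Sigma]$ divides a polynomial with coefficients in $\FF_q$), but it should be said explicitly. With these two points supplied, your argument is complete, and the remaining ingredients match the paper's: your observation that all but finitely many $g_i$ fall in the disk where $\exp_\phi$ is an isometric bijection plays the same role as the paper's decomposition $g=g_0+g_1$ into a small tail and a finite block.
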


To prove this we need a small intermediate result. Using Lemma \ref{lemma-di-serre} we can choose an 
$\FF_q^{ac}$-basis $\mathcal{B}=(b_i)_{i\in I}$ of $\FF_q^{ac}(\underline{t}_\Sigma)$ 
determining an orthonormal basis of the Banach $\CC_\infty$-algebra $\KK_\Sigma$. 
For any $i$, there exists $d>0$ such that $\tau^d(b_i)=b_i$.
Let $\widetilde{J}\subset I$ be a finite subset. Considering the orbit under the action of the group $\operatorname{Gal}(\FF_q(b_j:j\in \widetilde{J})/\FF_q(\underline{t}_\Sigma))$ we see that there exists $J$ finite, with $\widetilde{J}\subset J\subset I$ with the following property.
There is a matrix 
$M_J\in\GL_{|J|}(\FF_q^{ac})$ such that, writing $\underline{b}_J$ for the column matrix $(b_i)_{i\in J}$,
$$\tau(\underline{b}_J)=M_J\underline{b}_J,$$ and moreover, there is a decomposition
\begin{equation}\label{decomposition-tau}
\FF_q^{ac}(\underline{t}_\Sigma)=\operatorname{Vect}_{\FF_q^{ac}}(\underline{b}_J)\oplus\operatorname{Vect}_{\FF_q^{ac}}((b_i)_{i\in I\setminus J})\end{equation} which splits the action of  
$\tau$. 

\begin{Lemma}\label{locally-we-have-this}
For any $\widetilde{J}\subset J\subset I$ as above, the exponential map $\exp_\phi$ induces a surjective $\FF_q$-linear endomorphism of $\oplus_{j\in J}\CC_\infty b_j$ with kernel 
$\Lambda_\phi\otimes_{\FF_q}\FF_q(\underline{t}_\Sigma)^{|J|\times 1}$.
\end{Lemma}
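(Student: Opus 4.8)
The plan is to reduce the statement to the classical case (Proposition~\ref{propositionontatealgebras-ksigma} is not yet available --- indeed this lemma is the technical step toward it --- so we work directly from the finite-dimensional Weierstrass product picture over $\CC_\infty$). Fix $\widetilde J\subset J\subset I$ as in the paragraph preceding the lemma, so that $\tau(\underline b_J)=M_J\underline b_J$ with $M_J\in\GL_{|J|}(\FF_q^{ac})$, and set $V_J=\oplus_{j\in J}\CC_\infty b_j$. The key structural observation is that $V_J$ is a free $\CC_\infty$-module of finite rank $|J|$ on which $\exp_\phi$ acts $\FF_q$-linearly and \emph{entry-wise} after a suitable change of coordinates: because $\exp_\phi\colon\KK_\Sigma\to\KK_\Sigma$ is obtained by extending the rank-$r$ Drinfeld exponential $\FF_q(\underline t_\Sigma)$-linearly along $\tau$, its restriction to $V_J$ is given, in the $\CC_\infty$-basis $\underline b_J$, by the formal series $\sum_{k\ge0} c_k\,\tau^k$ acting coefficient-by-coefficient, where the $c_k\in\CC_\infty$ are the coefficients of $\exp_\phi\in\CC_\infty[[\tau]]$ and $\tau$ acts on the coordinate vector by $\underline x\mapsto M_J\,\underline x^{(q)}$ (Frobenius-twist of the entries).

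First I would make this completely explicit: writing an element of $V_J$ as $\underline b_J^{\,t}\underline x$ with $\underline x\in\CC_\infty^{|J|\times1}$, one has $\exp_\phi(\underline b_J^{\,t}\underline x)=\underline b_J^{\,t}\,E(\underline x)$ where $E(\underline x)=\sum_{k\ge0}c_k\,(M_J M_J^{(q)}\cdots M_J^{(q^{k-1})})\,\underline x^{(q^k)}$. Since $M_J$ has entries in $\FF_q^{ac}$ and $|J|$ is finite, all the twisted products $M_J^{(q^j)}$ lie in a fixed finite field $\FF_{q^d}$ and cycle with some period dividing $d$; in particular the operator $E$ is an everywhere-convergent $\FF_q$-linear entire map $\CC_\infty^{|J|}\to\CC_\infty^{|J|}$ (convergence is inherited from the convergence of $\exp_\phi$ as an entire function on $\CC_\infty$, since $|c_k|$ decays and the $M_J$-factors are norm-one). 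Then I would invoke the classical exact sequence $0\to\Lambda_\phi\to\CC_\infty\xrightarrow{\exp_\phi}\phi(\CC_\infty)\to0$ componentwise, together with the fact that $\exp_\phi$ is surjective on $\CC_\infty$ and an isometry near $0$ (stated just above in the excerpt). Surjectivity of $E$: given a target vector $\underline y$, one solves $E(\underline x)=\underline y$ by a Newton-type / successive-approximation argument --- pick $\underline x_0$ with $E(\underline x_0)\equiv\underline y$ modulo a small disk using the local isometric isomorphism, then correct iteratively; the twisting by the norm-one matrices $M_J$ does not affect the estimates, so the iteration converges in $\CC_\infty^{|J|}$. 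This gives surjectivity of $\exp_\phi$ on $V_J$.

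For the kernel: $\exp_\phi(\underline b_J^{\,t}\underline x)=0$ means $E(\underline x)=0$. Here I would argue that, because the decomposition \eqref{decomposition-tau} splits the $\tau$-action and $\underline b_J$ is part of an orthonormal basis, the kernel of $\exp_\phi$ on all of $\KK_\Sigma$ meets $V_J$ exactly in $\Lambda_\phi$-combinations; concretely, the $\FF_q(\underline t_\Sigma)$-span of $\Lambda_\phi$ inside $\KK_\Sigma$ intersected with $V_J$ is $\Lambda_\phi\otimes_{\FF_q}\operatorname{Vect}_{\FF_q}(\underline b_J)$, which as a subset of $V_J$ is the free $\Lambda_\phi$-module of rank $|J|$ written in the lemma as $\Lambda_\phi\otimes_{\FF_q}\FF_q(\underline t_\Sigma)^{|J|\times1}$ (the notation there really means the $\Lambda_\phi$-submodule generated by $\underline b_J$). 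One inclusion is immediate since $\exp_\phi$ kills $\Lambda_\phi$ and is $\FF_q(\underline t_\Sigma)$-linear. For the reverse inclusion, suppose $E(\underline x)=0$ with $\underline x\neq0$; projecting onto the coordinate $x_j$ of largest norm and using that $E$ is, up to the norm-one twist $M_J$, a ``diagonal'' copy of $\exp_\phi$, one shows $x_j$ must itself be annihilated by $\exp_\phi$, i.e.\ $x_j\in\Lambda_\phi$; an induction on $|J|$ (peeling off one coordinate at a time, using the block structure of $M_J$ after conjugating to rational canonical form over $\FF_q^{ac}$) then forces $\underline x$ into the asserted lattice.

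The main obstacle I anticipate is precisely this last point: disentangling the $\tau$-twist by $M_J$ so as to identify $\ker E$ on the nose, since $E$ is not literally a diagonal copy of $\exp_\phi$ but a Frobenius-twisted one. The clean way around it is to pass to the extension $\FF_{q^d}$ over which $M_J$ can be diagonalized (or brought to a product form $P^{-1}D\,P^{(q)}$ after which $\tau$-linearity makes the twist disappear upon absorbing $P$ into the basis): over such an extension the operator $E$ genuinely becomes $|J|$ independent copies of $\exp_\phi$ acting on scalar coordinates, so both surjectivity and the kernel computation reduce verbatim to the scalar exact sequence over $\CC_\infty$; one then descends back, using that $\Lambda_\phi\subset\CC_\infty$ is already defined over the small field and that faithfully flat descent along $\FF_{q^d}/\FF_q$ preserves both exactness and the free-module structure of the kernel. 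Everything else --- convergence, isometry near $0$, the successive-approximation surjectivity argument --- is routine given the entirety and local-isometry properties recalled above.
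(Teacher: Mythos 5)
Your final route is essentially the paper's: the decisive step is to untwist the Frobenius action by writing $M_J=\tau(U)U^{-1}$ for some $U\in\GL_{|J|}(\FF_q^{ac})$ --- this is exactly Lang's theorem, which the paper invokes --- after which $\exp_\phi$ on $\oplus_{j\in J}\CC_\infty b_j$ becomes $|J|$ independent copies of the scalar exponential, and both surjectivity and the kernel drop out of the componentwise exact sequence $0\rightarrow\Lambda_\phi\rightarrow\CC_\infty\rightarrow\phi(\CC_\infty)\rightarrow0$; moreover $U^{-1}\underline{b}_J$ is $\tau$-fixed, hence has entries in $\FF_q(\underline{t}_\Sigma)$ by (\ref{fixed-subfields}), which is what places the kernel inside $\Lambda_\phi\otimes_{\FF_q}\FF_q(\underline{t}_\Sigma)$. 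Two of your intermediate devices should be discarded: ordinary diagonalization of $M_J$ does not remove the twist (you would still face $\underline{x}\mapsto D\,\underline{x}^{(q)}$ with $D$ diagonal), only the $\sigma$-conjugation $M_J=P^{-1}D\,P^{(q)}$ with $D=I_{|J|}$ does; and the ``project onto the coordinate of largest norm'' induction for the kernel is not sound as stated, since $\exp_\phi$ is an isometry only near $0$ and $E$ genuinely mixes coordinates through $M_J$. The Newton-type iteration for surjectivity and the faithfully flat descent at the end are likewise superfluous once the untwisting is in place.
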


\begin{proof}
Since $J$ is fixed in the proof, let us write more simply $\underline{b}=\underline{b}_J$ and $M=M_J$.
Also, if $X$ is any matrix with entries in $\KK_\Sigma$, we set $X^{(i)}=\tau^i(X)$ (coefficient-wise application of $\tau^i$). Note that since $\underline{b}^{(1)}=M\underline{b}$, we have 
$\underline{b}^{(i)}=M^{(i-1)}\cdots M^{(1)}M\cdot\underline{b}$ for all $i\geq 0$.
By Lang's theorem \cite[Corollary p. 557]{LAN} there exists $U\in\GL_{|J|}(\FF_q^{ac})$ such that
$\tau(U)=MU$. Hence, $U^{(i)}=M^{(i-1)}\cdots M^{(1)}MU$ for all $i\geq0$. We deduce 
$$\underline{b}^{(i)}=U^{(i)}U^{-1}\underline{b},\quad i\geq 0\quad\text{ and }\quad(U^{-1}\underline{b})^{(1)}=U^{-1}\underline{b}.$$
Hence $U^{-1}\underline{b}\in \FF_q(\underline{t}_\Sigma)^{|J|}$ by (\ref{fixed-subfields}).

Let us compute, for $\underline{a}\in\CC_\infty^{|J|\times 1}$ (column vector), $({}^t\underline{a}\cdot
\underline{b})^{(i)}$, $i\geq 0$. We immediately see:
$({}^t\underline{a}\cdot
\underline{b})^{(i)}=({}^t\underline{a}\cdot U
)^{(i)}U^{-1}\underline{b}.$ Transposing we get:
$${}^t({}^t\underline{a}\cdot
\underline{b})^{(i)}={}^t\underline{b}\cdot{}^tU\cdot({}^tU\cdot\underline{a})^{(i)},$$
hence, if $f={}^t\underline{a}\cdot
\underline{b}\in \oplus_{j\in J}\CC_\infty b_j$,
$$\exp_\phi(f)=\exp_\phi({}^t\underline{a}\cdot
\underline{b})={}^t\underline{b}\cdot{}^t(U^{-1})\exp_\phi({}^tU\cdot \underline{a})\in\oplus_{j\in J}\CC_\infty b_j.$$ Since the map $\exp_\phi:\CC_\infty^{|J|\times 1}\rightarrow\CC_\infty^{|J|\times 1}$ is surjective, $\exp_\phi:\oplus_{j\in J}\CC_\infty b_j\rightarrow \oplus_{j\in J}\CC_\infty b_j$
is surjective. Now consider an element $f={}^t\underline{a}\cdot\underline{b}\in \oplus_{j\in J}\CC_\infty b_j$ such that $\exp_\phi(f)=0$. By the above computation, 
this is equivalent to $\exp_\phi({}^tU\cdot\underline{a})=0$, so that
$$\underline{a}\in{}^t(U^{-1})\cdot\Lambda_\phi^{|J|\times1}.$$
But ${}^t\underline{a}\cdot\underline{b}\in\Lambda_\phi^{1\times|J|}U^{-1}\cdot\underline{b}$
and we have seen that $U^{-1}\cdot\underline{b}\in \FF_q(\underline{t}_\Sigma)^{|J|\times 1}$. The lemma follows.
\end{proof}

\begin{proof}[Proof of Proposition \ref{propositionontatealgebras-ksigma}]
We first show that $\exp_\phi$ is surjective. Let us consider $g\in\KK_\Sigma$. There exists $J\subset I$ finite with $\underline{b}=(b_j)_{j\in J}=\underline{b}_J$ with $(\underline{b})^{(1)}=M\cdot\underline{b}$ as in the proof of Lemma \ref{locally-we-have-this}, and 
additionally, we can decompose
$$g=g_0+g_1$$
with $\|g_0\|<\rho_\phi$ and $g_1\in\oplus_{j\in J}\CC_\infty b_j$. By Lemma \ref{locally-we-have-this} there exists $f_1\in \oplus_{j\in J}\CC_\infty b_j$ such that 
$\exp_\phi(f_1)=g_1$ and since $\exp_\phi$ induces an isometry over 
$D^\circ_{\KK_\Sigma}(0,\rho_\phi)$, there also exists $f_0\in D^\circ_{\KK_\Sigma}(0,\rho_\phi)$
such that $\exp_\phi(f_0)=g_0$. Setting $f=f_0+f_1$ we deduce $\exp_\phi(f)=g$.

It remains to compute the kernel of $\exp_\phi$ over $\KK_\Sigma$. Let $f\in\KK_\Sigma$ be 
such that $\exp_\phi(f)=0$. Again, we can write $f=f_0+f_1$ with $\|f_0\|<\rho_\phi$ and 
$f_1\in \oplus_{j\in J}\CC_\infty b_j$. We write $f_0=f_0^0\oplus f_0^1$ where $f_0^0$
belongs to the Banach $\CC_\infty$-sub-vector space of $\KK_\Sigma$ generated by 
$(b_{i})_{i\in I\setminus J}$ and $f_0^1\in \oplus_{j\in J}\CC_\infty b_j$. By the hypothesis on $J$ we see that 
$\exp_\phi(f_0^0)=\sum_{i\in I\setminus J}c_ib_i$ while $\exp_\phi(f_0^1+f_1)\in\oplus_{j\in J}\CC_\infty b_j$. Hence, again by the fact that $\exp_\phi$ induces an isometry over 
$D^\circ_{\KK_\Sigma}(0,\rho_\phi)$, we can suppose that $f_0=0$. We can conclude by using Lemma \ref{locally-we-have-this}.\end{proof}

Let $\delta$ be an element of $\FF_q(\underline{t}_\Sigma)^\times$.
From the proof of Proposition \ref{propositionontatealgebras-ksigma} one deduces that the exponential function $\exp_\phi$  of a Drinfeld $A$-module $\phi$ also induces an $\FF_q[\underline{t}_\Sigma][\delta]$-linear surjective endomorphism of $\TT_\Sigma[\delta]^\wedge\subset\KK_\Sigma$, and we deduce the next result (compare with  \cite{ANG&PEL2}):

\begin{Corollary}\label{propositionontatealgebras}
For any $\delta\in \FF_q(\underline{t}_\Sigma)$ the map $\exp_\phi$ induces an exact sequence of $A[\underline{t}_\Sigma][\delta]$-modules:
\begin{equation}\label{exactsequencetatemodule}
0\rightarrow\Lambda_\phi\otimes_{\FF_q}\FF_q[\underline{t}_\Sigma][\delta]\rightarrow\widehat{\TT_\Sigma[\delta]}\xrightarrow{\exp_\phi} \phi(\widehat{\TT_\Sigma[\delta]})\rightarrow0.\end{equation}
Hence, we also have 
an exact sequence of $\FF_q(\underline{t}_\Sigma)[\theta]$-modules:
\begin{equation}\label{exactsequencetatemodule2}
0\rightarrow\Lambda_\phi\otimes_{\FF_q}\FF_q(\underline{t}_\Sigma)\rightarrow\LL_\Sigma\xrightarrow{\exp_\phi} \phi(\LL_\Sigma)\rightarrow0.\end{equation}
\end{Corollary}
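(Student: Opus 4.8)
The plan is to prove the first exact sequence by specialising the proof of Proposition~\ref{propositionontatealgebras-ksigma} to the affinoid subalgebra $\mathcal{A}:=\widehat{\TT_\Sigma[\delta]}$, and then to deduce the second from a filtered colimit over $\delta=d^{-1}$. The point that makes the specialisation smooth is choosing a single orthonormal basis adapted to $\mathcal{A}$ and, simultaneously, fixed by $\tau$. Since $\delta\in\FF_q(\underline{t}_\Sigma)^\times$ has Gauss norm $1$ and is $\tau$-invariant, $\mathcal{A}$ is a $\tau$-stable complete Banach $\CC_\infty$-subalgebra of $\KK_\Sigma$ with residual ring $\FF_q^{ac}[\underline{t}_\Sigma][\delta]$. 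First I would fix an $\FF_q$-basis $(b_i)_{i\in I}$ of $\FF_q(\underline{t}_\Sigma)$ such that a subfamily $(b_i)_{i\in I_0}$ is an $\FF_q$-basis of $R_0:=\FF_q[\underline{t}_\Sigma][\delta]$. Because $\FF_q(\underline{t}_\Sigma)/\FF_q$ is a regular extension one has $\FF_q^{ac}(\underline{t}_\Sigma)=\FF_q^{ac}\otimes_{\FF_q}\FF_q(\underline{t}_\Sigma)$, so $(b_i)_{i\in I}$ is also an $\FF_q^{ac}$-basis of the residual field of $\KK_\Sigma$; Lemma~\ref{lemma-di-serre} (with $\Sigma'=\emptyset$) then makes it an orthonormal basis of the Banach $\CC_\infty$-space $\KK_\Sigma$, and a direct check that $\TT_\Sigma[\delta]$ is dense in the closed subspace spanned by $(b_i)_{i\in I_0}$ makes $(b_i)_{i\in I_0}$ an orthonormal basis of $\mathcal{A}$. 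By construction every $b_i$ lies in $\FF_q(\underline{t}_\Sigma)$, hence is $\tau$-invariant.

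With this basis the exponential acts almost scalarly. Writing $\exp_\phi=\sum_{k\ge 0}e_k\tau^k$ as an operator ($|e_k|^{1/q^k}\to 0$), and using $\tau^k(b_i)=b_i$, one obtains for $f=\sum_{i\in I_0}f_ib_i\in\mathcal{A}$ the identity $\exp_\phi(f)=\sum_{i\in I_0}\exp_\phi(f_i)\,b_i$, the rearrangement being justified by $\sup_i|e_kf_i^{q^k}|\le |e_k|\,\|f\|^{q^k}\to 0$; here $\exp_\phi(f_i)\in\CC_\infty$ is the value of the scalar exponential of $\phi$, and $\exp_\phi(f_i)\to 0$ because $\exp_\phi$ is an isometry on $D^\circ_{\CC_\infty}(0,\rho_\phi)$ and $|f_i|<\rho_\phi$ eventually. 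Thus $\exp_\phi$ preserves $\mathcal{A}$ and is $\FF_q[\underline{t}_\Sigma][\delta]$-linear; surjectivity of $\exp_\phi$ on $\CC_\infty$ then gives surjectivity on $\mathcal{A}$ (given $g=\sum g_ib_i$, pick for each $i$ a preimage $f_i$ of $g_i$ — the unique one in $D^\circ_{\CC_\infty}(0,\rho_\phi)$ when $|g_i|<\rho_\phi$, so $|f_i|=|g_i|\to0$, an arbitrary one for the finitely many remaining $i$ — and set $f=\sum f_ib_i\in\mathcal{A}$), while $\exp_\phi(f)=0$ forces $f_i\in\Lambda_\phi$ for all $i$, so $\ker(\exp_\phi|_{\mathcal{A}})=\bigoplus_{i\in I_0}\Lambda_\phi b_i$. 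As $(b_i)_{i\in I_0}$ is an $\FF_q$-basis of $R_0$, this kernel is the image of $\Lambda_\phi\otimes_{\FF_q}R_0$ under the injections $\Lambda_\phi\otimes_{\FF_q}R_0\hookrightarrow\CC_\infty\otimes_{\FF_q}\FF_q(\underline{t}_\Sigma)\hookrightarrow\KK_\Sigma$. Recalling that $\Lambda_\phi$ is an $A$-submodule of $\CC_\infty$ under ordinary multiplication and that $\exp_\phi(az)=\phi_a(\exp_\phi z)$, this is exactly the claimed exact sequence of $A[\underline{t}_\Sigma][\delta]$-modules, with the $\phi$-twisted $A$-action on $\phi(\widehat{\TT_\Sigma[\delta]})$.

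For the second sequence I would take the directed union over $d\in\FF_q[\underline{t}_\Sigma]\setminus\{0\}$, ordered by divisibility (so $\delta=d^{-1}$): then $\LL_\Sigma=\bigcup_d\widehat{\TT_\Sigma[d^{-1}]}$, $\phi(\LL_\Sigma)=\bigcup_d\phi(\widehat{\TT_\Sigma[d^{-1}]})$ and $\Lambda_\phi\otimes_{\FF_q}\FF_q(\underline{t}_\Sigma)=\bigcup_d\Lambda_\phi\otimes_{\FF_q}\FF_q[\underline{t}_\Sigma][d^{-1}]$, so exactness passes to the filtered colimit and produces the second sequence, now of $\FF_q(\underline{t}_\Sigma)[\theta]$-modules. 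The only step that requires genuine care is the first one — producing a single orthonormal basis that is at once adapted to $\mathcal{A}$ and $\tau$-invariant; this is precisely what allows the ``diagonalisation'' in the second step, and it is in effect a streamlined form of the proof of Proposition~\ref{propositionontatealgebras-ksigma}, the $\tau$-invariance of the basis making the Lang's-theorem argument of Lemma~\ref{locally-we-have-this} unnecessary here.
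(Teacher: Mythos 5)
Your proof is correct, and it takes a more self-contained route than the paper, which disposes of this corollary in one line by invoking the proof of Proposition \ref{propositionontatealgebras-ksigma}. That proof rests on Lemma \ref{locally-we-have-this}: one picks an $\FF_q^{ac}$-basis of $\FF_q^{ac}(\underline{t}_\Sigma)$, observes that $\tau$ only stabilises suitable finite blocks of it up to an invertible matrix $M_J$ over $\FF_q^{ac}$, and then uses Lang's theorem to find $U$ with $\tau(U)=M_JU$ so as to conjugate the action of $\exp_\phi$ on each block to the scalar case. Your observation is that in the affinoid situation this detour is unnecessary: the residual ring of $\widehat{\TT_\Sigma[\delta]}$ is $\FF_q^{ac}[\underline{t}_\Sigma][\delta]=\FF_q^{ac}\otimes_{\FF_q}\FF_q[\underline{t}_\Sigma][\delta]$ (your appeal to linear disjointness of $\FF_q^{ac}$ and $\FF_q(\underline{t}_\Sigma)$ over $\FF_q$ is exactly what is needed, and the identity $\FF_q^{ac}\otimes_{\FF_q}\FF_q(\underline{t}_\Sigma)=\FF_q^{ac}(\underline{t}_\Sigma)$ holds because every irreducible of $\FF_q^{ac}[\underline{t}_\Sigma]$ divides its norm down to $\FF_q[\underline{t}_\Sigma]$), so one can take the orthonormal basis inside $\FF_q[\underline{t}_\Sigma][\delta]$ itself, where it is genuinely $\tau$-fixed and $\exp_\phi$ acts coefficientwise; surjectivity, the computation of the kernel, and the finiteness of the support of a kernel element (forced by the discreteness of $\Lambda_\phi$ against $f_i\to 0$) then all reduce to the scalar theory on $\CC_\infty$. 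This is precisely why your method proves the Corollary but could not prove the Proposition itself — no $\tau$-fixed orthonormal basis of $\KK_\Sigma$ exists, since $\FF_q(\underline{t}_\Sigma)$ only spans a proper closed subspace there — which is a useful way of seeing where Lang's theorem is genuinely needed in the paper and where it is not. The passage to $\LL_\Sigma$ by filtered colimit over the divisibility order on $d$ is the same as what the paper intends.
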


The proof of the next Lemma is easy and left to the reader. 

\begin{Lemma}\label{generation-of-entire-functions}
Let $\phi$ be a Drinfeld module over $\CC_\infty$ and $\exp_\phi$ be its exponential function. 
Let $f:\KK_\Sigma\rightarrow\KK_\Sigma$ be a $\KK_\Sigma$-entire function. Then the composition 
$\exp_\phi\circ f$ is a $\KK_\Sigma$-entire function. Additionally, if $f(\LL_\Sigma)$ is contained in $\LL_\Sigma$, then the image of $\exp_\phi\circ f$ is contained in $\LL_\Sigma$. Finally, if 
$f(z)=\lambda z$ with $\lambda\in\KK_\Sigma$, then $\exp_\phi\circ f$ is a $\FF_q(\underline{t}_\Sigma)[\theta]$-module morphism $\KK_\Sigma\rightarrow\phi(\KK_\Sigma)$.
\end{Lemma}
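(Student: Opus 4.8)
The plan is to verify the three assertions separately, each directly from the power‑series descriptions of the functions involved, so that nothing beyond bookkeeping with the growth conditions and the results of \S\ref{carlitzexttate} is needed.

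For the first claim, write $\exp_\phi(Z)=\sum_{i\ge 0}e_iZ^{q^i}$ with $e_i\in\CC_\infty$, where $|e_i|r^{q^i}\to 0$ for every $r>0$ since $\exp_\phi$ is entire on $\CC_\infty$, and write $f(z)=\sum_{j\ge 0}f_jz^j$ with $f_j\in\KK_\Sigma$ and $\|f_j\|r^j\to 0$ for every $r$. First I would observe that for a fixed $r$ in the value group, setting $R_r:=\max_j\|f_j\|r^j<\infty$, the ultrametric inequality gives $\|f(z)\|\le R_r$ whenever $\|z\|\le r$. The partial sums $\sum_{i\le N}e_if(z)^{q^i}$ are $\KK_\Sigma$‑entire, being polynomial functions of the entire function $f$, and on the disk $\|z\|\le r$ they converge uniformly since $\|e_if(z)^{q^i}\|\le|e_i|R_r^{q^i}\to0$; as $r$ is arbitrary, the analogue for $\KK_\Sigma$‑valued entire functions of the stability under locally uniform limits recalled just before the statement shows that $\exp_\phi\circ f$ is $\KK_\Sigma$‑entire. (Equivalently one could expand $\exp_\phi(f(z))$ into a single power series in $z$ and check that its coefficients satisfy the growth condition; this is the only mildly technical point of the whole lemma.)

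For the second claim, the exact sequence (\ref{exactsequencetatemodule2}) of Corollary \ref{propositionontatealgebras} shows that $\exp_\phi$ maps $\LL_\Sigma$ into $\LL_\Sigma$, so if $f(\LL_\Sigma)\subseteq\LL_\Sigma$ then $(\exp_\phi\circ f)(\LL_\Sigma)=\exp_\phi(f(\LL_\Sigma))\subseteq\exp_\phi(\LL_\Sigma)\subseteq\LL_\Sigma$. For the last claim, with $f(z)=\lambda z$: the map $z\mapsto\lambda z$ is $\FF_q(\underline{t}_\Sigma)$‑linear since $\lambda\in\KK_\Sigma$, and $\exp_\phi\colon\KK_\Sigma\to\phi(\KK_\Sigma)$ is a morphism of $A\otimes_{\FF_q}\FF_q(\underline{t}_\Sigma)$‑modules by Proposition \ref{propositionontatealgebras-ksigma}; hence $z\mapsto\exp_\phi(\lambda z)$ is $\FF_q(\underline{t}_\Sigma)$‑linear, and applying the functional equation $\exp_\phi(\theta w)=\phi_\theta(\exp_\phi(w))$ with $w=\lambda z$ (using $\theta\lambda z=\lambda(\theta z)$) gives $\exp_\phi(\lambda(\theta z))=\phi_\theta(\exp_\phi(\lambda z))$. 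Since $\FF_q(\underline{t}_\Sigma)[\theta]$ is generated over $\FF_q(\underline{t}_\Sigma)$ by $\theta$, this identifies $\exp_\phi\circ f$ as an $\FF_q(\underline{t}_\Sigma)[\theta]$‑module morphism $\KK_\Sigma\to\phi(\KK_\Sigma)$. I do not expect any genuine obstacle; the only step requiring a little care is the growth estimate in the first paragraph.
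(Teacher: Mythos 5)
The paper gives no proof of this lemma (it is "left to the reader"), so your proposal is the only argument on the table; its overall strategy — locally uniform limits of entire partial sums for the first claim, the exact sequence of Corollary \ref{propositionontatealgebras} for the second, and the functional equation of $\exp_\phi$ for the third — is exactly the intended one, and the second and third parts are correct as written.

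There is, however, one concrete error in the first paragraph: you write $\exp_\phi(Z)=\sum_{i\ge 0}e_iZ^{q^i}$ and then substitute $Z=f(z)\in\KK_\Sigma$. That identity only holds for $Z\in\CC_\infty$. The extension of $\exp_\phi$ to $\KK_\Sigma$ used in this lemma (and needed for the third claim, whose source is $\KK_\Sigma$) is defined in \S\ref{carlitzexttate} as $\exp_\phi=\sum_i e_i\tau^i$, where $\tau$ is the $\FF_q(\underline{t}_\Sigma)$-linear extension of $c\mapsto c^q$; on $\KK_\Sigma\setminus\CC_\infty$ one has $\tau(Z)\neq Z^q$ (e.g.\ $\tau(t_i)=t_i$). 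So the partial sums you are bounding, $\sum_{i\le N}e_if(z)^{q^i}$, are not the partial sums of $\exp_\phi\circ f$. The repair is immediate and does not disturb your estimates: the correct partial sums are $\sum_{i\le N}e_i\tau^i(f(z))$; for $z\in\CC_\infty$ one has $\tau^i(f(z))=\sum_j\tau^i(f_j)z^{jq^i}$, which is again $\KK_\Sigma$-entire since $\|\tau^i(f_j)\|r^{jq^i}=(\|f_j\|r^j)^{q^i}\to0$, and the uniform bound on $\|z\|\le r$ becomes $\|e_i\tau^i(f(z))\|=|e_i|\,\|f(z)\|^{q^i}\le|e_i|R_r^{q^i}\to0$, which is the same quantity you had (because $\|\cdot\|$ is multiplicative, so $\|x^{q^i}\|=\|\tau^i(x)\|=\|x\|^{q^i}$). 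With that substitution the rest of your argument, including the appeal to stability of $B$-entire functions under locally uniform limits, goes through verbatim.
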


\subsection{Some relevant functions associated with the Carlitz module}\label{Carlitz-exponential}

The functions mentioned in the title of the present subsection, and that will be described here, are required as basic tools to describe the analogues of Fourier series for our modular forms. 
One of the simplest examples of Drinfeld $A$-modules is the {\em Carlitz module} $C$. For the background on the Carlitz module,  together with its exponential, read \cite[Chapter 3]{GOS} and \cite[\S 4]{PEL5}. The Carlitz module $C(\KK_\Sigma)$ over $\KK_\Sigma$ is the 
$\FF_q(\underline{t}_\Sigma)$-algebra morphism 
$$A\otimes_{\FF_q}\FF_q(\underline{t}_\Sigma)\xrightarrow{C}\operatorname{End}_{\FF_q(\underline{t}_\Sigma)-\text{lin.}}(\KK_\Sigma)$$
defined by $C(\theta)=C_\theta=\theta+\tau$, the multiplication by $\theta$ (we prefer to adopt from now on the notation $C_a$ for the multiplication by $a\in A$ in $\phi(\KK_\Sigma)$). More generally,
$C$ can be viewed as a functor from the category of $\FF_q(\underline{t}_\Sigma)[\theta][\tau]$-modules to the category of $\FF_q(\underline{t}_\Sigma)[\theta]$-modules (with appropriate morphisms) so that we can define
$C(\LL_\Sigma),C(A),\ldots$ as well.
To describe the associated Carlitz exponential, we introduce the analogue of the sequence of numbers $q^n!$ in the following way:
$$d_n=\prod_a a,$$
where the product runs over the monic polynomials $a$ of $A$ of degree $n$. 
It can be proved  (see \cite[Proposition 3.1.6]{GOS}) that 
\begin{equation}\label{dn}d_n=\Big(\theta^{q^n}-\theta\Big)\cdots\Big(\theta^{q^n}-\theta^{q^{n-1}}\Big),\quad n\geq 0.\end{equation}
Then, $d_n\in K^\times$ for all $n$ and the map $\exp_C:\KK_\Sigma\rightarrow\KK_\Sigma$
defined by $$\exp_C(z)=\sum_{i\geq 0}d_i^{-1}\tau^i(z)$$
is the exponential function associated to the Carlitz module, which is a continuous, open $\FF_q(\underline{t}_\Sigma)$-linear endomorphism $\KK_\Sigma\rightarrow\KK_\Sigma$
to which we can apply Proposition \ref{propositionontatealgebras-ksigma} and Corollary \ref{propositionontatealgebras}. 
In particular, the kernel of $\exp_C$ (over $\LL_\Sigma$ or $\KK_\Sigma$) is equal to 
$\widetilde{\pi}\FF_q(\underline{t}_\Sigma)[\theta]$ where
\begin{equation}\label{pitilde}
\widetilde{\pi}=\theta(-\theta)^{\frac{1}{q-1}}\prod_{i=1}^\infty\Big(1-\theta^{1-q^i}\Big)^{-1},
\end{equation}
which belongs to $K_\infty((-\theta)^{\frac{1}{q-1}})\setminus K_\infty$ (we make a choice of a $(q-1)$-th root of $-\theta$, and we note that $(-\theta)^{\frac{1}{q-1}}=\exp_C(\widetilde{\pi}\theta^{-1})$).
From this product expansion one immediately sees that $|\widetilde{\pi}|=|(-\theta)^{\frac{1}{q-1}}|=|\theta|^{\frac{q}{q-1}}$.
It can be proved that $\widetilde{\pi}$ is transcendental over $K$; there are several ways that lead to this result, using the above product expansion. See \cite{PEL&BOU,PEL5} for an overview.

Occasionally,
we use the notation $\exp_C$ for the {\em Carlitz exponential operator} which is formal series $\sum_{n\geq 0}d_n^{-1}\tau^n\in K[[\tau]]\subset\KK_\Sigma[[\tau]],$ the unique one
such that the first term for $n=0$ is $1=\tau^0$ (normalized), satisfying, for the product rule of $\KK_\Sigma[[\tau]]$, $C_\theta\exp_C=\exp_C\theta$. 

The inverse of the Carlitz exponential $\exp_C$ for the composition is the {\em Carlitz logarithm} 
defined by the locally convergent series
$$\log_C(z)=\sum_{i\geq 0}l_i^{-1}\tau^i(z),$$
where $l_n$ is equal to $(-1)^n$ times the monic least common multiple of all polynomials of $A$ of degree $n$. It can be proved (see again \cite[Proposition 3.1.6]{GOS}) that 
\begin{equation}\label{ln}
l_n=\Big(\theta-\theta^q\Big)\cdots\Big(\theta-\theta^{q^n}\Big).\end{equation}
More precisely, $\log_C$ induces an isometric $\FF_q(\underline{t}_\Sigma)$-linear automorphism
$$D_{F}^\circ(0,|\widetilde{\pi}|)\xrightarrow{\log_C} D_{F}^\circ(0,|\widetilde{\pi}|),$$
where $F=\LL_\Sigma$ or $F=\LL_\Sigma$, and similar properties occur with $F=\TT_\Sigma[\delta]$ with $\delta\in \FF_q(\underline{t}_\Sigma)$ etc. We also identify, sometimes, $\log_C$ with the {\em Carlitz logarithm operator} $\sum_{n\geq 0}l_n^{-1}\tau^n\in K[[\tau]].$

\subsubsection{Omega matrices}\label{omegamatrices} We need certain matrix-valued
maps. Let $$A\xrightarrow{\chi} \FF_q(\underline{t}_\Sigma)^{n\times n}$$ be an injective $\FF_q$-algebra morphism.
We set
$$\vartheta:=\chi(\theta)\in \FF_q(\underline{t}_\Sigma)^{n\times n}.$$ Let $d\in\FF_q[\underline{t}_\Sigma]\setminus\{0\}$ be such that $d\vartheta\in\FF_q[\underline{t}_\Sigma]^{n\times n}$. Then, the image of $\chi$ lies in $\FF_q[\underline{t}_\Sigma][\frac{1}{d}]^{n\times n}$. We set 
$$\omega_\chi:=\sum_{i\geq 0}\exp_C\left(\frac{\widetilde{\pi}}{\theta^{i+1}}\right)\vartheta^i=\exp_C\Big(\widetilde{\pi}(\theta I_n-\vartheta)^{-1}\Big)\in\widehat{\TT_\Sigma[d^{-1}]}^{n\times n}\subset\LL_\Sigma^{n\times n},$$
where the map $\exp_C$ is applied coefficientwise on the entries of the matrix $\widetilde{\pi}(\theta I_n-\vartheta)^{-1}\in\KK_\Sigma^{n\times n}$. We have, for all $a\in A$, with $C_a\in K[\tau]^{n\times n}$ the multiplication by $a$ over $C(\KK_\Sigma)^{n\times n}$: 
\begin{equation}\label{Caomega}
C_a(\omega_\chi)=\exp_C\Big(\widetilde{\pi}a(\theta I_n-\vartheta)^{-1}\Big)=
\exp_C\Big(\widetilde{\pi}(a I_n-\chi(a))(\theta I_n-\vartheta)^{-1}\Big)+\chi(a)\omega_\chi=\chi(a)\omega_\chi,\end{equation}
because $a I_n-\chi(a)=(\theta I_n-\vartheta)H$ with $H\in A[\vartheta]^{n\times n}$, so that
$\widetilde{\pi}(a I_n-\chi(a))\in\operatorname{Ker}(\exp_C)^{n\times n}$.
\begin{Lemma}\label{lemmaomegachi}
We have $\omega_\chi\in \operatorname{GL}_n(\TT_\Sigma[\frac{1}{d}]^\wedge)$ and $\omega_\chi$ is solution of the linear $\tau$-difference system $$\tau(X)=(\vartheta-\theta I_n)X.$$ Moreover, every solution $X$ in $\KK_\Sigma^{n\times 1}$ of this difference system is of the form $X=\omega_\chi m$, with $m\in \FF_q(\underline{t}_\Sigma)^{n\times 1}$.
\end{Lemma}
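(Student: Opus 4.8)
The plan is to verify the three assertions in turn, exploiting that everything reduces, coefficient-wise, to the scalar theory of $\exp_C$ and $\widetilde{\pi}$ already set up above. First I would check invertibility of $\omega_\chi$. Writing $\omega_\chi=\exp_C\bigl(\widetilde{\pi}(\theta I_n-\vartheta)^{-1}\bigr)=\sum_{i\geq 0}\exp_C(\widetilde{\pi}\theta^{-i-1})\vartheta^i$, the $i=0$ term is $\exp_C(\widetilde{\pi}/\theta)I_n=(-\theta)^{1/(q-1)}I_n$ (using the identity $(-\theta)^{1/(q-1)}=\exp_C(\widetilde{\pi}\theta^{-1})$ recalled after \eqref{pitilde}), and every subsequent term $\exp_C(\widetilde{\pi}\theta^{-i-1})\vartheta^i$ has Gauss norm strictly smaller: indeed $|\exp_C(\widetilde{\pi}\theta^{-i-1})|=|\widetilde{\pi}|\,|\theta|^{-i-1}<|\widetilde{\pi}|\,|\theta|^{-1}=|(-\theta)^{1/(q-1)}|$ since $\exp_C$ is isometric on $D^\circ(0,|\widetilde\pi|)$ and $\vartheta\in\FF_q[\underline t_\Sigma][1/d]^{n\times n}$ has $\|\vartheta^i\|\le 1$ (its entries lie in the valuation ring after clearing $d$, and $\vartheta$ has all eigenvalues equal to $\theta$'s image, but what matters is only $\|\vartheta\|\le\|\,$its entries$\,\|\le 1$ once we work over $\widehat{\TT_\Sigma[d^{-1}]}$). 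Hence $\omega_\chi=(-\theta)^{1/(q-1)}\bigl(I_n+M\bigr)$ with $\|M\|<1$, and $I_n+M$ is invertible in $\widehat{\TT_\Sigma[d^{-1}]}^{n\times n}$ by the geometric series $\sum_{k\ge 0}(-M)^k$, which converges there. Since $(-\theta)^{1/(q-1)}$ is a unit of $\widehat{\TT_\Sigma[d^{-1}]}$, we get $\omega_\chi\in\GL_n\bigl(\widehat{\TT_\Sigma[d^{-1}]}\bigr)$.

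Next I would verify the $\tau$-difference equation. Applying $\tau$ coefficient-wise and using $C_\theta\exp_C=\exp_C\theta$, i.e. $(\theta+\tau)\exp_C(x)=\exp_C(\theta x)$, hence $\tau(\exp_C(x))=\exp_C(\theta x)-\theta\exp_C(x)$ for scalar $x$, extended to matrix arguments coefficientwise, one computes
$$\tau(\omega_\chi)=\tau\Bigl(\exp_C\bigl(\widetilde\pi(\theta I_n-\vartheta)^{-1}\bigr)\Bigr)=\exp_C\bigl(\theta\widetilde\pi(\theta I_n-\vartheta)^{-1}\bigr)-\theta\,\exp_C\bigl(\widetilde\pi(\theta I_n-\vartheta)^{-1}\bigr).$$
Now $\theta(\theta I_n-\vartheta)^{-1}=I_n+\vartheta(\theta I_n-\vartheta)^{-1}$, and $\widetilde\pi\vartheta(\theta I_n-\vartheta)^{-1}=\vartheta\cdot\widetilde\pi(\theta I_n-\vartheta)^{-1}$ with $\vartheta\in\FF_q[\underline t_\Sigma][1/d]^{n\times n}$ commuting with $(\theta I_n-\vartheta)^{-1}$; since $\exp_C$ is $\FF_q(\underline t_\Sigma)$-linear, $\exp_C(\widetilde\pi I_n+\vartheta\cdot\widetilde\pi(\theta I_n-\vartheta)^{-1})=\exp_C(\widetilde\pi)I_n+\vartheta\,\omega_\chi=\vartheta\,\omega_\chi$ because $\exp_C(\widetilde\pi)=0$. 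Therefore $\tau(\omega_\chi)=\vartheta\,\omega_\chi-\theta\,\omega_\chi=(\vartheta-\theta I_n)\omega_\chi$, which is the claimed system. (Alternatively this is just \eqref{Caomega} in the case $a=\theta$, reading $C_\theta=\theta I_n+\tau$.)

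Finally, for the rigidity statement, let $X\in\KK_\Sigma^{n\times 1}$ satisfy $\tau(X)=(\vartheta-\theta I_n)X$. Put $Y=\omega_\chi^{-1}X\in\KK_\Sigma^{n\times 1}$, which makes sense since $\omega_\chi\in\GL_n(\KK_\Sigma)$. Using $\tau(\omega_\chi)=(\vartheta-\theta I_n)\omega_\chi$ one gets $\tau(\omega_\chi^{-1})=\omega_\chi^{-1}(\vartheta-\theta I_n)^{-1}$, hence
$$\tau(Y)=\tau(\omega_\chi^{-1})\tau(X)=\omega_\chi^{-1}(\vartheta-\theta I_n)^{-1}(\vartheta-\theta I_n)X=\omega_\chi^{-1}X=Y,$$
so $Y\in(\KK_\Sigma^{\tau=1})^{n\times 1}=\FF_q(\underline t_\Sigma)^{n\times 1}$ by \eqref{fixed-subfields}, giving $X=\omega_\chi Y$ with $Y=:m\in\FF_q(\underline t_\Sigma)^{n\times 1}$. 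I expect the only genuinely delicate point to be the norm bookkeeping in the invertibility step — making sure that $\|\vartheta\|\le 1$ over $\widehat{\TT_\Sigma[d^{-1}]}$ (after clearing the denominator $d$) and that the leading scalar factor $(-\theta)^{1/(q-1)}$ is correctly identified and is a unit there; the difference-equation computation and the rigidity argument are then formal consequences of $\FF_q(\underline t_\Sigma)$-linearity of $\exp_C$, the relation $C_\theta\exp_C=\exp_C\theta$, and \eqref{fixed-subfields}.
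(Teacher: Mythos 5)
Your proposal is correct and follows essentially the same route as the paper: invertibility via the decomposition $\omega_\chi=\exp_C(\widetilde{\pi}\theta^{-1})I_n+R$ with $\|R\|<|\exp_C(\widetilde{\pi}\theta^{-1})|=|\theta|^{1/(q-1)}$, the difference equation as the case $a=\theta$ of (\ref{Caomega}) (your direct computation is just that identity unwound), and rigidity by passing to $\omega_\chi^{-1}X$ and invoking the fixed subfield (\ref{fixed-subfields}). The norm bookkeeping you flag ($\|\vartheta\|\le 1$ in $\widehat{\TT_\Sigma[d^{-1}]}$ and $(-\theta)^{1/(q-1)}$ a unit) is indeed the only point requiring care, and you handle it correctly.
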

\begin{proof}
observe that 
$$\omega_\chi=\exp_C\Big(\widetilde{\pi}(\theta I_n-\vartheta)^{-1}\Big)=\exp_C\Big(\widetilde{\pi}\theta^{-1}(I_n-\vartheta\theta^{-1})^{-1}\Big)=\exp_C(\widetilde{\pi}\theta^{-1})I_n+R$$ where
$R\in\KK_\Sigma^{n\times n}$ is such that $\|R\|<|\theta|^{\frac{1}{q-1}}=|\widetilde{\pi}\theta^{-1}|=|\exp_C(\widetilde{\pi}\theta^{-1})|$. This proves that $\omega_\chi\in \operatorname{GL}_n(\TT_\Sigma[\frac{1}{d}]^\wedge)$. The fact that $\omega_\chi$ is a matrix solution of the system indicated above follows directly from (\ref{Caomega}) with $a=\theta$. Finally, if $X$ is a column solution of the system above, we have that $\omega_\chi^{-1}X$ has entries in the constant subfield of $\KK_\Sigma$ which is $\FF_q(\underline{t}_\Sigma)=\FF_q(\underline{t}_\Sigma)$, and this proves the last assertion. 
\end{proof}

We denote by $\EE_\Sigma[\frac{1}{d}]^\wedge$ the $\CC_\infty$-algebra generated 
by all the series 
$$\sum_{i\geq 0}f_id^{-i},\quad f_i\in\EE_\Sigma,$$
with the property that $\|f_i\|r^i\rightarrow0$ for all $r\in|\CC_\infty|$. We have the next:

\begin{Corollary}\label{coroomegachi}
We have the identity 
$$\omega_\chi=(-\theta)^{\frac{1}{q-1}}\prod_{i\geq 0}\left(I_n-\vartheta\theta^{-q^i}\right)^{-1},$$
up to the choice of an appropriate root $(-\theta)^{\frac{1}{q-1}}$. Moreover, $\omega_\chi^{-1}\in\GL_n(\TT_\Sigma[\frac{1}{d}]^\wedge)\cap(\EE_\Sigma[\frac{1}{d}]^\wedge)^{n\times n}$.
\end{Corollary}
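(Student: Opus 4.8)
The plan is to derive both assertions from Lemma \ref{lemmaomegachi} together with the Weierstrass-type product expansion already recorded for $\widetilde{\pi}$ in (\ref{pitilde}). First I would establish the product formula by exploiting the $\tau$-difference equation $\tau(X) = (\vartheta - \theta I_n)X$. Define the candidate $P := (-\theta)^{\frac{1}{q-1}}\prod_{i\geq 0}\left(I_n - \vartheta\theta^{-q^i}\right)^{-1}$; this infinite product converges in $\GL_n(\Tame{\CC_\infty})$ (that is, in $\GL_n(\widehat{\TT_\Sigma[d^{-1}]})$) because $\|\vartheta\theta^{-q^i}\| \to 0$, each factor being invertible with inverse $I_n + \vartheta\theta^{-q^i} + \cdots$ of norm $1$. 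A direct computation shows $\tau(P) = (-\theta^q)^{\frac{1}{q-1}}\prod_{i\geq 0}(I_n - \vartheta^{(1)}\theta^{-q^{i+1}})^{-1}$; using that $\vartheta = \chi(\theta)$ has entries in $\FF_q(\underline{t}_\Sigma)$ so $\vartheta^{(1)} = \vartheta$, and reindexing the product, one gets $\tau(P) = (-\theta^q)^{\frac{1}{q-1}}(-\theta)^{-\frac{1}{q-1}}(I_n - \vartheta\theta^{-1})P = (-\theta)(I_n - \vartheta\theta^{-1})P = (\vartheta - \theta I_n)P$ — wait, one must track the scalar root carefully: $(-\theta^q)^{\frac{1}{q-1}} = (-\theta)^{\frac{q}{q-1}}\cdot(\text{unit})$, and the consistent choice of $(q-1)$-st roots forces the ratio to be $(-\theta)$. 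Hence $P$ satisfies the same difference system as $\omega_\chi$. By the last assertion of Lemma \ref{lemmaomegachi}, $P = \omega_\chi\, m$ for some $m \in \FF_q(\underline{t}_\Sigma)^{n\times n}$ (applied column-wise), in fact $m \in \GL_n(\FF_q(\underline{t}_\Sigma))$ since $P$ is invertible. Comparing the leading terms near $\theta^{-1} = 0$: from the proof of Lemma \ref{lemmaomegachi} we have $\omega_\chi = \exp_C(\widetilde{\pi}\theta^{-1})I_n + R$ with $\|R\|$ strictly smaller, and $\exp_C(\widetilde{\pi}\theta^{-1}) = (-\theta)^{\frac{1}{q-1}}$ by the remark following (\ref{pitilde}); likewise $P = (-\theta)^{\frac{1}{q-1}}I_n + (\text{lower order})$. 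Therefore $m = I_n$ and $P = \omega_\chi$.

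Second, for the claim $\omega_\chi^{-1} \in \GL_n(\widehat{\TT_\Sigma[d^{-1}]}) \cap (\Tame{\EE})^{n\times n}$ — using the notation $\EE_\Sigma[\frac{1}{d}]^\wedge$ introduced just above the statement — I would read it off the product formula directly. We have
$$\omega_\chi^{-1} = (-\theta)^{-\frac{1}{q-1}}\prod_{i\geq 0}\left(I_n - \vartheta\theta^{-q^i}\right).$$
Each factor $I_n - \vartheta\theta^{-q^i}$ is a \emph{polynomial} in $\theta^{-1}$ with coefficients in $\FF_q[\underline{t}_\Sigma][d^{-1}]$ (since $d\vartheta$ has polynomial entries), hence lies in $(\EE_\Sigma[\frac{1}{d}]^\wedge)^{n\times n}$; the partial products form a Cauchy sequence in that $\CC_\infty$-algebra because $\|\vartheta\theta^{-q^i}\| \to 0$ rapidly, and $\EE_\Sigma[\frac{1}{d}]^\wedge$ is complete, so the infinite product lies in $(\EE_\Sigma[\frac{1}{d}]^\wedge)^{n\times n}$. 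The scalar prefactor $(-\theta)^{-\frac{1}{q-1}}$ is a unit in $\CC_\infty$ and the whole matrix is invertible with inverse $\omega_\chi \in \GL_n(\widehat{\TT_\Sigma[d^{-1}]})$ by Lemma \ref{lemmaomegachi}; since $\EE_\Sigma[\frac{1}{d}]^\wedge \subset \widehat{\TT_\Sigma[d^{-1}]}$, both membership conditions hold simultaneously.

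The step I expect to be the main obstacle is pinning down the scalar $(q-1)$-st root unambiguously throughout the computation of $\tau(P)$ and in the leading-term comparison: the identity only holds "up to the choice of an appropriate root $(-\theta)^{\frac{1}{q-1}}$", as the statement itself acknowledges, so the argument should \emph{fix} the root by the normalization $\exp_C(\widetilde{\pi}\theta^{-1}) = (-\theta)^{\frac{1}{q-1}}$ coming from (\ref{pitilde}), and then verify that with this choice the difference-equation manipulation and the comparison of leading coefficients are consistent — all the rest is routine convergence bookkeeping in the ultrametric Banach algebras, already licensed by the material of \S\ref{Rings-fields-modules}.
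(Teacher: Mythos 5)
Your proposal follows essentially the same route as the paper's proof: verify that the candidate product solves the difference system $\tau(X)=(\vartheta-\theta I_n)X$, invoke the uniqueness clause of Lemma \ref{lemmaomegachi} to conclude the product equals $\omega_\chi$ up to a constant invertible matrix, pin that matrix down to $I_n$ by comparing leading terms via $\exp_C(\widetilde{\pi}\theta^{-1})=(-\theta)^{\frac{1}{q-1}}$, and then obtain the entireness of $\omega_\chi^{-1}$ by expanding the product $\prod_{i\geq 0}(I_n-\vartheta\theta^{-q^i})$. The one loose point is your appeal to ``completeness'' of $\EE_\Sigma[\frac{1}{d}]^\wedge$ for the Gauss norm: what is actually required (and what the paper does) is to write the expanded product as $\sum_{i\geq 0}c_i\nu^i d^{-i}$ with $\vartheta=d^{-1}\nu$, $\nu\in\FF_q[\underline{t}_\Sigma]^{n\times n}$, and check $|c_i|r^i\to 0$ for \emph{every} $r\in|\CC_\infty|$, which follows from the super-geometric bound $|c_i|\leq|\theta|^{-(1+q+\cdots+q^{i-1})}$ — this is implicit in your estimate $\|\vartheta\theta^{-q^i}\|\to 0$ but deserves to be made explicit.
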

Note that the factors of the infinite product commute each other. 
\begin{proof}[Proof of Corollary \ref{coroomegachi}] First of all note that 
$$F:=(-\theta)^{\frac{1}{q-1}}\prod_{i\geq 0}\left(I_n-\vartheta\theta^{-q^i}\right)^{-1}\in(\TT_\Sigma[d^{-1}]^\wedge)^{n\times n}$$
is a matrix $X$ solution of the difference system $\tau(X)=(\vartheta-\theta I_n)X$, in $\GL_n(\KK_\Sigma)$. Lemma \ref{lemmaomegachi} applies and there exists a matrix $V\in\GL_n(\FF_q(\underline{t}_\Sigma))$ such that $F=V\omega_\chi$. 
Now we proceed to prove that $V=I_n$. We recall that $(-\theta)^{\frac{1}{q-1}}=\exp_C(\frac{\widetilde{\pi}}{\theta})$ for a unique choice of $(-\theta)^{\frac{1}{q-1}}$.
We have seen, in the proof of Lemma \ref{lemmaomegachi}, that  
$\omega_\chi=\exp_C(\widetilde{\pi}\theta^{-1})I_n+R$ where $R\in\KK_\Sigma^{n\times n}$ is such that $\|R\|<|\theta|^{\frac{1}{q-1}}$. We also have 
$F=\exp_C(\widetilde{\pi}\theta^{-1})I_n+R'$, $R'\in\KK_\Sigma$ such that $\|R'\|<q^{\frac{1}{q-1}}$. Hence $V=I_n$.
Additionally, note that $\widetilde{\pi}(\theta I_n-\vartheta)^{-1}\in(\TT_\Sigma[\frac{1}{d}]^\wedge)^{n\times n}$ so that, by Corollary \ref{propositionontatealgebras}, 
$\omega_\chi$ has entries in $\TT_\Sigma[\frac{1}{d}]^\wedge$. Also, $F$ in this case is an element of $\GL_n(\TT_\Sigma[\frac{1}{d}]^\wedge)$. Writing $\vartheta=d^{-1}\nu$ with $\nu\in\FF_q[\underline{t}_\Sigma]^{n\times n}$, we see that 
$F=\sum_{i\geq 0}c_i\nu^id^{-i}$ with $c_i\in \CC_\infty$ such that $|c_i|r^i\rightarrow0$ for all $r\in|\CC_\infty|$. But then $c_i\nu^i\in\CC_\infty[\underline{t}_\Sigma]^{n\times n}$ with 
$\|c_i\nu^i\|r^i\rightarrow 0$ and therefore, the entries of $\omega_\chi^{-1}$ belong to 
$\EE_\Sigma[\frac{1}{d}]^\wedge$.
\end{proof}
 
\subsubsection{A class of entire functions}\label{class-of-entire-functions}

We recall that we have set $\vartheta=\chi(\theta)\in\FF_q(\underline{t}_\Sigma)^{n\times n}$ and that $d\in\FF_q[\underline{t}_\Sigma]\setminus\{0\}$ is such that 
$d\vartheta\in\FF_q[\underline{t}_\Sigma]$.
For $z\in\CC_\infty$, we set (\footnote{Note that the factors commute.}):
\begin{equation}\label{chidefi}
\widetilde{\chi}(z):=\exp_C\Big(\widetilde{\pi}z(\theta I_n-\vartheta)^{-1}\Big)\omega_\chi^{-1},\end{equation}
where $\omega_\chi\in\operatorname{GL}_n(\TT_\Sigma[\frac{1}{d}]^\wedge)$ has been introduced in \S \ref{omegamatrices}. By Lemma \ref{generation-of-entire-functions}, this is 
an entire function in $\operatorname{Hol}_{\TT_\Sigma[\frac{1}{d}]^\wedge}(\CC_\infty\rightarrow\TT_\Sigma[\frac{1}{d}]^\wedge)^{n\times n}$. 
We now use the material developed in this section to show the following
(compare with \cite[Lemmas 15, 17]{PEL&PER}).

\begin{Proposition}\label{firstpropertieschi}
The function $\widetilde{\chi}$ satisfies the following properties.
\begin{enumerate}
\item It has image in $(\EE_\Sigma[\frac{1}{d}]^\wedge)^{n\times n}$,
\item it satisfies $\widetilde{\chi}(a)=\chi(a)$ for all $a\in A$,
\item it satisfies the $\tau$-difference system
$\tau(X)=X+\exp_C(\widetilde{\pi}z)\omega_\chi^{-1}.$
\end{enumerate}
\end{Proposition}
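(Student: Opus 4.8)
The plan is to treat the three assertions separately, in the order (2), (3), (1); the first two are short functional-equation manipulations, while (1) is where the work lies — it amounts to improving the target space of $\widetilde{\chi}$ in (\ref{chidefi}) from $\TT_\Sigma[\frac{1}{d}]^\wedge$ to the subalgebra $\EE_\Sigma[\frac{1}{d}]^\wedge$. Throughout I write $E(z):=\exp_C(\widetilde{\pi}z(\theta I_n-\vartheta)^{-1})$, so that $\widetilde{\chi}(z)=E(z)\omega_\chi^{-1}$ and $E(1)=\omega_\chi$.

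Part (2) is immediate: for $a\in A$, the identity (\ref{Caomega}) — which is the Carlitz functional equation $C_a\exp_C=\exp_C\cdot a$ applied entrywise — gives $E(a)=C_a(\omega_\chi)=\chi(a)\omega_\chi$, hence $\widetilde{\chi}(a)=\chi(a)\omega_\chi\omega_\chi^{-1}=\chi(a)$. For part (3) I would start from the Carlitz functional equation in the form $\tau(\exp_C(Y))=\exp_C(\theta Y)-\theta\exp_C(Y)$ (applied entrywise) together with the elementary identity $\theta(\theta I_n-\vartheta)^{-1}=I_n+\vartheta(\theta I_n-\vartheta)^{-1}$; using the additivity of $\exp_C$ and its $\FF_q(\underline{t}_\Sigma)$-linearity (which lets one move the matrix $\vartheta$, whose entries are fixed by $\tau$, past $\exp_C$) this yields $\tau(E)=(\vartheta-\theta I_n)E+\exp_C(\widetilde{\pi}z)I_n$. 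Inverting the $\tau$-difference system $\tau(\omega_\chi)=(\vartheta-\theta I_n)\omega_\chi$ of Lemma \ref{lemmaomegachi} gives $\tau(\omega_\chi^{-1})=\omega_\chi^{-1}(\vartheta-\theta I_n)^{-1}$, and combining the two relations — all the matrices in sight commute, being power series in $\vartheta$ — leads to the $\tau$-difference equation of (3).

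For part (1) the idea is to pass to generating series in an auxiliary variable $x$ and to exploit a pole/zero cancellation. Expanding $(\theta I_n-\vartheta)^{-1}=\sum_{k\geq0}\vartheta^k\theta^{-k-1}$ and using additivity and $\FF_q(\underline{t}_\Sigma)$-linearity of $\exp_C$ one gets $E(z)=\sum_{k\geq0}\exp_C(\widetilde{\pi}z\theta^{-k-1})\vartheta^k$; resumming the geometric series in $\theta^{-q^n}$ after expanding $\exp_C$ shows that the scalar power series $e_z(x):=\sum_{k\geq0}\exp_C(\widetilde{\pi}z\theta^{-k-1})x^k$ represents, for $|x|$ small, the function $\sum_{n\geq0}d_n^{-1}(\widetilde{\pi}z)^{q^n}(\theta^{q^n}-x)^{-1}$, which extends to a meromorphic function of $x$ whose only singularities are simple poles at the points $x=\theta^{q^n}$, $n\geq0$. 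On the other hand Corollary \ref{coroomegachi} expresses $\omega_\chi^{-1}$ as $(-\theta)^{-\frac{1}{q-1}}\prod_{i\geq0}(I_n-\vartheta\theta^{-q^i})$, so its symbol $g(x):=(-\theta)^{-\frac{1}{q-1}}\prod_{i\geq0}(1-x\theta^{-q^i})$ is an entire function of $x$ with simple zeros exactly at $x=\theta^{q^i}$, $i\geq0$. Consequently the symbol $A_z(x):=e_z(x)g(x)$ of $\widetilde{\chi}(z)=E(z)\omega_\chi^{-1}$ is entire in $x$: indeed $e_z(x)\prod_{i\geq0}(1-x\theta^{-q^i})=\sum_{n\geq0}d_n^{-1}(\widetilde{\pi}z)^{q^n}\theta^{-q^n}\prod_{i\neq n}(1-x\theta^{-q^i})$ is a series of entire functions converging uniformly on bounded sets (its coefficients decaying doubly-exponentially).

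It then remains to conclude. Writing $A_z(x)=\sum_{m\geq0}a_m(z)x^m$, entirety of $A_z$ means $|a_m(z)|r^m\to0$ for every $r\in|\CC_\infty|$, and the estimates making this hold are uniform for $z$ in any disk. Substituting the matrix $\vartheta=d^{-1}\nu$, with $\nu\in\FF_q[\underline{t}_\Sigma]^{n\times n}$ and $\|\nu\|\leq1$, one finds $\widetilde{\chi}(z)=\sum_{m\geq0}(a_m(z)\nu^m)d^{-m}$ with $a_m(z)\nu^m\in\EE_\Sigma^{n\times n}$ and $\|a_m(z)\nu^m\|\,r^m\leq|a_m(z)|r^m\to0$ for all $r$; hence $\widetilde{\chi}(z)\in(\EE_\Sigma[\frac{1}{d}]^\wedge)^{n\times n}$, and the uniformity in $z$ makes $\widetilde{\chi}$ an entire function with values there. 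The one genuinely non-formal point — and the main obstacle — is the observation that the poles of the symbol of $E(z)$ fall exactly at the zeros of the symbol of $\omega_\chi^{-1}$; granting that, the rest is bookkeeping with the explicit product and series expansions of \S\ref{omegamatrices}.
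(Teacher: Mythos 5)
Your proof is correct. Parts (2) and (3) are essentially the paper's own argument: (2) is the one-line consequence of (\ref{Caomega}), and for (3) the paper applies the same functional equation $\tau(\exp_C(Y))=\exp_C(\theta Y)-\theta\exp_C(Y)$ together with $\theta(\theta I_n-\vartheta)^{-1}=I_n+\vartheta(\theta I_n-\vartheta)^{-1}$ to obtain $\tau(E)=E(\vartheta-\theta I_n)+\exp_C(\widetilde{\pi}z)I_n$, stopping there and leaving the passage to $\widetilde{\chi}=E\omega_\chi^{-1}$ implicit. The genuine divergence is in part (1). The paper works with the $\tau$-expansion $E(z)=\sum_{i}d_i^{-1}(\widetilde{\pi}z)^{q^i}(\theta^{q^i}I_n-\vartheta)^{-1}$, estimates the norms of its terms, and quotes Corollary \ref{coroomegachi} for the factor $\omega_\chi^{-1}$. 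You instead pass to the scalar generating series in an auxiliary variable $x$ and exploit the cancellation of the simple poles of $\sum_n d_n^{-1}(\widetilde{\pi}z)^{q^n}(\theta^{q^n}-x)^{-1}$ at $x=\theta^{q^n}$ against the zeros of $(-\theta)^{-1/(q-1)}\prod_{i}(1-x\theta^{-q^i})$, substituting $\vartheta=\nu d^{-1}$ only at the end. This is the matrix analogue of the classical proof that $\chi_t(z)$ is entire in $t$ (the case $n=1$ of \S\ref{a-class-chi}; cf.\ \cite[Lemmas 15, 17]{PEL&PER}, to which the Proposition itself points), and it buys something: it exhibits $\widetilde{\chi}(z)$ as a single series $\sum_m f_m d^{-m}$ with $\|f_m\|r^m\to0$ for \emph{every} $r$, which is exactly the membership criterion for $\EE_\Sigma[\frac{1}{d}]^\wedge$. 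The factor $E(z)$ alone does not satisfy that criterion in its natural presentation (for $z\neq 0$ its coefficients only decay like $|\theta|^{-k}$, so $\|f_k\|r^k\to0$ fails for $r\geq|\theta|$); the pole--zero cancellation is where the entirety in the $t_i$'s really comes from, and your argument makes that visible where the paper's is terser.

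One caveat on (3): carried to completion, your computation yields $\tau(\widetilde{\chi})=\widetilde{\chi}+\exp_C(\widetilde{\pi}z)\tau(\omega_\chi)^{-1}$ with $\tau(\omega_\chi)^{-1}=(\vartheta-\theta I_n)^{-1}\omega_\chi^{-1}$, not $\omega_\chi^{-1}$ as displayed in the statement. Comparison with (\ref{taudifferechit}) in the case $n=1$ confirms that the extra $\tau$ is correct and that the displayed statement carries a typo, so do not try to force your derivation to land on the literal formula.
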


\begin{proof}
(1) Since
$$\exp_C\Big(\widetilde{\pi}z(\theta I_n-\vartheta)^{-1}\Big)=\sum_{i\geq 0}d_i^{-1}(\widetilde{\pi}z)^{q^i}(\theta^{q^i} I_n-\vartheta)^{-1},\quad z\in\CC_\infty$$
and $\|d_i^{-1}(\theta^{q^i} I_n-\vartheta)^{-1}\|=|\theta|^{(i-1)q^i}$ for all $i\geq 0$,
the image of the map $\widetilde{\chi}$ is contained in $(\EE_\Sigma[\frac{1}{d}]^\wedge)^{n\times n}$ (we recall from Corollary \ref{coroomegachi} that $\omega_\chi^{-1}$ has entries in $\EE_\Sigma[\frac{1}{d}]^\wedge$). (2)
Observe that 
if $a\in A$,
\begin{eqnarray*}
\widetilde{\chi}(a)&=&\exp_C\Big(\widetilde{\pi}a(\theta I_n-\vartheta)^{-1}\Big)\omega_\chi^{-1}\\
&=&C_a(\omega_\chi)\omega_\chi^{-1}\\
&=&\chi(a).
\end{eqnarray*}
(3) We set $F=\exp_C\left(\widetilde{\pi}z(\theta I_n-\vartheta)^{-1}\right)$. Then,
\begin{eqnarray*}
\tau(F)&=&- \theta F +\exp_C\Big(\widetilde{\pi}z(\theta I_n-\vartheta+\vartheta)(\theta I_n-\vartheta)^{-1}\Big)\\
&=&-\theta F+\exp_C\Big(\widetilde{\pi}(z(\theta I_n-\vartheta)^{-1})\vartheta\Big)+\exp_C(\widetilde{\pi}z)I_n\\
&=&F\cdot(\vartheta-\theta I_n)+\exp_C(\widetilde{\pi}z)I_n.
\end{eqnarray*}
 \end{proof}
 
 From now on, we will 
denote both maps, $A\xrightarrow{\chi}\FF_q(\underline{t}_\Sigma)^{n\times n}$ and
$\CC_\infty\xrightarrow{\widetilde{\chi}}\KK_\Sigma^{n\times n}$, with $\chi$ to simplify our notations.
 
 \subsubsection{An example with $n=1$}\label{a-class-chi}
 
 We consider, to illustrate an example, the above picture in the case when 
 $\chi=\chi_t$, where $\chi_t$ is the unique $\FF_q$-algebra map $$A\xrightarrow{\chi_t}\FF_q[t]$$
defined by $\theta\mapsto t$ (therefore, $n=1$). In this case $\omega_\chi$ is the 
 {\em function of Anderson and Thakur} $\omega$. It is likely that this function appeared for the first time in the literature in the paper of Anderson and Thakur \cite[Proof of Lemma 2.5.4 p. 177]{AND&THA}. 
We have:
$$\omega(t)=\exp_C\left(\frac{\widetilde{\pi}}{\theta-t}\right).$$
Corollary \ref{coroomegachi} implies that
\begin{equation}\label{definitionomega}
\omega(t)=(-\theta)^{\frac{1}{q-1}}\prod_{i\geq0}\left(1-\frac{t}{\theta^{q^{i}}}\right)^{-1}\in\TT^\times,\end{equation} for a fixed choice of the $(q-1)$-th root, and its inverse is an entire function in $\EE$. The element $\omega$ can be also viewed as a function of the variable $t\in\CC_\infty$, because the infinite product 
converges for all $$t\in\CC_\infty\setminus\{\theta^{q^k};k\geq 0\}$$ and defines a meromorphic function over the above set, with simple poles at $\theta^{q^k}$,
$k\geq 0$.
The element $\omega$ is a $(\theta-t)$-torsion point in the Carlitz $A[t]$-module $C(\TT)$.
In particular, $\omega$ is a generator of the free sub-$\FF_q[t]$-module of rank one of $\TT$, kernel of the evaluation of the operator $$C_{\theta-t}=\tau+\theta-t\in K[t][\tau],$$
so that $\omega$ is a solution of the linear homogeneous $\tau$-difference equation of order $1$ (see also \cite[Proposition 3.3.6]{PAP0}):
\begin{equation}\label{tauomega}\tau(\omega)(t)=(t-\theta)\omega(t).\end{equation}
All these properties easily follow from Corollary \ref{coroomegachi}.

For the function $\chi_t:\CC_\infty\rightarrow\TT$ we note that explicitly, $$\chi_t(z):=\frac{\exp_C\left(\frac{\widetilde{\pi}z}{\theta-t}\right)}{\omega(t)},\quad z\in \CC_\infty.$$ 
We deduce that $\chi_t$ defines an entire function $\CC_\infty\rightarrow\EE$ which satisfies $\chi_t(a)=a(t)$ for all $a\in A$, and the $\tau$-difference equation
\begin{equation}\label{taudifferechit}
\tau(\chi_t(z))=\chi_t(z)+\frac{\exp_C(\widetilde{\pi}z)}{\tau(\omega)}.
\end{equation}
 To mention an additional property of the entire function $\chi_t$, it can be proved that the function $z\mapsto\frac{\chi_t(z)}{z}\in\EE$ is non-constant, entire, without zeroes.  

\section{Field of uniformizers}\label{tameseriestheory}

The crucial feature of the modular forms we study in the present text is that their entries can be identified with certain formal series generalizing the Fourier series of classical Drinfeld modular forms $f:\Omega\rightarrow\CC_\infty$ for $\Gamma$. These formal series can be seen as elements of the {\em field of uniformizers} $\mathfrak{K}$ (Definition \ref{definition-field-uniformizers}) which provides a natural environment to do computations 
and to prove our results. Roughly speaking, if $f:\Omega\rightarrow\KK_\Sigma^{N\times 1}$
is a modular form of weight $w$ for a representation of the first kind $\rho$, then the entries of 
$f$ can be viewed as elements of an algebraically closed field of generalized formal series in the sense of Kedlaya \cite{KED}, containing the valued field $\KK_\Sigma((u))$
with $u$ the uniformiser defined in (\ref{Goss-uniformiser}). We need to be a bit more precise however, as in practice, these series span a much smaller field and in the sequel, we need to gain a certain control on their coefficients. The main results in this section are Propositions \ref{descriptionfieldofunif-bis} and \ref{descriptionfieldofunif} where the reader can find an explicit description of the elements of $\mathfrak{K}$ as formal Laurent series with coefficients which are 
{\em tame series}, certain entire functions defined in \S \ref{tameseriessect}. Similar constructions have also been considered in \cite{PEL4}. We begin with \S \ref{algebrasettings}, where we introduce some algebraic settings.

\subsection{Some algebras and fields}\label{algebrasettings}
In this subsection, we consider an integral commutative $A$-algebra $B$ with the structure induced by a morphism 
$$A\xrightarrow{\iota}B.$$
Additionally, we suppose that $B$ is endowed with an $\FF_q$-algebra endomorphism $\tau$ which acts as the map $c\mapsto c^q$ over $\iota(A)$ so that  $(B,\tau)$ is a difference ring. We set $$\Theta=\iota(\theta).$$ In the paper, we are going to use this ring $B$ mainly in the case of $\iota$ injective. In this case, we identify $\Theta$ with $\theta$ but in the first general discussions, we prefer to keep $\Theta$ and $\theta$ distinct. 

We consider, further, the polynomial $B$-algebra $$\mathcal{R}=B[X_i;i\in\ZZ]$$ in infinitely many variables $X_i$, and the ideal $\mathcal{P}$ generated by the polynomials $$X_i^q+\Theta X_i-X_{i-1},\quad i\in\ZZ.$$
Then, with $\underline{X}$ the collection $(X_i:i\in\ZZ)$ and $j=\sum_{i\in\ZZ}j_iq^{-i}\in\ZZ[\frac{1}{p}]_{\geq 0}$ expanded in base $q$ (so that only finitely many terms occur),
we set 
$$\Angle{X}^j=\prod_{i\in\ZZ}X_i^{j_i}\in\mathcal{R}/\mathcal{P}.$$
The quotient $B$-algebra $\mathcal{R}/\mathcal{P}$ can be identified with the ring 
$\Ring{B}{X}$ whose elements $F$ are formal {\em finite} sums in the indeterminates $X_i$, $i\in\ZZ$:
\begin{equation}\label{prototypeF}
F=\sum_{j\in\ZZ[\frac{1}{p}]_{\geq 0}}F_j\Angle{X}^j=\sum_{j\in\ZZ[\frac{1}{p}]_{\geq 0}}F_j\prod_{k\in\ZZ}X_k^{j_k},\quad F_j\in B,\end{equation}
where we have expanded the indices $j=\sum_{k\in\ZZ}j_kq^{-k}$ in base $q$ (the coefficients $j_i$ are almost all zero and belong to $\{0,\ldots,q-1\}$). Note that a product over $\Ring{B}{X}$ is well defined in virtue of the rules $X_i^q=X_{i-1}-\Theta X_i$. 
We have thus identified, after a mild abuse of notation, $\Ring{B}{X}$ with a 
complete system of representatives of $\mathcal{R}$ modulo $\mathcal{P}$ and we have defined over it, a product which makes it isomorphic to the quotient $\mathcal{R}/\mathcal{P}$.

\subsubsection*{Examples} If $B=A$ and $\iota$ is the identity, since the multiplication by $\theta$ of the Carlitz $A$-module is given by $C_\theta={\theta}+\tau$, we have $X_{i-1}=C_\theta(X_i)$ in $C(\Ring{B}{X})$. If $B=\CC_\infty$ and $\iota$ is the inclusion $A\subset\CC_\infty$, the substitution $X_i\mapsto e_C(\frac{z}{\theta^i})$, where $e_C$ is defined by
\begin{equation*}\label{def-eC}e_C(z)=\exp_C(\widetilde{\pi}z),\end{equation*}
yields a $\CC_\infty$-algebra homomorphism $$\Ring{\CC_\infty}{X}\rightarrow\operatorname{Map}\left(K\rightarrow \CC_\infty\right).$$ 

We come back to the general settings of this \S \ref{algebrasettings}.
We define a map $$\Ring{B}{X}\xrightarrow{v}\ZZ[p^{-1}]_{\leq 0}\cup\{\infty\}$$ in the following way. We define $v(0):=\infty$ and we set $v(B\setminus\{0\})=\{0\}$. Further, for a monomial $\Angle{X}^j=\prod_{i\in\ZZ}X_i^{j_i}$ (so only finitely many factors satisfy $j_i>0$), we set
$v(\Angle{X}^j)=-j$. Note that distinct monomials $\Angle{X}^j$ correspond to distinct values in $\ZZ[\frac{1}{p}]_{\leq 0}$ so that $v$ is injective over $\{\Angle{X}^j:j\in\ZZ[\frac{1}{p}]_{\geq 0}\}$. If $F$ is non-zero as in (\ref{prototypeF}), then we set
$$v(F)=\inf\{v(\Angle{X}^j):F_j\neq0\};$$ the infimum is a minimum. 
\begin{Lemma}\label{valuation-BB}
The map $v$ is an additive valuation.
\end{Lemma}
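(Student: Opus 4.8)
The plan is to verify the two defining axioms of an additive valuation on the integral domain $\Ring{B}{X}$: that $v(FG)=v(F)+v(G)$ and that $v(F+G)\geq\min\{v(F),v(G)\}$, for all nonzero $F,G$ (the value $\infty$ on $0$ being handled by convention). The ultrametric inequality is essentially immediate from the definition as an infimum over the support: if $F=\sum_j F_j\Angle{X}^j$ and $G=\sum_j G_j\Angle{X}^j$, then $F+G=\sum_j(F_j+G_j)\Angle{X}^j$, and every monomial $\Angle{X}^j$ occurring in $F+G$ with nonzero coefficient occurs in $F$ or in $G$, so $v(F+G)=\min\{v(\Angle{X}^j):F_j+G_j\neq 0\}\geq\min\{v(F),v(G)\}$. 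The content of the lemma is therefore the multiplicativity on products, and the fact that $v$ takes values in $\ZZ[p^{-1}]_{\leq 0}$.

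First I would pin down how products of monomials behave under the relations $X_i^q=X_{i-1}-\Theta X_i$. The key observation is that the monomials $\Angle{X}^j$ with $j\in\ZZ[\tfrac1p]_{\geq 0}$ expanded in base $q$ (so each digit $j_i\in\{0,\dots,q-1\}$) form a $B$-basis of $\Ring{B}{X}$, and the reduction of an arbitrary product $\Angle{X}^{j}\Angle{X}^{j'}$ back to this basis is governed by a ``carrying'' process: when a digit sum $j_i+j'_i$ reaches $q$ or more, one applies $X_i^q=X_{i-1}-\Theta X_i$, which replaces a factor $X_i^q$ by a $B$-linear combination of $X_{i-1}$ and $X_i$, i.e. by terms of $v$-value $-q\cdot q^{-i}=-q^{1-i}$ exactly matching $v(X_i^q)$ (indeed $v(X_{i-1})=-q^{1-i}$ and $v(X_i)=-q^{-i}$, and the carry sends one unit of the digit in position $i$ to position $i-1$, which multiplies the contribution $q^{-i}$ by $q$; meanwhile the ``remainder'' term $\Theta X_i$ has $v(\Theta)=0$ and strictly larger $v$). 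The upshot is: among all basis monomials appearing in the reduced expansion of $\Angle{X}^{j}\Angle{X}^{j'}$, there is a unique one of minimal $v$-value, namely $\Angle{X}^{j+j'}$ (with value $-(j+j')$), obtained by performing all carries to completion, and it appears with coefficient a unit power of... more precisely with coefficient in $B\setminus\{0\}$ (a product of $1$'s, since each carry's leading term $X_{i-1}$ has coefficient $1$). Hence $v(\Angle{X}^{j}\Angle{X}^{j'})=-(j+j')=v(\Angle{X}^j)+v(\Angle{X}^{j'})$, and all other terms have strictly larger $v$.

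From this the general multiplicativity follows by the standard ``leading term'' argument: write $F=\sum F_j\Angle{X}^j$, $G=\sum G_{j'}\Angle{X}^{j'}$, let $j_0$ (resp. $j'_0$) be the index with $F_{j_0}\neq 0$ (resp. $G_{j'_0}\neq 0$) achieving $v(F)$ (resp. $v(G)$) — here I should note such an index need not be unique, so I would instead argue on the $v$-value directly: every monomial in the support of the reduction of $FG$ is a monomial in the reduction of some $\Angle{X}^j\Angle{X}^{j'}$ with $F_j,G_{j'}\neq 0$, hence has $v\geq -(j+j')\geq v(F)+v(G)$, giving $v(FG)\geq v(F)+v(G)$; and to get equality one checks that the monomial $\Angle{X}^{j_0+j'_0}$ of value $v(F)+v(G)$ survives with nonzero coefficient. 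For the latter, among all pairs $(j,j')$ with $F_j,G_{j'}\neq 0$ whose sum $j+j'$ is $v$-minimal, the contributions to the coefficient of $\Angle{X}^{j_0+j'_0}$ come only from those pairs (all others contribute monomials of strictly larger $v$), and this coefficient is a sum over such pairs of terms $F_jG_{j'}\cdot(\text{unit})$; one must rule out cancellation. The cleanest way is to observe that the map $(j,j')\mapsto j+j'$ restricted to $v$-minimal pairs is not quite injective, so instead I would use the base-$q$ digit structure: the pair with $j$ lexicographically largest (reading digits from the most negative position up) among $v$-minimal pairs gives a monomial that no other $v$-minimal pair reproduces after carrying, so its coefficient is $F_jG_{j'}\neq 0$ and there is no cancellation. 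This is the step I expect to be the main obstacle — the bookkeeping of carries and the non-cancellation argument — though it is entirely elementary; it is the positive-characteristic analogue of the fact that $\FF_q((\pi))$-style generalized power series rings are valued, and one could alternatively cite the identification of $\Ring{B}{X}$ with (a sub-object of) a field of generalized power series in the sense of Kedlaya \cite{KED} to bypass it. Finally, that $v$ is $\ZZ[p^{-1}]_{\leq 0}$-valued is clear since $-j\in\ZZ[\tfrac1p]_{\leq 0}$ for every index $j\in\ZZ[\tfrac1p]_{\geq 0}$ occurring, and $v(F)=0$ exactly when $F$ has a nonzero constant term ($F_0\neq 0$). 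This completes the verification.
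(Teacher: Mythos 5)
Your core step---that $\Angle{X}^j\Angle{X}^{k}=\Angle{X}^{j+k}+F$ with $v(F)>v(\Angle{X}^{j+k})$, obtained by tracking the carries produced by the relation $X_i^q=X_{i-1}-\Theta X_i$---is exactly the content of the paper's (one-line) proof, and your treatment of the ultrametric inequality is fine. The problem is in the step you yourself flag as the main obstacle, the non-cancellation of the leading coefficient of $FG$, where your argument goes wrong in two ways. First, you assert that an index $j_0$ achieving $v(F)$ ``need not be unique.'' It is unique: the paper observes just before the lemma that distinct monomials $\Angle{X}^j$ have distinct values $-j$, so $v$ is injective on monomials and the infimum defining $v(F)$ is attained at exactly one index $j_0$ in the support of $F$ (namely the largest one). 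Consequently, for $(j,j')$ in $\operatorname{supp}(F)\times\operatorname{supp}(G)$ one has $j+j'\le j_0+j_0'$ with equality if and only if $(j,j')=(j_0,j_0')$, so there is exactly one ``$v$-minimal pair,'' and since the lower-order terms coming from carries in the other products $\Angle{X}^j\Angle{X}^{j'}$ all have strictly larger $v$ (strictly smaller exponent), the coefficient of $\Angle{X}^{j_0+j_0'}$ in $FG$ is exactly $F_{j_0}G_{j_0'}\ne 0$ ($B$ being an integral domain). No cancellation question arises.

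Second, the patch you propose for the (nonexistent) multiple-minimal-pair scenario does not work as stated: all pairs $(j,j')$ with the same maximal sum $j+j'=s$ reduce, after carrying, to the \emph{same} leading monomial $\Angle{X}^{s}$, so it is false that the ``lexicographically largest'' pair produces a monomial ``that no other $v$-minimal pair reproduces.'' Had the scenario actually occurred, your argument would not rule out cancellation. So the proof is repairable in one sentence, but the repair is the injectivity of $v$ on monomials, not the digit bookkeeping you sketch. (Your parenthetical fallback of invoking Kedlaya's generalized power series is overkill and also circular in spirit here, since one still has to check that the product in $\Ring{B}{X}$ agrees with the one there.)
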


\begin{proof}
With $j,k\in\ZZ[\frac{1}{p}]_{\geq 0}$ and by the definition of the ideal $\mathcal{P}$,
$\Angle{X}^j\Angle{X}^k=\Angle{X}^{j+k}+F$ where $F\in \Ring{B}{X}$
satisfies $v(F)>v(\Angle{X}^{j+k})$, so that if $F,G\in \Ring{B}{X}$, $v(FG)=v(F)+v(G)$.
\end{proof}

\begin{Remark}\label{no-carry-over}
{\em Note that in general,
$$\Angle{X}^i\Angle{X}^j\neq\Angle{X}^{i+j},\quad i,j\in\ZZ[p^{-1}]_{\geq 0}.$$
The equality holds if there is no base-$q$ carry over in the sum $i+j$. For example, the reader can 
verify the formula:
\begin{equation}\label{aformula1}
\Angle{X}^{(q-1)(\frac{1}{q}+\cdots+\frac{1}{q^n})}\Angle{X}^{\frac{1}{q^n}}=\Angle{X}^1-{\Theta}\sum_{i=0}^{n-1}\Angle{X}^{(q-1)(\frac{1}{q}+\cdots+\frac{1}{q^i})}\Angle{X}^{\frac{1}{q^{i+1}}},\quad \forall n\geq 1.\end{equation}
}
\end{Remark}

Since $\Ring{B}{X}$ is a valued ring by Lemma \ref{valuation-BB}, it is integral and we deduce that $\mathcal{P}$ is a prime ideal. The residual ring of $\Ring{B}{X}$ is $B$. Further, defining $$\tau(X_i)=X_i^q\equiv X_{i-1}-{\Theta}X_i\pmod{\mathcal{P}}$$ induces an endomorphism of $\Ring{B}{X}$ and the subring $\Ring{B}{X}^{\tau=1}$ of the elements $F$ such that $\tau(F)=F$ is equal to $B^{\tau=1}$.
Note that even in the case of $(B,\tau)$ inversive, $\tau$ does not extend to an automorphism of $\Ring{B}{X}$.

\subsubsection{The algebra $\Rring{B}{X}$}
We analyze another difference $B$-algebra containing $\Ring{B}{X}$ (it is not complete, but it is {\em inversive}, that is, $\tau$ induces an automorphism). 

\begin{Definition}\label{def-BX}
{\em We define
$\Rring{B}{X}$ to be the $B$-module of formal series as in (\ref{prototypeF}), without the condition of finiteness of the sums, and such that the following conditions hold: 
\begin{enumerate}
\item There exists $L\geq 0$ (depending on $F$) such that if $F_j\neq 0$, then $\ell_q(j)\leq L$, with $\ell_q(j)$ denoting the sum of digits of $j$ in base $q$ (which means that the length of the base-$q$ expansions of the exponents $j$ involved is bounded). 
\item If $F_j\neq0$, then
$j\geq M$ with a constant $M$ depending on $F$ (which means that only the variables $X_i$ with $i$ in a subset of $\ZZ$ which has a lower bound in $\ZZ$ occur).
\end{enumerate}
It is clear that there is an inclusion of $B$-modules $\Ring{B}{X}\subset
\Rring{B}{X}$. 
The first condition also means that the number of factors of the monomials occurring in $F\in \Rring{B}{X}$ is bounded. If $F\in \Rring{B}{X}$ is non-zero, we call {\em depth} of $F$ the smallest integer $L$ satisfying the condition (1) above. We denote it by $\lambda(F)$.} \end{Definition}
We have that $F\in \Ring{B}{X}\setminus\{0\}$ has depth $0$ if and only if $F\in B$.

\subsubsection{Product in $\Rring{B}{X}$}\label{product-in-B}

Let $F\in \Ring{B}{X}\setminus\{0\}$ be as in (\ref{prototypeF}). We denote by $\mu(F)$ the largest $m\in\ZZ$ such that the variable $X_m$ occurs in at least one non-zero monomial of $F$ (remember that the elements of $\Ring{B}{X}$ are polynomials so that $\mu(F)$ is well defined). Similarly, we denote by $\nu(F)$ the smallest $n\in\ZZ$ such that the variable $X_n$ occurs in at least one non-zero monomial of $F$.  Clearly, the function $\mu$ dominates the function $\nu$ over $\Ring{B}{X}$ (in the natural ordering of $\ZZ$)
and $\nu$ is also defined on $\Rring{B}{X}$. The proof of the next result is elementary and left to the reader.

\begin{Lemma}\label{lemmanumu}
For two monomials $\Angle{X}^i$ and $\Angle{X}^j$ in $\Ring{B}{X}$ the following properties hold:
\begin{enumerate}
\item $\lambda(\Angle{X}^i\Angle{X}^j)\leq\lambda(\Angle{X}^i)+\lambda(\Angle{X}^i)$,
\item $\mu(\Angle{X}^i\Angle{X}^j)=\max\{\mu(\Angle{X}^i),\mu(\Angle{X}^j)\}$,
\item $\nu(\Angle{X}^i\Angle{X}^j)\in\{\min\{\nu(\Angle{X}^i),\nu(\Angle{X}^j)\},\min\{\nu(\Angle{X}^i),\nu(\Angle{X}^j)\}-1\}.$
\end{enumerate}
\end{Lemma}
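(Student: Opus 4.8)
The plan is to compute, directly, the normal form in $\Ring{B}{X}$ of the product of the two monomials $\Angle{X}^i=\prod_{k\in\ZZ}X_k^{i_k}$ and $\Angle{X}^j=\prod_{k\in\ZZ}X_k^{j_k}$, where all the exponents $i_k,j_k$ lie in $\{0,\dots,q-1\}$ and only finitely many are nonzero. In the polynomial ring $\mathcal{R}=B[X_k:k\in\ZZ]$ this product is simply $\prod_k X_k^{i_k+j_k}$, with each exponent $i_k+j_k\le 2q-2$, and one brings it into normal form by repeatedly applying the rewriting rule $X_k^{q}\rightsquigarrow X_{k-1}-\Theta X_k$ attached to the ideal $\mathcal{P}$. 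The key remark I would use throughout is that this process is nothing but base-$q$ addition of the digit strings $(i_k)_k$ and $(j_k)_k$ carried out inside $\Ring{B}{X}$: since each digit sum is at most $2(q-1)$, every carry that occurs is at most $1$, and this keeps the computation under tight control. For part (1), observe that a single rewriting step replaces a factor $X_k^q$ of total degree $q$ inside a monomial by a linear combination of degree-one monomials, so that (as $q\ge2$) the total degree of each monomial strictly drops. This proves at once that the rewriting terminates and that every monomial of the normal form of $\prod_k X_k^{i_k+j_k}$ has total degree at most $\sum_k(i_k+j_k)$; since for a normal monomial $\Angle{X}^\ell$ the quantity $\ell_q(\ell)$ equals its total degree, this yields $\lambda(\Angle{X}^i\Angle{X}^j)\le\sum_k(i_k+j_k)=\lambda(\Angle{X}^i)+\lambda(\Angle{X}^j)$.

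For part (2), the inequality $\mu(\Angle{X}^i\Angle{X}^j)\le m:=\max\{\mu(\Angle{X}^i),\mu(\Angle{X}^j)\}$ is immediate because the rewriting rule only ever introduces variables of strictly smaller index. For the reverse inequality I would isolate the top level: write $\Angle{X}^i\Angle{X}^j=X_m^{a}\,P$ with $a=i_m+j_m$ and $P=\prod_{k<m}X_k^{i_k+j_k}$. If $a<q$ then $X_m^a$ is already reduced and divides every monomial of the normal form. If $q\le a\le 2q-2$, then $X_m^a=X_m^{a-q}X_{m-1}-\Theta X_m^{a-q+1}$, where $a-q$ and $a-q+1$ are distinct elements of $\{0,\dots,q-1\}$; since reducing $P$ and $X_{m-1}P$ only involves variables of index $<m$, the normal form of $X_m^aP$ equals $X_m^{a-q}\cdot\mathrm{NF}(X_{m-1}P)-\Theta\,X_m^{a-q+1}\cdot\mathrm{NF}(P)$. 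As $\Ring{B}{X}$ is a domain, $\mathrm{NF}(P)\ne0$, and using $\Theta\ne0$ (which holds whenever $\iota$ is injective, the case of interest) the second summand is nonzero; it cannot cancel against the first because its monomials carry the distinct $X_m$-exponent $a-q+1$. Hence $X_m$ survives in the normal form and $\mu(\Angle{X}^i\Angle{X}^j)=m$.

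For part (3), set $n:=\min\{\nu(\Angle{X}^i),\nu(\Angle{X}^j)\}$, the case in which one of the two monomials equals $1$ being trivial. The upper bound $\nu(\Angle{X}^i\Angle{X}^j)\le n$ comes cheaply from Lemma \ref{valuation-BB}: since $v$ is an additive valuation, the normal form $\sum_\ell c_\ell\Angle{X}^\ell$ of the product satisfies $\max\{\ell:c_\ell\ne0\}=i+j$; were every $\ell$ with $c_\ell\ne0$ to satisfy $\nu(\Angle{X}^\ell)\ge n+1$, each such $\ell$ would be at most $\sum_{k\ge n+1}(q-1)q^{-k}=q^{-n}$, contradicting $i+j>q^{-n}$ (one of $i,j$ is $\ge q^{-n}$ and the other is positive). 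For the lower bound $\nu(\Angle{X}^i\Angle{X}^j)\ge n-1$ I would run the reduction carry by carry from the highest level downward: by downward induction the exponent accumulated at each level $k$ is at most $(i_k+j_k)+1\le2q-1$, and a direct expansion shows that $X_k^{b}$ with $q\le b\le2q-1$ is a $B$-combination of normal monomials in each of which $X_{k-1}$ occurs to the first power at most; consequently a carry out of level $n$ never forces a reduction at level $n-1$, so the variable $X_{n-2}$ is never created. Combining the two bounds gives $\nu(\Angle{X}^i\Angle{X}^j)\in\{n-1,n\}$.

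I expect the last point to be the only genuinely delicate one: arguing rigorously that, in this base-$q$ addition performed inside $\Ring{B}{X}$, the carries never cascade by more than one step, so that they reach level $n-1$ but never level $n-2$. Everything else is the bounded-carry arithmetic of base-$q$ addition together with the observation that total degree strictly decreases under each reduction.
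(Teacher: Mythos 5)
Your argument is correct and is exactly the elementary bounded-carry computation the paper has in mind (it gives no proof, stating only that the verification is "elementary and left to the reader"): each digit sum is at most $2(q-1)$, so every carry produced by the relation $X_k^q=X_{k-1}-\Theta X_k$ contributes exponent at most $1$ to level $k-1$ and carries never cascade past level $\nu-1$, while the upper bound on $\nu$ and the nonvanishing of the top monomial follow from Lemma \ref{valuation-BB}. Your observation that the equality in (2) genuinely requires $\Theta\neq 0$ (e.g.\ $X_0\cdot X_0^{q-1}=X_{-1}$ when $\iota(\theta)=0$) is a correct reading of an implicit hypothesis: the statement is meant for the case $\iota$ injective, which the paper declares to be the case of interest.
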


We are ready to show the next result:
\begin{Proposition}\label{B-alg-structure}
The $B$-module $\Rring{B}{X}$ is endowed with the structure of a difference $B$-algebra with endomorphism $\tau$, extending that of the difference algebra $(\Ring{B}{X},\tau)$. This difference algebra $(\Rring{B}{X},\tau)$ carries a unique extension of the valuation $v$ for which the residual ring is $B$, and for all $F\in \Rring{B}{X}$, $v(\tau(F))=qv(F)$. Furthermore, if $(B,\tau)$ is inversive, then $(\Rring{B}{X},\tau)$ is inversive and if $F\in \Rring{B}{X}$ there exists $G\in \Rring{B}{X}$ such that $\tau(G)-G=F$.
\end{Proposition}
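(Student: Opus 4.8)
The plan is to build everything on the valuation $v$ already available on $\Ring{B}{X}$ (Lemma \ref{valuation-BB}) together with the depth function $\lambda$ and the index functions $\mu,\nu$ controlled by Lemma \ref{lemmanumu}. First I would observe that a formal series $F\in\Rring{B}{X}$, written in the monomial form \eqref{prototypeF} but now infinite, can be split according to the value of $v$: since $v$ takes values in $\ZZ[p^{-1}]_{\le 0}$, and since conditions (1) and (2) of Definition \ref{def-BX} force the exponents $j$ to lie in a set that is bounded below (by $M$) and of bounded digit-length (by $L$), the set $\{v(\Angle{X}^j):F_j\ne 0\}$ is a subset of $\ZZ[p^{-1}]$ which is bounded above by $0$ and, crucially, is well-ordered in the reverse order — i.e.\ for every real $c$ only finitely many monomials of $F$ have $v$-value $\ge c$. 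Indeed an exponent $j$ with $\ell_q(j)\le L$ and $j\ge M$ and $-j\ge c$ lies in a finite set. This is the key finiteness statement that makes all the formal manipulations below converge in the $v$-adic sense.

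Next I would define the product. For $F=\sum_j F_j\Angle{X}^j$ and $G=\sum_k G_k\Angle{X}^k$ in $\Rring{B}{X}$, set $FG:=\sum_{j,k}F_jG_k(\Angle{X}^j\Angle{X}^k)$, where each $\Angle{X}^j\Angle{X}^k$ is the already-defined product in $\Ring{B}{X}$, expanded back into the monomial basis. One must check three things: (a) the sum is well-defined, i.e.\ each target monomial $\Angle{X}^\ell$ receives only finitely many contributions — this follows from the finiteness statement of the previous paragraph applied via Lemma \ref{valuation-BB}, since a contribution to a monomial of $v$-value $-\ell$ comes from a pair with $v(\Angle{X}^j)+v(\Angle{X}^k)\le -\ell$, hence from monomials of bounded $v$-value in each of $F,G$; (b) the result again satisfies conditions (1) and (2) of Definition \ref{def-BX} — condition (1) from part (1) of Lemma \ref{lemmanumu} (depths add), and condition (2) from parts (2),(3) of Lemma \ref{lemmanumu} together with the fact that the carry-correction terms arising from $X_i^q=X_{i-1}-\Theta X_i$ only ever \emph{decrease} the relevant index by a bounded amount; (c) associativity and distributivity, which reduce to the monomial case already handled in $\Ring{B}{X}$. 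Then $v(FG)=v(F)+v(G)$ follows because the leading (largest-$v$) monomials of $F$ and $G$ multiply to the leading monomial of $FG$ with no cancellation, exactly as in the proof of Lemma \ref{valuation-BB}; the residual ring is $B$ because $v^{-1}(\{0\})\cup\{0\}$ reduces to $B$ just as for $\Ring{B}{X}$. The endomorphism $\tau$ is extended by $\tau(F)=\sum_j \tau(F_j)\,\tau(\Angle{X}^j)$ with $\tau(\Angle{X}^j)=\Angle{X}^j{}^{\,q}$ reduced mod $\mathcal P$; one checks it lands in $\Rring{B}{X}$ (depth is unchanged, $\nu$ shifts by a controlled amount) and that $v(\tau(F))=qv(F)$ since $v(\tau(\Angle{X}^j))=v(\Angle{X}^{qj})=-qj=qv(\Angle{X}^j)$ and $\tau$ does not create cancellation among leading terms.

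For inversivity, suppose $(B,\tau)$ is inversive. Given $F\in\Rring{B}{X}$ I would construct a $\tau$-preimage monomial-by-monomial: each $\Angle{X}^j$ is in the image of $\tau$ on $\Rring{B}{X}$ because $\tau$ is "essentially invertible" in the following sense — the relations $X_i^q=X_{i-1}-\Theta X_i$ can be rewritten as $X_{i-1}=\tau(X_i)+\Theta X_i$, so each $X_{i-1}$, and hence (by taking products and using that the $X_i$ with small index appear) every monomial, is a $B$-polynomial in monomials of the form $\tau(\text{monomial})$; iterating and controlling depth and the lower index $\nu$ via Lemma \ref{lemmanumu} shows the resulting formal series lies in $\Rring{B}{X}$. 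The finiteness statement again guarantees the termwise construction converges. Finally, for the last assertion — solving $\tau(G)-G=F$ — I would again work monomial-by-monomial: for a single monomial $c\Angle{X}^j$ with $v$-value $-j<0$, the operator $\tau-1$ is invertible by the geometric-series trick $G=-\sum_{n\ge 0}\tau^n(F_{\text{monomial}})$, which converges because $v(\tau^n(\cdot))=q^n v(\cdot)\to -\infty$; for the constant part ($v=0$) one uses the hypothesis (from inversivity of $(B,\tau)$, but more precisely one needs $\tau-1$ surjective on $B$, which should be arranged or is part of the standing hypotheses on $B$) to solve it in $B$. Assembling these solutions and checking the sum lies in $\Rring{B}{X}$ (depths and indices again bounded by Lemma \ref{lemmanumu}) completes the proof.

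The main obstacle I anticipate is bookkeeping in step (b)/(c): verifying that the infinite products and the $(\tau-1)^{-1}$ series genuinely stay inside $\Rring{B}{X}$, i.e.\ that the carry-over corrections coming from $X_i^q=X_{i-1}-\Theta X_i$ (illustrated by formula \eqref{aformula1}) do not blow up the depth $\lambda$ or push the lower index $\nu$ down without bound. This is exactly what Lemma \ref{lemmanumu} is designed to control, so the argument is essentially a careful induction on depth combined with the reverse-well-ordering of $v$-values; the potential trap is an off-by-one in the index shifts, which the third clause of Lemma \ref{lemmanumu} (the "$\nu$ drops by at most $1$") is there to handle.
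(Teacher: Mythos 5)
Your overall architecture (define the product monomial-by-monomial, extend $v$, build $\tau^{-1}(X_j)$ by iterating the defining relation, solve $\tau(G)-G=F$ by a geometric series) matches the paper's, and your inversivity step --- rewriting $X_{i-1}=\tau(X_i)+\Theta X_i$ and iterating to get $X_{i-1}=\tau\bigl(\sum_{k\geq0}\Theta^{k/q}X_{i+k}\bigr)$ --- reproduces exactly the paper's explicit series $Y_j$. However, the ``finiteness statement'' on which you base the well-definedness of the product is false. Take $F=\sum_{i\geq1}X_i$, which lies in $\Rringcirc{B}{X}$ (depth $1$, all indices $\geq1$): its monomials have $v$-values $-q^{-i}$, accumulating at $0$ from below, so for any $c<0$ \emph{all but finitely many} of its monomials have $v$-value $\geq c$. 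The set of $v$-values of an element of $\Rring{B}{X}$ is bounded below but in general neither well-ordered nor reverse-well-ordered, so ``bounded $v$-value $\Rightarrow$ finitely many pairs $(j,k)$'' fails: for $F=G=\sum_i X_i$ and target value $-q^{-1}$ there are infinitely many pairs $(q^{-1},q^{-b})$ satisfying your inequality. The finiteness of the contributions to a fixed monomial $\Angle{X}^m$ is instead a consequence of the combinatorics of the carries $X_i^q=X_{i-1}-\Theta X_i$: bounded depth forces a bounded number of carries, each lowering the index by at most one, which is what Lemma \ref{lemmanumu} encodes and how the paper argues (after decomposing $F$ into finitely many sub-series supported on chains of exponents of the form $p^{-i}j_n$).

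Your geometric series for $(\tau-1)^{-1}$ also points the wrong way. The series $-\sum_{n\geq0}\tau^n(c\Angle{X}^j)$ is not an element of $\Rring{B}{X}$: since $\tau(X_i)=X_{i-1}-\Theta X_i$, the term $\tau^n(\Angle{X}^j)$ involves variables $X_{i-n}$ whose indices tend to $-\infty$, violating condition (2) of Definition \ref{def-BX}, and each fixed monomial then receives infinitely many contributions, a divergent sum in $B$. (Note also that in a valued ring a series converges when the valuations of its terms tend to $+\infty$; your criterion ``$v(\tau^n(\cdot))=q^nv(\cdot)\to-\infty$'' is the wrong direction, and in any case membership in $\Rring{B}{X}$ is a formal, not $v$-adic, condition.) The correct solution --- the paper's, and the one for which the inversivity hypothesis is actually needed --- is $G=\sum_{n\geq1}\tau^{-n}(F)$: here $\tau^{-n}(F)$ involves only variables of index at least $\nu(F)+n$, so each monomial of $G$ receives finitely many contributions, the depth stays bounded by $\lambda(F)$, and $\tau(G)-G=F$ by telescoping. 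Your remark that the constant term requires surjectivity of $\tau-1$ on $B$ is a fair observation (the paper glosses over it), but it does not repair the forward series for the non-constant part.
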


\begin{proof}
Let $F$ be an element of $\Rring{B}{X}\setminus\{0\}$. By the condition (1) of Definition \ref{def-BX}, there exists $h\in\NN^*$ and $j_1,\ldots,j_h\in\ZZ[\frac{1}{p}]_{\geq 0}$ such that
\begin{equation}\label{a-special-type}
F=\sum_{n=1}^hF_n,\end{equation}
where $F_n\in \Rring{B}{X}\setminus\{0\}$ is of the form
$$F_n=\sum_{i\geq 0}F_{i,n}\Angle{X}^{p^{-i}j_n},\quad F_{i,n}\in B.$$
Consider two elements $r,s\in \ZZ[\frac{1}{p}]_{\geq 0}$ and $i,j\in\ZZ$. By Lemma \ref{lemmanumu}, $\lambda(\Angle{X}^{p^{-i}r}\Angle{X}^{p^{-j}s})$ is uniformly bounded in the dependence of $i,j$. This means that the product of two series as in (\ref{a-special-type}) defines an element of $\Rring{B}{X}$ and therefore, 
$\Rring{B}{X}$ carries a structure of $B$-algebra,
extending the structure of $\Ring{B}{X}$. Also the fact that 
the valuation $v$ extends to a valuation $\Rring{B}{X}$, with same image, is easily verified.

Assuming now that $(B,\tau)$ is inversive, we observe that $Y_j:=\sum_{i\geq j}{\Theta}^{\frac{i-j}{q}}X_{i+1}\in \Rring{B}{X}$ for all $j\in\ZZ$, with ${\Theta}^{\frac{1}{q}}$ the $q$-th root of ${\theta}$ in $B$ (which is inversive by hypothesis, and therefore contains $\{x^{\frac{1}{p}}:x\in\iota(A)\}$), satisfies 
$Y_j^q=X_j$. Indeed, 
\begin{eqnarray*}
Y_j^q &=&\sum_{i\geq j}\Theta^{i-j}X_{i+1}^q\\
&=&\sum_{i\geq j}\Theta^{i-j}(X_i-\Theta X_{i+1})\\
&=&X_j.
\end{eqnarray*}
Therefore, inductively, if we set:
\begin{equation}\label{seriesF}
Y_{j,r}:=\sum_{i_1>\cdots>i_r\geq j}\Theta^{\frac{i_1-i_2-1}{q}+\frac{i_2-i_3}{q^2}\cdots+\frac{i_r-j}{q^r}}X_{i_1+1},
\end{equation}
then $Y_{j,r}^{q^r}=X_j$ for all $r\geq 0$ and $j\in\ZZ$.
Now consider 
$F=\sum_{j\in\ZZ[\frac{1}{p}]_{\geq 0}}F_j\Angle{X}^j\in \Rring{B}{X}$, with $\lambda(F)\leq L$.
Let $j$ be such that $F_j\neq0$ and write $$\langle\underline{Y}_r\rangle^j=\prod_{k\in \ZZ}Y_{k,r}^{j_k}.$$ Note that this is a well defined element of $\Rring{B}{X}$, due to the fact that $\lambda(Y_{j,r})=1$ for all $r,j$. It is therefore easy to see that the formal series 
$$g_r=\sum_{j\in\ZZ[\frac{1}{p}]_{\geq 0}}\tau^{-r}(F_j)\langle \underline{Y}_r\rangle^j$$
defines an element of $\Rring{B}{X}$ and $\tau^r(g_r)=f$. The last assertion of the proposition follows easily from the fact that if $F\in \Rring{B}{X}$, then
$$G=\sum_{i\geq 0}\tau^{-i}(F)$$
defines an element of $\Rring{B}{X}$ satisfying $\tau(G)-G=F$.
\end{proof}

\subsubsection{Depth homogeneity}

We denote by $\Rring{B}{X}_s$ the $B$-submodule of $\Rring{B}{X}$ whose elements
are the formal series $F$ as in (\ref{prototypeF}) such that 
if $F_j\neq0$, then $\Angle{X}^j$ has depth equal to $s$, i.e. $\ell_q(j)=s$.
It is easy to see that 
\begin{equation}\label{grading-T}
\Rring{B}{X}=\bigoplus_{s\geq 0}\Rring{B}{X}_s
\end{equation}
 as a $B$-module. If $F\in\Rring{B}{X}$, we
can expand in finite sum and in a unique way 
\begin{equation}\label{decomposition-f-T}
F=\sum_{s\geq 0}F^{[s]},\end{equation} where $F^{[s]}\in\Rring{B}{X}_s$. 
Moreover, we have the next Lemma, not used in the present text, the proof of which is left to the reader.

\begin{Lemma} For any $s\geq 0$, $\tau$ induces an endomorphism of the $B$-module $\Rring{B}{X}_s$.
\end{Lemma}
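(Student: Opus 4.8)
The plan is to reduce the statement to the behaviour of $\tau$ on a single reduced monomial and then to follow the $\ell_q$-grading through the rewriting rules $X_i^q=X_{i-1}-\Theta X_i$. First I would observe that $\tau$ is additive and $B$-semilinear, $\tau(bF)=\tau(b)\tau(F)$ with $\tau(b)\in B$, and that multiplying a monomial $\Angle{X}^j$ by a scalar of $B$ leaves $j$ unchanged; since $\Rring{B}{X}_s$ is a $B$-submodule of $\Rring{B}{X}$ closed under the convergent sums that occur there (Proposition \ref{B-alg-structure}), it suffices to prove $\tau(\Angle{X}^j)\in\Rring{B}{X}_s$ for every reduced monomial $\Angle{X}^j$ with $\ell_q(j)=s$. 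Writing $j=\sum_k j_kq^{-k}$ with $0\leq j_k\leq q-1$ and only finitely many $j_k$ nonzero, and using that $\tau$ is a ring endomorphism of $\Ring{B}{X}$ with $\tau(X_k)=X_k^q=X_{k-1}-\Theta X_k$, one gets
$$\tau(\Angle{X}^j)=\prod_k\bigl(X_{k-1}-\Theta X_k\bigr)^{j_k}.$$

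Next I would expand by the binomial formula: $\tau(\Angle{X}^j)$ becomes a $B$-linear combination of monomials $\prod_m X_m^{e_m}$, where $e_m=i_{m+1}+(j_m-i_m)$ for choices of integers $0\leq i_k\leq j_k$, and each such monomial has total $X$-degree $\sum_m e_m=\sum_m j_m=s$ by a telescoping sum. It then remains to put each of these monomials into canonical form using the relations $X_m^q=X_{m-1}-\Theta X_m$ and to verify that the reduced monomials produced all have $\ell_q=s$. In the case where no exponent $e_m$ ever reaches $q$ — equivalently, when adjacent base-$q$ digits of $j$ satisfy $j_m+j_{m+1}\leq q-1$ — the monomials $\prod_m X_m^{e_m}$ are already reduced and have $\ell_q=s$ by construction, so the claim holds without further work.

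The main obstacle is the carry case, and this is the point I would examine most carefully before committing to depth-homogeneity. A single rewriting $X_m^{e_m}=X_m^{e_m-q}(X_{m-1}-\Theta X_m)$ replaces a degree-$q$ factor by degree-one terms, hence strictly lowers the total $X$-degree, so a priori it only yields reduced monomials with $\ell_q\leq s$; to recover \emph{exact} depth-homogeneity one must show that the base-$q$ carries produced in the exponents compensate this drop precisely, for every monomial arising in the expansion above — the kind of bookkeeping already visible in Remark \ref{no-carry-over} and Lemma \ref{lemmanumu}. I would first test this against small cases (for instance $q=2$ with $j=q^{-1}+q^{-2}$, where $\tau(\Angle{X}^j)=\tau(X_1X_2)$ must be analysed after reducing $X_1^2$); should the carries fail to compensate, the robust replacement is the filtered version, namely that $\tau$ carries $\Rring{B}{X}_s$ into $\bigoplus_{s'\leq s}\Rring{B}{X}_{s'}$, which follows at once from the fact that the rewriting rules never raise the degree.
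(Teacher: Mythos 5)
The paper offers no proof to compare against: the Lemma is stated as ``not used in the present text, the proof of which is left to the reader'', so your blind attempt is the only argument on the table. Your reduction to a single reduced monomial, the bookkeeping $e_m=i_{m+1}+(j_m-i_m)$ with $\sum_m e_m=s$, and your identification of the carry case $j_m+j_{m+1}\geq q$ as the only possible obstruction are all correct. The one genuine gap in your write-up is that you stop at ``I would first test this against small cases'' without running the test; the outcome of that test decides whether the statement is a lemma or an erratum, so you must run it.

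Running your own test case settles the matter, and your suspicion is vindicated: the carries do \emph{not} compensate, and the Lemma as literally stated is false. For $q=2$ one has
$$\tau(X_1X_2)=X_1^2X_2^2=(X_0-\Theta X_1)(X_1-\Theta X_2)=X_0X_1-\Theta X_0X_2-\Theta X_1^2+\Theta^2X_1X_2,$$
and reducing $X_1^2=X_0-\Theta X_1$ leaves the depth-one remainder $-\Theta X_0+\Theta^2X_1$, which is nonzero whenever $\Theta\neq0$; hence $\tau(X_1X_2)\notin\Rring{B}{X}_2$. The phenomenon is not special to $q=2$: for $q=3$ the monomial $X_1^2X_2^2$ of depth $4$ produces, via the terms with $X_1$-exponent $\geq 3$, a depth-two component $\Theta X_0^2-\Theta^3X_1^2+\Theta^3X_0X_2-\Theta^4X_1X_2$ that does not cancel. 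In general a single reduction $X_m^q=X_{m-1}-\Theta X_m$ trades total degree $q$ for total degree $1$, so depth can only drop by multiples of $q-1$, and it genuinely does. Your fallback is therefore the correct statement: $\tau$ maps $\Rring{B}{X}_s$ into $\bigoplus_{j\geq0}\Rring{B}{X}_{s-j(q-1)}$, i.e.\ it preserves the filtration by depth but not the grading. This is exactly what the Remark immediately following the Lemma predicts, since $\tau(\Angle{X}^j)=\prod_i(X_{i-1}-\Theta X_i)^{j_i}$ is a product of $s$ elements of depth one. You should state and prove the filtered version and flag the Lemma as erroneous as written.
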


\begin{Remark}{\em 
The $B$-algebra $\Rring{B}{X}$ is not graded by the depths. Instead, we have that
$$\Rring{B}{X}_s\Rring{B}{X}_{s'}\subset\bigoplus_{j\geq 0}\Rring{B}{X}_{s+s'-j(q-1)},$$
where we set $\Rring{B}{X}_s=\{0\}$ if $s<0$.}\end{Remark}

\subsubsection{The case of $B$ a field}\label{case-B-field}

We suppose here that $B=L$ is a field together with an embedding $A\rightarrow L$. We write $$\Rringcirc{L}{X}=\{F\in \Rring{L}{X}:\nu(F)\geq 1\}\cup L.$$ Only the variables $X_1,X_2,\ldots$ occur in the series defining 
$\Rringcirc{L}{X}$.
This is an $L$-vector space and we have $$v(\Rringcirc{L}{X})=\ZZ[p^{-1}]\cap]-1,0].$$ In particular, $\Rringcirc{L}{X}$ is not a ring.
We set $$X:=X_0.$$ One sees that for any $f\in \Rring{L}{X}$ there exist
$n$ and $f_0,\ldots,f_n\in \Rringcirc{L}{X}$ such that
$f=f_0+f_1X+\cdots+f_nX^n$, and this expression is unique. We can write, loosely:
$$\Rring{L}{X}=\Rringcirc{L}{X}[X].$$

We now consider $\widehat{\operatorname{Frac}(\Rring{L}{X})}_v$, the completion for the valuation $v$ of the fraction field of $\Rring{L}{X}$. 
\begin{Proposition}\label{descriptionfieldofunif-bis}
Every element $f$ of $\widehat{\operatorname{Frac}(\Rring{L}{X})}_v$ can be expanded in a unique way as a sum
$$f=\sum_{i\geq i_0}f_iX^{-i},\quad f_i\in \Rringcirc{L}{X}.$$
\end{Proposition}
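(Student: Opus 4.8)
The plan is to describe explicitly the set $\mathcal{S}$ of those elements of $\widehat{\operatorname{Frac}(\Rring{L}{X})}_v$ that admit an expansion of the announced shape, and to show that $\mathcal{S}$ equals all of $\widehat{\operatorname{Frac}(\Rring{L}{X})}_v$. Everything rests on one elementary remark: since $v(X)=-1$ and $v(\Rringcirc{L}{X})=\ZZ[p^{-1}]\cap\,]-1,0]$, a nonzero term $f_iX^{-i}$ with $f_i\in\Rringcirc{L}{X}$ satisfies $v(f_iX^{-i})\in\,]i-1,i]$, so the valuation ranges attached to distinct indices $i$ are pairwise disjoint. Two consequences follow at once. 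First, for any choice of $f_i\in\Rringcirc{L}{X}$ the terms of $\sum_{i\ge i_0}f_iX^{-i}$ have valuations tending to $+\infty$, the series converges, and its sum lies in $\widehat{\operatorname{Frac}(\Rring{L}{X})}_v$; let $\mathcal{S}$ be the $L$-vector space of all sums so obtained. Second, for such a convergent sum $v\bigl(\sum_i f_iX^{-i}\bigr)=\min_i v(f_iX^{-i})$, attained exactly at the least index $i$ with $f_i\ne 0$ (the minimum being unique, the ranges being disjoint); in particular the sum vanishes only if every $f_i$ does, which is the asserted uniqueness. It remains to prove $\mathcal{S}=\widehat{\operatorname{Frac}(\Rring{L}{X})}_v$.

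I would obtain this by checking that $\mathcal{S}$ is a $v$-closed subring of $\widehat{\operatorname{Frac}(\Rring{L}{X})}_v$ which contains $\Rring{L}{X}$ together with the inverse of every nonzero element of $\Rring{L}{X}$; then $\mathcal{S}\supseteq\operatorname{Frac}(\Rring{L}{X})$, and as the latter is $v$-dense in its completion while $\mathcal{S}$ is closed, equality follows and, combined with the uniqueness above, the proposition is proved. Containment of $\Rring{L}{X}$ is just the description $\Rring{L}{X}=\Rringcirc{L}{X}[X]$ recalled above, a polynomial $\sum_{d=0}^{n}a_dX^d$ with $a_d\in\Rringcirc{L}{X}$ being viewed as $\sum_{i=-n}^{0}a_{-i}X^{-i}$, and stability under addition is immediate since $\Rringcirc{L}{X}$ is an $L$-vector space. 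For stability under multiplication the crucial---and only genuinely delicate---point is that the product in $\Rring{L}{X}$ of two elements $f,g\in\Rringcirc{L}{X}$ has $X$-degree at most $1$: writing $fg=\sum_{d\ge 0}h_dX^d$ with $h_d\in\Rringcirc{L}{X}$ in the unique way, the nonzero $h_dX^d$ have distinct valuations, so $v(fg)=\min_d v(h_dX^d)\le -d$ whenever $h_d\ne 0$, whereas $v(fg)=v(f)+v(g)>-2$; hence $h_d=0$ for $d\ge 2$. Thus the product of two terms $f_iX^{-i}$ and $g_jX^{-j}$ is supported in the indices $i+j$ and $i+j-1$ only; multiplying two elements of $\mathcal{S}$ and collecting the contributions to a fixed index $k$ one gets a finite sum (the conditions $i+j=k$ and $i+j=k+1$ cut out finite index sets) of elements of $\Rringcirc{L}{X}$, hence again an element of $\Rringcirc{L}{X}$, and the whole double family converges because $i+j\to\infty$. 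Finally $\mathcal{S}$ is $v$-closed: for a Cauchy sequence $g^{(m)}=\sum_i f^{(m)}_iX^{-i}$ its boundedness yields a common lower bound $i_0$ for the supports, and $v(g^{(m)}-g^{(m')})\ge N$ forces, by disjointness of ranges, $v(f^{(m)}_i-f^{(m')}_i)\ge N-i>0$ for each $i<N$, hence $f^{(m)}_i=f^{(m')}_i$ since $\Rringcirc{L}{X}$ has no nonzero element of positive valuation; so each coefficient is eventually constant, equal to some $f_i\in\Rringcirc{L}{X}$, and $g^{(m)}\to\sum_{i\ge i_0}f_iX^{-i}\in\mathcal{S}$.

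The remaining and most delicate ingredient---the main obstacle---is that $\mathcal{S}$ contains the inverse of every nonzero $h\in\Rring{L}{X}$, the difficulty being that $\Rringcirc{L}{X}$ is not a ring, so one cannot simply divide by a leading coefficient inside it. The remedy is that inverting a variable costs only one factor of $X^{-1}$, which stays in $\mathcal{S}$: the defining relations $X_i^{q}+\Theta X_i=X_{i-1}$ give $X_i^{-1}=(X_i^{q-1}+\Theta)X_{i-1}^{-1}$, and as $X_i^{q-1}+\Theta\in\Rringcirc{L}{X}\subset\mathcal{S}$ for $i\ge 1$ and $X_0^{-1}=X^{-1}\in\mathcal{S}$, induction on $i$ gives $X_i^{-1}\in\mathcal{S}$ for all $i\ge 1$, whence, $\mathcal{S}$ being a ring, $\mathcal{S}$ contains $(\Angle{X}^{j})^{-1}$ for every monomial $\Angle{X}^{j}$ with $\nu\ge 1$. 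Next, for $a\in\Rringcirc{L}{X}\setminus\{0\}$ with (unique) term $a_{j^{*}}\Angle{X}^{j^{*}}$ of minimal valuation, write $a=a_{j^{*}}\Angle{X}^{j^{*}}(1+t)$ with $t=a_{j^{*}}^{-1}(\Angle{X}^{j^{*}})^{-1}\bigl(a-a_{j^{*}}\Angle{X}^{j^{*}}\bigr)\in\mathcal{S}$ and $v(t)>0$; since $\mathcal{S}$ is a complete ring, $(1+t)^{-1}=\sum_{k\ge 0}(-t)^{k}\in\mathcal{S}$, so $a^{-1}\in\mathcal{S}$. Finally, for a general $h=\sum_{d=0}^{n}a_dX^d\in\Rring{L}{X}$ with $a_n\ne 0$, the term $a_nX^n$ has strictly minimal valuation, so $h=a_nX^n(1+t')$ with $t'=a_n^{-1}X^{-n}\sum_{d<n}a_dX^d\in\mathcal{S}$ (using $a_n^{-1}\in\mathcal{S}$ from the previous case) and $v(t')>0$, giving $h^{-1}=a_n^{-1}X^{-n}(1+t')^{-1}\in\mathcal{S}$. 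This completes the plan; the only step requiring real work is the inversion argument just sketched.
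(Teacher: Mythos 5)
Your argument is correct and follows essentially the same route as the paper's: the ring structure and completeness of the space of series $\sum_i f_iX^{-i}$ rest on the same key observation that a product of two elements of $\Rringcirc{L}{X}$ has $X$-degree at most one (Lemma \ref{lemmanaturalstructure}), and inversion is reduced, exactly as in Lemma \ref{firstapproach} and the proof in the text, to inverting the monomials via the relations $X_i^q+\Theta X_i=X_{i-1}$ and then expanding a geometric series around the leading monomial of a general element. The only differences are organizational: you make the uniqueness explicit through the disjointness of the valuation ranges $]i-1,i]$ (which the paper leaves implicit), you invert $X_i$ by the exact identity $X_i^{-1}=(X_i^{q-1}+\Theta)X_{i-1}^{-1}$ rather than by a geometric series, and you absorb the inversion of the $X_{-n}$ with $n>0$ into the general polynomial case instead of treating it separately.
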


\begin{Remark}{\em Note that in the above expansion the depths of the coefficients $f_i$ may be unbounded in their dependence on $i$.}
\end{Remark}

We can write $\Rringcirc{L}{X}((X^{-1}))$ for the set of the formal series $f=\sum_{i\geq i_0}f_iX^{-i}$ as above, with $f_i\in \Rringcirc{L}{X}$ for all $i$, with the warning that this is not a field for the usual Cauchy product rule of formal series, since, as pointed out previously, $\Rringcirc{L}{X}$ is not a ring but just an $L$-vector space. The proposition tells us that this set in fact
carries a structure of complete field, and equals $\widehat{\operatorname{Frac}(\Rring{L}{X})}_v$, but the product rule is not the Cauchy's one. To prove the proposition we will need the next two Lemmas. The first one describes the valued ring structure of $\Rringcirc{L}{X}((X^{-1}))$.

\begin{Lemma}\label{lemmanaturalstructure}
The set $\Rringcirc{L}{X}((X^{-1}))$ has a structure of commutative ring with unit, over which the valuation $v$ extends in a unique way from $\Rringcirc{L}{X}$, and which is complete for it.
\end{Lemma}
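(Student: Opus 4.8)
The plan is to exhibit the multiplication on $\Rringcirc{L}{X}((X^{-1}))$ directly and then verify the ring axioms, the existence and uniqueness of the extension of $v$, and completeness. The key point is that although $\Rringcirc{L}{X}$ is not a ring, the product of two elements of it lands in $\Rring{L}{X}$ (here I use Proposition \ref{B-alg-structure}, which gives $\Rring{L}{X}$ the structure of a difference $B$-algebra), and by the remark following \S\ref{case-B-field}, namely $\Rring{L}{X}=\Rringcirc{L}{X}[X]$, every element of $\Rring{L}{X}$ can be written uniquely as $g_0+g_1X+\cdots+g_nX^n$ with $g_i\in\Rringcirc{L}{X}$. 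So for $f=\sum_{i\geq i_0}f_iX^{-i}$ and $g=\sum_{j\geq j_0}g_jX^{-j}$ I would first form, for each pair $(i,j)$, the product $f_ig_j\in\Rring{L}{X}$, decompose it as $\sum_{k=0}^{n_{ij}} c^{(ij)}_k X^k$ with $c^{(ij)}_k\in\Rringcirc{L}{X}$, and then define the coefficient of $X^{-m}$ in $fg$ to be $\sum_{i+j-k=m} c^{(ij)}_k$. The first task is to check this sum is finite for each $m$: since $v(f_ig_j)\geq v(f_i)+v(g_j) > -2$ (each factor has $v>-1$) and $v(c^{(ij)}_k X^k)=v(c^{(ij)}_k)-k$, one has $k\leq 1$ forced by $v(c^{(ij)}_k)>-1$, actually $k\in\{0,1\}$, and combined with $i\geq i_0$, $j\geq j_0$ this bounds the contributing triples; hence the coefficient is a well-defined element of $\Rringcirc{L}{X}$, and $fg\in\Rringcirc{L}{X}((X^{-1}))$.

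Next I would verify that this product is commutative, associative, bilinear, and has $1$ as unit. Commutativity and bilinearity are immediate from the corresponding properties of the product on $\Rring{L}{X}$ and the distributivity of the decomposition map $\Rring{L}{X}\to\Rringcirc{L}{X}[X]$. Associativity is the only axiom requiring care: I would argue that the product just defined is nothing but the restriction of the product on the larger field $\widehat{\operatorname{Frac}(\Rring{L}{X})}_v$ — more precisely, $\Rringcirc{L}{X}((X^{-1}))$ is closed under the product induced from $\operatorname{Frac}(\Rring{L}{X})$, and on that fraction field associativity is automatic. (This is slightly circular with the later Proposition \ref{descriptionfieldofunif-bis}, so to be safe I would instead check associativity by a direct bookkeeping argument: for $f,g,h$ the coefficient of $X^{-m}$ in $(fg)h$ and in $f(gh)$ are both obtained by summing, over all ways of writing $m$ as $i+j+\ell-k$ with appropriate $k$, the depth-$X$ decompositions of $f_ig_jh_\ell\in\Rring{L}{X}$, using associativity of the product on $\Rring{L}{X}$ together with the fact that the decomposition $\Rring{L}{X}=\Rringcirc{L}{X}[X]$ is multiplicative up to the relations $X_i^q=X_{i-1}-\Theta X_i$, which only move powers of $X=X_0$ in a controlled way.)

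For the valuation: I would set $v(f)=\min\{\,v(f_i)-i : f_i\neq 0\,\}$ for $f=\sum_{i\geq i_0}f_iX^{-i}$, observing this minimum exists because the indices $i$ with $f_i\neq0$ are bounded below and $v(f_i)-i$ takes values in the discrete-below set $\ZZ[p^{-1}]\cap(-1,0]$ shifted by $-i$. The ultrametric inequality $v(f+g)\geq\min(v(f),v(g))$ is clear coefficient-wise. Multiplicativity $v(fg)=v(f)+v(g)$ follows because the leading term (lowest $v$) of $fg$ comes from the product of the leading terms of $f$ and $g$: if $f_{i^*}$ (resp. $g_{j^*}$) realizes the minimum for $f$ (resp. $g$) with the further convention of taking, among those, the one of largest $\mu$ or an appropriate tie-break, then $v(f_{i^*}g_{j^*})=v(f_{i^*})+v(g_{j^*})$ by Proposition \ref{B-alg-structure}/Lemma \ref{valuation-BB}, and no cancellation can raise the valuation of that term because all other contributions to that same $X$-degree have strictly larger $v$. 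Uniqueness of the extension of $v$ from $\Rringcirc{L}{X}$ is forced by multiplicativity together with $v(X)=v(X_0)=-1$. Finally, completeness: given a $v$-Cauchy sequence in $\Rringcirc{L}{X}((X^{-1}))$, for each $i$ the $X^{-i}$-coefficients form a sequence in $\Rringcirc{L}{X}$ that is eventually constant in any bounded range of depths (since $v$-convergence forces the tame-series coefficients to stabilize at each fixed $v$-level, and $\Rringcirc{L}{X}$, being built from the complete field $L$ with the discrete-below depth filtration, is $v$-complete in the relevant sense), and the limiting coefficients assemble to an element of $\Rringcirc{L}{X}((X^{-1}))$ because the lower bound on indices is preserved. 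The main obstacle I anticipate is precisely the associativity check and the careful handling of the relations $X_i^q=X_{i-1}-\Theta X_i$ when decomposing products along powers of $X_0$ — everything else is routine ultrametric bookkeeping.
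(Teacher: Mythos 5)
Your construction is essentially the paper's: the paper likewise forms the Cauchy coefficients $h_k=\sum_{i+j=k}f_ig_j\in\Rring{L}{X}$, observes that $v(h_k)\in\,]-2,0]\cup\{\infty\}$, writes $h_k=\alpha_kX+\beta_k$ with $\alpha_k,\beta_k\in\Rringcirc{L}{X}$ and regroups, then defines $v$ via the leading coefficient and proves completeness by a telescoping/stabilization argument; your pair-by-pair decomposition of each $f_ig_j$ is the same computation in a different order. One slip to correct: your formula $v(f)=\min\{v(f_i)-i\}$ has the wrong sign. Since $v(X_0)=-1$, one has $v(X^{-i})=+i$, so the correct formula is $\min\{v(f_i)+i\}=v(f_{i_0})+i_0$ (as in the paper); with the minus sign the set $\{v(f_i)-i\}$ is unbounded below for an infinite series and the minimum does not exist. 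Your own justification (indices bounded below, values confined to disjoint intervals $]i-1,i]$) is exactly the one that works for the corrected formula, so nothing downstream is affected, and the tie-break you mention is unnecessary because the minimizing index is unique. Finally, you are right that deducing associativity from Proposition \ref{descriptionfieldofunif-bis} would be circular; the paper simply asserts the ring structure after exhibiting the product, so your direct bookkeeping check (or, more cleanly, continuity of the product together with associativity on the dense subring $\Rring{L}{X}[X^{-1}]$) is the honest way to fill in that step.
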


\begin{proof}
Since $\Rringcirc{L}{X}$ is an $L$-vector space, in order to show that $\Rringcirc{L}{X}((X^{-1}))$ is a ring, all we need to do is to show that the product of $\Rring{L}{X}$ extends to a product structure
on $\Rringcirc{L}{X}((X^{-1}))$. Let $f=\sum_{i\geq i_0}f_iX^{-i}$ and $g=\sum_{j\geq j_0}g_jX^{-j}$ be two elements of $\Rringcirc{L}{X}((X^{-1}))$. We note that $h_k:=\sum_{i+j=k}f_ig_j\in \Rring{L}{X}$
has valuation in $]-2,0]\cup\{\infty\}$ and we can write $h_k=\alpha_k X+\beta_k$, with $\alpha_k,\beta_k\in \Rringcirc{L}{X}$. We define
$$h=fg=\sum_{k\geq k_0:=i_0+j_0}X^{-k}h_k=\sum_{k\geq k_0}\alpha_kX^{1-k}+\sum_{k\geq k_0}\beta_kX^{-k}\in \Rringcirc{L}{X}((X^{-1})).$$ From this, we obtain the required ring structure.
If $f=\sum_{i\geq i_0}f_iX^{-1}\in \Rringcirc{L}{X}((X^{-i}))$ is such that $f_{i_0}\in\Rringcirc{L}{X}\setminus\{0\}$, then we set $v(f):=v(f_{i_0})+i_0\in ]i_0-1,i_0]$ and it is plain that $v$ defines a valuation over the ring 
$\Rringcirc{L}{X}((X^{-1}))$ and that every such series of $\Rringcirc{L}{X}((X^{-1}))$ converges for this valuation.

Note that $f=\sum_if_iX^{-i}\in \Rringcirc{L}{X}((X^{-1}))$ is such that $v(f)>N$ where $N$ is characterised by the following condition: the smallest $i_0$
such that $f_{i_0}\neq0$ is such that $i_0\geq N+1$. This is meaningful, indeed, if $f_{i_0}\in \Rringcirc{L}{X}\setminus\{0\}$, $v(f_{i_0}X^{-i})\in]i_0-1,i_0]$. Thus, if $(F_k)_k$ is a Cauchy sequence of $\Rringcirc{L}{X}((X^{-1}))$, the sequence
$(F_0-F_k)_k=(\sum_{i=1}^k(F_{i-1}-F_i))_k$ converges to an element of $\Rringcirc{L}{X}((X^{-1}))$ which is then complete.
\end{proof}
We introduce the ring:
$$\Rringbullet{L}{X}:=\Rring{L}{X}\Big[(\Angle{X}^{j})^{-1}:j\in\ZZ[p^{-1}]_{\geq 0}\Big]=\Rring{L}{X}[X_i^{-1}:i\in\ZZ],$$ which contains $\Rring{L}{X}$. 
Every element $f$
of $\Rringbullet{L}{X}$ has a well defined valuation $v(f)$ in $\ZZ[\frac{1}{p}]$. To see this we note that for every $g\in \Rringbullet{L}{X}$, there exists $j\in\ZZ[\frac{1}{p}]_{\geq 0}$ such that $\Angle{X}^jg\in \Rring{L}{X}$ and this provides the unique extension of the valuation map over $\Rringbullet{L}{X}$.  

\begin{Lemma}\label{firstapproach}
We have $\Rringbullet{L}{X}\subset \Rringcirc{L}{X}((X^{-1}))$.
\end{Lemma}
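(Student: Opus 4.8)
The plan is to verify that every one of the generators adjoined in passing from $\Rring{L}{X}$ to $\Rringbullet{L}{X}$ already lies in $\Rringcirc{L}{X}((X^{-1}))$, and then to conclude using Lemma~\ref{lemmanaturalstructure}, which tells us that $\Rringcirc{L}{X}((X^{-1}))$ is a ring: a ring containing $\Rring{L}{X}$ in which every $X_i$ is invertible necessarily contains $\Rringbullet{L}{X}=\Rring{L}{X}[X_i^{-1}:i\in\ZZ]$, and since $\Rring{L}{X}$ is a domain (it is a valued ring by Proposition~\ref{B-alg-structure}) the inverses so produced agree with those taken in $\operatorname{Frac}(\Rring{L}{X})$, so the inclusion is the natural one.

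First I would record $\Rring{L}{X}\subset\Rringcirc{L}{X}((X^{-1}))$: by the identity $\Rring{L}{X}=\Rringcirc{L}{X}[X]$ noted in \S\ref{case-B-field}, every element of $\Rring{L}{X}$ is a polynomial in $X=X_0$ with coefficients in $\Rringcirc{L}{X}$, and both $X_0$ and $\Rringcirc{L}{X}$ itself sit inside the ring $\Rringcirc{L}{X}((X^{-1}))$. It then remains to invert each $X_i$ inside $\Rringcirc{L}{X}((X^{-1}))$. For $i=0$ this is immediate. For $i\geq 1$ I would use the defining relation in the form $X_k(X_k^{q-1}+\Theta)=X_k^q+\Theta X_k=X_{k-1}$, together with a short telescoping computation, to check that $Y_i:=X^{-1}\prod_{k=1}^{i}(X_k^{q-1}+\Theta)$ — a genuine element of $\Rringcirc{L}{X}((X^{-1}))$, being a finite product of elements of that ring, since $X_k^{q-1}+\Theta\in\Rringcirc{L}{X}$ for $k\geq 1$ — satisfies $X_iY_i=X^{-1}X_0=1$.

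The delicate case is $i=-n<0$. Iterating $X_{m-1}=X_m^q+\Theta X_m$ downward from $X_0=X$ shows that $X_{-n}$ is the $\FF_q$-linear (additive) polynomial $\sum_{l=0}^{n}b_lX^{q^l}$ with $b_l\in L$ and dominant term $X^{q^n}$ of coefficient $1$ (here the characteristic $p$ enters, via $(u+v)^q=u^q+v^q$). Hence $X_{-n}=X^{q^n}(1-g)$ with $g=-\sum_{l<n}b_lX^{q^l-q^n}\in\Rringcirc{L}{X}((X^{-1}))$ of valuation $v(g)\geq q^{n-1}(q-1)\geq 1$; since $\Rringcirc{L}{X}((X^{-1}))$ is complete (Lemma~\ref{lemmanaturalstructure}) the geometric series $\sum_{m\geq 0}g^m$ converges there to an inverse of $1-g$, and so $X_{-n}^{-1}=(X^{-1})^{q^n}\sum_{m\geq 0}g^m\in\Rringcirc{L}{X}((X^{-1}))$. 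The main obstacle I anticipate is precisely this negative-index case: unlike the variables $X_i$ with $i\geq 0$, the $X_{-n}$ carry large negative $v$-valuation and are honest polynomials in $X$, so one must pin down their dominant monomial $X^{q^n}$ exactly and make sure the resulting expansion lands inside $\Rringcirc{L}{X}((X^{-1}))$ rather than only in the larger $v$-completion; a secondary point to watch throughout is that $\Rringcirc{L}{X}$ is merely an $L$-vector space, not a ring, so all products must be performed in $\Rringcirc{L}{X}((X^{-1}))$.
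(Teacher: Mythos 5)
Your proof is correct, and its overall architecture is the same as the paper's: reduce to inverting each $X_i$ inside the complete ring $\Rringcirc{L}{X}((X^{-1}))$ of Lemma~\ref{lemmanaturalstructure}, handle $X_{-n}$ ($n>0$) by observing that it is an additive polynomial in $X$ with dominant monomial $X^{q^n}$ and expanding a geometric series, and finish with $\Rring{L}{X}=\Rringcirc{L}{X}[X]$. The one genuine difference is the case $i\geq 1$: the paper argues by induction on $i$, writing $X_i^{-1}=X_i^{q-1}X_{i-1}^{-1}\bigl(1-\Theta X_i/X_{i-1}\bigr)^{-1}$ and summing a convergent geometric series in $\Theta X_i/X_{i-1}$, whereas your telescoping identity $X_i\prod_{k=1}^{i}(X_k^{q-1}+\Theta)=X_0$ produces an exact closed-form inverse $Y_i=X^{-1}\prod_{k=1}^{i}(X_k^{q-1}+\Theta)$ as a finite product of elements of $\Rringcirc{L}{X}((X^{-1}))$, so no completeness or convergence is needed for the positive-index variables. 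This is a mild but real simplification of that step; your attention to the facts that $\Rringcirc{L}{X}$ is only a vector space (so products must be taken in $\Rringcirc{L}{X}((X^{-1}))$) and that the inverses agree with those in $\operatorname{Frac}(\Rring{L}{X})$ is exactly the right level of care.
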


\begin{proof}
If $n>0$ we have $X_{-n}^{-1}\in X^{-q^n}(1+X^{-1}A[[X^{-1}]])$ (recall that $C_a(e_C(z))=e_C(az)$ for all $a\in A$) and therefore, $X_{-n}^{-1}\in \Rringcirc{L}{X}((X^{-1}))$ for all $n>0$. Now, we show that $X_i^{-1}\in \Rringcirc{L}{X}((X^{-1}))$
for all $i\geq 0$. Since this is clear for $i=0$, let us assume (induction) that $X_0^{-1},\ldots,X_{i-1}^{-1}\in \Rringcirc{L}{X}((X^{-1}))$. We observe, in the fraction field of $\Rring{L}{X}$:
$$\frac{1}{X_i}=\frac{X_i^{q-1}}{X_i^q}=\frac{X_i^{q-1}}{X_{i-1}-\Theta X_i}=\frac{X_i^{q-1}}{X_{i-1}\left(1-\frac{\Theta X_i}{X_{i-1}}\right)}.$$
Since $v(\Theta X_i/X_{i-1})>0$, the series $\sum_{j\geq0}(\frac{\Theta X_{i}}{X_{i-1}})^j$
converges in $(\Rring{L}{X}[X_{i-1}^{-1}])^\wedge_v$ (completion at $v$) to an element $h$ such that $h(1-\frac{\Theta X_i}{X_{i-1}})=1$.
Now, we have $\Rring{L}{X}[X_{i-1}^{-1}]^\wedge_v\subset \Rringcirc{L}{X}((X^{-1}))$ by our induction hypothesis. Since 
$\Rringcirc{L}{X}((X^{-1}))$ is a ring, $\frac{1}{X_i}=X_i^{q-1}\cdot\frac{1}{X_{i-1}}\cdot h\in \Rringcirc{L}{X}((X^{-1}))$, and more generally, $\frac{1}{\Angle{X}^j}\in \Rringcirc{L}{X}((X^{-1}))$ for all $j$, and the lemma follows remembering that $\Rring{L}{X}=\Rringcirc{L}{X}[X]$.
\end{proof}

\begin{proof}[Proof of Proposition \ref{descriptionfieldofunif-bis}] 
We show that $\operatorname{Frac}(\Rring{L}{X})$ embeds in $(\Rringbullet{L}{X})^\wedge_v$ (completion). To see this, we only need to show that if $f\in \Rring{L}{X}$ is not proportional by an element of $L^\times$ to $\Angle{X}^j$ for some $j\in\ZZ[\frac{1}{p}]_{\geq 0}$, then there exists $g\in(\Rringbullet{L}{X})^\wedge_v$
such that $fg=1$. Now, write $f=\alpha \Angle{X}^j-h$ for some $j$, where $\alpha\in L^\times$ and where $h\in
\Rring{L}{X}$ is such that $v(h)>-j$. Then, the series $\sum_{i\geq 0}(\frac{h}{\alpha \Angle{X}^j})^i$ converges in $(\Rringbullet{L}{X})^\wedge_v$ and we can set $$g=\frac{1}{\alpha \Angle{X}^j}\sum_{i\geq 0}\left(\frac{h}{\alpha \Angle{X}^j}\right)^i\in \widehat{\Rringbullet{L}{X}}_v.$$
By Lemma \ref{firstapproach}, $(\operatorname{Frac}(\Rring{L}{X}))^\wedge_v\subset \Rringcirc{L}{X}((X^{-1}))$ which is complete. On the other hand, any series
$\sum_{i\geq i_0}f_iX^{-i}$ with the coefficients $f_i$ in $\Rringcirc{L}{X}$ converges (for $v$)
and the partial sums are elements of $\Rringbullet{L}{X}[X^{-1}]\subset(\operatorname{Frac}(\Rring{L}{X}))^\wedge_v$ from which we can conclude that
$(\operatorname{Frac}(\Rring{L}{X}))^\wedge_v=\Rringcirc{L}{X}((X^{-1}))$ and also, we note that in this way, $\Rringcirc{L}{X}((X^{-1}))$ carries the structure of a complete, valued field.  
\end{proof}

Note that the field $\Rringcirc{L}{X}((X^{-1}))$ has valuation ring $$L\oplus\widehat{\bigoplus_{i>0}\Rringcirc{L}{X}X^{-i}}$$
and maximal ideal 
$$\widehat{\bigoplus_{i>0}\Rringcirc{L}{X}X^{-i}}.$$
The residual field is $L$.
\subsubsection{The case of $B$ inversive}\label{case-b-inversible}

We suppose here that, in addition to the hypotheses of \S \ref{case-B-field}, $B=L$ is an inversive field
containing $\iota(A)$ such that $\tau(x)=x^q$
for $x\in\iota(A)$. By Proposition \ref{B-alg-structure}, $\Rring{L}{X}$
is inversive. We give some complements on the structure of $\Rring{L}{X}$.

We consider the following set of {\em generalized formal series} in the sense of Kedlaya, \cite{KED}:
$$L^\circ\{\!\{\underline{X}\}\!\}=\left\{f=\sum_{i\in\ZZ[p^{-1}]_{\geq 0}}f_iX^{-i}:f_i\in L\text{ and there exists }c\geq 0\text{ such that }\ell_p(i)\leq c\right\},$$ where $\ell_p(\cdot)$ denotes the sum of the digits in 
the base-$p$ expansion of an integer. Equivalently, $L^\circ\{\!\{\underline{X}\}\!\}$ can be 
described as the set of all the generalized formal series in the indeterminate $t=X^{-1}$ which are supported by the sets
$S_{a,b,c}$ of \cite[\S 3]{KED} with $a=1$, $b=0$ and $c\geq 0$. 

\begin{Lemma}
$L^\circ\{\!\{\underline{X}\}\!\}=\Rringcirc{L}{X}$.
\end{Lemma}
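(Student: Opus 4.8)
The claim is that $L^\circ\{\!\{\underline{X}\}\!\}=\Rringcirc{L}{X}$, i.e.\ the two $L$-vector spaces of generalized formal series in $X^{-1}$ coincide. The plan is to prove the two inclusions separately, translating each condition defining $\Rringcirc{L}{X}$ (namely $\nu(F)\ge 1$ together with bounded depth $\lambda(F)\le L$, modulo $L$ itself) into a statement about the support of the corresponding generalized series in $t=X^{-1}$, and vice versa. The essential point is that each monomial $\Angle{X}^j$ with $j=\sum_{k\geq 1}j_kq^{-k}\in\ZZ[p^{-1}]_{\geq 0}$ and $\ell_q(j)\leq L$, $\nu\geq 1$, expands — via the relations $X_i^q=X_{i-1}-\Theta X_i$, equivalently $X_{-n}^{-1}\in X^{-q^n}(1+X^{-1}A[[X^{-1}]])$ and the identities already used in Lemma \ref{firstapproach} — into a generalized power series in $t$ supported on $S_{1,0,c}$ for a suitable $c$ depending only on $L$; conversely a generalized series supported on such a set can be regrouped into a (possibly infinite) $L$-combination of the monomials $\Angle{X}^j$ of bounded depth.

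\textbf{The inclusion $\Rringcirc{L}{X}\subseteq L^\circ\{\!\{\underline{X}\}\!\}$.} First I would recall, as in the proof of Proposition \ref{descriptionfieldofunif-bis} and Lemma \ref{firstapproach}, that $\Rringcirc{L}{X}\subset\Rringcirc{L}{X}((X^{-1}))=(\operatorname{Frac}(\Rring{L}{X}))^\wedge_v$, so every element of $\Rringcirc{L}{X}$ is in particular a well-defined generalized formal series $\sum_{i}f_iX^{-i}$ with $f_i\in L$. The content is the control on the support: if $F\in\Rringcirc{L}{X}$ has depth $\lambda(F)\le L$, write $F=\sum_j F_j\Angle{X}^j$ over $j\in\ZZ[p^{-1}]_{\geq 0}$ with $\ell_q(j)\le L$ and $\nu\geq 1$. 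Each $\Angle{X}^j=\prod_{k\geq 1}X_k^{j_k}$, and using $X_k^{-0}=1$, $X_1=X_0^{q}\cdot(\dots)^{-1}$... more precisely expand each $X_k$ (for $k\ge 1$) as a generalized power series in $t=X^{-1}$: from $X_1^q+\Theta X_1=X_0=X$ one solves $X_1=X^{1/q}(1+\dots)$ — wait, I must instead use that $X_k = e_C(z/\theta^k)$-type substitutions give $X_k\in X^{q^{-k}}\cdot(1+ t^{>0})$ in the appropriate sense (this is exactly the identity $X_{-n}^{-1}\in X^{-q^n}(1+X^{-1}A[[X^{-1}]])$ read in reverse for positive indices). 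Then $\Angle{X}^j$ has leading term $X^{j}=t^{-j}$ with $j=\sum j_k q^{-k}$, and the $p$-adic digit sum of any exponent appearing is bounded by $\ell_q(j)$ plus contributions from the correction factors, all of which are dominated by a constant $c=c(L)$. Summing over finitely many depth-bounded $j$ (and countably many in the completion) preserves the bound $\ell_p(i)\le c$ on the support. Hence $F\in L^\circ\{\!\{\underline{X}\}\!\}$, with support of negative exponents bounded below (coming from $\nu\ge 1$, i.e.\ only $X_1,X_2,\dots$, giving $v(F)>-1$).

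\textbf{The reverse inclusion and the main obstacle.} Conversely, given $f=\sum_{i\in\ZZ[p^{-1}]_{\geq 0}}f_iX^{-i}$ with $\ell_p(i)\le c$ for all $i$ with $f_i\neq0$, I would show $f\in\Rringcirc{L}{X}$ by exhibiting it as a convergent $L$-combination of monomials $\Angle{X}^j$, $\nu\geq 1$, of depth bounded in terms of $c$. The natural approach is to peel off leading terms: the leading monomial $f_{i_1}X^{-i_1}$ of $f$ should be matched against $f_{i_1}\Angle{X}^{i_1}$ (note $i_1>0$ forces the variables to be among $X_1,X_2,\dots$), subtract, and iterate; the difference $f-f_{i_1}\Angle{X}^{i_1}$ again lies in $L^\circ\{\!\{\underline{X}\}\!\}$ and has strictly larger valuation, and one checks the process converges for $v$. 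The \textbf{main obstacle} is precisely the depth bookkeeping in this subtraction process: although $f$ has bounded $p$-digit-sum support, when one re-expresses a generalized power series in $t=X^{-1}$ through the $X_i$ one must verify that the resulting depths $\ell_q(j)$ stay bounded — the relation $X_i^q=X_{i-1}-\Theta X_i$ only lets a monomial in $X_i$ be rewritten with an increase of the index, and one needs that the $q$-ary (not merely $p$-ary) digit structure of the exponents is compatible with the definition of $S_{1,0,c}$ via $\ell_p$. Since $q=p^e$, one has $\ell_q(j)\le\ell_p(j)\le e\,\ell_q(j)$, so the two digit-sum conditions are equivalent up to the constant $e$; I would spell this comparison out carefully, as it is what makes the "$c\geq 0$" in Kedlaya's $S_{1,0,c}$ match the "there exists $L$" in Definition \ref{def-BX}. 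Once that equivalence of support conditions is established, both inclusions follow, and the statement that $L^\circ\{\!\{\underline{X}\}\!\}$ (as defined via $S_{1,0,c}$) equals $\Rringcirc{L}{X}$ drops out.
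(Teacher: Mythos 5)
Your overall strategy --- translating between the monomials $\Angle{X}^j$ and genuine fractional powers of $X$ by iterating the relations $X_i^q=X_{i-1}-\Theta X_i$, with digit-sum bookkeeping --- is the right one, and it is what the paper's one-line proof (``it follows from the proof of Proposition \ref{B-alg-structure}'') intends; you are also right to read the exponents as positive fractional powers of $X$, i.e.\ with valuations in $]-1,0]$, matching $v(\Rringcirc{L}{X})$. The forward inclusion as you sketch it is essentially fine once you drop the opening appeal to $\Rringcirc{L}{X}((X^{-1}))$: membership there only yields integer exponents with coefficients in $\Rringcirc{L}{X}$, not a generalized series with coefficients in $L$, so it buys nothing; the real content is the expansion $X_k=X^{1/q^k}-\Theta^{1/q}X^{1/q^{k+1}}+\cdots$ that you write down next, each term of $q$-digit sum one.

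The genuine gap is in the reverse inclusion. You propose a leading-term peeling argument and then assert that the only remaining obstacle is the comparison of $\ell_p$ with $\ell_q$; but that comparison alone does not make the peeling work. The process is transfinite on a well-ordered support, $\Rringcirc{L}{X}$ is not defined as a $v$-adically closed set (its nonzero elements all have valuation in $]-1,0]$, so ``convergence for $v$'' of the partial sums is not the relevant notion), and you never verify that the formal object produced satisfies the defining conditions of Definition \ref{def-BX} (each coefficient $F_j$ a finite sum, depths uniformly bounded). The shortcut the paper points to avoids all of this: the proof of Proposition \ref{B-alg-structure} already exhibits $X^{1/q^r}$ as the explicit depth-one element $Y_{0,r}=\sum_{i_1>\cdots>i_r\ge 0}\Theta^{\frac{i_1-i_2-1}{q}+\cdots+\frac{i_r}{q^r}}X_{i_1+1}$ of $\Rringcirc{L}{X}$; so for $i=\sum_r i_rq^{-r}$ one substitutes $X^{i}=\prod_r Y_{0,r}^{i_r}$, an element of $\Rringcirc{L}{X}$ of depth at most $\ell_q(i)$, and only has to check that the resulting double sum regroups into a legitimate series of uniformly bounded depth with finitely many contributions to each monomial --- no greedy extraction needed. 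Finally, your inequality ``$\ell_q(j)\le\ell_p(j)\le e\,\ell_q(j)$'' is backwards: with $q=p^e$, grouping base-$p$ digits into blocks of $e$ gives $\ell_p(j)\le\ell_q(j)\le(q-1)\,\ell_p(j)$ (for instance $j=2/4$ with $q=4$ has $\ell_q=2$ but $\ell_p=1$); the equivalence of the two boundedness conditions, which is all you need, still holds.
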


\begin{proof}
It follows easily from the proof of Proposition \ref{B-alg-structure}.
\end{proof}

\begin{Corollary}
If $L$ is perfect
the completion of the fraction field of $\Rring{L}{X}$ for the valuation $v$ is perfect and has no non-trivial Artin-Schreier extensions.
\end{Corollary}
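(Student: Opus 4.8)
The plan is to first turn the two assertions into concrete statements about the explicit field $\mathfrak{F}:=\widehat{\operatorname{Frac}(\Rring{L}{X})}_v$, and then to prove perfectness and surjectivity of the Artin--Schreier operator directly, the latter being the real work. By Proposition \ref{descriptionfieldofunif-bis} together with the identification $\Rringcirc{L}{X}=L^\circ\{\!\{\underline{X}\}\!\}$, we have $\mathfrak{F}=\Rringcirc{L}{X}((X^{-1}))$; it is a complete valued field with value group $\ZZ[p^{-1}]$ (in particular $p$-divisible, since $v\circ\tau=q\,v$ and $\tau$ is an automorphism of $\Rring{L}{X}$ by Proposition \ref{B-alg-structure}) and with residue field $L$. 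Throughout I would think of an element of $\mathfrak{F}$ as a generalized power series $\sum_i c_i t^i$ over $L$, where $t=X^{-1}$, whose support is a subset of $\ZZ[p^{-1}]$ bounded below and with uniformly bounded $\ell_p$ of the fractional parts (this is exactly the $L^\circ\{\!\{\underline{X}\}\!\}$ condition combined with the Laurent structure in $X^{-1}$).

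For perfectness I would argue directly, using only that $L$ is perfect: given $f=\sum_i c_i t^i\in\mathfrak{F}$, set $g:=\sum_i c_i^{1/p}\,t^{i/p}$. In characteristic $p$ one has $g^{p}=f$, so it remains to see that $g\in\mathfrak{F}$. Its support is $\tfrac1p$ times that of $f$, hence still a subset of $\ZZ[p^{-1}]$ bounded below; and writing an exponent $i=N+r$ with $N\in\ZZ$ and $r$ its fractional part, division by $p$ moves the unit digit $N\bmod p$ of $N$ into the fractional part and shifts the digits of $r$, so $\ell_p$ of the fractional part of $i/p$ exceeds that of $r$ by at most $p-1$. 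Thus the boundedness conditions persist and $g\in\mathfrak{F}$, so Frobenius is surjective on $\mathfrak{F}$.

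For the Artin--Schreier statement it suffices to show that $\wp\colon x\mapsto x^{p}-x$ is surjective on $\mathfrak{F}$. Given $a=\sum_{i\ge i_0}f_iX^{-i}$ with $f_i\in\Rringcirc{L}{X}$, I would split $a=a_-+a_+$ with $a_+=\sum_{i\ge 1}f_iX^{-i}$: since $v(f_i)>-1$, one has $v(a_+)>0$, so the series $b_+:=-\sum_{n\ge 0}a_+^{p^n}$ converges $v$-adically in $\mathfrak{F}$ and $\wp(b_+)=a_+$. The remaining part $a_-=\sum_{i_0\le i\le 0}f_iX^{-i}$ is a genuine element of the ring $\Rring{L}{X}$, hence a \emph{finite} $L$-linear combination $\sum_{j\in S}\alpha_j\Angle{X}^{j}$ with $S\subset\ZZ[p^{-1}]_{\ge 0}$ finite and $\alpha_j\in L$. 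The term $j=0$ lies in $L$ and is dealt with by surjectivity of $\wp$ on the residue field $L$ (automatic when $L$ is algebraically closed, which is the case in all the applications of this corollary). For $j>0$ I would use the $p$-divisibility of the value group and put $b_j:=\sum_{n\ge 1}\bigl(\alpha_j\Angle{X}^{j}\bigr)^{1/p^{n}}$; a telescoping computation gives $b_j^{p}-b_j=\alpha_j\Angle{X}^{j}$, so that $\wp\bigl(b_++\sum_{j}b_j\bigr)=a$, as desired.

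The main obstacle I expect is precisely to check that each $b_j$ — and therefore the finite sum $b_++\sum_j b_j$ — actually lies in $\mathfrak{F}=\Rringcirc{L}{X}((X^{-1}))$. Note that $b_j$ is \emph{not} a $v$-convergent series: inverting $\wp$ on a monomial of negative valuation forces one to pass to arbitrarily high $p$-power roots, whose valuations $-j/p^{n}$ accumulate at $0$. Its legitimacy as an element of $\mathfrak{F}$ must instead be read off from the support conditions: the supports $p^{-n}\cdot\operatorname{supp}(\Angle{X}^{j})$ union to a set that is still bounded below (the leading exponents increase to $0$), and the $\ell_p$ of the fractional parts of all exponents that occur remains bounded — this last point being the combinatorial estimate on base-$p$ digit lengths that the whole argument hinges on, entirely parallel to the digit count used for perfectness. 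Once this bookkeeping is in place, the two claims follow.
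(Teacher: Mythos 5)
Your overall strategy is the same as the paper's: split $a=a_-+a_+$ along the decomposition $\Rringcirc{L}{X}((X^{-1}))=\Rringcirc{L}{X}[X]\oplus\widehat{\bigoplus_{i\geq1}\Rringcirc{L}{X}X^{-i}}$, kill the positive-valuation part $a_+$ with the $v$-convergent geometric series $-\sum_n a_+^{p^n}$, and kill the part in $\Rring{L}{X}=\Rringcirc{L}{X}[X]$ with the formal (non-$v$-convergent) series of iterated $p$-th roots, whose membership in $\Rring{L}{X}$ is a matter of support and digit-length bookkeeping. Your perfectness argument is also fine and is essentially the inversivity statement of Proposition \ref{B-alg-structure}. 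Your observation that the constant term requires $\wp$ to be surjective on the residue field $L$ (not just $L$ perfect) is a legitimate point about the hypotheses, not a defect of your proof.

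There is, however, one genuine error: you assert that $a_-$, being "a genuine element of the ring $\Rring{L}{X}$", is a \emph{finite} $L$-linear combination $\sum_{j\in S}\alpha_j\Angle{X}^j$. That is false: by Definition \ref{def-BX}, elements of $\Rring{L}{X}$ are formal series with possibly infinitely many monomials (only bounded depth and a lower bound on the indices of the variables are required); it is $\Ring{L}{X}$, not $\Rring{L}{X}$, whose elements are finite sums. Concretely, each coefficient $f_i\in\Rringcirc{L}{X}$ of $a_-$ may already be an infinite series such as $\sum_{k\geq1}c_kX_k$. Consequently $b_-:=\sum_{j}b_j$ is an infinite sum of the formal series $b_j=\sum_{n\geq1}(\alpha_j\Angle{X}^j)^{1/p^n}$, and you must check \emph{uniformly in $j$} that the union of all supports stays bounded below and that the depths $\ell_q$ of all exponents occurring remain uniformly bounded — the single-monomial estimate you sketch at the end does not by itself cover this. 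The required uniform bookkeeping is exactly what Proposition \ref{B-alg-structure} establishes (via the elements $Y_{j,r}$ of (\ref{seriesF}), each of depth one and with controlled support), and it is precisely for this reason that the paper's one-line proof cites that proposition. So the gap is fillable by the tools you already have in hand, but as written the argument is incomplete at this step.
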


\begin{proof}
The only property that we need to show is that if $f\in \Rringcirc{L}{X}((X^{-1}))$
then there exists $g\in \Rringcirc{L}{X}((X^{-1}))$ such that $g^p-g=f$. But this is immediate from Proposition \ref{descriptionfieldofunif-bis}, the direct sum decomposition of $L$-vector spaces:
$$\Rringcirc{L}{X}((X^{-1}))=\Rringcirc{L}{X}[X]\oplus \widehat{\bigoplus_{i\geq 1}
\Rringcirc{L}{X} X^{-i}}$$ and Proposition \ref{B-alg-structure}.
\end{proof}

\begin{Remark}{\em The reader should compare Proposition \ref{descriptionfieldofunif-bis} with \cite[Lemma 7]{KED}. By Theorem 6 ibid., if $L$ is algebraically closed, the field
$$\bigcup_{n\geq 1}\widehat{\operatorname{Frac}(\Rring{L}{X})}_v((X^{-\frac{1}{n}}))$$
contains an algebraic closure of $L((X^{-1}))$.
 }\end{Remark}
 
 \subsubsection{Some automorphisms} We keep assuming the hypotheses of \S \ref{case-b-inversible}.
 It is easy to see that there is a natural embedding
$$L((\theta^{-1}))^\times\xrightarrow{\varphi}\operatorname{Aut}_L\big(\Rringcirc{L}{X}((X^{-1}))\big).$$
To define this map we recall that $\Theta:=\iota(\theta)$. We consider
$$\alpha=\sum_{j\geq j_0}\alpha_j\theta^{-j}\in L((\theta^{-1}))=\widehat{L\otimes_{\FF_q}K_\infty}_{|\cdot|},$$
where the valuation $|\cdot|$ is extended trivially on $L\otimes 1$. Then, setting 
\begin{equation}\label{automorphisms-alpha}
\varphi_\alpha(X_j):=\sum_{i\geq i_0}\alpha_iX_{i+j}
\end{equation}
defines an automorphism of $\Rringcirc{L}{X}((X^{-1}))$ as expected, and the map is $\alpha\mapsto\varphi_\alpha$. 
More precisely, the reader can verify the next result:
\begin{Lemma}\label{alpha-lambda-v}
For all $f\in\Rringcirc{L}{X}((X^{-1}))$ and for all $\alpha$ as above we have  $v(\varphi_\alpha(f))=q^{\deg_\theta(\alpha)}v(f)$. Moreover, if $f\in \Rring{L}{X}$ then $\lambda(\varphi_\alpha(f))\leq \lambda(f)$.
\end{Lemma}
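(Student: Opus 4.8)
The plan is to verify both assertions by reducing to the action of $\varphi_\alpha$ on monomials and then tracking $v$ and $\lambda$ through the defining formula \eqref{automorphisms-alpha}. First I would record that, by Proposition \ref{descriptionfieldofunif-bis}, every $f\in\Rringcirc{L}{X}((X^{-1}))$ has a unique expansion $f=\sum_{i\geq i_0}f_iX^{-i}$ with $f_i\in\Rringcirc{L}{X}$, and each $f_i$ is in turn a (convergent, bounded-depth) $L$-linear combination of monomials $\Angle{X}^j$ with $j\in\ZZ[p^{-1}]\cap\,]-1,0]$. Since $\varphi_\alpha$ is $L$-linear and continuous for $v$, it suffices to understand $\varphi_\alpha$ on a single monomial $\Angle{X}^j=\prod_{k}X_k^{j_k}$.

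For the valuation claim, I would first treat the case $\alpha=\theta^{-d}$ a pure power, where $\varphi_\alpha(X_k)=X_{k+d}$ is simply the index shift. Then $\varphi_\alpha$ sends $\Angle{X}^j$ with $v(\Angle{X}^j)=-j=-\sum_k j_k q^{-k}$ to $\prod_k X_{k+d}^{j_k}$, whose $v$-valuation is $-\sum_k j_k q^{-k-d}=q^{d}\cdot(-j)$; since $\deg_\theta(\theta^{-d})=-d$, this gives $v(\varphi_\alpha(\Angle{X}^j))=q^{-d}v(\Angle{X}^j)$, matching $q^{\deg_\theta(\alpha)}v(\cdot)$ on monomials, hence on all of $\Rringcirc{L}{X}((X^{-1}))$ by taking the minimal term (the image of the $v$-minimal monomial stays $v$-minimal because the shift is strictly monotone in $v$). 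For general $\alpha=\sum_{j\geq j_0}\alpha_j\theta^{-j}$ with $\alpha_{j_0}\neq 0$, so $\deg_\theta(\alpha)=-j_0$, I would write $\varphi_\alpha(X_k)=\sum_{i\geq j_0}\alpha_i X_{i+k}$ and note that the dominant term (largest $v$, i.e. smallest index) is $\alpha_{j_0}X_{j_0+k}$, the remaining terms having strictly larger $v$; multiplying out $\prod_k\varphi_\alpha(X_k)^{j_k}$ and invoking Lemma \ref{lemmanumu} together with Lemma \ref{valuation-BB} (the valuation property) shows the leading term of $\varphi_\alpha(\Angle{X}^j)$ is $\alpha_{j_0}^{\ell_q(j)}\prod_k X_{j_0+k}^{j_k}$ with $v$-value $q^{-j_0}v(\Angle{X}^j)$, all other contributions strictly higher. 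One must then check this survives the passage to infinite sums: since all $f_i$ and the monomials they involve have $v$ bounded below within each fixed $X^{-i}$-layer and the layers are separated by $v$, the $v$-minimal term of $f$ maps to the $v$-minimal term of $\varphi_\alpha(f)$, yielding $v(\varphi_\alpha(f))=q^{\deg_\theta\alpha}v(f)$ in general.

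For the depth claim, when $f\in\Rring{L}{X}$ I would argue directly: each monomial $\Angle{X}^j$ of $f$ has $\ell_q(j)=\lambda(\Angle{X}^j)\leq\lambda(f)$ factors, and applying $\varphi_\alpha$ replaces each variable $X_k$ by a series $\sum_i\alpha_i X_{i+k}$ whose depth is $1$ (it is a linear combination of the $X_{i+k}$), so the formal product $\prod_k\varphi_\alpha(X_k)^{j_k}$ is an element of $\Rring{L}{X}$ built from at most $\ell_q(j)$ depth-one factors. I would invoke Lemma \ref{lemmanumu}(1) iteratively (or rather its evident extension to products of finitely many monomials) to conclude $\lambda(\varphi_\alpha(\Angle{X}^j))\leq\ell_q(j)\leq\lambda(f)$, and since depth is subadditive and $\varphi_\alpha$ is $L$-linear, $\lambda(\varphi_\alpha(f))\leq\max_j\lambda(\varphi_\alpha(\Angle{X}^j))\leq\lambda(f)$. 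One subtlety to address is that $\prod_k\varphi_\alpha(X_k)^{j_k}$, though each factor is depth one, involves products $X_{a}^{q}$ that get rewritten via $X_a^q=X_{a-1}-\Theta X_a$, which can only decrease the number of factors, never increase it; this is exactly the content of the inequality in Lemma \ref{lemmanumu}(1), so the bound is preserved.

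The main obstacle I anticipate is the bookkeeping in the general-$\alpha$ case of the valuation statement: one must be careful that base-$q$ carries arising when multiplying $\varphi_\alpha(X_k)^{j_k}$ (which are controlled by the carry-over phenomenon in Remark \ref{no-carry-over} and quantified by Lemma \ref{lemmanumu}(3)) do not shift the leading monomial's valuation, and that passing from the polynomial setting to the completed field $\Rringcirc{L}{X}((X^{-1}))$ genuinely preserves ``leading term is $v$-minimal.'' Everything else is routine, and indeed the lemma is flagged in the text as verifiable by the reader — I would therefore present the proof compactly, emphasizing the monomial computation for $\alpha=\theta^{-d}$ and the depth-one observation for $\varphi_\alpha(X_k)$, and leave the combinatorial carry bookkeeping as a direct consequence of Lemma \ref{lemmanumu}.
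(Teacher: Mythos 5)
The paper gives no proof of this lemma (it is left to the reader), and your argument is exactly the routine verification that is intended: reduce to monomials, use that $v$ is multiplicative (Lemma \ref{valuation-BB}) so that $v\bigl(\prod_k\varphi_\alpha(X_k)^{j_k}\bigr)=\sum_k j_k\,v(\varphi_\alpha(X_k))=q^{-j_0}v(\Angle{X}^j)$, check that the leading monomials $\Angle{X}^{q^{-j_0}j}$ of the images of distinct monomials remain distinct so no cancellation occurs when summing, and bound the depth by writing each $\varphi_\alpha(X_k)$ as a depth-one series and invoking the subadditivity in Lemma \ref{lemmanumu}(1), carries only decreasing depth. The proof is correct; the only blemish is the parenthetical ``largest $v$'' when describing the dominant term $\alpha_{j_0}X_{j_0+k}$ — it is the term of \emph{smallest} $v$, as your very next clause (``the remaining terms having strictly larger $v$'') correctly asserts.
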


In particular, choosing
\begin{equation}\label{p-root}
\alpha=\frac{\theta\Theta^{\frac{1}{q}}}{\theta-\Theta^{\frac{1}{q}}}=\sum_{i\geq 0}\Theta^{\frac{i+1}{q}}\theta^{-i}
\end{equation}
we can reconstruct the map $f\mapsto\tau^{-1}(f)$ of Proposition \ref{B-alg-structure}.

\subsection{Tame series}\label{tameseriessect}

Unless otherwise specified, we shall fix, throughout this subsection, a $\tau$-difference sub-$A$-algebra $B$ of $\KK_\Sigma$, for some $\Sigma$.
We denote by $\Rring{B}{X}^b$ the sub-$B$-algebra of $\Rring{B}{X}$ formed by the series as in (\ref{prototypeF}), satisfying $\sup_{j}\|F_j\|<\infty$ ($(\cdot)^b$ stands for 'bounded'). We leave to the reader the proof of the following:
\begin{Lemma}\label{lemma15} $\Rring{B}{X}^b$ is a difference sub-$B$-algebra of $\Rring{B}{X}$ containing $\Ring{B}{X}$.\end{Lemma}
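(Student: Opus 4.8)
The plan is to verify that $\Rring{B}{X}^b$ is closed under the three relevant operations---$B$-module structure, multiplication, and the endomorphism $\tau$---and that it contains $\Ring{B}{X}$, all of these being straightforward consequences of the definitions together with the fact that $\|\cdot\|$ is the restriction of the Gauss norm (so it is sub-multiplicative) and that $\tau$ acts on $B\subset\KK_\Sigma$ with $\|\tau(c)\|=\|c\|^q$.

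First I would observe that $\Ring{B}{X}\subset\Rring{B}{X}^b$: an element of $\Ring{B}{X}$ is a \emph{finite} sum as in (\ref{prototypeF}), so only finitely many coefficients $F_j$ are nonzero and $\sup_j\|F_j\|$ is trivially finite (it is a maximum over a finite set, or $0$). Next, for the $B$-module structure, if $F=\sum_j F_j\Angle{X}^j$ and $G=\sum_j G_j\Angle{X}^j$ lie in $\Rring{B}{X}^b$ and $c\in B$, then $cF+G=\sum_j(cF_j+G_j)\Angle{X}^j$ has coefficients bounded by $\max\{\|c\|\sup_j\|F_j\|,\ \sup_j\|G_j\|\}<\infty$; one should note in passing that $cF$ still lies in $\Rring{B}{X}$ because multiplication by a scalar does not affect the depth bound (1) or the lower-bound condition (2) of Definition \ref{def-BX}, so closure under the $B$-module structure is clear, and $\Rring{B}{X}^b$ is a $B$-submodule of $\Rring{B}{X}$.

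For multiplication, the key point is that when one forms the product $FG$ inside $\Rring{B}{X}$, each coefficient of the result is a \emph{finite} $B$-linear combination of products $F_iG_j$ together with the ``carry-over'' corrections coming from the relations $X_k^q=X_{k-1}-\Theta X_k$ (compare Remark \ref{no-carry-over} and formula (\ref{aformula1})); the number of such terms contributing to a given monomial is bounded uniformly, because by Lemma \ref{lemmanumu}(1) the depths $\lambda(\Angle{X}^i\Angle{X}^j)$ are uniformly bounded (this is precisely the estimate already used in the proof of Proposition \ref{B-alg-structure} to show $\Rring{B}{X}$ is closed under products), and the corrections introduce only the bounded scalar $\Theta\in\iota(A)\subset B$. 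Since $\|\cdot\|$ is sub-multiplicative and non-archimedean, every coefficient of $FG$ is bounded in norm by a fixed constant multiple (a power of $\|\Theta\|$ times a uniform bound on the number of terms, which in the ultrametric world is just $1$) of $(\sup_i\|F_i\|)(\sup_j\|G_j\|)$, hence $\sup$ of the coefficients of $FG$ is finite and $FG\in\Rring{B}{X}^b$. Finally, $\tau$ sends $F=\sum_j F_j\Angle{X}^j$ to a series whose coefficients are, after re-expanding $\tau(\Angle{X}^j)=\prod_k X_k^{qj_k}$ in the monomial basis via the relations $X_k^q=X_{k-1}-\Theta X_k$, finite $B$-combinations of the $\tau(F_j)=F_j^{(1)}$ with coefficients that are products of powers of $\Theta$; since $\|\tau(F_j)\|=\|F_j\|^q$ and $\|F_j\|\leq\sup_j\|F_j\|$, and since again the number of contributing terms is uniformly bounded by the depth estimate, we get $\sup$ of the coefficients of $\tau(F)$ bounded by a fixed function of $\sup_j\|F_j\|$, so $\tau(F)\in\Rring{B}{X}^b$.

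I do not expect any genuine obstacle here; the only point requiring a little care is making the bound on the coefficients of a product (and of $\tau(F)$) genuinely \emph{uniform}, i.e. independent of which monomial one is looking at. This is handled exactly as in the proof of Proposition \ref{B-alg-structure}: writing $F$ and $G$ as finite sums of series supported on single ``$q$-power rays'' $\{p^{-i}j_n\}_i$ (using condition (1) of Definition \ref{def-BX}), one reduces to estimating products of two such rays, where the depth of every monomial product is bounded by $\lambda(\Angle{X}^{j_m})+\lambda(\Angle{X}^{j_n})$ and hence the carry-over corrections have uniformly bounded length; the resulting coefficient estimates are then immediate from the ultrametric sub-multiplicativity of $\|\cdot\|$. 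Since the statement is explicitly left to the reader in the text, a short paragraph recording these observations suffices.
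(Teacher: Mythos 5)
Your proof is correct and is exactly the argument the paper intends (the paper explicitly leaves this verification to the reader, so there is no written proof to diverge from): containment of $\Ring{B}{X}$ and closure under the $B$-module operations are immediate, and closure under products and under $\tau$ reduces, via the depth bound of Lemma \ref{lemmanumu} and the decomposition used in Proposition \ref{B-alg-structure}, to the observation that each reduction $X_k^q=X_{k-1}-\Theta X_k$ lowers depth by $q-1$, so any monomial product acquires at most $\lambda(F)+\lambda(G)$ factors of $\Theta$, giving the uniform coefficient bound by sub-multiplicativity and the ultrametric inequality. No gaps.
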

We consider the map $\Ring{B}{X}\xrightarrow{J}\operatorname{Hol}(\CC_\infty\rightarrow \KK_\Sigma)$ defined by $J(X_i)=e_i$, where
$$e_i:=e_C\left(\frac{z}{\theta^i}\right)=\exp_C\left(\frac{\widetilde{\pi}z}{\theta^i}\right)$$ for all $i\in\ZZ$. It is easy to see that $J$ is a $B$-algebra morphism and defines an algebra map
from $\Rring{B}{X}^b$ to the maps from $\CC_\infty$ to $\KK_\Sigma$; this follows from the fact that, for all $z\in\CC_\infty$, $|e_i(z)|=|\frac{\widetilde{\pi}z}{\theta^i}|$ for all $i$ sufficiently large (depending on $z$).
We set $\underline{e}=(e_i:i\in\ZZ)$.
We denote by $\Tame{B}$ the image $J(\Rring{B}{X}^b)$ of $J$ in the $\KK_\Sigma$-valued maps. We call it the $B$-algebra of  {\em tame series}.
Explicitly, if we set
$$\Angle{e}^j=J(\Angle{X}^j)=\prod_{i\in\ZZ}e_i^{j_i},\quad j=\sum_{i\in\ZZ}j_iq^{-i}\in\ZZ[p^{-1}]_{\geq 0},\quad j_k\in\{0,\ldots,q-1\},$$
we can make the next:
\begin{Definition}\label{deftameseries}
{\em A {\em tame series} with coefficients in $B$ is a map $\CC_\infty\rightarrow\KK_\Sigma$ which is defined by an everywhere converging series $f$ of the type
\begin{equation}\label{prototypeftame}
f(z)=\sum_{j\in\ZZ[p^{-1}]_{\geq 0}}f_j\Angle{e}^j,\quad f_j\in B,\end{equation}
satisfying the following properties.
\begin{enumerate}
\item There exists an integer $L\geq 0$ such that if $f_j\neq 0$, then
$\ell_q(j)\leq L$.
\item There exists $M>0$ such that, for all $j\in\ZZ[p^{-1}]_{\geq 0}$, $f_j\in B$ satisfies $\|f_j\|\leq M$.
\item There exists $N\in\NN$ such that if $j\in\ZZ[\frac{1}{p}]_{\geq 0}$ is such that $f_j\neq 0$, then $j<N$.
\end{enumerate}}
\end{Definition}

\begin{Proposition}\label{everytameboundedisentire}
The map $J$ extends to a $B$-algebra morphism $$\Rring{B}{X}^b\xrightarrow{J} \operatorname{Hol}(\CC_\infty\rightarrow \KK_\Sigma)$$
and this is a morphism of $\tau$-difference rings.
\end{Proposition}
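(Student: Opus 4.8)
The statement to be proved is that the $B$-algebra morphism $J\colon \Ring{B}{X}\to\operatorname{Hol}(\CC_\infty\to\KK_\Sigma)$ extends to $\Rring{B}{X}^b$ with image consisting of entire (holomorphic) functions, and that the extension still intertwines $\tau$. The plan is to first establish convergence and entireness on each monomial-type building block, then bootstrap to arbitrary elements of $\Rring{B}{X}^b$ using the bounded-coefficient hypothesis, and finally check compatibility with $\tau$ on generators.

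\textbf{Step 1: entireness of the basic series $\Angle{e}^j$ and their tails.} First I would recall that $e_i(z)=\exp_C(\widetilde\pi z/\theta^i)=\sum_{k\ge 0}d_k^{-1}(\widetilde\pi z/\theta^i)^{q^k}$, so each $e_i$ is an entire function $\CC_\infty\to\CC_\infty$ (hence a $\KK_\Sigma$-entire function in the sense of \S\ref{holomorphicfunc}); moreover for $i$ large, $|e_i(z)|=|\widetilde\pi z/\theta^i|$ for fixed $z$, and more precisely, on the disk $D_{\CC_\infty}(0,r)$ one has a uniform estimate $|e_i|_{\sup,r}\le |\widetilde\pi| r |\theta|^{-i}$ once $i$ is large enough (say $|\theta|^i>|\widetilde\pi|r$). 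The key point is then: given $j=\sum_k j_k q^{-k}$ with $\ell_q(j)\le L$, the monomial $\Angle{e}^j=\prod_k e_k^{j_k}$ has at most $L$ factors $e_k$ with $k=\nu(j),\nu(j)+1,\dots$, and I would bound $|\Angle{e}^j|_{\sup,r}$ from above: the contribution of the factors with large index decays geometrically in $\nu(j)$, while the factors with small (possibly negative) index are bounded on $D(0,r)$ by a constant depending only on $r$ and $L$. Concretely, since the set of possible ``small-index'' patterns of depth $\le L$ with $j<N$ is finite once $N$ is fixed (condition (3) of Definition \ref{deftameseries}), one obtains $|\Angle{e}^j|_{\sup,r}\le C(r,L,N)\cdot|\theta|^{-\lceil\text{something}\rceil}$ that tends to $0$ as $j\to\infty$ (equivalently as $\nu(j)\to+\infty$). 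This is the main technical estimate.

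\textbf{Step 2: summing up.} Given $f=\sum_j f_j\Angle{e}^j\in\Rring{B}{X}^b$, condition (2) gives $\|f_j\|\le M$ uniformly, so each term has spectral norm $\|f_j\Angle{e}^j\|_{\sup,r}\le M\cdot|\Angle{e}^j|_{\sup,r}$ on $D_{\KK_\Sigma}(0,r)$, and by Step 1 these tend to $0$ as $j\to\infty$; since $\KK_\Sigma$ is a countably cartesian Banach $\CC_\infty$-algebra and a sum indexed by $j\in\ZZ[\tfrac1p]_{\ge0}$ with only finitely many depths $\le L$ and support bounded above is a countable sum, the partial sums form a Cauchy sequence for the spectral norm on every disk $D(0,r)$. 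By the stability of $\KK_\Sigma$-entire functions under uniform limits on bounded sets (recorded right after the discussion of entire functions in \S\ref{holomorphicfunc}), the limit $J(f)$ is a $\KK_\Sigma$-entire function, i.e. lies in $\operatorname{Hol}(\CC_\infty\to\KK_\Sigma)$. That $J$ so extended is a $B$-algebra morphism follows because it is already so on the dense subalgebra $\Ring{B}{X}$, the product on $\Rring{B}{X}^b$ being the one making $J$ multiplicative by construction (Lemma \ref{lemma15} and the discussion preceding it); one spells this out by noting that multiplication of two such series is computed term-by-term with the relations $X_i^q=X_{i-1}-\Theta X_i$, which are respected by $J$ since $e_i^q = \tau(e_i)$ evaluated appropriately, i.e. $e_i(z)^q+\theta e_i(z)=e_{i-1}(z)$ holds identically in $z$ (this is the functional equation $C_\theta e_C=e_C\theta$ applied to $z/\theta^i$).

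\textbf{Step 3: the difference structure.} Finally, to see $J$ is a morphism of $\tau$-difference rings, I would check $J(\tau(X_i))=\tau(J(X_i))$ on generators, where on the left $\tau(X_i)=X_i^q=X_{i-1}-\Theta X_i$ in $\Rring{B}{X}^b$ and on the right $\tau$ is the endomorphism of $\operatorname{Hol}(\CC_\infty\to\KK_\Sigma)$ induced by the $q$-power map on $\KK_\Sigma$ (acting on values), so $\tau(e_i)(z)=e_i(z)^q$; the identity $e_i(z)^q=e_{i-1}(z)-\theta e_i(z)$ just noted is exactly $J(X_{i-1}-\Theta X_i)$, remembering $\Theta=\iota(\theta)=\theta$ here. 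Extending by continuity (both sides are continuous for the spectral norm on disks) gives the claim for all of $\Rring{B}{X}^b$.

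\textbf{Expected main obstacle.} The crux is Step 1: getting a clean uniform bound $|\Angle{e}^j|_{\sup,r}\to 0$ as $j\to\infty$ over the family of admissible exponents $j$ (bounded depth, bounded from above), handling correctly the finitely many ``low-index'' factors $e_k$ with $k\le 0$ — which are \emph{not} small on a disk — versus the geometrically decaying ``high-index'' tail. The conditions (1)--(3) of Definition \ref{deftameseries} are precisely tailored so that the low-index part ranges over a finite set and the decay in $\nu(j)$ is uniform; organizing the estimate so this is transparent is the only real work.
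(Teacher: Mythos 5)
Your proposal is correct and follows essentially the same route as the paper: split off the finitely many terms with $j\geq 1$ (equivalently, those involving low-index factors $e_k$, $k\leq 0$), bound the remaining tail using the local isometry of $\exp_C$ so that $\|f_j\Angle{e}^j\|\to 0$ uniformly on a disk, and check the relation $e_i^q=e_{i-1}-\theta e_i$ for the algebra and $\tau$-difference compatibility. The only simplification you missed is the paper's rescaling observation that $f(z)$ is tame iff $f(\theta^{-1}z)$ is, which reduces the whole estimate to the disk $R=1$ and makes your "Step 1" bound immediate.
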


\begin{proof}
Let us consider a series $f$ such as in (\ref{prototypeftame}).
Observe that for all $j\in\ZZ[\frac{1}{p}]_{\geq 0}$, the function 
$z\mapsto \Angle{e}^j$ is $\KK_\Sigma$-entire.
It suffices to show that, for all $R\in|\CC_\infty|$, the series defining $f$ converges 
uniformly over the disk $D_{\CC_\infty}(0,R)$. 
One immediately sees that $f(z)$ is a tame series if and only if $f(\theta^{-1}z)$ is a tame series. Hence, we are reduced to prove the above property in the case $R=1$.
Now, observe that the set $\{j\in\ZZ[p^{-1}]_{\geq 0}:f_j\neq 0\text{ and }j\geq1\}$ is finite (because of the conditions (1) and (3) of Definition \ref{deftameseries}). Hence, 
we can decompose 
\begin{equation}\label{decompotsition-f}
f=\sum_{j\geq1}f_j\Angle{e}+\sum_{0\leq j<1}f_j\Angle{e}.\end{equation}
 The first sum is finite and therefore defines an entire function. Note now that if $j=\sum_{k}j_kq^{-k}<1$ then we can write
$$\Angle{e}^j=e_{i_1}(z)^{j_1}\cdots
e_{i_l}(z)^{j_l}$$
 where
 $\underline{i}=(i_1,\ldots,i_l)\in(\NN^*)^l$. Then, for $|z|\leq 1$, by the fact that $\exp_C$ is locally an isometry,
$$|\Angle{e}^j|=|e_{i_1}(z)^{j_1}\cdots
e_{i_l}(z)^{j_l}|\leq |\widetilde{\pi}|^{\ell_q(j)}|\theta|^{-(i_1j_1+\cdots+i_lj_l)}.$$
Hence 
$$\|f_j\Angle{e}^j\|\leq M|\widetilde{\pi}|^{L}|\theta|^{-(i_1j_1+\cdots+i_lj_l)}\rightarrow0$$ where $L,M$ are as in (1) and (2) of Definition \ref{deftameseries},
the limit being considered for the Fr\'echet filter over the set of couples $(\underline{i},\underline{j})$ with $\underline{j}=(j_1,\ldots,j_l)$. This means that in the above decomposition (\ref{decompotsition-f}), the second series defines a $\KK_\Sigma$-entire function and the series defining $f$ converges to a $\KK_\Sigma$-entire function.\end{proof}

\subsubsection{Asymptotic behavior of tame series}\label{asymptotic-beha}

For $j\in\ZZ[\frac{1}{p}]_{\geq 0}$ we call $\Angle{e}^j$ a {\em monic tame monomial}.
Its {\em depth} is the integer $\lambda(\Angle{e}^j)=\ell_q(j)$ and its {\em weight} is $j$. To fix the ideas, the weight of $e_0=e_C(z)$ is one and the weight of $1$ or of a non-zero constant is $0$. Distinct tame monomials have distinct weights. The condition of finite depth ensures that the supremum of the weights of the monomials composing a non-zero tame series is a maximum.
In the following, we call the unique tame monomial of maximal weight in a non-zero tame series $f$, the {\em leading tame monomial}. The weight $w(f)$ of $f$ is by definition equal to the weight of the leading tame monomial. The weight $-\infty$ is assigned to the zero tame series. 
We now discuss the question on whether, assigning to a non-zero tame series $f$ the weight $w(f)$, we have defined a degree map
$$\Tame{B}\xrightarrow{w}\ZZ[p^{-1}]_{\geq 0}\cup\{-\infty\},$$ that is, the opposite of a valuation. 
Of course, this is related to the uniqueness of the tame expansion of a function such as in (\ref{prototypeftame}), entire after Proposition \ref{everytameboundedisentire}; we are going to focus on these questions now.

\begin{Lemma}\label{distinctweights} We consider a monic tame monomial
$f(z)=\Angle{e}^j=e_{i_1}(z)^{j_1}\cdots
e_{i_l}(z)^{j_l}$ with $i_1>\cdots>i_l$ and $j_1,\ldots,j_l\in\{0,\ldots,q-1\}$. Let $z\in\CC_\infty$ be such that $|z|\not\in |\theta|^\ZZ$. If $|z|>|\theta|^{i_l}$, we have 
$|f(z)|=|e_C(z)|^{j}.$
\end{Lemma}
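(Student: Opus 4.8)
The plan is to analyze the absolute value of the monic tame monomial $f(z)=\Angle{e}^j=e_{i_1}(z)^{j_1}\cdots e_{i_l}(z)^{j_l}$ by understanding each factor $e_i(z)=\exp_C(\widetilde{\pi}z/\theta^i)$ individually. The key input is that $\exp_C$ is locally an isometry on the disk of radius $\rho_C=|\widetilde{\pi}|$: for $w\in\CC_\infty$ with $|w|<|\widetilde\pi|$ one has $|\exp_C(w)|=|w|$, while outside this disk the Weierstrass product (\ref{weierstrassproduct}) governs the growth. For the variable $w=\widetilde{\pi}z/\theta^i$ we have $|w|=|\widetilde\pi||z||\theta|^{-i}$, so whether $e_i(z)$ is in the "isometry regime" depends on comparing $|z|$ with $|\theta|^i$. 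Since the hypothesis assumes $|z|>|\theta|^{i_l}$ and $i_1>\cdots>i_l$, all the indices satisfy $i_k\le i_l$ is false — rather $i_l$ is the smallest — so I must be careful: $|z|>|\theta|^{i_l}$ does not by itself place all factors in the same regime.

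First I would treat the "small index" factors, i.e. those $e_{i_k}(z)$ with $|z|>|\theta|^{i_k}$ (which by $|z|>|\theta|^{i_l}$ and $i_l\leq i_k$... no: $i_l$ is smallest so $|\theta|^{i_l}\leq|\theta|^{i_k}$, hence $|z|>|\theta|^{i_l}$ gives no control on larger indices). Let me restructure: the hypothesis $|z|>|\theta|^{i_l}$ with $i_l=\min_k i_k$ means $|z|>|\theta|^{i_k}$ fails in general for $k<l$, so the real content must be that we compute $|e_C(z)|$ first — that is the $i=0$ case or rather the reference monomial — and then show all other factors, relative to $e_C(z)$, contribute multiplicatively in a controlled way. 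The cleanest route: use the functional equation of $\exp_C$, namely $\exp_C(\theta w) = C_\theta(\exp_C(w)) = \theta\exp_C(w)+\exp_C(w)^q$, which gives $e_{i-1}(z) = C_\theta(e_i(z)) = \theta e_i(z) + e_i(z)^q$. Inductively, $e_C(z)=e_0(z)$ is a polynomial (with $A$-coefficients) in $e_i(z)$ of degree $q^i$; more precisely $e_0(z)=C_{\theta^i}(e_i(z))$, and since $C_{\theta^i}$ has the form $\tau^i + (\text{lower order})$ with leading coefficient $1$, one gets $|e_0(z)| \le |e_i(z)|^{q^i}$ when $|e_i(z)|\geq 1$ — or, when $|e_i(z)|$ is small, $|e_0(z)|=|\theta|^i|e_i(z)|$ since then the linear term dominates.

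Given that, here is the argument I expect to work. Since $|z|>|\theta|^{i_l}$ and $|z|\notin|\theta|^{\ZZ}$, I claim $|e_C(z)|>1$: indeed, if $|e_C(z)|$ were small then $|e_C(z)| = |\widetilde\pi z|$, giving $|z| = |e_C(z)|/|\widetilde\pi| < 1/|\widetilde\pi| = |\theta|^{-q/(q-1)} < 1 \leq |\theta|^{i_l}$ only if $i_l\geq 0$ — so I need to handle the sign of $i_l$; in the intended application $i_l\geq 1$ (the indices $X_1,X_2,\ldots$ in $\Rringcirc{}{}$), so this is fine and $|e_C(z)|>1$. Then $|e_{i_l}(z)|^{q^{i_l}} = |e_C(z)| >1$ forces $|e_{i_l}(z)|>1$, and similarly each $|e_{i_k}(z)|>1$ with $|e_{i_k}(z)|^{q^{i_k}}=|e_C(z)|$, hence $|e_{i_k}(z)| = |e_C(z)|^{q^{-i_k}}$. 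Therefore $|f(z)| = \prod_k |e_{i_k}(z)|^{j_k} = \prod_k |e_C(z)|^{j_k q^{-i_k}} = |e_C(z)|^{\sum_k j_k q^{-i_k}} = |e_C(z)|^{j}$, which is the claim. The main obstacle — and the one deserving care — is justifying $|e_{i_k}(z)| = |e_C(z)|^{q^{-i_k}}$ rigorously: this requires knowing that in the relation $e_0(z)=C_{\theta^{i_k}}(e_{i_k}(z))$ the top-degree term $e_{i_k}(z)^{q^{i_k}}$ strictly dominates all others in absolute value once $|e_{i_k}(z)|>1$, which follows because $C_{\theta^m}=\tau^m+c_{m-1}\tau^{m-1}+\cdots+c_0$ with $|c_j|=|\theta|^{?}$ bounded and $|e_{i_k}(z)^{q^m}|=|e_{i_k}(z)|^{q^m}$ grows strictly faster than any $|e_{i_k}(z)|^{q^j}$ for $j<m$ when the base exceeds $1$; the subtlety is ensuring no cancellation, which the ultrametric inequality handles since the dominant term is unique. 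I would also invoke the hypothesis $|z|\notin|\theta|^{\ZZ}$ precisely to exclude the borderline case $|e_{i_k}(z)|=1$, where the leading-term domination could fail.
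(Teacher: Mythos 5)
You correctly diagnose that the hypothesis must control \emph{all} the indices $i_k$ appearing in the monomial (this is indeed how the lemma is used and proved: the paper works on an annulus $|\theta|^{n-1}<|z|<|\theta|^n$ and treats exactly the indices $i<n$), and your reduction to the single claim $|e_i(z)|=|e_C(z)|^{1/q^i}$ is the right skeleton. But your route to that claim — via $e_0=C_{\theta^{m}}(e_{m})$ and dominance of the top term of $C_{\theta^m}(x)=\sum_{j=0}^m c_jx^{q^j}$ — has a genuine gap at the step you yourself flag. The coefficients $c_j$ are \emph{not} bounded: one has $|c_j|=|\theta|^{q^j(m-j)}$ (e.g.\ $|c_{m-1}|=|\theta|^{q^{m-1}}$), so $|x|>1$ does not force $|x|^{q^m}>|c_j||x|^{q^j}$. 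The condition $|x|^{q^m-q^j}>|\theta|^{q^j(m-j)}$ unwinds to $|x|>|\theta|^{(m-j)/(q^{m-j}-1)}$, whose worst case ($j=m-1$) is $|x|>|\theta|^{1/(q-1)}$. This threshold is in fact met, because $|e_m(z)|\geq|\widetilde{\pi}z/\theta^{m}|>|\widetilde{\pi}|=|\theta|^{q/(q-1)}$ once $|z/\theta^m|>1$ — but that lower bound lives \emph{outside} the disk $|w|<|\widetilde{\pi}|$ where $\exp_C$ is an isometry, so it cannot be had for free: it is itself a Weierstrass-product estimate, which is precisely the computation the paper performs. There is also a circularity in your write-up: "$|e_{i_k}(z)|^{q^{i_k}}=|e_C(z)|>1$ forces $|e_{i_k}(z)|>1$" assumes the identity you are trying to establish in order to launch the dominance argument. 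And the auxiliary claim "if $|e_C(z)|$ were small then $|e_C(z)|=|\widetilde{\pi}z|$" is not right as stated: smallness of $e_C(z)$ means $z$ is close to some $a\in A$, so the relevant quantity is $|\widetilde{\pi}(z-a)|$; what rules this out is $|z|\notin|\theta|^{\ZZ}$, which gives $|z-a|=\max(|z|,|a|)\geq|z|>1$ for all $a$.

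For contrast, the paper avoids the $\tau$-difference recursion entirely: on the annulus $|\theta|^{n-1}<|z|<|\theta|^n$ it evaluates the Weierstrass product to get $|e_A(z)|=|z|^{q^n}\prod_{0<|a|\leq|\theta|^{n-1}}|a|^{-1}=|z|^{q^n}|l_n/D_n|$, does the same for $e_A(z/\theta^i)$ on the annulus of index $n-i$, and verifies by direct manipulation of $|l_n|,|d_n|$ that the ratio $|e_C(z/\theta^i)|^{q^i}/|e_C(z)|$ equals $1$. If you want to keep your Newton-polygon approach, you must (i) first establish the lower bound $|e_{i_k}(z)|>|\widetilde{\pi}|$ from the product formula, and (ii) replace "$|x|>1$" by the correct threshold $|x|>|\theta|^{1/(q-1)}$ with the coefficient bound $|c_j|=|\theta|^{q^j(m-j)}$; with those two repairs the argument closes.
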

\begin{proof}
Let $z\in\CC_\infty$ be such that $|\theta|^{n-1}<|z|<|\theta|^n$, for $n\in\ZZ$. Let us suppose that $n\geq 1$.
From the Weierstrass product expansion of the function
$e_A(z)=\widetilde{\pi}^{-1}\exp_C(\widetilde{\pi}z)$:
 \begin{equation}\label{Weierstrass}
 e_A(z)=z\prod_{a\in A\setminus\{0\}}\left(1-\frac{z}{a}\right),\end{equation}
 we see that
 $$|e_A(z)|=|z|\prod_{a\neq0}\left|1-\frac{z}{a}\right|=|z|\prod_{0<|a|<|z|}\left|\frac{z}{a}\right|=|z|^{q^n}\prod_{0<|a|\leq |\theta|^{n-1}}|a|^{-1}.$$ Therefore
\begin{eqnarray*}
|e_C(z)|&=&|\widetilde{\pi}||z|^{q^n}\prod_{0<|a|<|\theta|^{n-1}}|a|^{-1},\\
|e_C(z\theta^{-i})|^{q^i}&=&|\widetilde{\pi}|^{q^i}\Big|\frac{z}{\theta^i}\Big|^{q^n}\prod_{0<|a|<|\theta|^{n-1-i}}|a|^{-q^i}.
\end{eqnarray*}
One computes easily $\prod_{0<|a|\leq q^{n-1}}a^{-1}=\frac{l_n}{D_n}$ with $D_n$ and $l_n$ defined in (\ref{dn}) and (\ref{ln}) and $|\frac{l_n}{D_n}|=|\theta|^{q\frac{q^n-1}{q-1}-nq^n}$ from which we deduce
$$\frac{\Big|e_C\Big(\frac{z}{\theta^i}\Big)^{q^i}\Big|}{|e_C(z)|}=|\widetilde{\pi}|^{q^i-1}|\theta^i|^{-q^n}\Bigg|\frac{l_{n-i}^{q^i}D_n}{D_{n-i}^{q^i}l_n}\Bigg|=1.$$
To resume, if $i$ is a non-negative integer and $n>i$ (note that $|\frac{z}{\theta^{i}}|\not\in |\theta|^\ZZ$), then $$|e_i(z)|=\left|e_C\left(\frac{z}{\theta^i}\right)\right|=|e_C(z)|^{\frac{1}{q^i}}.$$ This suffices to complete the proof of the Lemma.
\end{proof}

\begin{Proposition}\label{leading}
Let us consider a non-zero tame series $f$ as in (\ref{prototypeftame}) and let $\Angle{e}^{j_0}$ be its leading tame monomial. Then, for all $z\in\CC_\infty$ such that
$|z|\not\in |\theta|^\ZZ$ and with $|z|$ large enough depending on $f$,
$\|f(z)\|=\|f_{j_0}\||e_C(z)|^{j_0}$.
\end{Proposition}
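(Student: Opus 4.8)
\textbf{Proof plan for Proposition \ref{leading}.}
The plan is to reduce the claim to a single application of Lemma \ref{distinctweights} combined with the ultrametric inequality, exploiting the key fact that distinct monic tame monomials have distinct weights $j\in\ZZ[p^{-1}]_{\geq 0}$. First I would write $f=\sum_j f_j\Angle{e}^j$ with $f_j\in B$, and recall from Definition \ref{deftameseries} that the set of $j$ with $f_j\neq0$ has bounded base-$q$ digit sum and is bounded above, so the leading weight $j_0=w(f)$ is attained by a unique monomial $\Angle{e}^{j_0}$. Fix an auxiliary large real threshold: since only finitely many $j$ with $f_j\neq0$ satisfy $j\geq c$ for any $c>0$ (by conditions (1) and (3) of Definition \ref{deftameseries} the exponent set is finite above any positive bound; below it the exponents accumulate only at $0$), there is no genuine "infinitely many competitors" issue once we bound $|z|$ from below appropriately.

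Next I would fix $z\in\CC_\infty$ with $|z|\notin|\theta|^\ZZ$ and $|z|$ large enough that for every $j$ occurring in $f$, the inner indices $i_1>\cdots>i_l$ in $\Angle{e}^j$ all satisfy $|z|>|\theta|^{i_l}$; this is possible because the exponent set is contained in a region where the smallest index $i_l$ is bounded below (condition (2) in the definition of $\Rring{B}{X}$, inherited by tame series through $J$, forces $\nu(F)$ bounded below, hence the variables $X_i$ and thus the $e_i$ occurring have $i$ bounded below — or rather: for $|z|$ large relative to $f$, all relevant $e_i(z)$ lie in the range where Lemma \ref{distinctweights} applies). Then Lemma \ref{distinctweights} gives $|\Angle{e}^j(z)|=|e_C(z)|^{j}$ for every such $j$. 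Since $|e_C(z)|>1$ once $|z|$ is large (as $|e_C(z)|=|\widetilde\pi||z|^{q^n}\prod_{0<|a|<|\theta|^{n-1}}|a|^{-1}$ grows with $|z|$), the map $j\mapsto|e_C(z)|^j$ is strictly increasing in $j$, so the values $|f_j\Angle{e}^j(z)|=\|f_j\|\,|e_C(z)|^j$ are pairwise distinct for distinct $j$ with $f_j\neq0$ — provided we also enlarge $|z|$ so that the finitely many possible coincidences coming from different $\|f_j\|$ are separated; more carefully, since $j_0$ is strictly larger than all other occurring $j$ and $\|f_{j_0}\|>0$, for $|z|$ large enough $\|f_{j_0}\|\,|e_C(z)|^{j_0}$ strictly dominates $\sup_{j\neq j_0}\|f_j\|\,|e_C(z)|^{j}\leq M|e_C(z)|^{j_1}$ where $j_1<j_0$ is the second-largest occurring weight (or the sup over an infinite but bounded-by-$j_1<j_0$ family). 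By the ultrametric inequality the maximum is then achieved uniquely, so $\|f(z)\|=\|f_{j_0}\|\,|e_C(z)|^{j_0}$.

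The main obstacle I anticipate is handling the \emph{infinitely many} monomials with small weight $j\to0$: one must check that their contributions are uniformly dominated. This is exactly where condition (2) of Definition \ref{deftameseries} ($\|f_j\|\leq M$) and the computation in the proof of Proposition \ref{everytameboundedisentire} enter — there it is shown that $\|f_j\Angle{e}^j(z)\|\to0$ along the Fréchet filter for $|z|$ bounded, and an entirely analogous estimate shows that for fixed large $|z|$ the quantities $\|f_j\|\,|e_C(z)|^{j}$ over the infinite low-weight part are bounded by $M|e_C(z)|^{j_1}$ where $j_1$ is the largest weight $<j_0$ among \emph{all} occurring weights (this sup exists by the finite-depth condition, which confines weights to a discrete-looking set near any positive value). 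Once that uniform bound is in place, choosing $|z|$ large enough that $\|f_{j_0}\|\,|e_C(z)|^{j_0}>M|e_C(z)|^{j_1}$ finishes the argument. A secondary minor point is that $j_0$ and the inner indices depend on $f$ but not on $z$, so "large enough depending on $f$" is a legitimate uniform choice.
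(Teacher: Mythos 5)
Your overall strategy (isolate the leading monomial, dominate everything else, conclude by the ultrametric inequality) is the right one and is the same as the paper's, but there is a genuine gap at the step where you invoke Lemma \ref{distinctweights} ``for every $j$ occurring in $f$''. A tame series generically involves the functions $e_i$ for $i$ ranging over a set of integers that is bounded \emph{below} but not above (condition (2) of Definition \ref{def-BX} only bounds the indices from below; the weights $q^{-i}$ accumulate at $0$ precisely because $i\to+\infty$). Lemma \ref{distinctweights} yields $|\Angle{e}^j(z)|=|e_C(z)|^{j}$ only when $|z|$ exceeds $|\theta|^{i}$ for every index $i$ appearing in the monomial, so for any fixed $z$ there are infinitely many monomials of $f$ to which it does not apply; no choice of ``$|z|$ large enough depending on $f$'' repairs this. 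For those monomials one has instead $|e_i(z)|=|\widetilde{\pi}z\theta^{-i}|$, which is \emph{not} $|e_C(z)|^{q^{-i}}$, and your final paragraph still bounds the tail by treating the terms as if they equalled $\|f_j\|\,|e_C(z)|^{j}$. Mixed monomials, with some indices below the threshold and some above, are not addressed at all.

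The paper's proof fills exactly this hole: with $|\theta|^{n-1}<|z|<|\theta|^{n}$, each exponent is split as $j=j_{\leq n}+j_{>n}$ according to the size of the indices; Lemma \ref{distinctweights} is applied only to the factor $\Angle{e}^{j_{>n}}$, while $\Angle{e}^{j_{\leq n}}$ is estimated directly by $|\widetilde{\pi}|^{\ell_q(j)}|\theta|^{-\delta_n}|z|^{\ell_q(j)}$ with $\delta_n\to\infty$. The resulting polynomial factor $|\widetilde{\pi}z|^{L}$ is then absorbed into $|e_C(z)|^{\widetilde{w}}$ for a small $\widetilde{w}>0$ fitting in the gap between $j_0$ and the other occurring weights (that gap does exist, as you correctly note, by the bounded-depth condition), and the factor $|\theta|^{-\delta_n}\to 0$ controls the infinitely many remaining terms. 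If you incorporate this splitting, your argument goes through; without it, the key equality $|\Angle{e}^j(z)|=|e_C(z)|^{j}$ is simply false for most of the terms you need to control.
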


\begin{proof}
Let $z\in\CC_\infty$ be such that $|\theta|^{n-1}<|z|<|\theta|^n$, for $n\in\ZZ$.
Let us suppose that $n\leq i$. Then, $|z|< |\theta|^{i}$ and $|z/\theta^i|< 1$. In this case 
the product expansion (\ref{Weierstrass}) yields
$|e_C\left(\frac{z}{\theta^i}\right)|=\left|\widetilde{\pi}\frac{z}{\theta^i}\right|$.

We consider an arbitrary tame monomial $\Angle{e}^j$, and $z$ as above.
Writing $j=j_1q^{-i_1}+\cdots+j_lq^{-i_l}$ with $i_l>\cdots>i_1$ and $j_i\in\{0,\ldots,q-1\}$,
we can set $$j_{\leq n}=\sum_{\begin{smallmatrix}m\text{ such that}\\
n\leq i_m\end{smallmatrix}}j_mq^{-i_m},\quad 
j_{>n}=\sum_{\begin{smallmatrix}m\text{ such that}\\
n> i_m\end{smallmatrix}}j_mq^{-i_m}\in\ZZ[p^{-1}]_{\geq0}$$
so that
$$j=j_{\leq n}+j_{>n}$$
without carrying over in the base-$q$ sum. Then,
$$\Angle{e}^j=\Angle{e}^{j_{\leq n}}\Angle{e}^{j_{>n}}.$$ By Lemma  \ref{distinctweights}
we have $|\Angle{e}^{j_{>n}}|=|e_C(z)|^{j_{>n}}$. On the other hand, writing 
$j_{\leq n}=j_{k+1}q^{-i_{k+1}}+\cdots+j_{l}q^{-i_{l}}$ (hence $j_{<n}=j_{1}q^{-i_{1}}+\cdots+j_{k}q^{-i_{k}}$), 
we see that $$|\Angle{e}^{j_{\leq n}}|=\frac{(\widetilde{\pi}z)^{\ell_q(j_{\leq n})}}{|\theta|^{\delta_n}}\leq \frac{|\widetilde{\pi}|^{\ell_q(j)}}{|\theta|^{\delta_n}}|z|^{\ell_q(j)},$$ where $\delta_n:=i_{k+1}j_{k+1}+\cdots+i_lj_l$.
Then, we see that 
$$|\Angle{e}^j|\leq |e_C(z)|^{j}|\theta|^{-\delta_n}|\widetilde{\pi}z|^{L}.$$
Let us choose $\widetilde{w}\in\ZZ[\frac{1}{p}]$, positive. Then, for $|z|\geq R_0$ with $R_0\in|\CC_\infty|$ large enough, depending 
only on $\widetilde{w}$ and $L$, we have that 
$|\widetilde{\pi}z|^{L}\leq |e_C(z)|^{\widetilde{w}}$, so that $$|\Angle{e}^j|\leq |e_C(z)|^{j+\widetilde{w}}|\theta|^{-\delta_n}.$$
Now, let us consider a non-zero tame series $f$ that we can write in the following way
$$f=f_{j_0}\Angle{e}^{j_0}+\sum_{j\neq j_0}f_{j}\Angle{e}^j$$ with $f_{j_0}\neq 0$. 
There exists $\widetilde{w}\in\ZZ[\frac{1}{p}]_{\geq 0}$ such that if $j\neq j_0$ is such that $f_j\neq 0$, then
$j<j+\widetilde{w}<j_0$. Hence:
$$\|f_j\Angle{e}^j\|\leq C_1|e_C(z)|^{j+\widetilde{w}}|\theta|^{-\delta_n},\quad |z|\geq R_0,$$ where 
$C_1$ is an upper bound for the absolute values $\|f_j\|$. Since
$\delta_n\rightarrow\infty$, we have that
$$\left\|\sum_{j\neq j_0}f_j\Angle{e}^j\right\|\leq C_2|e_C(z)|^{w'},\quad |z|\geq R_0,$$
for $w'\in\ZZ[\frac{1}{p}]_{\geq 0}$, $0\leq w'<j_0$ and $R_0$ depending on $f$.
Hence,
$$\left\|f(z)-f_{j_0}\Angle{e}^{j_0}\right\|\leq C_3|e_C(z)|^{w'}$$
and if $|z|\geq R_1$ depending on $C_3$ and $w'$, we get $$\|f(z)\|=\|f_{j_0}\|\cdot|\Angle{e}^{j_0}|=\|f_{j_0}\|\cdot|e_C(z)|^{j_0}$$ ($C_2,C_3$ are constants depending on $f$).
\end{proof}

\begin{Remark}\label{remarkIm}{\em 
We define, for $z\in\CC_\infty 
$, $|z|_\Im=\inf\{|z-l|:l\in K_\infty\}=\min\{|z-l|:l\in K_\infty\}$ (see \cite[\S 5]{PEL5}).
The statement of Proposition \ref{leading} holds under the weaker condition that 
$|z|_\Im$ is large enough. We leave the details to the reader.}\end{Remark}

We have the following important consequence of Proposition \ref{leading}.
\begin{Corollary}\label{embeddingcorollary}
If $f$ is an entire function which belongs to $\Tame{B}$, then its tame series expansion is unique.
\end{Corollary}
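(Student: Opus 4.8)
The plan is to deduce uniqueness from Proposition \ref{leading} together with the fact that $\Tame{B}$ is a $B$-algebra (Proposition \ref{everytameboundedisentire}). Suppose a given entire function $f \in \Tame{B}$ admits two tame series expansions $f = \sum_j f_j \Angle{e}^j = \sum_j f_j' \Angle{e}^j$. Forming the difference $g := \sum_j (f_j - f_j')\Angle{e}^j$, I observe that $g$ is again a tame series (the three defining conditions of Definition \ref{deftameseries} are stable under subtraction — the depth bound $L$ is the max of the two, the norm bound $M$ the sum, and the support bound $N$ the max), and that $g$ is the zero function on $\CC_\infty$. So it suffices to prove: \emph{if a tame series $g$ is identically zero as a map $\CC_\infty \to \KK_\Sigma$, then all its coefficients vanish.}

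The key step is to run this by contradiction using the leading tame monomial. Suppose $g$ is not the formally-zero series; then by the finite-depth condition it has a well-defined leading tame monomial $\Angle{e}^{j_0}$ with $g_{j_0} \neq 0$, i.e.\ $\|g_{j_0}\| > 0$. Apply Proposition \ref{leading}: there exists $R_1 \in |\CC_\infty|$ such that for all $z \in \CC_\infty$ with $|z| \not\in |\theta|^{\ZZ}$ and $|z| \geq R_1$ one has $\|g(z)\| = \|g_{j_0}\|\cdot |e_C(z)|^{j_0}$. Now pick any such $z$ with $|z|$ large enough that additionally $e_C(z) \neq 0$ — this is possible since $e_C = \exp_C(\widetilde{\pi}\,\cdot\,)$ has only the discrete zero set $\widetilde{\pi}A$ (its kernel being a rank-one lattice), so away from a bounded region and outside $|\theta|^{\ZZ}$ we can certainly find $z$ with $|e_C(z)| = |\widetilde{\pi}z| > 0$ by the computation in the proof of Proposition \ref{leading}. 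Then $\|g(z)\| = \|g_{j_0}\|\cdot|e_C(z)|^{j_0} > 0$, contradicting $g \equiv 0$. Hence $g$ is the formally-zero tame series, so $f_j = f_j'$ for all $j$, proving uniqueness.

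The main obstacle — really a minor bookkeeping point rather than a genuine difficulty — is making sure the argument is insensitive to the choice of evaluation point, i.e.\ that the set of admissible $z$ (those with $|z| \geq R_1$, $|z| \not\in |\theta|^{\ZZ}$, and $e_C(z) \neq 0$) is nonempty. Since $|\CC_\infty^\times| = q^{\QQ}$ is dense in $\RR_{>0}$ and $\CC_\infty$ is algebraically closed, the condition $|z| \not\in |\theta|^{\ZZ}$ excludes only countably many values of $|z|$ and each level set $\{|z| = r\}$ is infinite; the zero set of $e_C$ being $\widetilde{\pi}A$, which is discrete, removes only finitely many points from any such level set. So admissible $z$ exist, and in fact Remark \ref{remarkIm} shows one may even replace $|z|$ by the imaginary-part norm $|z|_\Im$, giving points genuinely in the Drinfeld half-plane $\Omega$ if desired. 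With that settled, the corollary follows immediately from Proposition \ref{leading}, which does all the real work.
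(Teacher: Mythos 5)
Your proof is correct and follows the same route as the paper: reduce to showing that a nontrivial formal tame series cannot represent the zero function, and contradict this via the leading tame monomial and Proposition \ref{leading}. The extra care about admissible evaluation points is fine but not needed beyond what Proposition \ref{leading} already supplies (note that $|z|\notin|\theta|^{\ZZ}$ automatically forces $z\notin A$, hence $e_C(z)\neq 0$).
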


\begin{proof}
It suffices to show that a tame series as in (\ref{prototypeftame}) cannot vanish identically, if not trivially. But otherwise, such a series 
would then have a unique leading tame monomial, which would contradict the property of Proposition \ref{leading}.
\end{proof}

Thanks to the above Corollary, $J$ is injective, $\Tame{B}$ has a structure of $B$-algebra,
the map $w\circ J$ is the opposite of the valuation $v$ and the depth $\lambda(f)$ of a tame series $f$ defined as the depth of $g\in \Rring{B}{X}^b$ such that $J(g)=f$ becomes a well defined invariant of the entire function it represents. 

\begin{Remark}{\em The opposite of the weight is an additive valuation on tame series that we denote by $v$. While a tame series as in (\ref{prototypeftame}) in general diverges for the $v$-valuation, it converges for the inf-valuation associated to any disk $D_{\CC_\infty}(0,R)$, $R\in |\CC_\infty^\times|$.}\end{Remark}

\subsubsection{The field of uniformizers}\label{tbulletalgebras}

Several constructions of \S \ref{algebrasettings} can be reproduced in connection with the 
$B$-algebra $\Tame{B}$, with very little changes. We set $\Tamecirc{B}=J(\Rringcirc{B}{X}^b)$. 
Explicitly, $\Tamecirc{B}$ is the $B$-module of the series satisfying the items (1) to (3) of Definition \ref{deftameseries} with the additional property that only the functions $e_1,e_2,\ldots$ occur, just as for the indeterminates $X_1,X_2,\ldots$ in the definition of $\Rringcirc{L}{X}$ at the beginning of \S \ref{case-B-field}.
The reader can easily check, writing
$$e:=e_0,$$
the next result:
\begin{Lemma}\label{uniquedigitexpansion}
Every element $f\in\Tame{B}$ can be expanded, in a unique way, as 
$$f=\sum_{i=0}^rf_ie^i,\quad f_i\in\Tamecirc{B}.$$
\end{Lemma}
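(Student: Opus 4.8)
The plan is to transport the corresponding statement about $\Rring{B}{X}^b$ through the morphism $J$, which is injective by Corollary \ref{embeddingcorollary}. First I would recall from the discussion in \S \ref{case-B-field} the purely algebraic fact: writing $X=X_0$, every element $g$ of $\Rring{L}{X}$ (and the same argument works verbatim for the sub-$B$-algebra $\Rring{B}{X}^b$, since the reduction used below does not leave the bounded locus) can be written uniquely as $g=g_0+g_1X+\cdots+g_rX^r$ with $g_i\in\Rringcirc{B}{X}^b$. The existence part comes from iteratively using the relations $X_i^q=X_{i-1}-\Theta X_i$: given any monomial $\Angle{X}^j=\prod_{k\in\ZZ}X_k^{j_k}$, one isolates its $X_0$-exponent. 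If $j_0\geq q$ this cannot happen since the digits $j_k$ lie in $\{0,\dots,q-1\}$; so each monomial is already of the form $X_0^{j_0}\cdot(\text{monomial in }X_k,\ k\neq 0)$ with $0\le j_0\le q-1$. The monomials in the variables $X_k$ with $k\neq 0$ need not lie in $\Rringcirc{B}{X}^b$ because negative-index variables may occur; but using $X_{-1}=X_0^q+\Theta X_0$, then $X_{-2}=X_{-1}^q+\Theta X_{-1}$, etc., one rewrites every negative-index variable as a polynomial in $X_0$ with coefficients in $\Rringcirc{B}{X}^b$ (indeed in $B[X_0]$), and collecting the total $X_0$-degree gives the claimed finite expansion $g=\sum_{i=0}^r g_iX^i$. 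The coefficients $g_i$ remain bounded because the rewriting only involves multiplication by $\Theta=\iota(\theta)$, whose norm is $q$, finitely many times per monomial, and the depth stays bounded by Lemma \ref{lemmanumu}, so $g_i\in\Rringcirc{B}{X}^b$.

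Next I would establish uniqueness on the algebraic side. Suppose $\sum_{i=0}^r g_iX^i=0$ with $g_i\in\Rringcirc{B}{X}^b$ not all zero, and let $i_0$ be minimal with $g_{i_0}\neq 0$. Applying the valuation $v$ of Lemma \ref{valuation-BB}–Proposition \ref{B-alg-structure}: since $v(X)=v(X_0)=0$ while every nonzero element of $\Rringcirc{B}{X}^b$ has $v$-value in $\ZZ[p^{-1}]\cap\,]-1,0]$, the summand $g_iX^i$ contributes monomials of the shape $\Angle{X}^j$ with fractional part of $j$ genuinely in $(0,1]$ multiplied by $X_0^i$; the key point is that for distinct $i$ the resulting monomials are genuinely distinct (the $X_0$-exponent is $i$, and the $X_k$'s with $k\neq 0$ carry the rest), so no cancellation between different $i$'s is possible and the relation forces each $g_i=0$.

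Finally, transport through $J$. By Proposition \ref{everytameboundedisentire}, $J:\Rring{B}{X}^b\to\operatorname{Hol}(\CC_\infty\rightarrow\KK_\Sigma)$ is a $B$-algebra morphism with image $\Tame{B}$, and $J(X_0)=e_0=e$, $J(\Rringcirc{B}{X}^b)=\Tamecirc{B}$ by the definition in \S \ref{tbulletalgebras}. Given $f\in\Tame{B}$, pick $g\in\Rring{B}{X}^b$ with $J(g)=f$, expand $g=\sum_{i=0}^r g_iX^i$ as above, and apply $J$ to get $f=\sum_{i=0}^r J(g_i)e^i$ with $J(g_i)\in\Tamecirc{B}$; this gives existence. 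For uniqueness, if $\sum_{i=0}^r f_ie^i=0$ with $f_i\in\Tamecirc{B}$, choose preimages $g_i\in\Rringcirc{B}{X}^b$ with $J(g_i)=f_i$; then $J(\sum g_iX^i)=0$, and since $J$ is injective by Corollary \ref{embeddingcorollary}, $\sum g_iX^i=0$, whence each $g_i=0$ by the algebraic uniqueness, so each $f_i=0$. The main obstacle I anticipate is the bookkeeping in the existence step — verifying that the rewriting of negative-index variables $X_{-n}$ as polynomials in $X_0$ keeps the coefficients inside $\Rringcirc{B}{X}^b$ (bounded coefficients, bounded depth, and only positive-index variables surviving) — but this is exactly the content already used implicitly in the line $\Rring{L}{X}=\Rringcirc{L}{X}[X]$ of \S \ref{case-B-field} together with Lemma \ref{lemmanumu} and the sub-multiplicativity of the norm, so it is routine rather than deep.
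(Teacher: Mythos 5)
Your strategy --- establish the decomposition $\Rring{B}{X}^b=\Rringcirc{B}{X}^b[X_0]$ algebraically and transport it through the $B$-algebra morphism $J$, which is injective by Corollary \ref{embeddingcorollary} --- is exactly the intended one (the paper offers no proof and points implicitly to the identity $\Rring{L}{X}=\Rringcirc{L}{X}[X]$ of \S \ref{case-B-field}); your existence step and the transport via Proposition \ref{everytameboundedisentire} are correct. The uniqueness step, however, contains two slips. First, $v(X_0)=-1$, not $0$, since by definition $v(\Angle{X}^j)=-j$. Second, for $i\geq q$ the element $X_0^i$ is \emph{not} a canonical monomial ``with $X_0$-exponent $i$'': its canonical form involves the variables $X_{-1},X_{-2},\dots$ and also lower-weight terms (e.g. $X_0^q=X_{-1}-\Theta X_0$ contributes a monomial of weight $1$), so canonical monomials coming from $g_iX_0^i$ and from $g_{i'}X_0^{i'}$ with $i\neq i'$ can coincide, and cancellation between different $i$'s is a priori possible; the ``distinct monomials'' claim as stated does not hold. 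The repair is immediate with the valuation you already invoke: for nonzero $g_i\in\Rringcirc{B}{X}^b$ one has $v(g_i)\in\left]-1,0\right]$, and $v(X_0^i)=-i$ by multiplicativity of $v$ (Lemma \ref{valuation-BB}), so $v(g_iX_0^i)\in\left]-i-1,-i\right]$; these intervals are pairwise disjoint, hence in a relation $\sum_ig_iX_0^i=0$ with some $g_i\neq0$ the minimum of the valuations of the summands is attained at exactly one index, and the ultrametric inequality forces the sum to have finite valuation --- a contradiction. With that correction the proof is complete.
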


If $B=L$ is a field, We set
$$\mathfrak{K}_L=\widehat{\operatorname{Frac}(\Tame{L})}_v$$
($v$-adic completion); we call this the {\em field of uniformizers over $L$}. The next proposition provides a simple way to represent the elements of $\mathfrak{K}_L$. Its proof closely follows that of Proposition \ref{descriptionfieldofunif-bis} and we omit it.

\begin{Proposition}[$u$-expansions]\label{descriptionfieldofunif}
Every element $f$ of $\mathfrak{K}_L$ can be expanded in a unique way as a sum
$$f=\sum_{i\geq i_0}f_ie^{-i},\quad f_i\in\Tamecirc{L}.$$
\end{Proposition}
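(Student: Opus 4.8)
The plan is to mimic the proof of Proposition \ref{descriptionfieldofunif-bis}, transporting it through the isomorphism $J$ from $\Rring{L}{X}$ to $\Tame{L}$. First I would record what has already been established: by Corollary \ref{embeddingcorollary} the map $J$ is injective, so $\Tame{L}$ is an $L$-algebra isomorphic to $\Rring{L}{X}$, the valuation $v$ on $\Tame{L}$ being carried over from the one on $\Rring{L}{X}$ (equivalently, minus the weight), and $J$ restricts to a bijection $\Rringcirc{L}{X}\to\Tamecirc{L}$. Since taking the fraction field and completing at $v$ are functorial for valued rings, $J$ extends to an isometric isomorphism of valued fields $\widehat{\operatorname{Frac}(\Rring{L}{X})}_v \xrightarrow{\ \sim\ } \mathfrak{K}_L$.

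Second, I would invoke Proposition \ref{descriptionfieldofunif-bis}: every element $g$ of $\widehat{\operatorname{Frac}(\Rring{L}{X})}_v$ has a unique expansion $g = \sum_{i\geq i_0} g_i X^{-i}$ with $g_i \in \Rringcirc{L}{X}$. Applying the isomorphism $J$ and using $J(X) = e$, the image $f = J(g)$ acquires the expansion $f = \sum_{i\geq i_0} J(g_i)\, e^{-i}$ with $J(g_i) \in \Tamecirc{L}$. Since $J$ is a bijection onto $\mathfrak{K}_L$, every $f \in \mathfrak{K}_L$ arises this way. Uniqueness transfers likewise: two expansions of $f$ in powers of $e^{-1}$ with coefficients in $\Tamecirc{L}$ pull back under $J^{-1}$ to two expansions of $g$ in powers of $X^{-1}$ with coefficients in $\Rringcirc{L}{X}$, which must coincide by Proposition \ref{descriptionfieldofunif-bis}, hence so do the original ones. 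One should note here that the ``product rule'' on $\Tamecirc{L}((e^{-1}))$ is not the naive Cauchy product (exactly as in the $X$ case), but this does not affect the statement, which is purely about the existence and uniqueness of the representation as a formal sum.

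Alternatively, if one prefers a self-contained argument rather than a transport of structure, the same proof as for Proposition \ref{descriptionfieldofunif-bis} goes through verbatim with $X_i$ replaced by $e_i$, $X$ by $e$, and $\Theta$ by $\theta$: one sets $\Tamebullet{L}=\Tame{L}[\Angle{e}^{-j}:j\in\ZZ[p^{-1}]_{\geq 0}]$, checks $\operatorname{Frac}(\Tame{L})$ embeds in $\widehat{\Tamebullet{L}}_v$ by the geometric-series trick (writing $f = \alpha\Angle{e}^{j} - h$ with $v(h) > -j$ and inverting via $\sum_i (h/\alpha\Angle{e}^j)^i$), then checks $\Tamebullet{L}\subset \Tamecirc{L}((e^{-1}))$ by the same induction on $i$ used in Lemma \ref{firstapproach} (using $e_i^q = e_{i-1} - \theta e_i$, i.e. $C_\theta(e_C(z/\theta^i)) = e_C(z/\theta^{i-1})$), and finally observes that any series $\sum_{i\geq i_0} f_i e^{-i}$ with $f_i\in\Tamecirc{L}$ is $v$-convergent with partial sums in $\Tamebullet{L}[e^{-1}]$, so $\Tamecirc{L}((e^{-1}))$ is exactly the completion.

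\textbf{Main obstacle.} There is no serious obstacle: the content is entirely in Proposition \ref{descriptionfieldofunif-bis} and Corollary \ref{embeddingcorollary}, both already proved. The one point requiring a line of care is making sure the coefficients $f_i$ genuinely land in $\Tamecirc{L}$ and not merely in $\Tame{L}$ — that is, that the decomposition $\Tame{L}=\Tamecirc{L}[e]$ (Lemma \ref{uniquedigitexpansion}) and the condition $\nu(F)\geq 1$ defining $\Rringcirc{L}{X}$ are preserved by $J$ — which is immediate since $J(X_i)=e_i$ matches indices exactly. The harmless subtlety worth flagging in the write-up is, as above, that $\Tamecirc{L}((e^{-1}))$ is a field under a twisted (non-Cauchy) product, inherited from $\mathfrak{K}_L$ rather than defined term-by-term.
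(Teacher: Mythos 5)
Your second, self-contained route is the right one and is exactly what the paper intends: the proof of Proposition \ref{descriptionfieldofunif} is omitted in the text precisely because it ``closely follows'' that of Proposition \ref{descriptionfieldofunif-bis}, with $X_i$ replaced by $e_i$ and the inversion of the $\Angle{e}^j$ carried out via the relation $e_i^q=e_{i-1}-\theta e_i$ as in Lemma \ref{firstapproach}. The only point to keep in sight when rerunning that argument is that all coefficients produced along the way stay in $\Tamecirc{L}$, i.e.\ remain \emph{bounded}: this holds because products of tame series are tame (Lemma \ref{lemma15} and Proposition \ref{everytameboundedisentire}) and because, for a $v$-convergent series, only finitely many terms contribute to the coefficient of any fixed power $e^{-k}$.

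Your first route (transport of structure through $J$), however, contains a genuine error as written. You assert that ``$\Tame{L}$ is an $L$-algebra isomorphic to $\Rring{L}{X}$'', but $J$ is only defined on the bounded subalgebra $\Rring{L}{X}^b$, and $\Tame{L}=J(\Rring{L}{X}^b)$; the paper's own example $g=\sum_{i\geq0}\theta^{i/q}e_{i+1}$ (with $g^q-e=\widetilde{\pi}z$) shows that $J$ does \emph{not} extend to the full ring $\Rring{L}{X}$. Consequently $\mathfrak{K}_L\cong\widehat{\operatorname{Frac}(\Rring{L}{X}^b)}_v$, whereas Proposition \ref{descriptionfieldofunif-bis} concerns $\widehat{\operatorname{Frac}(\Rring{L}{X})}_v$: its expansion coefficients $g_i$ live a priori only in $\Rringcirc{L}{X}$, where $J(g_i)$ need not be defined, so the statement cannot simply be pulled back. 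One must redo the argument inside the bounded world — which is precisely your alternative route. I would drop the first paragraph of your write-up and keep the second.
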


We also need to introduce the valuation ring $\mathfrak{O}_L$ and the maximal ideal $\mathfrak{M}_L$ of $\mathfrak{K}_L$. The residual field is $L$. 
We have, as $L$-vector spaces:
$$\mathfrak{M}_L=\widehat{\bigoplus_{i>0}\Tamecirc{L}e^{-i}},\quad \mathfrak{O}_L=L\oplus\mathfrak{M}_L.$$
We write, for simplicity, $\mathfrak{K}_\Sigma$
for $\mathfrak{K}_{\KK_\Sigma}$ etc.
\begin{Definition}\label{definition-field-uniformizers}
{\em The {\em field of uniformizers} is the complete $v$-valued field 
$$\mathfrak{K}=\widehat{\bigcup_\Sigma\mathfrak{K}_\Sigma}.$$} We denote by $\mathfrak{O},\mathfrak{M}$ the valuation ring and the maximal ideal of $v$.
\end{Definition}

\subsubsection{Some continuous automorphisms of $\Tame{B}$ and $\Tamecirc{B}((e^{-1}))$} We continue to assume that $B$ is a subalgebra of $\KK_\Sigma$ containing $A$. Some automorphisms of the type (\ref{automorphisms-alpha}) give rise to automorphisms of 
$\Tame{B}$ and $\Tamecirc{B}((e^{-1}))$. 
We recall that in \S \ref{carlitzexttate} we have studied the extension of the Carlitz exponential $\exp_C$ to an $\FF_q(\underline{t}_\Sigma)$-linear endomorphism of $\KK_\Sigma$. In particular, we can view the functions $e_i:\CC_\infty\rightarrow\CC_\infty$ (for $i\in\ZZ$) as 
$\FF_q(\underline{t}_\Sigma)$-linear endomorphisms of $\KK_\Sigma$.
Consider $\alpha=\sum_{i\geq i_0}\alpha_i\theta^{-i}$ where $\alpha_i\in B$, such that the set $\{\|\alpha_i\|:i\in\ZZ\}$ is bounded. Additionally, suppose that $\alpha_{i_0}\in B^\times$. Then, it is easy to see that 
$\varphi_\alpha$ is well defined and induces a continuous automorphism of $\Tame{B}$. Moreover,
we can set, for $f=\sum_{i\geq i_0}f_ie^{-i}\in\Tamecirc{B}((e^{-1}))$:
$$\varphi_\alpha(f)=\sum_{i\geq i_0}\varphi_\alpha(f_i)e(\alpha z)^{-i}.$$
By using Lemma \ref{lemma15}, we see that the series defining $\varphi_\alpha(f)$ converges 
in $\Tamecirc{B}((e^{-1}))$ to an element of weight $q^{\deg_\theta(\alpha)}w(f)$; this is the extension by continuity of the previous automorphism of $\Tame{B}$. Similarly,
if $B$ is complete and $f\in\Tame{B}$ is such that $f:\CC_\infty\rightarrow \KK_\Sigma$ is entire, and if $\beta\in B$,
then we have that the entire function $f(z+\beta)$ defines an element of $\Tame{B}$ of same weight and same depth as $f$. Hence, we can define, for $f=\sum_{i\geq i_0}f_ie^{-i}\in\Tamecirc{B}((e^{-1}))$, 
$$\psi_\beta(f)=\sum_{i\geq i_0}f_i(z+\beta)(e+e(\beta))^{-i}$$
and again, this series converges in $\Tamecirc{B}((e^{-1}))$ to an element which has the same weight as $f$. These properties can be combined to yield the next lemma, which will be needed later.

\begin{Lemma}\label{aendomorphisms}
Let $f$ be an element of $\Tamecirc{\KK_\Sigma}((e^{-1}))$ and $\alpha\in\FF_q(\underline{t}_\Sigma)((\theta^{-1}))^\times$, $\beta\in\FF_q(\underline{t}_\Sigma)((\theta^{-1}))$. Then,
$h:=(\varphi_\alpha\circ\psi_\beta)(f)\in\Tame{\KK_\Sigma}$ satisfies $w(h)=q^{\deg_\theta(\alpha)}w(f)$.
\end{Lemma}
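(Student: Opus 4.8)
The plan is to reduce the statement to the two separate effects already discussed: the automorphism $\psi_\beta$ (translation by $\beta$ in the $z$-variable together with the attendant change $e\mapsto e+e(\beta)$) and the automorphism $\varphi_\alpha$ (the substitution (\ref{automorphisms-alpha}) together with the change $e\mapsto e(\alpha z)$). First I would verify that $\psi_\beta$ is well defined on $\Tamecirc{\KK_\Sigma}((e^{-1}))$ and preserves the weight. For this, observe that if $f_i\in\Tamecirc{\KK_\Sigma}$ is represented by an entire function $z\mapsto f_i(z)$, then $z\mapsto f_i(z+\beta)$ is again entire; the key point is that translation by $\beta\in\FF_q(\underline{t}_\Sigma)((\theta^{-1}))$ does not change the leading tame monomial, because by Proposition \ref{leading} the norm $\|f_i(z+\beta)\|$ for $|z|$ large equals $\|f_i(z)\|$ (the constant $\beta$ is negligible against large $z$), so the weight $w(f_i(\cdot+\beta))=w(f_i)$. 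The factor $e+e(\beta)$ has the same weight $1$ as $e$ since $e(\beta)$ is a constant. Hence each term $f_i(z+\beta)(e+e(\beta))^{-i}$ has the same weight as $f_ie^{-i}$, and the series converges in $\Tamecirc{\KK_\Sigma}((e^{-1}))$ with $w(\psi_\beta(f))=w(f)$. One small technical check: the coefficients $f_i(z+\beta)$ still satisfy the boundedness (2) and the depth bound (1) of Definition \ref{deftameseries}, which follows from Lemma \ref{lemma15} applied to translation.

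Next I would treat $\varphi_\alpha$. By Lemma \ref{alpha-lambda-v} (applied over $B=\KK_\Sigma$, which is inversive since it contains $(-\theta)^{1/q}$ and indeed all $p$-power roots of elements of $\iota(A)$), for every $g\in\Tamecirc{\KK_\Sigma}$ one has $v(\varphi_\alpha(g))=q^{\deg_\theta(\alpha)}v(g)$, equivalently $w(\varphi_\alpha(g))=q^{\deg_\theta(\alpha)}w(g)$, and moreover $\lambda(\varphi_\alpha(g))\leq\lambda(g)$, so $\varphi_\alpha$ preserves the tame-series conditions. Applying this coefficientwise to $\psi_\beta(f)=\sum_{i\geq i_0}g_ie^{-i}$ with $g_i=f_i(z+\beta)$ (and the understanding that the uniformizer $e$ is replaced by $e(\alpha z)$, whose weight is $q^{\deg_\theta(\alpha)}$ by the same computation since $e=e_0$ and $\varphi_\alpha(e_0)=e(\alpha z)$), each term contributes weight $q^{\deg_\theta(\alpha)}(w(g_i)-i)$. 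Since $w(g_i)=w(f_i)$ from the previous paragraph, the leading term of $h=(\varphi_\alpha\circ\psi_\beta)(f)$ has weight $q^{\deg_\theta(\alpha)}$ times that of the leading term of $f$, i.e.\ $w(h)=q^{\deg_\theta(\alpha)}w(f)$. Convergence of the resulting series in $\Tame{\KK_\Sigma}$ follows because the weights of the composed terms are controlled (they form a set bounded above and going to $-\infty$) and the depths are uniformly bounded by the depth of $f$; this is exactly the mechanism used to construct $\Tamecirc{\KK_\Sigma}((e^{-1}))$ in \S\ref{tbulletalgebras} and to prove Proposition \ref{everytameboundedisentire}.

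The main obstacle I anticipate is bookkeeping the interaction between the two substitutions on the uniformizer variable: $\psi_\beta$ sends $e\mapsto e+e(\beta)$ and then $\varphi_\alpha$ must be applied not to the formal symbol $e$ but to the actual function $e_0=e_C(z)$, so one has to be careful that $\varphi_\alpha(e+e(\beta))$ is computed correctly and still has weight $q^{\deg_\theta(\alpha)}$ (the constant $e(\beta)$ gets carried to another constant of weight $0$, so $\varphi_\alpha(e+e(\beta))=e(\alpha z)+\text{const}$, which has the right weight). Granting that this is only an unwinding of definitions, the conclusion $w(h)=q^{\deg_\theta(\alpha)}w(f)$ is immediate from the two weight computations composed, and the fact that $h\in\Tame{\KK_\Sigma}$ rather than merely $\Tamecirc{\KK_\Sigma}((e^{-1}))$ follows once one observes that the output has finitely many negative-power terms, since $\deg_\theta(\alpha)\geq 0$ forces $q^{\deg_\theta(\alpha)}w(f)$ to be bounded below appropriately — more precisely, if $f$ lies in $\Tame{\KK_\Sigma}$ to begin with (only finitely many $e^{-i}$, $i>0$), then so does $h$.
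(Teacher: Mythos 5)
Your proof is correct and follows exactly the route the paper takes: the lemma is stated there as the combination of the two observations made immediately before it, namely that $\psi_\beta$ preserves the weight (translation by $\beta$ does not change the leading tame monomial) and that $\varphi_\alpha$ multiplies it by $q^{\deg_\theta(\alpha)}$ via Lemma \ref{alpha-lambda-v}. Your fleshing-out of the convergence and bookkeeping details matches the mechanism the paper invokes (Lemma \ref{lemma15}, Proposition \ref{leading}), so nothing further is needed.
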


\subsubsection{Some remarks}

There are entire functions $\CC_\infty\rightarrow\CC_\infty$ which are not tame series. One of them is the identity map $z\mapsto z$.
Indeed, one sees easily that for all $w\in\QQ$,
$$\lim_{|z|_\Im\rightarrow\infty}\frac{|z|}{|e_C(z)|^w}\in\{0,\infty\}.$$ therefore, 
$(z\mapsto z)\not\in\Tame{\CC_\infty}$ as otherwise, we could assign a well defined weight in $\ZZ[\frac{1}{p}]$ to it.

To define $\Tame{B}$, we have used formal series with bounded coefficients in $B$ (in Definition \ref{deftameseries}).
One of the reasons for this choice is that the isomorphism $J$ of Proposition \ref{everytameboundedisentire}
is likely not to extend to a larger sub-algebra of $\Rring{B}{X}$.
We illustrate the problem for $B=\CC_\infty$.

We set $$G=\sum_{i\geq 0}\theta^{\frac{i}{q}}X_{i+1}=\varphi_\alpha(X_0)\in \Rringcirc{\CC_\infty}{X},$$ where $\alpha$ is as in (\ref{p-root}).
Then, we have the identities in $\Rring{\CC_\infty}{X}$ (we have used the following computation to show that $\Rring{\CC_\infty}{X}$ is inversive for $\tau$):
\begin{eqnarray*}
G^q & = & \left(\sum_{i\geq 0}\theta^{\frac{i}{q}}X_{i+1}\right)^q\\
&=&\sum_{i\geq 0}\theta^iX_{i+1}^q\\
&=&\sum_{i\geq 0}\theta^i(C_\theta(X_{i+1})-\theta X_{i+1})\\
&=&\sum_{i\geq 0}\theta^i(X_{i}-\theta X_{i+1})\\
&\overset{!}{=}&\sum_{i\geq 0}\theta^iX_{i}-\sum_{i\geq 0}\theta^{i+1}X_{i+1}\\
&=&X_0.
\end{eqnarray*}
Note the exclamation mark over the next to the last equality.
In parallel, let us set 
$$g=\sum_{i\geq 0}\theta^{\frac{i}{q}}e_{i+1}.$$
This is not an element of $\Tame{\CC_\infty}$ because the sequence $(|\theta^{\frac{i}{q}}|)_i$ is not bounded.
We claim that $g$ defines an entire function. Indeed, for all $R\in |\CC_\infty|$
and all $z\in D(0,R)$, we have, for any $i$ large enough,
$|e_{i+1}(z)|=|\widetilde{\pi}||z||\theta|^{-i-1}$ so that $|\theta^{\frac{i}{q}}e_{i+1}(z)|\leq|\widetilde{\pi}||\theta|^{\frac{i}{q}-i-1}R\rightarrow0$ which implies the uniform convergence of the series defining 
$g$ over any disk $D(0,R)$.

Now, $g^q\neq e$. One way to see this is by observing that $e=\widetilde{\pi}z+h^q$,
with $h$ an entire function. If $g^q=e$, the identity map $z\mapsto z$ would be equal to the 
$q$-th power of an entire function, which is impossible. To compute $g^q-e$ we cannot 
use the argument we applied to show the identity $G^q=X_0$; this argument breaks at the level of the equality $\overset{!}{=}$ because the series of functions $\sum_{i\geq 0}\theta^{i}e_{i+1}$ is divergent outside $0$ although the series $\sum_{i\geq 0}\theta^iX_{i+1}$ defines an element of 
$\Rring{\CC_\infty}{X}$.

To compute $g^q$ we proceed in the following way. We set $$\phi=e_C\Big(\frac{\widetilde{\pi}z}{\theta-t}\Big)=\sum_{i\geq 0}t^ie_{i+1}\in\Tamecirc{\FF_q[t]}.$$ It is easy to see that 
$\lim_{t\rightarrow\theta}(\theta-t)\phi=\widetilde{\pi}z$. But
$$e_C(z)=C_{\theta-t}(\phi)=(\theta-t)\phi+\tau(\phi)$$
so that $e=e_C(z)=\widetilde{\pi}z+\lim_{t\rightarrow\theta}\tau(\phi)=\widetilde{\pi}z+\sum_{i\geq 0}\theta^{i}e_{i+1}^q=\widetilde{\pi}z+g^q$. We thus obtain:
$$g^q-e=\widetilde{\pi}z.$$
From this identity we deduce (1) that $g\not\in\Tame{\CC_\infty}$ (because $z$ is not tame) and (2) the map $J$ does not extend to a $\CC_\infty$-algebra map over $\Tame{\CC_\infty}[G]$. 

Also, note that the condition 
of finite depth in Definition \ref{deftameseries} is necessary. It is not difficult to show that there is a uniformly convergent series expansion (in any bounded subset of $\CC_\infty$)
 $$\widetilde{\pi}z=\sum_{i\geq 0}c_ie_i^{q^i},$$ with $c_0=1$ and $c_i\in K_\infty$ such that 
 $c_i\rightarrow0$ so that the sequence $(c_i)_{i\geq 0}$ is bounded. The reader can compute the coefficients $c_i$ inductively.
 
 \subsubsection{Examples of tame series}
 
 To conclude this section, we give examples of tame series of the kind which will be used in the present paper.
Following \S \ref{class-of-entire-functions}, we consider, in the notations introduced there, a 
function
$$\chi\in\operatorname{Hol}_{\TT_\Sigma[d^{-1}]^\wedge}\Big(\CC_\infty\rightarrow(\EE_\Sigma[d^{-1}]^\wedge)^{n\times n}\Big)$$
analytically extending an $\FF_q$-algebra morphism $A\rightarrow \FF_q(\underline{t}_\Sigma)^{n\times n}$
(see Proposition \ref{firstpropertieschi}, where $\widetilde{\chi}=\chi$). We now use the properties of tame series that we know to show the following result.

\begin{Proposition}\label{chi-is-tame}
The function $\chi$ is the unique entire function $f:\CC_\infty\rightarrow\KK_\Sigma^{n\times n}$
such that $f(a)=\chi(a)$ for all $a\in A$ with $\|\exp_C(\widetilde{\pi}z)^{-\frac{1}{q}}f(z)\|\rightarrow0$
as $\exp_C(\widetilde{\pi}z)\rightarrow0$.
\end{Proposition}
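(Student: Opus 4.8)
The statement to prove is Proposition~\ref{chi-is-tame}: that $\chi$ is the \emph{unique} entire function $f:\CC_\infty\to\KK_\Sigma^{n\times n}$ with $f(a)=\chi(a)$ for all $a\in A$ and satisfying the growth bound $\|\exp_C(\widetilde\pi z)^{-1/q}f(z)\|\to 0$ as $\exp_C(\widetilde\pi z)\to 0$.

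The plan is to separate the assertion into two halves: (i) that $\chi$ itself satisfies the stated properties, and (ii) uniqueness. For (i), the identity $\chi(a)=\chi(a)$ for $a\in A$ is exactly part~(2) of Proposition~\ref{firstpropertieschi}, so nothing new is needed there. For the growth bound, I would use the explicit series expansion written in the proof of part~(1) of Proposition~\ref{firstpropertieschi}: namely
$$\chi(z)=\exp_C\Big(\widetilde\pi z(\theta I_n-\vartheta)^{-1}\Big)\omega_\chi^{-1}=\Big(\sum_{i\geq 0}d_i^{-1}(\widetilde\pi z)^{q^i}(\theta^{q^i}I_n-\vartheta)^{-1}\Big)\omega_\chi^{-1}.$$
Since $\omega_\chi^{-1}$ has bounded entries (it lies in $\GL_n(\TT_\Sigma[d^{-1}]^\wedge)$, by Corollary~\ref{coroomegachi}), and since $\|d_i^{-1}(\theta^{q^i}I_n-\vartheta)^{-1}\|=|\theta|^{(i-1)q^i}$ (as recorded in that same proof), the leading term as $z\to\infty$ (equivalently $\exp_C(\widetilde\pi z)\to 0$, i.e.\ $u\to 0$) is the $i=0$ term, which is proportional to $\widetilde\pi z$; more relevantly, I should reinterpret everything in terms of $e=e_C(z)=\exp_C(\widetilde\pi z)$. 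The key point is that $\chi$, viewed as a tame series, has the form $\sum_j f_j\Angle{e}^j$ with leading tame monomial $e_0=e$ of weight $1$: indeed from $\chi_t(z)=\exp_C(\widetilde\pi z/(\theta-t))/\omega(t)=\sum_{i\geq 0}e_{i+1}t^i$ in the scalar case and its matrix analogue, one sees that in the variable $z$ the function $\chi$ is $\sum_{i\geq 0}e_{i+1}\vartheta^i\omega_\chi^{-1}$, which is a tame series with $\nu\geq 1$ --- wait, this needs care, because $\chi(\theta)=\chi(\theta)$ and $\chi$ is not simply that. Actually the correct reading: $\chi(z)=\exp_C(\widetilde\pi z(\theta I_n-\vartheta)^{-1})\omega_\chi^{-1}$ and since $(\theta I_n-\vartheta)^{-1}=\sum_{k\geq 0}\theta^{-k-1}\vartheta^k$, applying $\exp_C$ coefficientwise and using $\exp_C(\widetilde\pi z\theta^{-k-1})=e_{k+1}$ gives $\chi(z)=\big(\sum_{k\geq 0}e_{k+1}\vartheta^k\big)\omega_\chi^{-1}$, a tame series in $\Tamecirc{\KK_\Sigma}^{n\times n}$ with leading monomial $e_1$ of weight $1/q$. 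Hence $w(\chi)=1/q$ entrywise (generically), so $v(\chi)=-1/q>-1/q\cdot q=-1$... in any case $\|\exp_C(\widetilde\pi z)^{-1/q}\chi(z)\|\to 0$ follows from Proposition~\ref{leading} since the weight of $\chi$ is $1/q<1/q$... I need $w(\chi)<1/q$ strictly or $w(\chi)=1/q$; if $w(\chi)=1/q$ then $\|e^{-1/q}\chi(z)\|$ tends to a nonzero constant, not $0$. So the exponent in the hypothesis should be read carefully --- I expect the intended bound is that $\|\exp_C(\widetilde\pi z)^{-1/q}f(z)\|$ stays bounded, or the claim uses $w(\chi)\le 1/q$; in the writeup I will compute $w(\chi)$ via Proposition~\ref{leading} and match the stated bound.

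For uniqueness (the substantive part), suppose $f,g$ are two entire functions with the listed properties and set $h=f-g$. Then $h$ is entire, vanishes on all of $A$, and satisfies $\|\exp_C(\widetilde\pi z)^{-1/q}h(z)\|\to 0$. Since $h$ vanishes on $A=\operatorname{Ker}(e_C)$... no, $A$ is not the kernel; rather $\exp_C$ has kernel $\widetilde\pi\FF_q(\underline t_\Sigma)[\theta]$. Instead: $h$ vanishes on the infinite discrete set $A\subset\CC_\infty$. I would argue that $h$ factors as $h(z)=\exp_C(\widetilde\pi z)\cdot k(z)$ for some entire $k$: indeed $\exp_C(\widetilde\pi z)=e_A(z)\widetilde\pi$ has simple zeros exactly at $A$ (from the Weierstrass product \eqref{Weierstrass}), so $h/e_C$ extends to an entire function $k$ by the division property for entire functions on $\CC_\infty$ (an entire function vanishing on the zero set of another, with matching multiplicities, is divisible by it --- standard for $\CC_\infty$-entire functions, and extends to the $\KK_\Sigma$-valued setting coefficientwise in an orthonormal basis, using that $\KK_\Sigma$ is countably cartesian). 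Then the growth hypothesis gives $\|e^{-1/q}h\|=\|e^{1-1/q}k\|\to 0$, i.e. $\|e^{(q-1)/q}(z)\,k(z)\|\to 0$ as $|z|\to\infty$; since $|e(z)|=|e_C(z)|\to\infty$ in this regime, while $\|k(z)\|$ is controlled... actually this forces $\|k(z)\|\to 0$, hence by the $B$-analogue of Liouville's theorem (Proposition~\ref{entire}) $k$ is constant, and a constant with $\|k(z)\|\to 0$ must be $0$, so $h=0$.

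The main obstacle I anticipate is the division step: justifying that a $\KK_\Sigma$-valued entire function vanishing (with multiplicity) on the zero set of the scalar entire function $e_C$ is divisible by $e_C$ in the ring of $\KK_\Sigma$-entire functions. For scalar $\CC_\infty$-valued entire functions this is classical Weierstrass theory over a complete algebraically closed non-archimedean field; to promote it to $\KK_\Sigma$-coefficients I would expand $h=\sum_{i}h_i b_i$ in an orthonormal basis $(b_i)$ of $\KK_\Sigma$ over $\CC_\infty$ (available since $\KK_\Sigma$ is countably cartesian, Lemma~\ref{lemma-di-serre}), observe each $h_i$ is $\CC_\infty$-entire and vanishes on $A$, divide each by $e_C$ to get $k_i$ with $h_i=e_C k_i$, check that $\|k_i\|\to 0$ uniformly on disks (using that division by a fixed entire function with known zeros is norm-controlled, e.g.\ via $k_i = h_i/e_C$ and the explicit product for $e_C$), and reassemble $k=\sum_i k_i b_i$ as a $\KK_\Sigma$-entire function. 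Once divisibility is in hand, the rest (growth $\Rightarrow$ $k$ bounded $\Rightarrow$ $k$ constant $\Rightarrow$ $k=0$) is a direct application of Proposition~\ref{entire} together with the weight/valuation bookkeeping of Proposition~\ref{leading}, so I do not expect further difficulty there.
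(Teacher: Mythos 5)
Your proposal is correct and follows essentially the same route as the paper: the interpolation property is quoted from Proposition \ref{firstpropertieschi}, the growth estimate comes from the tame series expansion $\chi=\omega_\chi^{-1}\sum_{i\geq 0}e_{i+1}\vartheta^{i}$ having weight $\frac{1}{q}$ together with Proposition \ref{leading}, and uniqueness is obtained by dividing the difference (which vanishes on $A$, the simple zero set of $\exp_C(\widetilde{\pi}z)$) by $\exp_C(\widetilde{\pi}z)$ and applying the Liouville-type Proposition \ref{entire}. Your extra care about the division step in the $\KK_\Sigma$-valued setting, and your observation that Proposition \ref{leading} literally yields boundedness of $\|\exp_C(\widetilde{\pi}z)^{-1/q}\chi(z)\|$ rather than decay to $0$, are both warranted — the paper's own proof glosses over the first and only asserts boundedness for the second.
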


\begin{proof}
We have already seen in Proposition \ref{firstpropertieschi} that the entire function $\chi$ interpolates the map $\chi:A\rightarrow \FF_q(\underline{t}_\Sigma)^{n\times n}$. We now prove the growth estimate. But note that
$$\chi(z)=\exp_C\Big(\widetilde{\pi}(\theta I_n-\Theta)^{-1}z\Big)\omega_\chi^{-1}
=\omega_\chi^{-1}\sum_{i\geq 0}e_{i+1}\Theta^{-i}\in\Big(\Tamecirc{{\TT_\Sigma[d^{-1}]^{\wedge}}}\Big)^{n\times n}.$$ We deduce that $w(\chi)=w(e_1)=\frac{1}{q}$. Hence, by Proposition \ref{leading}, we have that the function
$\|\exp_C(\widetilde{\pi}z)^{-\frac{1}{q}}f(z)\|$ is bounded as $\exp_C(\widetilde{\pi}z)\rightarrow0$.

It remains to show uniqueness. Consider $f\in\operatorname{Hol}_{\KK_\Sigma}(\CC_\infty\rightarrow\KK_\Sigma^{n\times n})$ such that $f(a)=\chi(a)$ for all $a\in A$.
Then the function $g=f-\chi$ is in  $\operatorname{Hol}_{\KK_\Sigma}(\CC_\infty\rightarrow\KK_\Sigma^{n\times n})$ and vanishes on $A\subset\CC_\infty$.
Therefore $\frac{g(z)}{\exp_C(\widetilde{\pi}z)}$ is entire and $\lim_{\exp_C(\widetilde{\pi}z)\rightarrow0}\Big\|\frac{g(z)}{\exp_C(\widetilde{\pi}z)}\Big\|=0$. By Proposition \ref{entire}, $g$ vanishes identically.
\end{proof}

\section{Quasi-periodic matrix functions}\label{quasiperiodicfunctions}

One of the basic observations in the theory of modular forms for the full modular group 
$\operatorname{SL}_2(\ZZ)$ is that they are $\ZZ$-periodic, so that they have a Fourier series development, also called $q$-expansion. There is a very similar feature for (scalar) Drinfeld modular forms for the full modular group $\Gamma=\GL_2(A)$ which are $A$-periodic, and indeed we have in this case $u$-expansions, which is the appropriate structure to study their behaviour at the cusp infinity as well as a large part of their theory.

A similar feature holds for the vector-valued modular forms which are studied in the present work, associated to higher dimensional representations of $\Gamma$. For a special class of representations called {\em representations of the first kind}, introduced below (Definition \ref{firstkindrepresentations}) we are able to expand entries of modular forms as certain formal sums in tame series as in \S \ref{tameseriessect}. However, the problem that we tackle is more involved than the special case of scalar Drinfeld modular forms (case of powers of the determinant representations). Therefore crucial becomes the notion of {\em quasi-periodic function}, equally introduced in this section (Definition
\ref{quasipermatrix}). In this section we study these functions, which can also be understood as a kind of generalization of Goss polynomials. The terminology chosen comes from Gekeler's paper \cite{GEK01} (see \S 2). Gekeler uses what he calls quasi-periodic functions to construct an analogue of the De Rham isomorphism associated to a Drinfeld module (between a `De Rham module' of classes of biderivations and a `Betti module'). More precisely, he constructs (in his \S 4) certain Poincar\'e series to show that the map is surjective (while injectivity follows essentially from the fact the the logarithm series does not extend to an entire function). These Poincar\'e series have inspired the construction of Perkins' series and are similar to the quasi-periodic functions we study in the present paper. It is possible to use them to prove an appropriate version of the De Rham isomorphism for the Carlitz functor evaluated on certain difference algebras, but this theme will not be pursued in the present paper. 

What is important to us is another property: that quasi-periodic functions allow us to make a bridge between modular forms and the tame series of \S \ref{tameseriessect} (see also the motivations in \S \ref{necessity}).
The central result obtained here is Theorem \ref{theorem-u-expansions}, which asserts that every modular form in the sense of Definition
\ref{modularform} can be expanded as a formal series in the field of uniformizers $\mathfrak{K}$. We also give an application of these structures in Theorem \ref{theo-hecke-operators}, where we show that the spaces of our modular forms and cusp forms are endowed with Hecke endomorphisms, generalizing \cite[Proposition 5.12]{PEL&PER3}, which deals with the very special case of $N=2$ and $\rho=\rho_t^*$ (with an ad hoc proof unfortunately very hard to generalize to our more general settings).

\subsection{Quasi-periodic functions}\label{Quasi-periodic-functions}
Let $k$ be any field, and $R$ a commutative $k$-algebra. We denote by $B(R)$ the Borel subgroup $\{(\begin{smallmatrix}* & *\\ 0 & *\end{smallmatrix})\}\subset\GL_2(R)$ and by $U(R)$ the unit upper-triangular subgroup
$\{(\begin{smallmatrix}1 & *\\ 0 & 1\end{smallmatrix})\}\subset\GL_2(R)$. Let $T$ be an indeterminate and $E/k(T)$ 
be a field extension. Suppose we are given 
$$GL_2(k)\xrightarrow{\mu}\GL_N(E)\xleftarrow{\nu}U(k[T])$$
two representations such that $\mu|_{U(k)}=\nu|_{U(k)}$ and such that for all $\lambda\in k^\times$ and $a\in k[T]$, 
$$\mu(\begin{smallmatrix}\lambda & 0\\ 0 & 1\end{smallmatrix})\nu(\begin{smallmatrix}1 & a\\ 0 & 1\end{smallmatrix})\mu(\begin{smallmatrix}\lambda^{-1} & 0\\ 0 & 1\end{smallmatrix})=\nu(\begin{smallmatrix}1 & \lambda a\\ 0 & 1\end{smallmatrix}).$$
Then, there is a unique representation $\rho:\GL_2(k[T])\rightarrow\GL_N(E)$
which restricts to $\mu,\nu$ respectively on $GL_2(k)$ and $U(k[T])$.

Indeed, see \cite{NAG,SER0}, we have that 
$$\GL_2(k[T])=\GL_2(k)*_{B(k)}B(k[T]),$$ which means that 
$\GL_2(k[T])$ is the amalgamated product of $\GL_2(k)$ and $B(k[T])$ along the common subgroup $B(k)$. By Bruhat's decomposition $\GL_2(k)=B(k)(\begin{smallmatrix}0 & 1\\ 1 & 0\end{smallmatrix})U(k)\sqcup B(k)$ this implies that every element $\gamma\in\GL_2(k[T])$ can be written in a unique way
$$\gamma=A_1B_1\cdots A_lB_l$$
for some $l$, where $A_i\in B(k)(\begin{smallmatrix}0 & 1\\ 1 & 0\end{smallmatrix})U(k)$
and $B_i\in B(k[T])$. Therefore, the identities
$$(\begin{smallmatrix}\lambda & 0\\ 0 & 1\end{smallmatrix})(\begin{smallmatrix}1 & a\\ 0 & 1\end{smallmatrix})(\begin{smallmatrix}\lambda^{-1} & 0\\ 0 & 1\end{smallmatrix})=(\begin{smallmatrix}1 & \lambda a\\ 0 & 1\end{smallmatrix})$$
are the gluing condition for $\mu,\nu$ giving rise to a unique representation $\rho$ of $\Gamma$.

We now take $k=\FF_q$ and $T=\theta$ and we recall that we write $\Gamma=\operatorname{GL}_2(A)$ with $A=\FF_q[\theta]$.
We also recall that $\Omega$ denotes the rigid analytic space whose underlying set is $\CC_\infty\setminus K_\infty$ as defined, for instance, in \cite{GEK} (see also \cite[\S 5, 6]{PEL5}). 
We set, for $a\in A$, $$T_a=(\begin{smallmatrix}1 & a \\ 0 & 1\end{smallmatrix}),\quad S=(\begin{smallmatrix}0 & -1 \\ 1 & 0\end{smallmatrix})$$ (in $\Gamma$). 
The discussion above suggests us, in order to study a representation
\begin{equation}\label{representation-in-B}
\Gamma\xrightarrow{\rho} \operatorname{GL}_N(B)\end{equation}
with $(B,|\cdot|_B)$ a countably cartesian Banach $\CC_\infty$-algebra, that we first analyse its restriction to $U(A)$.
This brings us to the next definition.

\begin{Definition}\label{quasipermatrix}{\em (a) Let $\rho$ be a representation as in (\ref{representation-in-B}).
An analytic function
$$\Omega\xrightarrow{f}B^{N\times1}$$
such that
\begin{equation}\label{semiperiodic}
f(z+a)=\rho(T_a)f(z)\quad\forall a\in A,\end{equation}
is called a {\em $\rho$-quasi-periodic function.} We say that $f$ is {\em tempered} if there exists $M\in\ZZ$ such that $$\lim_{|z|=|z|_\Im\rightarrow\infty}f(z)u(z)^M=\underline{0}$$ where $u$ is defined in (\ref{Goss-uniformiser}). We further say that 
$f$ is {\em regular} if there exists a constant $c>0$ (depending on $f$) such that
the set $\{|f(z)|_B:|z|_\Im\geq c\}$ is bounded (remember that $|\cdot|_\Im$ has been introduced in Remark \ref{remarkIm}).

\noindent (b) Let $$f:\Omega\rightarrow B^{N\times N}$$ be an analytic matrix function 
such that its columns are $\rho$-quasi-periodic in the sense of the point (a) above, so that
$$f(z+a)=\rho(T_a)f(z)\quad \forall a\in A.$$
We say that $f$ is of {\em type} $l\in\ZZ/(q-1)\ZZ$ if for all $\nu\in\FF_q^\times$, we have 
$$f(\nu z)=\nu^{-l}\rho(\begin{smallmatrix} \nu & 0 \\ 0 & 1\end{smallmatrix})f(z)\rho(\begin{smallmatrix} \nu & 0 \\ 0 & 1\end{smallmatrix})^{-1}.$$
(c) We denote by 
$\mathcal{QP}_l^!(\rho;B)$ the $B$-module of tempered $\rho$-quasi-periodic functions 
$$\Omega\rightarrow B^{N\times N}$$ of type $l$, and by $\mathcal{QP}_l(\rho;B)$ the sub-module  of quasi-periodic regular functions.}
\end{Definition}
If $n=1$ and $\rho=\boldsymbol{1}$ (with $\boldsymbol{1}$ the trivial map which sends every element of $\Gamma$ to $1\in\FF_q^\times$), then a quasi-periodic function is a holomorphic function
$f:\Omega\rightarrow B$ such that $f(z+a)=f(z)$ for all $a\in A$. Explicit examples are $e_C(z)=\exp_C(\widetilde{\pi}z)$ and $$u(z)=\frac{1}{\widetilde{\pi}}\sum_{a\in A}\frac{1}{z-a}=\frac{1}{e_C(z)}.$$
Both functions are obviously tempered. The function $e_C(z)$ is of type $-1$ and the function
$u(z)$ is of type $1$. For further use, we record the next Proposition.
\begin{Proposition}\label{propositionperiodic}
Let $f:\Omega\rightarrow B$ be rigid analytic, such that $f(z+a)=f(z)$ for all $a\in A$. Then, the following properties hold:
\begin{itemize}
\item[(a)] There is a unique series expansion
\begin{equation}\label{u-expansion-of-f}
f=\sum_{n\in\ZZ}f_nu(z)^n,\quad f_n\in B,\end{equation}
convergent if $z\in\Omega$ is such that $|z|_\Im>c$ for some $c\in|\CC_\infty^\times|$. 
\item[(b)] If $|f(z)|_B$ is bounded for $|u(z)|<c$ for some $c\in|\CC_\infty^\times|$, then 
$f_n=0$ for all $n<0$.
\item[(c)] If $f$ extends to an entire function over $\CC_\infty$, and there exists $M\in\ZZ$ such that 
$$|u(z)^Mf(z)|_B\rightarrow0$$ as $|u(z)|\rightarrow0$, then $f\in B[u(z)^{-1}]$.
\end{itemize}
\end{Proposition}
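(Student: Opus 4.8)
The plan is to establish the three statements (a)--(c) in sequence, each building on the previous one, using the fact that a function invariant under $z\mapsto z+a$ for all $a\in A$ descends to a function of $u=u(z)=1/e_C(z)$.

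\textbf{Proof of (a).} First I would use that $e_C:\CC_\infty\to\CC_\infty$ induces an isomorphism of rigid analytic spaces $\mathbb{A}^{1,\mathrm{an}}_{\CC_\infty}/\widetilde{\pi}A\cong\mathbb{A}^{1,\mathrm{an}}_{\CC_\infty}$ (recalled in \S\ref{carlitzexttate}), together with the identity $e_C(z)=u^{-1}$. Concretely, the map $z\mapsto u(z)$ sends the region $\{|z|_\Im>c\}$ isomorphically onto a punctured disk $\{0<|u|<c'\}$ for a suitable $c'\in|\CC_\infty^\times|$, since on that region $|u(z)|=|\widetilde{\pi}z|^{-1}$ is small and $e_C$ is locally an isometry near infinity in $u$ (equivalently, near $z$ with $|z|_\Im$ large). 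Because $f$ is $A$-periodic and rigid analytic on $\Omega$, it factors through $u$ on this region, giving a rigid analytic function on the punctured disk; the Mittag-Leffler / Laurent expansion for $B$-valued analytic functions on an annulus (available because $B$ is countably cartesian, via \S\ref{holomorphicfunc} and the discussion of $\operatorname{Hol}_B$ on affinoids) yields the expansion $f=\sum_{n\in\ZZ}f_nu^n$ with $f_n\in B$ converging on that region. Uniqueness is the usual one: two such expansions differ by a Laurent series vanishing identically on an annulus, forcing all coefficients to vanish (this can be checked coefficient-wise after expanding in an orthonormal basis of $B$ and invoking the scalar case).

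\textbf{Proof of (b).} This is the $B$-valued analogue of the classical statement that a holomorphic function bounded near a puncture has no polar part. Expanding each $f_n=\sum_{i\in I}f_{n,i}b_i$ in an orthonormal basis $(b_i)_{i\in I}$ of $B$, boundedness of $|f(z)|_B=\max_i|\sum_n f_{n,i}u^n|$ for $|u|$ small forces, for each fixed $i$, the scalar Laurent series $\sum_n f_{n,i}u^n$ to be bounded near $u=0$, hence (by the classical scalar result over $\CC_\infty$, or directly by the maximum principle on annuli $|u|=r$ as $r\to 0$) to have $f_{n,i}=0$ for all $n<0$. Since this holds for every $i$, $f_n=0$ for $n<0$.

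\textbf{Proof of (c).} Here the hypothesis is that $f$ extends to a $B$-entire function on $\CC_\infty$ and $|u^Mf|_B\to 0$ as $|u|\to0$. By (a), on the region $|z|_\Im>c$ we have $f=\sum_{n\in\ZZ}f_nu^n$; the growth hypothesis $|u^Mf|_B\to 0$ shows $f_n=0$ for all $n<-M$, so $f=\sum_{n\ge -M}f_nu^n$, i.e. $u^M f$ is, near the cusp, a power series in $u$, hence bounded there. The remaining point is to upgrade this local statement to the global assertion $f\in B[u^{-1}]$, i.e. that $g:=u^Mf=e_C(z)^{-M}f(z)$, an entire function times $u^M$, is actually a \emph{polynomial} in $u$. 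Equivalently, $h:=e_C(z)^{M}g=f$ multiplied back — wait, more cleanly: consider $p:=u^M f = \sum_{n\geq 0} f_{n-M}u^n$ near the cusp; since $u$ itself is $A$-periodic and entire-valued-reciprocal, the function $f$ being entire in $z$ means $f$ has, on all of $\CC_\infty$, the property that $e_C(z)^Mf(z)$ is entire and $A$-periodic and bounded at the cusp; by (a)+(b) applied to $e_C(z)^M f(z)$ it has a $u$-expansion $\sum_{n\ge 0}c_nu^n$ valid near the cusp. The key is then that an $A$-periodic entire function of $z$ whose $u$-expansion near the cusp is an honest power series $\sum_{n\geq 0}c_nu^n$ must have $c_n\to 0$ fast enough and in fact be a polynomial in $u$: here I would use the $B$-analogue of Liouville (Proposition~\ref{entire}) — the function $z\mapsto \sum_{n\ge 0}c_n u(z)^n$ is entire in $z$ (as it equals $e_C(z)^Mf(z)$, a product of entire functions) and bounded (its values near the cusp tend to $c_0$, and on any region $|z|_\Im\le c$, i.e. the image being a compact annulus in the $u$-plane, an entire function is bounded), hence constant, contradiction unless... — no: rather, one applies Liouville to $e_C(z)^{M'}f(z)$ for $M'$ large: for $M'\geq M$ big enough its $u$-expansion near the cusp is $\sum_{n\ge M'-M}'$ and one shows it is bounded on all of $\CC_\infty$, hence constant, which by descending induction on $M'$ pins down the expansion to a finite one. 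The upshot is $f = \sum_{n=-M}^{0} f_n u^n \in B[u^{-1}]$.

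\textbf{Main obstacle.} The genuinely delicate point is part (c): one must convert ``bounded by a power of $u^{-1}$ near the cusp, entire globally in $z$'' into ``polynomial in $u^{-1}$''. The clean route is to reduce to the $B$-analogue of Liouville's theorem (Proposition~\ref{entire}): after multiplying $f$ by a sufficiently high power of $e_C(z)$ to kill the pole at the cusp and dividing out, one exhibits a bounded $B$-entire function and concludes it is constant; iterating this pins the $u$-expansion to finitely many terms. Checking carefully that the relevant auxiliary function is genuinely $B$-entire (not merely meromorphic) and genuinely bounded on \emph{all} of $\CC_\infty$ — in particular on the ``bulk'' $\{|z|_\Im\le c\}$ where one uses that $u$ maps it into a region of bounded modulus and $f$ is entire hence bounded there — is where the care is needed, but it is exactly parallel to the scalar case treated in \cite{GEK} and uses no tools beyond those assembled in \S\ref{holomorphicfunc} and \S\ref{carlitzexttate}.
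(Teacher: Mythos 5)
Parts (a) and (b) of your argument are essentially the paper's: descend through $u$ to a $B$-valued Laurent expansion on a punctured disk, then reduce (b) to the scalar removable-singularity statement by expanding in an orthonormal basis of $B$. (One slip in (a): $z\mapsto u(z)$ is not an isomorphism from $\{|z|_\Im>c\}$ onto a punctured disk, being $A$-periodic; the induced map on the quotient by $A$ is, and that is what you actually use.)

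Part (c) has a genuine gap. Both auxiliary functions you propose to feed into Liouville fail one of its two hypotheses. The function $e_C(z)^{M'}f(z)=u^{-M'}f$ is entire, but its expansion near the cusp begins at $u^{-M-M'}$, so it is unbounded there: you have the direction of the exponent backwards, since $e_C(z)=u(z)^{-1}\to\infty$ at the cusp, so multiplying by powers of $e_C(z)$ worsens the singularity rather than killing it. The function $u^{M'}f$, which \emph{is} bounded near the cusp, is not entire: $u=1/e_C(z)$ has poles at every point of $A$. So neither candidate is a bounded $B$-entire function, and the descending induction never gets started. The missing idea, which is the heart of the paper's proof, is that the principal part itself globalizes: convergence of the Laurent series on an annulus $0<|u|\le c$ forces $|f_{-k}|_Br^k\to0$ for every $r$, so that $F^-(u):=\sum_{n\le 0}f_nu^n=\sum_{k\ge0}f_{-k}e_C(z)^k$ is an everywhere-convergent power series in $e_C(z)$ and hence defines a $B$-entire function $f^-$ of $z$. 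Then $f^+:=f-f^-$ is entire, $A$-periodic, tends to a constant at the cusp and is bounded on $\{|z|_\Im\le c\}$ by periodicity, so Proposition \ref{entire} forces $f^+$ to be constant, hence zero, and $f=F^-(u)$. Only at this point does the hypothesis $|u^Mf|_B\to0$ enter: applying (b) to $u^MF^-$ shows $F^-$ has only finitely many nonzero coefficients, i.e. $f\in B[u^{-1}]$. Without the observation that $f^-$ is entire you have no bounded entire function to which Liouville applies.
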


\begin{proof}[Sketch of proof.] This result is basically well known but there is a lack of complete reference in the literature. Let us give some details. 

\noindent (a) The proof of \cite[Proposition 6.1]{PEL5} can be adapted to our setting. We recall from ibid. that for an integer $n$ we define
$$\mathcal{B}_n=D_{\CC_\infty}^\circ(0,|\theta|^n)\setminus\bigcup_{a\in A(n)}D_{\CC_\infty}^\circ(a,1),\quad \mathcal{C}_n=D_{\CC_\infty}^\circ(0,|l_n|)\setminus D_{\CC_\infty}^\circ(0,1),$$
which are filtered unions of affinoid subsets of $\CC_\infty$ ($A(n)$ denotes the $\FF_q$-vector space of all the elements of $A$ which are of degree $<n$ in $\theta$). One can verify that, for all $n$,
\begin{multline*}
\mathcal{O}_{\mathcal{C}_n/B}(\mathcal{C}_n)=\Big\{\sum_{k\in\ZZ}f_ku^k:f_k\in B\text{ for all $k$, }f_{-k}\rightarrow0\text{ as }k\rightarrow\infty,f_kl_n^{(1-\epsilon)k}\rightarrow0\\
\text{ as }k\rightarrow\infty,\text{ for all }\epsilon>0\Big\}.\end{multline*}
This follows from the explicit use of an orthonormal basis of $\mathcal{O}_{\mathcal{C}_n}(\mathcal{C}_n)$ and yields an explicit description of the sheaf $\mathcal{O}_{\mathcal{C}_n/B}$.
Similarly, the sub-sheaf of $\mathcal{O}_{\mathcal{B}_n/B}$ whose global sections $g$ are such that $g(z+a)=g(z)$ for all $a\in A(n)$ equals the pull back $\mathcal{E}^*_n\mathcal{O}_{\mathcal{C}_n/B}$
where $\mathcal{E}_n(z)=l_nE_n(z)$, $E_n$ being the $n$-th Carlitz polynomial (see \cite[\S 4.2]{PEL5}). This follows from an application of Proposition 6.2 of ibid. After these observations, the proof of Proposition 6.1 can be slightly modified to yield the existence of the expansion (\ref{u-expansion-of-f}). Uniqueness follows easily from the connectedness of $\Omega$.

Before considering the point (b) of our proposition, we define, after (\ref{u-expansion-of-f}):
$$F(u):=\sum_{n\in\ZZ}f_nu^n,\quad F^-(u):=\sum_{n\leq 0}f_nu^n,\quad F^+(u):=\sum_{n>0}f_nu^n.$$
We have that $F$ converges for all $u\in\dot{D}_{\CC_\infty}(0,c):=\{z\in\CC_\infty:0<|z|\leq c\}$ where $c\in|\CC_\infty^\times|$, $c<1$ and $f(z)=F(u(z))$. The series $F^-(u)$ converges for all $0<|u|\leq c$ and $c<1$. In other words, $|f_k|_Bc^{-k}\rightarrow0$, which implies that the sequence $f_{-k}$ tends to zero as $k\rightarrow\infty$. In particular, $F^-(u)$ converges for every $u\neq0$.

\noindent (b) Applying (a), $\lim_{u\rightarrow0}F(u)$ exists and $|F(u)|_B$ is bounded on 
$\dot{D}_{\CC_\infty}(0,c)$. We write $f_n=\sum_{i\in I}f_{n,i}b_i$ with $f_{n,i}\in\CC_\infty$, where $(b_i)_{i\in I}$ is an 
orthonormal basis of $B$. We note that
$$|f_{n,i}|\max\{r_1^n,r_2^n\}\rightarrow0\text{ as }i\rightarrow\infty,$$
for all $r_1,r_2\in |\CC_\infty^\times|$ such that $r_1<r_2\leq c$. Therefore we have unconditional convergence with $u$ in $\dot{D}_{\CC_\infty}(0,c)$ for an appropriate choice of $c$ and we can write:
$$F(u)=\sum_{n\in\ZZ}\left(\sum_{i\in I}f_{n,i}b_i\right)u^n=\sum_{i\in I}\left(\sum_{n\in\ZZ}f_{n,i}u^n\right)b_i.$$
We get that for all $i\in I$, the limit for $u\rightarrow0$ of $\sum_nf_{n,i}u^n$ exists. By 
\cite[\S 3, Theorem (Riemann I)]{BAR}, $f_{n,i}=0$ for all $i$, $n<0$ and $F-F^+\in B$.

\noindent (c) Let $f:\CC_\infty\rightarrow B$ be entire, such that 
$f(z+a)=f(z)$ for all $a\in A$. Then, by (a) of this proposition, 
$$f(z)=F(u)=\sum_{k\in\ZZ}f_ku^k,\text{ with }f_k\in B,\quad \forall k\in\ZZ.$$ By the above remarks, 
setting $f^-(z)=F^-(u(z))$ if $z\not\in A$ and $0$ otherwise, $f^-$ defines a $B$-entire function. hence
$f^+(z)=F^+(u(z))=f(z)-f^-(z)$ is $B$-entire and at once, bounded at infinity. By Proposition \ref{entire}, it is constant, hence identically zero; We conclude that 
$f(z)=f^-(z)=F^-(u(z)).$ Now, assume that there exists $M$ such that $|u^MF^-(z)|_B$
is bounded in $B$ as $u\rightarrow0$ (i.e. as $|z|_\Im=|z|\rightarrow\infty$). Then, by (b), 
we have that $G:=u^MF^-$ is such that $G=G^+$ (in the above notations). This suffices to conclude.
\end{proof}

Proposition \ref{propositionperiodic} implies that $\mathcal{QP}_l^!(\boldsymbol{1};B)$ can be embedded in $B[[u]][u^{-1}]^{N\times N}$ and for all $l\in\ZZ/(q-1)\ZZ$ and a representation $\rho$ as in (\ref{representation-in-B}), $\mathcal{QP}_l^!(\rho;B)$ is a module over $\mathcal{QP}_0^!(\boldsymbol{1};B)^{N\times N}$, and a similar property holds for the regular quasi-periodic functions. Of course, we can specify the target space; the meaning of 
$\mathcal{QP}_l^!(\rho;\LL_\Sigma)$ etc. is therefore understood.

\subsubsection{The series $\Psi_m(\rho)$}\label{section-Psi}

There are three types of quasi-periodic functions that are needed in the present work. They are denoted by $\Psi_(\rho),\Xi_\rho$ and $\Phi_\rho$. Here we study the first type.
We consider a representation $\rho:\Gamma\rightarrow\operatorname{GL}_N(B)$. We additionally suppose that:
\begin{equation}\label{rhota}
|a^{-1}\rho(T_a)|_B\rightarrow0,\quad \text{ as $|a|\rightarrow\infty$ with $a\in A$}.\end{equation} 
\begin{Lemma}\label{lemmapsil}
Let $l$ be a positive integer. The function
$\Psi_m(\rho)$
defined, for all $z\in\CC_\infty\setminus A$, by 
$$\Psi_m(\rho)(z)=\sum_{a\in A}(z-a)^{-m}\rho(T_a),$$
determines a non-zero element of $\mathcal{QP}_m(\rho;B)$.
\end{Lemma}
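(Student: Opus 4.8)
The statement packages three assertions about the series
$$\Psi_m(\rho)(z)=\sum_{a\in A}(z-a)^{-m}\rho(T_a):$$ (i) it converges to a rigid analytic function $\Omega\to B^{N\times N}$; (ii) it lies in $\mathcal{QP}_m(\rho;B)$, i.e. it is $\rho$-quasi-periodic, tempered and regular; (iii) it is non-zero. The plan is to treat these in order, relying on the hypothesis \eqref{rhota} (namely $|a^{-1}\rho(T_a)|_B\to 0$ as $|a|\to\infty$) together with the boundedness properties of $\KK_\Sigma$-valued entire functions and of $u$ that are already available.

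\emph{Convergence and analyticity.} First I would fix a rational (affinoid) subset $Y\subset\Omega$ and bound $\|(z-a)^{-m}\rho(T_a)\|_B$ uniformly for $z\in Y$. On such a $Y$ we have $|z|_\Im\ge\varepsilon>0$ and $|z|\le R$, so for $|a|$ large $|z-a|=|a|$; hence $\|(z-a)^{-m}\rho(T_a)\|_B= |a|^{-m}\|\rho(T_a)\|_B\le |a|^{1-m}\cdot\big(|a|^{-1}\|\rho(T_a)\|_B\big)$, and the second factor tends to $0$ by \eqref{rhota} while $|a|^{1-m}$ is bounded for $m\ge 1$ (and tends to $0$ for $m\ge 2$). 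For $m=1$ one still wins because the product of a bounded-by-$1$-ish quantity with something tending to $0$ tends to $0$; thus the general term tends to $0$ for the Fr\'echet filter on $A$, uniformly on $Y$, and each partial sum is a finite $B$-linear combination of elements of $\mathcal O_\Omega(Y)$. By the description of $\operatorname{Hol}_B(\Omega\to B)$ in \S\ref{holomorphicfunc} (uniform limit over every rational subset of elements of $\mathcal O_\Omega(Y)\otimes_L B$), this shows $\Psi_m(\rho)$ is an analytic function $\Omega\to B^{N\times N}$.

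\emph{Quasi-periodicity, temperedness, regularity.} For quasi-periodicity, for $b\in A$ reindex: $\Psi_m(\rho)(z+b)=\sum_{a}(z+b-a)^{-m}\rho(T_a)=\sum_{a'}(z-a')^{-m}\rho(T_{a'+b})=\sum_{a'}(z-a')^{-m}\rho(T_{a'})\rho(T_b)=\Psi_m(\rho)(z)\rho(T_b)$, using $T_{a'+b}=T_{a'}T_b$ and the multiplicativity of $\rho$; since the reindexing is a bijection of $A$ and the sum converges unconditionally this is legitimate, so each column of $\Psi_m(\rho)$ satisfies \eqref{semiperiodic}. For temperedness and regularity I would estimate $\|\Psi_m(\rho)(z)\|_B$ as $|z|=|z|_\Im\to\infty$: isolating the term $a=0$ gives $z^{-m}I_N$ of norm $|z|^{-m}$, while for $a\ne 0$, splitting into $|a|<|z|$ and $|a|\ge|z|$ and using $|z-a|=\max\{|z|,|a|\}$ on the generic part, one finds $\|\Psi_m(\rho)(z)\|_B\to 0$ (each remaining term is $|a|^{-m}\|\rho(T_a)\|_B$ or $|z|^{-m}\|\rho(T_a)\|_B$, all controlled by \eqref{rhota}). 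In particular $\Psi_m(\rho)(z)u(z)^M$ tends to $\underline 0$ for $M=0$ already, and $\{\|\Psi_m(\rho)(z)\|_B:|z|_\Im\ge c\}$ is bounded; so $\Psi_m(\rho)\in\mathcal{QP}_m(\rho;B)$ once the type is checked --- but $\rho(\begin{smallmatrix}\nu&0\\0&1\end{smallmatrix})T_a\rho(\begin{smallmatrix}\nu&0\\0&1\end{smallmatrix})^{-1}=T_{\nu a}$ and $(\nu z-\nu a)^{-m}=\nu^{-m}(z-a)^{-m}$, so $\Psi_m(\rho)(\nu z)=\nu^{-m}\rho(\begin{smallmatrix}\nu&0\\0&1\end{smallmatrix})\Psi_m(\rho)(z)$ as a matrix identity on columns; I would here remark that this is the column version and that membership in $\mathcal{QP}_m^!(\rho;B)$ as a module element is what we record. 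Finally non-vanishing: the leading behaviour as $|z|\to\infty$ is $\Psi_m(\rho)(z)=z^{-m}I_N+o(|z|^{-m})$ by the estimate above (the $a=0$ term dominates strictly since all $a\ne0$ terms are $O(|z|^{-m}\cdot\epsilon(z))$ with $\epsilon(z)\to 0$, using that $\|\rho(T_a)\|_B$ is small for large $a$ and $=\|\rho(T_a)\|_B$ fixed for the finitely many small $a\ne0$ but those give $|z|^{-m}\|\rho(T_a)\|_B$ which is still $o$ relative to nothing --- here I must be slightly careful and instead argue: $\Psi_m(\rho)(z)- z^{-m}I_N=\sum_{a\ne 0}(z-a)^{-m}\rho(T_a)$ has norm $\le \max_{a\ne0}|z-a|^{-m}\|\rho(T_a)\|_B$, which for $|z|$ larger than all the relevant $|a|$ equals $|z|^{-m}\max_{a\ne0}\|\rho(T_a)\|_B=:c_0|z|^{-m}$; this does \emph{not} immediately beat $|z|^{-m}$), so the cleanest route is: specialise a matrix entry and use that $z\mapsto \Psi_m(\rho)(z)$ is not identically zero because its $(i,i)$-entry, say, has a pole of exact order $m$ at $z=0$ (the $a=0$ summand contributes $z^{-m}$ to the diagonal and no other summand has a pole at $0$), hence cannot vanish identically on $\Omega$.

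\emph{Main obstacle.} The genuinely delicate point is the case $m=1$ of the convergence/growth estimates, where the naive bound $|a|^{1-m}$ is merely bounded rather than decaying; one must use \eqref{rhota} honestly, i.e. that $|a|^{-1}\|\rho(T_a)\|_B\to 0$, to get the general term to $0$. Everything else (analyticity via the sheaf-theoretic description of $\operatorname{Hol}_B$, the reindexing for quasi-periodicity, the type computation, and the pole-order argument for non-vanishing) is routine. I would also take care, in the non-vanishing step, to phrase things in terms of a single scalar entry (composing with a coordinate functional $B^{N\times N}\to B$ and then, if needed, a point evaluation $B\to L$, which makes sense since entries are $B$-valued analytic) so as to reduce to the classical fact that a non-trivial Laurent-type expansion at the cusp does not vanish identically on the connected space $\Omega$; this also feeds directly into identifying $\Psi_m(\rho)$ as a \emph{nonzero} element of $\mathcal{QP}_m(\rho;B)$ as claimed.
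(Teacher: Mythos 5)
Your proposal is correct and follows essentially the same route as the paper: uniform convergence on affinoids away from $A$ (using the hypothesis \eqref{rhota}), quasi-periodicity by reindexing $a\mapsto a+b$, the type computed from $T_{\nu a}=\left(\begin{smallmatrix}\nu&0\\0&1\end{smallmatrix}\right)T_a\left(\begin{smallmatrix}\nu&0\\0&1\end{smallmatrix}\right)^{-1}$, regularity from $\|\Psi_m(\rho)(z)\|_B\to 0$ as $|z|_\Im\to\infty$, and non-vanishing from the order-$m$ poles of the meromorphic extension at the points of $A$. The only blemish is your displayed type identity, which omits the right-hand conjugation factor $\rho\left(\begin{smallmatrix}\nu&0\\0&1\end{smallmatrix}\right)^{-1}$ required by Definition \ref{quasipermatrix}(b) for matrix-valued quasi-periodic functions; since you already have the correct conjugation of $T_a$, this is a transcription slip rather than a gap.
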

\begin{proof}
It is easy to show that $\Psi_m(\rho)$ converges uniformly for $z\in\CC_\infty\setminus(\sqcup_{a\in A}D(a,r))$ with $r\in|\CC_\infty^\times|$, $0<r<1$. This implies that 
$\Psi_m(\rho)$ defines a holomorphic function $\Omega\rightarrow B^{N\times N}$, and this function is non-zero because it has, in any disk $D_{\CC_\infty}(0,r)$ with $r\in|\CC_\infty^\times|$, a meromorphic 
extension which has poles of order $m$ at every $a\in D(0,r)\cap A$. 
Moreover, we have, for all $z\in\CC_\infty\setminus A$ and $b\in A$, writing $\Psi$ for $\Psi_m(\rho)$:
\begin{eqnarray*}
\Psi(z-b)&=&\sum_{a\in A}(z-a-b)^{-m}\rho(T_a)\\
&=&\sum_{a\in A}(z-a-b)^{-m}\rho(T_{a+b})\rho(T_{-b})\\
&=&\Psi(z)\rho(T_{-b})=\rho(T_{-b})\Psi(z).
\end{eqnarray*}
so that
\begin{equation}\label{qpbehavior}\Psi(z+a)=\Psi(z)\rho(T_a)=\rho(T_a)\Psi(z),\quad \forall a\in A.\end{equation}
Since $$T_a=\begin{pmatrix}\lambda & 0 \\ 0 & 1\end{pmatrix}T_{\lambda^{-1}a}\begin{pmatrix}\lambda^{-1} & 0 \\ 0 & 1\end{pmatrix},\quad \forall a\in A,\quad \lambda\in\FF_q^\times,$$
for all $\lambda\in\FF_q^\times$:
\begin{eqnarray*}
\Psi(\lambda z)&=&\sum_{a\in A}(\lambda z-a)^{-m}\rho(T_a)\\
&=&\lambda^{-m}\rho\begin{pmatrix}\lambda & 0 \\ 0 & 1\end{pmatrix}\Psi(z)\rho\begin{pmatrix}\lambda^{-1} & 0 \\ 0 & 1\end{pmatrix},
\end{eqnarray*}
and the type is $m$.
Now, as $|z|_\Im\rightarrow\infty$, we get $\Psi(z)\rightarrow0$ so that $\Psi_m(\rho)\in\mathcal{QP}_m(\rho;B)$.
\end{proof}

\subsubsection*{Growth in annuli} What follows will be used in our study of Poincar\'e series \S \ref{poincareseries}, notably in proving non-vanishing properties. This part may be skipped in a first reading.
We suppose that $B\subset\KK_\Sigma$.
We study the series $\Psi_m(\rho)$ in the annuli $\mathcal{C}_0=\{z\in\CC_\infty:0<|z|<1\}$
and $\mathcal{C}_n=\{z\in\CC_\infty:|\theta|^{n-1}<|z|<|\theta|^n\}$, for $n>0$. The representation
$\rho$ being fixed, we now write $\Psi_m$ instead of $\Psi_m(\rho)$. We also write:
\begin{eqnarray*}
\Psi_m^{<}(z)&=&I_Nz^{-m}\quad\text{ if $n=0$},\\
&=&\sum_{\begin{smallmatrix}a\in A\\ |a|<|\theta|^n\end{smallmatrix}}(z-a)^{-m}\rho(T_a)\quad\text{ if $n>0$},\\
\Psi_m^{\geq}(z)&=&\sum_{\begin{smallmatrix}a\in A\\ |a|\geq|\theta|^n\end{smallmatrix}}(z-a)^{-m}\rho(T_a)\quad\text{ for all $n$}.
\end{eqnarray*}
Note that $\Psi_m^{<}\in \KK_\Sigma(z)^{N\times N}$ and that $\Psi_m=\Psi_m^{<}+\Psi_m^{\geq}$. Also, if $\mathcal{D}_k$ denotes the higher divided derivative of order $k$ in the variable $z$ applied coefficientwise, we have
\begin{equation}\label{higher-deri-psi}
\Psi_m=(-1)^{m-1}\mathcal{D}_{m-1}(\Psi_1)=(-1)^{m-1}\mathcal{D}_{m-1}(\Psi^{<}_1)+(-1)^{m-1}\mathcal{D}_{m-1}(\Psi^{\geq}_1).\end{equation}
We begin by studying the case $m=1$. We note that if $a\in A$ is such that 
$|a|<|\theta|^n$ then $|\frac{a}{z}|<1$ and
$$\frac{1}{z-a}=\frac{1}{z}\frac{1}{1-\frac{a}{z}}=z^{-1}\Bigg(1+\sum_{i\geq 0}\Big(\frac{a}{z}\Big)^i\Bigg).$$
Hence, we get
\begin{equation}\label{Psi-<}
\Psi_1^{<}(z)=z^{-1}\Bigg(\sum_{i\geq 0}H_{-i}(\rho)z^{-i}\Bigg),\quad |z|>|\theta|^{n-1},
\end{equation}
where
$$H_{-i}(\rho)=\sum_{\begin{smallmatrix}a\in A\\ |a|<|\theta|^n\end{smallmatrix}}a^i\rho(T_a),\quad i\geq 0,$$ where we adopt the convention $a^0=1$ including when $a=0$, so that
$H_0(\rho)=\sum_{|a|<|\theta|^n}\rho(T_a)$.

Similarly, if $|a|\geq|\theta|^n$ then $|\frac{z}{a}|<1$ and
$$\frac{1}{z-a}=-\frac{1}{a}\frac{1}{1-\frac{z}{a}}=-\frac{1}{a}\sum_{i\geq 0}\Big(\frac{z}{a}\Big)^i=-z^{-1}\sum_{j\geq1}\Big(\frac{z}{a}\Big)^j,$$
and we derive the expansion
\begin{equation}\label{Psi-geq}\Psi_1^{\geq}(z)=-z^{-1}\sum_{j\geq 1}H_j(\rho)z^j,\quad |z|<|\theta|^n,\end{equation}
where 
$$H_{j}(\rho)=-\sum_{\begin{smallmatrix}a\in A\\ |a|\geq|\theta|^n\end{smallmatrix}}a^{-j}\rho(T_a),\quad j\geq 1.$$
We deduce that
$$\Psi_1(z)=z^{-1}\sum_{i\in\ZZ}H_i(\rho)z^i,\quad z\in\mathcal{C}_n.$$

\subsubsection*{The case $\rho=\boldsymbol{1}$}
We suppose that $\rho=\boldsymbol{1}$ so that $N=1$. It is easy to show that, in this case, writing 
$S_d(i)=\sum_{a\in A^+(d)}a^{-i}\in K$, $S_{<d}(i)=\sum_{0\leq k<d}S_k(i)\in K$ and $\zeta_A(i)=\sum_{a\in A^+}a^{-i}\in K_\infty$, we get $H_i(\boldsymbol{1})=0$ if $q-1\nmid i$, and if $q-1\mid i$ then
$H_i(\boldsymbol{1})=-S_{<n}(-i)$ if $i<0$, $H_0(\boldsymbol{1})=1$ if $n=0$ and $H_0(\boldsymbol{1})=0$ if $n>0$, and $H_i(\boldsymbol{1})=\zeta_A(i)-S_{<n}(i)\in K_\infty$. If $n=0$ we get
$H_i(\boldsymbol{1})=0$ for all $i<0$ and therefore we conclude with the well known identity:
\begin{equation}\label{tilde-psi-Carlitz}
\Psi_1(z)=\frac{1}{z}\Bigg(1+\sum_{\begin{smallmatrix}j>0\\ q-1\mid j\end{smallmatrix}}\zeta_A(j)z^j\Bigg),\quad 0<|z|<1.\end{equation}

If we choose $n=1$ and $z\in\mathcal{C}_1$ (i.e. $1<|z|<|\theta|$) we easily verify:
$$\Psi_1^{<}(z)=\sum_{\lambda\in\FF_q}\frac{1}{z-\lambda}=\frac{-1}{z^q-z}=-\sum_{\begin{smallmatrix}i\geq q\\ q-1\mid i\end{smallmatrix}}z^{-i},\quad |z|>1.$$
Similarly, we compute
$$\Psi_1^{\geq}(z)=z^{-1}\sum_{\begin{smallmatrix}i\geq 1\\ q-1\mid i\end{smallmatrix}}\Big(\zeta_A(i)-1\Big)z^i,\quad |z|<|\theta|.$$
In other words, to construct the formal series which represents $\Psi_1$ on $\mathcal{C}_1$ it suffices to compute 
$$\widetilde{\Psi}-z^{-1}\sum_{\begin{smallmatrix}j\in\ZZ\\ q-1\mid j\end{smallmatrix}}z^j,$$
where $\widetilde{\Psi}$ is the formal series (\ref{tilde-psi-Carlitz}) which represents $\Psi_1$ on $\mathcal{C}_0$ (note that the last series is nowhere converging). This example when $n=1$ is useful in computations related to Poincar\'e series, see \S \ref{poincareseries}, so we keep developing it a little bit further but the reader can skip it at the first reading.

Since $\zeta_A(i)-1\equiv\theta^{-i}\pmod{\mathcal{M}_\infty^{i+1}}$ for all $i>0$ where $\mathcal{M}_\infty=\frac{1}{\theta}\FF_q[[\frac{1}{\theta}]]$ is the maximal ideal of $K_\infty$, we observe that 
the $\infty$-adic Newton polygon of $\Psi_1$ over $\mathcal{C}_1$ has three slopes. 
If $z\in\mathcal{C}_1$, we have $|\Psi_1^{<}(z)|=|z|^{-q}$ and $|\Psi_1^{\geq}(z)|=|z|^{q-2}|\theta|^{1-q}$. We therefore have that 
$|\Psi_1^{<}(z)|=|\Psi_1^{\geq}(z)|$ if and only if $|z|=|\theta|^{\frac{1}{2}}$ and if $1<|z|<|\theta|^{\frac{1}{2}}$ we have 
$|\Psi_1(z)|=|\Psi_1^{<}(z)|=|z|^{-q}$ while if $|\theta|^{\frac{1}{2}}<|z|<|\theta|$ we have 
$|\Psi_1(z)|=|\Psi_1^{\geq}(z)|=|z|^{q-2}|\theta|^{1-q}$. 

In view of our further investigations (related to Poincar\'e series), we need to generalize the above computations to the study of $\Psi_m$ with $m>0$. But using (\ref{higher-deri-psi}) and the fact that
$(-1)^{m-1}\mathcal{D}_{m-1}(\Psi^{<}_1)=\Psi^{<}_m$ and $(-1)^{m-1}\mathcal{D}_{m-1}(\Psi^{\geq}_1)=\Psi^{\geq}_m$ we deduce that if $z\in\mathcal{C}_1$:
\begin{equation}\label{precise-estimate}
|\Psi_m^{<}(z)|=|z|^{1-q-m-\omega_1},\quad |\Psi_m^{\geq}(z)|=|\theta|^{-m}\Big|\frac{z}{\theta}\Big|^{\omega_2},
\end{equation}
where $-1+q+m+\omega_1$ is the order of $\Psi_m^{<}(z)$ in $z^{-1}$ with $\omega_1\geq 0$, and 
$\omega_2\geq 0$ is the order of $\Psi_m^{\geq 0}(z)$ in $z$. 
Indeed, the reader can easily verify that $\theta^m\mathcal{D}_{m-1}(\Psi_1^{\geq})=\sum_k\alpha_kz^k$
where $|\alpha_k|=|\theta|^{-k}$ for all $k$.
The computation of $\omega_1$ and $\omega_2$ and their dependence in $m$ is a combinatorial problem which goes beyond our scopes but fortunately, we do not need to solve it.
We see that the $\infty$-adic Newton polygon has three slopes in this case too.
Note that $|\Psi_m^{<}(z)|=|\Psi_m^{\geq}(z)|$ if and only if 
$|z|^{1-q-m-\omega_1}=|\theta|^{-m}\Big|\frac{z}{\theta}\Big|^{\omega_2}$
which is equivalent to
$|z|=|\theta|^{\frac{m+\omega_2}{\omega_1+\omega_2+m+q-1}}.$
Now, 
$$\frac{m+\omega_2}{\omega_1+\omega_2+m+q-1}=1-\frac{\omega_1+q-1}{\omega_1+\omega_2+m+q-1}\in]0,1[.$$
We deduce the next result.
\begin{Lemma}\label{lemma-omega12}
There exists an element $\kappa_1\in]1,|\theta|[\cap|\CC_\infty^\times|$ and two non-negative integers $\omega_1,\omega_2$ such that if $1<|z|<\kappa_1$, then 
$\Psi_m(z)=|z|^{1-q-m-\omega_1}$ and if $\kappa_1<|z|<|\theta|$, then
$\Psi_m(z)=|\theta|^{-m}|\frac{z}{\theta}|^{\omega_2}$.
\end{Lemma}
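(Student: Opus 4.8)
The plan is to deduce Lemma \ref{lemma-omega12} directly from the two-sided estimates \eqref{precise-estimate} by pinning down the crossover radius and invoking the strict ultrametric inequality; the paragraph preceding the statement already assembles every ingredient, so the proof amounts to putting them together.

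First I would recall, for $z\in\mathcal{C}_1=\{z:1<|z|<|\theta|\}$, the decomposition $\Psi_m=\Psi_m^{<}+\Psi_m^{\geq}$ together with the precise norms from \eqref{precise-estimate}: writing $r=|z|$,
$$|\Psi_m^{<}(z)|=r^{1-q-m-\omega_1},\qquad |\Psi_m^{\geq}(z)|=|\theta|^{-m}\Big(\frac{r}{|\theta|}\Big)^{\omega_2},$$
where $\omega_1,\omega_2\in\NN$ are the integers introduced there. I would then set
$$\kappa_1:=|\theta|^{\frac{m+\omega_2}{\omega_1+\omega_2+m+q-1}},$$
which is the unique positive real at which $|\Psi_m^{<}(z)|=|\Psi_m^{\geq}(z)|$, and verify the membership claim: since $m\geq 1$ we have $m+\omega_2>0$, and since $q\geq 2$ we have $\omega_1+q-1>0$, so the exponent $1-\frac{\omega_1+q-1}{\omega_1+\omega_2+m+q-1}$ lies strictly in $]0,1[$; because $|\theta|=q$ and $|\CC_\infty^\times|=q^{\QQ}$, this places $\kappa_1$ in $]1,|\theta|[\cap|\CC_\infty^\times|$.

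Next I would record the monotonicity of the two sides in $r$: the map $r\mapsto r^{1-q-m-\omega_1}$ is strictly decreasing (its exponent is $\leq-2$ since $q\geq 2$, $m\geq 1$), while $r\mapsto|\theta|^{-m}(r/|\theta|)^{\omega_2}$ is non-decreasing (constant when $\omega_2=0$). Comparing each side with its common value at $r=\kappa_1$ then gives $|\Psi_m^{<}(z)|>|\Psi_m^{\geq}(z)|$ for $1<r<\kappa_1$ and the reverse inequality for $\kappa_1<r<|\theta|$. Finally I would apply the strict non-archimedean triangle inequality — valid for the matrix norm, since $|A+B|=\max\{|A|,|B|\}$ whenever the entrywise sup-norms satisfy $|A|\neq|B|$ — to $\Psi_m=\Psi_m^{<}+\Psi_m^{\geq}$, obtaining $|\Psi_m(z)|=|\Psi_m^{<}(z)|=|z|^{1-q-m-\omega_1}$ on the inner subannulus and $|\Psi_m(z)|=|\Psi_m^{\geq}(z)|=|\theta|^{-m}|z/\theta|^{\omega_2}$ on the outer one, which is exactly the assertion.

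I do not expect a genuine obstacle here. All the analytic content — the exact orders $\omega_1,\omega_2$, the shape of the Laurent expansion of $\Psi_m^{<}$ at $\infty$ and of $\Psi_m^{\geq}$ at $0$, in particular that $\theta^m\mathcal{D}_{m-1}(\Psi_1^{\geq})$ has coefficients of norm exactly $|\theta|^{-k}$ — is already packaged into \eqref{precise-estimate}. The only points needing a little care are the degenerate cases $\omega_1=0$ or $\omega_2=0$, which are absorbed uniformly by the comparison-at-$\kappa_1$ argument, and the check that $\kappa_1$ is a strictly interior point of the annulus lying in the value group $q^{\QQ}$, which is the short computation displayed above.
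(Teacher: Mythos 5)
Your proposal is correct and follows essentially the same route as the paper: the paper's own argument is precisely the paragraph preceding the lemma, which computes the crossover radius $\kappa_1=|\theta|^{\frac{m+\omega_2}{\omega_1+\omega_2+m+q-1}}$ from (\ref{precise-estimate}), checks the exponent lies in $]0,1[$, and concludes by comparing the two norms on either side of $\kappa_1$. Your explicit mention of the monotonicity in $r$ and the strict ultrametric equality $|A+B|=\max\{|A|,|B|\}$ when $|A|\neq|B|$ merely spells out what the paper leaves implicit.
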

This result is used in the proof of Proposition \ref{proposition-poincare-bis} which deals with non-vanishing properties of Poincar\'e series.

\subsection{Representations of the first kind}\label{representations-first-kind}

We now introduce a class of representations of $\Gamma$ for which we can construct explicitly entire non-zero quasi-periodic functions in several ways. First of all, we introduce a useful technical definition.

\begin{Definition}\label{degree-of-a-representation}
{\em We say that a representation $\rho:\Gamma\rightarrow\GL_N(\FF_q(\underline{t}_\Sigma))$ is of {\em degree} $l\in\ZZ/(q-1)\ZZ$ if for all $\mu\in\FF_q^\times$, $\rho(\mu I_2)=\mu^{-l}I_N$.}
\end{Definition}
We recall that after (\ref{condition-degree}), $J_\gamma(z)^w\rho(\gamma)$ is a factor of 
automorphy if and only if $\rho$ is of degree $w$.
For example, $\det^{-m}$ is of degree $2m$ (the double of the type). The identity map over $\Gamma$ is of degree $-1$. All the representations that we consider in this text have a well defined degree.

\begin{Definition}\label{firstkindrepresentations}
{\em Let $\chi:A\rightarrow\FF_q(\underline{t}_\Sigma)^{n\times n}$
be an injective $\FF_q$-algebra morphism, let $d\in\FF_q[\underline{t}_\Sigma]\setminus\{0\}$ be such that 
$d\chi(\theta)\in\FF_q[\underline{t}_\Sigma]^{n\times n}$. Then the map 
$$\rho_\chi:\Gamma\rightarrow\operatorname{GL}_{2n}\left(\FF_q[\underline{t}_\Sigma][d^{-1}]\right)\subset\operatorname{GL}_{2n}(\FF_q(\underline{t}_\Sigma))$$
defined, with $\gamma=(\begin{smallmatrix}a & b \\ c & d\end{smallmatrix})\in\Gamma$,
by $$
\rho_\chi(\gamma):=\begin{pmatrix}\chi(a) & \chi(b)\\ \chi(c) & \chi(d)\end{pmatrix},
$$
is a representation of degree $-1$, called the {\em basic representation} associated to $\chi$.
Note also that
$$\det(\rho_\chi(\gamma))=\det(\chi(ad-bc))=\det(\gamma)^n.$$
If $\rho$ is a representation, we write $$\rho^*:={}^t\rho^{-1}$$ for its contragredient representation.
If $\rho$ is of degree $l$, $\rho^*$ is of degree $-l$.
Let $\rho:\Gamma\rightarrow\operatorname{GL}_N(\underline{t}_\Sigma)$ be a representation.
We say that $\rho$ is a {\em representation of the first kind} if $\rho$ can be obtained from basic representations by finitely many iterated 
applications of the following {\em elementary operations}: $(\cdot)^*$, direct sums $\oplus$, Kronecker products $\otimes$, symmetric powers $S^m$, exterior powers $\wedge^m$, in such a way that $\rho$ has a well defined degree. For further use, we will call these operations {\em admissible operations}.}\end{Definition}

Note that if $\rho$ and $\psi$ are two representations such that $\rho$ has degree $l$ and $\psi$ has degree $m$, then:
\begin{eqnarray*}
\rho\oplus\psi& \text{has degree} & l \text{ (if $l=m$)}\\
\rho\otimes\psi& &l+m,\\
S^r(\rho)& & rl,\\
\wedge^r\rho & & rl,\\
\rho^*& & -l,
\end{eqnarray*}
where in the right, $(\cdot)^*,\oplus, \otimes, S^r$ and $\wedge^r$ denote respectively the contragredient, direct sum, Kronecker product, $r$-th symmetric power and the $r$-th exterior power, of representations.

\begin{Remark}\label{remark-monoid}{\em For basic representations $\rho_1,\ldots,\rho_k$, any representation of the first kind 
$\rho:\Gamma\rightarrow\GL_N(\FF_q(\underline{t}_\Sigma))$ constructed combining them with the admissible operations $\oplus,\otimes,\wedge^r,S^r$ extend to monoid homomorphisms 
$A^{2\times 2}\rightarrow\FF_q(\underline{t}_\Sigma)^{N\times N}$. The operation $(\cdot)^*$ is excluded. However, the comatrix representation $\operatorname{Co}(\rho):=\det(\rho)\otimes\rho^*$ also extends to
a monoid homomorphism.}\end{Remark}

\subsubsection{The functions $\Xi_\rho$}\label{here-defi-phi-rho}
For any representation of the first kind $\rho$, we can canonically associate a quasi-periodic function $\Xi_\rho$. This allows to show that, for $L\subset\KK_\Sigma$ a field extension of $\CC_\infty$, the $\KK_{\Sigma}((u))^{N\times N}$-module $\mathcal{QP}_m^!(\rho;\KK_\Sigma)$ is free of rank one. 
Additionally, $\Xi_\rho$ has entries in $\Tamecirc{(\EE_\Sigma[\frac{1}{d}]^\wedge)}$.
Let us first assume that $\rho=\rho_\chi$ is a basic representation.
We denote by $\chi$ the function $\widetilde{\chi}$ of Proposition \ref{firstpropertieschi}. 

By using Proposition  
\ref{firstpropertieschi} and the identity $\chi(z+a)=\chi(z)+\chi(a)$ for $z\in\CC_\infty$ and $a\in A$, we see that the function
\begin{equation}\label{def-xi}
\Xi_\rho(z)=\begin{pmatrix} I_n & \chi(z)\\ 0& I_n\end{pmatrix},\end{equation}
belongs to $\mathcal{QP}^!_{0}(\rho;\EE_\Sigma[\frac{1}{d}]^\wedge)$ (with $d\chi(\theta)\in\FF_q[\underline{t}_\Sigma]\setminus\{0\}$). 
In fact, we have more. Indeed, since $\chi(z+a)=\chi(z)+\chi(a)=\chi(z+a)=\chi(a)+\chi(z)$,
we have $\Xi_\rho(z)=\rho(T_a)\Xi_\rho(z)=\Xi_\rho(z)\rho(T_a)$ for all $a\in A$.

If now $\rho$ is a representation of the first kind, by definition it can be constructed from basic representations $\rho_1,\ldots,\rho_m$ by finitely many iterated 
applications of direct sums, Kronecker products, exterior and symmetric powers, and the  operation of taking the contragredient, and following the same 
process, we can combine the functions $\Xi_{\rho_1},\ldots,\Xi_{\rho_m}$ to construct 
a quasi-periodic matrix function $\Xi_\rho\in\mathcal{QP}^!_{0}(\rho;\widehat{\EE_\Sigma[\frac{1}{d}]})$ for some $d$. More precisely, we set, for $\rho,\psi$ two representations of the first kind:
\begin{eqnarray}
\Xi_{\rho\oplus\psi}&=&\Xi_\rho\oplus\Xi_\psi,\label{procedures}\\
\Xi_{\rho\otimes\psi}&=&\Xi_\rho\otimes\Xi_\psi,\nonumber\\
\Xi_{S^r(\rho)}&=&S^r(\Xi_\rho),\nonumber\\
\Xi_{\wedge^r\rho}&=&\wedge^r\Xi_\rho,\nonumber\\
\Xi_{\rho^*}&=&(\Xi_\rho)^*\nonumber.
\end{eqnarray}
We thus get:
\begin{equation}\label{xi-left-right}
\Xi_\rho(z+a)=\rho(T_a)\Xi_\rho(z)=\Xi_\rho(z)\rho(T_a), \quad a\in A.
\end{equation}
To simplify our notations we write, in the following,
$$\mathfrak{E}:=\EE_\Sigma[d^{-1}]^\wedge,$$ where $\EE_\Sigma[d^{-1}]^\wedge$ has been introduced before Corollary \ref{coroomegachi}.
\begin{Proposition}\label{somepropertiesxirho}
If $\rho$ is a representation of the first kind then we have: (1)
$$\Xi_\rho\in\mathcal{QP}^!_{0}\Big(\rho;\widehat{\EE_\Sigma[d^{-1}]}\Big)\cap(\Tame{\KK_\Sigma})^{N\times N},$$ (2) $\Xi_\rho\in\GL_N(\Tame{\KK_\Sigma})$ and $\Xi_\rho^p=I_N$ and (3)  
$\oplus_{m}\mathcal{QP}^!_{m}(\rho;\KK_\Sigma)\subset\mathfrak{K}_\Sigma^{N\times N}$ 
is both a left and a right $\KK_\Sigma((u))^{N\times N}$-module, free of rank one.
\end{Proposition}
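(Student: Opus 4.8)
The plan is to reduce everything to the case of basic representations and then propagate through the admissible operations. For part (1), I would argue as follows. For a basic representation $\rho=\rho_\chi$, the entries of $\Xi_\rho$ as in (\ref{def-xi}) are, up to the constant entries $0$ and $1$ of $I_n$, exactly the entries of $\chi(z)$; by Proposition \ref{chi-is-tame} these lie in $\Tamecirc{\mathfrak{E}}$, hence in $\Tame{\KK_\Sigma}$, and the quasi-periodicity $\Xi_\rho(z+a)=\rho(T_a)\Xi_\rho(z)=\Xi_\rho(z)\rho(T_a)$ together with temperedness (the weights involved are finite, so Proposition \ref{leading} gives the required growth, i.e. regularity) places $\Xi_\rho$ in $\mathcal{QP}^!_0(\rho;\widehat{\EE_\Sigma[d^{-1}]})$. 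The point is that all of this is preserved by the five admissible operations $\oplus,\otimes,S^r,\wedge^r,(\cdot)^*$: direct sums, Kronecker products, symmetric and exterior powers of matrices with entries in the $\CC_\infty$-algebra $\Tame{\KK_\Sigma}$ again have entries in $\Tame{\KK_\Sigma}$ (this is just that $\Tame{\KK_\Sigma}$ is a $\CC_\infty$-algebra, which is established in \S \ref{tameseriessect} via Corollary \ref{embeddingcorollary}), and for $(\cdot)^*={}^t(\cdot)^{-1}$ one uses part (2), to which I turn next. The quasi-periodicity is propagated through the operations exactly by the defining formulas (\ref{procedures}), which yield (\ref{xi-left-right}), and temperedness/regularity survives because each operation only multiplies weights by bounded factors. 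Since $\widehat{\EE_\Sigma[d^{-1}]}=\mathfrak{E}$ is closed under the admissible operations as well, $\Xi_\rho$ lands in $\mathcal{QP}^!_0(\rho;\mathfrak{E})\cap(\Tame{\KK_\Sigma})^{N\times N}$.

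For part (2), for a basic representation $\Xi_{\rho_\chi}=\begin{pmatrix} I_n & \chi(z)\\ 0 & I_n\end{pmatrix}$ is visibly unipotent: its inverse is $\begin{pmatrix} I_n & -\chi(z)\\ 0 & I_n\end{pmatrix}$, which again has entries in $\Tame{\KK_\Sigma}$ (as $\chi(z)$ does), and in characteristic $p$ a unipotent matrix with nilpotent part $M$ satisfying $M^p=0$ gives $(I+M)^p=I+M^p=I$; here $M=\begin{pmatrix}0&\chi(z)\\0&0\end{pmatrix}$ has $M^2=0$, so certainly $\Xi_{\rho_\chi}^p=I$. For a general representation of the first kind, I would observe that the property "$X\in\GL_N(\Tame{\KK_\Sigma})$ with $X^p=I_N$" is closed under $\oplus,\otimes,S^r,\wedge^r$: functorially each of these operations sends the identity to the identity, and $(\Xi_\rho\otimes\Xi_\psi)^p=\Xi_\rho^p\otimes\Xi_\psi^p=I$ because $\otimes$ (and likewise $\oplus$, $S^r$, $\wedge^r$) is multiplicative on matrices and these operations commute with taking $p$-th powers in characteristic $p$ via the multiplicativity of the Frobenius-compatible structure; and it is trivially closed under $(\cdot)^*$ since $(X^*)^p=({}^t X^{-1})^p={}^t(X^p)^{-1}=I$. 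Hence $\Xi_\rho^p=I_N$ in all cases, and in particular $\Xi_\rho$ is invertible with inverse $\Xi_\rho^{p-1}$, which again has entries in the $\CC_\infty$-algebra $\Tame{\KK_\Sigma}$; so $\Xi_\rho\in\GL_N(\Tame{\KK_\Sigma})$.

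For part (3), once I know $\Xi_\rho\in\GL_N(\Tame{\KK_\Sigma})\subset\GL_N(\mathfrak{K}_\Sigma)$, the claim that $\bigoplus_m\mathcal{QP}^!_m(\rho;\KK_\Sigma)$ is a free left (and right) $\KK_\Sigma((u))^{N\times N}$-module of rank one, generated by $\Xi_\rho$, is proved by exhibiting the isomorphism. Given any tempered $\rho$-quasi-periodic matrix function $f$, the function $g:=\Xi_\rho^{-1}f$ satisfies $g(z+a)=\Xi_\rho(z+a)^{-1}f(z+a)=(\rho(T_a)\Xi_\rho(z))^{-1}\rho(T_a)f(z)=\Xi_\rho(z)^{-1}f(z)=g(z)$, so $g$ is genuinely $A$-periodic; by Proposition \ref{propositionperiodic}(a),(c) its entries lie in $\KK_\Sigma((u))$ (temperedness of $f$ together with $\Xi_\rho$ being tame controls the pole order), i.e. $g\in\KK_\Sigma((u))^{N\times N}$, and conversely $\Xi_\rho g$ is again a tempered $\rho$-quasi-periodic function for any such $g$. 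Because $\Xi_\rho$ also satisfies $\Xi_\rho(z+a)=\Xi_\rho(z)\rho(T_a)$ one gets the same statement on the right by considering $f\Xi_\rho^{-1}$. The embedding into $\mathfrak{K}_\Sigma^{N\times N}$ is then automatic: $\Xi_\rho$ has entries in $\Tame{\KK_\Sigma}\subset\mathfrak{K}_\Sigma$ and $\KK_\Sigma((u))\subset\mathfrak{K}_\Sigma$, so the product lands in $\mathfrak{K}_\Sigma^{N\times N}$.

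The main obstacle I anticipate is not any single step but making precise that the five admissible operations genuinely preserve "tame with controlled weight" \emph{and} the unipotence/$p$-torsion simultaneously, in particular that $(\cdot)^*$ behaves well — this is why part (2) has to be interleaved with part (1) rather than proved afterward, and why the characteristic-$p$ identity $\Xi_\rho^p=I$ is doing real work: without it, inverting $\Xi_\rho$ inside $\Tame{\KK_\Sigma}$ (rather than merely inside the fraction field $\mathfrak{K}_\Sigma$) would be unclear, and the rank-one freeness in part (3) would only give a module over a larger ring. The bookkeeping for temperedness through $S^r$ and $\wedge^r$ (where weights add up to $r$ times the base weight) is routine but needs to be stated.
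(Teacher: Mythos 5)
Your proof follows essentially the same route as the paper's: reduce to basic representations, invoke Proposition \ref{chi-is-tame} for tameness of the entries, propagate through the admissible operations, and prove (3) by observing that $\Xi_\rho^{-1}\Phi$ and $\Phi\Xi_\rho^{-1}$ are $A$-periodic and tempered and then applying Proposition \ref{propositionperiodic}. Three small remarks. First, membership in $\mathcal{QP}^!_0$ requires, beyond quasi-periodicity and temperedness, that $\Xi_\rho$ be of type $0$ in the sense of Definition \ref{quasipermatrix}(b), namely $\Xi_\rho(\nu z)=\rho(\begin{smallmatrix}\nu & 0\\ 0 & 1\end{smallmatrix})\Xi_\rho(z)\rho(\begin{smallmatrix}\nu & 0\\ 0 & 1\end{smallmatrix})^{-1}$ for $\nu\in\FF_q^\times$; you never verify this. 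It is a one-line computation for basic representations (using $\chi(\nu z)=\nu\chi(z)$ and the explicit form of $\rho_\chi$ on $\GL_2(\FF_q)$) and it propagates through the operations, but it is part of the statement and must be said. Second, where you obtain $\Xi_\rho^{-1}\in(\Tame{\KK_\Sigma})^{N\times N}$ from $\Xi_\rho^{-1}=\Xi_\rho^{p-1}$, the paper instead uses $\det(\Xi_\rho)=1$ (true for basic representations and preserved by the operations) together with cofactors; both work, and your interleaving of (2) with (1) is a legitimate alternative that makes the role of the identity $\Xi_\rho^p=I_N$ more transparent. Third, in part (3) the correct reference is Proposition \ref{propositionperiodic}(a) and (b), not (c): the function $U_1=\Xi_\rho^{-1}f$ is only holomorphic on $\Omega$, not entire, and one bounds the pole order in $u$ by applying (b) to $u^MU_1$.
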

\begin{proof}
The fact that $\Xi_\rho$ is quasi-periodic is clear from (\ref{xi-left-right}).
Moreover, it is easy to see that $\Xi_\rho$ is of type $0$. It suffices to check this for basic representations.
For this note that, for $\nu\in\FF_q^\times$, and for any $\FF_q$-algebra morphism $\chi:A\rightarrow\FF_q(\underline{t}_\Sigma)$, 
$(\begin{smallmatrix}I_n & \chi(\nu z)\\ 0 & I_n\end{smallmatrix})=(\begin{smallmatrix}I_n & \nu\chi( z)\\ 0 & I_n\end{smallmatrix})=
(\begin{smallmatrix}\nu I_n & 0\\ 0 & I_n\end{smallmatrix})(\begin{smallmatrix}I_n & \chi( z)\\ 0 & I_n\end{smallmatrix})(\begin{smallmatrix}\nu^{-1}I_n & 0\\ 0 & I_n\end{smallmatrix})$. But
since $\rho=\rho_\chi$, we have $\rho(\begin{smallmatrix}a & b \\ c & d\end{smallmatrix})=
(\begin{smallmatrix}aI_n & bI_n \\ cI_n & dI_n\end{smallmatrix})$ for all $(\begin{smallmatrix}a & b \\ c & d\end{smallmatrix})\in\operatorname{GL}_2(\FF_q)$, and therefore,
\begin{equation}\label{suffering1}
\Xi_\rho(\nu z)=
\rho\begin{pmatrix} \nu & 0 \\ 0 & 1\end{pmatrix}\Xi_\rho (z)\rho\begin{pmatrix} \nu & 0 \\ 0 & 1\end{pmatrix}^{-1}.
\end{equation}
Additionally, since the entries of the function $\chi$ are tame series in virtue of Proposition \ref{chi-is-tame}, $\Xi_\rho$  is tempered thanks to Proposition \ref{leading}. Now, note that $\det(\Xi_\rho)=1$ due to the fact that this equality holds true for $\rho$ a basic representation.
Hence $\Xi_\rho^{-1}\in (\Tame{\KK_\Sigma})^{N\times N}$ which confirms (1). 
For (2), note that $\Xi_\rho\in\GL_N(\Tame{\KK_\Sigma})$ (with determinant one) and $\Xi_\rho^p=I_N$ for $\rho$ a basic representation, just by construction. The general case follows easily. 
Finally for (3),
note that by (\ref{xi-left-right}), for all $a\in A$,
$$\Xi_\rho(z+a)^{-1}=\Xi_\rho(z)^{-1}\rho(T_a)^{-1}=\rho(T_a)^{-1}\Xi_\rho(z)^{-1}.$$
Let $\Phi$ be an element of $\mathcal{QP}^!_{m}(\rho;\KK_\Sigma)$ for some $m$. Then
$U_1:=\Xi_\rho^{-1}\Phi$ and $U_2:=\Phi\Xi_\rho^{-1}$ are both $A$-periodic and tempered. By Proposition \ref{propositionperiodic} we see that
$U_1,U_2$ belong to $\KK_\Sigma((u))^{N\times N}$. Hence $\Phi=\Xi_\rho U_1=U_2\Xi_\rho\in\mathfrak{K}_\Sigma^{N\times N}$. A simple computation indicates that $U_1,U_2$ are both of type $m$.
\end{proof}

Along with (\ref{procedures}) we also define, with $\rho_\chi:\Gamma\rightarrow\GL_{2n}(\FF_q(\underline{t}_\Sigma))$ a basic representation and $\omega_\chi$ as in (\ref{omegamatrices}):

\begin{eqnarray}
\omega_{\rho_\chi}&=&\Big(\begin{smallmatrix} \omega_\chi & 0_n\\ 0_n & I_n\end{smallmatrix}\Big)\\
\omega_{\rho\oplus\psi}&=&\omega_\rho\oplus\omega_\psi,\label{procedures-omega}\\
\omega_{\rho\otimes\psi}&=&\omega_\rho\otimes\omega_\psi,\nonumber\\
\omega_{S^r(\rho)}&=&S^r(\omega_\rho),\nonumber\\
\omega_{\wedge^r\rho}&=&\wedge^r\omega_\rho,\nonumber\\
\omega_{\rho^*}&=&(\omega_\rho)^*\nonumber.
\end{eqnarray}
This allows to associate, in a unique way, to every representation of the first kind $\rho$ of dimension $N$, 
an element $\omega_\rho\in\GL_N(\LL_\Sigma)$.
We have:
\begin{Lemma}\label{rationality-of-xi}
If $\rho$ is a representation of the first kind there exist $\vartheta_1,\ldots,\vartheta_r\in \FF_q(\underline{t}_\Sigma)$ such that $\omega_\rho\Xi_\rho\omega_\rho^{-1}\in (\Tame{A[\vartheta_1,\ldots,\vartheta_r]})^{N\times N}$.
\end{Lemma}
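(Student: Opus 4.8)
The plan is to reduce the statement to the case of basic representations by an induction along the construction of $\rho$ via admissible operations, and then to handle a basic representation $\rho_\chi$ directly. For a basic representation, recall from \S \ref{class-of-entire-functions} (Proposition \ref{firstpropertieschi} and its proof) that $\Xi_{\rho_\chi}=\bigl(\begin{smallmatrix}I_n & \chi(z)\\ 0 & I_n\end{smallmatrix}\bigr)$ with $\chi(z)=\exp_C(\widetilde\pi z(\theta I_n-\vartheta)^{-1})\omega_\chi^{-1}$, where $\vartheta=\chi(\theta)$. With $\omega_{\rho_\chi}=\bigl(\begin{smallmatrix}\omega_\chi & 0\\ 0 & I_n\end{smallmatrix}\bigr)$ one computes directly that $\omega_{\rho_\chi}\Xi_{\rho_\chi}\omega_{\rho_\chi}^{-1}=\bigl(\begin{smallmatrix}I_n & \omega_\chi\chi(z)\\ 0 & I_n\end{smallmatrix}\bigr)$, and $\omega_\chi\chi(z)=\exp_C\bigl(\widetilde\pi z(\theta I_n-\vartheta)^{-1}\bigr)=\sum_{i\geq 0}e_{i+1}(z)\,\vartheta^i$. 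Since $\vartheta\in\FF_q(\underline t_\Sigma)^{n\times n}$ has entries of the form $d^{-1}\nu$ with $\nu\in\FF_q[\underline t_\Sigma]^{n\times n}$, the entries of $\omega_\chi\chi(z)$ are tame series whose coefficients are $\FF_q$-polynomial expressions in a finite set of elements $\vartheta_1,\ldots,\vartheta_r$ (the entries of $\vartheta$, together with $d^{-1}$ absorbed appropriately) — more precisely, after clearing denominators one sees $\omega_\chi\chi(z)\in(\Tame{A[\vartheta_1,\ldots,\vartheta_r]})^{n\times n}$, where $\vartheta_1,\ldots,\vartheta_r$ are the entries of $\vartheta$. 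The off-diagonal block shows $\omega_{\rho_\chi}\Xi_{\rho_\chi}\omega_{\rho_\chi}^{-1}\in(\Tame{A[\vartheta_1,\ldots,\vartheta_r]})^{2n\times 2n}$, establishing the base case.

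For the inductive step, suppose $\rho$ and $\psi$ are representations of the first kind for which the conclusion holds, say with conjugated matrices having entries in $\Tame{A[\underline\vartheta]}$ and $\Tame{A[\underline\vartheta']}$ respectively; set $A'=A[\underline\vartheta,\underline\vartheta']$, a finitely generated $\FF_q$-subalgebra of $\FF_q(\underline t_\Sigma)$. By the defining relations (\ref{procedures}) and (\ref{procedures-omega}), and because the operations $\oplus,\otimes,S^r,\wedge^r$ commute with conjugation — e.g. $\omega_{\rho\otimes\psi}\Xi_{\rho\otimes\psi}\omega_{\rho\otimes\psi}^{-1}=(\omega_\rho\Xi_\rho\omega_\rho^{-1})\otimes(\omega_\psi\Xi_\psi\omega_\psi^{-1})$, and likewise for $\oplus$, $S^r$, $\wedge^r$ — and because $\Tame{A'}$ is an $A'$-algebra (so entrywise sums, products, and the polynomial combinations defining symmetric and exterior powers stay inside $(\Tame{A'})^{N\times N}$), the conclusion propagates through these four operations. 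For the contragredient one must be slightly more careful: $\omega_{\rho^*}\Xi_{\rho^*}\omega_{\rho^*}^{-1}=(\omega_\rho\Xi_\rho\omega_\rho^{-1})^*={}^t\!\bigl((\omega_\rho\Xi_\rho\omega_\rho^{-1})^{-1}\bigr)$, and one invokes Proposition \ref{somepropertiesxirho}(2): $\Xi_\rho$ (hence its conjugate) has determinant $1$ and satisfies $\Xi_\rho^p=I_N$, so $(\omega_\rho\Xi_\rho\omega_\rho^{-1})^{-1}=(\omega_\rho\Xi_\rho\omega_\rho^{-1})^{p-1}$, a polynomial in the matrix itself, whose entries therefore again lie in $\Tame{A'}$. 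Enlarging the finite set $\{\vartheta_1,\ldots,\vartheta_r\}$ at each step so as to keep track of all the elements of $\FF_q(\underline t_\Sigma)$ introduced, the induction closes.

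The only genuinely delicate point — and the step I expect to be the main obstacle — is the bookkeeping in the base case: verifying that the coefficients of the tame series $\sum_{i\geq0}e_{i+1}(z)\vartheta^i$ really belong to a \emph{polynomial} ring $A[\vartheta_1,\ldots,\vartheta_r]$ and not merely to $A[\underline\vartheta][d^{-1}]$. This is handled by the observation already used in the proof of Corollary \ref{coroomegachi}: writing $\vartheta=d^{-1}\nu$ with $\nu\in\FF_q[\underline t_\Sigma]^{n\times n}$, one has $\vartheta^i=d^{-i}\nu^i$, and the tame series $\sum_i e_{i+1}(z)d^{-i}\nu^i$ can be regrouped so that each monomial coefficient is an $\FF_q$-polynomial in the finitely many entries of $\vartheta$ — the apparent denominators $d^{-i}$ are absorbed because $e_{i+1}(z)=e_C(\widetilde\pi z/\theta^{i+1})$ carries a compensating factor, exactly as in the argument showing $\omega_\chi^{-1}\in(\EE_\Sigma[d^{-1}]^\wedge)^{n\times n}$. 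Once this is pinned down for the entries of $\vartheta$ themselves, taking $\vartheta_1,\ldots,\vartheta_r$ to be those entries (for the basic case) and accumulating them over the finitely many admissible operations used to build $\rho$ yields the required finite list.
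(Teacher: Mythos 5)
Your proof is correct and follows essentially the same route as the paper: the paper's own proof consists exactly of your base-case computation (conjugation turns the off-diagonal block into $\omega_\chi\chi(z)=\sum_{i\geq 0}\vartheta^ie_{i+1}$, whose coefficients are polynomials in the entries of $\vartheta$), with the propagation through the admissible operations, including the contragredient step you justify via $\Xi_\rho^p=I_N$ and $\det\Xi_\rho=1$, left implicit. One remark: the ``delicate point'' of your final paragraph is a non-issue, since the lemma allows $\vartheta_1,\ldots,\vartheta_r$ to be arbitrary elements of $\FF_q(\underline{t}_\Sigma)$ — taking them to be the entries of $\vartheta$ already places every coefficient of $\sum_i\vartheta^ie_{i+1}$ in $\FF_q[\vartheta_1,\ldots,\vartheta_r]$ with nothing to clear — and the mechanism you invoke there is in any case spurious, because $e_{i+1}=e_C(z/\theta^{i+1})$ carries no factor that could compensate powers of $d\in\FF_q[\underline{t}_\Sigma]$.
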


\begin{proof}
This follows, with $\chi$ basic, from $\Big(\begin{smallmatrix}\omega_\chi & 0_n\\ 0_n & I_n\end{smallmatrix}\Big)
\Big(\begin{smallmatrix}I_n & \chi(z)\\ 0_n & I_n\end{smallmatrix}\Big)\Big(\begin{smallmatrix}\omega_\chi^{-1} & 0_n\\ 0_n & I_n\end{smallmatrix}\Big)=\Big(\begin{smallmatrix}I_n & \omega_\chi\chi(z)\\ 0_n & I_n\end{smallmatrix}\Big)$, and the property that, with $\vartheta=\chi(\theta)$, $\omega_\chi\chi(z)=\exp_C\Big(\widetilde{\pi}z(\theta I_n-\vartheta)^{-1}\Big)=\sum_{i\geq 0}\vartheta^ie_{i+1}\in(\Tame{\FF_q[\vartheta]})^{n\times n}$ and (\ref{chidefi}).
\end{proof}

\subsubsection{The functions $\Phi_\rho$}\label{section-Phi-rho}
Another important class of matrix-valued functions is the following one, that we are going to study now:
$$\Phi_\rho=e_A\Psi_1(\rho),$$ where we recall that $\Psi_1(\rho)=\sum_{a\in A}(z-a)^{-1}\rho(T_a)$, depending on the choice of a representation of the first kind $\rho$.

\begin{Proposition}\label{propgeneralitiestame}
The following properties hold:
\begin{itemize}
\item[(a)] The function $\Phi_\rho$ extends to an entire function $\CC_\infty\rightarrow\mathfrak{E}^{N\times N}$.
\item[(b)] We have that $\Phi_\rho\in\mathcal{QP}^!_0(\rho;\mathfrak{E})$.
\item[(c)] 
There exist two matrices $U_1,U_2\in(\mathfrak{E}[e_C(z)])^{N\times N}$ of type $0$ with $$U_i-I_N\in e_C(z)(\mathfrak{E}[e_C(z)])^{N\times N},\quad i=1,2$$ which are $p$-nilpotent, uniquely determined depending on $\rho$,
such that 
$$\Phi_\rho=U_1\Xi_\rho=\Xi_\rho U_2.$$\item[(d)] We have $\Phi_\rho\in(\Tamecirc{\mathfrak{E}})^{N\times N}$ and this is the unique element $f$ of $(\Tamecirc{\mathfrak{E}})^{N\times N}$ such that $f(a)=\rho(T_a)$ for all $a\in A$.
\end{itemize}
\end{Proposition}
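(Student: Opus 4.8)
The plan is to prove the four assertions (a)--(d) in order, reducing everything to the case of a basic representation $\rho=\rho_\chi$ and then propagating through the admissible operations.

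\textbf{Step 1: the basic case.} First I would write out $\Phi_{\rho_\chi}$ explicitly. By definition $\Psi_1(\rho_\chi)(z)=\sum_{a\in A}(z-a)^{-1}\rho_\chi(T_a)$, and since $T_a=(\begin{smallmatrix}1&a\\0&1\end{smallmatrix})$ we have $\rho_\chi(T_a)=(\begin{smallmatrix}I_n&\chi(a)\\0&I_n\end{smallmatrix})$, so
\[
\Psi_1(\rho_\chi)(z)=\Big(\begin{smallmatrix}1&0\\0&1\end{smallmatrix}\Big)\otimes\Big(\sum_{a\in A}\tfrac{1}{z-a}\Big)\cdot\text{(const)}+\text{(off-diagonal block)}\sum_{a\in A}\tfrac{\chi(a)}{z-a}.
\]
More precisely $\Phi_{\rho_\chi}=e_A\Psi_1(\rho_\chi)=\big(\begin{smallmatrix}e_A\cdot\frac1{e_A}\,I_n & e_A\sum_a\frac{\chi(a)}{z-a}\\ 0 & e_A\cdot\frac1{e_A}\,I_n\end{smallmatrix}\big)=\big(\begin{smallmatrix}I_n & \chi(z)\\ 0 & I_n\end{smallmatrix}\big)$, where I use $e_A\sum_{a}(z-a)^{-1}=1$ (the defining identity $u=1/e_C$, $e_A=\widetilde\pi^{-1}e_C$) and Perkins' formula together with Proposition~\ref{firstpropertieschi}/Proposition~\ref{chi-is-tame} to identify $e_A\sum_a\chi(a)(z-a)^{-1}$ with the entire function $\chi(z)=\widetilde\chi(z)$ of \S\ref{class-of-entire-functions}. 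In other words, for basic $\rho$ we get $\Phi_{\rho_\chi}=\Xi_{\rho_\chi}$ on the nose, which is entire with values in $\mathfrak E^{N\times N}$ by Proposition~\ref{firstpropertieschi}(1), lies in $\mathcal{QP}^!_0(\rho_\chi;\mathfrak E)$ by \eqref{xi-left-right}, and has entries in $\Tamecirc{\mathfrak E}$ by Proposition~\ref{chi-is-tame} (note $w(\chi)=1/q<1$, so $\chi(z)\in\Tamecirc{\mathfrak E}$). Here $U_1=U_2=I_N$, trivially $p$-nilpotent and of type $0$.

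\textbf{Step 2: propagation through admissible operations.} Next I would show that the class of representations $\rho$ for which (a)--(d) hold is closed under $\oplus$, $\otimes$, $S^r$, $\wedge^r$ and $(\cdot)^*$, with $\Phi$ transforming exactly like $\Xi$ in \eqref{procedures}. The key algebraic point is that $\Psi_1$, hence $\Phi=e_A\Psi_1$, is ``multiplicative'' in the representation only in a twisted way; the clean object to propagate is rather the factorization $\Phi_\rho=U_1\Xi_\rho=\Xi_\rho U_2$. For $\rho\otimes\psi$, from $\Phi_\rho=U_1^{(\rho)}\Xi_\rho$ and $\Phi_\psi=U_1^{(\psi)}\Xi_\psi$ I would set $U_1^{(\rho\otimes\psi)}=U_1^{(\rho)}\otimes U_1^{(\psi)}$ and check $\Phi_{\rho\otimes\psi}=(U_1^{(\rho)}\otimes U_1^{(\psi)})(\Xi_\rho\otimes\Xi_\psi)$; the subtlety is that this forces us to \emph{define} $\Phi_{\rho\otimes\psi}$ this way and then verify it agrees with $e_A\Psi_1(\rho\otimes\psi)$. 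That verification is a quasi-periodicity + growth uniqueness argument: both sides are $\rho\otimes\psi$-quasi-periodic of type $0$, entire, tempered, and take the value $\rho(T_a)\otimes\psi(T_a)=(\rho\otimes\psi)(T_a)$ at each $a\in A$; by part (d) (proved in parallel) such a function is unique. The analogous statements for $\oplus$, $S^r$, $\wedge^r$ are easier, and for $(\cdot)^*$ one uses $\Phi_{\rho^*}=(\Phi_\rho)^*=(\Xi_\rho^*)(U_1^*)$ noting that $\Xi_\rho\in\GL_N(\Tame{\KK_\Sigma})$ and $U_1$ is invertible (its inverse is again unipotent-type since $U_1-I_N$ is nilpotent). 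The $p$-nilpotency of $U_i$ is preserved under all these operations by elementary linear algebra over a characteristic-$p$ ring. Type $0$ is preserved similarly, using \eqref{suffering1} for $\Xi$ and the corresponding invariance of $\Psi_1$.

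\textbf{Step 3: part (d), the uniqueness.} The heart of the argument — and the step I expect to be the main obstacle — is the uniqueness clause in (d): $\Phi_\rho$ is \emph{the unique} $f\in(\Tamecirc{\mathfrak E})^{N\times N}$ with $f(a)=\rho(T_a)$ for all $a\in A$. Here I would argue entrywise. If $f,g$ are two such, then $h=f-g$ is a $\KK_\Sigma$-entire function vanishing on all of $A$, with entries in $\Tamecirc{\KK_\Sigma}$, hence (by Proposition~\ref{leading} / the description of $\Tamecirc{\cdot}$) of weight $<1$, i.e. $\|h(z)\|\le C|e_C(z)|^{w}$ with $w<1$ as $|z|_\Im\to\infty$. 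Since $h$ vanishes on $A$, $h(z)/e_C(z)$ is entire (as $e_C$ has simple zeros exactly at $\widetilde\pi A$... careful: $e_C(\widetilde\pi a)=0$, so actually $h$ vanishes on $A$ while $e_C$ vanishes on $\widetilde\pi A\cdot$const; the right statement is that $e_A(z)=\widetilde\pi^{-1}e_C(z)$ vanishes on $A$, so $h/e_A$ is entire). Then $\|h(z)/e_A(z)\|\le C'|e_C(z)|^{w-1}\to 0$, so $h/e_A$ is a bounded $\KK_\Sigma$-entire function, hence constant by Proposition~\ref{entire} (the $B$-analogue of Liouville), and the constant is $0$ since $|e_C(z)|^{w-1}\to0$. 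Thus $h\equiv0$. This is the same mechanism as the proof of Proposition~\ref{chi-is-tame}, just applied matrix-entrywise. Assembling: (a) is entirety with values in $\mathfrak E^{N\times N}$, which holds for basic $\rho$ and is preserved by the operations since $\mathfrak E=\widehat{\EE_\Sigma[d^{-1}]}$ is stable under $\oplus,\otimes,S^r,\wedge^r$ and under inversion of unipotent-by-scalar matrices; (b) is \eqref{xi-left-right}-type quasi-periodicity plus temperedness from Proposition~\ref{leading}; (c) is the factorization built in Step 2 with $p$-nilpotency and type tracked along the way, uniqueness of $U_1,U_2$ following from invertibility of $\Xi_\rho$ in $\GL_N(\Tame{\KK_\Sigma})$; and (d) combines Step 1, Step 2's construction, and the Liouville uniqueness argument just sketched.
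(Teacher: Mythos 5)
Your Steps 1 and 3 are sound: Step 3 is essentially the paper's own proof of the uniqueness in (d) (vanishing on $A$, divide by $e_A$, apply Proposition \ref{entire}), and Step 1 recovers what the paper records as Corollary \ref{caseofthebasicrepresentation}. The fatal problem is Step 2. Your propagation rule $U_1^{(\rho\otimes\psi)}=U_1^{(\rho)}\otimes U_1^{(\psi)}$ is false: for basic $\rho,\psi$ it would force $\Phi_{\rho\otimes\psi}=\Xi_{\rho\otimes\psi}$, but $\Xi_{\rho\otimes\psi}$ generally has a nonzero $e_0$-component in its tame expansion and so does not lie in $(\Tamecirc{\mathfrak{E}})^{N\times N}$ — take $\rho_{t_1}\otimes\cdots\otimes\rho_{t_q}$, where $\prod_{k}e_C\big(\tfrac{z}{\theta-t_k}\big)=e_0-\cdots$ (the example following Theorem \ref{theopsi}) — whereas $\Phi_{\rho\otimes\psi}$ does lie there by (d). The source of the failure is that $\Psi_1(\rho\otimes\psi)$ is a single sum over $a\in A$ while $\Psi_1(\rho)\otimes\Psi_1(\psi)$ is a double sum, so $\Phi$ simply does not tensor. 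Moreover, the uniqueness you invoke to "verify" the identification is not available at that point: part (d) gives uniqueness only within $(\Tamecirc{\mathfrak{E}})^{N\times N}$, and uniqueness fails among entire, $\rho$-quasi-periodic, tempered functions agreeing on $A$ (adding $e_C(z)\Xi_\rho(z)$ to $\Phi_\rho$ preserves all three properties and the values on $A$). Since showing $e_A\Psi_1(\rho\otimes\psi)\in(\Tamecirc{\mathfrak{E}})^{N\times N}$ is precisely what one is trying to prove, the argument is circular as well as based on a wrong formula.

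The paper avoids propagation for $\Phi$ entirely. It works directly with $\Phi_\rho=e_A\Psi_1(\rho)$ for the given $\rho$ of the first kind: (a) entirety comes from the Weierstrass product of $e_A$; (b) one sets $U_1:=\Phi_\rho\Xi_\rho^{-1}$, which is $A$-periodic, entire and tempered because $\Xi_\rho\in\GL_N(\Tame{\mathfrak{E}})$ and $\|\Psi_1(\rho)(z)\|\to0$, so Proposition \ref{propositionperiodic} applies; (c) by part (c) of that proposition $U_1\in\mathfrak{E}[e_C(z)]^{N\times N}$, and the evaluation $\Phi_\rho(a)=\lim_{z\to a}e_A(z)(z-a)^{-1}\rho(T_a)=\rho(T_a)$ (using $e_A'=1$) gives $U_1(a)=I_N$, hence $U_1-I_N\in e_C(\mathfrak{E}[e_C])^{N\times N}$; (d) then follows because $\Phi_\rho=U_1\Xi_\rho\in(\Tame{\mathfrak{E}})^{N\times N}$ and $\Psi_1(\rho)=u\Phi_\rho\to0$ forces the weight to be $<1$. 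If you want to salvage your plan, replace Step 2 by this direct factorization argument; the only thing that propagates through the admissible operations is $\Xi_\rho$, not $\Phi_\rho$ or the $U_i$.
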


Note that if $\rho=\boldsymbol{1}$ is the trivial representation, with $N=1$, then we have $\Xi_\rho=1$ and $\Phi_\rho=1$, because $\Psi_1(\rho)=\sum_{b\in A}\frac{1}{z-b}$.

\begin{proof}[Proof of Proposition \ref{propgeneralitiestame}]
(a). In any disk $D(0,r)$ with $r\in|\CC_\infty^\times|$, the product $e_A(z)\Psi_1(\rho)(z)$ extends to a holomorphic matrix-valued function
because of the Weierstrass factorization $$e_A(z)=z\prod_{a\in A\setminus\{0\}}\left(1-\frac{z}{a}\right).$$ This immediately implies that $\Phi_\rho$ has entire entries, and the target space is easily determined.

\noindent (b). Since $\rho$ is a representation of the first kind, $\Xi_\rho$ can be constructed applying finitely many operations as in (\ref{procedures}) to finitely many functions $\Xi_{\rho_i}$ associated to 
basic representations $\rho_i$, which take the elements $T_a$ with $a\in A$ to unipotent matrices (in fact, upper triangular with one on the diagonals). Therefore $\Xi_\rho^{-1}$ defines an entire function $\CC_\infty\rightarrow\mathfrak{E}^{N\times N}$. Hence, $U_1(z):=\Phi_\rho(z)\Xi_\rho(z)^{-1}$ has entries which are holomorphic $\Omega\rightarrow\mathfrak{E}^{N\times N}$, and $U_1(z+a)=U_1(z)$ for all $a\in A$, by (\ref{qpbehavior}). Moreover, since $\Xi_\rho$
is tempered and $\|\Psi_1(\rho)(z)\|$ tends to zero as $|z|_\Im=|z|\rightarrow\infty$, there exists $L\in\ZZ$ such that
$u(z)^LU_1(z)\rightarrow\underline{0}$ as $|z|_\Im\rightarrow\infty$. By (b) of Proposition \ref{propositionperiodic}, $U_1$ can be identified with an element of $\mathfrak{E}[[u]][u^{-1}]^{N\times N}$ and we easily check that
$\Phi_\rho\in\mathcal{QP}^!_0(\rho;\mathfrak{E})$. 

\noindent (c). By (2) of Proposition \ref{somepropertiesxirho} we see that $\Xi_\rho\in\GL_N(\Tame{\mathfrak{E}})$ therefore by the arguments of the point (b), we additionally observe that $U_1=\Phi_\rho(z)\Xi_\rho^{-1}\in\mathfrak{E}[[u]][u^{-1}]^{n\times n}$ extends to an entire matrix function which, in virtue 
of (c) of Proposition \ref{propositionperiodic}, belongs to $\mathfrak{E}[e_C(z)]^{N\times N}$. Note that for all $a\in A\setminus\{0\}$, $U_1(a)=\Phi_\rho(a)\Xi_\rho(a)^{-1}=\Phi_\rho(a)\rho(T_{-a})$.
Now, $\Phi_\rho(a)=\lim_{z\rightarrow a}e_A(z)\Psi_1(\rho)(z)=\lim_{z\rightarrow a}e_A(z)(z-a)^{-1}\rho(T_a)=\rho(T_a)$ because $e_A'=1$. 
Hence, $U_1(a)=I_N$ and the various properties claimed for $U_1$ follow. Similar arguments hold for $U_2$. 

\noindent (d). From (c) above, $\Phi_\rho\in(\Tame{\mathfrak{E}})^{N\times N}$. We denote by $w\in\ZZ[\frac{1}{p}]_{\geq 0}$ the supremum of the weights of all the entries of $\Phi_\rho$. 
Then $\Psi_1(\rho)\in u(\Tame{\mathfrak{E}})^{N\times N}$ and since we have the obvious limit $\lim_{|z|_\Im=|z|\rightarrow\infty}\|\Psi_1(\rho)(z)\|=0$ we note that $w<1$ so that $\Phi_\rho\in(\Tamecirc{\mathfrak{E}})^{N\times N}$.
An element $f\in\Tamecirc{\KK_\Sigma}$ satisfies $\|u(z)f(z)\|\rightarrow0$ as $|z|_\Im=|z|\rightarrow\infty$. By Proposition \ref{entire}, for any map $g:A\rightarrow \KK_\Sigma$ there exists at most one element $f\in\Tamecirc{\mathfrak{E}}$ such that $f(a)=g(a)$ for all $a\in A$.
Consequently, if $f$ is an element of $(\Tamecirc{\mathfrak{E}})^{N\times N}$ such that
$f(a)=\rho(T_a)$ for all $a\in A$, then, $\Phi_\rho=f$. 
\end{proof}
We have the next corollary, where $\rho$ is a representation of the first kind.
\begin{Corollary}\label{previouscorollary}
The tame series expansion of $\Phi_\rho$ is provided by the unique representative in 
the $\mathfrak{E}$-module $(\Tamecirc{\mathfrak{E}})^{N\times N}$ of the matrix $\Xi_\rho$ in the quotient of $(\Tame{\mathfrak{E}})^{N\times N}$ by the principal ideal generated by $e_0I_N$.
Moreover, we have $\det(\Phi_\rho)=1$, $\Phi_\rho-I_N$ is $p$-nilpotent and $\Phi_\rho^{-1}\in(\Tamecirc{\mathfrak{E}})^{N\times N}$. If $\omega_\rho$ is the matrix
introduced in (\ref{procedures-omega}), then $\omega_\rho\Phi_\rho\omega_\rho^{-1}\in(\Tamecirc{A[\vartheta]})^{N\times N}$ for an element $\vartheta\in\FF_q[\underline{t}_\Sigma]$.
\end{Corollary}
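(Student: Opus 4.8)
The proof will assemble everything out of Propositions \ref{propgeneralitiestame} and \ref{somepropertiesxirho}, Lemma \ref{uniquedigitexpansion} and Lemma \ref{rationality-of-xi}. The first assertion is essentially a restatement of (c) of Proposition \ref{propgeneralitiestame} once we combine it with Lemma \ref{uniquedigitexpansion}. I would begin by noting that $\Phi_\rho-\Xi_\rho=(U_1-I_N)\Xi_\rho$ has all its entries in $e_C(z)\Tame{\mathfrak{E}}$, so that $\Xi_\rho\equiv\Phi_\rho$ modulo the (two-sided, since $e_0I_N$ is central) ideal generated by $e_0I_N$ in $(\Tame{\mathfrak{E}})^{N\times N}$. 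Since by Lemma \ref{uniquedigitexpansion} one has $\Tame{\mathfrak{E}}=\bigoplus_{i\geq 0}\Tamecirc{\mathfrak{E}}\,e^i$ as an $\mathfrak{E}$-module, the $\mathfrak{E}$-linear projection $r_0$ onto the $e$-degree-$0$ component, applied entrywise, provides the unique system of representatives in $(\Tamecirc{\mathfrak{E}})^{N\times N}$ for the quotient $(\Tame{\mathfrak{E}})^{N\times N}/(e_0I_N)$; and $r_0(\Xi_\rho)=r_0(\Phi_\rho)=\Phi_\rho$ because $\Phi_\rho\in(\Tamecirc{\mathfrak{E}})^{N\times N}$ by (d) of Proposition \ref{propgeneralitiestame}. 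This is exactly the first claim.

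For the $p$-nilpotence of $\Phi_\rho-I_N$ — which will immediately force $\det(\Phi_\rho)=1$, $\Phi_\rho$ being then unipotent — the key observation is that $\Xi_\rho(z)$ commutes with every $\rho(T_a)$, $a\in A$: this is read off from $(\ref{xi-left-right})$, which gives $\rho(T_a)\Xi_\rho(z)=\Xi_\rho(z)\rho(T_a)$. Since $\Phi_\rho(z)=e_A(z)\sum_{a\in A}(z-a)^{-1}\rho(T_a)$ is an everywhere convergent $\CC_\infty$-linear combination of the matrices $\rho(T_a)$, it follows that $\Phi_\rho(z)$ commutes with $\Xi_\rho(z)$, hence so does $U_1=\Phi_\rho\Xi_\rho^{-1}$. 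Now in the factorization $\Phi_\rho=U_1\Xi_\rho$ of Proposition \ref{propgeneralitiestame}(c) the two factors commute, $\Xi_\rho^{p}=I_N$ by Proposition \ref{somepropertiesxirho}(2), and $U_1^{p^{a}}=I_N$ for some $a$ by the $p$-nilpotence of $U_1-I_N$ recorded there; choosing $p^{m}\geq p^{a}$ one gets $\Phi_\rho^{p^{m}}=U_1^{p^{m}}\Xi_\rho^{p^{m}}=I_N$, i.e. $(\Phi_\rho-I_N)^{p^{m}}=0$ in characteristic $p$.

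For $\Phi_\rho^{-1}\in(\Tamecirc{\mathfrak{E}})^{N\times N}$ I would establish the functional identity $\Phi_\rho(z)\,\Phi_\rho(-z)=I_N$. Using $e_A(-z)=-e_A(z)$ one has $\Phi_\rho(-z)=e_A(z)\sum_{a\in A}(z+a)^{-1}\rho(T_a)$, and then, since $\rho(T_a)\rho(T_b)=\rho(T_{a+b})$, re-indexing by $c=a+b$ reduces the product to the two elementary identities $\sum_{a\in A}\frac{1}{(z-a)(z-a+c)}=0$ for $0\neq c\in A$ (by the $A$-translation invariance of $\sum_{a\in A}(z-a)^{-1}$) and $\sum_{a\in A}(z-a)^{-2}=\widetilde{\pi}^{2}e_C(z)^{-2}$ (differentiate $\sum_{a\in A}(z-a)^{-1}=\widetilde{\pi}/e_C(z)$), together with $e_A(z)^{2}=\widetilde{\pi}^{-2}e_C(z)^{2}$. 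Hence $\Phi_\rho^{-1}(z)=\Phi_\rho(-z)$; and since $z\mapsto -z$ sends each $e_i=\exp_C(\widetilde{\pi}z/\theta^{i})$ to $-e_i$ by $\FF_q$-linearity of $\exp_C$, it preserves $(\Tamecirc{\mathfrak{E}})^{N\times N}$, so $\Phi_\rho^{-1}\in(\Tamecirc{\mathfrak{E}})^{N\times N}$.

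Finally, the statement on $\omega_\rho\Phi_\rho\omega_\rho^{-1}$ I would deduce by applying the projection $r_0$ to the identity of Lemma \ref{rationality-of-xi}: $r_0$ is $\KK_\Sigma$-linear, hence commutes with conjugation by the constant matrix $\omega_\rho\in\GL_N(\LL_\Sigma)$, and it preserves the subring of coefficients of a tame series; combined with $r_0(\Xi_\rho)=\Phi_\rho$ from the first claim this gives $\omega_\rho\Phi_\rho\omega_\rho^{-1}=r_0(\omega_\rho\Xi_\rho\omega_\rho^{-1})\in(\Tamecirc{A[\vartheta]})^{N\times N}$. I expect the only genuinely delicate point to be the bookkeeping surrounding $r_0$ in the first and last steps — checking that the $e$-degree-$0$ projection is well defined, is $\mathfrak{E}$-linear (resp. $\KK_\Sigma$-linear), and preserves the ring of coefficients, which is where the structure $\Rringcirc{B}{X}^{b}[X]$ underlying Lemma \ref{uniquedigitexpansion} must be invoked with some care; everything else is either already contained in Propositions \ref{propgeneralitiestame} and \ref{somepropertiesxirho} or is a short explicit computation.
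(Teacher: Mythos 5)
Your proof is correct, and on the central claims it takes a genuinely different route from the paper's. For $\det(\Phi_\rho)=1$, the $p$-nilpotence of $\Phi_\rho-I_N$ and $\Phi_\rho^{-1}\in(\Tamecirc{\mathfrak{E}})^{N\times N}$, the paper simultaneously upper-triangularizes the commuting unipotent family $\{\rho(T_a)\}_{a\in A}$ over $\FF_q(\underline{t}_\Sigma)^{ac}$ (possible since $T_a^p=I_2$), so that $\Phi_\rho=e_A\Psi_1(\rho)$ is conjugate to a unipotent upper-triangular matrix, from which all three properties are read off at once. You instead (i) deduce from (\ref{xi-left-right}) that $\Xi_\rho(z)$ commutes with every $\rho(T_a)$, hence with $\Phi_\rho(z)$ and with $U_1$, and combine $\Xi_\rho^p=I_N$ with the $p$-unipotence of $U_1$ to get $\Phi_\rho^{p^m}=I_N$; and (ii) establish the reflection identity $\Phi_\rho(z)\Phi_\rho(-z)=I_N$ by partial fractions, which yields tameness of the inverse because $z\mapsto -z$ sends each $e_i$ to $-e_i$ and hence preserves $\Tamecirc{\mathfrak{E}}$. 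Both steps check out: the rearrangement of the double sum is justified by unconditional convergence, and the two elementary identities you invoke are correct. Your route is more computational but buys an explicit and reusable formula, $\Phi_\rho^{-1}(z)=\Phi_\rho(-z)$, which the paper does not record; the paper's triangularization is shorter and is also the structural fact that (implicitly) underlies the $p$-nilpotence of $U_1-I_N$ asserted in Proposition \ref{propgeneralitiestame}(c), which you quote — so your unipotence argument ultimately rests on the same input, just packaged one proposition earlier. For the first and last claims your treatment coincides with the paper's (which simply cites Proposition \ref{propgeneralitiestame}(c),(d) and Lemma \ref{rationality-of-xi}); your explicit degree-zero projection $r_0$ from Lemma \ref{uniquedigitexpansion}, together with the checks of its linearity and compatibility with the coefficient ring $A[\vartheta]$, is precisely the verification the paper leaves to the reader.
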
\label{corollarycomputationtame}
\begin{proof}
The first property follows directly from Proposition \ref{propgeneralitiestame} (c), (d). To show the second property we first note that the matrices $\rho(T_a)$, $a\in A$, can be simultaneously (upper) triangularised over an algebraic closure 
$\FF_q(\underline{t}_\Sigma)^{ac}$ of $\FF_q(\underline{t}_\Sigma)$, and the diagonal entries are all equal to one because $T_a^p=I_2$ for all $a$. Hence, $\Psi_1(\rho)$ is conjugated over $\FF_q(\underline{t}_\Sigma)^{ac}$ to an upper triangular matrix having $e_A(z)^{-1}$ as diagonal entries. This implies that $\Phi_\rho$ is conjugated over $\FF_q(\underline{t}_\Sigma)^{ac}$ to an upper triangular matrix having $1$ in the diagonal. Hence,
$\det(\Phi_\rho)=1$, $(\Phi_\rho-I_N)^p=0$ and $\Phi_\rho^{-1}\in(\Tamecirc{\mathfrak{E}})^{N\times N}$. The last property follows easily from Lemma \ref{rationality-of-xi}.
\end{proof}

Let $\chi:A\rightarrow\FF_q(\underline{t}_\Sigma)^{n\times n}$ be an $\FF_q$-algebra morphism 
and denote by $\rho$ the basic representation $\rho_\chi:\Gamma\rightarrow\operatorname{GL}_N(\FF_q(\underline{t}_\Sigma))$ defined by $\rho(\begin{smallmatrix}a & b \\ c & d\end{smallmatrix})=(\begin{smallmatrix}\chi(a) & \chi(b) \\ \chi(c) & \chi(d)\end{smallmatrix})$, with $N=2n$. For a matrix $f\in\mathfrak{K}_\Sigma^{N\times N}$, $v(f)$ denotes the infimum of the $v$-valuations 
of the entries of $f$ (where $v$ is the valuation defined after Proposition \ref{descriptionfieldofunif}).

\begin{Corollary}\label{caseofthebasicrepresentation}
We have $\Phi_\rho=\Xi_\rho$, $v(\Phi_\rho)=-\frac{1}{q}$ and $v(\Phi_\rho-\omega_{\chi}^{-1}e_1)>-\frac{1}{q}$.
\end{Corollary}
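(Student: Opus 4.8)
The statement concerns a basic representation $\rho=\rho_\chi$ with $\chi:A\to\FF_q(\underline{t}_\Sigma)^{n\times n}$, so $N=2n$, and asserts three things: (i) $\Phi_\rho=\Xi_\rho$; (ii) $v(\Phi_\rho)=-\tfrac1q$; (iii) $v(\Phi_\rho-\omega_\chi^{-1}e_1)>-\tfrac1q$. The plan is to reduce everything to the explicit block form $\Xi_\rho(z)=\bigl(\begin{smallmatrix} I_n & \chi(z)\\ 0 & I_n\end{smallmatrix}\bigr)$ from (\ref{def-xi}) together with the tame-series expansion of the entire function $\chi$ established in \S\ref{class-of-entire-functions} and Proposition \ref{chi-is-tame}.

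First I would prove (i). By Proposition \ref{propgeneralitiestame}(c) we have $\Phi_\rho=U_1\Xi_\rho$ with $U_1\in(\mathfrak{E}[e_C(z)])^{N\times N}$ of type $0$, $U_1-I_N\in e_C(z)(\mathfrak{E}[e_C(z)])^{N\times N}$, and $U_1$ $p$-nilpotent. But here $\rho$ is a \emph{basic} representation: $\rho(T_a)=\bigl(\begin{smallmatrix} I_n & \chi(a)\\ 0 & I_n\end{smallmatrix}\bigr)$, so $\Psi_1(\rho)(z)=\bigl(\begin{smallmatrix} e_A(z)^{-1}I_n & \Psi_1(\chi)(z)\\ 0 & e_A(z)^{-1}I_n\end{smallmatrix}\bigr)$ where the upper-left and lower-right blocks come from the $N=1$, $\rho=\boldsymbol{1}$ case (for which, as already noted after Proposition \ref{propgeneralitiestame}, $\Phi_{\boldsymbol 1}=1$, i.e. $e_A\cdot e_A^{-1}=1$). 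Hence $\Phi_\rho=e_A\Psi_1(\rho)=\bigl(\begin{smallmatrix} I_n & e_A\Psi_1(\chi)\\ 0 & I_n\end{smallmatrix}\bigr)$, and $e_A(z)\Psi_1(\chi)(z)=\sum_{a\in A}\tfrac{e_A(z)}{z-a}\chi(a)$ is precisely the entire interpolation of $a\mapsto\chi(a)$ on $A$ (the $z=a$ term contributes $\chi(a)$ since $e_A'=1$, the others vanish there). By the uniqueness clause of Proposition \ref{chi-is-tame} (or of Proposition \ref{propgeneralitiestame}(d) applied blockwise, noting $\Phi_\rho\in(\Tamecirc{\mathfrak E})^{N\times N}$), this equals $\chi(z)$. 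Therefore $\Phi_\rho=\bigl(\begin{smallmatrix} I_n & \chi(z)\\ 0 & I_n\end{smallmatrix}\bigr)=\Xi_\rho$, which also forces $U_1=I_N$.

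Next, (ii) and (iii) follow from the tame-series expansion of $\chi$ recalled inside the proof of Proposition \ref{chi-is-tame}:
\[
\chi(z)=\omega_\chi^{-1}\sum_{i\geq 0}e_{i+1}\,\vartheta^{\,i}\in\bigl(\Tamecirc{\mathfrak E}\bigr)^{n\times n},\qquad \vartheta=\chi(\theta).
\]
The leading tame monomial of each nonzero entry is the one of weight $w(e_1)=\tfrac1q$, coming from the $i=0$ term $\omega_\chi^{-1}e_1$; so $w(\chi)=\tfrac1q$, i.e. $v(\chi)=-\tfrac1q$ by the identification $w=-v$ of \S\ref{asymptotic-beha}. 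Since the off-diagonal block of $\Phi_\rho-I_N$ is exactly $\chi(z)$ and the diagonal blocks of $\Phi_\rho-I_N$ vanish, $v(\Phi_\rho)=\min\{0,v(\chi)\}=-\tfrac1q$, giving (ii); here I should check $\omega_\chi^{-1}e_1\ne 0$, which holds because $\omega_\chi\in\GL_n$ and $e_1$ is a nonzero entire function. For (iii), note that $\omega_\chi^{-1}e_1$ is the leading part: $\chi(z)-\omega_\chi^{-1}e_1=\omega_\chi^{-1}\sum_{i\geq 1}e_{i+1}\vartheta^i$ has weight $\leq w(e_2)=\tfrac1{q^2}<\tfrac1q$, so $v$-value $>-\tfrac1q$; and $\Phi_\rho-\omega_\chi^{-1}e_1$ (interpreting $\omega_\chi^{-1}e_1$ in the relevant off-diagonal block, the only place $\Phi_\rho$ differs from a matrix of weight $0$) differs from $\Xi_\rho-I_N$ by exactly this tail plus bounded ($v\geq 0$) terms, hence has $v>-\tfrac1q$. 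The only genuinely delicate point is bookkeeping the block structure so that "$\Phi_\rho-\omega_\chi^{-1}e_1$" is interpreted with $\omega_\chi^{-1}e_1$ placed in the upper-right $n\times n$ block (matching (\ref{def-xi})); once that convention is fixed the estimates are immediate from Proposition \ref{leading} and the expansion above. I do not expect a serious obstacle; the main care needed is in aligning the matrix-entry conventions for $v$ on $\mathfrak K_\Sigma^{N\times N}$ with the scalar weight computations.
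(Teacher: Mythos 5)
Your proof is correct and follows essentially the same route as the paper: both parts (ii) and (iii) are read off from the tame expansion $\chi(z)=\omega_\chi^{-1}\sum_{i\ge 0}e_{i+1}\vartheta^i$, and part (i) rests on the uniqueness of tame interpolation (Proposition \ref{propgeneralitiestame}(d) / Corollary \ref{previouscorollary}). The paper gets (i) slightly faster by observing that $\Xi_\rho$ already lies in $(\Tamecirc{\KK_\Sigma})^{N\times N}$, so Corollary \ref{previouscorollary} gives $\Phi_\rho=\Xi_\rho$ at once, whereas you recompute the block form of $e_A\Psi_1(\rho)$ explicitly — but this is the same argument in expanded form.
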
 

\begin{proof}
By definition, $\Xi_\rho=(\begin{smallmatrix}I_n & \chi \\ 0 & I_n\end{smallmatrix})$
and $\chi(z)=e_C(z(\theta I_n-\vartheta)^{-1})\omega_\chi^{-1}$ (with $\vartheta=\chi(\theta)$) has entries in $\Tamecirc{\KK_\Sigma}$ so we have already $\Phi_\rho=\Xi_\rho$ by Corollary \ref{previouscorollary}.
Moreover, the tame series expansion of $e_C(z(\theta I_n-\vartheta)^{-1})$ is
$e_C(z(\theta I_n-\vartheta)^{-1})=e_1I_n+$terms of smaller weight, which implies the remaining properties.
\end{proof}

\subsubsection{Application to column quasi-periodic functions}

We consider, in this subsection, a representation of the first kind $\Gamma\xrightarrow{\rho}\operatorname{GL}_N(\FF_q(\underline{t}_\Sigma)).$ Recall the notation $\mathfrak{K}_\Sigma=
\mathfrak{K}_{\KK_\Sigma}$ where, for a subfield $L$ of $\KK_\Sigma$, $\mathfrak{K}_L$ has been defined after Proposition \ref{descriptionfieldofunif}. We recall that the $v$-valuation ring is denoted by $\mathfrak{O}_\Sigma$, the maximal ideal is denoted by $\mathfrak{M}_\Sigma$, and the residual field is denoted by $\FF_q(\underline{t}_\Sigma)$. 

\begin{Proposition}\label{quasiperiodictempered}
If $f:\Omega\rightarrow\KK_\Sigma^{N\times 1}$ is $\rho$-quasi-periodic and tempered, we can identify it with an element of $\mathfrak{K}_\Sigma^{N\times 1}$. If additionally $f$ is regular, then we can identify it with an element of $\mathfrak{O}_\Sigma^{N\times 1}$. In the latter case, we can expand
in a unique way
\begin{equation}\label{expansion-ff}
f=f_0+\sum_{i>0}f_iu^i,\quad f_0\in\KK_\Sigma^{N\times 1},\quad f_i\in(\Tamecirc{\KK_\Sigma})^{N\times 1},\quad i>0,\end{equation}
and the coefficients $f_i$ are $\KK_\Sigma$-linear combinations of the columns of $\Phi_\rho$.
\end{Proposition}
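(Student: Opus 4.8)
## Proof plan for Proposition \ref{quasiperiodictempered}

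The plan is to reduce the column-vector statement to the already-established matrix results on $\Xi_\rho$ and $\Phi_\rho$ by a ``peeling off'' argument. First I would recall from Proposition \ref{somepropertiesxirho}(2) that $\Xi_\rho\in\GL_N(\Tame{\KK_\Sigma})$, so that $\Xi_\rho^{-1}$ has tame-series entries and in particular defines an entire $\KK_\Sigma^{N\times N}$-valued function. Given a $\rho$-quasi-periodic tempered $f:\Omega\to\KK_\Sigma^{N\times 1}$, consider $g:=\Xi_\rho^{-1}f$. By (\ref{xi-left-right}) we have $\Xi_\rho(z+a)^{-1}=\rho(T_a)^{-1}\Xi_\rho(z)^{-1}$, hence $g(z+a)=\Xi_\rho(z)^{-1}\rho(T_a)^{-1}\rho(T_a)f(z)=g(z)$, so $g$ is $A$-periodic. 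Temperedness of $f$ together with temperedness of $\Xi_\rho$ (an entry of $\Xi_\rho^{-1}$ is a tame series, hence has bounded growth after multiplication by a power of $u$, by Proposition \ref{leading}) shows that $g$ is tempered as well: there is $M\in\ZZ$ with $\|u(z)^Mg(z)\|$ bounded as $|z|_\Im\to\infty$. By Proposition \ref{propositionperiodic}(a),(b), $g$ has a unique expansion $g=\sum_{n\in\ZZ}g_nu^n$ with $g_n\in\KK_\Sigma^{N\times 1}$ and $g_n=0$ for $n\ll 0$, i.e. $g\in\KK_\Sigma((u))^{N\times 1}$. Then $f=\Xi_\rho g\in\mathfrak{K}_\Sigma^{N\times 1}$, since $\Xi_\rho\in(\Tame{\KK_\Sigma})^{N\times N}\subset\mathfrak{O}_\Sigma^{N\times N}$ and multiplication is an operation inside $\mathfrak{K}_\Sigma$; this gives the first assertion. (A minor point to check: that the entrywise product of a tame-series matrix with a Laurent series in $u$ with $\KK_\Sigma$-coefficients lands in $\mathfrak{K}_\Sigma$ — this follows from $\Tame{\KK_\Sigma}\subset\mathfrak{O}_\Sigma$ and $\KK_\Sigma((u))\subset\mathfrak{K}_\Sigma$, both of which are built into the construction of \S\ref{tameseriessect}.)

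For the ``regular'' case, suppose in addition that $\{\|f(z)\|:|z|_\Im\geq c\}$ is bounded. Here I would use $\Phi_\rho$ rather than $\Xi_\rho$: by Proposition \ref{propgeneralitiestame}(c) we have $\Phi_\rho=U_1\Xi_\rho$ with $U_1\in\mathfrak{E}[e_C(z)]^{N\times N}$, $U_1-I_N\in e_C(z)\mathfrak{E}[e_C(z)]^{N\times N}$, so $\Phi_\rho$ is invertible in $(\Tamecirc{\mathfrak{E}})^{N\times N}$ (Corollary \ref{previouscorollary}) and $\Phi_\rho^{-1}$ has entries which are tame series of weight $<1$ — more precisely, $\Phi_\rho^{-1}-I_N$ has all entries of \emph{positive} $v$-valuation at the leading coefficient level, since the off-diagonal correction is a multiple of $e_C(z)=e$, whose reciprocal is $u$. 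Set $g:=\Phi_\rho^{-1}f$. As before $g$ is $A$-periodic (using $\Phi_\rho(z+a)^{-1}=\rho(T_a)^{-1}\Phi_\rho(z)^{-1}$, which follows from (b) of Proposition \ref{propgeneralitiestame}). The point is that $g$ is now \emph{regular}: since $\Phi_\rho^{-1}(z)=I_N+(\text{terms tending to }0\text{ as }|z|_\Im\to\infty)$, boundedness of $\|f\|$ gives boundedness of $\|g\|$ near the cusp. Then Proposition \ref{propositionperiodic}(b) forces $g=\sum_{n\geq 0}g_nu^n$ with $g_n\in\KK_\Sigma^{N\times 1}$, and $f=\Phi_\rho g$ has nonnegative $v$-valuation, i.e. lies in $\mathfrak{O}_\Sigma^{N\times 1}$.

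For the explicit expansion (\ref{expansion-ff}), write $g=g_0+\sum_{i>0}g_iu^i$ as above and compute $f=\Phi_\rho g=\Phi_\rho g_0+\sum_{i>0}\Phi_\rho g_i u^i$. Now $\Phi_\rho=I_N+\Phi_\rho'$ where $\Phi_\rho'$ has all entries in $\Tamecirc{\KK_\Sigma}$ with strictly positive weight-gap from the constant term — actually one uses that the leading tame monomial of every entry of $\Phi_\rho-I_N$ has weight $\le 1-\tfrac1q$ by Corollary \ref{caseofthebasicrepresentation} and the inductive construction — combined with the fact that $\Phi_\rho$ has type $0$ (Proposition \ref{propgeneralitiestame}(b)); so each column of $\Phi_\rho$ is a $\KK_\Sigma$-combination of tame series of weight $<1$, hence in $(\Tamecirc{\KK_\Sigma})^{N\times 1}$. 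One then collects powers of $u$: the constant ($u^0$) coefficient is $g_0$ plus nothing, because $\Phi_\rho'g_0$ and $\Phi_\rho g_i u^i$ for $i>0$ all contribute only to the $\Tamecirc{\KK_\Sigma}$-part carried by positive powers of $u$ (this is where one uses $\Tamecirc{\KK_\Sigma}\cdot u\subset\mathfrak{M}_\Sigma$ and Lemma \ref{uniquedigitexpansion}, the unique digit expansion, to keep track of which contributions sit in which $u$-degree). The final form is $f=f_0+\sum_{i>0}f_i u^i$ with $f_0=g_0\in\KK_\Sigma^{N\times 1}$ and each $f_i$ a finite $\KK_\Sigma$-linear combination of the columns of $\Phi_\rho$ (the coefficients being the $g_j$'s), and uniqueness follows from Proposition \ref{descriptionfieldofunif} applied entrywise. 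The main obstacle I anticipate is the bookkeeping in this last step: one must be careful that the product $\Phi_\rho\cdot(\sum g_i u^i)$, when re-expanded in the normal form of Proposition \ref{descriptionfieldofunif} (coefficients in $\Tamecirc{\KK_\Sigma}$, powers $u^{-i}$, resp.\ $u^i$), does not secretly produce a nonconstant tame series in the $u^0$ slot — this rests precisely on $w(\Phi_\rho)<1$ together with $w(e)=1$, so no ``carry'' into degree $0$ occurs, exactly as in the proof of Proposition \ref{descriptionfieldofunif-bis} where $\Rringcirc{L}{X}$ is closed under the relevant operations.
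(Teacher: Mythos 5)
Your overall strategy (factor $f$ through an invertible quasi-periodic matrix to reduce to the $A$-periodic case, then invoke Proposition \ref{propositionperiodic}) is the same as the paper's, which works throughout with $\Phi_\rho$ rather than switching between $\Xi_\rho$ and $\Phi_\rho$. However, there is a genuine error at the heart of your treatment of the regular case: you assert that $\Phi_\rho^{-1}(z)=I_N+(\text{terms tending to }0\text{ as }|z|_\Im\to\infty)$, and hence that boundedness of $\|f\|$ gives boundedness of $\|g\|=\|\Phi_\rho^{-1}f\|$. This is false. The off-diagonal entries of $\Phi_\rho^{-1}-I_N$ are nonconstant tame series of \emph{positive} weight $w\in\,]0,1[$ (already for a basic representation the entry is $-\chi(z)$, of weight $1/q$), and by Proposition \ref{leading} such a series satisfies $\|f(z)\|=c\,|e_C(z)|^{w}\to\infty$ as $|z|_\Im\to\infty$, since $e_C(z)=u(z)^{-1}$ blows up at the cusp. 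So the best one can say is $\|u^{w}g\|\leq c_2$ for some $w\in\ZZ[\tfrac1p]\cap[0,1[$, and Proposition \ref{propositionperiodic}(b) does not apply directly to $g$. The paper closes this gap by choosing $\alpha$ with $p^\alpha w\in\ZZ$, applying Proposition \ref{propositionperiodic}(b) to $u^{p^\alpha w}g^{p^\alpha}$, deducing $u^{w}g\in\KK_\Sigma[[u^{1/p^\alpha}]]^{N\times 1}$, and then concluding $g\in\KK_\Sigma[[u]]^{N\times 1}$ because $g$ is already a Laurent series in $u$ and $w<1$. Some argument of this kind (or a direct comparison of the growth $|u|^{n_0}$ of the lowest-order term of $g\in\KK_\Sigma((u))$ against the bound $c_2|u|^{-w}$ with $w<1$) is indispensable; without it the claim $g\in\KK_\Sigma[[u]]^{N\times 1}$ is unsupported.

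Two smaller points. First, the inclusion $\Tame{\KK_\Sigma}\subset\mathfrak{O}_\Sigma$ you invoke in the first part is false ($v(e_0)=-1$), though the conclusion $f\in\mathfrak{K}_\Sigma^{N\times 1}$ survives because $\mathfrak{K}_\Sigma$ is a ring containing both $\Tame{\KK_\Sigma}$ and $\KK_\Sigma((u))$. Second, in the expansion step, $\Phi_\rho g_0$ does not a priori ``contribute only to positive powers of $u$'': its nonconstant positive-weight part sits in the $u^0$ slot of the normal form of Proposition \ref{descriptionfieldofunif}. What saves the statement is that once $f\in\mathfrak{O}_\Sigma^{N\times 1}$ is known, the decomposition $\mathfrak{O}_\Sigma=\KK_\Sigma\oplus\mathfrak{M}_\Sigma$ forces the $u^0$ coefficient to be constant; the paper then identifies the $f_i$ with $\KK_\Sigma$-combinations of columns of $\Phi_\rho$ by observing that each $\Phi_\rho^{-1}f_i$ is a tame, $A$-periodic vector, hence constant — a cleaner route than your term-by-term bookkeeping.
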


\begin{proof}
In the proof of part (c) of Proposition \ref{propgeneralitiestame}, we have seen that $\Phi_\rho$ can be identified with an element of $\operatorname{GL}_N(\Tame{\KK_\Sigma})$. Hence, the function $\Phi_\rho^{-1}f:\Omega\rightarrow\KK_\Sigma^{N\times 1}$ has entries which are all $A$-periodic and tempered. By part (b) of Proposition \ref{propositionperiodic}, the entries are thus elements of $\KK_\Sigma((e_0^{-1}))=\KK_\Sigma((u))$ and the entries of $f=\Phi_\rho\Phi_\rho^{-1}f$ are therefore in $\Tamecirc{\KK_\Sigma}((e_0^{-1}))$ which is equal, by Proposition \ref{descriptionfieldofunif}, to $\mathfrak{K}_\Sigma$. This proves the first part of the proposition.

Since $\Phi_\rho$ is a matrix function which is quasi-periodic we have $f=\Phi_\rho g$ where $g\in\KK_\Sigma((u))^{N\times 1}$. Corollary \ref{previouscorollary} implies that $\Phi_\rho\in\GL_N(\Tamecirc{\KK_\Sigma})$. Namely, $\det(\Phi_\rho)=1$ and $\Phi_\rho^{-1}\in(\Tamecirc{\KK_\Sigma})^{N\times N}$.
Observe that $g=\Phi_\rho^{-1}f$. Since the entries of $\Phi_\rho^{-1}$ are in $\Tamecirc{\KK_\Sigma}$,
for $|z|_\Im\geq c_1$ for some constant $c_1\in|\CC_\infty^\times|$, we have 
$\|\Phi_\rho^{-1}f\|\leq c_2|e_A(z)|^w$ by Proposition \ref{leading}, where $w\in\ZZ[\frac{1}{p}]\cap[0,1[$, for some $c_2>0$. This means that $\|u^wg\|\leq c_2$ as $|z|_\Im$ is large. Let $\alpha>0$ be such that 
$p^\alpha w\in\ZZ$. Then $\|u^{p^\alpha w}g^{p^\alpha}\|$ is bounded at infinity and $u^{p^\alpha w}g^{p^\alpha}\in\KK_\Sigma((u))^{N\times 1}$. Therefore, $u^{w}g\in\KK_\Sigma[[u^{\frac{1}{p^\alpha}}]]^{N\times 1}$ by Proposition \ref{propgeneralitiestame} (b) and we deduce that, necessarily, $g\in\KK_\Sigma[[u]]^{N\times 1}$. Hence, 
$f=\Phi_\rho g\in\mathfrak{O}_\Sigma^{N\times 1}$. This yields (\ref{expansion-ff}) because
$f_0\in\KK_\Sigma^{N\times 1}$, if non-vanishing, has weight zero. Considering again
$$g=\Phi_\rho^{-1}f=\Phi_\rho^{-1}f_0+\sum_{i>0}\Phi_\rho^{-1}f_iu^i\in\KK_\Sigma[[u]]^{N\times 1}$$
The coefficients $\Phi_\rho^{-1}f_i$ are in $(\Tamecirc{\KK_\Sigma})^{N\times 1}$ and $A$-periodic, hence in $\KK_\Sigma^{N\times 1}$. 
\end{proof}

Taking into account Definition \ref{modularform}, we deduce parts (1), (2), (3) of Theorem A in the introduction, where the hypothesis that $\rho$ is of the first kind is essential:
 \begin{Theorem}\label{theorem-u-expansions}
For all $w\in\ZZ$, there is a natural embedding $$M^!_w(\rho;\KK_\Sigma)\xrightarrow{\iota_\Sigma}\mathfrak{K}_\Sigma^{N\times 1}$$ such that $$M_w(\rho;\KK_\Sigma)=\iota_\Sigma^{-1}\Big(\iota_\Sigma(M^!_w(\rho;\KK_\Sigma))\cap\mathfrak{O}_\Sigma^{N\times 1}\Big)$$ and 
$$S_w(\rho;\KK_\Sigma)=\iota_\Sigma^{-1}\Big(\iota_\Sigma(M^!_w(\rho;\KK_\Sigma))\cap\mathfrak{M}_\Sigma^{N\times 1}\Big).$$
 \end{Theorem}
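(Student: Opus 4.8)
The plan is to deduce Theorem~\ref{theorem-u-expansions} directly from Proposition~\ref{quasiperiodictempered}, which is the real content; the theorem is essentially a translation of that proposition into the language of modular-like functions. First I would observe that a modular-like function $f$ of weight $w$ for $\rho$ is, in particular, $\rho$-quasi-periodic: applying the functional equation~(\ref{functional}) to the matrices $\gamma=T_a=(\begin{smallmatrix}1 & a\\ 0 & 1\end{smallmatrix})$ for $a\in A$, which have $J_{T_a}(z)=1$, gives $f(z+a)=\rho(T_a)f(z)$ for all $a\in A$, so~(\ref{semiperiodic}) holds. Next I would check that the growth condition defining $M^!_w(\rho;\KK_\Sigma)$ forces $f$ to be tempered, and the condition defining $M_w$ (resp.\ $S_w$) forces $f$ to be regular (resp.\ regular with $\|f(z)\|\to 0$). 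Here one must pass between the condition ``$\|u(z)^Mf(z)\|$ bounded as $0<|u(z)|<c$'' used in Definition~\ref{modularform} and the condition ``$\{|f(z)u(z)^M|_B:|z|_\Im\geq c\}$ bounded'' used in Definition~\ref{quasipermatrix}; these agree because $|u(z)|$ small is equivalent to $|z|_\Im$ large (the function $u=1/e_C$ tends to $0$ precisely in the neighborhood of the cusp), a fact already implicit in the discussion after Definition~\ref{classical-Drinfeld-modular} and in Remark~\ref{remarkIm}.

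Having established that $M^!_w(\rho;\KK_\Sigma)\subset\mathcal{QP}^!_l(\rho;\KK_\Sigma)$ for the appropriate type $l$ (determined by $w\bmod(q-1)$ via the degree of $\rho$), Proposition~\ref{quasiperiodictempered} gives each $f\in M^!_w(\rho;\KK_\Sigma)$ a well-defined image in $\mathfrak{K}_\Sigma^{N\times 1}$, namely the unique expansion $f=\Phi_\rho g$ with $g\in\KK_\Sigma((u))^{N\times 1}$; this defines the map $\iota_\Sigma$. I would check $\iota_\Sigma$ is $\KK_\Sigma$-linear (it is induced by left multiplication by $\Phi_\rho^{-1}$ followed by the $u$-expansion of $A$-periodic tempered functions, both linear) and injective: if $\iota_\Sigma(f)=0$ then $\Phi_\rho^{-1}f=0$ as an element of $\KK_\Sigma((u))^{N\times 1}$, hence $f=0$ as an analytic function on $\Omega$ by the uniqueness in Proposition~\ref{propositionperiodic}(a) together with the connectedness of $\Omega$ (a holomorphic function on $\Omega$ vanishing on a neighborhood of the cusp vanishes identically). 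The second assertion of Proposition~\ref{quasiperiodictempered} then says precisely that if moreover $f$ is regular---that is, $f\in M_w(\rho;\KK_\Sigma)$---then $\iota_\Sigma(f)\in\mathfrak{O}_\Sigma^{N\times 1}$, and conversely a weak modular form whose image lies in $\mathfrak{O}_\Sigma^{N\times 1}$ has bounded norm near the cusp, hence is a modular form; this gives the identity $M_w(\rho;\KK_\Sigma)=\iota_\Sigma^{-1}(\iota_\Sigma(M^!_w(\rho;\KK_\Sigma))\cap\mathfrak{O}_\Sigma^{N\times 1})$.

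For the cusp-form statement I would argue similarly: $f\in S_w(\rho;\KK_\Sigma)$ means $\|f(z)\|\to 0$ as $u(z)\to 0$; writing $f=\Phi_\rho g=f_0+\sum_{i>0}f_iu^i$ as in~(\ref{expansion-ff}), the condition $\|f(z)\|\to 0$ is equivalent to the vanishing of the constant term, i.e.\ $f_0=0$, since $f_0\in\KK_\Sigma^{N\times 1}$ has weight zero while the remaining terms lie in $u(\Tamecirc{\KK_\Sigma})((u))^{N\times 1}=\mathfrak{M}_\Sigma^{N\times 1}$ and do tend to $0$ at the cusp by Proposition~\ref{leading}. Thus $\iota_\Sigma(f)\in\mathfrak{M}_\Sigma^{N\times 1}$ iff $f$ is a cusp form, which is the last identity. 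The main obstacle I anticipate is the careful verification that the three growth conditions---boundedness of $\|u^Mf\|$ near the cusp, boundedness of $|f u^M|_B$ for $|z|_\Im$ large, and the asymptotic statements of Proposition~\ref{leading} governing the weights of tame series---are mutually compatible and correctly matched up; in particular one must make sure that ``$f_0$ is the only obstruction to being a cusp form'' really uses that every nonzero element of $\Tamecirc{\KK_\Sigma}$ has a strictly positive contribution to the $v$-valuation, so that $\mathfrak{M}_\Sigma$ is exactly $\{$series with vanishing constant term$\}$, as recorded in the description of $\mathfrak{O}_\Sigma,\mathfrak{M}_\Sigma$ after Proposition~\ref{descriptionfieldofunif}. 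Everything else is bookkeeping on top of Proposition~\ref{quasiperiodictempered}.
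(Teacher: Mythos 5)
Your proposal is correct and follows essentially the same route as the paper: the embedding $\iota_\Sigma$ comes from Proposition~\ref{quasiperiodictempered} (weak modular forms are tempered quasi-periodic, modular forms are regular quasi-periodic), and the characterizations of $M_w$ and $S_w$ inside $\mathfrak{O}_\Sigma^{N\times 1}$ and $\mathfrak{M}_\Sigma^{N\times 1}$ come from combining the $u$-expansion of Proposition~\ref{descriptionfieldofunif} with the asymptotic control of Proposition~\ref{leading}, exactly as in the paper's (much terser) proof. The additional verifications you flag — matching the growth conditions of Definitions~\ref{modularform} and~\ref{quasipermatrix}, injectivity of $\iota_\Sigma$, and the identification of $\mathfrak{M}_\Sigma$ with the series of vanishing constant term — are the correct bookkeeping and are all consistent with the paper.
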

 \begin{proof}
 Since a weak modular form is also a tempered quasi-periodic (column) function and a modular form is 
 a regular quasi-periodic function, the first part of the result follows directly from Proposition \ref{quasiperiodictempered}.
 To prove the two other parts of the statement, namely the characterisation of the image of
 $M_w(\rho;\KK_\Sigma)$ and $S_w(\rho;\KK_\Sigma)$, we combine Proposition \ref{descriptionfieldofunif} with Proposition \ref{leading},
which allows to derive, from the fact that $f$ is bounded at infinity (resp. has zero limit at infinity)
that valuations of the entries of $f$ are non-negative (resp. positive).
 \end{proof}

\subsection{Hecke operators}\label{section-Hecke-operators}

We show here part (5) of Theorem A in the introduction.
As an immediate consequence of the above investigations, we will now define Hecke operators 
acting on the spaces $M_w(\rho;\KK_\Sigma),M_w(\rho;\LL_\Sigma),S_w(\rho;\KK_\Sigma)$ and $S_w(\rho;\LL_\Sigma)$, with $w\in\ZZ,$ when $\Gamma\xrightarrow{\rho}\operatorname{GL}_N(\FF_q(\underline{t}_\Sigma))$ is a representation of the first kind. Although not explicitly considered in the general purposes of it, Miyake's book \cite{MIY} essentially contains everything we need to set up the basis of the present discussion. Following \cite[\S 2.7 and \S 4.5]{MIY} we consider the Hecke algebra 
$\mathcal{R}_{A}(\Gamma,\Delta)$ where $\Delta=(\begin{smallmatrix} * & * \\ 0 & *\end{smallmatrix})\cap A^{2\times 2}\cap\GL_2(K)$ is the semigroup generated by the elements 
of $G=\GL_2(K)$ with entries in $A$ and with the lower left coefficient equal to zero. Explicitly,
$\mathcal{R}_{A}(\Gamma,\Delta)$ is the free $A$-module generated by the double cosets
$\Gamma \delta\Gamma$ with $\delta$ in $\Delta$, endowed with the structure of $A$-algebra
induced by ibid. (2.7.2), after reduction modulo $p$ of the integral coefficients. It is easy to see, using \cite[Theorem 2.7.8]{MIY}, that 
$\mathcal{R}_{A}(\Gamma,\Delta)$ is commutative. For $a\in A$, we 
set $T(a)=\Gamma(\begin{smallmatrix} 1 & 0 \\ 0 & a\end{smallmatrix})\Gamma\in\mathcal{R}_{A}(\Gamma,\Delta)$. The proof of ibid. Lemma 4.5.7
can be easily modified to show that, if $P\in A$ is irreducible, then
$$T(P)T(P^n)=T(P^{n+1})+q^{\deg_\theta(P)}T(P,P)T(P^{n-1}),\quad n\geq1,$$
where $T(P,P)=\Gamma(\begin{smallmatrix} P & 0 \\ 0 & P\end{smallmatrix})\Gamma$ (compare with ibid. (4.5.15)). But $K$ has characteristic $p\mid q$ so that $T(P)T(P^n)=T(P^{n+1})$. Similarly, the proof of Lemma 4.5.8
in Miyake's book implies that if $a,b\in A$ are relatively prime, then $T(a)T(b)=T(ab)$
in $\mathcal{R}_{A}(\Gamma,\Delta)$. The map $A\mapsto T(a)$ is therefore totally multiplicative.
Also, given any right action of $\Delta$ on a $B$-module $\mathcal{M}$, $\mathcal{R}(\Gamma,\Delta)$ acts on $\mathcal{M}^\Gamma=\{m\in \mathcal{M}:m|\gamma=m,\forall \gamma\in \Gamma\}$, as described in \cite[Lemma 2.7.2]{MIY}, where we denoted by $m|\gamma$ the action of $\gamma$ on $m$.

We consider 
$\rho:\Gamma\rightarrow\operatorname{GL}_N(\FF_q(\underline{t}_\Sigma))$ a representation of the first kind.
Then, $\rho$ can be extended in a unique way to a faithful representation of $G=\GL_2(K)$
in $\GL_2(K)$ and there exists $d\in \FF_q[\underline{t}_\Sigma]\setminus\{0\}$ such that
$\rho(\Delta)\subset\FF_q[\underline{t}_\Sigma][d^{-1}]$.
Let $w$ be an integer and $B$ a $\CC_\infty$-algebra contained in $\KK_\Sigma$ such that
it contains $\TT_\Sigma[d^{-1}]^\wedge$.
We set $\mathcal{M}_B:=\operatorname{Hol}_{\KK_\Sigma}(\Omega\rightarrow B^{N\times 1})$. Let $f$ be in $\mathcal{M}_B$.
The {\em Petersson slash operator} $f|_{w,\rho}\gamma$ on $f$ is defined, for any $\gamma\in\GL_2(K)$, by 
\begin{equation}\label{Petersson}
(\mathcal{G}|_{w,\rho}\gamma)(z):=J_\gamma(z)^{-w}\rho(\gamma)^{-1}\mathcal{G}(\gamma(z)).\end{equation}
It is easily seen that this gives rise to an action of $\Delta$ over 
$\mathcal{M}_w(\rho;B)$, the $B$-module of the modular-like functions of weight $w$ for $\rho$ of 
Definition \ref{modularform}. For instance, the reader can easily check that 
$(f|_{w,\rho}\gamma)|_{w,\rho}\delta=f|_{w,\rho}\gamma\delta$ for any $\gamma,\delta\in\GL_2(K)$. By the above discussion, we have a well defined $\mathcal{R}_A(\Gamma,\Delta)$-module structure on $\mathcal{M}_w(\rho;B)$. If $\Gamma \delta\Gamma$ is a double coset in $\mathcal{R}_A(\Gamma,\Delta)$ we can expand in a finite sum $\Gamma \delta\Gamma=\sum_i\Gamma \delta_i$
with $\delta_i\in\Delta$ for all $i$ as depicted in \cite[Lemma 2.7.3]{MIY} and the action is given by $$(\Gamma \delta\Gamma,f)\mapsto \sum_if|_{w,\rho}\delta_i.$$ We also denote by 
$T_a(f)$ the image of the action of $T(a)$
on $f$, with $a\in A$. Then, $T_{a}(T_b(f))=T_{ab}(f)$ for all $a,b\in A$. 
For example, since for $P\in A$ irreducible,
$$T(P)=\Gamma\begin{pmatrix} P & 0 \\ 0 & 1\end{pmatrix}\sqcup\bigsqcup_{\begin{smallmatrix}b\in A\\ |b|<|P|\end{smallmatrix}}\Gamma\begin{pmatrix} 1 & b \\ 0 & P\end{pmatrix}$$
(see very similar computations in \cite[Lemma 4.5.6]{MIY}), we have, for $f\in\mathcal{M}_w(\rho;B)$:
\begin{equation}\label{TP}
T_P(f)(z)=\rho\begin{pmatrix} P & 0 \\ 0 & 1\end{pmatrix}^{-1}f(Pz)+P^{-w}\sum_{|b|<|P|}\rho\begin{pmatrix} 1 & b \\ 0 & P\end{pmatrix}^{-1}f\left(\frac{z+b}{P}\right),\quad z\in \Omega.\end{equation}
Comparing with \cite[(7.1)]{GEK} we have here a different normalisation for these operators. In the case
of $\rho=\boldsymbol{1}$ so that $N=1$, denoting by $\mathcal{T}_P$ the weight $w$ operator of ibid., 
we have $T_P=P^{-w}\mathcal{T}_P$.

The following result holds:

\begin{Theorem}\label{theo-hecke-operators}
Assuming that $\rho$ is of the first kind, we have that for all $a\in A$ and $w\in \ZZ$, $T_a$ defines a $B$-linear endomorphism of $M^!_w(\rho;B)$ which induces endomorphisms of $M_w(\rho;B)$ and $S_w(\rho;B)$.
\end{Theorem}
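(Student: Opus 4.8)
The plan is to show three things: first that $T_a$ preserves $M^!_w(\rho;B)$ (i.e.\ weak modular forms map to weak modular forms), then that it preserves $M_w(\rho;B)$, and finally that it preserves $S_w(\rho;B)$. The modular-like property (equivariance under $\widetilde\Gamma$) is automatic from the abstract Hecke formalism borrowed from Miyake: since the Petersson slash $|_{w,\rho}$ gives a right action of $\Delta$ on $\mathcal{M}_w(\rho;B)$ and $\mathcal{M}_w(\rho;B)\subset\mathcal{M}_B^\Gamma$, the double-coset operators $T_a$ act $B$-linearly on $\mathcal{M}_w(\rho;B)$. So the only content is the behaviour at the cusp infinity. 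For this I would work with the explicit formula (\ref{TP}) and its analogue for general $a\in A$ obtained by writing $\Gamma\left(\begin{smallmatrix} 1 & 0 \\ 0 & a\end{smallmatrix}\right)\Gamma=\bigsqcup_i\Gamma\delta_i$ with $\delta_i=\left(\begin{smallmatrix} a_i & b_i \\ 0 & d_i\end{smallmatrix}\right)$, $a_id_i=a$ up to units, $|b_i|<|d_i|$.

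First I would treat $M^!_w$. Let $f\in M^!_w(\rho;B)$, so by Theorem~\ref{theorem-u-expansions} its entries lie in $\mathfrak{K}_\Sigma$, equivalently $f=\Phi_\rho g$ with $g\in\KK_\Sigma((u))^{N\times 1}$ by Proposition~\ref{quasiperiodictempered}; in particular $\|u(z)^Mf(z)\|$ is bounded near the cusp for some $M\in\ZZ$. Each term of $T_a(f)$ has the shape $c\,\rho(\delta_i)^{-1}f(\delta_i(z))$ with $\delta_i(z)=(a_iz+b_i)/d_i$ and $c\in K^\times$ a constant. As $u(z)\to 0$ we have $|z|_\Im\to\infty$, hence also $|\delta_i(z)|_\Im\to\infty$, and moreover $u(\delta_i(z))$ is, via the quasi-periodicity and the standard transformation of $u$ under $z\mapsto z/d_i$ and $z\mapsto z+b_i$, controlled: $u(z/d_i)=C_{d_i}(u(z))^{-1}$-type relations (coming from $e_C((z/d_i))$ versus $e_C(z)$) show $|u(\delta_i(z))|$ is bounded by a fixed power of $|u(z)|$. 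Concretely, $u\bigl((z+b_i)/d_i\bigr)=u\bigl(z/d_i\bigr)$ by $A$-periodicity when $b_i\in A$, $d_i\mid$ something — here I must be careful and instead argue directly with $e_C$: $e_C((z+b_i)/d_i)$ and $e_C(z/d_i)$ differ by a torsion translation, and $e_C(z/d_i)^{q^{\deg d_i}}$ relates to $e_C(z)$ up to a unit, so $|u(\delta_i(z))|\le |u(z)|^{q^{-\deg_\theta(d_i)}}$ for $|z|_\Im$ large. Hence $\|u(\delta_i(z))^{M'}f(\delta_i(z))\|$ bounded for suitable $M'$ forces $\|u(z)^{M''}\,\rho(\delta_i)^{-1}f(\delta_i(z))\|$ bounded for a possibly larger $M''$ depending only on $a$ and $M$ (the matrix $\rho(\delta_i)^{-1}$ has bounded norm since $\rho(\Delta)\subset\GL_N(\TT_\Sigma[d^{-1}]^\wedge)$). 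Summing the finitely many terms, $T_a(f)\in M^!_w(\rho;B)$.

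For the refinement to $M_w$ and $S_w$ I would instead use the clean characterisation of Theorem~\ref{theorem-u-expansions}: $M_w(\rho;B)$ is the preimage under $\iota_\Sigma$ of $\mathfrak{O}_\Sigma^{N\times 1}$ and $S_w(\rho;B)$ of $\mathfrak{M}_\Sigma^{N\times 1}$. It therefore suffices to show that if $v(\iota_\Sigma f)\ge 0$ (resp.\ $>0$) then $v(\iota_\Sigma T_a(f))\ge 0$ (resp.\ $>0$). Each slash term $f|_{w,\rho}\delta_i$ is again a quasi-periodic regular (resp.\ vanishing) function: regularity is exactly the boundedness of $\|f|_{w,\rho}\delta_i(z)\|$ as $|z|_\Im\to\infty$, which follows from the boundedness of $\|f(\delta_i(z))\|$ together with the bounded norm of $\rho(\delta_i)^{-1}$ and the fact that $|J_{\delta_i}(z)^{-w}|=|d_i|^{-w}$ is a constant; and if $\|f(z)\|\to 0$ then $\|f(\delta_i(z))\|\to 0$ since $|z|_\Im\to\infty$ implies $|\delta_i(z)|_\Im\to\infty$. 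Since $\mathfrak{O}_\Sigma$ and $\mathfrak{M}_\Sigma$ are closed under the $\KK_\Sigma$-module structure and under addition, the finite sum $T_a(f)$ lands in $\mathfrak{O}_\Sigma^{N\times 1}$ (resp.\ $\mathfrak{M}_\Sigma^{N\times 1}$), giving $T_a(M_w)\subset M_w$ and $T_a(S_w)\subset S_w$.

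The main obstacle I anticipate is the bookkeeping in the first paragraph: proving cleanly that $|z|_\Im\to\infty$ forces $|\delta_i(z)|_\Im\to\infty$ with a quantitative comparison of the uniformizers, i.e.\ that $u$ of the image point is bounded by a fixed power of $u(z)$. This is where one genuinely uses the structure of $e_C$ under the operators $C_{d_i}$ and the translations $z\mapsto z+b_i$, $b_i\in A$; the elementary identity $C_{d_i}(e_C(z/d_i))=e_C(z)$ and the local isometry property of $\exp_C$ give it, but one has to phrase it so that it is uniform in the finitely many coset representatives. Everything else—$B$-linearity, the algebra relations $T_aT_b=T_{ab}$, commutativity—is already supplied by the cited results of Miyake and the discussion preceding the theorem, so once the cusp estimate is in hand the proof assembles immediately.
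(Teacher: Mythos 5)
Your proposal is correct and follows essentially the same route as the paper: reduce to the explicit coset decomposition via the Miyake formalism and then control the behaviour at the cusp of each slash term through a comparison of uniformizers under $z\mapsto Pz$ and $z\mapsto(z+b)/P$. The quantitative estimate you flag as the main obstacle is exactly what the paper packages in Lemma~\ref{aendomorphisms} (the automorphisms $\varphi_\alpha\circ\psi_\beta$ scale the weight by $q^{\deg_\theta(\alpha)}$, hence preserve $\mathfrak{O}_\Sigma^{N\times 1}$ and $\mathfrak{M}_\Sigma^{N\times 1}$), so your hand-made version of that lemma completes the argument in the same way.
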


\begin{proof}
Thanks to the above observations it suffices to prove the result for $a=P$ irreducible. Lemma
\ref{aendomorphisms} implies that $T_P$ operates, via the slash operator of weight $w$ associated with $\rho$, on $\mathfrak{K}_\Sigma^{N\times 1}$ and furthermore, it leaves 
$\mathfrak{O}_\Sigma^{N\times 1}$ and $\mathfrak{M}_\Sigma^{N\times 1}$ invariant. 
\end{proof}
This generalizes \cite[Proposition 5.12]{PEL&PER3} (which deals with the very special case of $N=2$ and $\rho=\rho_t^*$, with an ad hoc proof hard to generalize to our settings).

\subsubsection*{Example} Assume that $\rho=\rho_\Sigma^*={}^t\rho_\Sigma^{-1}$ for a finite subset $\Sigma$ of $\NN^*$ and consider
$f={}^{t}(f_1,\ldots,f_N)\in M_w(\rho;B)$. Then the first entry $(T_P(f))_1$ in
(\ref{TP}) is 
\begin{equation}\label{Hecke-first-entry}
(T_P(f))_1=\sigma_\Sigma(P)f_1(Pz)+P^{-w}\sum_{|b|<|P|}f_1\left(\frac{z+b}{P}\right).\end{equation}
The last entry is also interesting but slightly more involved. We have:
\begin{equation}\label{last-entry}
(T_P(f))_N=f_N(Pz)+P^{-w}\sum_{|b|<|P|}\Bigg(\bigotimes_{i\in\Sigma}\Big(\chi_{t_i}(b),\chi_{t_i}(P)\Big)\Bigg)\cdot f\left(\frac{z+b}{P}\right).
\end{equation} Note that the whole column vectors $f(\frac{z+b}{P})$ occur in the right-hand side.

\section{Structure results for modular forms}\label{structureofvmf}

We consider, in this section, a representation $$\Gamma\xrightarrow{\rho}\operatorname{GL}_N(\FF_q(\underline{t}_\Sigma)).$$ 
We recall that $M^!_w(\rho;\LL_\Sigma)$,  $M_w(\rho;\LL_\Sigma)$, $S_w(\rho;\LL_\Sigma)$ denote respectively, the $\LL_\Sigma$-vector spaces of weak modular forms, modular forms, and cusp forms in $\operatorname{Hol}_{\KK_\Sigma}(\Omega\rightarrow\LL_\Sigma^{N\times 1})$ of weight $w$ for $\rho$ (in the sense of Definition
\ref{modularform}), so that $S_w(\rho;\LL_\Sigma)\subset M_w(\rho;\LL_\Sigma)\subset M_w^!(\rho;\LL_\Sigma)$. 
The operator $\tau$ induces 
$\FF_q(\underline{t}_\Sigma)$-linear injective maps
$$M_{w}(\rho;\LL_\Sigma)\xrightarrow{\tau}M_{qw}(\rho;\LL_\Sigma),$$
and similarly for $M^!_{w}(\rho;\LL_\Sigma),S_{w}(\rho;\LL_\Sigma)$ etc.
Of course, this depends on the choice of $\Sigma$. To simplify, we will sometimes also write $M_w(\rho)$ for $M_w(\rho;\LL_\Sigma)$ etc. when the reference to the field $\LL_\Sigma$ is clear. The next sub-section also allows to justify this abuse of notation.

\subsection{Changing the coefficient field}\label{changing}

We have defined, for $\rho:\Gamma\rightarrow\GL_N(\FF_q(\underline{t}_\Sigma))$ a representation,
the $\LL_\Sigma$-vector space of modular forms $W_w(\rho;\LL_\Sigma)$ and the $\KK_\Sigma$-vector space of modular forms $W_w(\rho;\KK_\Sigma)$ (with $W$ a symbol such that $W\in\{M^!,M,S\}$). Let $\Sigma'$ be finite such that $\Sigma\subset\Sigma'\subset\NN^*$. Then,
we also have the spaces $W_w(\rho;\LL_{\Sigma'})$ and $W_w(\rho;\KK_{\Sigma'})$. The next result allows to compare these spaces for $\Sigma'\supset\Sigma$. It is important in that 
it confirms that there are bases of these spaces which depend on the representation only. The notation $W_w$ stands for $M^!_w,M_w,S_w$ respectively.

\begin{Proposition}\label{proposition-scalars} Assuming that $\rho$ is of the first kind we have that
$$W_w(\rho;\KK_{\Sigma'})=W_w(\rho;\KK_{\Sigma})\widehat{\otimes}_{\KK_{\Sigma}}\KK_{\Sigma'}$$ where $\widehat{\otimes}_{\KK_{\Sigma}}$ means that
every element $f$ of $W_w(\rho;\KK_{\Sigma'})$ can be expanded as a series 
$f=\sum_{i}a_if_i$ where $a_i\in\KK_{\Sigma'}$, $f_i\in W_w(\rho;\KK_{\Sigma})$ for all $i$, and 
$a_if_i\rightarrow0$ for the supremum norm of every affinoid subdomain of $\Omega$.
Moreover, If $\dim_{\LL_\Sigma}(M_w(\rho;\LL_\Sigma))<\infty$, then
$$M_w(\rho;\LL_{\Sigma'})=M_w(\rho;\LL_\Sigma)\otimes_{\LL_\Sigma}\LL_{\Sigma'},\quad S_w(\rho;\LL_{\Sigma'})=S_w(\rho;\LL_\Sigma)\otimes_{\LL_\Sigma}\LL_{\Sigma'}.$$
\end{Proposition}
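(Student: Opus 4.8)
\textbf{Proof plan for Proposition \ref{proposition-scalars}.}

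The plan is to reduce everything to the module of tame-series expansions via the embedding $\iota_\Sigma$ of Theorem \ref{theorem-u-expansions}, exploiting the fact that the quasi-periodic function $\Phi_\rho$ (hence the whole ``change of trivialisation'' machinery) is defined over $\TT_\Sigma[d^{-1}]^\wedge$, not just over $\KK_\Sigma$. Concretely, by Proposition \ref{quasiperiodictempered} every $f\in M^!_w(\rho;\KK_{\Sigma'})$ is of the form $f=\Phi_\rho g$ with $g\in\KK_{\Sigma'}((u))^{N\times 1}$, and $g$ is recovered from $f$ by $g=\Phi_\rho^{-1}f$, where $\Phi_\rho,\Phi_\rho^{-1}\in(\Tamecirc{\mathfrak{E}})^{N\times N}$ with $\mathfrak{E}=\EE_\Sigma[d^{-1}]^\wedge$ independent of $\Sigma'$. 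So the map $f\mapsto g$ is a $\KK_{\Sigma'}$-linear (indeed $\mathfrak{E}$-linear) isomorphism from $M^!_w(\rho;\KK_{\Sigma'})$ onto the subspace of $\KK_{\Sigma'}((u))^{N\times 1}$ consisting of those $g$ for which $\Phi_\rho g$ satisfies the full modular functional equation under $S$ (equivalently, under all of $\widetilde\Gamma$). First I would make this identification precise for all three flavours $W\in\{M^!,M,S\}$: parts (2),(3) of Theorem \ref{theorem-u-expansions} say that passing from $M^!$ to $M$ to $S$ corresponds exactly to intersecting with $\mathfrak{O}_\Sigma^{N\times 1}$ resp. $\mathfrak{M}_\Sigma^{N\times 1}$, and these conditions are ``coefficient-wise'' in any orthonormal basis of $\KK_{\Sigma'}$ over $\KK_\Sigma$ coming from Lemma \ref{lemma-di-serre} (with $\Sigma'\supset\Sigma$ in the role of $\Sigma\supset\Sigma'$ there); so they are compatible with the scalar extension once $M^!$ is.

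Next I would treat the $M^!$ case, which is the heart of the matter. Fix an orthonormal basis $(b_i)_{i\in I}$ of $\KK_{\Sigma'}$ as a Banach $\KK_\Sigma$-vector space with $b_i\in\mathcal{O}_{\KK_{\Sigma'}}$ and the reductions $\overline b_i$ a $k_{\CC_\infty}(\underline t_\Sigma)$-basis of $k_{\CC_\infty}(\underline t_{\Sigma'})$ (Lemma \ref{lemma-di-serre}). Given $f\in M^!_w(\rho;\KK_{\Sigma'})\subset\operatorname{Hol}_{\KK_{\Sigma'}}(\Omega\to\KK_{\Sigma'}^{N\times1})$, expand $f=\sum_i f_i b_i$ with $f_i\in\operatorname{Hol}_{\KK_{\Sigma'}}(\Omega\to\KK_\Sigma^{N\times1})$ and $f_i\to0$ on every affinoid of $\Omega$ (this expansion of holomorphic functions in the orthonormal basis is exactly what \S\ref{holomorphicfunc} provides). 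Since $J_\gamma(z)^w\rho(\gamma)$ has all entries in $\KK_\Sigma$ (indeed in $\FF_q(\underline t_\Sigma)$ up to the automorphy factor $J_\gamma(z)^w$), the functional equation $f(\gamma z)=J_\gamma(z)^w\rho(\gamma)f(z)$ splits coefficient-wise: each $f_i$ separately satisfies it, hence $f_i$ is modular-like. The temperedness condition ``$\|u^M f\|$ bounded near $u=0$'' is, via $\|f\|=\max_i\|f_i\|$, equivalent to ``$\|u^M f_i\|$ bounded uniformly in $i$''; after rescaling the $f_i$ (using that $f_i\to0$) one gets each $f_i\in M^!_w(\rho;\KK_\Sigma)$, and $f=\sum_i f_i b_i$ is the desired convergent expansion. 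Conversely any such series with $f_i\in M^!_w(\rho;\KK_\Sigma)$ and $f_i b_i\to0$ uniformly on affinoids defines an element of $M^!_w(\rho;\KK_{\Sigma'})$, because $\operatorname{Hol}_{\KK_{\Sigma'}}$ is closed under such limits and the functional equation and growth bound pass to the limit. This is exactly the statement $M^!_w(\rho;\KK_{\Sigma'})=M^!_w(\rho;\KK_\Sigma)\widehat\otimes_{\KK_\Sigma}\KK_{\Sigma'}$, and then the $M_w,S_w$ cases follow by intersecting with $\iota_\Sigma(M^!)\cap\mathfrak{O}^{N\times1}$ resp. $\cap\,\mathfrak{M}^{N\times1}$ (Theorem \ref{theorem-u-expansions}), which is coefficient-wise in the $b_i$.

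Finally, for the $\LL_\Sigma$ statement I would argue as follows. The inclusion $M_w(\rho;\LL_\Sigma)\otimes_{\LL_\Sigma}\LL_{\Sigma'}\hookrightarrow M_w(\rho;\LL_{\Sigma'})$ is clear since $\LL_\Sigma\subset\LL_{\Sigma'}$ and the functional equation and boundedness are preserved under $\LL_{\Sigma'}$-linear combinations. For the reverse inclusion use the finiteness hypothesis: pick an $\LL_\Sigma$-basis $g_1,\dots,g_r$ of $M_w(\rho;\LL_\Sigma)$; by the already-proven $\KK$-statement it is a $\KK_\Sigma$-basis of $M_w(\rho;\KK_\Sigma)$ (here one needs that a finite $\LL_\Sigma$-linearly independent family in $M_w(\rho;\LL_\Sigma)$ stays $\KK_\Sigma$-independent, which holds because $\KK_\Sigma$ is the completion of $\LL_\Sigma$ and the $g_j$ have $u$-expansions with coefficients in $\LL_\Sigma$, so a $\KK_\Sigma$-relation reduces to an $\LL_\Sigma$-relation coefficient-by-coefficient). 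Given $f\in M_w(\rho;\LL_{\Sigma'})\subset M_w(\rho;\KK_{\Sigma'})=\bigoplus_j\KK_{\Sigma'}g_j$, write $f=\sum_j a_j g_j$ with $a_j\in\KK_{\Sigma'}$. Comparing $u$-expansion coefficients (in $\Tamecirc{\KK_{\Sigma'}}$, which has a $\KK_{\Sigma'}$-structure making the coefficients behave coordinate-wise) and using that the $g_j$ have coefficients in $\LL_\Sigma$ while $f$ has coefficients in $\LL_{\Sigma'}$, one solves a finite linear system with coefficients in $\LL_\Sigma$ and right-hand side in $\LL_{\Sigma'}$, forcing $a_j\in\LL_{\Sigma'}$ (the field $\LL_{\Sigma'}$, being a union of the affinoid algebras $\widehat{\TT_{\Sigma'}[d^{-1}]}$, is closed under solving such systems). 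Hence $f\in M_w(\rho;\LL_\Sigma)\otimes_{\LL_\Sigma}\LL_{\Sigma'}$, and the same argument gives the cusp-form identity since the relevant subspace is defined by a $v$-valuation (equivalently $\mathfrak{M}$-membership) condition that is stable under the coefficient comparison.

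\textbf{Main obstacle.} The delicate point is the coefficient-wise splitting of the growth/temperedness condition under the orthonormal decomposition $f=\sum_i f_i b_i$: one must check that boundedness of $\|u^M f(z)\|$ on the punctured disk near $u=0$ really is equivalent to a \emph{uniform} bound on $\|u^M f_i(z)\|$, and that the rescaling needed to land each $f_i$ in $M^!_w(\rho;\KK_\Sigma)$ is harmless — this uses that $\|\cdot\|$ on $\KK_{\Sigma'}$ is the sup over the $b_i$-coordinates together with the fact that $f_i\to0$, plus the analogue of Liouville (Proposition \ref{entire}) to rule out pathological coefficients; the $\LL_\Sigma$ part has the extra subtlety that $\LL_\Sigma$ is not complete, so one cannot take arbitrary convergent sums and must genuinely rely on the finite-dimensionality hypothesis to reduce to a finite linear-algebra problem over $\LL_\Sigma$.
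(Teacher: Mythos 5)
Your treatment of the $\KK$-valued case is essentially the paper's own argument: fix an orthonormal basis $(b_i)_{i\in I}$ of $\KK_{\Sigma'}$ over $\KK_\Sigma$ coming from Lemma \ref{lemma-di-serre}, expand $f=\sum_i f_ib_i$, and observe that since $J_\gamma(z)^w\rho(\gamma)$ has entries in $\KK_\Sigma$ the functional equation splits coefficient-wise, while the growth conditions split because $\|f\|=\max_i\|f_i\|$. (The opening detour through $\Phi_\rho$ and $\iota_\Sigma$ is unnecessary, and the ``rescaling'' you invoke for the temperedness condition is not needed — the bound $\|u^Mf_i\|\leq\|u^Mf\|$ already puts each $f_i$ in $M_w^!(\rho;\KK_\Sigma)$ — but neither point is an error.)

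The $\LL$-valued half has a genuine gap. You pick an $\LL_\Sigma$-basis $g_1,\dots,g_r$ of $M_w(\rho;\LL_\Sigma)$ and assert that ``by the already-proven $\KK$-statement it is a $\KK_\Sigma$-basis of $M_w(\rho;\KK_\Sigma)$'', so that every $f\in M_w(\rho;\LL_{\Sigma'})\subset M_w(\rho;\KK_{\Sigma'})$ can be written as $\sum_j a_jg_j$ with $a_j\in\KK_{\Sigma'}$. But the $\KK$-statement compares $M_w(\rho;\KK_{\Sigma'})$ with $M_w(\rho;\KK_\Sigma)$; it says nothing about whether $M_w(\rho;\KK_\Sigma)$ equals $M_w(\rho;\LL_\Sigma)\otimes_{\LL_\Sigma}\KK_\Sigma$. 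That identity is precisely the kind of statement the paper treats as nontrivial and unavailable in general — the finiteness hypothesis is only on $\dim_{\LL_\Sigma}M_w(\rho;\LL_\Sigma)$, and the paper stresses elsewhere (the discussion around Theorems B and F) that $\KK_\Sigma$-valued forms are not controlled by the $\LL_\Sigma$-valued theory. Without it, your finite linear system $f=\sum_j a_jg_j$ need not exist, and the rest of the argument collapses. The paper circumvents this by a different route: Lemma \ref{precise-image} shows that any $f\in W_w(\rho;\LL_{\Sigma'})$ actually takes values in a single affinoid $\widehat{\TT_{\Sigma'}[1/d']}$; one then reruns the orthonormal-basis decomposition \emph{inside} that affinoid (choosing $d\mid d'$ with $\rho$ defined over $\FF_q[\underline{t}_\Sigma][1/d]$), obtaining a convergent expansion $f=\sum_k a_kf_k$ with $a_k\in\widehat{\TT_{\Sigma'}[1/d']}\subset\LL_{\Sigma'}$ and $f_k\in W_w(\rho;\widehat{\TT_\Sigma[1/d]})\subset W_w(\rho;\LL_\Sigma)$; only then does finite-dimensionality of $M_w(\rho;\LL_\Sigma)$ enter, to collapse the convergent series into a finite $\LL_{\Sigma'}$-linear combination. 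If you want to keep your basis-and-linear-system strategy, you must first prove the affinoid reduction and produce the coefficients $a_j$ in $\LL_{\Sigma'}$ directly, rather than descending from $\KK_{\Sigma'}$.
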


\begin{proof} 
Let $(b_i)_{i\in I}$ be a basis of $\FF_q^{ac}(\underline{t}_{\Sigma'})$ over $\FF_q^{ac}(\underline{t}_{\Sigma})$. By Lemma \ref{lemma-di-serre}, $(b_i)_{i\in I}$ is an orthonormal basis. In other words, every element $\kappa\in\KK_{\Sigma'}$ can be expanded, in a unique way, as a series $\kappa=\sum_{i}\kappa_ib_i$ with $\kappa_i\in\KK_\Sigma$ such that $\kappa_i\rightarrow0$. Let us choose a basis $(c_j)_{j\in J}$ of 
$\FF_q^{ac}(\underline{t}_{\Sigma})$ over $\FF_q^{ac}$ so that $(b_ic_j)_{i,j}$ is an orthonormal 
basis of $\KK_{\Sigma'}$ over $\CC_\infty$.
Now consider $f\in W_w(\rho;\KK_{\Sigma'})$. We can expand 
$$f(z)=\sum_{i,j}f_{i,j}(z)b_ic_j$$
where $f_{i,j}\in \operatorname{Hol}_{\CC_\infty}(\Omega\rightarrow\CC_\infty)$ for all $i,j$,
with the property that $f_{i,j}\rightarrow0$ with respect to the supremum norm relative to any choice of an affinoid subdomain of $\Omega$. Observe that
$$f(\gamma(z))=J_\gamma(z)^w\sum_i\left(\rho(\gamma)\sum_jf_{i,j}(z)c_j\right)b_i,$$
because $f$ is modular-like.
Since $\rho(\gamma)\sum_jf_{i,j}(z)c_j\in \KK_\Sigma$ and $(b_i)_i$ is an orthonormal 
basis of $\KK_{\Sigma'}$ over $\KK_\Sigma$, we deduce that for all $i\in I$, setting 
$f_i=\sum_jf_{i,j}(z)c_j$,
$$f_i(\gamma(z))=J_\gamma(z)^w\rho(\gamma)f_i(z),$$
and one sees that $f_i\in W_w(\rho;\KK_{\Sigma})$ from which the first part of the Proposition follows.

The proof of the second part of the proposition is similar but we restrict to $W_w=M_w,S_w$. First notice that by Lemma \ref{precise-image} which can be applied to $B=\LL_{\Sigma'}$ (it satisfies the conditions at the beginning of \S \ref{structure-XBC}),
if $f\in W_w(\rho;\LL_{\Sigma'})$ then there exists $d'\in\FF_q[\underline{t}_{\Sigma'}]\setminus\{0\}$ such that $f\in W_w(\rho;\TT_{\Sigma'}[\frac{1}{d'}]^\wedge)$. We can even choose $d,d'$ with
$d\in \FF_q[\underline{t}_{\Sigma'}]\setminus\{0\}$ such that $d\mid d'$ and such that the image 
of $\rho$ is contained in $\GL_{N}(\FF_q[\underline{t}_{\Sigma}][\frac{1}{d}])$. The proof of the first part of the proposition can be modified to obtain that 
$f$ can be expanded as a series $f=\sum_ka_kf_k$ where $a_k\in\TT_{\Sigma'}[\frac{1}{d'}]^\wedge$ and $f_i\in W_w(\rho;\TT_{\Sigma}[\frac{1}{d}])$,
and $a_if_i\rightarrow0$ for the supremum norm associated to any affinoid subset of $\Omega$. If now $\dim_{\LL_{\Sigma}}M_w(\rho;\LL_{\Sigma})$, we deduce the result. We have excluded $W_w=M^!_w$ because in general, 
$\dim_{\LL_{\Sigma}}M^!_w(\rho;\LL_{\Sigma})=\infty$.
\end{proof}

\subsection{Finiteness results}\label{structure-tate}

In this subsection we suppose that the representation $\rho:\Gamma\rightarrow\GL_N(\FF_q(\underline{t}_\Sigma))$ is of the first kind.
We also recall that $\mathfrak{K}_\Sigma$ is the completion of the fraction field of $\Tame{\KK_\Sigma}$ for the valuation $v$, and that $\mathfrak{O}_\Sigma$, $\mathfrak{M}_\Sigma$ are respectively the valuation ring and the maximal ideal of $v$.
 We have the following results which correspond to part (1) of Theorem B in the introduction:

\begin{Theorem}[Finiteness Theorem]\label{theorem-structure-tate-algebras}
The $\LL_\Sigma$-vector space $M_w(\rho;\LL_\Sigma)$ has finite dimension $r_\rho(w)$ and we have 
$r_\rho(w)\leq(1+\lfloor\frac{w}{q+1}\rfloor)N$ if $q>2$ and $r_\rho(w)\leq2(1+\lfloor\frac{w}{q+1}\rfloor)N$ if $q=2$.
\end{Theorem}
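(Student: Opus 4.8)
The plan is to reduce the finiteness of $M_w(\rho;\LL_\Sigma)$ to the known finiteness of spaces of scalar Drinfeld modular forms for congruence subgroups of $\Gamma$, by specializing the variables $\underline{t}_\Sigma$ at suitable powers of $\theta$. First I would recall that, since $\rho$ is of the first kind, it is built up by admissible operations from basic representations $\rho_{\chi}$ with $\chi:A\to\FF_q(\underline{t}_\Sigma)^{n\times n}$ injective; in particular, choosing integers $k_i\geq 0$ and substituting $t_i\mapsto\theta^{q^{k_i}}$ turns each $\chi_{t_i}$ into $\FF_q$-algebra maps $A\to A$, hence specializes $\rho$ to a representation $\rho_{\underline{k}}$ of $\Gamma$ with values in $\GL_N(K)$ whose restriction to the principal congruence subgroup $\Gamma(\mathfrak{a}_{\underline{k}})$ (for an explicit ideal $\mathfrak{a}_{\underline{k}}$ depending on the $k_i$) becomes the trivial representation times a power of $\det$. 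The point of working over $\LL_\Sigma$ rather than $\KK_\Sigma$ is precisely that such a specialization map is available: an element of $\LL_\Sigma$ lies in some $\widehat{\TT_\Sigma[d^{-1}]}$ with $d\in k_L[\underline{t}_\Sigma]\setminus\{0\}$, and one can arrange the $\theta^{q^{k_i}}$ to avoid the (finitely many) zeros of the reduction of $d$, so evaluation at $\underline{t}_\Sigma=(\theta^{q^{k_i}})_i$ is defined and $\CC_\infty$-linear.

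Next I would show that under this specialization a modular form $f\in M_w(\rho;\LL_\Sigma)$ maps to a vector of scalar Drinfeld modular forms of weight $w$ for $\Gamma(\mathfrak{a}_{\underline{k}})$ (with some type), using the functional equation \eqref{functional} together with the fact, recorded in Theorem~\ref{theorem-u-expansions} and Proposition~\ref{quasiperiodictempered}, that regularity at the cusp is encoded by membership in $\mathfrak{O}_\Sigma^{N\times 1}$, which is preserved; the tame series of the entries specialize to ordinary $u$-expansions with coefficients in $\CC_\infty$ because the functions $e_i$ and the $\omega$-matrices specialize to the corresponding scalar objects. I would then invoke the classical finite-dimensionality of $M_w(\Gamma(\mathfrak{a});\CC_\infty)$ (Goss, Gekeler, and in the vector-valued-over-congruence-subgroup form used elsewhere in the paper, e.g. via the Bruhat--Tits tree picture) together with an explicit dimension bound of the shape $(1+\lfloor w/(q+1)\rfloor)$ per scalar component (respectively its double when $q=2$, reflecting the failure of $q+1$ and $q-1$ to be coprime) — this is where the stated numerical bound $r_\rho(w)\leq(1+\lfloor w/(q+1)\rfloor)N$ comes from, since the target has $N$ components.

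The crucial step is to deduce finiteness of $\dim_{\LL_\Sigma}M_w(\rho;\LL_\Sigma)$ from the uniform boundedness of the dimensions of the specialized spaces. Here the argument is: if $f_1,\dots,f_m\in M_w(\rho;\LL_\Sigma)$ were $\LL_\Sigma$-linearly independent with $m$ larger than the bound, then, clearing denominators, we may assume $f_j\in M_w(\rho;\widehat{\TT_\Sigma[d^{-1}]})$ for a common $d$, and linear independence over $\LL_\Sigma$ is detected by a nonzero $m\times m$ minor of the matrix of some $u$-expansion coefficients, which is an element of $\LL_\Sigma$, hence of $\widehat{\TT_\Sigma[d_1^{-1}]}$ for some $d_1$; choosing the $k_i$ so that $\theta^{q^{k_i}}$ avoids the zeros of the reduction of $d\,d_1$, the specialization preserves this nonvanishing, contradicting the bound on the specialized space. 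I expect the main obstacle to be the bookkeeping in this specialization argument: one must check simultaneously (i) that the chosen $\underline{t}_\Sigma=(\theta^{q^{k_i}})$ really avoids all the finitely many bad loci coming from $\rho$, from $d$, and from the minors, (ii) that the specialized $\rho_{\underline{k}}$ is genuinely a representation trivial on a congruence subgroup so that the classical theory applies with the right weight and the right bound, and (iii) that regularity/cuspidality is preserved under specialization, i.e. that the tame-series coefficients specialize compatibly with $u$-expansions — all of which are routine given Sections~\ref{tameseriestheory}--\ref{quasiperiodicfunctions} but require care to state cleanly. The vanishing for $w<0$ follows immediately, since already each specialized scalar space vanishes for negative weight.
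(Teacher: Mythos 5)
There is a genuine gap, and it sits at the heart of your argument. The specialization you propose, $t_i\mapsto\theta^{q^{k_i}}$, does not do what you claim. First, it is not even defined on general elements of $\LL_\Sigma$: the points $\theta^{q^{k_i}}$ have absolute value $>1$, so evaluation there only makes sense on entire functions of the $t_i$ (the algebra $\EE_\Sigma$), not on $\widehat{\TT_\Sigma[d^{-1}]}$. Second, and more seriously, the specialized representation is \emph{not} trivial (up to a power of $\det$) on any congruence subgroup: $\chi_{t_i}(a)$ specializes to $a^{q^{k_i}}$, which never vanishes for $a\neq 0$. What this specialization actually produces is a Drinfeld \emph{quasi-modular} form of weight $w+\sum_i q^{k_i}$ and positive depth for the full group $\Gamma$ — this is exactly the content of Theorem~\ref{to-give-Petrov} — not a modular form of weight $w$ for $\Gamma(\mathfrak{a})$. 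The specialization that does trivialize $\rho$ on a congruence subgroup is evaluation at roots of unity $\underline{\zeta}\in(\FF_q^{ac})^\Sigma$ (Lemma~\ref{lemma-reduction-n} and Proposition~\ref{evaluation-of-modular}), and this is the one the paper uses. Even after this correction, your route cannot yield the stated numerical bound: the specialized entries land in $M_w(\Gamma(\mathfrak{n});\boldsymbol{1};\CC_\infty)$, whose dimension grows with the index of $\Gamma(\mathfrak{n})$ and with $\underline{\zeta}$, so it gives finiteness but only with a bound depending on $\mathfrak{n}$, far weaker than $(1+\lfloor w/(q+1)\rfloor)N$ and not uniform in $\Sigma$.

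In the paper, specialization at roots of unity is used only to anchor the induction, namely to prove that $M_0(\rho;\LL_\Sigma)$ consists of constants and that $M_w(\rho;\LL_\Sigma)=\{0\}$ for $w<0$ (Lemma~\ref{weight-zero} and Corollary~\ref{corollary-negative}, the latter via multiplication by $g^\alpha h^\beta$ and twisting by $\tau^k$ to reach weight zero). The bound itself comes from a completely different mechanism: one writes $f=\Phi_\rho\, g_f$ with $g_f\in\widehat{\TT_\Sigma[d^{-1}]}[[u]]^{N\times 1}$ using the quasi-periodic matrix $\Phi_\rho$ of Proposition~\ref{propgeneralitiestame}, uses the action of the diagonal matrices $(\begin{smallmatrix}\nu&0\\0&1\end{smallmatrix})$, $\nu\in\FF_q^\times$, to show each entry of $g_f$ lies in $u^{m_i}\widehat{\TT_\Sigma[d^{-1}]}[[u^{q-1}]]$, and deduces the decomposition $M_w(\rho;\LL_\Sigma)=h\,M_{w-(q+1)}(\rho\det;\LL_\Sigma)\oplus\mathcal{W}_w$ with $\dim_{\LL_\Sigma}\mathcal{W}_w\leq N$ (resp.\ $2N$ when $q=2$); induction on $w$ then gives $r_\rho(w)\leq(1+\lfloor w/(q+1)\rfloor)N$. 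If you want to salvage your approach, replace $\theta^{q^{k_i}}$ by roots of unity throughout — but be aware that this only proves finiteness, not the quantitative statement, and that your "nonzero minor" argument also needs care since the $u$-expansion coefficients of the $f_j$ are tame series in $\Tamecirc{\LL_\Sigma}$ rather than scalars.
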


In particular, if $w<0$, then $r_\rho(w)=0$ and $M_w(\rho;\LL_\Sigma)=\{0\}$ but this property will be actually proved separately to obtain the general result. The proof of this theorem
makes use of an important feature of our Drinfeld modular forms when they take values in $\LL_\Sigma$; the possibility of evaluating the variables $t_i$ ($i\in\Sigma$) at roots of unity. This will the subject of the next subsection. In \S \ref{Proof-Finiteness-Theorem} we prove Theorem \ref{theorem-structure-tate-algebras} by using that the spaces of modular forms of negative weight are trivial.
This is a consequence of the fact that classical negative weight (scalar) Drinfeld modular forms for congruence subgroups of $\Gamma$ are zero. The upper bound for the dimensions 
in Theorem \ref{theorem-structure-tate-algebras} can be slightly refined, but our methods do not allow an explicit computation.

\subsubsection{Evaluating at roots of unity}\label{evaluation-banach}
The representation of the first kind $\rho$ is constructed starting from a finite set of basic representations $\rho_i$ associated with injective $\FF_q$-algebra morphisms
$\chi_i:A\rightarrow \FF_q(\underline{t}_\Sigma)$ ($i=1,\ldots,r$). If $d_1,\ldots,d_r\in\FF_q[\underline{t}_\Sigma]\setminus\{0\}$ are such that the entries of $d_i\chi_i(\theta)$ are in $\FF_q[\underline{t}_\Sigma]$ then the image of $\rho$ is in $\GL_N(\FF_q[\underline{t}_\Sigma][\frac{1}{d_1},\ldots,\frac{1}{d_r}])\subset\GL_N(\FF_q[\underline{t}_\Sigma][\frac{1}{d}])$ for some $d\in\FF_q[\underline{t}_\Sigma]\setminus\{0\}$. We thus get, after Proposition \ref{propgeneralitiestame}, that $$\Xi_\rho,\Phi_\rho\in\operatorname{Hol}(\CC_\infty\rightarrow\widehat{\EE_\Sigma[d^{-1}]}^{N\times N}).$$

Let $\Sigma=U\sqcup V$ be a finite subset of $\NN^*$ written as a disjoint union of subsets $U,V$, with $U$ non-empty.
The set $$\mathcal{V}_U(d)=\{\underline{\zeta}\in(\FF_q^{ac})^U:d(\underline{\zeta})=0\}$$ is contained in a proper hypersurface of $(\FF_q^{ac})^{U}$ and therefore, $(\FF_q^{ac})^U\setminus\mathcal{V}_U(d)$ is Zariski-dense in $(\FF_q^{ac})^{U}$. Let $\underline{\zeta}=(\zeta_i:i\in U)$ be an element of $(\FF_q^{ac})^U\setminus\mathcal{V}_U(d)$.

The {\em evaluation map}
$$\operatorname{ev}_{\underline{\zeta}}:\widehat{\TT_\Sigma[d^{-1}]}\rightarrow\widehat{\TT_V[\operatorname{ev}_{\underline{\zeta}}(d)^{-1}]}$$ is the $\TT_V$-algebra morphism uniquely determined by the assignment $t_i\mapsto\zeta_i$
for $i\in U$. If there is no possibility of confusion, we write $f(\underline{\zeta})$ in place of $\operatorname{ev}_{\underline{\zeta}}(f)$.
We extend this map to matrices with entries in $\TT_\Sigma[d^{-1}]^\wedge$. It is easy to see that if $X$ is an analytic space over $\CC_\infty$ and $f\in\operatorname{Hol}(X\rightarrow\TT_\Sigma[d^{-1}]^\wedge)$, then $\operatorname{ev}_{\underline{\zeta}}(f)\in \operatorname{Hol}(X\rightarrow\TT_V[d(\underline{\zeta})^{-1}]^\wedge)$. Moreover:

\begin{Lemma}\label{locally-constant-implies-constant} 
Let $X$ be a rigid analytic space over $\CC_\infty$.
If $f\in\operatorname{Hol}(X\rightarrow\TT_\Sigma[\frac{1}{d}]^\wedge)$ and 
if for all $\underline{\zeta}\in(\FF_q^{ac})^U\setminus\mathcal{V}_U(d)$, $\operatorname{ev}_{\underline{\zeta}}(f)\in
\operatorname{Hol}(X\rightarrow\TT_V[\frac{1}{d(\underline{\zeta})}]^\wedge)$ is constant, then $f$ is constant.
\end{Lemma}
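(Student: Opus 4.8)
The statement is a rigidity principle: an analytic $\TT_\Sigma[d^{-1}]^\wedge$-valued function whose every specialisation at $U$-tuples of roots of unity (avoiding the hypersurface $\mathcal{V}_U(d)$) is constant must itself be constant. The plan is to expand $f$ in a suitable orthonormal basis, reduce the vanishing of the non-constant Fourier-type coefficients to a statement about scalar $\CC_\infty$-valued analytic functions, and then invoke the Zariski-density of $(\FF_q^{ac})^U\setminus\mathcal{V}_U(d)$ together with the fact that a $\CC_\infty$-valued analytic function on a connected rigid curve with infinitely many zeroes vanishes identically (as already used in the proof of Lemma~\ref{precise-image}).

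\textbf{Key steps.} First I would use Lemma~\ref{lemma-di-serre} (with $\Sigma'=V$) to pick an orthonormal basis $(b_i)_{i\in I}$ of $\KK_\Sigma$ over $\KK_V$ whose elements lie in $\FF_q^{ac}[\underline{t}_U]$, refined further by a basis of $\FF_q^{ac}(\underline{t}_V)$ over $\FF_q^{ac}$ so as to get an orthonormal basis of $\KK_\Sigma$ over $\CC_\infty$; then expand $f=\sum_{i,j}f_{i,j}b_ic_j$ with $f_{i,j}\in\operatorname{Hol}_{\CC_\infty}(X\rightarrow\CC_\infty)$ (here I may reduce to $X$ a rigid analytic curve, or even an integral affinoid, and treat a general $X$ afterward by working on an admissible affinoid cover, exactly as in the proof of Lemma~\ref{precise-image}). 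Second, since $f$ takes values in $\TT_\Sigma[d^{-1}]^\wedge$, after clearing a power of $d$ I may assume $f\in\operatorname{Hol}(X\rightarrow\TT_\Sigma)$, so the $b_i$ that occur are genuine polynomials in $\underline{t}_U$ and the expansion is actually a finite-in-$\underline{t}_U$-degree (in each variable, but with controlled length) Fourier-type expansion; the point is that for $\underline{\zeta}\in(\FF_q^{ac})^U\setminus\mathcal{V}_U(d)$, $\operatorname{ev}_{\underline{\zeta}}(f)=\sum_{i,j}f_{i,j}b_i(\underline{\zeta})c_j$, and "constant in $X$" means each coefficient of $c_j$ — that is $\sum_i f_{i,j}(x)\,b_i(\underline{\zeta})$ — is independent of $x\in X$. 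Third, fix $j$ and two points $x,x'\in X$; then $\sum_i (f_{i,j}(x)-f_{i,j}(x'))\,b_i(\underline{\zeta})=0$ for every $\underline{\zeta}$ outside the proper hypersurface $\mathcal{V}_U(d)$, hence (Zariski-density of $(\FF_q^{ac})^U\setminus\mathcal{V}_U(d)$ in affine space, applied to the polynomial $\sum_i(f_{i,j}(x)-f_{i,j}(x'))\,b_i$ in $\underline{t}_U$, whose coefficients are the finitely many nonzero differences) all coefficients vanish, i.e. $f_{i,j}(x)=f_{i,j}(x')$. Since $x,x'$ were arbitrary, each $f_{i,j}$ is constant, and therefore $f$ is constant.

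\textbf{Main obstacle.} The delicate point is the third step: a priori, for a \emph{fixed} $\underline{\zeta}$ the relation $\sum_i(f_{i,j}(x)-f_{i,j}(x'))b_i(\underline{\zeta})=0$ is a single scalar identity, and one needs to know that as $\underline{\zeta}$ ranges over a Zariski-dense set these identities force the polynomial $P:=\sum_i(f_{i,j}(x)-f_{i,j}(x'))\,b_i\in\CC_\infty[\underline{t}_U]$ to be zero. This works because, after reducing to $f\in\operatorname{Hol}(X\rightarrow\TT_\Sigma)$, only finitely many $b_i$ have bounded-length support of a fixed shape relevant to $P$ — more precisely one fixes the finitely many monomials appearing and uses that a nonzero polynomial cannot vanish on a Zariski-dense subset of $(\FF_q^{ac})^U$, hence on a fortiori dense-in-the-Zariski-topology-over-$\FF_q^{ac}$ set; here the hypothesis that $\FF_q^{ac}$ is infinite and $\mathcal{V}_U(d)$ is a proper subvariety is exactly what is needed, and one must be slightly careful that the "constant" hypothesis on $\operatorname{ev}_{\underline\zeta}(f)$ is applied uniformly, which is legitimate since the expansion coefficients $f_{i,j}$ do not depend on $\underline\zeta$. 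Assembling the general $X$ from affinoid pieces is routine once the affinoid (or curve) case is settled.
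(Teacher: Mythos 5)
Your strategy is viable and is essentially the transpose of the paper's. The paper reduces to $X=\operatorname{Spm}(\mathcal{A})$ affinoid, picks an orthonormal basis $(a_i)_{i\in I}$ of $\mathcal{A}$ over $\CC_\infty$ (Lemma \ref{affinoid-is-cartesian}) with $a_{i_0}=1$, and expands $f=\sum_i f_i a_i$ with \emph{coefficients} $f_i\in\TT_\Sigma[\frac{1}{d}]^\wedge$; constancy of $\operatorname{ev}_{\underline{\zeta}}(f)$ then says literally that $\operatorname{ev}_{\underline{\zeta}}(f_i)=0$ for $i\neq i_0$, and Zariski-density of $(\FF_q^{ac})^U\setminus\mathcal{V}_U(d)$ kills each $f_i$. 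You instead expand the \emph{target} in an orthonormal basis over $\CC_\infty$ and work with scalar coefficient functions $f_{i,j}$ on $X$. Both decompositions lead to the same density argument; the paper's is slightly cleaner because the hypothesis translates directly into vanishing of coefficients rather than into constancy of infinite linear combinations evaluated at $\underline{\zeta}$.

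Two steps of yours need repair. First, you cannot ``clear a power of $d$'' to reduce to $f\in\operatorname{Hol}(X\rightarrow\TT_\Sigma)$: an element of $\TT_\Sigma[\frac{1}{d}]^\wedge$ is a convergent series $\sum_k g_k d^{-k}$, not a quotient $g/d^k$, so this reduction fails as stated. The fix is to choose the orthonormal basis of $\TT_\Sigma[\frac{1}{d}]^\wedge$ itself (its elements may be taken in $\FF_q^{ac}[\underline{t}_\Sigma][\frac{1}{d}]$, which is why excluding $\mathcal{V}_U(d)$ matters) rather than pretending the values are polynomial in $\underline{t}_U$. Second, and more importantly, the object $P=\sum_i(f_{i,j}(x)-f_{i,j}(x'))b_i$ is an \emph{infinite} convergent series (the coefficients merely tend to $0$), not a polynomial, so ``a nonzero polynomial cannot vanish on a Zariski-dense set'' does not apply, and your claim that ``one fixes the finitely many monomials appearing'' is false in general. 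What is actually needed — and what the paper also uses tacitly — is the identity principle for elements of a Tate-type affinoid algebra over $\CC_\infty$: a nonzero element of $\TT_{\CC_\infty,U}[\frac{1}{d}]^\wedge$ cannot vanish at every point of $(\FF_q^{ac})^U\setminus\mathcal{V}_U(d)$ (all such points lie in the relevant affinoid domain; one argues by induction on $|U|$ using the finiteness of zeroes of a nonzero one-variable element of $\TT$ in the closed unit disk). With these two corrections your argument goes through.
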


\begin{proof}
It is enough to prove the result for $X=\operatorname{Spm}(\mathcal{A})$ affinoid. By Lemma \ref{affinoid-is-cartesian} we can choose an orthonormal basis $(a_i)_{i\in I}$ of the Banach $\CC_\infty$-algebra $\mathcal{A}$. 
We can even assume, without loss of generality, that $a_{i_0}$ is the constant function equal to one for an index $i_0\in I$. Then, for all $i\neq i_0$, $a_i$ is non-constant over $X$.
We can expand every element 
$f$ of $\operatorname{Hol}(X\rightarrow\TT_\Sigma[\frac{1}{d}]^\wedge)$ as
$f=\sum_{i\in I}f_ia_i$ with $f_i\in \TT_\Sigma[\frac{1}{d}]^\wedge,$
where the series converges for the supremum norm of $X$. Hence,
$$\operatorname{ev}_{\underline{\zeta}}(f)=\sum_{i\in I}\operatorname{ev}_{\underline{\zeta}}(f_i)a_i,$$ and
$\operatorname{ev}_{\underline{\zeta}}(f_i)=0$ for all $i\neq i_0$. Since 
this happens for all $\underline{\zeta}\in(\FF_q^{ac})^U\setminus\mathcal{V}_U(d)$ which is Zariski-dense, we obtain $f_i=0$ for all $i\neq i_0$ and $f$ is constant over $X$.
\end{proof}

Let $\mathfrak{n}$ be a non-zero ideal of $A$.
We denote by $\Gamma(\mathfrak{n})$
the associated principal congruence subgroup of $\Gamma$:
$$\Gamma(\mathfrak{n})=\{\gamma\in\Gamma:\gamma\equiv(\begin{smallmatrix}1 & 0 \\ 0 & 1\end{smallmatrix})\pmod{\mathfrak{n}}\}.$$
We recall that $\rho:\Gamma\rightarrow\GL_N(\FF_q[\underline{t}_\Sigma][d^{-1}])$ is a representation of the first kind.
\begin{Lemma}\label{lemma-reduction-n}
Let $\underline{\zeta}=(\zeta_i:i\in\Sigma)$ be an element of $(\FF_q^{ac})^\Sigma\setminus\mathcal{V}_\Sigma(d)$. There exists a non-zero ideal $\mathfrak{n}$ of $A$ such that for all $\gamma\in\Gamma(\mathfrak{n})$,
$\operatorname{ev}_{\underline{\zeta}}\Big(\rho(\gamma)\Big)=I_N$.
\end{Lemma}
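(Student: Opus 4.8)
The plan is to exploit the inductive definition of a representation of the first kind (Definition \ref{firstkindrepresentations}): $\rho$ is obtained, by finitely many admissible operations $(\cdot)^*,\oplus,\otimes,S^m,\wedge^m$, from finitely many basic representations $\rho_{\chi_1},\dots,\rho_{\chi_r}$ attached to injective $\FF_q$-algebra morphisms $\chi_i:A\to\FF_q(\underline{t}_\Sigma)^{n_i\times n_i}$. Fix $d\in\FF_q[\underline{t}_\Sigma]\setminus\{0\}$ clearing the denominators of all the $\chi_i(\theta)$, so that $\chi_i(A)\subset\FF_q[\underline{t}_\Sigma][d^{-1}]^{n_i\times n_i}$ and $\operatorname{ev}_{\underline{\zeta}}$ is defined on these matrices (as $d(\underline{\zeta})\neq 0$). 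Write $\FF=\FF_q(\underline{\zeta})$, a finite extension of $\FF_q$, and $\vartheta_i=\operatorname{ev}_{\underline{\zeta}}(\chi_i(\theta))\in\FF^{n_i\times n_i}$. Then $a\mapsto\operatorname{ev}_{\underline{\zeta}}(\chi_i(a))=a(\vartheta_i)$ is an $\FF_q$-algebra morphism $A\to\FF^{n_i\times n_i}$ with image in the finite ring $\FF[\vartheta_i]$; since $A$ is infinite, its kernel is a nonzero ideal $\mathfrak{n}_i$ of $A$. I would then set $\mathfrak{n}=\bigcap_{i=1}^r\mathfrak{n}_i$, which is nonzero, being a finite intersection of nonzero ideals of a principal ideal domain.

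First I would treat the basic case $\rho=\rho_{\chi_i}$. Evaluating $\rho_{\chi_i}(\begin{smallmatrix}a & b\\ c& d\end{smallmatrix})=(\begin{smallmatrix}\chi_i(a) & \chi_i(b)\\ \chi_i(c)& \chi_i(d)\end{smallmatrix})$ at $\underline{\zeta}$ amounts to composing the reduction $\Gamma=\GL_2(A)\to\GL_2(A/\mathfrak{n}_i)$ with the injective $\FF_q$-algebra map $A/\mathfrak{n}_i\hookrightarrow\FF^{n_i\times n_i}$, $\theta\mapsto\vartheta_i$ (well defined and injective precisely by the choice of $\mathfrak{n}_i$), the resulting block matrices being read as $2n_i\times 2n_i$ matrices over $\FF$. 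Hence $\operatorname{ev}_{\underline{\zeta}}(\rho_{\chi_i}(\gamma))=I_{2n_i}$ as soon as $\gamma\equiv I_2\pmod{\mathfrak{n}_i}$, in particular for every $\gamma\in\Gamma(\mathfrak{n})$ since $\mathfrak{n}\subset\mathfrak{n}_i$.

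For the general case I would induct on the number of admissible operations used to build $\rho$ from the $\rho_{\chi_i}$. Each admissible operation sends an identity matrix to an identity matrix ($I\oplus I=I$, $I\otimes I=I$, $S^m(I)=\wedge^m(I)=I$, ${}^tI^{-1}=I$) and commutes entrywise with $\operatorname{ev}_{\underline{\zeta}}$: this is clear for $\oplus,\otimes$ and for $S^m,\wedge^m$ (polynomial in the entries), and for $(\cdot)^*$ it holds once one knows the matrices are invertible after evaluation. For the latter I would record, again by induction on the construction tree, that every representation $\psi$ appearing as a node satisfies $\det\psi(\gamma)\in\FF_q^\times$ for all $\gamma\in\Gamma$: this holds for basic ones since $\det\rho_{\chi_i}(\gamma)=\det(\gamma)^{n_i}\in\FF_q^\times$, and is preserved by all the operations, the determinant of the output being in each case a product of powers of the determinants of the inputs (resp. its inverse for $(\cdot)^*$). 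Then $\operatorname{ev}_{\underline{\zeta}}(\psi(\gamma))$ has determinant in $\FF_q^\times$, hence is invertible, and $\operatorname{ev}_{\underline{\zeta}}$ commutes with $\psi\mapsto\psi^*$. Combining this with the basic case yields $\operatorname{ev}_{\underline{\zeta}}(\rho(\gamma))=I_N$ for all $\gamma\in\Gamma(\mathfrak{n})$.

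The statement poses no real obstacle; the one point to keep in mind is that the conclusion genuinely uses that $\rho$ is of the first kind. Indeed $\operatorname{ev}_{\underline{\zeta}}\circ\rho$ has finite image in $\GL_N(\FF)$, but $\Gamma$ does not have the congruence subgroup property, so finiteness of the image by itself does not force triviality on some $\Gamma(\mathfrak{n})$; it is the explicit construction from the ``congruence'' basic representations $\rho_{\chi_i}$ that makes this work.
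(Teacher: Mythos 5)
Your proof is correct and follows essentially the same route as the paper's: reduce to the finitely many basic representations $\rho_{\chi_i}$, take for $\mathfrak{n}$ the common kernel of the maps $a\mapsto a(\vartheta_i)$ (the paper phrases this via the minimal polynomials $P_i\in\FF_q[X]$ of the evaluated matrices $\chi_i(\theta)|_{\underline{t}_\Sigma=\underline{\zeta}}$), and observe that the admissible operations stabilise identity matrices. You are in fact a bit more careful than the paper on two small points: you take the intersection $\bigcap_i\mathfrak{n}_i$ (rather than the ideal ``generated by'' the $P_i|_{X=\theta}$, which as literally written would be too large), and you justify that $(\cdot)^*$ commutes with $\operatorname{ev}_{\underline{\zeta}}$ by checking invertibility of the evaluated matrices through the determinant.
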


\begin{proof}
There exist basic representations $\rho_{\chi_1},\ldots,\rho_{\chi_r}$, associated to $\FF_q$-algebra morphisms $\chi_i:A\rightarrow \FF_q(\underline{t}_\Sigma)^{n_i\times n_i}$ ($i=1,\ldots,r$) such that $\rho$ can be constructed applying 
admissible operations finitely many times (as in Definition \ref{firstkindrepresentations}). We fix $\underline{\zeta}\in(\FF_q^{ac})^\Sigma\setminus\mathcal{V}_\Sigma(d)$ where $d\in\FF[\underline{t}_\Sigma]\setminus\{0\}$ is such that 
$d\chi_i(\theta)\in\FF_q[\underline{t}_\Sigma]^{n_i\times n_i}$. 
We denote by 
$\mathfrak{n}$ the ideal generated by $P_1|_{X=\theta},\ldots,P_r|_{X=\theta}\in A\setminus\{0\}$, where $P_i\in\FF_q[X]$ is  the minimal polynomial
 of $\eta_i=\chi_i(\theta)|_{\underline{t}_\Sigma=\underline{\zeta}}$ (for all $i$), which are well defined. Then, if $a\in\mathfrak{n}$, we have 
 $\operatorname{ev}_{\underline{\zeta}}(\chi_{t_i}(a))=0$ for all $i$ so that
 $\operatorname{ev}_{\underline{\zeta}}(\rho(\gamma))=I_N$ for all $\gamma\in\Gamma(\mathfrak{n})$ due to the fact that
 the admissible operations (which construct all the representations of the first kind) stabilise the set of identity matrices.
\end{proof}

We now introduce a slightly more general notion of vector-valued modular form for a congruence subgroup of $\Gamma$.
Let $G$ be a congruence subgroup of $\Gamma$. The quotient space $G\backslash\Omega$ carries a natural structure of analytic curve $Y_G$ with compactification $X_G$ obtained by adding finitely many points to $Y_G$ called {\em cusps}. We can consider neighbourhoods of a cusp of $G\backslash\Omega$ in $\Omega$ in the usual way and therefore, there is a natural notion of modular-like forms $f:\Omega\rightarrow \LL_\Sigma^{N\times 1}$ of weight $w$ for $\rho$, seen as a representation of $G$ by restriction, namely, satisfying the collection of 
functional equations 
\begin{equation}\label{functional-bis}
f(\gamma(z))=J_\gamma(z)^w\rho(\gamma)f(z)\quad \forall z\in\Omega,\quad \forall \gamma\in G.\end{equation}   Let $c$ be a cusp of $X_G$ and let us consider $\delta\in\Gamma$
such that $\delta(\infty)=c$. If $f:\Omega\rightarrow \LL_\Sigma^{N\times 1}$ is a map and $w$ an integer, we set
$$f^\delta(z):=f|_{w,\rho}\delta=J_{\delta}(z)^{-w}\rho(\delta)^{-1}f(\delta(z))$$ (Petersson slash operator as in (\ref{Petersson})).
A simple computation shows that if $f$ is modular-like of weight $w$ for the restriction $\rho|_G$ of $\rho$ on $G$, then $f^\delta:\Omega\rightarrow \LL_\Sigma^{N\times 1}$ is modular-like of weight $w$ for $\rho|_{G^\delta}$ where $G^\delta:=\delta^{-1}G\delta$ (in particular, if $f$ is modular-like for
the group $\Gamma$, then $f=f^\delta$).

\begin{Definition}\label{refined-modularity} Let $w$ be in $\ZZ$.
{\em We say that a modular-like function $\Omega\xrightarrow{f} \LL_\Sigma^{N\times 1}$ 
of weight $w$ for $\rho|_G$ is:
\begin{enumerate}
\item A {\em weak Drinfeld modular form of weight $w$ for $\rho|_G$} if there exists $H\in\ZZ$ such that $$\|u(z)^Hf^\delta(z)\|\rightarrow0$$ as $z\in\Omega$ is such that $|z|=|z|_\Im\rightarrow\infty$, and this, for 
all $\delta\in\Gamma$.
\item A {\em Drinfeld modular form of weight $w$ for $\rho|_G$,} if $\|f^\delta(z)\|$ is bounded as $|u(z)|<c$
for some constant $c<1$, for all $\delta\in\Gamma$.
\item A {\em cusp form of weight $w$ for $\rho|_G$} if $\|f^\delta(z)\|\rightarrow0$ as $z\in\Omega$ is such that $|z|=|z|_\Im\rightarrow\infty$ for 
all $\delta\in\Gamma$.
\end{enumerate}
We denote by $M^!_w(G;\rho;\LL_\Sigma)$ (resp. $M_w(G;\rho;\LL_\Sigma),S_w(G;\rho;\LL_\Sigma)$) the $\LL_\Sigma$-vector spaces of weak modular forms (resp. modular forms, cusp forms) of weight $w$ for $\rho$. More generally, if $B$ is a $\CC_\infty$-subalgebra of $\KK_\Sigma$,
we write $M_w(G;\rho;B)$ for the corresponding $B$-module of modular forms.}
\end{Definition}
It is easy to see that the $\CC_\infty$-vector space $M_w(G;\boldsymbol{1};\CC_\infty)$
is equal to the $\CC_\infty$-vector space of the classical (scalar) Drinfeld modular forms of weight $w$ for $G$
and a similar property holds for weak modularity and cuspidality of a form. In the next proposition, $W_w$ stands for 
$M_w^!,M_w,S_w$ (so the proposition is in fact equivalent to three distinct statements).

\begin{Proposition}\label{evaluation-of-modular}
Let $f$ be in $W_w(\rho;\LL_\Sigma)$.
Then, there exists $d\in\FF_q[\underline{t}_\Sigma]\setminus\{0\}$ such that $f\in W_w(\rho;\widehat{\TT_\Sigma[\frac{1}{d}]})$. Let us consider, further, $\underline{\zeta}\in(\FF_q^{ac})^\Sigma\setminus\mathcal{V}_\Sigma(d)$. We have $\ev_{\underline{\zeta}}(f)\in W_w(\Gamma(\mathfrak{n});\boldsymbol{1};\CC_\infty)^{N\times 1}$ where $\mathfrak{n}$ is any ideal as in Lemma \ref{lemma-reduction-n}.
\end{Proposition}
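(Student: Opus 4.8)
The plan is to reduce everything to the analytic properties of tame series already established, in particular Proposition \ref{leading}, combined with the standard dictionary between vector-valued modular forms for $\Gamma$ and scalar Drinfeld modular forms for congruence subgroups.

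First I would address the existence of $d$. Since $f \in W_w(\rho;\LL_\Sigma)$, by definition its values lie in $\LL_\Sigma^{N\times 1}$; applying Lemma \ref{precise-image} with $B = \LL_\Sigma$ (which satisfies the hypotheses at the beginning of \S \ref{structure-XBC}, with $\Lambda$ the divisibility poset on $k_L[\underline{t}_\Sigma]\setminus\{0\}$ and $B_\lambda = \widehat{\TT_\Sigma[\lambda^{-1}]}$), the function $f$ actually has values in some $\widehat{\TT_\Sigma[d_0^{-1}]}$. Enlarging $d_0$ if necessary so that it is divisible by a $d_1 \in \FF_q[\underline{t}_\Sigma]\setminus\{0\}$ for which $\rho(\Gamma) \subset \GL_N(\FF_q[\underline{t}_\Sigma][d_1^{-1}])$ — such a $d_1$ exists because $\rho$ is of the first kind, being built from finitely many basic representations $\rho_{\chi_i}$ each requiring only the inversion of the denominator of $\chi_i(\theta)$ — I obtain the required $d$, and $f \in W_w(\rho;\widehat{\TT_\Sigma[d^{-1}]})$.

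Next I would fix $\underline{\zeta} \in (\FF_q^{ac})^\Sigma \setminus \mathcal{V}_\Sigma(d)$ and let $\mathfrak{n}$ be an ideal as in Lemma \ref{lemma-reduction-n}, so that $\ev_{\underline{\zeta}}(\rho(\gamma)) = I_N$ for every $\gamma \in \Gamma(\mathfrak{n})$. Set $g := \ev_{\underline{\zeta}}(f) \in \operatorname{Hol}(\Omega \to \widehat{\TT_V[d(\underline{\zeta})^{-1}]}^{N\times 1})$. Applying $\ev_{\underline{\zeta}}$ to the functional equation (\ref{functional}) and using $\rho|_{\Gamma(\mathfrak{n})} \mapsto I_N$ gives $g(\gamma(z)) = J_\gamma(z)^w g(z)$ for all $\gamma \in \Gamma(\mathfrak{n})$, i.e. each entry of $g$ is modular-like of weight $w$ for the scalar trivial representation of $\Gamma(\mathfrak{n})$; thus $g$ is modular-like of weight $w$ for $\Gamma(\mathfrak{n})$ with $\rho = \boldsymbol{1}$. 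It then remains to transport the regularity (resp. weak regularity, resp. cuspidality) at every cusp. Here I would argue that for each $\delta \in \Gamma$, the slash $g^\delta = \ev_{\underline{\zeta}}(f^\delta)$ since $\ev_{\underline{\zeta}}$ is a continuous $\CC_\infty$-algebra map commuting with composition by homographies and with the scalar factor $J_\delta(z)^{-w}$ (the conjugating matrix $\rho(\delta)^{-1}$ being handled by noting $\ev_{\underline{\zeta}}$ is an algebra morphism on matrices). The hypothesis on $f$ — that $\|u(z)^H f^\delta(z)\|$ is bounded, or tends to $0$, or $\|f^\delta(z)\| \to 0$, as $|z|_\Im \to \infty$ — passes to $g^\delta$ because $\ev_{\underline{\zeta}}$ is norm-nonincreasing for the Gauss/spectral norms ($\|\ev_{\underline{\zeta}}(h)\| \le \|h\|$), so the same bounds hold for $g^\delta$ with the same $H$ (in the $M^!$ case) or the same decay (in the $M$ and $S$ cases). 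Together with the modularity for $\Gamma(\mathfrak{n})$ this is exactly the condition in Definition \ref{refined-modularity}, so $g \in W_w(\Gamma(\mathfrak{n}); \boldsymbol{1}; \CC_\infty)^{N\times 1}$.

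The main obstacle I anticipate is the bookkeeping in the last step: verifying cleanly that $\ev_{\underline{\zeta}}$ commutes with the Petersson slash for arbitrary $\delta \in \Gamma$ (not just that it preserves $\Gamma(\mathfrak{n})$-modularity) and that it does not degrade the asymptotic bounds at the cusps — one must check that $\ev_{\underline{\zeta}}$ is well-defined and contractive on $\widehat{\TT_\Sigma[d^{-1}]}$ relative to the relevant norms, and that $d(\underline{\zeta}) \ne 0$ guarantees $f^\delta$ still has values in a single $\widehat{\TT_\Sigma[\tilde d^{-1}]}$ after slashing (the matrix $\rho(\delta)^{-1}$ may introduce new denominators, but only finitely many, controlled by the fixed $d$ chosen divisible by $d_1$). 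Everything else is routine once the interplay between $\ev_{\underline{\zeta}}$, the norms, and the homographic action is pinned down.
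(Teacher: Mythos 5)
Your proposal is correct and follows essentially the same route as the paper: existence of $d$ via Lemma \ref{precise-image}, reduction of $\rho$ to the identity on $\Gamma(\mathfrak{n})$ via Lemma \ref{lemma-reduction-n}, and transport of the decay conditions at every cusp through the contractive, homography-compatible evaluation map $\ev_{\underline{\zeta}}$ applied to $f^\delta$ for all $\delta\in\Gamma$. The only cosmetic difference is that the paper phrases the regularity at the cusps in terms of expansions in $\CC_\infty((u(\frac{z}{\boldsymbol{n}})))$ rather than directly via the growth bounds, but the content is identical.
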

Hence, the evaluations of the $N$ entries of $f\in M_w(\rho;\LL_\Sigma)$ are scalar Drinfeld modular forms of weight $w$ for $\Gamma(\mathfrak{n})$.
\begin{proof}[Proof of Proposition \ref{evaluation-of-modular}]
By Lemma \ref{lemma-reduction-n}, for all $\gamma\in\Gamma(\mathfrak{n})$ and $z\in\Omega$, 
$\ev_{\underline{\zeta}}(f)(\gamma(z))=J_\gamma(z)^w\ev_{\underline{\zeta}}(f)(z)$ and also, it is easy to see that $\ev_{\underline{\zeta}}(f)$ has rigid analytic entries. It remains to show that the entries of $\operatorname{ev}_{\underline{\zeta}}(f)$ have the decay properties of Definition \ref{refined-modularity} which is guaranteed if we show regularity at all cusps of $G\backslash\Omega$. In more detail, 
if $f$ has image defined over $\widehat{\TT_\Sigma[\frac{1}{d}]}$,
we show that the map $\operatorname{ev}_{\underline{\zeta}}(\cdot)$ defines maps ($\CC_\infty$-linear maps)
\begin{eqnarray}
M_{w}^!(\rho;\widehat{\TT_\Sigma[d^{-1}]})&\rightarrow& M^!_{w}(\Gamma(\mathfrak{n});\boldsymbol{1};\CC_\infty)^{N\times 1},\label{propev1}\\
M_{w}(\rho;\widehat{\TT_\Sigma[d^{-1}]})&\rightarrow& M_{w}(\Gamma(\mathfrak{n});\boldsymbol{1};\CC_\infty)^{N\times 1},\label{propev2}\\
S_{w}(\rho;\widehat{\TT_\Sigma[d^{-1}]})&\rightarrow& S_{w}(\Gamma(\mathfrak{n});\boldsymbol{1};\CC_\infty)^{N\times 1}.\label{propev3}
\end{eqnarray}
First of all, a holomorphic function $f:\Omega\rightarrow\CC_\infty$ satisfying $f(\gamma(z))=J_\gamma(z)^wf(z)$ for all $\gamma\in\Gamma(\mathfrak{n})$ is a weak modular form of weight 
$w$ for $\Gamma(\mathfrak{n})$ if for all $\delta\in\Gamma$, the function
$f^\delta(z)$ can be expanded as a series of $\CC_\infty((u(\frac{z}{\boldsymbol{n}})))$ in the neighborhood of the cusp $\delta(\infty)$, where $\boldsymbol{n}$ is a generator of $\mathfrak{n}$. We deduce that 
$f^\delta(z)$ is a weak modular form of weight $w$ for the group $\delta^{-1}\Gamma(\mathfrak{n})\delta$. Note indeed that $u(\frac{z}{\boldsymbol{n}})$ is a uniformiser at
$\infty$ for the action of $\Gamma(\mathfrak{n})$ over $\Omega$ in virtue of the fact that the group $(\begin{smallmatrix}1 & \mathfrak{n} \\ 0 & 1\end{smallmatrix})$ is contained in $\delta^{-1}\Gamma(\mathfrak{n})\delta$
for all $\delta\in\Gamma$. 

Let $f$ be in $M^!_{w}(\rho;\TT_\Sigma[d^{-1}]^\wedge)$. Then, $\operatorname{ev}_{\underline{\zeta}}(f)$ has all the entries which are $\mathfrak{n}$-periodic and $\operatorname{ev}_{\underline{\zeta}}(f^\delta)$ is tempered for all $\delta\in\Gamma$. This implies that $\operatorname{ev}_{\underline{\zeta}}(f)\in M^!_{w}(\Gamma(\mathfrak{n});\boldsymbol{1};\CC_\infty)^{N\times 1}$ which proves (\ref{propev1}). Now assume that 
$f$ is, additionally, a modular form in $M_{w}(\rho;\TT_\Sigma[d^{-1}]^\wedge)$. Then, all the entries $b^\delta$ of $\operatorname{ev}_{\underline{\zeta}}(f^\delta)$ satisfy $b^\delta\in \CC_\infty[[u(\frac{z}{\boldsymbol{n}})]]$ for all $\delta\in\Gamma$, which yields (\ref{propev2}). Similarly, if $f$ is in $S_{w}(\rho;\TT_\Sigma[d^{-1}]^\wedge)$, we see that all the entries of $\operatorname{ev}_{\underline{\zeta}}(f)$ vanish at all the cusps of $X(\mathfrak{n})$
hence confirming (\ref{propev3}) and completing the proof of the Proposition.
\end{proof}

At this point, we would like to ask a question. The next definition prepares it.

\begin{Definition}
{\em Let $\mathfrak{n}$ be a non-zero ideal of $A$, let $g$ be a Drinfeld modular form of weight $w$ for $\Gamma(\mathfrak{n})$. We say that $g$ {\em lifts to a modular form for the full modular group} if there exist: (1) a representation of the first kind $\rho:\Gamma\rightarrow\GL_N(\FF_q(\underline{t}_\Sigma))$ and $\underline{\zeta}\in(\FF_q^{ac})^\Sigma$ such that the evaluations $\operatorname{ev}_{\zeta}(\rho(\gamma))$ are well defined for every $\gamma\in\Gamma$, and (2) an element $f={}^t(f_1,\ldots,f_N)\in M_w(\rho;\LL_\Sigma)$ such that $g=\operatorname{ev}_{\zeta}(f_i)$ for some $i\in\{1,\ldots,N\}$.}
\end{Definition}

\begin{Question} Compute the $\CC_\infty$-span in $M_{w}(\Gamma(\mathfrak{n});\boldsymbol{1};\CC_\infty)$ of the modular forms which lift to modular forms for the full modular group. For which $\mathfrak{n}$ do we obtain the whole space?
\end{Question}

\subsubsection{Proof of the Finiteness Theorem}\label{Proof-Finiteness-Theorem}

We first study the structure of the space $M_0(\rho;\LL_\Sigma)$.

\begin{Lemma}\label{weight-zero}
We have $M_0(\rho;\LL_\Sigma)\subset\LL_\Sigma^{N\times 1}$.
\end{Lemma}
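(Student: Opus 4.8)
\textbf{Proof plan for Lemma \ref{weight-zero}.}

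The statement says that a weight-zero modular form for a representation of the first kind, with values in $\LL_\Sigma^{N\times 1}$, is automatically constant (i.e.\ lies in the subspace $\LL_\Sigma^{N\times 1}$ of constant functions inside $\operatorname{Hol}_{\KK_\Sigma}(\Omega\to\LL_\Sigma^{N\times 1})$). The plan is to combine the reduction-to-roots-of-unity machinery of \S\ref{evaluation-banach} with the classical fact that a scalar Drinfeld modular form of weight $0$ for a congruence subgroup of $\Gamma$ is constant, and then to pull this pointwise constancy back over the Tate algebra using Lemma \ref{locally-constant-implies-constant}.

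First I would take $f\in M_0(\rho;\LL_\Sigma)$ and, using Lemma \ref{precise-image} applied to $B=\LL_\Sigma$ (which satisfies the running hypotheses of \S\ref{structure-XBC}), find $d\in\FF_q[\underline{t}_\Sigma]\setminus\{0\}$ with $f\in M_0(\rho;\widehat{\TT_\Sigma[d^{-1}]})$ and with the image of $\rho$ contained in $\GL_N(\FF_q[\underline{t}_\Sigma][d^{-1}])$. Then, for each $\underline\zeta\in(\FF_q^{ac})^\Sigma\setminus\mathcal{V}_\Sigma(d)$, Proposition \ref{evaluation-of-modular} (in the $W_w=M_w$ incarnation, with $w=0$) gives $\operatorname{ev}_{\underline\zeta}(f)\in M_0(\Gamma(\mathfrak{n});\boldsymbol{1};\CC_\infty)^{N\times 1}$ for a suitable nonzero ideal $\mathfrak{n}$ depending on $\underline\zeta$ as in Lemma \ref{lemma-reduction-n}. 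Each entry of $\operatorname{ev}_{\underline\zeta}(f)$ is thus a classical scalar Drinfeld modular form of weight $0$ for $\Gamma(\mathfrak{n})$; such a form is a holomorphic function on the compact rigid analytic curve $X_{\Gamma(\mathfrak{n})}$, hence constant (a weight-zero modular form is a global section of the structure sheaf of a connected proper curve). Therefore $\operatorname{ev}_{\underline\zeta}(f)\in\CC_\infty^{N\times 1}$ is constant for every admissible $\underline\zeta$.

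Finally I would invoke Lemma \ref{locally-constant-implies-constant} with $X=\Omega$ (more precisely, applied on an admissible affinoid cover of $\Omega$, or directly to each affinoid subdomain, since constancy is local and $\Omega$ is connected), $U=\Sigma$, $V=\emptyset$: since $f\in\operatorname{Hol}(\Omega\to\widehat{\TT_\Sigma[d^{-1}]})^{N\times 1}$ has the property that $\operatorname{ev}_{\underline\zeta}(f)$ is constant for all $\underline\zeta$ in the Zariski-dense set $(\FF_q^{ac})^\Sigma\setminus\mathcal{V}_\Sigma(d)$, we conclude that $f$ itself is constant, i.e.\ $f\in\widehat{\TT_\Sigma[d^{-1}]}^{N\times 1}\cap\LL_\Sigma^{N\times 1}=\LL_\Sigma^{N\times 1}$.

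I do not expect a serious obstacle here; the only point requiring a little care is the application of Lemma \ref{locally-constant-implies-constant} to the non-affinoid space $\Omega$, which is handled by working affinoid-locally and using connectedness of $\Omega$, exactly as in the proof of Proposition \ref{evaluation-of-modular}. A secondary point to state cleanly is that ``weight-zero scalar Drinfeld modular form for $\Gamma(\mathfrak{n})$ is constant'' — this is classical (holomorphy at all cusps plus $\Gamma(\mathfrak n)$-invariance forces a bounded holomorphic, hence constant, function on the connected proper curve $X_{\Gamma(\mathfrak n)}$), and can alternatively be derived from part (c) of Proposition \ref{propositionperiodic} together with the analogue of Liouville's theorem, Proposition \ref{entire}.
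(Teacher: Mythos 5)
Your proposal is correct and follows essentially the same route as the paper: reduce via Lemma \ref{precise-image} to coefficients in $\widehat{\TT_\Sigma[d^{-1}]}$, evaluate at roots of unity using Proposition \ref{evaluation-of-modular} to land in weight-zero scalar forms for $\Gamma(\mathfrak{n})$ (which are constant), and conclude by Lemma \ref{locally-constant-implies-constant} with $X=\Omega$. The extra care you take with the non-affinoid nature of $\Omega$ and with justifying constancy of weight-zero scalar forms is sound but not needed beyond what the paper already records.
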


\begin{proof}
Let $f$ be an element of $M_0(\rho;\LL_\Sigma)$. By Lemma \ref{precise-image} there exists $d\in\FF_q[\underline{t}_\Sigma]\setminus\{0\}$ such that the image of $f$ is defined over $\widehat{\TT_\Sigma[\frac{1}{d}]}$. By Proposition \ref{evaluation-of-modular}, for all $\underline{\zeta}\in(\FF_q^{ac})^\Sigma\setminus\mathcal{V}_\Sigma(d)$ there exists a non-zero ideal $\mathfrak{n}$ of $A$ such that $\operatorname{ev}_{\underline{\zeta}}(f)\in M_0(\Gamma(\mathfrak{n});\boldsymbol{1};\CC_\infty)^{N\times 1}$. A scalar Drinfeld modular form of weight zero is constant. Hence, for all $\underline{\zeta}$ as above, $\operatorname{ev}_{\underline{\zeta}}(f)\in\CC_\infty^{N\times 1}$. Therefore, $f$ is a constant map by Lemma \ref{locally-constant-implies-constant} with $X=\Omega$.\end{proof}

We recall from \S \ref{carlitzexttate} the $\FF_q(\underline{t}_\Sigma)$-linear automorphisms $\tau:\KK_\Sigma\rightarrow\KK_\Sigma$, $\tau:\LL_\Sigma\rightarrow\LL_\Sigma$. Since the image of a representation of the first kind $\rho$ lies in $\FF_q(\underline{t}_\Sigma)^{N\times N}$ for some $N$, we have injective
$\FF_q(\underline{t}_\Sigma)$-linear maps
$$W_w(\rho;\KK_\Sigma)\xrightarrow{\tau} W_{qw}(\rho;\KK_\Sigma),\quad W_w(\rho;\LL_\Sigma)\xrightarrow{\tau} W_{qw}(\rho;\LL_\Sigma),$$ where $W_w=M^!_w,M_w,S_w$. With this, we can prove the next corollary to Lemma \ref{weight-zero}.

\begin{Corollary}\label{corollary-negative}
If $w<0$, $M_w(\rho;\LL_\Sigma)=\{0\}$.
\end{Corollary}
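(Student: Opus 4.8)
\textbf{Proof proposal for Corollary \ref{corollary-negative}.}

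The plan is to run a descent on the weight using the $\tau$-operator, reducing everything to the already-established triviality of weight-zero modular forms (Lemma \ref{weight-zero}) together with the fact that scalar Drinfeld modular forms of negative weight for a congruence subgroup vanish. First I would fix $w<0$ and take $f\in M_w(\rho;\LL_\Sigma)$; the goal is to show $f=0$. The key remark is that the $\FF_q(\underline{t}_\Sigma)$-linear map $\tau$ sends $M_w(\rho;\LL_\Sigma)$ injectively into $M_{qw}(\rho;\LL_\Sigma)$ (the image of $\rho$ lies in $\FF_q(\underline{t}_\Sigma)^{N\times N}$, so the functional equation \eqref{functional} is preserved under applying $\tau$ coefficientwise, and $\tau$ does not harm boundedness at the cusp since $\|\tau(g)\|=\|g\|^q$). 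Applying $\tau$ repeatedly, $\tau^k(f)\in M_{q^kw}(\rho;\LL_\Sigma)$ for all $k\ge 0$, and since $w<0$ the weights $q^kw$ march off to $-\infty$; injectivity of each $\tau$ means it suffices to prove $M_{w'}(\rho;\LL_\Sigma)=\{0\}$ for some single $w'<0$, or more efficiently to directly handle all negative weights by the evaluation argument below. In fact the cleanest route is not to iterate $\tau$ at all but simply to mimic the proof of Lemma \ref{weight-zero}.

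So the main step I would carry out is the following. Given $f\in M_w(\rho;\LL_\Sigma)$ with $w<0$, by Lemma \ref{precise-image} there is $d\in\FF_q[\underline{t}_\Sigma]\setminus\{0\}$ with $f\in M_w(\rho;\widehat{\TT_\Sigma[d^{-1}]})$. By Proposition \ref{evaluation-of-modular} (applied with $W_w=M_w$, i.e. \eqref{propev2}), for every $\underline\zeta\in(\FF_q^{ac})^\Sigma\setminus\mathcal{V}_\Sigma(d)$ there is a non-zero ideal $\mathfrak{n}$ of $A$ (furnished by Lemma \ref{lemma-reduction-n}) such that $\ev_{\underline\zeta}(f)\in M_w(\Gamma(\mathfrak{n});\boldsymbol{1};\CC_\infty)^{N\times 1}$; that is, every entry of $\ev_{\underline\zeta}(f)$ is a classical scalar Drinfeld modular form of negative weight $w$ for the congruence subgroup $\Gamma(\mathfrak{n})$. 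But a classical Drinfeld modular form of negative weight for a congruence subgroup of $\Gamma$ is identically zero — this is the standard fact invoked right before the statement of Theorem \ref{theorem-structure-tate-algebras} (it follows, for instance, from the compactness of $X_{\Gamma(\mathfrak{n})}$: a negative-weight form would be a global section of a line bundle of negative degree on a projective curve, hence zero; alternatively one argues via the product formula for zeros and poles on $X_{\Gamma(\mathfrak{n})}$). Therefore $\ev_{\underline\zeta}(f)=0$ for all $\underline\zeta$ in the Zariski-dense set $(\FF_q^{ac})^\Sigma\setminus\mathcal{V}_\Sigma(d)$.

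It then remains to deduce $f=0$ from the vanishing of all these evaluations. I would apply Lemma \ref{locally-constant-implies-constant} with $X=\Omega$: writing $f=\sum_{i\in I}f_i a_i$ in an orthonormal basis of $\mathcal{O}_\Omega$-sections over an affinoid subdomain (the target being $\widehat{\TT_\Sigma[d^{-1}]}$), the condition that $\ev_{\underline\zeta}(f)$ is the zero (in particular constant, in fact vanishing) function for all such $\underline\zeta$ forces each coefficient $f_i$ to vanish on the dense set, hence $f_i=0$; one runs this on an admissible affinoid covering of $\Omega$ to conclude $f\equiv 0$. (Strictly, Lemma \ref{locally-constant-implies-constant} yields that $f$ is constant; combined with the fact that the constant value is $\ev_{\underline\zeta}(f)=0$ for generic $\underline\zeta$, which determines the constant, we get $f=0$ — or one reruns the basis argument directly with ``constant'' replaced by ``zero'', which is immediate.)

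I do not anticipate a serious obstacle here; the corollary is a short consequence of machinery already in place. The only point requiring a little care is making sure one is allowed to invoke the vanishing of negative-weight \emph{scalar} forms for $\Gamma(\mathfrak{n})$ — this is classical and the paper flags it explicitly — and that the evaluation lands in the \emph{modular} (not merely weakly modular) space, which is exactly content \eqref{propev2} of Proposition \ref{evaluation-of-modular}, so no extra work is needed. Alternatively, and even more cheaply, one can reduce to Lemma \ref{weight-zero}: iterating $\tau$ never lowers us to weight $0$ exactly when $w<0$, so this shortcut does not literally apply, which is why the evaluation argument above is the right one to spell out.
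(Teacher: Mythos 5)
Your proof is correct, but it is not the route the paper takes. The paper's proof never evaluates at roots of unity for negative weight: instead it picks $k,\alpha,\beta\in\NN$ with $\beta>0$ and $q^kw+\alpha(q-1)+\beta(q+1)=0$ (a small congruence computation, treated separately for $p$ odd and $p=2$), forms $\widetilde{f}=g^\alpha h^\beta\tau^k(f)\in S_0(\rho\det^{-\beta};\LL_\Sigma)$, applies Lemma \ref{weight-zero} to conclude that this weight-zero form is a constant, observes that a constant cusp form is zero, and then recovers $f=0$ from the injectivity of $\tau$ and the non-vanishing of $h$ on $\Omega$. The point of that detour is that it only needs the weight-\emph{zero} case of the classical theory (scalar weight-zero forms are constants), which is already packaged in Lemma \ref{weight-zero}; the multiplication by $g^\alpha h^\beta$ is precisely the device that lets $\tau^k$ land exactly on weight $0$, which, as you note, $\tau$ alone cannot do. Your argument instead imports the vanishing of negative-weight \emph{scalar} Drinfeld modular forms for $\Gamma(\mathfrak{n})$ as a black box. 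That fact is true and standard (e.g.\ $f^{q^2-1}\Delta^{-w}$ is a weight-zero form vanishing at the cusps, hence zero), and the paper alludes to it in the discussion preceding the Finiteness Theorem, but it is not proved or precisely cited there; your proof is shorter and more uniform at the cost of this extra classical input, while the paper's is self-contained modulo Lemma \ref{weight-zero}. Your use of Proposition \ref{evaluation-of-modular} followed by Lemma \ref{locally-constant-implies-constant} to pass from generic vanishing of evaluations to $f\equiv 0$ is exactly the pattern the paper itself uses in the proof of Theorem \ref{rankoneforweighone}, so that part is unobjectionable.
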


\begin{proof}
Let $f$ be an element of $M_w(\rho;\LL_\Sigma)$ with negative $w$. For all $k,\alpha,\beta\in\NN$
with $\beta>0$, 
$\widetilde{f}:=g^\alpha h^\beta\tau^k(f)\in S_{q^kw+\alpha(q-1)+\beta(q+1)}(\rho\det^{-\beta};\LL_\Sigma)$, where $g$ is the normalised Eisenstein series in $M_{q-1}(\boldsymbol{1};\CC_\infty)$ and $h$ is $-1$ times
the normalised generator of $S_{q+1}(\det^{-1};\CC_\infty)$ (we are adopting Gekeler's notations in \cite{GEK}, see also \S \ref{Gekeler-Poincare}).
We show that there exist $k,\alpha,\beta$ with $\beta>0$ such that \begin{equation}\label{kalphabeta}q^kw+\alpha(q-1)+\beta(q+1)=0.\end{equation} This is very easy but we give all the details. To find such $k,\alpha,\beta$, we first observe that we need $q^kw+\alpha(q-1)+\beta(q+1)\equiv0\pmod{q-1}$, and this is guaranteed by $w\equiv-2\beta\pmod{q-1}$. We must have:
\begin{eqnarray*}
\alpha&=&\frac{1}{q-1}(-wq^k-\beta(q+1))\\
&=&\frac{1}{q-1}(-wq^k-2\beta)+\beta.
\end{eqnarray*}
Assume first that $p\neq2$. Then, there exists $\beta\in\{1,\ldots,q-1\}$ such that $w\equiv-2\beta\pmod{q-1}$. We can choose $k$ large enough so that $-wq^k-2\beta$, divisible by $q-1$, is $\geq 0$. Therefore we can choose $\alpha\in\NN$ such that, with such $\beta$ and $k$, (\ref{kalphabeta}) holds.

If $p=2$ we can set $\beta=1$ and $k$ such that $\alpha=-2^kw-3\geq 0$. Since $\beta>0$ we see that $\widetilde{f}$ is a cusp form and Lemma \ref{weight-zero} now implies that $\widetilde{f}=0$; hence $f=0$ because $\tau$ is injective.
\end{proof}

\begin{proof}[Proof of Theorem \ref{theorem-structure-tate-algebras}]
The result is already proved in Lemma \ref{weight-zero} and Corollary \ref{corollary-negative} if $w\leq 0$. Now assume that $w>0$ and let $f$ be in $M_w(\rho;\LL_\Sigma)$. Again, we can suppose that 
$f\in M_w(\rho;\widehat{\TT_\Sigma[\frac{1}{d}]})$ for some $d\in\FF_q[\underline{t}_\Sigma]\setminus\{0\}$. 

We have that $f\in\mathfrak{O}_{\Sigma}^{N\times 1}$ by Theorem \ref{theorem-u-expansions}. In fact, the proof of Proposition \ref{quasiperiodictempered} allows to show that, more precisely, $f\in\mathfrak{O}_{\TT_\Sigma[\frac{1}{d}]^\wedge}^{N\times 1}$. Since $f$ is a regular $\rho$-quasi-periodic function (Definition \ref{quasipermatrix}), viewing the 
proof of Proposition \ref{quasiperiodictempered}, we obtain that 
$f=\Phi_\rho g$, where $\Phi_\rho$ has been defined in \S \ref{here-defi-phi-rho} and studied in Proposition \ref{propgeneralitiestame}, and where $g$ is in 
$\TT_\Sigma[\frac{1}{d}]^\wedge[[u]]^{N\times 1}$. We recall that from Corollary \ref{previouscorollary} that $\det(\Phi_\rho)=1$ and $\Phi_\rho,\Phi_\rho^{-1}\in(\Tamecirc{\mathfrak{E}})^{N\times N}$. We now study the association $f\mapsto g$ so that we write $g_f$ to stress the dependence of $g$ on $f$.

Let $\nu$ be in $\FF_q^\times$. We have 
$$\rho(\begin{smallmatrix}\nu & 0 \\ 0 & 1\end{smallmatrix})\Phi_\rho(z)\rho(\begin{smallmatrix}\nu & 0 \\ 0 & 1\end{smallmatrix})^{-1}g_f(\nu z)=f(\nu z)=\nu^w\rho(\begin{smallmatrix}\nu & 0 \\ 0 & 1\end{smallmatrix})f(z)=\nu^w\rho(\begin{smallmatrix}\nu & 0 \\ 0 & 1\end{smallmatrix})\Phi_\rho(z)g_f(z),\quad \forall z\in\Omega.$$
Since $\rho$ is of the first kind, $\rho(\begin{smallmatrix}\nu & 0 \\ 0 & 1\end{smallmatrix})$ is diagonal and we can write:
$$\rho(\begin{smallmatrix}\nu & 0 \\ 0 & 1\end{smallmatrix})=\begin{pmatrix}\nu^{-n_1} & & \\
& \ddots & \\ & & \nu^{-n_N}\end{pmatrix},\quad n_i\in\ZZ/(q-1)\ZZ,\quad \nu\in\FF_q^\times.$$
Writing additionally $g_f={}^t(g_1,\ldots,g_N)$, we deduce that $$g_i(\nu z)=\nu^{w-n_i}g_i(z)$$ for all $i=1,\ldots,N$, so that
$g_i\in u^{m_i}\TT_\Sigma[\frac{1}{d}]^\wedge[[u^{q-1}]]$ where $m_i$ is the unique representative of $n_i-w$ modulo $q-1$ in $\{0,\ldots,q-2\}$. This implies that the subspace $\mathcal{W}_w$ of 
$M_w(\rho;\LL_\Sigma)$ spanned by the forms $f$ with $g_f$ having entries of $v$-valuation 
in the set $\{0,1\}$ has dimension not exceeding $N$ if $q>2$ and $2N$ if $q=2$. On the other hand, if $f\in M_w(\rho;\LL_\Sigma)$ is such that $g_f$ is not in $\mathcal{W}_w$, that is, the $v$-valuations of its entries are $\geq 2$, then, by the fact that $\Phi_\rho^{-1}\in(\Tamecirc{\mathfrak{E}})^{N\times N}$, we deduce that
the $v$-valuations of the entries of $f$ are all $\geq 1$ and therefore $f\in hM_{w-(q+1)}(\rho\det;\LL_\Sigma)$ (where we recall that $h$ is the generator of $S_{q+1}(\det^{-1};\CC_\infty)$ normalised by 
the coefficient of $u$ in its $u$-expansion, which is set to $-1$).
We have proved that $$M_w(\rho;\LL_\Sigma)=hM_{w-(q+1)}(\rho\det;\LL_\Sigma)\oplus \mathcal{W}_w.$$ This implies
$$\dim_{\LL_\Sigma}\Big(M_w(\rho;\LL_\Sigma)\Big)\leq \dim_{\LL_\Sigma}\Big(M_{w-(q+1)}(\rho\det;\LL_\Sigma)\Big)+\left\{\begin{matrix}N & \text{ if }q>2\\ 
2N & \text{ if }q=2\end{matrix}\right.$$
The result follows by induction over $w$.
\end{proof}

\subsubsection{Modular forms of weight one}\label{weight-one-first-part}

We keep working with a representation of the first kind $\rho:\Gamma\rightarrow\GL_N(\FF_q(\underline{t}_\Sigma))$ and we set, with $L$ a field extension of $\FF_q(\underline{t}_\Sigma)$, $$H(\rho;L)=\{l\in L^{N\times 1}:\rho(T_a)l=l\text{ for all }a\in A\}.$$
This is equal to the $L$-vector space generated by the simultaneous eigenvectors of $\rho(T_a)$ in $\FF_q(\underline{t}_\Sigma)$,
with $a\in A$. Note indeed that for all $a\in A$, $T_a^p=I_2$ so that $1$ is the unique eigenvalue of 
$\rho(T_a)$ for all $a$. We denote by $\delta_\rho$ the dimension of $H(\rho;L)$ (independent on $L$).

Let us consider $f\in M_w(\rho;L)$ where $L=\KK_\Sigma$ or $L=\LL_\Sigma$. By Theorem \ref{theorem-u-expansions} we can identify $f={}^t(f_1,\ldots,f_N)$ with an element of $\mathfrak{O}_\Sigma^{N\times1}$. We denote by $\overline{f}_i$ the image of $f_i$ modulo $\mathfrak{M}_\Sigma$ for all $i$. This is an element of $\KK_\Sigma$ and we set $\overline{f}={}^t(\overline{f}_1,\ldots,\overline{f}_N)\in \KK_\Sigma^{N\times1}$. We easily see, by taking the limit for $z\in\Omega$, $|z|=|z|_\Im\rightarrow\infty$ that in fact, $\overline{f}\in L^{N\times1}$. Note that for every
$a\in A$, $f|_{w,\rho}T_a=\rho(T_{-a})f(z+a)$ equally belongs to $\mathfrak{O}_\Sigma^{N\times1}$
(by Lemma \ref{aendomorphisms}). Therefore $\overline{f}\in H(\rho;L)$. 
This means that 
$$M_w(\rho;L)=S_w(\rho;L)\oplus \mathcal{W}_w,$$
where the map $f\mapsto \overline{f}$ induces an embedding $\mathcal{W}_w\rightarrow H(\rho;L)$
so that $\delta_\rho$ is an upper bound for the dimension of $\mathcal{W}_w$.
We can now prove the following result which justifies part (2) of Theorem B in the introduction:
\begin{Theorem}\label{rankoneforweighone} We have $S_1(\rho;\LL_\Sigma)=\{0\}$ and the inequality
$\dim_{\LL_\Sigma}(M_1(\rho;\LL_\Sigma))\leq \delta_\rho$.\end{Theorem}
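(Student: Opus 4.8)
The plan is to prove the two assertions in the order stated, deducing the dimension bound from the vanishing $S_1(\rho;\LL_\Sigma)=\{0\}$ together with the splitting $M_w(\rho;L)=S_w(\rho;L)\oplus\mathcal{W}_w$ established just before the statement, where $\mathcal{W}_w$ embeds into $H(\rho;L)$ via $f\mapsto\overline f$. Granting the vanishing of $S_1$, one immediately gets $M_1(\rho;\LL_\Sigma)\cong\mathcal{W}_1\hookrightarrow H(\rho;\LL_\Sigma)$, hence $\dim_{\LL_\Sigma}M_1(\rho;\LL_\Sigma)\le\delta_\rho$. So the entire content is the claim $S_1(\rho;\LL_\Sigma)=\{0\}$.

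For the vanishing of $S_1$ I would argue by reduction to the scalar case via the evaluation maps of \S\ref{evaluation-banach}. Let $f\in S_1(\rho;\LL_\Sigma)$. By Lemma \ref{precise-image} there is $d\in\FF_q[\underline t_\Sigma]\setminus\{0\}$ with $f\in S_1(\rho;\widehat{\TT_\Sigma[d^{-1}]})$, and by Proposition \ref{evaluation-of-modular} (the map (\ref{propev3})), for every $\underline\zeta\in(\FF_q^{ac})^\Sigma\setminus\mathcal V_\Sigma(d)$ we obtain $\ev_{\underline\zeta}(f)\in S_1(\Gamma(\mathfrak n);\boldsymbol 1;\CC_\infty)^{N\times 1}$ for a suitable non-zero ideal $\mathfrak n$ depending on $\underline\zeta$ (Lemma \ref{lemma-reduction-n}). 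Now invoke the classical fact that scalar Drinfeld cusp forms of weight one for any congruence subgroup $\Gamma(\mathfrak n)$ of $\Gamma$ vanish: this is the weight-one analogue of the vanishing of negative-weight forms used in Corollary \ref{corollary-negative}, and is exactly the input the paper signals (Gekeler's remark, cf.\ Cornelissen \cite{COR}). Hence every entry of $\ev_{\underline\zeta}(f)$ is zero, for all $\underline\zeta$ in the Zariski-dense set $(\FF_q^{ac})^\Sigma\setminus\mathcal V_\Sigma(d)$. Applying Lemma \ref{locally-constant-implies-constant} with $X=\Omega$ (a constant function all of whose evaluations vanish is zero) yields $f=0$, proving $S_1(\rho;\LL_\Sigma)=\{0\}$.

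I should double-check one subtlety in the reduction: the congruence-subgroup cusp forms produced are a priori vector-valued only in the trivial sense that each of the $N$ coordinates is separately a scalar cusp form of weight one for $\Gamma(\mathfrak n)$, so it suffices to know the scalar statement $S_1(\Gamma(\mathfrak n);\CC_\infty)=0$; no new vector-valued input is needed. One also needs the regularity at \emph{all} cusps, which is precisely what (\ref{propev3}) guarantees (the argument in the proof of Proposition \ref{evaluation-of-modular} handles the slash operators $f^\delta$ at every cusp). The only genuinely external ingredient, and the main obstacle in the sense of not being internal to this excerpt, is the classical vanishing $S_1(\Gamma(\mathfrak n);\CC_\infty)=0$ for Drinfeld cusp forms of weight one; this rests on the structure of weight-one forms for congruence subgroups of $\Gamma$ and is the crucial earlier remark of Gekeler alluded to in the introduction. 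Everything else — the evaluation machinery, Zariski density, and the constancy criterion — is already available in \S\ref{evaluation-banach} and \S\ref{Rings-fields-modules}.

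Finally, for completeness I would record that the bound $\dim_{\LL_\Sigma}M_1(\rho;\LL_\Sigma)\le\delta_\rho$ matches part (2) of Theorem B: $\delta_\rho=\dim_{\LL_\Sigma}H(\rho;\LL_\Sigma)$ is exactly the dimension of the space of common eigenvectors of all $\rho(\gamma)$ with $\gamma$ in the Borel subgroup, since such $\gamma$ are products of an $T_a$, a diagonal matrix, and the identity, and the diagonal entries act by roots of unity which, combined with the type-$0$ normalisation after reduction mod $\mathfrak M_\Sigma$, impose no further constraint beyond $\rho(T_a)l=l$. This identification is routine and I would relegate it to a remark rather than belabour it in the proof.
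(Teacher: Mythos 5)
Your proposal is correct and follows essentially the same route as the paper: the dimension bound is reduced to the vanishing of $S_1(\rho;\LL_\Sigma)$ via the splitting $M_w=S_w\oplus\mathcal{W}_w$ with $\mathcal{W}_w\hookrightarrow H(\rho;L)$, and the vanishing is obtained by evaluating at roots of unity, invoking $S_1(\Gamma(\mathfrak n);\CC_\infty)=0$ (Gekeler, via Cornelissen), and concluding with Lemma \ref{locally-constant-implies-constant}. The subtleties you flag (regularity at all cusps via (\ref{propev3}), and the fact that only the scalar weight-one vanishing is needed) are exactly the points the paper's proof relies on.
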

\begin{proof} It suffices to show that $S_1(\rho;\LL_\Sigma)=\{0\}$. 
Let $f$ be a cusp form of $S_1(\rho;\LL_\Sigma)$. In the settings of Proposition \ref{evaluation-of-modular}, for $\underline{\zeta}\in(\FF_q^{ac})^{\Sigma}\setminus\mathcal{V}_\Sigma(d)$ we get (after this proposition) that the evaluation $\operatorname{ev}_{\underline{\zeta}}(f)$ is well defined and its entries are cusp forms of $S_{1}(\Gamma(\mathfrak{n}))$. The latter space is zero as it was first noticed by Gekeler (see Cornelissen, in \cite[Theorem (1.10)]{COR}). Hence, for all $\underline{\zeta}$ as above, $\operatorname{ev}_{\underline{\zeta}}(f)=0$. By Lemma \ref{locally-constant-implies-constant}, $f$ vanishes identically.
\end{proof}
A more precise result in a particular case is Theorem \ref{coro-pel-per}.

\subsection{Poincar\'e series}\label{poincareseries} Here we construct explicit examples of modular forms in our generalized setting. We are mainly concerned with a class of matrix-valued Poincar\'e series. 

We consider a representation of the first kind $$\Gamma\xrightarrow{\rho}\operatorname{GL}_N(\FF_q(\underline{t}_\Sigma)),$$ of degree $l$.
Let $w$ be an integer and, with $L=\LL_\Sigma$ or $L=\KK_\Sigma$, let $G:\Omega\rightarrow L_\Sigma^{N\times N}$ be a tempered matrix $\rho$-quasi-periodic matrix function of type $m$, following Definition \ref{quasipermatrix}. We shall keep these settings all along \S \ref{poincareseries}.
We set, for $\gamma\in\Gamma$ and $z\in\Omega$:
$$S_\gamma(w,m;G)(z)=\det(\gamma)^mJ_\gamma(z)^{-w}\rho(\gamma)^{-1}G(\gamma(z))\rho(\begin{smallmatrix}\det(\gamma) & 0 \\ 0 & 1\end{smallmatrix}).$$
\begin{Lemma}\label{propertiesofSgamma}
Let $\gamma,\gamma'\in\Gamma$ be in the same left coset modulo $H:=\{(\begin{smallmatrix} * & * \\ 0 & 1\end{smallmatrix})\}\subset\Gamma$. Then we have the equality $S_\gamma(w,m;G)(z)=S_{\gamma'}(w,m;G)(z)$.
Moreover, for all $\delta\in\Gamma$,
$$S_\gamma(w,m;G)(\delta(z))=\det(\delta)^{-m}J_\delta(z)^{w}\rho(\delta)S_{\gamma\delta}(w,m;G)(z)
\rho(\begin{smallmatrix}\det(\delta)^{-1} & 0 \\ 0 & 1\end{smallmatrix}).$$
\end{Lemma}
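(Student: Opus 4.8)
Here the plan is to derive both identities by bare‑hands bookkeeping from three ingredients: the cocycle relation $J_{\gamma\delta}(z)=J_\gamma(\delta(z))J_\delta(z)$ for the factor of automorphy, the multiplicativity of $\det$ and of the homomorphism $\rho$, and the two defining properties of a $\rho$-quasi-periodic matrix function of type $m$ from Definition \ref{quasipermatrix}, namely $G(z+a)=\rho(T_a)G(z)$ for $a\in A$ and $G(\nu z)=\nu^{-m}\rho(\begin{smallmatrix}\nu & 0\\ 0 & 1\end{smallmatrix})G(z)\rho(\begin{smallmatrix}\nu & 0\\ 0 & 1\end{smallmatrix})^{-1}$ for $\nu\in\FF_q^\times$.

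For the first assertion one writes $\gamma'=h\gamma$ with $h\in H$ (this is what it means for $\gamma,\gamma'$ to lie in the same coset $H\gamma$). Since every $h\in H$ factors as $h=T_b\,(\begin{smallmatrix}\nu & 0\\ 0 & 1\end{smallmatrix})$ with $b\in A$ and $\nu\in\FF_q^\times$, it suffices to treat the two cases $h=T_b$ and $h=(\begin{smallmatrix}\nu & 0\\ 0 & 1\end{smallmatrix})$ and then compose. For $h=T_b$ one uses $J_{T_b}\equiv 1$, $\det T_b=1$, $(T_b\gamma)(z)=\gamma(z)+b$ and $\rho(T_b)^{-1}=\rho(T_{-b})$, so that the only new factor is $\rho(T_{-b})G(\gamma(z)+b)=\rho(T_{-b})\rho(T_b)G(\gamma(z))=G(\gamma(z))$, giving $S_{T_b\gamma}=S_\gamma$. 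For $h=(\begin{smallmatrix}\nu & 0\\ 0 & 1\end{smallmatrix})$ one uses $J_h\equiv 1$, $\det h=\nu$, $(h\gamma)(z)=\nu\,\gamma(z)$, and the type-$m$ relation $G(\nu\,\gamma(z))=\nu^{-m}\rho(h)G(\gamma(z))\rho(h)^{-1}$; then the $\nu^m$ coming from $\det(h\gamma)^m=\nu^m\det(\gamma)^m$ cancels the $\nu^{-m}$, the $\rho(h)$ cancels the $\rho(h)^{-1}$ coming from $\rho(h\gamma)^{-1}=\rho(\gamma)^{-1}\rho(h)^{-1}$, and the surviving $\rho(h)^{-1}$ to the right of $G$ combines with the explicit factor $\rho(\begin{smallmatrix}\det(h\gamma) & 0\\ 0 & 1\end{smallmatrix})$ to give $\rho(\begin{smallmatrix}\det\gamma & 0\\ 0 & 1\end{smallmatrix})$, so $S_{h\gamma}=S_\gamma$. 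Composing the two cases gives $S_{h\gamma}(w,m;G)=S_\gamma(w,m;G)$ for all $h\in H$.

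For the second assertion I would substitute $z\mapsto\delta(z)$ in the definition of $S_\gamma(w,m;G)$ and, on the right-hand side, substitute the definition of $S_{\gamma\delta}(w,m;G)(z)$, and then match the five factors one at a time: $\det(\gamma)^m=\det(\delta)^{-m}\det(\gamma\delta)^m$; $J_\gamma(\delta(z))^{-w}=J_\delta(z)^{w}J_{\gamma\delta}(z)^{-w}$ by the cocycle relation; $\rho(\gamma)^{-1}=\rho(\delta)\rho(\gamma\delta)^{-1}$; the matrix argument $G((\gamma\delta)(z))=G(\gamma(\delta(z)))$ is literally the same on both sides; and $\rho(\begin{smallmatrix}\det\gamma & 0\\ 0 & 1\end{smallmatrix})=\rho(\begin{smallmatrix}\det(\gamma\delta) & 0\\ 0 & 1\end{smallmatrix})\rho(\begin{smallmatrix}\det(\delta)^{-1} & 0\\ 0 & 1\end{smallmatrix})$. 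Reassembling these identities yields the stated equality.

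I expect no genuine obstacle here beyond organizing the bookkeeping; the one point that requires care is keeping the non-commuting factors $\rho(\begin{smallmatrix}* & 0\\ 0 & 1\end{smallmatrix})$ on the correct side (they always sit to the right of $G$, acting "by conjugation from the right"), so that in the torus case of the first assertion the leftover $\rho(\begin{smallmatrix}\nu & 0\\ 0 & 1\end{smallmatrix})^{-1}$ really does merge with the explicit $\rho(\begin{smallmatrix}\det(h\gamma) & 0\\ 0 & 1\end{smallmatrix})$ and the coset representative $h$ drops out cleanly.
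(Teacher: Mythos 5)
Your proof is correct and follows essentially the same route as the paper's: the same factorization of $H$ into translations $T_b$ and torus elements $(\begin{smallmatrix}\nu & 0\\ 0 & 1\end{smallmatrix})$, the same use of quasi-periodicity and the type-$m$ relation for the first assertion, and the same factor-by-factor matching via the cocycle relation and multiplicativity of $\rho$ and $\det$ for the second. Nothing to add.
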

\begin{proof} We simplify the notation: $S_\gamma(w,m;G)(z)=S_\gamma(z)$.
We prove the first property. Since $H$ is the semidirect product of $A$ by $\FF_q^\times$,
it suffices to show that: (1) for all $a\in A$, $S_{T_a\gamma}(z)=S_\gamma(z)$ and (2)
for all $\nu\in\FF_q^\times$, $S_{\delta\gamma}(z)=S_\gamma(z)$ if $\delta=(\begin{smallmatrix}\nu & 0 \\ 0 & 1\end{smallmatrix})$.
For (1), we observe, by the properties of $G$, that
\begin{eqnarray*}
S_{T_a\gamma}(z)&=&\det(T_a\gamma)^mJ_{T_a\gamma}(z)^{-w}\rho(T_a\gamma)^{-1}G(T_a(\gamma(z)))\rho(\begin{smallmatrix}\det(T_a\gamma) & 0 \\ 0 & 1\end{smallmatrix})\\
&=&\det(\gamma)^mJ_{\gamma}(z)^{-w}\rho(\gamma)^{-1}\rho(T_a)^{-1}\rho(T_a)G(\gamma(z))\rho(\begin{smallmatrix}\det(\gamma) & 0 \\ 0 & 1\end{smallmatrix})\\
&=&S_\gamma(z).
\end{eqnarray*}
For (2), we see, similarly, with $\delta=(\begin{smallmatrix}\nu & 0 \\ 0 & 1\end{smallmatrix})$ (here we use that $G$ has type $m$):
\begin{eqnarray*}
S_{\delta\gamma}(z)&=&\det(\delta\gamma)^mJ_{\delta\gamma}(z)^{-w}\rho(\delta\gamma)^{-1}G(\delta(\gamma(z)))\rho(\begin{smallmatrix}\det(\delta\gamma) & 0 \\ 0 & 1\end{smallmatrix})\\
&=&\det(\gamma)^mJ_{\gamma}(z)^{-w}\rho(\gamma)^{-1}\rho(\delta)^{-1}\det(\delta)^m\det(\delta)^{-m}\rho(\delta)G(\gamma(z))\rho(\delta)^{-1}\rho(\begin{smallmatrix}\det(\delta\gamma) & 0 \\ 0 & 1\end{smallmatrix})\\
&=&S_\gamma(z).
\end{eqnarray*}
This completes the proof of the first part of the Lemma. For the second, observe, if $\gamma'=\gamma\delta$ with $\delta\in\Gamma$:
\begin{eqnarray*}
S_{\gamma}(\delta(z))&=&\det(\gamma)^mJ_\gamma(\delta(z))^{-w}\rho(\gamma)^{-1}G(\gamma(\delta(z)))\rho(\begin{smallmatrix}\det(\gamma) & 0 \\ 0 & 1\end{smallmatrix})\\
&=&\det(\delta)^{-m}\det(\gamma')^mJ_\delta(z)^wJ_{\gamma'}(z)^{-w}\rho(\gamma'\delta^{-1})^{-1}G(\gamma'(z))\rho(\begin{smallmatrix}\det(\gamma') & 0 \\ 0 & 1\end{smallmatrix})
\rho(\begin{smallmatrix}\det(\delta) & 0 \\ 0 & 1\end{smallmatrix})^{-1}\\
&=&\det(\delta)^{-m}J_\delta(z)^w\rho(\delta)S_{\gamma'}(z)\rho(\begin{smallmatrix}\det(\delta) & 0 \\ 0 & 1\end{smallmatrix})^{-1}.
\end{eqnarray*}
\end{proof}
We consider the formal series (Poincar\'e series):
\begin{equation}\label{defpoincare}
\mathcal{P}_{w}(G)(z):=\sum_\gamma S_\gamma(w,m;G)(z),
\end{equation}
where the sum runs over a complete set of representatives of $H\backslash \Gamma$. Note that, if well defined, this is a matrix function. Compare this with Bruinier's definition of Poincar\'e series in \cite[\S 1.2, 1.3]{BRU}.
We have the next property.
\begin{Proposition}\label{preinvestigation}
If the series $\mathcal{P}_{w}(G)(z)$ converges to an element of 
$\operatorname{Hol}_{\KK_\Sigma}(\Omega\rightarrow L^{N\times N})$ then it satisfies, for all $z\in\Omega$ and 
$\gamma\in\Gamma$:
$$\mathcal{P}_{w}(G)(\gamma(z))=\det(\gamma)^{-m}J_\gamma(z)^w\rho(\gamma)\mathcal{P}_{w}(G)(z)\rho(\begin{smallmatrix}\det(\gamma) & 0 \\ 0 & 1\end{smallmatrix})^{-1}.$$
For each column $f$ of $\mathcal{P}_{w}(G)$ there exists $i\in\ZZ/(q-1)\ZZ$ such that
$$f(\delta(z))=\det(\delta)^{i-m}J_\delta(z)^w\rho(\delta)f(z),\quad \forall z\in\Omega,\quad \delta\in\Gamma.$$
\end{Proposition}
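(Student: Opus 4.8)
The plan is to derive both assertions directly from Lemma \ref{propertiesofSgamma}, the only analytic input being that $\mathcal{P}_w(G)$ converges: in the ultrametric Banach module $\KK_\Sigma^{N\times N}$ a convergent series converges unconditionally, so one may freely reindex it and pull index-independent matrix factors out of it.

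First I would prove the matrix transformation law. Fix $\delta\in\Gamma$. Applying the second identity of Lemma \ref{propertiesofSgamma} termwise and using that $\rho$ is a homomorphism, so that $\rho(\begin{smallmatrix}\det(\delta)^{-1} & 0 \\ 0 & 1\end{smallmatrix})=\rho(\begin{smallmatrix}\det(\delta) & 0 \\ 0 & 1\end{smallmatrix})^{-1}$ (both matrices lying in $\Gamma$ since $\det(\delta)\in\FF_q^\times$), one obtains
$$\mathcal{P}_w(G)(\delta(z))=\det(\delta)^{-m}J_\delta(z)^w\rho(\delta)\Big(\sum_\gamma S_{\gamma\delta}(w,m;G)(z)\Big)\rho(\begin{smallmatrix}\det(\delta) & 0 \\ 0 & 1\end{smallmatrix})^{-1},$$
the sum running over a complete set of representatives of $H\backslash\Gamma$. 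By the first identity of Lemma \ref{propertiesofSgamma}, $S_\gamma(w,m;G)$ depends only on the class of $\gamma$ in $H\backslash\Gamma$, and right translation by $\delta$ is a bijection of $H\backslash\Gamma$; hence $\sum_\gamma S_{\gamma\delta}(w,m;G)(z)=\mathcal{P}_w(G)(z)$, which gives the asserted identity
$$\mathcal{P}_{w}(G)(\gamma(z))=\det(\gamma)^{-m}J_\gamma(z)^w\rho(\gamma)\,\mathcal{P}_{w}(G)(z)\,\rho(\begin{smallmatrix}\det(\gamma) & 0 \\ 0 & 1\end{smallmatrix})^{-1}.$$

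Then I would pass to columns. Here I would invoke that, since $\rho$ is of the first kind, $\rho(\begin{smallmatrix}\nu & 0 \\ 0 & 1\end{smallmatrix})$ is diagonal for every $\nu\in\FF_q^\times$: this holds for basic representations because $\chi(\nu)=\nu I_n$, and it is preserved by the admissible operations (it is already used, for instance, in the proof of Theorem \ref{theorem-structure-tate-algebras}). Writing $\rho(\begin{smallmatrix}\nu & 0 \\ 0 & 1\end{smallmatrix})^{-1}=\operatorname{Diag}(\nu^{n_1},\dots,\nu^{n_N})$ with $n_j\in\ZZ/(q-1)\ZZ$, right multiplication by this diagonal matrix scales the $j$-th column by $\nu^{n_j}$. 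Specialising $\nu=\det(\delta)\in\FF_q^\times$ in the matrix identity above and extracting the $j$-th column $f=f_j$ of $\mathcal{P}_w(G)$ yields
$$f(\delta(z))=\det(\delta)^{n_j-m}J_\delta(z)^w\rho(\delta)f(z),\qquad z\in\Omega,\ \delta\in\Gamma,$$
so that $i:=n_j$ has the required property.

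There is no genuine obstacle; the only points needing a word of justification are the legitimacy of the reindexing and of pulling the $\gamma$-independent factors out of the series, which follow from unconditional convergence in $\KK_\Sigma^{N\times N}$, and the diagonality of $\rho(\begin{smallmatrix}\nu & 0 \\ 0 & 1\end{smallmatrix})$ for representations of the first kind, which is standard and already exploited elsewhere in the text.
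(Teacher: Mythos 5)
Your proof is correct and follows essentially the same route as the paper: the matrix transformation law comes from the second identity of Lemma \ref{propertiesofSgamma} together with the reindexing of $H\backslash\Gamma$ under right translation by $\delta$, and the column statement comes from the diagonality of $\rho(\begin{smallmatrix}\nu & 0 \\ 0 & 1\end{smallmatrix})$ for representations of the first kind. The only difference is one of detail: the paper treats the matrix identity as immediate from the Lemma and only spells out the block decomposition into the $\mathcal{P}^{[i]}_w(G)$, whereas you write out the reindexing and the unconditional-convergence justification explicitly, which is a harmless (and arguably welcome) expansion.
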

\begin{proof} We assume that the series converges, giving rise to an element of $\operatorname{Hol}_{\KK_\Sigma}(\Omega\rightarrow L^{N\times N})$. We note that for $\nu\in\FF_q^\times$, $\rho(\begin{smallmatrix}\nu & 0 \\ 0 & 1\end{smallmatrix})$ is diagonal in $\operatorname{GL}_N(\FF_q)$ and there are integers $n_i$ with $i=0,\ldots,q-2$ such that $\sum_in_i=N$ so that we can decompose 
\begin{equation}\label{blocks-P}
\mathcal{P}_{w}(G)=\bigoplus_{i=0}^{q-2}\mathcal{P}_{w}^{[i]}(G),
\end{equation}
where $\mathcal{P}_{w}^{[i]}(G):\Omega\rightarrow\KK_\Sigma^{N\times n_i}$ for all $i$,
and 
$$\mathcal{P}_{w}^{[i]}(G)(\delta(z))=\det(\delta)^{i-m}J_\delta(z)^w\rho(\delta)\mathcal{P}_{w}^{[i]}(G),\quad \forall z\in\Omega,\quad \delta\in\Gamma,\quad i=0,\ldots,q-2.$$
\end{proof}
In full generality (for any quasi-periodic function $G$), we do not have a good criterion of convergence for the series $\mathcal{P}_{w}(G)$. We discuss these series for two choices of $G$.

We will need the next Lemma in the book \cite{GER&PUT} of Gerritzen and van der Put.
\begin{Lemma}\label{lemma3}
There exists a complete set of representatives $\gamma_{c,d}=\big(\begin{smallmatrix}* & *\\ c & d\end{smallmatrix}\big)$ of $H\backslash\Gamma$ in which each matrix is of one of the following three types:
\begin{enumerate}
\item $\gamma_{0,\mu}=\big(\begin{smallmatrix}\mu^{-1} & 0\\ 0 & \mu\end{smallmatrix}\big)$ with $\mu\in\FF_q^\times$,
\item $\gamma_{\mu,\nu}=\big(\begin{smallmatrix}0 & -\mu^{-1}\\ \mu & \nu\end{smallmatrix}\big)$ with $\mu\in\FF_q^\times$ and $\nu\in\FF_q$,
\item $\gamma_{c,d}=\big(\begin{smallmatrix}a & b\\ c & d\end{smallmatrix}\big)$, with $a,b,c,d\in A$ such that $ad-bc=1$,  $|cd|>1$, $|a|<|c|$, $|b|<|d|$.
\end{enumerate}
\end{Lemma}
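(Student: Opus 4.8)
\textbf{Proof plan for Lemma \ref{lemma3}.}

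The plan is to produce the set of representatives explicitly by analysing, for each right coset $H\gamma$, the bottom row $(c,d)$ of $\gamma$, which is well defined up to simultaneous multiplication by $\FF_q^\times$ and up to adding an $A$-multiple of $(c,d)$ to the (irrelevant) top row; in fact the bottom row $(c,d)$ of a matrix in $\GL_2(A)$ is an arbitrary pair of coprime elements of $A$, and two matrices lie in the same coset $H\gamma$ precisely when their bottom rows agree. So the problem reduces to choosing, for every unordered pair $\{c,d\}$ of coprime polynomials (up to a common scalar), one matrix in $\Gamma$ with that bottom row, and organising the list according to the size of $c$ and $d$. First I would dispose of the degenerate cases: if $c=0$ then coprimality forces $d\in\FF_q^\times$, and after scaling the bottom row we get $(0,\mu)$; lifting to $\Gamma$ with determinant a unit gives the diagonal matrices $\gamma_{0,\mu}=\bigl(\begin{smallmatrix}\mu^{-1}&0\\0&\mu\end{smallmatrix}\bigr)$ of type (1). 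If $c\neq0$ but $|c|\le 1$, i.e.\ $c=\mu\in\FF_q^\times$, then after translating we may take $d=\nu\in\FF_q$ (subtracting a suitable $A$-multiple of $c$ from $d$), and the matrix $\gamma_{\mu,\nu}=\bigl(\begin{smallmatrix}0&-\mu^{-1}\\\mu&\nu\end{smallmatrix}\bigr)$ has bottom row $(\mu,\nu)$ and determinant $1$; these are the type (2) matrices.

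For the remaining cosets, both $c$ and $d$ should be taken nonconstant. Here I would argue that given coprime $c,d\in A$ with $\deg c,\deg d\ge 1$, one can, by the Euclidean algorithm, replace $d$ by its remainder modulo $c$ and $c$ by its remainder modulo $d$ — more precisely, after multiplying by a common unit and translating by $H$ we can arrange $|b|<|d|$ and $|a|<|c|$ for the chosen lift $\bigl(\begin{smallmatrix}a&b\\c&d\end{smallmatrix}\bigr)$ with $ad-bc=1$; the normalisation $\det=1$ removes the last scalar ambiguity since $\det$ is then forced and the remaining freedom $H\cap\SL_2$ acts transitively on the valid $(a,b)$. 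The inequality $|cd|>1$ simply records that we are not in cases (1) or (2). The key point to check is that this list is \emph{complete} and \emph{irredundant}: completeness because every coset falls into exactly one of the three regimes $c=0$, $|c|\le 1<|d|$ or $\deg c,\deg d\ge1$ (using that the bottom row is determined up to $\FF_q^\times$ and we have pinned down a scalar in each case), and irredundancy because distinct normalised bottom rows give distinct cosets, while within a fixed bottom row the conditions $|a|<|c|$, $|b|<|d|$ together with $\det=1$ pin down $\bigl(\begin{smallmatrix}a&b\\c&d\end{smallmatrix}\bigr)$ uniquely.

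Since this is quoted from Gerritzen--van der Put's book \cite{GER&PUT}, the cleanest route is to cite the corresponding statement there and only indicate the dictionary ``coset $\leftrightarrow$ normalised bottom row'' together with the case split above; the main obstacle, such as it is, is the bookkeeping in the third case — verifying that the reduction $|a|<|c|$, $|b|<|d|$ can always be achieved simultaneously with $ad-bc=1$ and that it leaves no residual ambiguity — which is a short but slightly fiddly argument with the division algorithm in $A=\FF_q[\theta]$ and the action of the unipotent subgroup. I expect no conceptual difficulty, only the need to be careful that translation by $H$ on the left genuinely realises the Euclidean reductions on both $c$ (via $\FF_q^\times$-scaling and the structure of $H$) and $d$ (via $T_a$), and that the three families are pairwise disjoint.
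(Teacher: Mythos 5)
The paper gives no proof of this lemma — it is quoted from Gerritzen and van der Put — so I can only compare with the intended argument, which is indeed the bottom-row classification you describe. Your third paragraph is the mathematically substantive part and is correct: for a coprime row $(c,d)$ with $|cd|>1$, the solutions of $ad-bc=1$ form the family $(a_0+tc,\,b_0+td)$, division with remainder fixes the unique $t$ with $|a|<|c|$, and then $|b|<|d|$ is automatic from $bc=ad-1$ together with $|ad|<|cd|$ and $1<|cd|$.

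However, your first two paragraphs contain an error that would wreck the completeness and irredundancy count. Since $H=\bigl\{\bigl(\begin{smallmatrix}\nu&a\\0&1\end{smallmatrix}\bigr)\bigr\}$ has bottom row $(0,1)$ and the cosets in question are of the form $H\gamma$, multiplication by $H$ preserves the bottom row of $\gamma$ \emph{exactly}: the coset is in bijection with the ordered coprime pair $(c,d)$ itself, not with its class up to a common scalar, and the $H$-action gives no way to reduce $d$ modulo $c$ — it only replaces the top row $(\alpha,\beta)$ by $(\nu\alpha+ac,\,\nu\beta+ad)$. You state the correct fact ("two matrices lie in the same coset precisely when their bottom rows agree") and then immediately contradict it by passing to unordered pairs up to scalar and by claiming that for $c=\mu\in\FF_q^\times$ one may "take $d=\nu\in\FF_q$ after translating". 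Under that false reduction the lemma's list would appear redundant: $(0,1)$ and $(0,\mu)$ would be identified, although type (1) deliberately lists $q-1$ distinct representatives, and $(1,\theta)$ would be identified with $(1,0)$, although the former is a type-(3) coset and the latter a type-(2) coset. The correct bookkeeping is: type (1) consists of the coprime rows with $c=0$; type (2) of those with $c\in\FF_q^\times$ and $d\in\FF_q$; type (3) of all remaining coprime rows, characterized by $|cd|>1$ (note $c$ or $d$ may still be a unit there). Euclidean reduction enters only in normalizing the top row, exactly as in your third case.
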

We note that the first two sets are finite. 
Let us look at the corresponding extracted series in the series
(\ref{defpoincare}) defining $\mathcal{P}_{w}(G)$; we denote them by $\mathfrak{A},\mathfrak{B},\mathfrak{C}$ (in agreement with the order of the types in the above set of representatives), so that $\mathfrak{A},\mathfrak{B}$ correspond to finite sums while $\mathfrak{C}$ is an infinite sum.
We set:
$$\epsilon(\rho):=\sum_{\mu\in\FF_q^\times}\mu^{2m-w+l}\rho\big(\begin{smallmatrix}\mu^{2} & 0 \\ 0 & 1\end{smallmatrix}\big)\in\FF_q^{N\times N}.$$
Note that this is a diagonal matrix with entries in $\{-1,0\}$.
For the first sub-sum we have, in virtue of the fact that $G$ is of type $m$ (second equality) and that $\rho$ is of degree $l$ (third equality):
\begin{eqnarray}
\mathfrak{A}:=\sum_{\mu\in\FF_q^\times}S_{\gamma_{0,\mu}}(z)
&=&\sum_{\mu\in\FF_q^\times}\mu^{-w}\rho\big(\begin{smallmatrix}\mu^{-1} & 0 \\ 0 & \mu\end{smallmatrix}\big)^{-1}
G(\mu^{-2}z)\label{frak-A}\\
&=&\sum_{\mu\in\FF_q^\times}\mu^{-w}\rho\big(\begin{smallmatrix}\mu^{-1} & 0 \\ 0 & \mu\end{smallmatrix}\big)^{-1}\mu^{2m}\rho\big(\begin{smallmatrix}\mu^{-2} & 0 \\ 0 & 1\end{smallmatrix}\big)
G(z)\rho\big(\begin{smallmatrix}\mu^{-2} & 0 \\ 0 & 1\end{smallmatrix}\big)^{-1}\nonumber\\
&=&G(z)\epsilon(\rho)\nonumber.
\end{eqnarray}
For the second sub-sum we have, similarly:
\begin{eqnarray}
\lefteqn{\mathfrak{B}:=\sum_{\begin{smallmatrix}\mu\in\FF_q^\times\\ \nu\in\FF_q\end{smallmatrix}}S_{\gamma_{\mu,\nu}}(z)=}\\ &=&\sum_{\mu\in\FF_q^\times}\mu^{2m-w+l}\sum_{\nu\in\FF_q}\left(z+\frac{\nu}{\mu}\right)^{-w}\rho
\Big(\begin{smallmatrix}\frac{\nu}{\mu} &-1 \\ 1 & 0\end{smallmatrix}\Big)G\left(\frac{-1}{z+\frac{\nu}{\mu}}\right)\rho
\Big(\begin{smallmatrix}\mu^2 & 0 \\ 0 & 1\end{smallmatrix}\Big)\label{frak-B}\\ 
&=&(-1)^m\left(\sum_{\beta\in\FF_q}(z+\beta)^{-w}\rho\Big(\begin{smallmatrix}-\beta & 1 \\ 1 & 0\end{smallmatrix}\Big)
G\left(\frac{1}{z+\beta}\right)\rho\Big(\begin{smallmatrix}-1 & 0 \\ 0 & 1\end{smallmatrix}\Big)\right)\epsilon(\rho)\nonumber.
\end{eqnarray}
We easily deduce that $\mathfrak{A}+\mathfrak{B}\in\operatorname{Hol}_{\KK_{\Sigma}}(\Omega\rightarrow L^{N\times N})$. We now make explicit choices for $G$.

\subsubsection{The case of $G$ entire} 

We suppose that $G\in\mathcal{QP}^!_m(\rho;\KK_\Sigma)$ extends to an entire function
$\CC_\infty\rightarrow L^{N\times N}$, where $L$ is a field extension of $\CC_\infty$ contained in $\KK_\Sigma$. In this part we study $\mathcal{P}_{w}(G)$ with $w\in\ZZ$, $w>0$. Let $M\geq 0$
be such that $\|G(z)u(z)^M\|$ is bounded for $|u(z)|<c$ for some $c<1$ (it exists as $G$ is tempered).

\begin{Lemma}\label{lemma-preparatory}
There are three constants $c_1,c_2,c_3\in|\CC_\infty^\times|$ such that $c_1\geq1$ and $\eta\in\ZZ[\frac{1}{p}]\cap[0,M+1[$ such that if $|z|_\Im\geq c_1$ then 
$\|G(z)\|\leq c_2|e_C(z)|^\eta$ and if $|z|_\Im\leq c_1$ then $\|G(z)\|\leq c_3$.
\end{Lemma}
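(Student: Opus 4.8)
\emph{Proof strategy.} The plan is to reduce everything to the fact that $G$ is itself the matrix of a tame series, after which the two estimates fall out of the asymptotic analysis of Proposition~\ref{leading} combined with the $\rho$-quasi-periodicity.

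First I would conjugate $G$ by $\Phi_\rho$ to make it $A$-periodic. Recall that $\Phi_\rho$ and $\Phi_\rho^{-1}$ lie in $(\Tamecirc{\KK_\Sigma})^{N\times N}$ with $\det(\Phi_\rho)=1$ (Proposition~\ref{propgeneralitiestame}, Corollary~\ref{previouscorollary}), and that $\Phi_\rho(z+a)=\rho(T_a)\Phi_\rho(z)$ for $a\in A$; hence $\Phi_\rho^{-1}G$ is $A$-periodic. It is entire (tame series are entire by Proposition~\ref{everytameboundedisentire}, and $G$ is entire by hypothesis), and it is tempered: since every entry of $\Phi_\rho^{-1}$ is a tame series of weight $<1$, Proposition~\ref{leading} (and Remark~\ref{remarkIm}) gives $\|u(z)\Phi_\rho^{-1}(z)\|\to0$ as $|z|_\Im\to\infty$, and together with the boundedness of $\|u(z)^MG(z)\|$ this yields $\|u(z)^{M+1}\Phi_\rho^{-1}(z)G(z)\|\to0$. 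By part~(c) of Proposition~\ref{propositionperiodic} applied with $M+1$ in place of $M$, one gets $\Phi_\rho^{-1}G=\sum_{m=0}^{D}q_mu^{-m}$ for some $D\in\NN$ and $q_m\in\KK_\Sigma^{N\times N}$, and the decay $\|u^{M+1}\Phi_\rho^{-1}G\|\to0$ forces the leading exponent $M+1-D$ to be positive, i.e. $D\le M$. Consequently $G=\Phi_\rho\big(\sum_{m\le D}q_mu^{-m}\big)$ has entries in $\Tame{\KK_\Sigma}$, because $u^{-1}=e_0=e_C(z)\in\Tame{\KK_\Sigma}$ and $\Tame{\KK_\Sigma}$ is a $\KK_\Sigma$-algebra.

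Assuming $G\neq0$ (the case $G=0$ being trivial with $\eta=0$), set $\eta:=w(G)\in\ZZ[p^{-1}]_{\ge0}$, the largest weight occurring among the entries of $G$. Using that $w$ is subadditive on matrices of tame series, $\eta\le w(\Phi_\rho)+w\big(\sum_m q_mu^{-m}\big)\le w(\Phi_\rho)+D<1+M$, since $w(\Phi_\rho)<1$ (entries in $\Tamecirc{\KK_\Sigma}$) and $D\le M$; thus $\eta\in\ZZ[p^{-1}]\cap[0,M+1[$. Proposition~\ref{leading} together with Remark~\ref{remarkIm} then produces a threshold $R_1\in|\CC_\infty^\times|$ beyond which the entries of $G$ of weight $\eta$ dominate and $\|G(z)\|=c_2|e_C(z)|^\eta$, with $c_2\in|\CC_\infty^\times|$ the maximum of the norms of the corresponding leading coefficients; I would then take $c_1:=\max(R_1,1)\in|\CC_\infty^\times|$, which gives the first estimate and ensures $c_1\ge1$. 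For the second estimate, let $z\in\Omega$ with $|z|_\Im\le c_1$. Since the infimum defining $|z|_\Im$ is attained (Remark~\ref{remarkIm}), there is $l\in K_\infty$ with $|z-l|=|z|_\Im\le c_1$; writing $l=a+r$ with $a\in A$ and $r\in\frac{1}{\theta}\FF_q[[\frac{1}{\theta}]]$, we get $z=a+\zeta_0$ with $\zeta_0:=(z-l)+r\in\Omega$ and $|\zeta_0|\le\max(c_1,1)=c_1$ (here $c_1\ge1$ is used), whence $G(z)=\rho(T_a)G(\zeta_0)$ by Definition~\ref{quasipermatrix}. For a representation of the first kind, $\rho(T_a)$ is built by admissible operations from matrices $\big(\begin{smallmatrix}I_n&\chi(a)\\0&I_n\end{smallmatrix}\big)$ whose blocks $\chi(a)$ have Gauss norm $\le1$ (as $d\chi(\theta)\in\FF_q[\underline{t}_\Sigma]^{n\times n}$ and $d$ has Gauss norm $1$), and each admissible operation ($\oplus,\otimes,S^r,\wedge^r$, and $(\cdot)^*$, using that $\rho(T_a)^p=I_N$ so $\rho(T_a)^{-1}$ is an $\FF_p$-polynomial in $\rho(T_a)$) preserves the property of having operator norm $1$; hence $\|\rho(T_a)\|=1$ for all $a\in A$. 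Therefore $\|G(z)\|\le\|\rho(T_a)\|\,\|G(\zeta_0)\|\le\sup_{\zeta\in D_{\CC_\infty}(0,c_1)}\|G(\zeta)\|=:c_3$, finite since $G$ is entire (and enlargeable to lie in $|\CC_\infty^\times|$), which is the second estimate.

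The only genuinely delicate point is the second estimate: the set $\{z\in\Omega:|z|_\Im\le c_1\}$ is \emph{not} bounded in $\CC_\infty$, so one cannot simply invoke boundedness of the entire function $G$ on a disk; one must use $A$-periodicity to fold $z$ back into a bounded region, together with the uniform bound $\|\rho(T_a)\|=1$, which is the one place where the hypothesis that $\rho$ is of the first kind is used. All the other steps are routine manipulations with the tame series and quasi-periodic functions already developed in Sections~\ref{tameseriestheory} and \ref{quasiperiodicfunctions}.
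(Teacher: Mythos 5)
Your proof is correct and follows essentially the same route as the paper: conjugate $G$ into an $A$-periodic function via $\Phi_\rho$, use Proposition~\ref{propositionperiodic}(c) to see that the result is a polynomial in $e_C(z)$ of degree $\leq M$, apply Proposition~\ref{leading} (with Remark~\ref{remarkIm}) for the estimate near infinity, and fold $z$ back into a bounded disk by quasi-periodicity for the other regime. The only (harmless) differences are that you multiply by $\Phi_\rho^{-1}$ on the left rather than the right, and that you spell out the uniform bound $\|\rho(T_a)\|\leq 1$, which the paper leaves implicit.
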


\begin{proof}
We recall that $\Phi_\rho$, introduced at the end of \S \ref{Quasi-periodic-functions}, is 
entire (Proposition \ref{propgeneralitiestame} (a)), $\rho$-quasi-periodic of type $0$ (same proposition (b)) and that $\Phi_\rho^{-1}$ is entire (Corollary \ref{previouscorollary}) and has its entries which are
at once tame series of degrees in $[0,1[\cap\ZZ[\frac{1}{p}]\cup\{-\infty\}$. Then $G\Phi_\rho^{-1}$
is also entire and $A$-periodic. Therefore, by Proposition \ref{propositionperiodic} (c),
$G\Phi_\rho^{-1}\in L[e_C(z)]^{N\times N}$ and the degrees in $e_C$ of the entries of this matrix function, well defined, are $\leq M$ while the matrix function itself is of type $m$. We deduce that 
$$G\in L[e_C(z)]^{N\times N}\Phi_\rho.$$
By Proposition \ref{leading}, there exist constants $c_1\geq 1$ and $c_2$ with $c_1,c_2\in|\CC_\infty^\times|$, $\eta\in\ZZ[\frac{1}{p}]\cap[0,M+1[$ such that if $|z|_\Im\geq c_1$, then 
$\|G(z)\|\leq c_2|e_C(z)|^\eta$. Suppose now that $|z|_\Im\leq c_1$. 
There exists $\lambda\in K_\infty$ such that $|z-\lambda|=|z|_\Im\leq c_1$. We can write
$\lambda=a+m$ with $a\in A$ and $m\in\frac{1}{\theta}\FF_q[[\frac{1}{\theta}]]$. Then 
$|z-a|=|z-\lambda+m|\leq\max\{|z-\lambda|,|m|\}\leq c_1$ because $|m|<1\leq c_1$. Now, since
$G(z)$ is $\rho$-quasi-periodic, $\|G(z)\|\leq\|G(z-a)\|\leq c_3$ for some constant $c_3\in|\CC_\infty^\times|$, because the entries of $G$ are 
entire functions, hence bounded in the disk $D_{\CC_\infty}(0,c_1)$.
\end{proof}

\begin{Proposition} 
Let $w$ be a positive integer.
If $G$ is an entire tempered $\rho$-quasi-periodic function of type $m$ the series defining $\mathcal{P}_{w}(G)$ converges to an element of $\operatorname{Hol}_{\KK_\Sigma}(\Omega\rightarrow L^{N\times N})$ and the matrix functions $\mathcal{P}_{w}^{[i]}(G)$ defined in (\ref{blocks-P}) are elements of $M^!_w(\rho\det^{i-m};L)^{1\times n_i}$ for $i$ varying in $\ZZ/(q-1)\ZZ$. If the $i$-th block of $\epsilon(\rho)$
is non-zero, then the columns of $\mathcal{P}_{w}^{[i]}(G)$ are non-zero. Moreover, the matrix functions
$h^{M+1}\mathcal{P}^{[i]}_{w}(G)$ are elements of $S_{w+(M+1)(q+1)}(\rho\det^{i-m-M-1};L)^{1\times n_i}$.
\end{Proposition}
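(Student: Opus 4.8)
The plan is to handle separately the three sub-series $\mathfrak{A},\mathfrak{B},\mathfrak{C}$ attached through Lemma \ref{lemma3} to the three types of representatives of $H\backslash\Gamma$. Recall $\mathfrak{A},\mathfrak{B}$ are the finite sums computed in (\ref{frak-A}), (\ref{frak-B}) (in particular $\mathfrak{A}=G\epsilon(\rho)$), while $\mathfrak{C}=\sum_\gamma S_\gamma(w,m;G)$ runs over the $\gamma=(\begin{smallmatrix}a&b\\c&d\end{smallmatrix})$ of type (3), for which $c\neq0$ and $|J_\gamma(z)|=|cz+d|\geq|c|\,|z|_\Im\geq|z|_\Im$. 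The two work-horse estimates will be: (i) $\|S_\gamma(w,m;G)(z)\|\leq|J_\gamma(z)|^{-w}\|G(\gamma(z))\|$, which holds because $\det(\gamma)\in\FF_q^\times$ and because $\|\rho(\gamma)\|=\|\rho(\gamma)^{-1}\|\leq1$ for every $\gamma\in\Gamma$ — indeed a basic representation $\rho_\chi$ satisfies $\|\chi(a)\|\leq1$ for all $a\in A$ (as $\chi(\theta)$ has Gauss norm $\leq1$ and $a$ has coefficients in $\FF_q$), hence $\|\rho_\chi(\gamma^{\pm1})\|\leq1$ since $\gamma^{\pm1}\in\Gamma$ has entries in $A$, and the admissible operations $\oplus,\otimes,S^r,\wedge^r,(\cdot)^*$ preserve the bound $\leq1$; and (ii) Lemma \ref{lemma-preparatory}, giving $c_1,c_2,c_3\in|\CC_\infty^\times|$ with $c_1\geq1$ and $\eta\in\ZZ[\frac{1}{p}]\cap[0,M+1[$ such that $\|G(z)\|\leq c_2|e_C(z)|^\eta$ for $|z|_\Im\geq c_1$ and $\|G(z)\|\leq c_3$ for $|z|_\Im\leq c_1$.

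First I would prove convergence and holomorphy, which reduces to $\mathfrak{C}$. Fix an affinoid $Y\subset\Omega$, so $0<\delta_Y\leq|z|_\Im\leq\Delta_Y$ on $Y$; a short ultrametric computation (distinguishing $|c|\Delta_Y\geq|d|$ from $|c|\Delta_Y<|d|$) yields $\inf_{z\in Y}|cz+d|\geq c_Y\max\{|c|,|d|\}$ for a constant $c_Y>0$, so for each $R$ only finitely many type-(3) $\gamma$ have $\inf_Y|cz+d|\leq R$. Whenever $|cz+d|^2\geq|z|_\Im/c_1$ one has $|\gamma(z)|_\Im=|z|_\Im/|cz+d|^2\leq c_1$, hence $\|G(\gamma(z))\|\leq c_3$ by Lemma \ref{lemma-preparatory} and $\|S_\gamma(w,m;G)(z)\|\leq c_3|cz+d|^{-w}$ with $w>0$. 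Only finitely many $\gamma$ fail this on all of $Y$; they contribute a finite sum of functions holomorphic on $Y$ ($G$ entire, $J_\gamma$ non-vanishing on $\Omega$), and for the rest $\sup_Y\|S_\gamma(w,m;G)\|\leq c_3(c_Y\max\{|c|,|d|\})^{-w}\to0$, giving uniform convergence on $Y$. As $G$ and $\rho$ take values in one complete algebra $\widehat{\TT_\Sigma[d^{-1}]}$, this gives $\mathcal{P}_w(G)\in\operatorname{Hol}_{\KK_\Sigma}(\Omega\to L^{N\times N})$ (for $L=\LL_\Sigma$ invoke Lemma \ref{precise-image}). Proposition \ref{preinvestigation} then gives the transformation law, and splitting along the blocks of $\rho(\begin{smallmatrix}\nu&0\\0&1\end{smallmatrix})$ as in (\ref{blocks-P}) shows each column of $\mathcal{P}_w^{[i]}(G)$ is modular-like of weight $w$ for $\rho\det^{i-m}$.

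Next I would prove temperedness. As $|z|_\Im\to\infty$, $\|\mathfrak{B}(z)\|\to0$ (in (\ref{frak-B}), $|z+\beta|^{-w}\to0$ while $G(1/(z+\beta))$ stays bounded since $G$ is entire and $1/(z+\beta)\to0$) and $\|\mathfrak{C}(z)\|\to0$ too: for $|z|_\Im\geq1$ every type-(3) $\gamma$ has $|cz+d|\geq|z|_\Im\geq1$, so $|\gamma(z)|_\Im\leq1/|z|_\Im\leq1\leq c_1$ and thus $\|\mathfrak{C}(z)\|\leq\sup_\gamma c_3|cz+d|^{-w}\leq c_3|z|_\Im^{-w}\to0$ (here $w>0$ is essential). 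Hence $\mathcal{P}_w(G)=G\epsilon(\rho)+(\mathfrak{B}+\mathfrak{C})$ with $\mathfrak{B}+\mathfrak{C}$ vanishing at the cusp, so if $M\geq0$ is such that $\|G(z)u(z)^M\|$ is bounded near $u=0$ then so is $\|u(z)^M\mathcal{P}_w(G)(z)\|$ (using $\|\epsilon(\rho)\|\leq1$ and $|u|^M\leq1$), i.e. $\mathcal{P}_w^{[i]}(G)\in M_w^!(\rho\det^{i-m};L)^{1\times n_i}$; by Theorem \ref{theorem-u-expansions} this is just $v(\mathcal{P}_w^{[i]}(G))\geq-M$. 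For the non-vanishing, note that right multiplication by the diagonal matrix $\rho(\begin{smallmatrix}\det\gamma&0\\0&1\end{smallmatrix})$ does not mix columns, so the $k$-th column of $\mathcal{P}_w(G)$ equals that of $G\epsilon(\rho)$ plus the $k$-th columns of $\mathfrak{B}$ and $\mathfrak{C}$, which tend to $\underline{0}$ at the cusp; if the $i$-block of $\epsilon(\rho)$ is non-zero then $\epsilon(\rho)_{kk}=-1$ for the indices $k$ of that block, so this column is $-(k\text{-th column of }G)+o(1)$ and vanishes identically only if the $k$-th column of $G$ does — in particular the columns of $\mathcal{P}_w^{[i]}(G)$ are non-zero (always when $G\in\GL_N$, as in the applications). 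Finally $h\in S_{q+1}(\det^{-1};\CC_\infty)$ has $v(h)=1$, so $v(h^{M+1}\mathcal{P}_w^{[i]}(G))\geq(M+1)-M>0$, and by Theorem \ref{theorem-u-expansions} each column of $h^{M+1}\mathcal{P}_w^{[i]}(G)$ lies in $S_{w+(M+1)(q+1)}(\rho\det^{i-m-M-1};L)$, the weight and representation being read off from those of $h$ and $\mathcal{P}_w^{[i]}(G)$.

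I expect the main obstacle to be the convergence bookkeeping of the second paragraph: on each affinoid one must isolate the finitely many indices $\gamma$ for which $G\circ\gamma$ is evaluated near the cusp (so that only the "large" case of Lemma \ref{lemma-preparatory} could be needed) from the cofinitely many for which the crude bound $c_3|cz+d|^{-w}$ applies uniformly and forces $\sup_Y\|S_\gamma\|\to0$. Once that, the two displayed estimates, and Proposition \ref{preinvestigation} are in place, everything else is bookkeeping at the cusp.
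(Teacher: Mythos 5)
Your proof is correct and follows essentially the same route as the paper's: the decomposition $\mathcal{P}_w(G)=\mathfrak{A}+\mathfrak{B}+\mathfrak{C}$, Lemma \ref{lemma-preparatory} to control $G\circ\gamma$, Proposition \ref{preinvestigation} for the transformation law, the dominance of $\mathfrak{A}=G\epsilon(\rho)$ at the cusp for temperedness and non-vanishing, and $v(h)=1$ for the final cuspidality assertion. Your explicit affinoid bookkeeping for the convergence of $\mathfrak{C}$, and your observation that the non-vanishing of the columns of $\mathcal{P}_w^{[i]}(G)$ really rests on the non-vanishing of the corresponding columns of $G$ (which holds in the intended applications, e.g. $G=\Phi_\rho\in\GL_N$), are if anything slightly more careful than the terser original.
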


\begin{proof} 
Let $\gamma=(\begin{smallmatrix}a & b\\ c & d\end{smallmatrix})$ be in $\Gamma$, such that $c\neq0$ and let us consider $z\in\Omega$. Then:
\begin{equation}\label{decompositiongamma}
\gamma(z)=\frac{a}{c}-\frac{\det(\gamma)}{c(cz+d)}.
\end{equation}
We consider $c_1\in|\CC_\infty^\times|$ such that $c_1>1$ and we consider $z\in\Omega$ such that
$c_1^{-1}\leq |z|_\Im\leq |z|\leq c_1$. We note that if $\gamma$ is of type (2) or (3) as in Lemma \ref{lemma3}, then $|\gamma(z)|\leq c_1$. Since $G$ has entire entries, we therefore get that
the series defining $\mathcal{P}_w(G)$ converges uniformly over all the
affinoid subdomains of $\Omega$ of the type $\{z\in\Omega:c_3\leq |z|_\Im \leq |z|\leq c_4\}$ with $c_3,c_4\in|\CC_\infty^\times|$ hence defining an element of $\operatorname{Hol}_{\KK_\Sigma}(\Omega\rightarrow\LL_\Sigma^{N\times N})$. Now observe that if $|z|_\Im=|z|\rightarrow\infty$
and $\gamma$ is of type (2) or (3), then $|\gamma(z)|\rightarrow0$ uniformly on the set of representatives $\gamma$ of $H\backslash \Gamma$ and therefore, the sum $\mathfrak{B}+\mathfrak{C}$,
as a function of the variable $z$, is bounded as $|z|_\Im=|z|\rightarrow\infty$. By Lemma \ref{lemma-preparatory} and the expression we found for $\mathfrak{A}$, we therefore have that 
$\mathcal{P}_w(G)$ is tempered, because for $|z|_\Im$ large enough, $\|\mathcal{P}_{w}(G)(z)\|=\|G(z)\epsilon(\rho)\|$. More precisely, $|e_C(z)|^{-\eta}\|G(z)\|$ is bounded as 
$|z|_\Im=|z|\rightarrow\infty$ where $\eta$ is given in Lemma \ref{lemma-preparatory}.
Thanks to Proposition \ref{preinvestigation}, this suffices to show that $\mathcal{P}_{w}(G)$
has it columns in $M^!_w(\rho\det^{i-m};L)$. If $\epsilon(\rho)$ does not vanish identically, looking at the blocks which are not zero we deduce 
the properties regarding $\mathcal{P}_{w}^{[i]}(G)$. The last assertion of the proposition is verified
by noticing that $\|G(z)u(z)^\eta\|\rightarrow0$ as $|z|_\Im=|z|\rightarrow\infty$, and $0\leq \eta<M+1$. 
Therefore $h^{M+1}\mathcal{P}_{w}(G)$ vanishes at infinity because $v(h)=1$.
\end{proof}

\begin{Corollary}
If $G=\Phi_\rho$ and $\epsilon(\rho)\neq 0$ then there exists $i$ such that $$h\mathcal{P}^{[i]}_{w}(G)\in \Big(S_{w+q+1}(\rho\det{}^{i-1};L)\setminus\{0\}\Big)^{1\times n_i}.$$ 
\end{Corollary}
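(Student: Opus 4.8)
The plan is to specialize the previous Proposition to the case $G=\Phi_\rho$, using the special properties of $\Phi_\rho$ established earlier: namely that $\Phi_\rho$ is entire (Proposition \ref{propgeneralitiestame}~(a)), tempered $\rho$-quasi-periodic of type $0$ (so $m=0$), and that by Corollary \ref{previouscorollary} its entries are tame series of degree in $[0,1[\cap\ZZ[\tfrac1p]\cup\{-\infty\}$, with $\det(\Phi_\rho)=1$ and $\Phi_\rho^{-1}\in(\Tamecirc{\mathfrak{E}})^{N\times N}$. In particular $\Phi_\rho$ satisfies the hypotheses of the preceding Proposition with $m=0$; moreover the constant $M$ controlling the temperedness can be taken to be $0$, since $G=\Phi_\rho$ already has entries with $v$-valuation $\geq 0$ (equivalently, $\|\Phi_\rho(z)\|$ is bounded as $|z|_\Im\to\infty$ — indeed $\Phi_\rho(a)=\rho(T_a)\in\FF_q^{N\times N}$ and the $u$-expansion of $\Phi_\rho$ has constant term $I_N$). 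Thus $M+1=1$ and the last assertion of the Proposition gives that $h\,\mathcal{P}^{[i]}_w(G)\in S_{w+q+1}(\rho\det^{i-1};L)^{1\times n_i}$ for all $i$ in $\ZZ/(q-1)\ZZ$.

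The second thing to extract is non-vanishing. The Proposition already tells us that if the $i$-th block of $\epsilon(\rho)$ is non-zero, then the columns of $\mathcal{P}^{[i]}_w(G)$ are non-zero; since $\epsilon(\rho)\neq 0$ by hypothesis, at least one block index $i$ has non-zero block, and for that $i$ the columns of $\mathcal{P}^{[i]}_w(\Phi_\rho)$ are non-zero holomorphic functions $\Omega\to L^{N\times 1}$. It remains only to observe that multiplication by $h$ does not kill them: $h$ is the normalised generator of $S_{q+1}(\det^{-1};\CC_\infty)$, a non-zero element of the integral domain $\operatorname{Hol}_{\KK_\Sigma}(\Omega\to\CC_\infty)$ (equivalently, $h$ is a non-zero element of the field $\mathfrak{K}$ with $v(h)=1$), so $h\,\mathcal{P}^{[i]}_w(\Phi_\rho)$ has a non-zero column. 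Combining with the cuspidality just noted, that column lies in $S_{w+q+1}(\rho\det^{i-1};L)\setminus\{0\}$, which is the assertion.

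Concretely, the steps in order are: (i) check that $\Phi_\rho$ is an entire, tempered, type-$0$ $\rho$-quasi-periodic function, citing Proposition \ref{propgeneralitiestame} and Corollary \ref{previouscorollary}; (ii) identify the temperedness exponent, i.e.\ verify that one may take $M=0$ because the entries of $\Phi_\rho$ are tame series of weight $<1$, hence $\|\Phi_\rho(z)u(z)\|\to 0$ as $|z|_\Im\to\infty$; (iii) apply the preceding Proposition with $G=\Phi_\rho$, $m=0$, $M=0$, obtaining that $h\,\mathcal{P}^{[i]}_w(\Phi_\rho)\in S_{w+q+1}(\rho\det^{i-1};L)^{1\times n_i}$ for all $i$; (iv) use $\epsilon(\rho)\neq 0$ to pick an index $i$ with non-vanishing block, so that the columns of $\mathcal{P}^{[i]}_w(\Phi_\rho)$ are non-zero by the Proposition; (v) note that multiplication by the non-zero form $h$ preserves non-vanishing, so $h\,\mathcal{P}^{[i]}_w(\Phi_\rho)$ has a non-zero column in $S_{w+q+1}(\rho\det^{i-1};L)$.

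The step requiring the most care is (ii): one must be sure that the generic temperedness bound $M$ from the statement of the Proposition can really be sharpened to $M=0$ for $G=\Phi_\rho$, which is exactly the content of part (d) of Proposition \ref{propgeneralitiestame} together with Corollary \ref{previouscorollary} (the supremum $w$ of the weights of the entries of $\Phi_\rho$ satisfies $w<1$, so $\Phi_\rho\in(\Tamecirc{\mathfrak{E}})^{N\times N}$). Everything else is a direct reading-off from the preceding Proposition and the fact that $h$ is a non-zero-divisor, so the corollary follows with essentially no new computation.
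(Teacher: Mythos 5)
Your proposal is the paper's intended argument: the corollary is stated without proof precisely because it is the specialization $G=\Phi_\rho$, $m=0$, $M=0$ of the preceding Proposition, plus the observation that $h$ is a non-zero divisor. Steps (i), (iii), (iv), (v) are exactly right.

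One assertion in your step (ii) is false and should be deleted: $\|\Phi_\rho(z)\|$ is \emph{not} bounded as $|z|_\Im\to\infty$, and the entries of $\Phi_\rho$ do not have $v$-valuation $\geq 0$ — they lie in $\Tamecirc{\mathfrak{E}}$, so their valuations lie in $]-1,0]\cup\{\infty\}$, and by Proposition \ref{leading} an entry of weight $\eta\in\,]0,1[$ grows like $|e_C(z)|^{\eta}$ near the cusp (already for a basic representation, $\chi(z)$ has weight $1/q$). Boundedness of $\Phi_\rho$ on the lattice $A$ says nothing about its growth on $\Omega$. The reason this does not sink the argument is the one you give in your closing paragraph: what the proof of the Proposition actually uses (via Lemma \ref{lemma-preparatory}) is that the sup $\eta$ of the weights of the entries of $G$ satisfies $\eta<M+1$, equivalently that $M$ bounds the degree in $e_C(z)$ of $G\Phi_\rho^{-1}$; for $G=\Phi_\rho$ that degree is $0$ and the weights are $<1$ by Proposition \ref{propgeneralitiestame}~(d), so $M=0$ is legitimate and $\|\Phi_\rho(z)u(z)\|\to0$. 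Note that if one instead read the definition of $M$ literally as ``$\|G(z)u(z)^M\|$ bounded,'' one would be forced to $M=1$ and would only get $h^2\mathcal{P}^{[i]}_w(G)\in S_{w+2(q+1)}(\rho\det^{i-2};L)^{1\times n_i}$, which is weaker than the corollary; so the correct justification is the weight bound, not boundedness of $\Phi_\rho$.
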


\subsubsection{The case $G=\Psi_m(\rho)$}
With $m\geq 1$, we study $\mathcal{P}_m(G)$ where:
\begin{equation}\label{defgengoss}
G=\Psi_m(\rho)=\sum_{a\in A}\frac{1}{(z-a)^m}\rho(T_a).
\end{equation} The functions $\Psi_m(\rho)$ have been introduced in \S \ref{section-Psi}.
By Lemma \ref{lemmapsil}
we have $\Psi_m(\rho)\in\mathcal{QP}_m(\rho;\LL_\Sigma)$.
If $\rho=\boldsymbol{1}:\Gamma\rightarrow\{1\}$ we recover the (scalar) sums $S_{m,\Lambda}$ for the lattice $\Lambda=A$ (see \cite[\S 6]{GOS2} and \cite[\S 3]{GEK}). In particular,
for any $m\geq1$ there exists a polynomial $G_{m}\in K[X]$ (called the {\em Goss' polynomial} of order $m$) such that \begin{equation}\label{goss-polynomials-identity}
G=\widetilde{\pi}^mG_m(u).
\end{equation}
The Goss' polynomials $G_m$ can be computed inductively by using the generating series:
\begin{equation}\label{goss-poly-relations}
\sum_{m\geq 1}G_m(u)X^m=\frac{uX}{1-u\exp_C(X)}.\end{equation}
See \cite[(3.6)]{GEK},\cite{GEK1}, and \cite[Theorem 3.2]{PAP&ZEN}, \cite{GEK2} for more recent results on these polynomials. See also our Lemma \ref{generalisation-gekeler}.

The next result holds:
\begin{Proposition}\label{proposition-convergence} Let us consider $w,m\in\NN^*$.
If $G=\Psi_m(\rho)$, the columns of $\mathcal{P}_{w}(G)$ are 
in $S_w(\rho\det^{-j};\LL_\Sigma)$ with $j$ varying in $\ZZ/(q-1)\ZZ$.
\end{Proposition}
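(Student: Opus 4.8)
The strategy is to establish convergence and modularity of the Poincaré series $\mathcal{P}_w(\Psi_m(\rho))$ in exactly the same spirit as the case of entire $G$ treated above, but now using the pole structure of $\Psi_m(\rho)$ at the elements of $A$ together with its decay at the cusp. First I would record that $\Psi_m(\rho)\in\mathcal{QP}_m(\rho;\LL_\Sigma)$ by Lemma \ref{lemmapsil}, so it is tempered; and moreover, writing $\Phi_\rho^{-1}\Psi_m(\rho)$, the matrix function $\Psi_m(\rho)$ has a meromorphic extension to $\CC_\infty$ with poles of order $m$ supported on $A$ only. The point is that for $\gamma=(\begin{smallmatrix}a&b\\c&d\end{smallmatrix})$ of type (2) or (3) in Lemma \ref{lemma3}, and $z$ in a fixed affinoid subdomain of $\Omega$ where $|z|_\Im$ is bounded below and above, the images $\gamma(z)$ stay at bounded distance from $0$ and, crucially, stay away from the lattice $A$ (because $z\in\Omega$ forces $|cz+d|\geq|z|_\Im>0$ and then (\ref{decompositiongamma}) keeps $\gamma(z)$ in $\Omega$ as well); so $\Psi_m(\rho)(\gamma(z))$ is well defined and uniformly bounded there.

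\textbf{Convergence.} The key estimate is that $\|\Psi_m(\rho)(\gamma(z))\|$ decays polynomially in $|cd|$ while $|J_\gamma(z)^{-w}|=|cz+d|^{-w}$ also decays, forcing the tail sum over type-(3) matrices to converge uniformly on affinoids of $\Omega$. Concretely, using (\ref{decompositiongamma}), $\gamma(z)=\tfrac{a}{c}-\tfrac{\det\gamma}{c(cz+d)}$; for $z$ in an affinoid with $c_1^{-1}\leq|z|_\Im\leq|z|\leq c_1$ one gets $|cz+d|\asymp|c||z|$ roughly (more precisely $|cz+d|\geq |c|\,|z|_\Im$), so that $|\gamma(z)-\tfrac ac|$ is small when $|cd|$ is large, and since $\tfrac ac$ is not in $A$ unless $c\mid a$ — which the normalization $|a|<|c|$ precludes unless $a=0$ — the value $\Psi_m(\rho)(\gamma(z))$ is controlled by the regular part of $\Psi_m(\rho)$ near $\tfrac ac$, hence bounded. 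Combining with $|cz+d|^{-w}\to0$ as $|cd|\to\infty$ (since $w\geq1$ and $|cd|>1$), and noting there are $O(q^{2n})$ pairs $(c,d)$ with $|cd|=q^n$, the series $\mathfrak{C}$ converges absolutely and uniformly on every affinoid of $\Omega$; together with the finite sums $\mathfrak{A},\mathfrak{B}$ this shows $\mathcal{P}_w(\Psi_m(\rho))\in\operatorname{Hol}_{\KK_\Sigma}(\Omega\to\LL_\Sigma^{N\times N})$.

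\textbf{Modularity, type, and cuspidality.} By Proposition \ref{preinvestigation} the convergent sum satisfies the functional equation $\mathcal{P}_w(G)(\gamma(z))=\det(\gamma)^{-m}J_\gamma(z)^w\rho(\gamma)\mathcal{P}_w(G)(z)\rho(\begin{smallmatrix}\det\gamma&0\\0&1\end{smallmatrix})^{-1}$, and the block decomposition (\ref{blocks-P}) splits it into pieces $\mathcal{P}_w^{[i]}(G)$ whose columns transform by $J_\gamma(z)^w\rho\det^{i-m}(\gamma)$; after reindexing $j$ for $i-m$, each column lies in the modular-like functions of weight $w$ for $\rho\det^{-j}$. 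It remains to check the behavior at the cusp infinity, i.e. that each such column is actually a cusp form in $S_w$. As $|z|_\Im=|z|\to\infty$, for every type-(2) or type-(3) representative $\gamma$ one has $|\gamma(z)|\to0$ uniformly, so $\Psi_m(\rho)(\gamma(z))\to0$ (indeed $\Psi_m(\rho)$ vanishes at $0$, having a $u$-expansion with positive valuation by the Goss-polynomial identity (\ref{goss-polynomials-identity}) extended to $\rho$), making $\mathfrak{B}+\mathfrak{C}\to\underline0$; and $\mathfrak{A}=G(z)\epsilon(\rho)$ also tends to $0$ since $\|G(z)\|\to0$ at the cusp. Hence $\|\mathcal{P}_w(G)(z)\|\to0$ as $|z|_\Im\to\infty$, and by Theorem \ref{theorem-u-expansions} (identifying each column with an element of $\mathfrak{M}_\Sigma^{N\times1}$) each column of $\mathcal{P}_w^{[i]}(G)$ lies in $S_w(\rho\det^{i-m};\LL_\Sigma)$, which after reindexing is the assertion.

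\textbf{Main obstacle.} The delicate point is the uniform bound on $\|\Psi_m(\rho)(\gamma(z))\|$ for type-(3) matrices: one must ensure that $\gamma(z)$ does not approach the poles of $\Psi_m(\rho)$ — i.e. does not get too close to $A$ — uniformly over the infinitely many $\gamma$, and simultaneously quantify the decay from the automorphy factor to beat the growth in the number of lattice points. The normalizations $|a|<|c|$, $|b|<|d|$, $ad-bc=1$ in Lemma \ref{lemma3} are exactly what make $a/c\notin A$ and keep $|\gamma(z)-a/c|$ genuinely small, so the regular (pole-free) part of $\Psi_m(\rho)$ near $a/c$ governs the estimate; making this rigorous, perhaps via the meromorphic continuation on annuli $\mathcal{C}_n$ from \S\ref{section-Psi} and the growth bounds of Lemma \ref{lemma-omega12}, is where the real work lies.
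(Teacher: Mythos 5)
Your overall architecture is the same as the paper's: the decomposition $\mathcal{P}_w(G)=\mathfrak{A}+\mathfrak{B}+\mathfrak{C}$ along Lemma \ref{lemma3}, uniform convergence of $\mathfrak{C}$ on affinoids of the form $\{z\in\Omega:c_1^{-1}\leq|z|_\Im\leq|z|\leq c_1\}$, modularity and the block/type decomposition via Proposition \ref{preinvestigation}, and cuspidality from the decay of $G$ at the cusp. The problem lies in your central convergence estimate. You claim that $\|\Psi_m(\rho)(\gamma(z))\|$ is uniformly bounded over the type-(3) representatives because $\gamma(z)$ ``stays away from the lattice $A$'', and you even remark that the normalization $|a|<|c|$ fails to exclude $a/c\in A$ only when $a=0$ --- but you then assert boundedness anyway. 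That is exactly where the argument breaks. The cosets with bottom row $(c,d)$, $c\in\FF_q^\times$, $|d|>1$, have type-(3) representatives $\gamma=\big(\begin{smallmatrix}0&-c^{-1}\\ c&d\end{smallmatrix}\big)$, for which $\gamma(z)=-\det(\gamma)/(c(cz+d))$ tends to $0\in A$ as $|d|\to\infty$; the term $\widetilde{b}=0$ of $\Psi_m(\rho)(\gamma(z))$ then has norm $|\gamma(z)|^{-m}=|c(cz+d)|^{m}$, so $\|G(\gamma(z))\|$ grows like $|J_\gamma(z)|^{m}$ and is certainly not bounded. (Even for $a\neq0$ one only gets $\|G(\gamma(z))\|\leq(|c|/|a|)^{m}\leq|c|^{m}$, which is unbounded as $|c|\to\infty$.) The honest estimate, obtained from the identity $(\gamma(z)-\widetilde{b})^{-m}=J_\gamma(z)^{m}(az+b-\widetilde{b}J_\gamma(z))^{-m}$ exactly as in the paper's proof of Proposition \ref{proposition-poincare-bis}, is $\|S_\gamma(w,m;G)(z)\|\leq|J_\gamma(z)|^{m-w}$: the decay must come from the automorphy factor beating the order-$m$ pole of $\Psi_m(\rho)$ at $0$, not from boundedness of $G\circ\gamma$. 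As written your argument does not establish convergence of $\mathfrak{C}$, and even after correction you would have to confront the regime $w\leq m$, which your proposal (and, admittedly, the paper's own rather terse proof, which splits off a finite subsum before invoking $w>0$) does not address.

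Two smaller points. First, in the non-archimedean setting the count of $O(q^{2n})$ pairs $(c,d)$ is irrelevant: a series converges if and only if its general term tends to zero, so all the work is in the termwise estimate above. Second, your justification of cuspidality for $\mathfrak{B}+\mathfrak{C}$ is confused: you write that ``$\Psi_m(\rho)$ vanishes at $0$, having a $u$-expansion with positive valuation'', but $\Psi_m(\rho)$ has a pole of order $m$ at $z=0$ (and at every point of $A$); it vanishes at the cusp $u=0$, which is a different limit. As $|z|_\Im\to\infty$ the points $\gamma(z)$ for $\gamma$ of type (2) or (3) approach the unit disk, not the cusp, so the decay of $\mathfrak{B}+\mathfrak{C}$ again has to be extracted from $|J_\gamma(z)|^{-w}$ against the pole of $G$, not from a vanishing of $G$ at its argument. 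The treatment of $\mathfrak{A}=G(z)\epsilon(\rho)$, where $\|G(z)\|\to0$ genuinely holds, is the only part of your cuspidality argument that goes through as stated.
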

\begin{proof} It suffices to show that the sum defining $\mathcal{P}_{w}(G)$ is uniformly convergent on affinoid subdomains of $\Omega$ of the type $\mathcal{C}:=\{z\in\Omega:c_1^{-1}\leq |z|_\Im \leq |z|\leq c_1\}$ with $c_1\in|\CC_\infty^\times|$ such that $c_1>1$. For this, we use the decomposition 
$\mathcal{P}_w(G)=\mathfrak{A}+\mathfrak{B}+\mathfrak{C}$. We need to show that the series $\mathfrak{C}$ converges uniformly over $\mathcal{C}$. We note that if $\gamma=\gamma_{c,d}$ is of type (3) as in Lemma \ref{lemma3}, then if $z\in\mathcal{C}$ we get $|\gamma(z)|\leq c_1$. In fact,
we have $\gamma(z)\rightarrow0$ by (\ref{decompositiongamma}) as $\gamma$ varies in the chosen 
representative set of $H\backslash\Gamma$ and $\gamma(\mathcal{C})\subset D_{\CC_\infty}(0,|\theta|^{-1})\cap\Omega$ for all but finitely many $\gamma$. If we denote by $\mathcal{E}$ the set of such homographies, we get $\|G(\gamma(z))\|\leq |z|^{-m}$ for all $z\in\mathcal{C}$ and for all $\gamma\in\mathcal{E}$. Therefore we can decompose $\mathfrak{C}=\mathfrak{C}_0+\mathfrak{C}_1$ where $\mathfrak{C}_1$ is a finite sum of holomorphic functions and $\mathfrak{C}_0=\sum_{\gamma\in\mathcal{E}}S_\gamma(G)(z)$ which converges uniformly on $\mathcal{C}$
in virtue of the fact that $w>0$. We deduce that $\mathcal{P}_w(G)$ defines a holomorphic function over $\Omega$, with values in $\LL_\Sigma^{N\times N}$. Since moreover,
$\|G(z)\|\rightarrow0$ as $|z|=|z|_\Im\rightarrow\infty$, we see that the columns of $\mathcal{P}_{w}(G)$ are cusp forms.
\end{proof}
Giving sufficiently general conditions for the non-vanishing of $\mathcal{P}_{w}(G)$ is more difficult in the case
$G=\Psi_m(\rho)$. We have the next proposition:

\begin{Proposition}\label{proposition-poincare-bis}
Assuming that $m,w$ are two positive integers such that $w>2m$, if  
$G=\Psi_m(\rho)$ and $\epsilon(\rho)\neq0$, then $\mathcal{P}_{w}(G)$ has a non-zero column in $S_w(\rho\det^{-i};\LL_\Sigma)$ for some $i$.
\end{Proposition}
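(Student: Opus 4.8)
The idea is to adapt the standard argument for non-vanishing of scalar Poincaré series (compare Gekeler's treatment of $S_{m,\Lambda}$ and the classical theory) to the matrix setting, using the fine control on $\Psi_m(\rho)$ in annuli provided by Lemma \ref{lemma-omega12}. By Proposition \ref{proposition-convergence} we already know that the columns of $\mathcal{P}_{w}(G)$ lie in the appropriate spaces $S_w(\rho\det^{-i};\LL_\Sigma)$, so only non-vanishing is at stake. The series $\mathcal{P}_{w}(G)$ decomposes as $\mathfrak{A}+\mathfrak{B}+\mathfrak{C}$ according to the three types of representatives in Lemma \ref{lemma3}, and the essential point will be to isolate a region of $\Omega$ where exactly one of the finite pieces $\mathfrak{A}$, $\mathfrak{B}$ dominates all the rest, so that the norm of $\mathcal{P}_{w}(G)$ there equals the norm of a piece we can compute to be non-zero (because $\epsilon(\rho)\neq 0$).

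\textbf{Key steps.} First I would fix $z$ in the annulus $\mathcal{C}_1=\{z\in\Omega:1<|z|=|z|_\Im<|\theta|\}$ and analyze the size of each term. By Lemma \ref{lemma-omega12}, applied to $\Psi_m(\rho)$ (which is built from the scalar $\Psi_m$ after simultaneous triangularization of the $\rho(T_a)$, all of whose diagonal entries are $1$, so the same slope data govern the growth), there is a sub-annulus $1<|z|<\kappa_1$ on which $\|\Psi_m(\rho)(z)\|=|z|^{1-q-m-\omega_1}$, a size that blows up as $|z|\downarrow 1$. Meanwhile, for a type-(3) representative $\gamma=\gamma_{c,d}$ one has, by \eqref{decompositiongamma}, $|\gamma(z)|=|\det\gamma|/|c(cz+d)|$, which is uniformly small when $|cd|>1$ and $z$ stays in a fixed annulus; hence $\|S_\gamma(w,m;G)(z)\|=|J_\gamma(z)|^{-w}\|\Psi_m(\rho)(\gamma(z))\|$ is controlled by $|cz+d|^{-w}\cdot|\gamma(z)|^m$, and since $w>2m$ the factor $|cz+d|^{-w}$ (bounded by a negative power of $|c|$, hence small for all but finitely many $\gamma$) wins: the tail $\mathfrak{C}_0=\sum_{\gamma\in\mathcal E}S_\gamma$ is $\ll |z|^{-\text{(large)}}$, negligible compared to $\mathfrak{A}$. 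The hypothesis $w>2m$ is exactly what makes the off-diagonal contribution decay faster than the diagonal piece grows as one approaches the boundary of the annulus.

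\textbf{Finishing.} Next, among the finitely many terms of $\mathfrak{A}$, $\mathfrak{B}$, and $\mathfrak{C}_1$, I would compare $\infty$-adic sizes as $|z|\downarrow 1$ (equivalently $|z|_\Im\downarrow 1$): from \eqref{frak-A}, $\mathfrak{A}(z)=G(z)\epsilon(\rho)$, so $\|\mathfrak A(z)\|=\|G(z)\epsilon(\rho)\|$, which is $\gg|z|^{1-q-m-\omega_1}\cdot\|\epsilon(\rho)\|$ once we use that $\epsilon(\rho)$ is a non-zero $\{-1,0\}$-diagonal matrix (so multiplying by it does not kill the leading entry of $G$ in at least one block). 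By contrast each term $S_{\gamma_{\mu,\nu}}(z)$ in $\mathfrak{B}$ involves $G\big((z+\nu/\mu)^{-1}\big)$ with the argument of bounded size, hence $\|\mathfrak B(z)\|$ stays bounded, and similarly $\mathfrak C_1$ stays bounded as $|z|\downarrow 1$. Therefore for $|z|$ sufficiently close to $1$ the ultrametric inequality forces $\|\mathcal{P}_{w}(G)(z)\|=\|\mathfrak A(z)\|\neq 0$, so $\mathcal{P}_{w}(G)\neq 0$; passing to the block decomposition \eqref{blocks-P} and picking the block $i$ where the $i$-th block of $\epsilon(\rho)$ is non-zero gives a non-zero column of $\mathcal P_w^{[i]}(G)$ in $S_w(\rho\det^{-i};\LL_\Sigma)$, completing the proof. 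The main obstacle I anticipate is making the growth comparison in $\mathcal{C}_1$ fully rigorous for the \emph{matrix} function $\Psi_m(\rho)$ — in particular confirming that Lemma \ref{lemma-omega12} transfers from the scalar $\Psi_m$ to $\Psi_m(\rho)$ with the \emph{same} exponents $\omega_1,\omega_2$ (this rests on the diagonalizability-to-unipotent structure of the $\rho(T_a)$, so the diagonal of the triangularized $\Psi_1(\rho)$ is $e_A(z)^{-1}I_N$), and in checking that multiplication by $\epsilon(\rho)$ on the right genuinely preserves the leading tame monomial in the relevant block rather than producing a cancellation.
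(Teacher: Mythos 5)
There is a genuine gap, and it lies in the choice of region within the annulus $\mathcal{C}_1$ and in the claimed growth of $\mathfrak{A}$ there. You work near the \emph{inner} boundary $|z|\downarrow 1$ and assert that $\|\mathfrak{A}(z)\|\gg|z|^{1-q-m-\omega_1}$ ``blows up'' as $|z|\downarrow1$. It does not: the exponent $1-q-m-\omega_1$ is negative and $|z|>1$, so $|z|^{1-q-m-\omega_1}<1$ and it merely tends to $1$ as $|z|\downarrow1$ (the poles of $\Psi_m$ sit at points of $A$, and approaching the circle $|z|=1$ radially is not approaching a pole). Meanwhile the bounds for the other pieces, $\|\mathfrak{B}(z)\|\leq|z|^{m-w}$ and $\|\mathfrak{C}(z)\|\leq|z|^{m-w}$, also tend to $1$ in that limit, so the ultrametric inequality gives no domination. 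If instead you try to win by comparing exponents on the sub-annulus $1<|z|<\kappa_1$, you need $|z|^{1-q-m-\omega_1}>|z|^{m-w}$, i.e.\ $w>2m+q-1+\omega_1$, which is strictly stronger than the hypothesis $w>2m$ and involves the uncontrolled combinatorial quantity $\omega_1$. So your argument cannot close under the stated hypothesis.

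The correct comparison (the one the paper makes) takes place at the \emph{outer} part of the annulus. By Lemma \ref{lemma-omega12}, for $\kappa_1<|z|<|\theta|$ one has $|\Psi_m(z)|=|\Psi_m^{\geq}(z)|=|\theta|^{-m}\bigl|\tfrac{z}{\theta}\bigr|^{\omega_2}$, and since the diagonal of $G\epsilon(\rho)$ contains entries $\pm\Psi_m(\boldsymbol{1})$ (your observation about the unipotent structure of the $\rho(T_a)$ and the $\{-1,0\}$-diagonal $\epsilon(\rho)$ is fine here), $\|\mathfrak{A}(z)\|\geq|\theta|^{-m}\bigl|\tfrac{z}{\theta}\bigr|^{\omega_2}$. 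The inequality $|\theta|^{-m}\bigl|\tfrac{z}{\theta}\bigr|^{\omega_2}>|z|^{m-w}$ holds exactly when $|z|>|\theta|^{\frac{\omega_2+m}{\omega_2+w-m}}$, and $\frac{\omega_2+m}{\omega_2+w-m}=1-\frac{w-2m}{\omega_2+w-m}<1$ precisely because $w>2m$, \emph{whatever} $\omega_2$ is; so such $z$ exist in the annulus and force $\mathcal{P}_w(G)\neq0$. Two further points in your sketch need repair even in the right region: for type-(3) matrices the term with $\widetilde{b}=0$ contributes $|\gamma(z)|^{-m}$ (not $|\gamma(z)|^{m}$), which is large and must be absorbed via $|az+b|<|cz+d|$ to get the clean bound $\|S_\gamma(G)\|\leq|J_\gamma(z)|^{m-w}$; and $\mathfrak{B}$ must be split into the $a=0$ part and the rest to obtain $\|\mathfrak{B}(z)\|\leq|z|^{m-w}$, rather than just ``bounded''.
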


\begin{proof} 
We need to analyze the various subsums $\mathfrak{A},\mathfrak{B}$ and $\mathfrak{C}$
of $\mathcal{P}_w(G)$ that we know being convergent series, by Proposition \ref{proposition-convergence}. 
We begin by studying the subsum $\mathfrak{A}$. 
Note that $\rho(T_a)-I_N$ is a nilpotent matrix having zeroes in the diagonal for all $a\in A$. The diagonal of $G=\Psi_m(\rho)$ is equal to 
$I_N\sum_{a\in A}(z-a)^{-m}$ and the hypothesis on $\epsilon(\rho)$ implies that 
$G\epsilon(\rho)$ has some non-zero elements on the diagonal of valuation $|\cdot|$ equal to 
$|\Psi_m(\boldsymbol{1})|$. By Lemma \ref{lemma-omega12} there exists $\kappa_1\in]1,|\theta|[\cap|\CC_\infty^\times|$ and a non-negative integer $\omega_2$ such that if $\kappa_1<|z|<|\theta|$, then
$\Psi_m(z)=|\theta|^{-m}|\frac{z}{\theta}|^{\omega_2}$. We deduce that
\begin{equation}\label{bound-A}
\|\mathfrak{A}(z)\|=|\theta|^{-m}\Big|\frac{z}{\theta}\Big|^{\omega_2},\quad \kappa_2<z<|\theta|.
\end{equation}

We now study the subsum $\mathfrak{B}$. To do this, we assume that $|z|>1$. By (\ref{frak-B}) and the definition of 
$G$:

\begin{multline}
\mathfrak{B}=\Bigg(\underbrace{\sum_{\beta\in\FF_q}(z+\beta)^{m-w}\rho\Big(\begin{smallmatrix}-\beta & 1 \\ -1 & 0\end{smallmatrix}\Big)}_{\mathfrak{B}_0}+\underbrace{\sideset{}{'}\sum_{a\in A}\sum_{\beta\in\FF_q}\frac{(z+\beta)^{m-w}}{(1-a(z+\beta))^m}\rho\Big(\begin{smallmatrix}-\beta & 1 \\ 1 & 0\end{smallmatrix}\Big)\rho(T_a)\rho\Big(\begin{smallmatrix}-1 & 0 \\ 0 & 1\end{smallmatrix}\Big)}_{\mathfrak{B}_1}\Bigg)\epsilon(\rho),
\end{multline}

where the sum is split in two pieces, the first sum corresponding to $a=0$, while the dash $'$ on the second sum designates the term corresponding to $a=0$ omitted. If $a\neq0$ we get
$|1-a(z+\beta)|=|a||z|\geq |z|=|z+\beta|$ and therefore, 
$\|\mathfrak{B}_1(z)\|\leq |z|^{-w}$ for $1<|z|.$
As for $\mathfrak{B}_0$, we see that 
$$\mathfrak{B}_0=\sum_{\beta\in\FF_q}(z+\beta)^{m-w}\rho\Big(\begin{smallmatrix}-\beta & 1\\ -1 & 0\end{smallmatrix}\Big)\epsilon(\rho).$$ Hence,
$\|\mathfrak{B}_0(z)\|\leq |z|^{m-w}$ again for $1<|z|.$
Thus,
\begin{equation}\label{bound-B}
\|\mathfrak{B}(z)\|\leq |z|^{m-w},\quad 1<|z|.
\end{equation}

It remains to handle the subsum $\mathfrak{C}$ and we consider, for this purpose, $z\in\Omega$
such that $1<|z|$ and $|z|\not\in|\theta|^\ZZ$. Suppose that $\gamma=\gamma_{c,d}=(\begin{smallmatrix}a & b\\ c & d\end{smallmatrix})$ is of type (3) as
in Lemma \ref{lemma3}. We notice that $|az+b|< |cz+d|$. This follows easily from the conditions on $a,b,c,d$ determining the type (3) and the fact that $|az+b|=\max\{|az|,|b|\}$ and $|cz+c|=\max\{|cz|,|d|\}$ because $|z|\not\in|\theta|^\ZZ$.

Then 
$$S_\gamma(G)=\widetilde{\pi}^{-m}J_\gamma(z)^{-w}\rho(\gamma)\sum_{\widetilde{b}\in A}(\gamma(z)-\widetilde{b})^{-m}\rho(T_{\widetilde{b}})\rho(\begin{smallmatrix}\det(\gamma) & 0\\ 0 & 1\end{smallmatrix}).$$
One sees easily that
$$(\gamma(z)-\widetilde{b})^{-m}=\frac{J_\gamma(z)^m}{(az+b-\widetilde{b}J_\gamma(z))^m}.$$
Note that
$az+b-\widetilde{b}J_\gamma(z)=z(a-\widetilde{b}c)+b-\widetilde{b}d$ so that, if $\widetilde{b}\neq0$,
$|az+b-\widetilde{b}J_\gamma(z)|=\max\{|z||a-\widetilde{b}c|,|b-\widetilde{b}d|\}=\max\{|z||c|,|d|\}|\widetilde{b}|=
|\widetilde{b}||J_\gamma(z)|$. Hence $\widetilde{b}\neq0$
implies that $|(\gamma(z)-\widetilde{b})^{-m}|\leq1$. If $\widetilde{b}=0$, since $|az+b|<|cz+d|$, 
we get $|\gamma(z)|^{-m}\leq|J_\gamma(z)|^m$.
Therefore, we deduce that $\|S_\gamma(G)\|\leq |J_\gamma(z)|^{m-w}$ for $\gamma$ of type (3) and we can conclude that
\begin{equation}\label{bound-C}
\|\mathfrak{C}(z)\|\leq |z|^{m-w},\quad \text{if }1<|z|,\quad |z|\not\in|\theta|^{\ZZ}.
\end{equation}

Assuming by contradiction that 
$\mathcal{P}_w(G)$ vanishes identically, we have that $\mathfrak{A}=-(\mathfrak{B}+\mathfrak{C})$. 
Looking at Lemma \ref{lemma-omega12} we observe that $|z|>|\theta|^{\frac{\omega_2+m}{\omega_2+w-m}}$ if and only if $|z|^{\omega_2+w-m}>|\theta|^{\omega_2+m}$, equivalent to $|\theta|^{-m}|\frac{z}{\theta}|^{\omega_2}>|z|^{m-w}$. But
$$\frac{\omega_2+m}{\omega_2+w-m}=1-\frac{w-2m}{\omega_2+w-m}$$ and the hypothesis 
$w>2m$ ensures that there exists $\kappa_2\in]1,|\theta|[\cap|\CC_\infty^\times|$ such that
for all $z\in\Omega$ such that $\kappa_2<|z|<|\theta|$,
$$\|\mathfrak{A}(z)\|\geq|\Psi_m(z)|=|\Psi_m^{\geq}(z)|=|\theta|^{-m}\Big|\frac{z}{\theta}\Big|^{\omega_2}>|z|^{m-w}\geq \|\mathfrak{B}(z)+\mathfrak{C}(z)\|,$$ by (\ref{bound-A}), (\ref{bound-B}) and (\ref{bound-C}) (more precisely, a non-zero column of $\mathfrak{A}$ has an entry which has $\|\cdot\|$ equal to $|\Psi_m(z)|$). This is impossible. Hence $\mathcal{P}_w(G)$ does not vanish identically.
\end{proof}

\subsubsection{Example: Poincar\'e series in a class introduced by Gekeler}\label{Gekeler-Poincare} We consider the case $N=1$, $\rho=\boldsymbol{1}$, we choose $G(z)=G_m(u)=\widetilde{\pi}^{-m}\Psi_m(\boldsymbol{1})$ the Goss' polynomial of order $m$ with $m>0$. Then, we see that 
$\epsilon(\rho)=\sum_{\mu\in\FF_q^\times}\mu^{2m-w}$ which is non-zero if and only if $w\equiv2m\pmod{q-1}$. We therefore reach the next result.

\begin{Corollary}
If $w\equiv2m\pmod{q-1}$ and $w>2m$ then, with $G(z)=G_m(u)$, the Poincar\'e series $\mathcal{P}_w(G)$ determines a non-zero element of $S_w(\det^{-m};\CC_\infty)$.
\end{Corollary}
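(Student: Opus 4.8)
The idea is to specialise the general machinery developed for Poincaré series $\mathcal{P}_w(G)$ to the scalar situation $N=1$, $\rho=\boldsymbol{1}$, and apply the two propositions already proved. First I would check that with $G(z)=G_m(u)=\widetilde{\pi}^{-m}\Psi_m(\boldsymbol{1})$ the function $G$ is a tempered $\boldsymbol{1}$-quasi-periodic function of type $m$: indeed $\Psi_m(\boldsymbol{1})\in\mathcal{QP}_m(\boldsymbol{1};\CC_\infty)$ by Lemma \ref{lemmapsil}, and multiplying by the scalar $\widetilde{\pi}^{-m}\in\CC_\infty^\times$ changes nothing, so $G$ is still regular, tempered, and of type $m$. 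Hence the Poincaré series $\mathcal{P}_w(G)$ is defined and Propositions \ref{proposition-convergence} and \ref{proposition-poincare-bis} apply with $L=\CC_\infty$ (since $\Sigma=\emptyset$, $\LL_\Sigma=\CC_\infty$).

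Next I would identify the relevant character sum. Since $N=1$ and $\rho=\boldsymbol{1}$ has degree $l=0$, the matrix $\epsilon(\rho)$ is simply the scalar
$$\epsilon(\boldsymbol{1})=\sum_{\mu\in\FF_q^\times}\mu^{2m-w+l}=\sum_{\mu\in\FF_q^\times}\mu^{2m-w}\in\FF_q.$$
This sum over the cyclic group $\FF_q^\times$ of order $q-1$ equals $-1$ if $2m-w\equiv 0\pmod{q-1}$ and $0$ otherwise. Under the hypothesis $w\equiv 2m\pmod{q-1}$ we therefore get $\epsilon(\boldsymbol{1})=-1\neq 0$. Also, since there is a single "block" here, the decomposition (\ref{blocks-P}) is trivial and $\mathcal{P}_w(G)=\mathcal{P}_w^{[i]}(G)$ for the unique relevant $i$, namely $i\equiv m\pmod{q-1}$ (so that $\det^{i-m}$ is trivial); more precisely the twist appearing is $\det^{-m}$ after the type-$m$ normalisation, matching the target space $S_w(\det^{-m};\CC_\infty)$ claimed in the statement.

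Then I would invoke Proposition \ref{proposition-convergence} with $\rho=\boldsymbol{1}$ to conclude that $\mathcal{P}_w(G)$ converges and its (single) column lies in $S_w(\det^{-j};\CC_\infty)$ for the appropriate $j$; tracking the determinant twist through the definition of $S_\gamma(w,m;G)$ and the fact that $G$ has type $m$ shows $j=m$, i.e. $\mathcal{P}_w(G)\in S_w(\det^{-m};\CC_\infty)$. Finally, to get that it is \emph{non-zero}, I would apply Proposition \ref{proposition-poincare-bis}: the hypotheses there are that $m,w$ are positive integers with $w>2m$ and $\epsilon(\rho)\neq 0$, all of which we have just verified; the conclusion is that $\mathcal{P}_w(G)$ has a non-zero column in $S_w(\rho\det^{-i};\LL_\Sigma)$ for some $i$, which in the scalar case is exactly the assertion that $\mathcal{P}_w(G)\in S_w(\det^{-m};\CC_\infty)\setminus\{0\}$. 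This completes the argument.

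\textbf{Main obstacle.} The only genuinely delicate point is bookkeeping of the determinant twist: one must check that the index $i$ (equivalently $j$) produced by Propositions \ref{proposition-convergence} and \ref{proposition-poincare-bis} is consistently $i\equiv m\pmod{q-1}$, so that $\rho\det^{i-m}=\boldsymbol{1}$ inside the Poincaré construction while the form itself transforms with the automorphy factor of weight $w$ and type $m$, landing in $S_w(\det^{-m};\CC_\infty)$. This is a matter of carefully unwinding the definition of $S_\gamma(w,m;G)$ and the type condition on $G=G_m(u)$, together with the observation (already used in computing $\mathfrak{A}$, $\mathfrak{B}$) that $\sum_{\mu\in\FF_q^\times}\mu^{2m-w}$ is the obstruction to the surviving block; no new analytic input is needed beyond the two cited propositions.
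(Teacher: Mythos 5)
Your proposal is correct and follows exactly the paper's route: specialise to $N=1$, $\rho=\boldsymbol{1}$, note that $\epsilon(\boldsymbol{1})=\sum_{\mu\in\FF_q^\times}\mu^{2m-w}\neq 0$ precisely when $w\equiv 2m\pmod{q-1}$, and then invoke Proposition \ref{proposition-convergence} for cuspidality and Proposition \ref{proposition-poincare-bis} for non-vanishing. Your bookkeeping of the type and of the determinant twist is consistent with the paper's (unwritten) verification, so there is nothing to add.
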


This sharpens Petrov's \cite[Remark 4.1]{PET2} where the condition on $w\equiv2m\pmod{q-1}$ is  stronger: $w>(q+1)m$. Note that Petrov's condition is the same of Gerritzen and van der Put in \cite[pp. 304-307]{GER&PUT}. If we take
$w> 2m$, $m\in\{1,\ldots,q\}$, $w\equiv2m\pmod{q-1}$ and $G=u^m$, we see that
$$\mathcal{P}_{w}(G)=\sum_{\gamma\in H\backslash \Gamma}\det(\gamma)^mJ_\gamma^{-w}u(\gamma(z))=P_{w,m}\in S_w(\det{}^{-m};\CC_\infty)$$ in the notations of Gekeler, \cite[(5.11)]{GEK}. If $w=q+1$ and $m=1$ then $h=P_{q+1,1}$.

\subsubsection{Example: Poincar\'e series associated to the representations $\rho_\Sigma^*$} We consider $\rho=\rho_\Sigma^*$ which is of degree $s=|\Sigma|$, where $\Sigma\subset\NN^*$. A simple computation shows that
$$\rho(\begin{smallmatrix}\nu & 0 \\ 0 & 1\end{smallmatrix})=\operatorname{Diag}(\nu^{-s},\cdots,\nu^{-n_1},\nu^{-n_0})$$ where the integer sequence $(n_i)_{i\geq 0}$ does not depend on $s$ and coincides with the so-called one's-counting sequence, that is, the sequence which gives the number of one's in the binary expansion of $i$. We suppose $s$ fixed.
We consider $w,m>0$ such that $w>2m$ and we set $r=2m-w+s$. We have the next result, of which we omit the elementary proof, where we recall that $p$ is the prime dividing $q$.

\begin{Proposition}\label{tedious}
The following properties hold.
\begin{enumerate}
\item If $p=2$ we have $\epsilon(\rho)=I_N$.
\item If $p>2$ then $\epsilon(\rho)$ is non-zero if and only if $r$ is even.
\item If $p>2$ then $r\equiv0\pmod{q-1}$ implies that the last column of $\epsilon(\rho)$ is 
non-zero.
\item Assume that $p>2$. For all $i=1,\ldots,N$ there exists a unique $k\in\{0,2,\ldots,\frac{q+1}{2}\}$
such that if $r\equiv k\pmod{q-1}$, then the $i$-th column of $\epsilon(\rho)$ equals $-\varepsilon_i$
where $\varepsilon_i$ is the $i$-th column of $I_N$.
\end{enumerate}
\end{Proposition}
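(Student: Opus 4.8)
The statement is purely a computation with the matrix $\epsilon(\rho)=\sum_{\mu\in\FF_q^\times}\mu^{r}\rho(\begin{smallmatrix}\mu^{2} & 0 \\ 0 & 1\end{smallmatrix})$ where $r=2m-w+s$, and $\rho=\rho_\Sigma^*$. The plan is to unfold the definition of $\rho_\Sigma^*$ on the diagonal torus and then reduce to a very classical fact about character sums over $\FF_q^\times$, namely that $\sum_{\mu\in\FF_q^\times}\mu^{a}$ equals $-1\in\FF_q$ if $(q-1)\mid a$ and $0$ otherwise (recall $p\mid q$). First I would record, from the definition of $\rho_\Sigma^*=\bigotimes_{i\in\Sigma}\rho_{t_i}^*$ and the fact that $\chi_{t_i}(\mu)=\mu$ for $\mu\in\FF_q^\times$, that each $\rho_{t_i}^*(\begin{smallmatrix}\mu & 0\\0 & 1\end{smallmatrix})=\operatorname{Diag}(\mu^{-1},1)$, so that $\rho(\begin{smallmatrix}\mu & 0 \\ 0 & 1\end{smallmatrix})=\operatorname{Diag}(\mu^{-n_0},\mu^{-n_1},\ldots,\mu^{-n_{N-1}})$ after ordering the Kronecker factors, where $n_i$ is the one's-counting sequence (the number of $1$'s in the binary expansion of $i$) as already stated in the text. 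Hence $\rho(\begin{smallmatrix}\mu^2 & 0 \\ 0 & 1\end{smallmatrix})=\operatorname{Diag}(\mu^{-2n_0},\ldots,\mu^{-2n_{N-1}})$, and $\epsilon(\rho)$ is the diagonal matrix whose $i$-th entry (indexing from $0$) is the scalar $c_i:=\sum_{\mu\in\FF_q^\times}\mu^{r-2n_i}\in\FF_q$.

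Now everything follows by case analysis on the residue of $r-2n_i$ modulo $q-1$, using the character-sum fact above, so $c_i\in\{0,-1\}$ and $c_i=-1$ exactly when $(q-1)\mid (r-2n_i)$. For part (1), when $p=2$ we have $q=2^e$, so $n_i\equiv 0$ is irrelevant: here $q-1$ is odd, so $2$ is invertible mod $q-1$ and $r-2n_i\equiv 0\pmod{q-1}$ has, for each target, solutions — but more simply, $\sum_{\mu\in\FF_q^\times}\mu^{r-2n_i}$: in characteristic $2$, note $2n_i$ and the parity of $r$ play no role and one checks directly that $c_i=-1=1$ for every $i$; the cleanest route is to observe that squaring is an automorphism of $\FF_q^\times$ when $q$ is even, so $\sum_\mu \mu^{r-2n_i}=\sum_\mu \mu^{r-2n_i}$ where the exponent mod $q-1$ ranges over a coset, and in fact $\epsilon(\rho)=I_N$ drops out of the identity $\rho(\begin{smallmatrix}\mu^2&0\\0&1\end{smallmatrix})^{-1}=\rho(\begin{smallmatrix}\mu^{-2}&0\\0&1\end{smallmatrix})$ combined with $\mu\mapsto\mu^2$ bijective. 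For part (2) with $p>2$: if $r$ is odd then $r-2n_i$ is odd, hence not divisible by the even number $q-1$, so $c_i=0$ for all $i$ and $\epsilon(\rho)=0$; if $r$ is even, then taking $i=0$ gives $n_0=0$ and if additionally $(q-1)\mid r$ one gets $c_0=-1\neq 0$, so $\epsilon(\rho)\neq 0$ — and conversely if $\epsilon(\rho)\neq 0$ some $c_i=-1$, forcing $r-2n_i\equiv 0\pmod{q-1}$, in particular $r$ even since $q-1$ is even. For part (3): assume $p>2$ and $r\equiv 0\pmod{q-1}$; the last column corresponds to $i=N-1=2^s-1$, whose binary expansion is all $1$'s, so $n_{N-1}=s$ and $r-2n_{N-1}=2m-w+s-2s=2m-w-s$. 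Since $s\equiv w\pmod{q-1}$ is \emph{not} assumed here, I would instead argue directly from $r\equiv 0$: we need $2n_{N-1}\equiv 0\pmod{q-1}$, i.e.\ $2s\equiv 0\pmod{q-1}$ — hmm, this is not automatic, so the correct last-column index giving a nonzero entry must be pinned down by the parametrization in part (4) rather than asserted for $i=N-1$ unconditionally; I would therefore prove (4) first and deduce (3) as the instance $k=0$, checking that $i=N-1$ is among the columns with $r\equiv 0$.

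For part (4), fix $p>2$ and fix $i\in\{1,\ldots,N\}$ (so $n_{i-1}$ in $0$-indexing, or adjust indexing consistently). The column is $-\varepsilon_i$ precisely when $c_{i-1}=-1$, i.e.\ $r\equiv 2n_{i-1}\pmod{q-1}$. Since $0\le n_{i-1}\le s$ and we are free to reduce mod $q-1$, and since $q-1$ is even while $2n_{i-1}$ is even, the residue $2n_{i-1}\bmod(q-1)$ lies in $\{0,2,4,\ldots,q-3\}$, i.e.\ among the even residues; and every even residue class mod $q-1$ is hit by a unique $k\in\{0,2,\ldots,\tfrac{q+1}{2}\}$ up to the symmetry $k\leftrightarrow q-1-k$ — here I would use that $2k$ for $k\in\{0,1,\ldots,\tfrac{q-1}{2}\}$ runs over all even residues mod $q-1$ bijectively (since $\gcd(2,q-1)=2$ and $2\cdot\tfrac{q-1}{2}=q-1\equiv 0$), so for a given $i$ there is exactly one such $k$ with $2k\equiv 2n_{i-1}\pmod{q-1}$, giving the stated parametrization. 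The remaining content is bookkeeping to match the index ranges $\{0,2,\ldots,\tfrac{q+1}{2}\}$ in the statement with the set of attainable even residues; I would verify this small finite combinatorial claim explicitly.

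\textbf{Main obstacle.} None of the steps is deep; the only place requiring care is the indexing and the precise finite-combinatorial matching in part (4) — keeping track of $0$- versus $1$-based indexing of the $N=2^s$ columns, ensuring the one's-counting values $n_i$ and their doubles reduce correctly modulo $q-1$, and confirming that the claimed index set $\{0,2,\ldots,\tfrac{q+1}{2}\}$ is exactly a set of representatives for the relevant even residues. This is why the paper calls the proof elementary and omits it; the plan is simply to reduce each of the four assertions to the character-sum evaluation $\sum_{\mu\in\FF_q^\times}\mu^{a}=-[\,(q-1)\mid a\,]$ in $\FF_q$ and then discharge the residue arithmetic.
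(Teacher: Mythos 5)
The paper gives no proof of this proposition (it is explicitly omitted as ``elementary''), so the only question is whether your argument stands on its own. Your reduction is certainly the intended one: $\epsilon(\rho)$ is diagonal and its entries are the character sums $c=\sum_{\mu\in\FF_q^\times}\mu^{r-2n}$, equal to $-1$ if $(q-1)\mid(r-2n)$ and to $0$ otherwise, with $n$ running over the one's-counting values. But the execution has concrete errors. The most damaging one is in part (3): you have the ordering of the diagonal reversed. The paper writes $\rho(\begin{smallmatrix}\nu&0\\0&1\end{smallmatrix})=\operatorname{Diag}(\nu^{-s},\ldots,\nu^{-n_1},\nu^{-n_0})$, so the \emph{last} entry carries the exponent $n_0=0$ (consistent with the convention used throughout the paper that the last coordinate of a $\rho_\Sigma^*$-indexed vector is the one fixed by $\rho_\Sigma^*(T_a)$ and carrying the constant term). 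Hence the last diagonal entry of $\epsilon(\rho)$ is $\sum_\mu\mu^{r}$, which equals $-1$ exactly when $(q-1)\mid r$, and part (3) is immediate --- there is no need to route it through part (4). Your identification of the last column with the exponent $n_{N-1}=s$ is what produces the spurious condition $2s\equiv 0\pmod{q-1}$ and leaves your part (3) unproved.

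The other parts are also incomplete. For part (1) you give no argument: the displayed identity is a tautology, and ``one checks directly that $c_i=-1=1$'' asserts the conclusion. The reduction in fact shows that $\epsilon(\rho)=I_N$ forces $(q-1)\mid(r-2n)$ for every $n\in\{0,\ldots,s\}$, hence $(q-1)\mid 2$; for $p=2$ with $q>2$ this fails (take $q=4$, $s=1$, $m=1$, $w=3$, so $r=0$ and $\epsilon(\rho)=\operatorname{Diag}(0,1)$), so the claim can only be established for $q=2$ and your proof cannot be repaired as written. In part (2) you prove the forward implication only under the extra hypothesis $(q-1)\mid r$; for a general even $r$ one needs some $n\in\{0,\ldots,s\}$ with $2n\equiv r\pmod{q-1}$, which fails when $s$ is small relative to $q$ (e.g.\ $q=7$, $s=1$, $r\equiv 4\pmod 6$ gives $\epsilon(\rho)=0$ with $r$ even). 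Finally, in part (4) the identification of the attainable residues $2n\bmod(q-1)$ with the set $\{0,2,\ldots,\frac{q+1}{2}\}$ is precisely the nontrivial bookkeeping, and you leave it unverified. So the framework is right, but parts (1)--(3) as written are not proofs; the character-sum computation must be carried out with the correct column ordering, and the statements of (1) and (2) need the hypotheses under which the divisibility conditions can actually be met.
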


If $s\equiv1\pmod{q-1}$ then the smallest parameters allowable in the construction of a Poincar\'e series as above are $w=3$ and $m=1$. By Proposition \ref{tedious}, the last column of $\mathcal{P}_3(G)$ where $G=\Psi_1(\rho_\Sigma^*)$ is an element of
$S_3(\rho_\Sigma^*\det^{-1};\LL_\Sigma)\setminus\{0\}$. Note that if $\Sigma=\emptyset$ and $q=2$ then 
we get a multiple of Gekeler's function $h$.

\subsection{Eisenstein series}\label{intro-eisenstein-series}
The process that leads to the construction of Eisenstein series is different from that of Poincar\'e series and delivers, in general, vector-valued modular forms rather than matrix-valued modular forms. We describe it in our particular setting but the discussion that follows easily generalizes to e.g. the case of vector-valued modular forms for the group $\operatorname{SL}_2(\ZZ)$ etc. Let $\rho$ be a representation
$$\Gamma\xrightarrow{\rho}\GL_N(B),$$
with $(B,|\cdot|_B)$ a countably cartesian Banach $\CC_\infty$-algebra.
Suppose that there is a map 
\begin{equation}\label{the-map-mu}
A^{1\times 2}\xrightarrow{\mu}B^{N\times 1}\end{equation}
such that for all $\gamma\in\Gamma$, if $(a,b)\gamma=(a',b')$ in $A^{1\times 2}$, then
$${}^t\rho(\gamma)\mu(a,b)=(a',b').$$ Assume further that the image of $\mu$ is bounded, that is, there
is $c_1>0$ such that $|\mu(a,b)|_B\leq c_1$ for all $a,b\in A$. Then, for all $w>0$, the series
$$\mathcal{E}=\sideset{}{'}\sum_{a,b\in A}(az+b)^{-w}\mu(a,b)$$ (where the dash $'$ indicates that the term corresponding to $a=b=0$ is omitted)
converges to a rigid analytic map $$\Omega\rightarrow B^{N\times1}$$ and moreover:

\begin{Lemma}\label{defi-general-eisenstein-series}
We have that $\mathcal{E}\in M_w(\rho^*;B)$. If $\sum_{b\in A\setminus\{0\}}b^{-w}\mu(0,b)$ is non-zero, then $\mathcal{E}$ does not vanish identically.
\end{Lemma}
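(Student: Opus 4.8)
The statement asserts two things: (i) that the Eisenstein-type series $\mathcal{E} = \sideset{}{'}\sum_{a,b} (az+b)^{-w}\mu(a,b)$ is a modular form of weight $w$ for $\rho^*$ with values in $B$, and (ii) a non-vanishing criterion. The plan is to first establish convergence and the functional equation, then regularity at the cusp, and finally the non-vanishing.

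For convergence and holomorphy, I would argue as in the classical scalar case. Fix an affinoid subdomain of $\Omega$ of the form $\{z : c^{-1}\leq |z|_\Im \leq |z| \leq c\}$; on such a set there is a lower bound $|az+b| \geq \epsilon \max\{|a|,|b|\}$ for a constant $\epsilon \in |\CC_\infty^\times|$ depending only on $c$, by a standard estimate on $\Omega$ (this uses $z \notin K_\infty$). Since $w > 0$ and $|\mu(a,b)|_B \leq c_1$ is bounded, the terms $|(az+b)^{-w}\mu(a,b)|_B \leq c_1 \epsilon^{-w}\max\{|a|,|b|\}^{-w}$ tend to $0$ along the Fréchet filter on $A^2$, because for each $n$ there are only $\mathcal{O}(q^{2n})$ pairs with $\max\{|a|,|b|\} = q^n$ while $q^{-nw}$ decays (here one uses $w \geq 2$; the case $w=1$ needs a small separate argument, or one notes the intended range from the surrounding text is $w \geq 2$, cf. the constraint $w \equiv s \pmod{q-1}$, $w>0$ used for $\mathcal{E}(w;\rho^*_\Sigma)$). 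Because $B$ is countably cartesian, expanding in an orthonormal basis reduces uniform convergence to the scalar statement, and the uniform limit of functions in $\mathcal{O}_\Omega(U)\otimes_{\CC_\infty}B$ on every rational $U$ is by definition in $\operatorname{Hol}_B(\Omega \to B)$.

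For the functional equation: given $\delta = \big(\begin{smallmatrix} \alpha & \beta \\ \gamma & \delta' \end{smallmatrix}\big) \in \Gamma$, substitute $z \mapsto \delta(z)$. Using $a\,\delta(z) + b = J_\delta(z)^{-1}(a',b')\cdot{}^t(z,1)$ where $(a',b') = (a,b)\delta$, together with the hypothesis ${}^t\rho(\delta)\mu(a,b) = \mu(a',b')$ — wait, the excerpt's display reads ${}^t\rho(\gamma)\mu(a,b) = (a',b')$, which I read as a typo for $\mu(a',b')$ — one gets
\begin{align*}
\mathcal{E}(\delta(z)) &= \sideset{}{'}\sum_{a,b} (a\,\delta(z)+b)^{-w}\mu(a,b) = J_\delta(z)^w \sideset{}{'}\sum_{a,b}(a'z+b')^{-w}\, {}^t\rho(\delta)^{-1}\mu(a',b') \\
&= J_\delta(z)^w\, {}^t\rho(\delta)^{-1} \sideset{}{'}\sum_{a',b'}(a'z+b')^{-w}\mu(a',b') = J_\delta(z)^w \rho^*(\delta)\, \mathcal{E}(z),
\end{align*}
where the reindexing $(a,b)\mapsto(a,b)\delta$ is a bijection of $A^{1\times 2}\setminus\{(0,0)\}$, absolute convergence justifying the rearrangement, and $\rho^* = {}^t\rho^{-1}$ by definition. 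So $\mathcal{E}$ is modular-like of weight $w$ for $\rho^*$.

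For regularity at the cusp and non-vanishing: let $|z|_\Im = |z| \to \infty$. Terms with $a \neq 0$ satisfy $|az+b| \geq |z|$ (for suitable representatives $b$), hence contribute a quantity whose $B$-norm tends to $0$; terms with $a = 0$ give $\sum_{b \neq 0} b^{-w}\mu(0,b)$, a constant vector in $B^{N\times 1}$ (note $\mu(0,\lambda b) = \mu(0,b)$ up to the $\FF_q^\times$-scaling, and the sum converges as above). Thus $\|\mathcal{E}(z)\|_B$ is bounded as $|u(z)| \to 0$, so $\mathcal{E} \in M_w(\rho^*;B)$; and if the constant term $\sum_{b\in A\setminus\{0\}} b^{-w}\mu(0,b)$ is non-zero, then $\mathcal{E}$ does not vanish identically since $\mathcal{E}(z)$ converges to this non-zero limit. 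The main obstacle is purely bookkeeping: getting the uniform lower bound $|az+b|\geq \epsilon\max\{|a|,|b|\}$ on annuli in $\Omega$ and treating the borderline $w=1$ case correctly — everything else is a routine reindexing argument, and the countably-cartesian hypothesis on $B$ is exactly what lets the scalar estimates pass to the $B$-valued setting without extra work.
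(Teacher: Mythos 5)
Your overall route is the same as the paper's: the paper's proof consists solely of the reindexing computation $(a,b)\mapsto(a,b)\gamma$ combined with the hypothesis ${}^t\rho(\gamma)\mu(a,b)=\mu(a',b')$ (you are right that the displayed hypothesis contains a typo) to get $\mathcal{E}(\gamma(z))=J_\gamma(z)^w\rho^*(\gamma)\mathcal{E}(z)$, followed by the one-line remark that the terms tend to zero and that the non-vanishing is clear from the constant term. Your functional-equation computation and your treatment of regularity at the cusp and of non-vanishing (the $a\neq 0$ terms have norm $\leq c_1|z|_\Im^{-w}\to 0$, the $a=0$ terms give the constant $\sum_{b\neq 0}b^{-w}\mu(0,b)$) are correct and are exactly the intended details.

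There is, however, one genuine misstep in your convergence argument. You estimate the number of pairs with $\max\{|a|,|b|\}=q^n$ and compare it with $q^{-nw}$, concluding that you need $w\geq 2$ and that $w=1$ "needs a small separate argument". This is an archimedean convergence test that has no place here: in the ultrametric setting a series $\sum_i x_i$ in a Banach space converges if and only if $|x_i|_B\to 0$ along the Fr\'echet filter, with no condition on how many terms have a given norm. Since $|(az+b)^{-w}\mu(a,b)|_B\leq c_1\epsilon^{-w}\max\{|a|,|b|\}^{-w}\to 0$ for any $w>0$, the series converges (uniformly on the affinoid) for all $w>0$, and $w=1$ is not a borderline case. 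This matters because $w=1$ is precisely the case the paper cares most about ($\mathcal{E}(1;\rho^*_\Sigma)$ generates $M_1(\rho^*_\Sigma;\LL_\Sigma)$ in Theorem D); your hedge that "the intended range from the surrounding text is $w\geq 2$" is incorrect, as the statement immediately preceding the lemma asserts convergence for all $w>0$. Once this is corrected, your proof is complete and agrees with the paper's.
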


\begin{proof} 
We consider $\gamma\in\Gamma$. Then:
\begin{eqnarray*}
\mathcal{E}_w(\rho;\mu)&=&J_\gamma(z)^{w}\sideset{}{'}\sum_{a,b\in A}\Big((a,b)\gamma\Big(\begin{smallmatrix}z \\ \\1\end{smallmatrix}\Big)\Big)^{-w}\mu(a,b)\\
&=&J_\gamma(z)^{w}\sideset{}{'}\sum_{a',b'\in A}\Big((a',b')\Big(\begin{smallmatrix}z \\  \\1\end{smallmatrix}\Big)\Big)^{-w}\rho^*(\gamma)\mu(a',b')\\
&=&J_\gamma(z)^{w}\rho^*(\gamma)\mathcal{E}_w(\rho;\mu).
\end{eqnarray*}
Since $|(az+b)^{-w}\mu(a,b)|_B$ tends to zero, we easily conclude that $\mathcal{E}\in M_w(\rho^*;B)$ and the non-vanishing condition is clear.
\end{proof}

\begin{Definition}
{\em We call the function $\mathcal{E}$ of Lemma \ref{defi-general-eisenstein-series} the {\em Eisenstein series of weight $w$ associated with the data $(\rho^*,\mu)$} and we denote it by $\mathcal{E}_w(\rho^*;\mu)$ or more simply $\mathcal{E}_w(\rho^*)$ when the reference to $\mu$ is understood.}
\end{Definition}

Although we can always associate Poincar\'e series to representations of the first kind $\rho$
(it follows from Proposition \ref{proposition-poincare-bis} that for any representation of the first kind $\rho$ there exists $m\in\ZZ/(q-1)\ZZ$ and $w>0$ such that a column of a Poincar\'e series constructed there defines an non-zero element of $M_w(\rho)$) not every representation $\rho$ can be enriched by a map $\mu$ as above. The reader can check that if $\rho$ is a representation of the first kind that can be constructed by starting from basic representations by using only the elementary operations $\oplus,\otimes,S^m,\wedge^m$ (so the operation $(\cdot)^*$ is omitted) then maps like $\mu$ exist which are not zero and Lemma \ref{defi-general-eisenstein-series} can be applied to construct non-zero Eisenstein series in $M_w(\rho^*)$ for certain $w>0$. In this paper Eisenstein series will be studied in depth for specific choices of $\rho$ only. Namely, we will study in \S \ref{eisensteinseries} Eisenstein series associated to the representation $\rho_\Sigma^*$ with $\Sigma$ a finite subset of $\NN^*$.

\section{Differential operators on modular forms, Perkins' series}\label{differential-perkins}

A classical feature of modular forms for the group $\operatorname{SL}_2(\ZZ)$ is the existence of differential operators acting homogeneously on them (sending families of modular forms to modular forms). For instance, one can mention the so-called Serre's derivatives, Rankin-Cohen's brackets etc. For scalar Drinfeld modular forms associated to the characters $\det^{-m}$, similar structures exist and have been investigated (see \cite{BOS&PEL0,BOS&PEL,PAP&ZEN}). Here we describe the natural extension of Serre's derivatives over the Drinfeld modular forms for a representation of the first kind. In order to justify the existence of such operators, we need to first show that divided derivatives leave the fields of uniformizers invariant. 

In this section (see \S \ref{theweightofperkins}) we will also apply our results on quasi-periodic functions and higher derivatives to determine, in Theorems \ref{theopsi} and \ref{moregeneralcomputation}, the $v$-valuations of certain series introduced by Perkins in his Thesis \cite{PER0}, which turn out to be related to tame series. Perkins noticed that these series play a singular role in series expansions of Eisenstein series (see \S \ref{eisensteinseries}).

All along this section, we consider the divided higher derivatives:
$$\mathcal{D}_m(z^n)=\binom{n}{m}z^{n-m},\quad n,m\in\NN.$$
We choose $(B,|\cdot|_B)$ a Banach $L$-algebra which is countably cartesian in the sense of Definition \ref{algebra-countably-cartesian}. For all $n\geq 0$,
$\mathcal{D}_n$ defines a $B$-linear endomorphism of $\mathcal{O}_{\mathbb{A}^{1,an}_{\CC_\infty}/B}$. Note that these operators satisfy
Leibniz's rule $$\mathcal{D}_n(fg)=\sum_{i+j=n}\mathcal{D}_i(f)\mathcal{D}_j(g),$$ for 
$f,g$ analytic functions. To handle divided derivatives it is convenient to introduce the following map, where $x$ is an indeterminate and where $\underline{\mathcal{D}}$ denotes the family
of operators $(\mathcal{D}_n)_{n\geq 0}$ (Taylor's map):
$$\Exp{\mathcal{D}}{x}:\mathcal{O}_{\mathbb{A}^{1,an}_{\CC_\infty}/B}\rightarrow\mathcal{O}_{\mathbb{A}^{1,an}_{\CC_\infty}/B}[[x]],\quad \Exp{\mathcal{D}}{x}(f)=\sum_{i\geq 0}\mathcal{D}_i(f)x^i.$$
Then, $\Exp{\mathcal{D}}{x}$ induces $B$-algebras morphisms at the level of the sections, and Leibniz's rule is equivalent to the multiplicativity
$\Exp{\mathcal{D}}{x}(fg)=\Exp{\mathcal{D}}{x}(f)\Exp{\mathcal{D}}{x}(g)$. Let $Y$ be an affinoid subdomain of $\mathbb{A}^{1,an}_{\CC_\infty}/B$, $z\in Y$ and $x_0\in\CC_\infty$ such that $z+x_0\in Y$. If $f\in\mathcal{O}_{\mathbb{A}^{1,an}_{\CC_\infty}/B}$ then $\Exp{\mathcal{D}}{x}(f)_{x=x_0}=f(z+x_0)$. If $x,y$ are two indeterminates,
we therefore have 
$$\Exp{\mathcal{D}}{x}(\Exp{\mathcal{D}}{y}(f))=\Exp{\mathcal{D}}{x+y}(f).$$ This implies that the family of higher derivatives $\underline{\mathcal{D}}$ is {\em iterative}: 
$$\mathcal{D}_{m+n}=\binom{m+n}{m}\mathcal{D}_m\circ\mathcal{D}_n=\binom{m+n}{n}\mathcal{D}_n\circ\mathcal{D}_m,$$ for all $m,n\geq 0$. By an application of Lucas' formula, 
if $n=n_0+n_1q+\cdots+n_rq^r\in\NN$ with $n_0,\ldots,n_r\in\{0,\ldots,q-1\}$, we have the identity
$$\mathcal{D}_n=\mathcal{D}_{n_0}\circ\mathcal{D}_{n_1q}\circ\cdots\circ\mathcal{D}_{n_rq^r},$$
and the operators $\mathcal{D}_{n_iq^i}$ mutually commute, for $i=0,\ldots,r$.

\subsection{Higher derivatives on tame series}\label{sectionongosspoly}
We show that tame series are closed under higher derivations. The main result of this subsection is 
Proposition \ref{propdoublestructure} but we also present some auxiliary properties that can be of interest for the reader willing to do computations. Let $\Sigma$ be a finite subset of $\NN^*$ with $s$ elements. 
Let $m\geq 0$ be the unique integer such that $(m-1)(q-1)+1\leq s\leq m(q-1)$. 
If $s=0$ then $m=0$.
Let $l$ be the unique integer with $s=(m-1)(q-1)+l$ (so that $1\leq l\leq q-1$ and if $s=m=0$, then $l=q-1$). 
We set:
\begin{equation}\label{Ms}
M_s=e_1^{q-1}\cdots e_{m-1}^{q-1}e_m^l\in\Tamecirc{\FF_q}\end{equation}
(note that we can define the $B$-module $\Tamecirc{B}$ for any $\FF_q$-algebra $B$).
We clearly have, by the fact that $s=(m-1)(q-1)+l$: 
\begin{equation}\label{def-w-max}
\lambda(M_s)=s,\quad w(M_s)=(q-1)\sum_{i=1}^{m-1}\frac{1}{q^i}+\frac{l}{q^m}=:w_{\max}(s).
\end{equation}
We set, for $B$ as in \S \ref{tameseriessect}, $\Tamecirc{B}_s=\Tamecirc{B}\cap\Tame{B}_s$ (recall the graduation by depths (\ref{grading-T}) and $\lambda$ in Definition \ref{def-BX}). We have the direct sum of $B$-modules $\Tamecirc{B}=\oplus_{s\geq 0}\Tamecirc{B}_s$. 
We call $M_s$ 
 the {\em maximal} tame monomial, a terminology which is motivated by the following result which tells us that
in the homogeneous module $\Tamecirc{B}_s$, $M_s$ has maximal weight (the proof is easy and left to the reader).
\begin{Lemma}\label{for-the-next-proof}
For all 
$f\in \Tamecirc{\KK_\Sigma}_s$ we have $w(f)\leq w_{\max}(s)$.
\end{Lemma}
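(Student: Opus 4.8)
The statement to establish is Lemma~\ref{for-the-next-proof}: for every $f\in\Tamecirc{\KK_\Sigma}_s$ we have $w(f)\le w_{\max}(s)$, where $w_{\max}(s)=(q-1)\sum_{i=1}^{m-1}q^{-i}+lq^{-m}$ and $s=(m-1)(q-1)+l$ with $1\le l\le q-1$ (and $m=0,l=q-1$ when $s=0$). The key observation is that $w(f)$ is, by definition, the weight of the leading tame monomial of $f$, so it suffices to bound the weight of an arbitrary monic tame monomial $\Angle{e}^j$ of depth $\ell_q(j)=s$ whose variables all have index $\ge 1$ (because $\Tamecirc{\KK_\Sigma}$ only involves $e_1,e_2,\dots$). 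Thus the statement reduces to a purely combinatorial maximization: among all $j\in\ZZ[p^{-1}]_{\ge 0}$ with $\ell_q(j)=s$ and whose base-$q$ expansion $j=\sum_{k\ge 1}j_kq^{-k}$ is supported on indices $k\ge 1$, the weight $j$ itself is maximized, and that maximum equals $w_{\max}(s)$.

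\textbf{The combinatorial core.} First I would observe that since $j=\sum_{k\ge 1}j_kq^{-k}$ with $j_k\in\{0,\dots,q-1\}$ and $\sum_k j_k=s$, making $j$ as large as possible means concentrating digit mass in the positions with smallest index, i.e.\ $k=1$ first, then $k=2$, and so on. Concretely, fill $j_1=q-1$, $j_2=q-1$, \dots, $j_{m-1}=q-1$, and $j_m=l$; this uses exactly $(m-1)(q-1)+l=s$ units of digit sum, and gives $j=(q-1)\sum_{i=1}^{m-1}q^{-i}+lq^{-m}=w_{\max}(s)$, which is precisely the weight of the maximal tame monomial $M_s=e_1^{q-1}\cdots e_{m-1}^{q-1}e_m^l$ recorded in~(\ref{def-w-max}). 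To see this is optimal, I would use a standard exchange/rearrangement argument: if some $j_k<q-1$ for $k<m$ while $j_{k'}>0$ for some $k'>k$, then transferring one unit from position $k'$ to position $k$ strictly increases $j$ (it changes $j$ by $q^{-k}-q^{-k'}>0$) without changing the depth $s$. Iterating, the maximum is attained at the greedy configuration described above, giving $j\le w_{\max}(s)$. (The edge case $s=0$ is immediate: the only monomial of depth $0$ in $\Tamecirc{\KK_\Sigma}$ is a constant, of weight $0=w_{\max}(0)$.)

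\textbf{From monomials to tame series.} With the bound on monomials in hand, the passage to a general $f\in\Tamecirc{\KK_\Sigma}_s$ is immediate from the structure of tame series: by Corollary~\ref{embeddingcorollary} the tame expansion of $f$ is unique, $f=\sum_j f_j\Angle{e}^j$ with all occurring $j$ satisfying $\ell_q(j)=s$ (this is exactly the meaning of $f$ being depth-homogeneous, belonging to $\Tamecirc{\KK_\Sigma}_s$), and the condition that only $e_1,e_2,\dots$ occur forces each such $j$ to have base-$q$ support in indices $\ge 1$. Hence every $j$ with $f_j\ne 0$ satisfies $j\le w_{\max}(s)$ by the combinatorial step, and therefore $w(f)=\max\{j:f_j\ne 0\}\le w_{\max}(s)$.

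\textbf{Main obstacle.} There is really no serious obstacle here; the lemma is elementary, as the text itself signals ("the proof is easy and left to the reader"). The only point requiring a little care is making the rearrangement argument airtight when the digit support is infinite---one must note that although $j$ may have infinitely many nonzero digits a priori, the depth constraint $\sum_k j_k=s<\infty$ forces all but finitely many $j_k$ to vanish, so the exchange procedure terminates and the supremum is a genuine maximum realized by $M_s$. I would state this finiteness remark explicitly and then the greedy optimality follows in one line.
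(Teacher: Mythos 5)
Your proof is correct; the paper explicitly leaves this lemma to the reader, and your argument (reduce to maximizing $j=\sum_{k\ge 1}j_kq^{-k}$ over digit vectors with $\sum_k j_k=s$ and $0\le j_k\le q-1$, then conclude by a greedy/exchange argument that the maximum is attained by $M_s$) is exactly the intended elementary one. The reduction to monomials via uniqueness of the tame expansion and the finiteness of the digit support are handled correctly, so nothing is missing.
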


We have the next rather straightforward result, where 
$w_{\max}$ has been defined in (\ref{def-w-max}) (recall that if $f\in\Tame{B}$ then $f^{[i]}$ is the projection of $f$ on $\Tame{B}_i$ of (\ref{decomposition-f-T})):
\begin{Proposition}\label{propdoublestructure} The following properties hold.
(1) The operators $(\mathcal{D}_i)_{i\geq0}$ induce $B$-linear endomorphisms
of $\Tamecirc{B},\Tame{B},\Tame{B}[[u]]$.
(2) If $f=\sum_if^{[i]}\in\Tame{B}$ is of depth $\leq L$ we have, for all $n\geq 1$:
$$\mathcal{D}_n(f)=\sum_{L\geq i\geq \ell_q(n)}\mathcal{D}_n(f^{[i]}).$$
(3) For all $n\geq 0$ and for all $f\in\Tamecirc{B}$ of depth $\leq s$, $\mathcal{D}_n(f)\in\Tamecirc{B}$ is of depth $\leq s-\ell_q(n)$ and of weight $\leq w_{\max}(s-\ell_q(n))$. (4) We have the commutation rules 
\begin{equation}\label{commutationrules}
\mathcal{D}_n\tau=\left\{\begin{matrix}0\text{ if }q\nmid n\\
\tau\mathcal{D}_{\frac{n}{q}}\text{ if }q\mid n,\end{matrix}\right.\quad n\geq 1.
\end{equation}
\end{Proposition}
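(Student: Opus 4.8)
The four assertions are all of an essentially formal nature and should be handled by reducing everything to monomials and invoking the combinatorics of Lucas' formula together with the structure results of \S\ref{tameseriessect}. The plan is to prove (4) first, since it is the engine for (1)--(3): for $q\nmid n$ one has $\binom{nq}{n}\equiv 0\pmod p$ by Lucas' theorem applied to the base-$q$ digits of $nq$ and $n$ (more precisely $\mathcal D_n(x^{qk})=\binom{qk}{n}x^{qk-n}$, and $\binom{qk}{n}=0$ in $\FF_q$ unless every base-$q$ digit of $n$ is at most the corresponding digit of $qk$, which forces $q\mid n$); and for $q\mid n$ the identity $\mathcal D_n(x^{qk})=\binom{qk}{n}x^{qk-n}=\binom{k}{n/q}^{?}\ldots$ — here one uses that, writing $n=qn'$, $\binom{qk}{qn'}\equiv\binom{k}{n'}\pmod p$, again by Lucas — gives $\mathcal D_{qn'}(\tau(g))=\mathcal D_{qn'}(g^q)$ applied to $g=x^k$ equals $\tau(\mathcal D_{n'}(g))=(\mathcal D_{n'}(g))^q$. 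Since every element of $\Tame{B}$ is a convergent (on each disk) $B$-linear combination of the monic tame monomials $\Angle{e}^j=\prod_i e_i^{j_i}$ with $e_i=e_C(\widetilde\pi z/\theta^i)$, and since $\tau(e_i)=e_i^q=e_{i-1}-\theta e_i$, this extends by $B$-linearity and continuity to all of $\Tame{B}$.

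For (2) I would argue as follows: by iterativity $\mathcal D_n=\mathcal D_{n_0}\circ\mathcal D_{n_1q}\circ\cdots\circ\mathcal D_{n_rq^r}$ where $n=\sum n_iq^i$ is the base-$q$ expansion, and by the commutation rules (4) the factor $\mathcal D_{n_iq^i}$ equals $\tau^i\circ\mathcal D_{n_i}\circ\tau^{-i}$ on the relevant subspaces; so it suffices to understand the effect of a single $\mathcal D_{n_0}$ with $0\le n_0\le q-1$ on a monic tame monomial of depth exactly $i$. A monomial $\Angle{e}^j$ of depth $i$ is a product of $i$ factors $e_{k_1},\ldots,e_{k_i}$ (with multiplicities summing with digits in $\{0,\dots,q-1\}$); applying Leibniz's rule for $\mathcal D_{n_0}$ distributes $n_0$ derivatives among these $\leq q-1$-wise appearing factors, and since $\mathcal D_c(e_k)=\mathcal D_c(e_C(\widetilde\pi z/\theta^k))$ for $1\le c\le q-1$ is (up to a scalar in $K$) again an entire function whose leading tame monomial computation via Proposition~\ref{leading} shows it lies in $\Tamecirc{B}$ of strictly smaller depth as soon as $c\ge 1$ — indeed $\mathcal D_c(e_C(w))$ for $1\le c<q$ equals $\binom{?}{?}\cdots$; concretely one uses $e_C(w)=\sum_{i\ge 0}d_i^{-1}(\widetilde\pi w)^{q^i}$ so $\mathcal D_c(e_C(\widetilde\pi z))=0$ for $1\le c<q$ except the $i=0$ term, giving $\mathcal D_c(e_C(\widetilde\pi z))=\binom{1}{c}\widetilde\pi^{?}\ldots$ — wait, this needs care: $\mathcal D_1(e_C(\widetilde\pi z))=\widetilde\pi$, a constant, and $\mathcal D_c=0$ for $2\le c\le q-1$. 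So applying $\mathcal D_{n_0}$ to a depth-$i$ tame monomial kills it unless $n_0$ is a sum of at most $i$ digits each in $\{0,1\}$ after reduction, and the result has depth $\le i-\ell_q(n_0)=i-n_0$ (for $n_0\le q-1$, $\ell_q(n_0)=n_0$). Summing the contributions of each base-$q$ digit gives the depth drop $\ell_q(n)=\sum n_i$, which yields exactly the index range $\ell_q(n)\le i\le L$ in (2), and simultaneously the depth bound in the first half of (3).

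For (1), item (2) already shows $\mathcal D_n$ preserves $\Tame{B}$ (finite depth, and the coefficient-boundedness and lower-bound-on-exponents conditions of Definition~\ref{deftameseries} are plainly preserved since $\mathcal D_n$ only lowers exponents and multiplies coefficients by elements of $K\subset B$ of absolute value $\le 1$); restricting to the submodule where only $e_1,e_2,\dots$ occur gives stability of $\Tamecirc B$; and $\Tame{B}[[u]]$-stability follows because $\mathcal D_n(u)=\mathcal D_n(e_C(\widetilde\pi z)^{-1})$ is again in $\Tame{B}$ (or one simply notes $\mathcal D_n$ acts coefficientwise-plus-Leibniz on $\sum f_iu^i$ and $u\in\Tame{B}$). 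For the weight bound in (3): by the depth bound just proved, $\mathcal D_n(f)\in\Tamecirc{B}_{\le s-\ell_q(n)}=\bigoplus_{i\le s-\ell_q(n)}\Tamecirc B_i$, and Lemma~\ref{for-the-next-proof} gives $w(g)\le w_{\max}(i)\le w_{\max}(s-\ell_q(n))$ for any $g\in\Tamecirc B_i$ with $i\le s-\ell_q(n)$ (using that $w_{\max}$ is non-decreasing in its argument, which is immediate from its defining formula~\eqref{def-w-max}). The main obstacle I anticipate is not conceptual but bookkeeping: getting the digit-by-digit analysis of $\mathcal D_{n_0}$ acting on a monic tame monomial exactly right, in particular checking that no unexpected carry phenomena occur when Leibniz's rule redistributes derivatives among repeated factors $e_k^{j_k}$ with $j_k\le q-1$ — here one repeatedly uses $\mathcal D_c(e_C(w))=0$ for $1\le c\le q-1$, $c\ne 1$, which is what makes the depth arithmetic clean, together with the rewriting rule $e_k^q=e_{k-1}-\theta e_k$ to keep everything in normal form; and then confirming that after summing over all digits the total depth drop is exactly $\ell_q(n)$ and not something smaller. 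Once that is pinned down, (1)--(4) all drop out.
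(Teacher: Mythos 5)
Your core idea is the same as the paper's: a monic tame monomial of depth $s$ is a product of $s$ $\FF_q$-linear factors $e_k$, Leibniz's rule distributes $\mathcal{D}_n$ among them, and $\mathcal{D}_c$ of an $\FF_q$-linear function vanishes unless $c\in\{0\}\cup q^{\NN}$, producing a constant of $B$ when $c=q^j$; hence every surviving term of $\mathcal{D}_n(\Angle{e}^j)$ loses at least $\ell_q(n)$ factors (the minimal number of $q$-powers summing to $n$), which gives (1), (2) and the depth bound in (3) in one stroke. (The paper's own proof is only a sketch along exactly these lines, leaving (3) and (4) to the reader.)

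However, your reduction step for (2) is flawed as written: you cannot write $\mathcal{D}_{n_iq^i}=\tau^i\circ\mathcal{D}_{n_i}\circ\tau^{-i}$, because $\tau^{-i}$ is not defined on $\Tame{B}$. The paper makes this point explicitly in \S\ref{tbulletalgebras} (the "Some remarks" discussion): the candidate $q$-th root $g=\sum_{i\ge0}\theta^{i/q}e_{i+1}$ of $e_0$ has unbounded coefficients, is not a tame series, and in fact satisfies $g^q-e_0=\widetilde{\pi}z$, so $e_0$ is not in the image of $\tau$ on $\Tame{\CC_\infty}$. The detour is also unnecessary: since $\mathcal{D}_{q^j}(e_k)=d_j^{-1}(\widetilde{\pi}\theta^{-k})^{q^j}\in B$ is a constant for \emph{every} $j\ge0$ (not just $j=0$), you can apply Leibniz directly to $\mathcal{D}_n$ on the full product without ever splitting $n$ into digits; the surviving terms are indexed by decompositions $i_1+\cdots+i_s=n$ with each $i_k\in\{0\}\cup q^{\NN}$, and at least $\ell_q(n)$ of the $i_k$ are nonzero. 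One further small slip: the constants $\mathcal{D}_{q^j}(e_k)$ do not all have absolute value $\le1$ (e.g. $|\mathcal{D}_1(e_1)|=|\widetilde{\pi}/\theta|=q^{1/(q-1)}>1$); boundedness of the coefficients of $\mathcal{D}_n(f)$ instead follows because only the finitely many $q^j\le n$ occur and the indices $k$ of the $e_k$ appearing in $f$ are bounded below, so the relevant family of constants is bounded. With these repairs your argument for (1)--(4) goes through and coincides with the paper's.
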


\begin{proof}[Sketch of proof.]

If $M\in\Tame{B}_s$ is a tame monomial of depth $s$ (as in \S \ref{asymptotic-beha}), then $\mathcal{D}_n(M)$ is a tame polynomial, and
$$\mathcal{D}_n(M)\in\bigoplus_{i\geq 0}\Tame{B}_{s-\ell_q(n)-i(q-1)}.$$
To see this consider more generally, for $i\in U$ with $U$ a finite subset of $\NN^*$ of cardinality $s$, $\FF_q$-linear functions $f_i\in\operatorname{Hol}(\CC_\infty\rightarrow B)$, so that we can write
$$f_i=\sum_{j\geq 0}f_{i,q^j}z^{q^j},\quad f_{i,q^j}\in B,\quad i\in U.$$
By Leibniz's rule we have for $n\geq 0$:
$$\mathcal{D}_n\left(\prod_{i\in U}f_i\right)=\sum_{i_1+\cdots+i_s=n}\prod_{k\in U}\mathcal{D}_{i_k}(f_k).$$ By $\FF_q$-linearity we have that 
$\mathcal{D}_k(f_i)=f_i$ if $k=0$, $\mathcal{D}_k(f_i)=f_{i,q^j}$ if $k=q^j$ with $j\in\NN$, and $0$ otherwise.
Hence, setting $f_{i,0}:=f_i$, we can write:
\begin{equation}\label{ausefulformula}
\mathcal{D}_n\left(\prod_{i\in U}f_i\right)=\sum_{\begin{smallmatrix}i_1+\cdots+i_s=n\\ 
i_k\in \{0\}\cup q^\NN;\forall k\end{smallmatrix}}\prod_{k\in U}f_{k,i_k},
\end{equation} if the subset of indices is non-empty, and $0$ otherwise, by the usual conventions on empty sums. 
Coming back to our elements of $\Tame{B}$, since for all $i$, $e_i$ is $\FF_q$-linear, we deduce that for all $n\geq 0$, $\mathcal{D}_n$ sends tame monomials on tame polynomials and therefore the 
operators $\mathcal{D}_i$ induce $B$-linear endomorphisms of $\Tame{B}$ as expected and the property corresponding to $\Tamecirc{B}$ follows easily. Now, it is easy to see that the operators $D_i$ extend to $B$-linear endomorphisms of $B[u^{-1}][[u]]$ so that we can also deduce the expected property for 
$\Tame{B}[[u]]$ and this suffices to justify (1). For (2), 
let $n$ be in $\NN^*$ and let us consider the set of decompositions of length $r\geq1$
$$n=\sum_{i=1}^rn_iq^i,\quad r\in\NN,\quad n_i\in\NN^*.$$
Then, the $q$-ary expansion of $n$ (the unique one which has the coefficients $n_i\in\{0,\ldots,q-1\}$) minimises the length $r=\ell_q(n)$. The reader can complete the verifications of the remaining properties of the proposition.
\end{proof}

\begin{Remark}
{\em The behavior of $v$ with respect to the action of the operator $\tau$ is multiplicative. On the other hand, it is difficult to make the interaction between $v$ and the collection of operators $\underline{\mathcal{D}}$ explicit which introduces a difficulty in handling our modular forms.}
\end{Remark}

\subsection{Divided higher derivatives of $\rho$-quasi-periodic functions}\label{divided-higher-derivatives}

We discuss here the problem of the computation of higher divided derivatives of the entries of the 
matrix functions $\Phi_\rho$ and $\Psi_m(\rho)$ for $m\geq 1$. We added this section to allow readers to perform explicit computations of higher derivatives of our modular forms. Indeed, the latter are all $\rho$-quasi-periodic and Proposition \ref{quasiperiodictempered} tells us that in order to explicitly compute higher derivatives of $\rho$-quasi-periodic functions, it suffices to explicitly compute higher derivatives of 
$u$ and $\Phi_\rho$. 

For this purpose it is convenient to choose a different normalisation for the higher derivatives. We set $D_n=(-\widetilde{\pi})^{-n}\mathcal{D}_n$
for all $n\geq 0$ and we write $\underline{D}=(D_i)_{i\geq 0}$. The formalism of the function
$\Exp{\mathcal{D}}{x}$ extends to $\underline{D}$ and matrix functions. We set, for $f$ 
an analytic function $\Omega\rightarrow\KK_\Sigma^{N\times N}$,
$$\Exp{D}{x}(f)=\sum_{i\geq0}D_i(f)x^i.$$
This defines, with $\mathbb{H}:=\operatorname{Hol}_{\KK_\Sigma}(\Omega\rightarrow\KK_\Sigma^{N\times N})$ a $\KK_\Sigma^{N\times N}$-algebra morphism 
$$\mathbb{H}\xrightarrow{\Exp{D}{x}}\mathbb{H}[[x]].$$
We also set
\begin{equation}\label{def-Gm}
G_m(\rho)=\widetilde{\pi}^{-m}\sum_{a\in A}(z-a)^{-m}\rho(T_a)=D_{m-1}(G_1(\rho)),\quad m\geq1,\end{equation}
and $G_0(\rho)=0$. The generating series of these functions is
\begin{equation}\label{G-rho}
\boldsymbol{G}(\rho):=\sum_{i\geq 0}G_i(\rho)x^i=x\Exp{D}{x}(G_1(\rho)).\end{equation}
We have the next lemma where we recall that $\exp_C(x)=\sum_{i\geq 0}d_i^{-1}x^{q^i}$ is Carlitz's exponential in $x$ (see \S \ref{Carlitz-exponential}).
\begin{Lemma}\label{generalisation-gekeler} The following formula holds:
$$\boldsymbol{G}(\rho)=\frac{ux}{1-u\exp_C(x)}\Exp{D}{x}(\Phi_{\rho}).$$
\end{Lemma}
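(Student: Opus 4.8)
\textbf{Proof plan for Lemma \ref{generalisation-gekeler}.}

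The plan is to reduce the identity to the scalar case $\rho=\boldsymbol{1}$, where it is exactly the classical generating-series identity (\ref{goss-poly-relations}) for Goss' polynomials, by exhibiting both sides as the value of the Taylor operator $\Exp{D}{x}$ on a well-chosen object and using that $\Exp{D}{x}$ is a ring morphism. First I would record that, by (\ref{def-Gm}) and (\ref{G-rho}), the left-hand side equals $x\,\Exp{D}{x}(G_1(\rho))$, and that $G_1(\rho)=\widetilde\pi^{-1}\Psi_1(\rho)=u\,\Phi_\rho$ by the very definition $\Phi_\rho=e_A\Psi_1(\rho)$ together with $e_A=\widetilde\pi^{-1}e_C$ and $u=e_C^{-1}$; more precisely $\Psi_1(\rho)=e_A^{-1}\Phi_\rho$ so $G_1(\rho)=\widetilde\pi^{-1}e_A^{-1}\Phi_\rho=e_C(z)^{-1}\Phi_\rho=u\,\Phi_\rho$. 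Hence $\boldsymbol G(\rho)=x\,\Exp{D}{x}(u\,\Phi_\rho)=x\,\Exp{D}{x}(u)\,\Exp{D}{x}(\Phi_\rho)$ by multiplicativity of $\Exp{D}{x}$ (Leibniz's rule, valid entrywise for the matrix function $\Phi_\rho$ since $u$ is scalar).

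So the whole lemma comes down to the scalar identity
\begin{equation*}
x\,\Exp{D}{x}(u)=\frac{ux}{1-u\exp_C(x)},
\end{equation*}
i.e.\ $\sum_{i\ge 0}D_i(u)\,x^{i}=\dfrac{u}{1-u\exp_C(x)}$. This is Gekeler's computation: one has $u(z+x_0)=1/e_C(z+x_0)=1/(e_C(z)+e_C(x_0))$ for the $\FF_q$-linearity of $e_C$, which (after the normalisation $D_n=(-\widetilde\pi)^{-n}\mathcal D_n$, so that $\Exp{D}{x}(f)_{x=x_0}=f(z+(-\widetilde\pi)^{-1}\cdot(-\widetilde\pi)x_0)$ — I would double-check the sign/scaling bookkeeping here so that $e_C$ rather than $e_C$ composed with a scalar appears) becomes
\begin{equation*}
\Exp{D}{x}(u)=\frac{1}{e_C(z)+e_C(x)}=\frac{u}{1+u\,e_C(x)},
\end{equation*}
and then one only has to reconcile this with the stated right-hand side. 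Here the one genuine subtlety is the minus sign: the paper writes $1-u\exp_C(x)$, which matches once one uses that in the relevant normalisation $\exp_C$ enters as $e_C$ of $-x$ (equivalently, $-\widetilde\pi$ appears in $D_n$), so that $e_C(x)$ above should read $-\exp_C(x)$; I would carry the substitution $x\mapsto$ (the normalised variable) carefully to land exactly on (\ref{goss-poly-relations}), which after multiplication by $x$ is precisely $\sum_{m\ge1}G_m(u)x^m=\dfrac{ux}{1-u\exp_C(x)}$, and this already identifies $x\Exp{D}{x}(u)$ with the classical Goss generating series (consistently with $G_m(\boldsymbol 1)=G_m(u)$).

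Putting the pieces together: $\boldsymbol G(\rho)=x\,\Exp{D}{x}(u)\cdot\Exp{D}{x}(\Phi_\rho)=\dfrac{ux}{1-u\exp_C(x)}\,\Exp{D}{x}(\Phi_\rho)$, which is the claim. The only points requiring care — and the main (mild) obstacle — are the normalisation bookkeeping for $D_n$ versus $\mathcal D_n$ that produces the correct sign in $1-u\exp_C(x)$, and the justification that $\Exp{D}{x}$ is multiplicative on the entries of a matrix-valued analytic function and that $G_1(\rho)$ literally equals $u\Phi_\rho$ as analytic functions on $\Omega$; the convergence of all the series involved is already guaranteed by Proposition \ref{propgeneralitiestame}(a) (entirety of $\Phi_\rho$) and the local behaviour of $u$, so no further analytic input is needed.
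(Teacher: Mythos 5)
Your proposal is correct and follows essentially the same route as the paper: write $\boldsymbol G(\rho)=x\,\Exp{D}{x}(G_1(\rho))$, use $G_1(\rho)=u\Phi_\rho$ and the multiplicativity of $\Exp{D}{x}$ to peel off $\Exp{D}{x}(\Phi_\rho)$, and reduce to the scalar identity $\Exp{D}{x}(u)=\frac{u}{1-u\exp_C(x)}$. The only (minor) divergence is in that last scalar computation: the paper avoids your sign bookkeeping entirely by writing $1=u\cdot u^{-1}$, applying $\Exp{D}{x}$, and computing $\Exp{D}{x}(u^{-1})=u^{-1}-\sum_{i\geq 0}d_i^{-1}x^{q^i}=u^{-1}-\exp_C(x)$ directly from the $\FF_q$-linearity of $u^{-1}=e_C(z)=\sum_i d_i^{-1}(\widetilde\pi z)^{q^i}$ and the normalisation $D_n=(-\widetilde\pi)^{-n}\mathcal D_n$ (the factor $(-\widetilde\pi)^{-q^i}\widetilde\pi^{q^i}=(-1)^{q^i}=-1$ in characteristic $p$ produces the minus sign). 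Your Taylor-substitution route also closes correctly: since $\Exp{D}{x}=\Exp{\mathcal D}{-x/\widetilde\pi}$, evaluation at $x_0$ gives $u(z-x_0/\widetilde\pi)=1/\bigl(e_C(z)-\exp_C(x_0)\bigr)$, which is exactly the stated right-hand side, so the sign you flagged as needing a double-check does resolve as you suspected.
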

\begin{proof}
It suffices to compute $\Exp{D}{x}(G_1(\rho))$. Since $G_1(\rho)=u\Phi_\rho$ the formula is obvious 
if we prove that $\Exp{D}{x}(G_1(\boldsymbol{1}))=\frac{u}{1-u\exp_C(x)}$. This is well known, see Gekeler \cite[(3.6)]{GEK}. Nevertheless, we recall the proof here. From $1=uu^{-1}$ we see that
$1=\Exp{D}{x}(u)\Exp{D}{x}(u^{-1})$. But $\Exp{D}{x}(u^{-1})=u^{-1}-\sum_{i\geq 0}d_i^{-1}x^{q^i}$ (recall that $u^{-1}$ is $\FF_q$-linear). Hence $$\Exp{D}{x}(u)=\frac{1}{u^{-1}-\sum_{i\geq 0}d_i^{-1}x^{q^i}}=\frac{u}{1-u\exp_C(x)}.$$
\end{proof}
If $\rho=\boldsymbol{1}$ then the formula of Lemma \ref{generalisation-gekeler} reduces to \cite[(3.6)]{GEK} because in this case $\Phi_\rho=1$. In general, the next Lemma can be helpful in determining some properties of $\Exp{D}{x}(\Phi_\rho)$.

\begin{Lemma}\label{matrix-M}
There exist $\vartheta\in\FF_q(\underline{t}_\Sigma)$ and a matrix $$M\in A[\vartheta,\theta^{-1}][[x]]^{N\times N}$$ such that 
$$\Exp{D}{x}(\Phi_\rho)=\Phi_\rho M.$$
\end{Lemma}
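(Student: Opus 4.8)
The plan is to exploit the functional equation that defines $\Phi_\rho$, namely $\Phi_\rho(a) = \rho(T_a)$ for all $a \in A$ (Proposition \ref{propgeneralitiestame}(d)), together with the fact that $\Phi_\rho$ is, up to conjugation by $\omega_\rho$, a tame series with coefficients in $A[\vartheta]$ (Corollary \ref{previouscorollary}). First I would set $M := \Phi_\rho^{-1}\Exp{D}{x}(\Phi_\rho)$, which a priori is an element of $\operatorname{Hol}_{\KK_\Sigma}(\Omega \to \KK_\Sigma^{N\times N})[[x]]$ since $\Phi_\rho \in \GL_N(\Tamecirc{\mathfrak{E}})$ by Corollary \ref{previouscorollary}. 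The lemma amounts to the assertion that $M$ has entries that are actually power series in $x$ whose coefficients lie in $A[\vartheta,\theta^{-1}]$ — in particular, that they are \emph{constant} functions of $z$ and have a controlled denominator.

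The key structural input is that $\Phi_\rho$ is $\rho$-quasi-periodic: $\Phi_\rho(z+a) = \rho(T_a)\Phi_\rho(z)$ for all $a \in A$ by (\ref{qpbehavior}). Applying $\Exp{D}{x}$ and using that $\Exp{D}{x}(g(z+a)) = \Exp{D}{x}(g)(z+a)$ together with the chain-rule identity $\Exp{D}{x}(\Exp{D}{y}(f)) = \Exp{D}{x+y}(f)$ from \S \ref{differential-perkins}, one gets $\Exp{D}{x}(\Phi_\rho)(z+a) = \rho(T_a)\Exp{D}{x}(\Phi_\rho)(z)$, so $M(z+a) = M(z)$: each coefficient of $M$ in $x$ is an $A$-periodic analytic function. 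Next, since the entries of $\Phi_\rho$ and $\Phi_\rho^{-1}$ are tame series in $\Tamecirc{\mathfrak{E}}$, Proposition \ref{propdoublestructure}(1),(3) shows that each $D_n(\Phi_\rho)$ is again tame with entries in $\Tamecirc{\mathfrak{E}}$ of strictly smaller (or equal for $n=0$) weight; in particular $D_n(\Phi_\rho) \in u \cdot (\Tamecirc{\mathfrak{E}})^{N\times N}$ for $n \geq 1$ (as its weight is $< 1$), and $\Phi_\rho^{-1}$ has entries in $\Tamecirc{\mathfrak{E}}$. Hence the coefficients of $M$ are $A$-periodic tame series in $\Tamecirc{\mathfrak{E}}$; being tame and $A$-periodic forces them, by the argument in the proof of Proposition \ref{quasiperiodictempered} (an $A$-periodic tame series lies in $\KK_\Sigma$), to be constant, so $M \in \mathfrak{E}^{N\times N}[[x]]$ with \emph{constant} (in $z$) coefficients, and the coefficient of $x^0$ is $I_N$.

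It remains to pin down the arithmetic nature of these constants. Here I would use the conjugated picture: by Lemma \ref{rationality-of-xi} and Corollary \ref{previouscorollary} there is $\vartheta \in \FF_q[\underline{t}_\Sigma]$ with $\omega_\rho\Phi_\rho\omega_\rho^{-1} \in (\Tamecirc{A[\vartheta]})^{N\times N}$, and $\omega_\rho$ is a constant matrix in $\GL_N(\LL_\Sigma)$, so it commutes with $\Exp{D}{x}$ and the conjugate $\widetilde M := \omega_\rho M \omega_\rho^{-1}$ equals $(\omega_\rho\Phi_\rho\omega_\rho^{-1})^{-1}\Exp{D}{x}(\omega_\rho\Phi_\rho\omega_\rho^{-1})$. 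Evaluating the identity $\Exp{D}{x}(\Phi_\rho) = \Phi_\rho M$ at the points $a \in A$ and using $\Phi_\rho(a)=\rho(T_a)$, $\rho(T_a) \in \GL_N(\FF_q)$, one reads off $M$ from finitely many such evaluations: since $M$ is a polynomial-coefficient — in fact power series — object with constant coefficients, its coefficients are determined by interpolation from $D_n(\Phi_\rho)(a) = \rho(T_a)M_n$, where $M = \sum_n M_n x^n$, and the values $D_n(\Phi_\rho)(a)$, being divided derivatives at $z=a$ of a tame series with coefficients in $A[\vartheta]$ evaluated via the explicit formula (\ref{ausefulformula}) for $\mathcal{D}_n$ on products of $\FF_q$-linear functions $e_i$, lie in $A[\vartheta,\theta^{-1}]$ (the $e_i(z)$ contribute factors $\widetilde\pi\,\theta^{-i}$ near the relevant expansion and the renormalisation $D_n = (-\widetilde\pi)^{-n}\mathcal D_n$ cancels the powers of $\widetilde\pi$, leaving powers of $\theta^{-1}$ and coefficients in $A[\vartheta]$). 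Therefore $M_n \in A[\vartheta,\theta^{-1}]^{N\times N}$ for all $n$, giving $M \in A[\vartheta,\theta^{-1}][[x]]^{N\times N}$ as claimed.

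The main obstacle I anticipate is the last step: making the bookkeeping of denominators rigorous, i.e.\ verifying that applying $D_n$ and specialising at $z=a$ really only introduces denominators that are powers of $\theta$ (and not, say, factorial-type denominators $d_i$ from $\exp_C$). This requires a careful analysis of formula (\ref{ausefulformula}) applied to the $e_i$'s — noting that the Carlitz factorials $d_i$ appearing in $\exp_C$ are units away from finitely many primes but here one must track them exactly — and of how the renormalised $D_n$ interacts with the weights so that the powers of $\widetilde\pi$ genuinely cancel. The cleanest route is probably to work entirely in the conjugated model $\omega_\rho\Phi_\rho\omega_\rho^{-1} = \exp_C(\widetilde\pi z(\theta I - \vartheta)^{-1})$-type expressions (as in Corollary \ref{caseofthebasicrepresentation} for basic $\rho$, then propagate through the admissible operations (\ref{procedures})), where the $x$-expansion of $\Exp{D}{x}$ of such an exponential can be written down in closed form and the arithmetic of the coefficients is transparent.
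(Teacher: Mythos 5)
Your proposal is correct and follows essentially the same route as the paper's (very terse) proof: one observes that each $D_n(\Phi_\rho)$ is again $\rho$-quasi-periodic and has entries in $\Tamecirc{\mathfrak{E}}$ (Proposition \ref{propdoublestructure}), so that $\Phi_\rho^{-1}D_n(\Phi_\rho)$ is $A$-periodic and tame of weight $<1$, hence constant, with coefficients read off from the tame expansion of $\Phi_\rho$. The denominator bookkeeping you flag as the main obstacle is indeed the delicate point, and the paper's proof does not elaborate on it either; just note that the cleanest way to see constancy is the uniqueness of the expansion of Lemma \ref{uniquedigitexpansion} (an $A$-periodic element of $\Tamecirc{B}$, not of $\Tame{B}$, is constant), rather than the slightly imprecise statement that every $A$-periodic tame series lies in $\KK_\Sigma$.
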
 

\begin{proof}
Since $\Exp{D}{x}(\Phi_{\rho}(z+a))=\rho(T_a)\Exp{D}{x}(\Phi_{\rho}(z))$ for all $a\in A$, by Proposition \ref{propdoublestructure},
the coefficients $D_i(\Phi_{\rho})$ of the $x$-expansion of $\Exp{D}{x}(\Phi_{\rho})$ are, for $\vartheta\in\FF_q(\underline{t}_\Sigma)$, elements of $(\Tamecirc{A[\vartheta,\theta^{-1}]})^{N\times N}$ which are in $\mathcal{QP}^!_0(\rho;\KK_\Sigma)$, therefore of the form $\Phi_\rho M_i$ with $M_i\in A[\vartheta,\theta^{-1}]^{N\times N}$.
\end{proof}
For further use we also state and prove:
\begin{Corollary}\label{v-integrality}
There exists $\vartheta\in\FF_q(\underline{t}_\Sigma)$ such that $$\Exp{D}{x}\Big(\omega_\rho\boldsymbol{G}(\rho)\omega_\rho^{-1}\Big)\in\Big(\Tame{K[\vartheta]}[u][[x]]\Big)^{N\times N}$$ and every truncation of the power series in $x$ has coefficients in 
$\Tame{A[\vartheta,\Theta][u]}$ for some $\Theta\in K$ depending on the order of the truncation.
\end{Corollary}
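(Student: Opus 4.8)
The plan is to reduce everything to the generating-function identity of Lemma \ref{generalisation-gekeler}, namely $\boldsymbol{G}(\rho)=\tfrac{ux}{1-u\exp_C(x)}\Exp{D}{x}(\Phi_\rho)$, together with the renormalized higher derivatives $D_n=(-\widetilde{\pi})^{-n}\mathcal{D}_n$. First I would use that $\omega_\rho\in\GL_N(\LL_\Sigma)$ has $z$-independent entries and the $D_n$ are $\KK_\Sigma$-linear, so conjugation by $\omega_\rho$ commutes with $\Exp{D}{x}$ (and with $\Exp{D}{y}$ for a fresh variable $y$), while the scalar factor $\tfrac{ux}{1-u\exp_C(x)}I_N$ is central; hence $\omega_\rho\boldsymbol{G}(\rho)\omega_\rho^{-1}=\tfrac{ux}{1-u\exp_C(x)}\Exp{D}{x}(\omega_\rho\Phi_\rho\omega_\rho^{-1})$, and applying an extra $\Exp{D}{y}$ only turns this into $\tfrac{ux}{1-u\exp_C(x)}\Exp{D}{x+y}(\omega_\rho\Phi_\rho\omega_\rho^{-1})$ by iterativity of $\underline{D}$ and the identity $\Exp{D}{x}(u)=\tfrac{u}{1-u\exp_C(x)}$ from the proof of Lemma \ref{generalisation-gekeler}. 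So the statement will follow from two points: (i) $\Exp{D}{x}(u)\in K[u][[x]]$, and (ii) $\Exp{D}{x}(\omega_\rho\Phi_\rho\omega_\rho^{-1})\in(\Tamecirc{K[\vartheta]}[[x]])^{N\times N}$ for a suitable $\vartheta\in\FF_q[\underline{t}_\Sigma]$, with every $x$-truncation landing in $(\Tamecirc{A[\vartheta,\Theta]})^{N\times N}$ for an appropriate $\Theta\in K$.

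For (i), expanding $\Exp{D}{x}(u)=u\sum_{k\ge0}(u\exp_C(x))^k$ shows that the coefficient of $x^n$ equals $\sum_{k=0}^{n}c_{n,k}u^{k+1}$ with $c_{n,k}=[x^n]\exp_C(x)^k\in K$ (a polynomial in $u$ of degree $\le n+1$), and $c_{n,k}$ involves only the constants $d_a^{-1}$ with $q^a\le n$. For (ii), I would start from Corollary \ref{previouscorollary}, which gives $\omega_\rho\Phi_\rho\omega_\rho^{-1}\in(\Tamecirc{A[\vartheta]})^{N\times N}$, and from Proposition \ref{propdoublestructure}, which makes each $\mathcal{D}_n$ an $A[\vartheta]$-linear endomorphism of $\Tamecirc{A[\vartheta]}$; this already puts $\Exp{D}{x}(\omega_\rho\Phi_\rho\omega_\rho^{-1})$ in $(\Tamecirc{\CC_\infty}[[x]])^{N\times N}$. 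The substantive step is to show that the factor $(-\widetilde{\pi})^{-n}$ exactly cancels the power of $\widetilde{\pi}$ that $\mathcal{D}_n$ produces on a tame monomial, so that in fact $D_n$ maps $\Tamecirc{A[\vartheta]}$ into $\Tamecirc{K[\vartheta]}$. Using $e_k=\sum_{j\ge0}d_j^{-1}\widetilde{\pi}^{q^j}\theta^{-kq^j}z^{q^j}$ and formula (\ref{ausefulformula}) applied to a monic tame monomial $\Angle{e}^j=e_{k_1}\cdots e_{k_s}$ (factors counted with multiplicity, all $k_l\ge1$): in each term of $\mathcal{D}_n(\Angle{e}^j)$ every factor either survives as $e_{k_l}$ or is replaced by $d_{a_l}^{-1}\widetilde{\pi}^{q^{a_l}}\theta^{-k_lq^{a_l}}$, and the constraint $\sum_l i_l=n$ with $i_l\in\{0\}\cup\{q^{a_l}\}$ forces the total external power of $\widetilde{\pi}$ to be exactly $n$; hence $\mathcal{D}_n(\Angle{e}^j)=\widetilde{\pi}^{\,n}$ times a tame polynomial whose coefficients are products of the $d_a^{-1}$ with $q^a\le n$ and powers $\theta^{-kq^a}\in K$, so $D_n(\Angle{e}^j)\in\Tamecirc{K}$. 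By $A[\vartheta]$-linearity $D_n$ then sends $\Tamecirc{A[\vartheta]}$ to $\Tamecirc{K[\vartheta]}$, and more precisely into $\Tamecirc{A[\vartheta,\Theta_n]}$ with $\Theta_n=\big(\prod_{1\le a\le\lfloor\log_q n\rfloor}d_a\big)^{-1}\in K$ (note $\theta\mid d_1$, so $\theta^{-1}\in A[\Theta_n]$); Lemma \ref{matrix-M} provides an alternative route to the same conclusion.

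Combining (i) and (ii) through $\boldsymbol{G}(\rho)=x\,\Exp{D}{x}(u)\,\Exp{D}{x}(\Phi_\rho)$ (after conjugation by $\omega_\rho$, and, if needed, after applying $\Exp{D}{y}$), I obtain $\Exp{D}{x}(\omega_\rho\boldsymbol{G}(\rho)\omega_\rho^{-1})\in(\Tamecirc{K[\vartheta]}[u][[x]])^{N\times N}\subset(\Tame{K[\vartheta]}[u][[x]])^{N\times N}$; truncating the $x$-expansion at a fixed order $n_0$ involves only the operators $D_i$ with $i\le n_0$, hence only the constants $d_a^{-1}$ with $q^a\le n_0$, so all coefficients of the truncation lie in $\Tame{A[\vartheta,\Theta][u]}$ with $\Theta=\Theta_{n_0}$. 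I expect the main obstacle to be exactly the $\widetilde{\pi}$-bookkeeping in step (ii): one must verify, term by term and using the $\FF_q$-linearity of the $e_i$ together with the base-$q$ digit structure of $n$, that the normalizing factor $(-\widetilde{\pi})^{-n}$ matches the power of $\widetilde{\pi}$ generated by $\mathcal{D}_n$ — everything else (centrality of the scalar factor, $A[\vartheta]$-linearity, the Gekeler-type identity, and tracking which $d_a$ occur in a given truncation) being routine.
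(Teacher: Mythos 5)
Your proof is correct and follows the same route as the paper's (which is essentially a two-line version of your argument: factor $\omega_\rho\boldsymbol{G}(\rho)\omega_\rho^{-1}$ via Lemma \ref{generalisation-gekeler} into $\frac{ux}{1-u\exp_C(x)}\in K[u][[x]]$ times $\Exp{D}{x}(\omega_\rho\Phi_\rho\omega_\rho^{-1})$, the latter controlled by Corollary \ref{previouscorollary} and Proposition \ref{propdoublestructure}); your explicit $\widetilde{\pi}$-bookkeeping for $D_n$ on tame monomials is exactly the point the paper leaves implicit. One tiny slip: for $1\leq n<q$ your $\Theta_n$ is an empty product, yet the $a=0$ terms already contribute $\theta^{-k_l}$, so $\theta^{-1}$ must always be adjoined (consistent with the paper's $A[\vartheta,\theta^{-1}]$ in Lemma \ref{matrix-M}).
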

\begin{proof}
We have $\frac{u}{1-u\exp_C(x)}\in K[u][[x]]$ and $$\Exp{D}{x}\Big(\omega_\rho\Phi_\rho\omega_\rho^{-1}\Big)\in\Big(\Tamecirc{A[\vartheta,\theta^{-1}]}[[x]]\Big)^{N\times N}$$ by Corollary \ref{previouscorollary} (the elements $\Theta$ are necessary in the statement because of the denominators of the coefficients of the series expansion of $\frac{u}{1-u\exp_C(x)}$ and the presence of $\theta$ in the denominators of the series expansion associated to $\omega_\rho\Phi_\rho\omega_\rho^{-1}$).
\end{proof}

For example, if $\rho=\rho_\chi$ is basic, we have seen in Corollary \ref{caseofthebasicrepresentation} that $\Phi_\rho=\Xi_\rho=\Big(\begin{smallmatrix}I_n & \chi(z) \\ 0_n & I_n\end{smallmatrix}\Big)$, with $N=2n$. By (\ref{chidefi}) $D_{q^k}(\chi)=D_k^{-1}\Big(\vartheta-\theta^{q^k} I_n\Big)^{-1}\omega_\chi^{-1}$ for $k\geq 0$, and $D_{j}(\chi)=0$ if $j>0$ is not a $q$-power. Hence in this case the matrix $M$ of Lemma \ref{matrix-M} is:
$$M=\begin{pmatrix}I_n & \chi(z)\\ 0_n & I_n\end{pmatrix}+
\omega_\chi^{-1}\exp_C\Big((\vartheta-\theta I_n)^{-1}x\Big)
\begin{pmatrix}0_n & I_n\\ 0_n & 0_n\end{pmatrix},$$ with $\tau(x)=x^q$.

\subsection{Serre's derivatives}\label{serre-derivatives}

In this subsection we prove part (6) of our Theorem A.
We discuss variants of {\em Serre's higher derivatives} introduced in \cite[\S 1.2.3]{BOS&PEL}. Following this reference, we set, for $n,w\in\NN$ and $f\in\operatorname{Hol}(\Omega\rightarrow \KK_\Sigma^{N\times 1})$:
\begin{equation}\label{ramanujan-derivatives}
\partial_n^{(w)}(f):=D_n(f)+\sum_{i=1}^n(-1)^i\binom{w+n-1}{i}D_{i-1}(E)D_{n-i}(f),\end{equation}
where $E$ is the normalised false Eisenstein series
of weight $2$ and type $1$ of Gekeler, defined in \cite[\S 8]{GEK}. We recall the definition here, for convenience of the reader. We can define $E$ by using the conditionally convergent lattice sum
$$E(z)=\widetilde{\pi}^{-1}\sum_{a\in A^+}\sum_{b\in A}\frac{a}{az+b}.$$ This defines a rigid analytic function
$E:\Omega\rightarrow\CC_\infty$ which satisfies
$$E(\gamma(z))=J_\gamma(z)^2\det(\gamma)^{-1}\Big(E(z)-\widetilde{\pi}\frac{c}{cz+d}\Big),\quad \gamma=(\begin{smallmatrix}* & * \\ c & d\end{smallmatrix})\in\Gamma$$
(a {\em Drinfeld quasi-modular form of weight $2$, type $1$} and depth $1$ in the terminology of \cite{BOS&PEL0}). We also recall the $u$-expansion, with $u_a=e_C(az)^{-1}$:
$$E=\sum_{a\in A^+}au_a.$$
Another property of $E$ is that it can be computed as a logarithmic derivative $E=\frac{D_1(\Delta)}{\Delta}$ of $\Delta$ the cusp form of weight $q^2-1$ defined in \cite[\S (6.4)]{GEK}. See also \S \ref{few-examples-quasimodular}.
Coming back to our modular forms, note that the case $n=1$ of (\ref{ramanujan-derivatives}) yields the operator $\partial_1^{(w)}=D_1-wEI_N$. This 
is the analogue of Ramanujan's derivative introduced by Gekeler in \cite[(8.5)]{GEK}.

\begin{Theorem}\label{serre-derivatives} Let $\rho:\Gamma\rightarrow\GL_N(\FF_q(\underline{t}_\Sigma))$ be a representation of the first kind.
The operator $\partial_n^{(w)}$ induces a $\KK_\Sigma$-linear map 
$M_{w}(\rho;\KK_\Sigma)\rightarrow S_{w+2n}(\rho\det^{-n};\KK_\Sigma)$ and an $\LL_\Sigma$-linear map 
$M_{w}(\rho;\LL_\Sigma)\rightarrow S_{w+2n}(\rho\det^{-n};\LL_\Sigma)$.
\end{Theorem}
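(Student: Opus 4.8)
The plan is to reduce the statement to two facts: first, that $\partial_n^{(w)}$ maps analytic functions satisfying the weight-$w$ functional equation for $\rho$ to analytic functions satisfying the weight-$(w+2n)$ functional equation for $\rho\det^{-n}$; second, that it sends a form which is regular (bounded near the cusp) to one which is cuspidal. The first fact is essentially formal and follows from the quasi-modularity of $E$ combined with the iterativity and Leibniz properties of the divided derivatives $\underline{D}$ recalled at the start of \S\ref{differential-perkins}; this is the same computation as in the scalar case \cite[\S1.2.3]{BOS\&PEL}, now carried out coefficientwise on $\KK_\Sigma^{N\times 1}$-valued functions, using that $\rho(\gamma)$ is a constant matrix (independent of $z$) so it commutes with all the $D_i$. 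I would first spell out the cocycle-type identity: writing $j_\gamma(z)=\widetilde{\pi}\frac{c}{cz+d}$, one has $\Exp{D}{x}(f)(\gamma(z))$ expressed via $\Exp{D}{x}(f)$ evaluated at a shifted argument, and the term $\sum (-1)^i\binom{w+n-1}{i}D_{i-1}(E)D_{n-i}(f)$ is exactly the correction needed to cancel the anomalous $j_\gamma$-contributions, because $E$ transforms with the inhomogeneous term $-j_\gamma$. This yields $\partial_n^{(w)}(f)\in M^!_{w+2n}(\rho\det^{-n};\KK_\Sigma)$ whenever $f\in M^!_w(\rho;\KK_\Sigma)$, and likewise over $\LL_\Sigma$.

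Next I would address the behaviour at infinity, which is where the field of uniformizers enters. Given $f\in M_w(\rho;\KK_\Sigma)$, by Theorem~\ref{theorem-u-expansions} we may identify $f$ with an element of $\mathfrak{O}_\Sigma^{N\times 1}$, and moreover, by Proposition~\ref{quasiperiodictempered}, $f=\Phi_\rho g$ with $g\in\KK_\Sigma[[u]]^{N\times 1}$ and $\Phi_\rho\in\GL_N(\Tamecirc{\KK_\Sigma})$. The key structural input is Proposition~\ref{propdoublestructure}: the operators $\mathcal{D}_i$ (hence the $D_i$) preserve $\Tamecirc{B}$, $\Tame{B}$ and $\Tame{B}[[u]]$, and they are compatible with depths and weights. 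Since $E\in\CC_\infty[[u]]$ with $E=\sum_{a\in A^+}au_a$ has no constant term, $E$ lies in $u\,\CC_\infty[[u]]$, and therefore $D_{i-1}(E)\in u\,\Tamecirc{\CC_\infty}[[u]]$ for every $i\ge 1$ — more precisely $E$ and all its divided derivatives vanish at the cusp. Combining this with the fact that $D_n$ preserves the filtration $u\mathfrak{O}_\Sigma=\mathfrak{M}_\Sigma$ up to controlled weight shifts, one sees that $\partial_n^{(w)}(f)$, expanded in $\mathfrak{K}_\Sigma$, has strictly positive $v$-valuation: the leading term $D_n(f)$ contributes, via $D_n(\Phi_\rho g)$ and Leibniz, a series in $u\,\Tamecirc{\KK_\Sigma}[[u]]$ plus the genuine $D_n$ of the constant term $f_0$ of $g$, but $D_n$ kills constants for $n\ge 1$; the corrective sum is manifestly in $\mathfrak{M}_\Sigma$ because each $D_{i-1}(E)$ is. Hence $\partial_n^{(w)}(f)\in\mathfrak{M}_\Sigma^{N\times 1}$, which by Theorem~\ref{theorem-u-expansions} means $\partial_n^{(w)}(f)\in S_{w+2n}(\rho\det^{-n};\KK_\Sigma)$.

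For the $\LL_\Sigma$-statement I would simply observe that everything is visibly $\LL_\Sigma$-linear and preserves having image in $\LL_\Sigma^{N\times 1}$: $E$ has coefficients in $\CC_\infty$, $\Phi_\rho$ has coefficients in $\EE_\Sigma[d^{-1}]^\wedge\subset\LL_\Sigma$, and the divided derivatives do not enlarge the coefficient ring, so the restriction of $\partial_n^{(w)}$ to $M_w(\rho;\LL_\Sigma)$ lands in $S_{w+2n}(\rho\det^{-n};\LL_\Sigma)$. The main obstacle, and the part deserving the most care, is the verification of the functional-equation step: one must track the binomial coefficients $\binom{w+n-1}{i}$ modulo $p$ via Lucas' theorem and check that the telescoping of the inhomogeneous terms coming from $E(\gamma(z))$ against the Leibniz expansion of $D_n(f)(\gamma(z))$ closes up exactly, uniformly in the matrix setting. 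This is a finite, purely combinatorial identity — identical in form to the scalar computation of \cite{BOS\&PEL} since $\rho(\gamma)$ and $\det(\gamma)$ are scalars relative to the $z$-variable — so while bookkeeping-heavy it presents no genuinely new difficulty; the cuspidality at infinity, by contrast, is immediate once Proposition~\ref{propdoublestructure} and the $u$-expansion $E=\sum_{a\in A^+}au_a$ are in hand.
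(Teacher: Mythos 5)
Your proposal is correct and follows essentially the same route as the paper, which likewise combines the quasi-modularity computation of \cite[Theorem 4.1]{BOS\&PEL} (valid verbatim since $\rho(\gamma)$ is constant in $z$) with the stability of $\mathfrak{O}_\Sigma^{N\times 1}$ and $\mathfrak{M}_\Sigma^{N\times 1}$ under the $D_i$ coming from Theorem \ref{theorem-u-expansions} and Proposition \ref{propdoublestructure}. One small cleanup: rather than splitting $f=\Phi_\rho g$ and invoking ``$D_n$ kills the constant term of $g$'' (note $\Phi_\rho g_0$ is a priori only tame, though it is in fact forced to be constant since $f\in\mathfrak{O}_\Sigma^{N\times 1}$), it is cleaner to apply $D_n$ directly to the expansion $f=f_0+\sum_{i>0}f_iu^i$ of (\ref{expansion-ff}), where $f_0\in\KK_\Sigma^{N\times 1}$ is genuinely constant and each remaining term visibly lands in $\mathfrak{M}_\Sigma^{N\times 1}$.
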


\begin{proof}
If $f\in M_{w}(\rho;\KK_\Sigma)$ then $f$ can be identified with an element of 
$\mathfrak{O}_\Sigma^{N\times 1}$ (Theorem \ref{theorem-u-expansions}) which is
$\partial_n^{(w)}$-stable for all $n,w$. The same arguments of the proof of \cite[Theorem 4.1]{BOS&PEL} (which holds in a wider context of {\em Drinfeld quasi-modular forms}) imply that
$\partial_n^{(w)}(f)\in M_{w+2n}(\rho\det^{-n};\KK_\Sigma)$. Further, it is easy to see that 
$\partial_n^{(w)}(f)$ has entries in $\mathfrak{M}_\Sigma$ so it is a cusp form.
\end{proof}

 \subsection{Application to Perkins' series}\label{theweightofperkins}
In this subsection we present the series indicated in the title, originally introduced by Perkins in his Ph. D. Thesis \cite{PER0}, as generating series for certain zeta values in Tate algebras introduced by the author in \cite{PEL1}. These series define elements of $\mathfrak{O}_\Sigma$ and the problem of computing their $v$-valuations (or equivalently, weights) arises. This is quite an intricate problem that we partially solve here. One of the difficulties is that the matrix formalism of the preceding sections
does not seem suitable to extract this kind of information.

Let $U$ be a finite subset of $\NN^*$. We set $$\sigma_U=\prod_{i\in U}\chi_{t_i}.$$ 
Explicitly, $\sigma_U(a)=\prod_{i\in U}\chi_{t_i}(a)\in \FF_q[\underline{t}_U]$ for all $a\in A$. 

For further use, with $\Sigma$ a given finite subset of $\NN^*$:

\begin{Definition}\label{def-semicharacters}
{\em A {\em semicharacter} is a map $\sigma:A\rightarrow\FF_q[\underline{t}_\Sigma]$ defined, for $a\in A$, by
$\sigma(a)=\prod_{i\in \Sigma}\chi_{t_i}(a)^{\alpha_i}$ for integers $\alpha_i\geq 0$.}
\end{Definition}

We are interested in the following class of function.

\begin{Definition}\label{defperkinsseries}
{\em Let $U$ be a finite subset of $\NN^*$. The {\em Perkins series of order $n\geq 1$ associated to $\sigma_U$} is the series:
$$\psi(n;\sigma_U)=\sum_{a\in A}(z-a)^{-n}\sigma_U(a).$$}
\end{Definition}
For any $U$ and $n$ as above, the series converges for $z\in \CC_\infty\setminus A$ (with respect to the norm $\|\cdot\|$ of $\KK_\Sigma$, $\Sigma$ being a finite subset of $\NN^*$ containing $U$) and 
$z\mapsto e_A(z)^n\psi(n;\sigma_U)(z)$ define entire functions $\CC_\infty\rightarrow\EE_\Sigma$, as it is easily seen. 
If $U=\emptyset$ we have $\sigma_\emptyset=\boldsymbol{1}$ the {\em trivial semi-character}, and Perkins' generating series are related to Goss' polynomials associated to the lattice $A\subset\CC_\infty$ as in \cite[\S 6]{GOS2} and \cite[\S 3]{GEK}. Indeed,
\begin{equation}\label{connectionwithgosspoly}
\psi(n;\boldsymbol{1})=S_{n,A}=\sum_{b\in A}\frac{1}{(z-b)^{n}}=G_{n,A}(S_{1,A}),
\end{equation}
for polynomials $G_{n,A}\in K_\infty[X]$ (in the notations of \cite{GEK}.)
The functions $\psi(n;\sigma_U)$ with 
$U\subset \Sigma$ occur in the entries of $\Psi_n(\rho_\Sigma)$, where $\rho_\Sigma$ is the representation of the first kind
$$\rho_\Sigma=\bigotimes_{i\in\Sigma}\rho_{t_i},$$
where $\rho_{t_i}(\begin{smallmatrix}a & b \\ c & d\end{smallmatrix})=
\Big(\begin{smallmatrix}a(t_i) & b(t_i) \\ c(t_i) & d(t_i)\end{smallmatrix}\Big)$ (or alternatively, one can also use $\rho=\rho^*_\Sigma$).
Since $\Psi_n(\rho_\Sigma)\in\mathcal{QP}^!_n(\rho_\Sigma;\EE_\Sigma)$ by 
Proposition \ref{somepropertiesxirho}, Lemma \ref{generalisation-gekeler} implies:
\begin{Lemma} 
For all $U\subset\Sigma$ and $n\geq 1$ we have $\psi(n;\sigma_U)\in\mathfrak{K}_\Sigma$. Additionally, $\phi(1;\sigma_U):=e_0\psi(1;\sigma_U)\in\Tamecirc{\EE_\Sigma}$.
\end{Lemma}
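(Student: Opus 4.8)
The statement has two parts. The first, that $\psi(n;\sigma_U)\in\mathfrak{K}_\Sigma$ for all $U\subset\Sigma$ and $n\geq1$, follows the same pattern as Proposition \ref{quasiperiodictempered}: the function $\Psi_n(\rho_\Sigma)$ is a tempered $\rho_\Sigma$-quasi-periodic matrix function of type $n$ by Lemma \ref{lemmapsil}, and $\rho_\Sigma$ is a representation of the first kind (being a Kronecker product of basic representations $\rho_{t_i}$). Hence by Proposition \ref{somepropertiesxirho}(3) each entry of $\Psi_n(\rho_\Sigma)$ lies in $\mathfrak{K}_\Sigma$. So the plan is simply to identify $\psi(n;\sigma_U)$ with one of those entries: writing $\rho_\Sigma=\bigotimes_{i\in\Sigma}\rho_{t_i}$ and expanding the Kronecker product $\Psi_n(\rho_\Sigma)=\sum_{a\in A}(z-a)^{-n}\bigotimes_{i\in\Sigma}\rho_{t_i}(T_a)$, each entry is a sum $\sum_{a}(z-a)^{-n}\prod_{i\in\Sigma}(\rho_{t_i}(T_a))_{k_i,l_i}$ for a choice of indices $(k_i,l_i)_{i\in\Sigma}$. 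Since $\rho_{t_i}(T_a)=\bigl(\begin{smallmatrix}1 & a(t_i)\\ 0 & 1\end{smallmatrix}\bigr)$, choosing the $(1,2)$-entry in the factors indexed by $U$ and the $(1,1)$-entry (equal to $1$) in the factors indexed by $\Sigma\setminus U$ produces exactly $\sum_a(z-a)^{-n}\prod_{i\in U}a(t_i)=\psi(n;\sigma_U)$, because $\chi_{t_i}(a)=a(t_i)$. This exhibits $\psi(n;\sigma_U)$ as an entry of $\Psi_n(\rho_\Sigma)\in\mathfrak{K}_\Sigma^{N\times N}$, giving the first claim.

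For the second part, $\phi(1;\sigma_U):=e_0\psi(1;\sigma_U)\in\Tamecirc{\EE_\Sigma}$, I would invoke the study of $\Phi_\rho=e_A\Psi_1(\rho)$ in \S\ref{section-Phi-rho}. By Proposition \ref{propgeneralitiestame}(d), applied to $\rho=\rho_\Sigma$ (a representation of the first kind whose image lies in $\GL_N(\FF_q[\underline{t}_\Sigma])$, so here $d=1$ and $\mathfrak{E}=\EE_\Sigma$), we have $\Phi_{\rho_\Sigma}\in(\Tamecirc{\EE_\Sigma})^{N\times N}$. Now $\Phi_{\rho_\Sigma}=e_A\Psi_1(\rho_\Sigma)=\widetilde\pi^{-1}e_0\Psi_1(\rho_\Sigma)$, using $e_A(z)=\widetilde\pi^{-1}e_C(z)=\widetilde\pi^{-1}e_0$. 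Extracting the same entry of $\Phi_{\rho_\Sigma}$ as above, we get $\widetilde\pi^{-1}e_0\psi(1;\sigma_U)=\widetilde\pi^{-1}\phi(1;\sigma_U)$, an element of $\Tamecirc{\EE_\Sigma}$; since $\Tamecirc{\EE_\Sigma}$ is an $\EE_\Sigma$-module (and $\widetilde\pi\in\CC_\infty\subset\EE_\Sigma$), multiplying by $\widetilde\pi$ keeps us in $\Tamecirc{\EE_\Sigma}$, hence $\phi(1;\sigma_U)\in\Tamecirc{\EE_\Sigma}$.

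I would present this as a short proof. The two points of care are, first, the bookkeeping of which entry of the Kronecker product one selects — one should state precisely the index tuple $(k_i,l_i)$ so that the product of matrix entries telescopes to $\prod_{i\in U}a(t_i)$ — and second, the verification that the relevant entry actually lands in the $\Tamecirc{}$-part rather than merely in $\Tame{}$: this is exactly what Proposition \ref{propgeneralitiestame}(d) and Corollary \ref{previouscorollary} guarantee for all entries of $\Phi_{\rho_\Sigma}$, so no extra work is needed beyond citing those results with $d=1$. I do not anticipate a serious obstacle; the only mildly delicate point is making sure the normalization constants $\widetilde\pi$ are tracked correctly through $e_A$ versus $e_C=e_0$, but this is routine.
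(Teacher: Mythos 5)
Your proof is correct and follows essentially the same route as the paper, which identifies $\psi(n;\sigma_U)$ as an entry of the quasi-periodic matrix $\Psi_n(\rho_\Sigma)$ and then appeals to Proposition \ref{somepropertiesxirho}(3) for membership in $\mathfrak{K}_\Sigma$ and to the relation $\Psi_1(\rho)=\widetilde{\pi}u\Phi_\rho$ (via Lemma \ref{generalisation-gekeler}, equivalently the definition $\Phi_\rho=e_A\Psi_1(\rho)$) together with Proposition \ref{propgeneralitiestame}(d) for the tameness of $e_0\psi(1;\sigma_U)$. Your version just makes the Kronecker-entry bookkeeping and the $\widetilde{\pi}$ normalizations explicit, which the paper leaves to the reader.
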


\subsubsection{Perkins' series of order $n=1$}\label{perkins-series-order-1} 
We focus now on $\phi(1;\sigma_\Sigma)\in\Tamecirc{\EE_\Sigma}$. The next question is the computation of its weight.
We set, for $\Sigma$ non-empty with $s=|\Sigma|=(m-1)(q-1)+l$ with $m\geq 1$ and $l\in\{1,\ldots,q-1\}$:
 \begin{equation}\label{def-kappa}
 \kappa(\Sigma):=q^{-m}(q-l)\in]0,1[\cap\ZZ[p^{-1}].
 \end{equation} For $\Sigma=\emptyset$, we extend the definition to $\kappa(\emptyset):=1$. Note that $\kappa(\Sigma)$ defines a strictly decreasing function $|\Sigma|\mapsto\kappa(\Sigma)$, and $\lim_{|\Sigma|\rightarrow\infty}\kappa(\Sigma)=0$. 
 We prove:
\begin{Theorem}\label{theopsi} 
The function $\phi(1;\sigma_\Sigma)\in\Tamecirc{\EE_\Sigma}$ has weight
\begin{equation}\label{theformula}
w(\phi(1;\sigma_\Sigma))=1-\kappa(\Sigma)=1-q^{1-m}+lq^{-m}=w_{\max}(s).
\end{equation}
\end{Theorem}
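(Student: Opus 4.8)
The plan is to reduce the weight computation to an explicit determination of the leading tame monomial of $\phi(1;\sigma_\Sigma)=e_0\psi(1;\sigma_\Sigma)$. Recall from Perkins' formula (\ref{perkins-formulas}), valid in the regime $s<q$ (so $m=1$, $l=s$), that
$$\psi(1;\sigma_\Sigma)(z)=\sum_{a\in A}\frac{\sigma_\Sigma(a)}{z-a}=\frac{\widetilde{\pi}\prod_{i\in\Sigma}\sum_{j_i\geq 0}e_C(\widetilde{\pi}z/\theta^{j_i+1})t_i^{j_i}}{\omega(t_1)\cdots\omega(t_s)\,e_C(\widetilde{\pi}z)},$$
so that $\phi(1;\sigma_\Sigma)=e_0\psi(1;\sigma_\Sigma)$ equals (up to a unit in $\EE_\Sigma$) the product $\prod_{i\in\Sigma}\sum_{j_i\geq 0}e_{j_i+1}t_i^{j_i}$. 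In the tame series ring this is $\prod_{i\in\Sigma}\chi_{t_i}(z)\cdot\widetilde{\pi}/(\omega(t_1)\cdots\omega(t_s))\cdot(\text{unit})$, and each factor $\chi_{t_i}(z)=\sum_{j\geq0}e_{j+1}t_i^j$ has leading tame monomial $e_1$ with coefficient a unit (Corollary \ref{caseofthebasicrepresentation}). When $s<q$ there is no base-$q$ carry in the product $e_1^s$, so the leading tame monomial of $\phi(1;\sigma_\Sigma)$ is exactly $e_1^s=M_s$, of weight $s/q=w_{\max}(s)$, confirming the formula in that range since $\kappa(\Sigma)=q^{-1}(q-s)$ gives $1-\kappa(\Sigma)=s/q$.

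For the general case $s\geq q$, Perkins' formula fails, so I would instead argue directly with the structure results of \S\ref{tameseriessect}. First I note $\phi(1;\sigma_\Sigma)$ is depth-homogeneous of depth $s$: indeed it occurs as the relevant entry of $e_0\Psi_1(\rho_\Sigma)$, and $\rho_\Sigma=\bigotimes_{i\in\Sigma}\rho_{t_i}$ with each $\Phi_{\rho_{t_i}}=\Xi_{\rho_{t_i}}$ having entries of depth $\leq 1$ (Corollary \ref{caseofthebasicrepresentation}); the Kronecker product of $s$ such gives entries of depth $\leq s$, and the entry tracking $\sigma_\Sigma=\prod_i\chi_{t_i}$ is the one with maximal depth $s$ coming from multiplying the off-diagonal $\chi_{t_i}(z)$ entries. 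Then Lemma \ref{for-the-next-proof} gives the upper bound $w(\phi(1;\sigma_\Sigma))\leq w_{\max}(s)$. For the matching lower bound, i.e. that the maximal tame monomial $M_s=e_1^{q-1}\cdots e_{m-1}^{q-1}e_m^l$ actually appears with nonzero coefficient, I would compute the coefficient explicitly: expand $\phi(1;\sigma_\Sigma)=\frac{\widetilde{\pi}}{\omega(t_1)\cdots\omega(t_s)}\prod_{i\in\Sigma}\chi_{t_i}(z)\cdot u^{-1}e_0\cdot(\ldots)$; more cleanly, $\phi(1;\sigma_\Sigma)$ equals up to a unit $\prod_{i\in\Sigma}\bigl(\sum_{j\geq 0}e_{j+1}t_i^j\bigr)$, expand the product, and reduce each monomial $\prod_i e_{j_i+1}$ modulo the relations $e_k^q=e_{k-1}-\theta e_k$ (equivalently $X_k^q=X_{k-1}-\Theta X_k$) to its tame-monomial form; the monomial of largest weight that survives is $M_s$, and its coefficient is, after clearing, a nonzero element of $\EE_\Sigma$ — this uses the "no carry-over" bookkeeping of Remark \ref{no-carry-over} and the fact that distinct tame monomials have distinct weights (so the maximal-weight contribution cannot be cancelled).

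The main obstacle, and the heart of the argument, is the combinatorial verification that the coefficient of $M_s$ in the reduced form of $\prod_{i\in\Sigma}\sum_{j\geq0}e_{j+1}t_i^j$ is nonzero once $s\geq q$. Here one must understand how the reduction $e_k^q = e_{k-1}-\theta e_k$ redistributes weight: starting from a pure product $e_1^s$ (which has depth $s$ but is \emph{not} a tame monomial when $s\geq q$), repeated application of the relation lowers the exponent on $e_1$ by $q$ and creates $e_0$ or $e_2$ terms; tracking which chains of reductions land on $M_s$ (the unique depth-$s$ monomial of weight $w_{\max}(s)$) and checking the resulting coefficient in $\mathbb{F}_q[\underline t_\Sigma]$ (or $\EE_\Sigma$) is nonzero — equivalently, that the characteristic-$p$ sum of the relevant multinomial coefficients does not vanish — is the delicate point. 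I expect this to follow from an induction on $s$ (or on $m$), peeling off one factor $\sum_j e_{j+1}t_i^j$ at a time: if $\phi(1;\sigma_{\Sigma'})$ has leading monomial $M_{s-1}$ with unit coefficient, then multiplying by one more $\chi_{t_i}(z)$ and reducing yields leading monomial $M_s$, since $M_{s-1}\cdot e_1$ has no carry-over precisely when $\ell_q(s-1)<q-1$ in the appropriate digit, and when a carry does occur (at the transition $l=q-1\to l=1$, $m\to m+1$) one checks directly via (\ref{aformula1})-type identities that the top monomial becomes $M_s$. Combining the upper bound from Lemma \ref{for-the-next-proof} with this lower bound gives $w(\phi(1;\sigma_\Sigma))=w_{\max}(s)$, and the identity $w_{\max}(s)=(q-1)\sum_{i=1}^{m-1}q^{-i}+lq^{-m}=1-q^{1-m}+lq^{-m}=1-\kappa(\Sigma)$ is then just the arithmetic of the definition (\ref{def-kappa}).
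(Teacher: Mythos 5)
Your overall architecture is the paper's: identify $\phi(1;\sigma_\Sigma)$ with the representative of $\prod_{i\in\Sigma}\chi_{t_i}(z)$ modulo $(e_0)$ (Corollary \ref{previouscorollary}), get the upper bound $w\leq w_{\max}(s)$ from the depth bound and Lemma \ref{for-the-next-proof}, and get the lower bound by showing the maximal monomial $M_s$ occurs with non-zero coefficient. The case $s<q$ and the final arithmetic with $\kappa$ are fine. But the step you yourself flag as ``the main obstacle, and the heart of the argument'' --- the non-vanishing of the coefficient of $M_s$ when $s\geq q$ --- is not proved, and the route you sketch for it rests on a false picture of how the reduction works. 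You propose to start from ``a pure product $e_1^s$'' and track ``which chains of reductions land on $M_s$,'' checking a characteristic-$p$ sum of multinomial coefficients by an induction that peels off one factor at a time, with the carry case handled ``via (\ref{aformula1})-type identities.'' This cannot work: each application of $e_k^q=e_{k-1}-\theta e_k$ strictly decreases the depth (by $q-1$) and the weight, so \emph{no} reduction chain ever contributes to the depth-$s$, weight-$w_{\max}(s)$ monomial $M_s$; formula (\ref{aformula1}) exhibits exactly this loss, not a mechanism for recovering $M_s$. Likewise the inductive step ``leading monomial of $\phi(1;\sigma_{\Sigma'})$ times leading monomial $e_1$ of $\chi_{t_i}$ gives $M_s$'' fails at every carry: for $s=q$ one has $M_{q-1}e_1=e_1^q=e_0-\theta e_1$, which is nowhere near $M_q=e_1^{q-1}e_2$. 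To reach $M_q$ one factor must contribute its \emph{second} term $e_2t_i$ rather than its leading term, so the leading monomial of a product of tame series is not the product of the leading monomials.

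The observation that closes the gap --- and is the actual content of the paper's proof --- is that, precisely because every carry drops the depth by a multiple of $q-1$, the coefficient of $M_s$ in $\prod_{i\in\Sigma}\chi_{t_i}(z)$ receives contributions \emph{only} from the unreduced terms: one selects, for each $i\in\Sigma$, a single term $e_kt_i^{k-1}$ of $\omega_{\chi}\chi_{t_i}(z)=\sum_{j\geq0}e_{j+1}t_i^{j}$ so that the resulting product of $e$'s is literally $M_s=e_1^{q-1}\cdots e_{m-1}^{q-1}e_m^{l}$ with no base-$q$ carry. The coefficient is therefore the explicit partition polynomial
\begin{equation*}
\mathcal{P}_\Sigma=\sum_{\substack{I_1\sqcup\cdots\sqcup I_m=\Sigma\\ |I_1|=\cdots=|I_{m-1}|=q-1,\ |I_m|=l}}\ \prod_{k=1}^{m}\prod_{i\in I_k}t_i^{k-1}\ \in\FF_p[\underline{t}_\Sigma],
\end{equation*}
with no characteristic-$p$ multinomial coefficients to control, and its non-vanishing is immediate by specializing the $t_i$ suitably to $0$ and $1$ (a single partition survives, with value $1$). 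All remaining terms lie in depth components $\leq s-(q-1)$, hence have weight $<w_{\max}(s)$ by Lemma \ref{for-the-next-proof}, and since distinct tame monomials have distinct weights no cancellation can occur. Without this depth-grading argument (or an equivalent substitute), your lower bound is unproved.
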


\begin{proof} The identities connecting $\kappa$ and $w_{\max}$ are easily verified. If $\Sigma=\emptyset$, it is clear that $\phi(1;\sigma_\Sigma)$
has weight $0$ (it is in this case a constant function).
We suppose that $\Sigma$ is non-empty. We consider the unique representative $g_\Sigma\in\Tamecirc{\EE_\Sigma}$
of $\prod_{i\in\Sigma}\chi_{t_i}(z)$ (see \S \ref{a-class-chi} for the definition of $\chi_t(z)$) modulo the ideal of $\Tame{\EE_\Sigma}$ generated by $e_0$.
By Corollary \ref{previouscorollary}, we have $\phi(1;\sigma_\Sigma)=g_\Sigma$. We can write $g_\Sigma=\sum_{i=0}^sg_\Sigma^{[i]}$
with $g_\Sigma^{[i]}\in\Tamecirc{\EE_\Sigma}_i$ (see (\ref{decomposition-f-T})) (\footnote{In fact, one sees that if $i\not\equiv s\pmod{q-1}$, then $g_\Sigma^{[i]}=0$.}). 
We note that 
\begin{equation}\label{expansionofthetamepart}
g_\Sigma^{[s]}=\left[\prod_{i\in\Sigma}\chi_{t_i}(z)\right]^{[s]}=\underbrace{e_1^{q-1}\cdots e_{m-1}^{q-1}e_m^l}_{\text{Tame monomial $M_s$}}\mathcal{P}_\Sigma+\Phi,\end{equation}
with $w(M_s)=w_{\max}(s)$, $\Phi\in\Tame{\EE_\Sigma}$, with $w(\Phi)<w_{\max}(s)$, and where
$$\mathcal{P}_\Sigma:=\sum_{\begin{smallmatrix}I_0\sqcup I_1\sqcup\cdots\sqcup I_m=\Sigma\\
|I_0|=\cdots=|I_{m-1}|=q-1\\
|I_m|=l\end{smallmatrix}
}\left(\prod_{i_1\in I_1}t_{i_1}\right)\cdots\left(\prod_{i_m\in I_m}t_{i_m}^{m-1}\right)\in\FF_p[\underline{t}_\Sigma].$$ This polynomial is non-zero as it is easily verified 
by choosing a subset $\widetilde{I}\subset \Sigma$ such that $|\Sigma\setminus\widetilde{I}|=l$.
Substituting $t_i$ by $1$ if $i\in \Sigma\setminus\widetilde{I}$ and by $0$ if 
$i\in \widetilde{I}$, we get the value $1$.
By Lemma \ref{for-the-next-proof}:
$$w\Big(g_\Sigma^{[s]}-\mathcal{P}_\Sigma M_s\Big)<w_{\max}(s).$$
This implies the theorem because the map $s\mapsto w_{\max}(s)$ is a strictly increasing function ($s>0$) so that 
$$w(\phi(1;\sigma_\Sigma))=w(g_\Sigma)=w(g_\Sigma^{[s]})=w_{\max}(s).$$
\end{proof} 

For all $\Sigma\subset\NN^*$ a finite subset, the above proof yields the next corollary:
\begin{Corollary}
We have 
$$\lim_{\begin{smallmatrix}|z|_\Im\rightarrow\infty\end{smallmatrix}}e_A(z)^{\kappa(\Sigma)}\psi(1;\sigma_\Sigma)=\mathcal{P}_\Sigma.$$
\end{Corollary}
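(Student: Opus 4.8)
The plan is to extract the desired limit directly from the proof of Theorem~\ref{theopsi} by tracking the asymptotic behaviour of the leading tame monomial in the weight-maximal homogeneous part. First I would recall the two facts already established: that $\phi(1;\sigma_\Sigma)=e_0\psi(1;\sigma_\Sigma)=g_\Sigma\in\Tamecirc{\EE_\Sigma}$, and that in the depth decomposition $g_\Sigma=\sum_{i=0}^sg_\Sigma^{[i]}$ the top part satisfies
$$g_\Sigma^{[s]}=M_s\mathcal{P}_\Sigma+\Phi,\qquad w(M_s)=w_{\max}(s)=1-\kappa(\Sigma),$$
with $w(\Phi)<w_{\max}(s)$ and $w(g_\Sigma^{[i]})<w_{\max}(s)$ for $i<s$. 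Hence the unique leading tame monomial of $g_\Sigma$ is $M_s=e_1^{q-1}\cdots e_{m-1}^{q-1}e_m^l$, with leading coefficient $\mathcal{P}_\Sigma\in\FF_p[\underline{t}_\Sigma]$, a nonzero constant (independent of $z$) of norm $1$.

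Next I would invoke Proposition~\ref{leading} (or, more precisely, the computation inside its proof together with Lemma~\ref{distinctweights}): for $|z|_\Im$ large enough, $\|g_\Sigma(z)-\mathcal{P}_\Sigma\Angle{e}^{j_0}\|\leq C|e_C(z)|^{w'}$ for some $w'<j_0=w_{\max}(s)$, where $\Angle{e}^{j_0}=M_s(z)$. Combining this with the explicit asymptotics $|e_i(z)|=|e_C(z)|^{1/q^i}$ valid for $i\geq 1$ and $|z|_\Im$ large (Lemma~\ref{distinctweights}), one gets
$$|M_s(z)|=|e_C(z)|^{(q-1)(q^{-1}+\cdots+q^{-(m-1)})+lq^{-m}}=|e_C(z)|^{w_{\max}(s)}=|e_A(z)|^{w_{\max}(s)}$$
up to the harmless factor $|\widetilde\pi|^{w_{\max}(s)}$ coming from $e_A=\widetilde\pi^{-1}e_C$; but in fact I would prefer to work directly with the tame-monomial expansion rather than with norms, so that the limit is genuine (not merely a norm estimate). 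Concretely: since $e_A(z)^{\kappa(\Sigma)}\psi(1;\sigma_\Sigma)=e_A(z)^{\kappa(\Sigma)-1}\phi(1;\sigma_\Sigma)=e_A(z)^{-w_{\max}(s)}g_\Sigma(z)$, and $e_A(z)^{-w_{\max}(s)}M_s(z)$ tends to $1$ while $e_A(z)^{-w_{\max}(s)}$ times every other tame monomial appearing in $g_\Sigma$ has strictly negative weight and so tends to $0$ as $|z|_\Im\to\infty$ (again by the pointwise asymptotics of the $e_i$), one concludes
$$\lim_{|z|_\Im\to\infty}e_A(z)^{\kappa(\Sigma)}\psi(1;\sigma_\Sigma)(z)=\mathcal{P}_\Sigma.$$

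The only genuinely delicate point is the interchange of the limit with the (infinite) tame series, i.e.\ ensuring that the tail $\sum_{j\neq j_0}f_j\Angle{e}^j$ of $g_\Sigma$, once multiplied by $e_A(z)^{-w_{\max}(s)}$, converges to $0$ \emph{uniformly} rather than monomial-by-monomial; but this is exactly the content of the estimate in the proof of Proposition~\ref{leading}, where one chooses $\widetilde w>0$ with $j<j+\widetilde w<j_0$ for all $j\neq j_0$ with $f_j\neq 0$ (finitely many weights are $\geq j_0-\widetilde w$ since the series is tame), bounds $\|f_j\Angle{e}^j\|\leq C_1|e_C(z)|^{j+\widetilde w}|\theta|^{-\delta_n}$, and uses $\delta_n\to\infty$. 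So the proof reduces to reading off the statement from Proposition~\ref{leading} and the explicit form of the leading tame monomial $M_s$ with coefficient $\mathcal{P}_\Sigma$, together with the observation $\kappa(\Sigma)=1-w_{\max}(s)$ already noted in the proof of Theorem~\ref{theopsi}. I would also remark that the case $\Sigma=\emptyset$ is trivial since then $\psi(1;\boldsymbol 1)=S_{1,A}=u^{-1}$, $\kappa(\emptyset)=1$, $e_A(z)\psi(1;\boldsymbol 1)=\widetilde\pi^{-1}e_C(z)\cdot e_C(z)^{-1}\cdot\widetilde\pi=1=\mathcal P_\emptyset$ (with the empty product convention).
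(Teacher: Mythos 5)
Your overall strategy is exactly what the paper intends: the Corollary carries no separate proof (``the above proof yields\dots''), and reading the limit off the leading tame monomial $M_s\mathcal{P}_\Sigma$ of $g_\Sigma=\phi(1;\sigma_\Sigma)$, with the tail controlled uniformly by the estimate inside the proof of Proposition \ref{leading}, is the right reconstruction. There is, however, one genuine gap. You rightly insist that a genuine limit is needed and not merely a norm estimate, but the ingredient you then invoke --- Lemma \ref{distinctweights}, i.e.\ $|e_i(z)|=|e_C(z)|^{1/q^i}$ --- is itself only a norm statement: it shows that $e_A(z)^{-w_{\max}(s)}M_s(z)$ stays bounded and bounded away from $0$, not that it converges. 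To obtain convergence you must use the functional equation $e_{i-1}=C_\theta(e_i)=\theta e_i+e_i^q$: since $|e_i/e_{i-1}|=|e_{i-1}|^{\frac{1}{q}-1}\rightarrow0$ as $|z|_\Im\rightarrow\infty$, one gets $e_i^q/e_{i-1}=1-\theta e_i/e_{i-1}\rightarrow1$, hence $e_i^{q^i}/e_0\rightarrow1$ and therefore $M_s(z)/e_0(z)^{w_{\max}(s)}\rightarrow1$ (the $q^i$-th roots being unique in characteristic $p$). This is a one-line fix, but it has to be said; without it the argument only bounds the quantity, it does not produce a limit.

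A second, lesser point concerns the constants. Since $\phi(1;\sigma_\Sigma)=e_0\psi(1;\sigma_\Sigma)$ and $e_0=e_C=\widetilde{\pi}e_A$, your displayed identity $e_A^{\kappa(\Sigma)}\psi=e_A^{\kappa(\Sigma)-1}\phi$ is off by $\widetilde{\pi}^{-1}$, and once the constants are carried through (for $s=|\Sigma|\leq q-1$, Perkins' formula (\ref{Perkins-formula}) gives the limit $\widetilde{\pi}^{s/q}\omega_\Sigma^{-1}$ rather than $1$) the limit agrees with $\mathcal{P}_\Sigma$ only up to an explicit factor involving a power of $\widetilde{\pi}$ and $\omega_\Sigma^{-1}$. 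This fuzziness is already present in the source --- the identification $g_\Sigma^{[s]}=M_s\mathcal{P}_\Sigma+\Phi$ in the proof of Theorem \ref{theopsi} suppresses the factor $\omega_\Sigma^{-1}$ coming from $\chi_{t_i}(z)=\omega(t_i)^{-1}\sum_{j}e_{j+1}t_i^j$, which is irrelevant for the weight computed there but not for the value of the limit --- so it is not a defect of your approach; still, you should make the constant explicit rather than declaring it harmless, since the Corollary asserts an exact equality.
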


\subsubsection*{Example} If $\Sigma$ is a singleton we can work with one variable $t$ and we have the explicit formula, due to Perkins, a simple proof of which can be found in \cite{PEL&PER} (combine (3) and Theorem 1):
\begin{equation}\label{Perkins-formula}
\psi(1;\chi)=\widetilde{\pi}u(z)\chi_t(z).
\end{equation}
Let $\Sigma$ be a subset of $\NN^*$ of cardinality $q$. Developing the product $\prod_{k\in\Sigma}e_C\left(\frac{z}{\theta-t_k}\right)$ we get, after elimination of the $q$-th powers:
\begin{multline*}
\prod_{k\in\Sigma}e_C\left(\frac{z}{\theta-t_k}\right)=e_0-\sum_{j\geq 0}\left(\theta\prod_{i\in\Sigma}t_i^j-\prod_{i\in\Sigma}t_i^{j+1}\right)e_{j+1}+\\ +\sum_{\begin{smallmatrix}0\leq i_1\leq \cdots\leq i_q\\ i_k\text{ not all equal}\end{smallmatrix}}e_{i_1+1}\cdots e_{i_q+1}\sum_{\begin{smallmatrix}
\underline{\alpha}=(\alpha_i:i\in\Sigma)\in\NN^{|\Sigma|}\\ |\underline{\alpha}|=q^{i_1}+\cdots+q^{i_q}
\end{smallmatrix}}\prod_{k\in\Sigma}t_k^{\alpha_k}.\end{multline*}
from this tame series expansion (of depth $q$) we deduce that the leading tame monomial of the tame series $\prod_{i\in\Sigma}\chi_{t_i}(z)$ is $e_0$. Hence, $\prod_{k\in\Sigma}e_C\left(\frac{z}{\theta-t_k}\right)-e_0\in\Tamecirc{A[\underline{t}_\Sigma]}$ and we get an explicit computation of $\phi(1;\sigma_\Sigma)$ for this choice of $\Sigma$.

\subsubsection{Perkins' series of higher order}

In this part we are interested in the following question:
\begin{Question}\label{questionorder}
Compute the valuation $v(\psi(n;\sigma_\Sigma))\in\ZZ[\frac{1}{p}]_{\geq 0}$ explicitly in terms of $l,m,n$. 
\end{Question} 
The case $\Sigma=\emptyset$, where $N=1$ was partially settled by Gekeler in \cite{GEK1}. The complete 
solution is now available in Gekeler's manuscript \cite{GEK2}. In Theorem \ref{moregeneralcomputation} we give a partial answer in the several variables case.
We suppose that $s=|\Sigma|\neq0$. 
We recall that by Proposition \ref{propdoublestructure}, $\mathcal{D}_n$ induces $\KK_\Sigma$-linear endomorphisms of $\Tamecirc{\KK_\Sigma}$ and $\Tame{\KK_\Sigma}$ for all $n$. We also recall that $w$ denotes the opposite of the valuation $v$ (degree).
\begin{Proposition}\label{quiteageneralproposition}
Let $i$ be a non-negative integer, let $r\geq 0$ be such that $\mathcal{D}_{(i+1)q^r-1}(f_\Sigma)\neq0$. Then,
$$w(\psi(1+i;\sigma_\Sigma))=\frac{1}{q^r}w(\mathcal{D}_{(i+1)q^r-1}(f_\Sigma))-\frac{1}{q^r}\in\left[-\frac{1}{q^r},0\right[.$$
\end{Proposition}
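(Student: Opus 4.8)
\textbf{Proof proposal for Proposition \ref{quiteageneralproposition}.}

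The plan is to reduce the computation of the weight of $\psi(1+i;\sigma_\Sigma)$ to the weight of a single divided derivative of the tame series $f_\Sigma:=\phi(1;\sigma_\Sigma)$, exploiting the iterative and $\tau$-twisted structure of the operators $\mathcal{D}_n$ recorded in Proposition \ref{propdoublestructure}. First I would recall that, by the identity $\psi(m;\sigma_\Sigma)=\widetilde{\pi}^{m-1}\mathcal{D}_{m-1}(\psi(1;\sigma_\Sigma))$ coming from $(z-a)^{-m}=\binom{-1}{m-1}(-1)^{m-1}\mathcal{D}_{m-1}((z-a)^{-1})$ (equivalently $G_m(\rho)=D_{m-1}(G_1(\rho))$ from \eqref{def-Gm}), applied with $m=1+i$, one has, up to a unit in $\CC_\infty$,
$$\psi(1+i;\sigma_\Sigma)=c\cdot u\cdot\mathcal{D}_i(\phi(1;\sigma_\Sigma))+(\text{lower order in }u),$$
where the extra factor $u$ reflects that $\psi(1;\sigma_\Sigma)=u\,f_\Sigma$ with $f_\Sigma\in\Tamecirc{\EE_\Sigma}$; more precisely $\psi(1+i;\sigma_\Sigma)=\widetilde{\pi}^i\mathcal{D}_i(u f_\Sigma)$ and Leibniz's rule together with $\mathcal{D}_j(u)$ being computable (Lemma \ref{generalisation-gekeler}) shows that the dominant term in the $v$-adic sense is $u\cdot\mathcal{D}_i(f_\Sigma)$ as long as $\mathcal{D}_i(f_\Sigma)\neq0$. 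Thus $w(\psi(1+i;\sigma_\Sigma))=w(u)+w(\mathcal{D}_i(f_\Sigma))=-1+w(\mathcal{D}_i(f_\Sigma))$ when $r=0$.

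For general $r$, I would use the commutation rule \eqref{commutationrules}: since $f_\Sigma\in\Tamecirc{\EE_\Sigma}$ one has $f_\Sigma=\tau^r(g)+(\text{non-}q^r\text{-power contributions})$ only in a limited sense, so instead the cleaner route is via the iterativity $\mathcal{D}_{(i+1)q^r-1}=\mathcal{D}_{q^r-1}\circ\mathcal{D}_{(i+1)q^r-1-(q^r-1)}\cdots$ — more to the point, I would observe that applying $\mathcal{D}_{(i+1)q^r-1}$ to $u f_\Sigma$ picks out the part of $u$ of the form $\exp_C$-type $q^r$-th powers (by Lemma \ref{generalisation-gekeler}, $\Exp{D}{x}(u)=u/(1-u\exp_C(x))$ whose coefficients involve $d_j^{-1}x^{q^j}$), contributing a factor whose $v$-valuation is $-1/q^r$, while the remaining derivative $\mathcal{D}_{(i+1)q^r-1}$ essentially acts as $\frac{1}{q^r}$-times a derivative on $f_\Sigma$ at the level of weights because $w(\tau h)=q\,w(h)$. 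Carefully bookkeeping the base-$q$ digit structure of $(i+1)q^r-1$ — which in base $q$ is $(q-1)+\cdots+(q-1)q^{r-1}+i\cdot q^r$, so $\ell_q((i+1)q^r-1)=r(q-1)+\ell_q(i)$ when there is no carry, and this is exactly the length needed for $\mathcal{D}_{(i+1)q^r-1}$ to interact with an $r$-fold $q$-power structure — yields the asserted formula $w(\psi(1+i;\sigma_\Sigma))=\frac{1}{q^r}w(\mathcal{D}_{(i+1)q^r-1}(f_\Sigma))-\frac{1}{q^r}$.

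The main steps in order: (1) write $\psi(1+i;\sigma_\Sigma)=\widetilde{\pi}^i\mathcal{D}_i(u f_\Sigma)$ and expand by Leibniz; (2) use Lemma \ref{generalisation-gekeler} to identify which term of $\Exp{D}{x}(u)$ dominates $v$-adically, namely the one of minimal $v$-valuation compatible with the depth constraint imposed by $f_\Sigma$; (3) invoke Proposition \ref{propdoublestructure}(3),(4) to control $w(\mathcal{D}_n(f_\Sigma))$ and the behaviour under $\tau$, converting the index $(i+1)q^r-1$ into the factor $1/q^r$; (4) check that the non-dominant terms have strictly smaller weight, using Proposition \ref{leading} and Lemma \ref{for-the-next-proof} to compare weights of tame monomials, so that no cancellation of the leading term can occur; (5) read off the range $[-1/q^r,0[$ from $w(\mathcal{D}_{(i+1)q^r-1}(f_\Sigma))\in[0,1[$ (Proposition \ref{propdoublestructure}(3) gives $w(\mathcal{D}_n g)\le w_{\max}(\lambda(g)-\ell_q(n))<1$).

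The hard part will be step (2)--(3): precisely pinning down, among all the Leibniz terms $\mathcal{D}_j(u)\mathcal{D}_{(i+1)q^r-1-j}(f_\Sigma)$, which one realizes the minimum $v$-valuation and verifying it is \emph{unique} (so that the leading tame monomials do not cancel). This requires a delicate analysis of how the base-$q$ carries in $(i+1)q^r-1-j$ interact with the depth of $f_\Sigma$ and with the weights of $\mathcal{D}_j(u)$, and it is exactly the combinatorial subtlety flagged in the Remark after Proposition \ref{propdoublestructure} (``it is difficult to make the interaction between $v$ and $\underline{\mathcal{D}}$ explicit''). The hypothesis $\mathcal{D}_{(i+1)q^r-1}(f_\Sigma)\neq0$ is what guarantees the relevant term survives; I expect the proof to proceed by first establishing the inequality $w(\psi(1+i;\sigma_\Sigma))\le\frac{1}{q^r}w(\mathcal{D}_{(i+1)q^r-1}(f_\Sigma))-\frac{1}{q^r}$ from an explicit dominant term, then the reverse inequality from Proposition \ref{leading} applied to the full tame series expansion of $\mathcal{D}_i(u f_\Sigma)$.
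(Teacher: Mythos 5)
Your $r=0$ step is essentially the paper's: write $\psi(1+i;\sigma_\Sigma)=(-1)^i\mathcal{D}_i(u f_\Sigma)$, expand by Leibniz, and observe that the term $u\,\mathcal{D}_i(f_\Sigma)$ dominates. For general $r$, however, there is a genuine gap. The mechanism you propose for producing the factor $1/q^r$ — that $\mathcal{D}_{(i+1)q^r-1}$ applied to $u$ "picks out a part of $u$" contributing valuation $-1/q^r$ — cannot work: $u=e_C(z)^{-1}$ has weight $-1$ and every positive-order divided derivative $\mathcal{D}_\alpha(u)$ lies in $u^2\CC_\infty[u]$, hence has weight $\leq -2$; nothing of weight $-1/q^r$ arises on that side. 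The factor $1/q^r$ comes from a source your sketch never isolates, namely the identity $(-1)^i\tau^r(\psi(i+1;\sigma_\Sigma))=\mathcal{D}_{(i+1)q^r-1}(\psi(1;\sigma_\Sigma))$ (because $\mathcal{D}_{(i+1)q^r-1}((z-a)^{-1})=\pm (z-a)^{-(i+1)q^r}$), combined with the multiplicativity $w(\tau^r(g))=q^r\,w(g)$. One first shows, by the same Leibniz estimate you use for $r=0$ but applied at the index $n=(i+1)q^r-1$, that $w(\mathcal{D}_{n}(\psi(1;\sigma_\Sigma)))=-1+w(\mathcal{D}_{n}(f_\Sigma))$, and then divides by $q^r$ to undo the twist (the paper runs this through the auxiliary tame series $f_i=\mathcal{D}_i(e_C(z)\psi(1;\sigma_\Sigma))$, whose correction term is a finite sum of constants times functions vanishing at infinity and so has negative weight). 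Trying instead to extract $1/q^r$ from the base-$q$ digit structure of the index, as you suggest, is not where it lives, and your route does not close without the twist identity.

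Relatedly, the step you flag as the unsolved "hard part" — deciding which Leibniz term $\mathcal{D}_\alpha(u)\mathcal{D}_{n-\alpha}(f_\Sigma)$ dominates via an analysis of base-$q$ carries — requires no such analysis. The bound is uniform and crude: for $\alpha>0$ one has $w(\mathcal{D}_\alpha(u))\leq -2$, while $w(\mathcal{D}_{n-\alpha}(f_\Sigma))\in\{-\infty\}\cup[0,1[$ because $\Tamecirc{\EE_\Sigma}$ is stable under the $\mathcal{D}_n$ (Proposition \ref{propdoublestructure}). Hence every error term has weight $<-1$, strictly below the main term $u\,\mathcal{D}_n(f_\Sigma)$, whose weight lies in $[-1,0[$ exactly when $\mathcal{D}_n(f_\Sigma)\neq0$ — this is the only place the non-vanishing hypothesis is used, and uniqueness of the leading tame monomial (hence non-cancellation) is automatic since the error terms live in strictly smaller weight. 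So the combinatorial subtlety you anticipate never arises; the real missing ingredient is the $\tau^r$-twist identity above.
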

\begin{proof}
We observe that $\tau^r(\psi(i+1;\sigma_\Sigma))\in\mathfrak{K}_\Sigma$. Further, we have:
\begin{equation}\label{formulaigeneral}\tau^r(\mathcal{D}_i(\psi(1;\sigma_\Sigma)))=(-1)^i\tau^r(\psi(i+1;\sigma_\Sigma))=\mathcal{D}_{q^r(i+1)-1}(\psi(1;\sigma_\Sigma)),\quad i,r\geq 0.\end{equation}
We are interested in the computation of the weight of $\tau^r(\psi(i+1;\sigma_\Sigma))$ (it is equal to $q^r$ times the weight of $\psi(i+1;\sigma_\Sigma)$, which is the quantity we ultimately want to compute). 
We set $f_i=\mathcal{D}_i(e_C(z)\psi(1;\sigma_\Sigma))\in\Tamecirc{\KK_\Sigma}$. 
In particular, $f_0=f_\Sigma=e_C(z)\psi(1;\sigma_\Sigma)\in\Tamecirc{\KK_\Sigma}$.
By Leibniz's rule,
we have
\begin{equation}\label{fiintermsofpsi}
f_i=e_C(z)\mathcal{D}_i(\psi(1;\sigma_\Sigma))+\underbrace{\sum_{\begin{smallmatrix}\alpha+\beta=n\\ \alpha>0\end{smallmatrix}}\mathcal{D}_\alpha(e_C(z))\mathcal{D}_\beta(\psi(1;\sigma_\Sigma))}_{=:\Xi}.
\end{equation} 
All terms of the above sum are in $\mathfrak{K}_\Sigma$. Since the higher derivatives of positive order of $e_C(z)$ are constant and all the functions $\psi(1+\beta;\sigma_\Sigma)$ for $\beta\geq0$ tend to zero as $|z|=|z|_\Im\rightarrow\infty$, the weight of the above defined term $\Xi$ is $< 0$. We apply the operator $\tau^r$. We get, by (\ref{formulaigeneral}):
\begin{equation}\label{ideinvfi}
\tau^r(f_i)=e_C(z)^{q^r}\mathcal{D}_{(i+1)q^r-1}(\psi(1;\sigma_\Sigma))+\tau^r(\Xi).\end{equation}
We have that $\tau^r(\Xi)\in \mathfrak{K}$ and the weight is $\leq 0$; we also set $n=(i+1)q^r-1$. Then, 
\begin{eqnarray*}
\mathcal{D}_{n}(\psi(1;\sigma_\Sigma))&=&\mathcal{D}_{n}(uf_\Sigma)\\
&=&u\mathcal{D}_n(f_\Sigma)+\underbrace{\sum_{\begin{smallmatrix}\alpha+\beta=n\\ \alpha>0\end{smallmatrix}}\mathcal{D}_\alpha(u)\mathcal{D}_\beta(f_\Sigma)}_{=:\Upsilon}.\end{eqnarray*}
If $\alpha>0$, $\mathcal{D}_\alpha(u)\in\CC_\infty[u]\subset\mathfrak{K}$ which is of weight $\leq -2$ as the reader can easily check. Since $f_\Sigma\in\Tamecirc{\KK_\Sigma}$, the weights of all its higher derivatives are in $\{-\infty\}\cup[0,1[$
and thus, the weight of the term $\Upsilon$ above defined is $<-1$. Let us suppose that $\mathcal{D}_n(f_\Sigma)$ is non-zero. Then, its weight belongs to $[0,1[$ and the weight of 
$u\mathcal{D}_n(f_\Sigma)$ belongs to $[-1,0[$. We deduce that, under this hypothesis of non-vanishing, 
the weight of $\mathcal{D}_{n}(\psi(1;\sigma_\Sigma))$ is equal to the weight of $u\mathcal{D}_n(f_\Sigma)$, belonging to the interval $[-1,0[$. Coming back to the identity (\ref{ideinvfi}) and recalling that $\tau^r(\Xi)$ has negative weight, we deduce that
$\tau^r(f_i)$ and $e_C(z)^{q^r-1}\mathcal{D}_n(f_\Sigma)$ have the same weight, belonging to the interval
$[q^r-1,q^r[$, and the weight of $f_i$ satisfies:
\begin{equation}\label{idweightfi}
w(f_i)=1+\frac{1}{q^r}w(\mathcal{D}_{q^r(i+1)-1}(f))-\frac{1}{q^r}\in\left[1-\frac{1}{q^r},1\right[,\quad r\geq 0,\quad i\geq 1.
\end{equation}
Coming back to (\ref{fiintermsofpsi}), we have noticed that the term $\Xi$ has weight $<0$. But $f_i$ has non-negative weight by (\ref{idweightfi}). Hence, the weight of the first term in the right-hand side of (\ref{fiintermsofpsi}) has the same weight as $f_i$ and the result follows.
\end{proof}

We recall that if $s=|\Sigma|=(m-1)(q-1)+l$ with $m\geq 1$ and $l\in\{1,\ldots,q-1\}$, then $w(\psi(1;\sigma_\Sigma))=lq^{-m}-q^{1-m}$ (see Theorem \ref{theopsi}). We want to compute the weight of $\psi(1+n;\sigma_\Sigma)$ for $n\geq 0$ and this allows to compute the $v$-valuation of these elements. The following Theorem generalizes Theorem \ref{theopsi}:
\begin{Theorem}\label{moregeneralcomputation}
Let $\Sigma$, $s,m,l$ as above and 
let $n$ be $\geq0$ such that $\ell_q(n)\leq l$. Then, 
$$v(\psi(1+n;\sigma_\Sigma))=q^{1-m}-(l-\ell_q(n))q^{-m}.$$
\end{Theorem}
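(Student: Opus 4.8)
The strategy is to reduce the computation of $v(\psi(1+n;\sigma_\Sigma))$ to the weight computation already carried out in Theorem \ref{theopsi} for $\psi(1;\sigma_\Sigma)$, using the commutation formula (\ref{formulaigeneral}) and Proposition \ref{quiteageneralproposition}. Recall that $w(\psi(1;\sigma_\Sigma)) = w_{\max}(s) = (q-1)\sum_{i=1}^{m-1}q^{-i} + lq^{-m}$, with leading tame monomial $M_s = e_1^{q-1}\cdots e_{m-1}^{q-1}e_m^l$ carrying a non-zero coefficient $\mathcal{P}_\Sigma \in \FF_p[\underline{t}_\Sigma]$ (after the decomposition (\ref{expansionofthetamepart}) of $f_\Sigma = e_0\psi(1;\sigma_\Sigma)$ into depth-homogeneous pieces). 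The key point is that Proposition \ref{quiteageneralproposition} tells us $w(\psi(1+n;\sigma_\Sigma)) = q^{-r}w(\mathcal{D}_{(n+1)q^r-1}(f_\Sigma)) - q^{-r}$ for any $r\geq 0$ such that $\mathcal{D}_{(n+1)q^r-1}(f_\Sigma)\neq 0$; so it suffices to (i) locate an $r$ making the derivative non-zero and (ii) compute the weight of that higher derivative of $f_\Sigma$.

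The computational heart is step (ii): computing $w(\mathcal{D}_{N}(f_\Sigma))$ where $N = (n+1)q^r - 1$. Here I would use Proposition \ref{propdoublestructure}(3): $\mathcal{D}_N$ sends a tame series of depth $\leq s$ to one of depth $\leq s - \ell_q(N)$ and weight $\leq w_{\max}(s - \ell_q(N))$. A natural choice is to take $r$ large enough (say $q^r > n$) so that the base-$q$ expansion of $N = (n+1)q^r - 1$ has $\ell_q(N) = \ell_q(n) + (q-1)r$ (the $\ell_q(n)$ digits of $n+1$... more carefully: $(n+1)q^r - 1 = n\cdot q^r + (q^r-1)$, and since $q^r-1$ has $r$ digits all equal to $q-1$ and $nq^r$ has its digits above position $r$, there is no carry, so $\ell_q(N) = \ell_q(n) + r(q-1)$). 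I would then analyze the action of $\mathcal{D}_N$ on the leading monomial $M_s$ using formula (\ref{ausefulformula}): $\mathcal{D}_N$ applied to a product of $\FF_q$-linear functions $e_{i}$ distributes $N$ as a sum of terms each in $\{0\}\cup q^{\NN}$ among the factors, and by the commutation rule (\ref{commutationrules}) each $\mathcal{D}_{q^j}(e_i) = e_i^{(j)}$-type constant shift. The claim $v(\psi(1+n;\sigma_\Sigma)) = q^{1-m} - (l-\ell_q(n))q^{-m}$, i.e. $w = -q^{1-m} + (l - \ell_q(n))q^{-m}$, translates via Proposition \ref{quiteageneralproposition} into $w(\mathcal{D}_N(f_\Sigma)) = q^r(1 - q^{1-m} + (l-\ell_q(n))q^{-m}) = q^r + q^r\, w_{\max}(s - \ell_q(N))\cdot(\text{consistency check})$; one checks directly that $w_{\max}(s - \ell_q(N)) = w_{\max}((m-1)(q-1) + l - \ell_q(n) - r(q-1))$, and when $\ell_q(n)\leq l$ this equals $(q-1)\sum_{i=1}^{m-r-1}q^{-i} + (l-\ell_q(n))q^{-m+r}$ (for $r$ not too large), so that $q^r w_{\max}(s-\ell_q(N)) + q^r - q^r$ gives exactly $q^r w(\psi(1+n;\sigma_\Sigma))$; the bookkeeping has to be done carefully but is forced.

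The step I expect to be the main obstacle is proving the \emph{non-vanishing} $\mathcal{D}_N(f_\Sigma)\neq 0$ together with showing the bound $w(\mathcal{D}_N(f_\Sigma))\leq w_{\max}(s - \ell_q(N))$ from Proposition \ref{propdoublestructure}(3) is actually \emph{attained}. For this I would examine the explicit coefficient of the target leading monomial $M_{s-\ell_q(N)} = e_1^{q-1}\cdots e_{m-r-2}^{q-1}e_{m-r-1}^{l-\ell_q(n)}$ (for the appropriate $r$) in $\mathcal{D}_N(g_\Sigma^{[s]})$, tracing through (\ref{expansionofthetamepart}) and (\ref{ausefulformula}): the derivative $\mathcal{D}_N$ must decrease each $e_i$-index by contributions summing to $N$, and the hypothesis $\ell_q(n)\leq l$ is precisely what guarantees that enough of the "budget" can be absorbed by lowering $e_m^l$ (of exponent $l$) down to $e_{m-r-1}^{l - \ell_q(n)}$ without the coefficient polynomial in $\underline{t}_\Sigma$ collapsing to zero in characteristic $p$. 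One then specializes the $t_i$ as in the proof of Theorem \ref{theopsi} (choosing a subset $\widetilde I\subset\Sigma$ with $|\Sigma\setminus\widetilde I| = l - \ell_q(n)$, sending $t_i\mapsto 1$ or $0$) to witness a non-zero value. Finally, contributions from the non-leading depth-homogeneous pieces $g_\Sigma^{[i]}$ with $i < s$ have strictly smaller weight by Lemma \ref{for-the-next-proof} and Proposition \ref{propdoublestructure}(3), hence cannot interfere, and an application of $\tau^r$ transfers the conclusion back to $\psi(1+n;\sigma_\Sigma)$ via (\ref{idweightfi}), giving $v(\psi(1+n;\sigma_\Sigma)) = -w = q^{1-m} - (l-\ell_q(n))q^{-m}$.
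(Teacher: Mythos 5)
Your overall skeleton is the paper's: reduce to the weight of a higher derivative of $f_\Sigma=e_0\psi(1;\sigma_\Sigma)$ via Proposition \ref{quiteageneralproposition}, compute $\mathcal{D}_N$ on the leading tame monomial $M_s$, and check that the lower-weight and lower-depth pieces cannot interfere. But the specific choice you make at the start — take $r$ large enough that $q^r>n$, so $N=(n+1)q^r-1$ — is where the argument breaks. Since $\ell_q(N)=\ell_q(n)+r(q-1)$, for many admissible inputs this exceeds the depth $s=(m-1)(q-1)+l$ of $f_\Sigma$; by Proposition \ref{propdoublestructure} (parts (2)--(3)) this forces $\mathcal{D}_N(f_\Sigma)=0$, and Proposition \ref{quiteageneralproposition} then does not apply at all. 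A concrete instance: $s=1$ (so $m=l=1$) and $n=1$ requires $r\geq1$ to get $q^r>n$, but already $\ell_q(N)=1+(q-1)=q>1=s$. The paper simply takes $r=0$, i.e.\ $N=n$, which is always admissible and is exactly the case the hypothesis $\ell_q(n)\leq l$ is designed for.

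Even in the cases where your large $r$ keeps $\ell_q(N)\leq s$, the non-vanishing step becomes substantially harder than you acknowledge. Reaching $M_{s-\ell_q(N)}$ from $M_s$ then requires deleting all copies of $e_m$, all copies of $e_{m-1},\dots$, down into $e_{m-r}$, so the $q$-power "budget" of $N$ must be distributed over factors of several different indices; the coefficient of the target monomial is a sum over all such assignments of products of constants $\mathcal{D}_{q^j}(e_i)$, and showing this sum is non-zero in characteristic $p$ is a genuine combinatorial obstacle that the hypothesis $\ell_q(n)\leq l$ does not control (it only governs the $e_m^l$ factor). With $r=0$ the entire budget $n$ is absorbed by $e_m^l$ alone (the Leibniz terms touching $e_1,\dots,e_{m-1}$ lose strictly more weight), and the leading coefficient is the single explicit non-zero constant $\kappa_n$ of Lemma \ref{asimpletechnicallemma}; the polynomial factor $\mathcal{P}_\Sigma$ is already known to be non-zero from Theorem \ref{theopsi}, so no new specialization of the $t_i$ is needed. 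Finally, a minor slip: with your $N$, the correct target monomial is $e_1^{q-1}\cdots e_{m-r-1}^{q-1}e_{m-r}^{l-\ell_q(n)}$, not $e_1^{q-1}\cdots e_{m-r-2}^{q-1}e_{m-r-1}^{l-\ell_q(n)}$.
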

\begin{proof}
We choose $i=n$ and $r=0$ in Proposition \ref{quiteageneralproposition} (note that in this case $n=(i+1)q^r-1$). We show that $\mathcal{D}_n(f_\Sigma)\neq0$ and we compute its depth.
To construct $f_\Sigma$, we have applied the rule 
$e_{i-1}=C_\theta(e_i)$ to the product 
$\prod_{i\in\Sigma}e_C\left(\frac{z}{\theta-t_i}\right)$ which implies that 
$$f_\Sigma=f_\Sigma^{[s]}+f_\Sigma^{[s-q+1]}+\cdots.$$ 
We recall that we have already seen that $f_\Sigma^{[s]}$ is equal to $\chi(\sigma_\Sigma)^{[s]}$ and has the monic maximal tame monomial $M_s$ as a non-zero term of its tame expansion. Further, $w(f_\Sigma^{[s-j(q-1)]})<w_{\max}(s)$ for all $j>0$.
Hence, we can write $f_\Sigma=M_s+g$ with $w(g)<w_{\max}(s)$ and $\mathcal{D}_{n}(g)$ has weight strictly less than $$w_{\max}(s-\ell_q(n))=1-q^{1-m}+(l-\ell_q(n))q^{-m}.$$ We now claim that
$w(\mathcal{D}_n(f_\Sigma))=w_{\max}(s-\ell_q(n))$. If this is true, we deduce, from Proposition \ref{quiteageneralproposition}, the formula 
$w(\psi(1+n;\sigma_\Sigma))=(l-\ell_q(n))q^{-m}-q^{1-m}$ hence completing the proof of the Theorem. 
\end{proof}
The claim is the object of the next Lemma, where $M_s$ is defined in (\ref{Ms}):
\begin{Lemma}\label{asimpletechnicallemma}
With $s=|\Sigma|$ equal to $(m-1)(q-1)+l$, $m\geq 1$ and $1\leq l\leq q-1$,
let $n\in\NN$ be such that $\ell_q(n)\leq l$. Then, 
$$\mathcal{D}_n(M_s)=\kappa_nM_{s-\ell_q(n)}+h,$$
with $$\kappa_n:=\left(\frac{\widetilde{\pi}}{\theta^m}\right)^{n_0}
\left(\frac{\widetilde{\pi}^q}{\theta^{mq}d_1}\right)^{n_1}\cdots\left(\frac{\widetilde{\pi}^{q^r}}{\theta^{mq^r}d_r}\right)^{n_r}\in \CC_\infty^{\times},$$
where $n=n_0+n_1q+\cdots+n_rq^r$ is the base-$q$ expansion of $n$, and with $h\in\mathcal{T}^\circ(\CC_\infty)$ of weight $<w_{\max}(s-\ell_q(n))$. 
\end{Lemma}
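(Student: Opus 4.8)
The claim concerns $\mathcal{D}_n(M_s)$ where $M_s = e_1^{q-1}\cdots e_{m-1}^{q-1}e_m^l$ is the maximal tame monomial and $n$ has $q$-ary expansion $n = n_0 + n_1 q + \cdots + n_r q^r$ with $\ell_q(n) = n_0 + \cdots + n_r \leq l$. The strategy is to exploit the iterativity of the family $\underline{\mathcal{D}}$ together with the commutation rules of Proposition \ref{propdoublestructure}(4), which give $\mathcal{D}_n = \mathcal{D}_{n_0}\circ \mathcal{D}_{n_1 q}\circ\cdots\circ\mathcal{D}_{n_r q^r}$ with the individual factors mutually commuting. Since $\mathcal{D}_{kq^j} = (\tau^j)\mathcal{D}_k(\tau^{-j})$-type relations hold on the $e_i$ (concretely $\mathcal{D}_{q^j}(e_i) = \tau^j \mathcal{D}_1 (\cdots)$, and more importantly $\mathcal{D}_{q^j}$ picks out the coefficient of $z^{q^j}$ in an $\FF_q$-linear function), the plan is to reduce to the following explicit computation: each $e_i = \exp_C(\widetilde{\pi} z/\theta^i)$ is $\FF_q$-linear with $q^j$-coefficient equal to $d_j^{-1}(\widetilde{\pi}/\theta^i)^{q^j}$, so $\mathcal{D}_{q^j}(e_i) = d_j^{-1}(\widetilde{\pi}/\theta^i)^{q^j} \in \CC_\infty^\times$ is a nonzero constant (and $\mathcal{D}_k(e_i) = 0$ for $0 < k$ not a power of $q$).

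\textbf{Main step: Leibniz expansion and the leading term.} First I would apply formula (\ref{ausefulformula}) from the proof of Proposition \ref{propdoublestructure}: writing $M_s = \prod_{i \in U} f_i$ where $U$ is the multiset listing each $e_i$ with its multiplicity ($q-1$ copies of $e_1,\dots,e_{m-1}$ and $l$ copies of $e_m$), one gets
$$\mathcal{D}_n(M_s) = \sum_{\substack{\text{assignments }i_k \in \{0\}\cup q^{\NN}\\ \sum_k i_k = n}} \prod_{k \in U} f_{k, i_k}.$$
Each summand is a product of factors, one per slot of $U$: a slot either keeps its $e_i$ unchanged (contributing exponent weight) or gets hit by some $\mathcal{D}_{q^j}$, which converts that $e_i$-factor into the constant $d_j^{-1}(\widetilde{\pi}/\theta^i)^{q^j}$ and lowers the depth by $1$. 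Since $\ell_q(n) = \sum_j n_j$, exactly $\ell_q(n)$ slots get differentiated (counting a $\mathcal{D}_{q^j}$ as using up one slot, and the number of $q^j$'s used is $n_j$). The resulting term is therefore a constant times a sub-tame-monomial of $M_s$ of depth $s - \ell_q(n)$. To identify the top-weight contribution I would observe that weight is maximized by differentiating the slots with the smallest index $i$ — i.e. by removing factors of $e_1$ first, then $e_2$, etc. — because removing a factor $e_i$ deletes weight $q^{-i}$, and we want to keep the large (small-$i$) weights. Carrying this out: one first removes the $n_0$ copies (wait — one must be careful here; the correct bookkeeping is that one uses $n_j$ applications of $\mathcal{D}_{q^j}$), and the uniquely weight-maximal surviving monomial is $M_{s - \ell_q(n)}$, with coefficient obtained by collecting the constants $d_j^{-1}(\widetilde{\pi}/\theta^i)^{q^j}$ over the optimal assignment. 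The constraint $\ell_q(n) \leq l$ is exactly what guarantees that after deleting $\ell_q(n)$ factors (all from the $e_m^l$ block, in the weight-maximal choice, since $e_m$ has the smallest weight $q^{-m}$) we still land on a genuine maximal tame monomial $M_{s - \ell_q(n)}$ rather than something degenerate; more precisely it ensures $s - \ell_q(n) = (m-1)(q-1) + (l - \ell_q(n))$ with $1 \leq l - \ell_q(n) \leq q-1$ when $l > \ell_q(n)$, and the edge case $l = \ell_q(n)$ handled by the standard $(m,l) \leftrightarrow (m-1,q-1)$ convention.

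\textbf{Computing the constant and bounding the remainder.} For the coefficient $\kappa_n$: in the weight-maximal assignment, the $n_j$ operators $\mathcal{D}_{q^j}$ all act on copies of $e_m$ (the minimal-weight generator), each contributing $d_j^{-1}(\widetilde{\pi}/\theta^m)^{q^j} = \widetilde{\pi}^{q^j}/(\theta^{m q^j} d_j)$ (with $d_0 = 1$), so the product over all $j$ is precisely
$$\kappa_n = \left(\frac{\widetilde{\pi}}{\theta^m}\right)^{n_0}\left(\frac{\widetilde{\pi}^q}{\theta^{mq}d_1}\right)^{n_1}\cdots\left(\frac{\widetilde{\pi}^{q^r}}{\theta^{mq^r}d_r}\right)^{n_r} \in \CC_\infty^\times,$$
matching the statement; here one also uses that all these $\mathcal{D}_{q^j}$ commute, so the order of application is irrelevant and no multinomial coefficient (which could vanish mod $p$) intervenes — this is where Lucas/iterativity is essential. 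All the other terms in the Leibniz sum either produce a tame monomial of depth $s - \ell_q(n)$ but strictly smaller weight (by the weight-maximality argument above), or produce monomials of smaller depth still; in all cases their weight is $< w_{\max}(s - \ell_q(n))$ by Lemma \ref{for-the-next-proof}. Collecting these into $h \in \Tamecirc{\CC_\infty}$ gives $w(h) < w_{\max}(s - \ell_q(n))$, completing the proof.

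\textbf{Anticipated obstacle.} The delicate point is the weight-maximality claim — that among all Leibniz assignments distributing the $\mathcal{D}_{q^j}$'s over the slots, the strictly unique maximal-weight surviving tame monomial is $M_{s-\ell_q(n)}$ — together with checking that no cancellation occurs among several assignments that happen to yield $M_{s-\ell_q(n)}$. I expect one needs a short combinatorial lemma: different assignments either give monomials of different weight, or, if they give the same monomial $\Angle{e}^j$, the contributions are all genuine constants in $\CC_\infty^\times$ with controlled valuation and one must verify their sum is the stated $\kappa_n$ and in particular nonzero. Because the $\mathcal{D}_{q^j}$ hitting distinct copies of the same $e_m$ are interchangeable, the combinatorial multiplicity is a product of multinomials $\binom{l}{n_0,n_1,\dots}$-type factors which, crucially, must be checked to be nonzero in characteristic $p$; this is where the hypothesis $\ell_q(n) \le l$ does real work (it keeps all the relevant digits small enough that no $\binom{\cdot}{\cdot} \equiv 0 \pmod p$ obstruction arises, again via Lucas). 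Once that bookkeeping is pinned down the rest is the routine tame-series estimation already developed in \S\ref{tameseriessect}.
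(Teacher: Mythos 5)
Your strategy is essentially the one the paper uses: expand $\mathcal{D}_n(M_s)$ by the Leibniz formula (\ref{ausefulformula}), observe that the unique weight-maximal surviving monomial comes from converting exactly $\ell_q(n)$ factors $e_m$ (the generator of smallest weight $q^{-m}$) into the constants $d_j^{-1}(\widetilde{\pi}/\theta^m)^{q^j}$, and bound every other term by $w_{\max}(s-\ell_q(n))$. The paper merely organizes this as $M_s=FG$ with $F=(e_1\cdots e_{m-1})^{q-1}$, $G=e_m^l$, kills all terms $\mathcal{D}_\alpha(F)\mathcal{D}_\beta(G)$ with $\alpha>0$ by the weight bound, and computes $F\cdot\mathcal{D}_n(G)$. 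One slip to flag: your sentence that weight is maximized "by removing factors of $e_1$ first" is backwards — removing $e_i$ deletes weight $q^{-i}$, so one must remove the factors of \emph{largest} index, i.e.\ the copies of $e_m$ — but your subsequent paragraph applies the operators to $e_m$, so this is only a local inconsistency.

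The genuine gap is in your "anticipated obstacle" paragraph. You correctly isolate the delicate point: the coefficient of $M_{s-\ell_q(n)}$ is not $\prod_j\big(d_j^{-1}(\widetilde{\pi}/\theta^m)^{q^j}\big)^{n_j}$ alone, but that quantity multiplied by the combinatorial multiplicity
$$\binom{l}{n_0,\ n_1,\ \ldots,\ n_r,\ l-\ell_q(n)},$$
the number of ways of distributing the $n_j$ operators $\mathcal{D}_{q^j}$ among the $l$ copies of $e_m$. But your claim that $\ell_q(n)\le l$ forces this multinomial to be $\not\equiv 0\pmod p$ "via Lucas" is false whenever $q$ is a proper power of $p$: Lucas requires no carrying in base $p$, whereas the hypothesis only controls base-$q$ digits. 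Concretely, take $q=4$ (so $p=2$), $m=1$, $l=3$, $n=5=1+1\cdot 4$: then $\ell_q(n)=2\le 3$, the multiplicity is $\binom{3}{1,1,1}=6\equiv 0\pmod 2$, and a direct Leibniz computation gives $\mathcal{D}_5(e_1^3)=6\,d_1^{-1}(\widetilde{\pi}/\theta)^{5}e_1=0$, so no nonzero leading term of weight $w_{\max}(1)$ survives. Your argument only closes when $q=p$ is prime, since then the parts sum to $l\le p-1$ and no base-$p$ carry can occur (even then the multinomial need not equal $1$, so the displayed $\kappa_n$ should carry this extra factor; this is harmless for the applications, which only use $\kappa_n\neq 0$).

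For fairness: the paper's own proof asserts $\mathcal{D}_n(e_m^l)=\kappa_n e_m^{l-\ell_q(n)}$ in one line and omits the multinomial entirely, so your instinct to single this out as the point requiring justification was sound — it is exactly the point that neither your resolution nor the printed argument actually settles, and for $q\neq p$ the statement needs the extra hypothesis that the multinomial above is nonzero modulo $p$.
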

\begin{proof}
We write $M_s=FG$ with $F=\left(e_1\cdots e_{m-1})\right)^{q-1}$ and $G=e_m^{l}$. By Leibniz's rule $\mathcal{D}_n(M_s)=\sum_{\alpha+\beta=n}\mathcal{D}_\alpha(F)\mathcal{D}_\beta(G)$. If $\alpha>0$, then $w(\mathcal{D}_\alpha(F)\mathcal{D}_\beta(G))$ is strictly smaller than $w_{\max}(s-\ell_q(n))$. Now, we consider the term
with $\alpha=0$. Note, by the formula (\ref{ausefulformula}) applied to the product of $\FF_q$-linear maps $G=e_m^l$, that 
$$\mathcal{D}_n(G)=(\mathcal{D}_{n_0}\circ\mathcal{D}_{n_1q}\circ \cdots\circ\mathcal{D}_{n_rq^r})(e_m^l)=\kappa_n e_m^{l-\ell_q(n)}.$$ The result follows.
\end{proof}

\section{Eisenstein series for $\rho^*_\Sigma$}\label{eisensteinseries}

This section contains the proofs of the various items of Theorem C in the introduction.
We present several aspects of Eisenstein series for the representation $\rho=\rho_\Sigma^*$, with $N=2^s$. These functions provide important examples of the modular forms we consider (see also \cite{PEL2}).  
We set, for $w\in\NN^*$:
$$\mathcal{E}(w;\rho_\Sigma^*):=\sideset{}{'}\sum_{(a,b)\in A}(az+b)^{-w}\bigotimes_{i\in\Sigma}\binom{\chi_{t_i}(a)}{\chi_{t_i}(b)},$$  where the sum runs over the 
$a,b\in A$ which are not both zero. This series corresponds to the choice
$$\mu(a,b)=\bigotimes_{i\in\Sigma}\binom{\chi_{t_i}(a)}{\chi_{t_i}(b)}$$
in (\ref{the-map-mu}) (this is the transposition of the first line of $\rho_\Sigma\Big(\begin{smallmatrix}a & b \\ * & *\end{smallmatrix}\Big)$) so that by Lemma \ref{defi-general-eisenstein-series} $\mathcal{E}_w(\rho_\Sigma^*)\in
M_w(\rho_\Sigma^*)\setminus\{0\}$ if $s=|\Sigma|\equiv w\pmod{q-1}$ (see also \cite[\S 5]{PEL2}). Note also that this series defines a holomorphic function
$\Omega\rightarrow \EE_\Sigma^{N\times 1}$. We call $\mathcal{E}(w;\rho^*_\Sigma)$ the {\em Eisenstein series of weight $w$ associated to $\rho^*_\Sigma$}. 

Here is the plan of this section. In \S \ref{expansion-eisenstein}, Corollary \ref{valuations-eisenstein}, we compute the $v$-valuation of the entries of $\mathcal{E}(1;\rho^*_\Sigma)$. The computation uses results of \S \ref{differential-perkins} on Perkins' series. The general problem of the computation of the $v$-valuation of the entries of $\mathcal{E}(m;\rho^*_\Sigma)$ for $m>0$ is likely to be a difficult problem. Some partial results can be obtained applying Theorem \ref{moregeneralcomputation}. 
In \S \ref{modular-forms-weight-1} we use the Eisenstein series $\mathcal{E}(1;\rho^*_\Sigma)$ to show that the dimension of $M_1(\rho^*_\Sigma;\LL_\Sigma)$ equals one if $|\Sigma|\equiv1\pmod{q-1}$. This is one of the very few spaces of non-scalar Drinfeld modular forms that we are able to fully characterize. As a corollary, the series $\mathcal{E}(1;\rho^*_\Sigma)$ are Hecke eigenforms. 
In \S \ref{integrality-eisenstein-coefficients}, Theorem \ref{integrality-results} we describe integrality properties of the $u$-expansions (in the sense of Proposition \ref{descriptionfieldofunif}) of the entries of $\mathcal{E}(m;\rho^*_\Sigma)$. Naturally, these series expansions are much more complicated and less explicit than those obtained by Gekeler in \cite{GEK} for the scalar Eisenstein series.
In \S \ref{Quasimodular-forms-from-Eisenstein} we show how certain results of Petrov \cite{PET} on $A$-expansions can be generalized to show that series such as
$$\sum_{a\in A^+}a^lG_m(u_a)\in K[[u]]$$ with $l,m>0$ such that $l\equiv m\pmod{q-1}$ give rise to $u$-expansions of {\em quasi-modular forms} in the sense of \cite{BOS&PEL0}. These series occur as special values of an entry of the Eisenstein series $\mathcal{E}(m;\rho^*_\Sigma)$ hence confirming a prediction of D. Goss on a link between Petrov's $A$-expansions and Eisenstein series; see Theorem \ref{to-give-Petrov}. In \S \ref{v-adic-modular} we present, succinctly, some applications to $\mathfrak{v}$-adic modular forms.

\subsubsection*{Link between Eisenstein series and Poincar\'e series}
The next lemma provides a connection with Poincar\'e's series.

\begin{Lemma}\label{connection-eisenstein-poincare} $\mathcal{E}(w;\rho^*_\Sigma)=\zeta_A(w;\sigma_\Sigma)\mathcal{P}_{w}^{(0)}(\Phi_{\rho^*_\Sigma})$.
\end{Lemma}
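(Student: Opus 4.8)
The statement to prove is the identity
$$\mathcal{E}(w;\rho^*_\Sigma)=\zeta_A(w;\sigma_\Sigma)\mathcal{P}_{w}^{(0)}(\Phi_{\rho^*_\Sigma}),$$
relating the Eisenstein series attached to $\rho^*_\Sigma$ with the weight-$w$ Poincar\'e series built from the quasi-periodic function $\Phi_{\rho^*_\Sigma}$, taking the block of type $0$ (i.e. $\mathcal{P}_{w}^{[0]}$ in the notation of (\ref{blocks-P}), here written $\mathcal{P}_{w}^{(0)}$). The plan is to compute both sides by decomposing the lattice $A^2\setminus\{(0,0)\}$, equivalently the cosets $H\backslash\Gamma$ together with the ``extra parameter'' coming from the greatest common divisor, and to match the resulting sums term by term. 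First I would write the Eisenstein sum as a sum over the projective line: grouping the pairs $(a,b)$ according to the primitive pair $(a_0,b_0)$ they are a multiple of, with $(a,b)=e\cdot(a_0,b_0)$, $e\in A^+$, one gets
$$\mathcal{E}(w;\rho^*_\Sigma)=\Bigl(\sum_{e\in A^+}e^{-w}\sigma_\Sigma(e)\Bigr)\sideset{}{'}\sum_{(a_0,b_0)\ \mathrm{primitive}}(a_0z+b_0)^{-w}\bigotimes_{i\in\Sigma}\binom{\chi_{t_i}(a_0)}{\chi_{t_i}(b_0)},$$
where I use the multiplicativity $\chi_{t_i}(ea_0)=\chi_{t_i}(e)\chi_{t_i}(a_0)$ and the definition $\sigma_\Sigma(e)=\prod_{i\in\Sigma}\chi_{t_i}(e)$; the first factor is exactly $\zeta_A(w;\sigma_\Sigma)$ as defined in (\ref{zeta-value-tate}). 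So it remains to identify the primitive-pair sum with $\mathcal{P}_{w}^{(0)}(\Phi_{\rho^*_\Sigma})$.

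Next I would unwind the definition of the Poincar\'e series. By Lemma \ref{lemma3} a set of representatives of $H\backslash\Gamma$ is indexed by the bottom rows $(c,d)$ with $\gcd(c,d)=1$ (together with the finitely many ``boundary'' representatives of types (1), (2)), which is precisely the set of primitive pairs; and for $G=\Phi_{\rho^*_\Sigma}$, using that $\Phi_\rho$ is $\rho$-quasi-periodic of type $0$ and that $\Phi_\rho(a)=\rho(T_a)$ for $a\in A$ (Proposition \ref{propgeneralitiestame}(d)), the summand $S_\gamma(w,0;\Phi_{\rho^*_\Sigma})(z)$ collapses to a term of the shape $J_\gamma(z)^{-w}\rho^*_\Sigma(\gamma)^{-1}\Phi_{\rho^*_\Sigma}(\gamma(z))$, and evaluating $\Phi_{\rho^*_\Sigma}$ at the ``constant part'' of $\gamma(z)$ (which lies in $A$ modulo the part going to $0$) reproduces, after extracting the type-$0$ block, exactly the vector $\bigotimes_{i\in\Sigma}\binom{\chi_{t_i}(c)}{\chi_{t_i}(d)}$ up to the automorphy factor $(cz+d)^{-w}$. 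Concretely, I would use the cocycle relation (\ref{xi-left-right})/(\ref{qpbehavior}) to write $\Phi_{\rho^*_\Sigma}(\gamma(z))=\rho^*_\Sigma(\gamma)\cdot\bigl(\text{something tending to }\rho^*_\Sigma(\text{diag part})\bigr)$, so that the $\rho^*_\Sigma(\gamma)^{-1}$ cancels and the surviving contribution of the type-$0$ block is the desired rank-one Eisenstein summand. The finitely many boundary representatives of types (1) and (2) must be checked to combine with the ``$a=0$'' contributions in the Eisenstein sum, i.e. the terms $(0,b)$, via the computation of $\mathfrak{A}$ and $\mathfrak{B}$ in (\ref{frak-A}), (\ref{frak-B}); with $m=0$ and $\rho$ of degree $s=|\Sigma|\equiv w\pmod{q-1}$, $\epsilon(\rho)$ has a nonzero $0$-th block and the combinatorics matches the $b$-sum $\sum_{b\neq 0}b^{-w}\bigotimes_i\binom{0}{\chi_{t_i}(b)}$.

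The cleanest way to organize all of this is probably to avoid case analysis altogether: I would instead prove the identity by the standard ``both sides are modular forms with the same $u$-expansion'' argument, or even more directly by noting that both sides equal the $H$-average of the same rank-one seed. Namely, set $F_\Sigma(z):=\sideset{}{'}\sum_{b\in A}b^{-w}\bigotimes_{i\in\Sigma}\binom{0}{\chi_{t_i}(b)}$ — this is (up to a constant) the ``infinity component'' common to both constructions — observe via Lemma \ref{connection-eisenstein-poincare}'s own seed that $\Phi_{\rho^*_\Sigma}$ restricted along the cusp reproduces the column vectors $\mu(a,b)$, and then invoke that both $\mathcal{E}(w;\rho^*_\Sigma)$ and $\mathcal{P}_w^{(0)}(\Phi_{\rho^*_\Sigma})$ lie in $M_w(\rho^*_\Sigma;\EE_\Sigma)$ (the former by Lemma \ref{defi-general-eisenstein-series}, the latter by the Poincar\'e series results) and agree on the $0$-th Fourier layer; since $M_w$ embeds in $\mathfrak{K}_\Sigma^{N\times 1}$ with a genuine valuation (Theorem \ref{theorem-u-expansions}) and the difference would be a form whose seed vanishes, one concludes equality. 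I expect the \textbf{main obstacle} to be the bookkeeping of exactly how the type-$0$ block is extracted and how the scalar $\zeta_A(w;\sigma_\Sigma)$ factors out cleanly: one has to be careful that the $\gcd$-decomposition of $A^2$ is compatible with the $H\backslash\Gamma$ parametrization (primitive vectors versus bottom rows of $\Gamma$), that the boundary representatives of Lemma \ref{lemma3} are not double-counted, and that the convergence is uniform on annuli so that the rearrangement of the double sum $\sum_{e}\sum_{(a_0,b_0)}$ is legitimate — this last point is guaranteed by the estimates already used in the proof of Proposition \ref{proposition-convergence}.
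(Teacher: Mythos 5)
Your first step --- writing $A^2\setminus\{(0,0)\}=\mathcal{I}\cdot A^+$ with $\mathcal{I}$ the set of coprime pairs, using multiplicativity of the $\chi_{t_i}$ to pull out the factor $\zeta_A(w;\sigma_\Sigma)$ --- is exactly the paper's argument, and the identification of $H\backslash\Gamma$ with $\mathcal{I}$ via bottom rows is also correct (the ``boundary'' representatives of Lemma \ref{lemma3} are themselves primitive pairs, so there is nothing extra to reconcile there). The gap is in the second half. First, $\mathcal{P}_w^{(0)}$ denotes the \emph{last column} of the matrix $\mathcal{P}_w(\Phi_{\rho^*_\Sigma})$, not the type-$0$ block $\mathcal{P}_w^{[0]}$ of (\ref{blocks-P}). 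Second, and more importantly, the identification of that column with the primitive Eisenstein sum is an \emph{exact algebraic identity}, not an asymptotic one, and your sketch misses the one observation that makes it exact: $\Phi_{\rho^*_\Sigma}(z)$ is lower triangular with $1$'s on the diagonal for every $z$ (this follows from the construction of $\Xi_{\rho^*_\Sigma}$ and Corollary \ref{previouscorollary}), so its last column is the \emph{constant} vector ${}^t(0,\ldots,0,1)$. Hence the last column of $S_\gamma(w,0;\Phi_{\rho^*_\Sigma})=J_\gamma(z)^{-w}\rho^*_\Sigma(\gamma)^{-1}\Phi_{\rho^*_\Sigma}(\gamma(z))\rho^*_\Sigma(\begin{smallmatrix}\det\gamma&0\\0&1\end{smallmatrix})$ is just $J_\gamma(z)^{-w}$ times the last column of $\rho^*_\Sigma(\gamma)^{-1}={}^t\rho_\Sigma(\gamma)$, which is $(cz+d)^{-w}\bigotimes_i\binom{\chi_{t_i}(c)}{\chi_{t_i}(d)}$ on the nose. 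Your alternative of ``evaluating $\Phi_{\rho^*_\Sigma}$ at the constant part of $\gamma(z)$'' does not work: $\gamma(z)=a/c-\det(\gamma)/(c(cz+d))$ has $a/c\notin A$ in general, so the quasi-periodicity relation $\Phi_\rho(z+a)=\rho(T_a)\Phi_\rho(z)$ cannot be applied, and at best you would get a statement about limits near the cusp rather than an identity of functions on $\Omega$.

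Your proposed ``cleanest way'' fallback is also not valid as stated: two elements of $M_w(\rho^*_\Sigma;\KK_\Sigma)$ with the same image modulo $\mathfrak{M}_\Sigma^{N\times 1}$ (the same ``$0$-th Fourier layer'') differ by a cusp form, and $S_w(\rho^*_\Sigma;\KK_\Sigma)$ is not known to vanish except for $w=1$ (Theorem \ref{rankoneforweighone}). Since the lemma is stated for all $w>0$ with $w\equiv|\Sigma|\pmod{q-1}$, matching constant terms cannot substitute for the term-by-term computation.
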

Here $\mathcal{P}_{w}^{(0)}(\Phi_{\rho^*_\Sigma})$ denotes the last column of the matrix valued Poincar\'e series $\mathcal{P}_{w}(\Phi_{\rho^*_\Sigma})$ defined in (\ref{defpoincare}), with $G=\Phi_{\rho^*_\Sigma}$ and: $$\zeta_A(w;\sigma_\Sigma)=\sum_{a\in A^+}\frac{\sigma_\Sigma(a)}{a^w}=\prod_P\left(1-\frac{\sigma_\Sigma(P)}{P^w}\right)^{-1}\in\TT_\Sigma^\times\cap\EE_\Sigma,$$ the product running over the irreducible monic polynomials of $A$.
\begin{proof}[Proof of Lemma \ref{connection-eisenstein-poincare}]
We consider a matrix $\gamma=(\begin{smallmatrix} * & * \\ c & d\end{smallmatrix})\in\Gamma$.
We note that the last column of $\Phi_{\rho^*_\Sigma}(\gamma(z))$ is the last entry of the canonical basis of the vector space $\FF_q^{N\times 1}$. Indeed, $\Phi_{\rho^*_\Sigma}(z)$ itself is a matrix function which is lower triangular 
with $1$ on the diagonal. Moreover, the last column of $\rho^*_\Sigma(\gamma)^{-1}={}^t\rho_\Sigma(\gamma)$ is $\otimes_{i\in\Sigma}\binom{\chi_{t_i}(c)}{\chi_{t_i}(d)}$, which is therefore also equal to the last column of $\rho^*_\Sigma(\gamma)^{-1}\Phi_{\rho^*_\Sigma}(\gamma(z))$ and 
to the last column of $$\rho_\Sigma^*(\gamma)^{-1}\Phi_{\rho^*_\Sigma}(\gamma(z))\rho_\Sigma^*(\begin{smallmatrix}\det(\gamma) & 0\\ 0 & 1\end{smallmatrix}).$$
Therefore, the last column $\mathcal{P}_{w}^{(0)}(\Phi_{\rho^*_\Sigma})$ of $\mathcal{P}_{w}(\Phi_{\rho^*_\Sigma})$ is
$$\sum_{\begin{smallmatrix}\gamma=(\begin{smallmatrix}* & *\\ c & d\end{smallmatrix})\\
c,d\in A\\ \text{ relatively prime}\end{smallmatrix}}(cz+d)^{-w}\bigotimes_{i\in\Sigma}\binom{\chi_{t_i}(c)}{\chi_{t_i}(d)},$$ independent on the choice of the representatives modulo the subgroup $H$ of $\Gamma$.
Observe that the index set of the sum defining the series $\mathcal{E}(w;\rho^*_\Sigma)$,
$A^2\setminus\{(0,0)\}$, is equal to $\mathcal{I}A^+$, where $\mathcal{I}$ is the set of couples $(c,d)\in A^2$ with $c,d$ relatively prime. This means that 
$$\mathcal{E}(w;\rho^*_\Sigma)=\sum_{a\in A^+}\frac{\sigma_\Sigma(a)}{a^w}\sum_{(c,d)\in 
\mathcal{I}}(cz+d)^{-w}\bigotimes_{i\in \Sigma}(\chi_{t_i}(c),\chi_{t_i}(d))=
\zeta_A(w;\sigma_\Sigma)\mathcal{P}_{w}^{(0)}(\Phi_{\rho^*_\Sigma}).$$
\end{proof}

\subsection{The $v$-valuation of Eisenstein series}\label{v-valuation-eisenstein}
\label{expansion-eisenstein} We expand the entries of our vector-valued Eisenstein series along the principles of Theorem \ref{theorem-u-expansions} and we compute their $v$-valuations in certain cases.  

If $|\Sigma|=s>0$ and $N=2^s$, the ordering on $\Sigma$ induces a bijection $\Sigma\xrightarrow{\varepsilon}\{0,\ldots,s-1\}$. This in turn defines a bijection between subsets $J\subset\Sigma$ and integers $0\leq n\leq N-1$.
If $n=n_0+n_12+\cdots+n_{s-1}2^{s-1}$ is the base-$2$ expansion of $n$, the image of $n$ is the subset $J=\{j\in\Sigma:n_j\neq0\}\subset \Sigma$. We can write $|J|_\Sigma:=n$. For example, $|\emptyset|_\Sigma=0$. Then, we can describe in two ways an $N$-tuple of objects parametrized by the subsets of $\{1,\ldots,2^s\}$: $$f=(f^J)_{J\subset \Sigma}=(f_i)_{1\leq i\leq N},$$ by using that the latter is $(f_{|J|_\Sigma+1})_{J\subset\Sigma}$ (note how we distinguish the $\NN^*$-indexing from the $\Sigma$-indexing).
Note that the first entry is $$f_0=f^\emptyset.$$

The Perkins series $\psi(w;\sigma_U)$ defined in (\ref{defperkinsseries}) are elements of $\mathfrak{O}_\Sigma$, if $U\subset\Sigma$. 
We set
$$\psi_a(w;\sigma_\Sigma):=\psi(w;\sigma_\Sigma)(za),$$
functions which also belong to $\mathfrak{O}_\Sigma$.
Their valuations $v$ are positive and we 
we have, for all $a\in A^+$, $$v(\psi_a(w;\sigma_\Sigma))=|a|v(\psi(w;\sigma_\Sigma)).$$ We set
\begin{equation}\label{definition-V}
V(w;\rho^*_\Sigma):=\frac{1}{\widetilde{\pi}^w}\sum_{b\in A}\frac{1}{(z+b)^w}\Big(\sigma_J(b)\Big)_{J\subset\Sigma}.\end{equation} We denote by $V(w;\rho^*_\Sigma)_a$ the function of the variable $z$  obtained by rescaling $z\mapsto az$. We also set
\begin{equation}\label{definition-Z}
\mathcal{Z}(w;\rho^*_\Sigma):=\begin{pmatrix}0 \\ \vdots \\ 0 \\ \zeta_A(w;\sigma_\Sigma)\end{pmatrix}.
\end{equation}
The next Proposition generalizes \cite[Proposition 3.7]{PEL&PER3} to the case of $\rho=\rho^*_\Sigma$.
\begin{Proposition}\label{seriesofEisensteinseries}
If $s=|\Sigma|\equiv w\pmod{q-1}$ and $w>0$, then:
\begin{equation}\label{V-expansion}
\mathcal{E}(w;\rho^*_\Sigma)=-\mathcal{Z}(w;\rho^*_\Sigma)-\widetilde{\pi}^w\sum_{a\in A^+}\rho_\Sigma\Big(\begin{smallmatrix} a & 0 \\ 0 & 1\end{smallmatrix}\Big)V(w;\rho^*_\Sigma)_a.
\end{equation}
Writing 
$\mathcal{E}(w;\rho^*_\Sigma)=(\mathcal{E}^J)_{I\sqcup J=\Sigma}$, we have, more explicitly:
\begin{eqnarray}
\mathcal{E}^J&=&-(-1)^{|J|}\sum_{a\in A^+}\sigma_I(a)\psi_a(w;\sigma_J),\quad J\neq\Sigma,\label{firstformula}\\
\mathcal{E}^\Sigma&=&-\zeta_A(w;\sigma_\Sigma)-(-1)^{|\Sigma|}\sum_{a\in A^+}\psi_a(w;\sigma_\Sigma)\label{secondformula}.
\end{eqnarray}
In particular, if $J=\emptyset\neq\Sigma$, we have that 
\begin{equation}\label{thirdformula}
\mathcal{E}^\emptyset=-\widetilde{\pi}^w\sum_{a\in A^+}\sigma_{\Sigma}(a)G_w(u_a(z))\in\KK_\Sigma[[u]].
\end{equation}
Moreover, if $\Sigma=\emptyset$, we have, for $q-1\mid n$:
\begin{equation}\label{fourthformula}
\mathcal{E}(w;\boldsymbol{1})=-\zeta_A(w)-\widetilde{\pi}^n\sum_{a\in A^+}G_w(u_a(z)).\end{equation}
In all cases, we can identify $\mathcal{E}(w;\rho^*_\Sigma)$ with an element of $\mathfrak{O}_\Sigma^{N\times 1}$.
\end{Proposition}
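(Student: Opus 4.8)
The statement is a computation together with a convergence/identification claim, and the strategy is to start from the defining double sum for $\mathcal{E}(w;\rho^*_\Sigma)$ and unfold it the same way one proves the classical $u$-expansion of scalar Eisenstein series (as in \cite[(6.3)]{GEK}), only keeping track of the Kronecker-product weights. First I would split the index set $A^2\setminus\{(0,0)\}$ according to whether $a=0$ or $a\neq 0$. The $a=0$ part gives $\sum_{b\in A\setminus\{0\}}b^{-w}\bigotimes_{i\in\Sigma}\binom{0}{\chi_{t_i}(b)}=\bigotimes_{i\in\Sigma}\binom{0}{1}\cdot\sum_{b\in A\setminus\{0\}}b^{-w}$; but $\chi_{t_i}(b)=b(t_i)$ and the sum over $b\in A\setminus\{0\}$ of $b^{-w}\sigma_\Sigma(b)$ restricted to nonzero constants-times-monics... more carefully, $\sum_{b\in A\setminus\{0\}}b^{-w}\sigma_J(b)$: since $b^{-w}\sigma_J(b)$ is invariant under $b\mapsto \lambda b$ only up to $\lambda^{-w+|J|}$ and $w\equiv s\equiv |J|+|I|\pmod{q-1}$, one sees the $a=0$ contribution collapses to $-\mathcal{Z}(w;\rho^*_\Sigma)$ (the sign coming from $\sum_{\lambda\in\FF_q^\times}\lambda^{k}=-1$ when $(q-1)\mid k$), matching the first term on the right of \eqref{V-expansion}; here $\mathcal{Z}$ has all entries zero except the last, because $\bigotimes_{i\in\Sigma}\binom{0}{1}$ is the last vector of the canonical basis. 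This is exactly the content that forces $\mathcal{E}^J$ with $J\neq\Sigma$ to have no ``constant'' $\zeta$-term.

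Next I would treat the $a\neq 0$ part. Writing every nonzero $a$ as $\lambda a'$ with $\lambda\in\FF_q^\times$ and $a'\in A^+$, summing over $\lambda$ first and using $w\equiv s\pmod{q-1}$ together with $\bigotimes_{i\in\Sigma}\binom{\chi_{t_i}(\lambda a')}{\chi_{t_i}(b)}$, one factors out $\rho_\Sigma\big(\begin{smallmatrix}a'&0\\0&1\end{smallmatrix}\big)$ applied to $\bigotimes_{i\in\Sigma}\binom{\chi_{t_i}(1)}{\chi_{t_i}(b/a')}$ — more precisely, after the change of variable $b\mapsto b$ ranging over $A$ and pulling $a'$ out of $(a'z+b)^{-w}=a'^{-w}(z+b/a')^{-w}$, one gets $-\widetilde{\pi}^w\sum_{a\in A^+}\rho_\Sigma\big(\begin{smallmatrix}a&0\\0&1\end{smallmatrix}\big)V(w;\rho^*_\Sigma)_a$, which is the second term of \eqref{V-expansion}. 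Then I would read off the entries: the $J$-entry of $\rho_\Sigma\big(\begin{smallmatrix}a&0\\0&1\end{smallmatrix}\big)V(w;\rho^*_\Sigma)_a$ is $\sigma_{\Sigma\setminus J}(a)$ times the $J$-entry of $V(w;\rho^*_\Sigma)_a$, and the $J$-entry of $V$ is $\widetilde\pi^{-w}\sum_{b\in A}(z+b)^{-w}\sigma_J(b)=\widetilde\pi^{-w}\psi(w;\sigma_J)(z)$ by Definition \ref{defperkinsseries}; rescaling gives $\widetilde\pi^{-w}\psi_a(w;\sigma_J)$. Combining with the sign $(-1)^{|J|}$ coming from $\bigotimes_{i\in J}\binom{0}{1}$-type contributions (one $-1$ per element of $J$, exactly as in the scalar case where $\sum_{b}(z+b)^{-w}$ expands via $G_w$) yields \eqref{firstformula} and \eqref{secondformula}. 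Formula \eqref{thirdformula} is the special case $J=\emptyset$, where $\psi(w;\sigma_\emptyset)=\psi(w;\boldsymbol 1)=S_{w,A}=G_w(u)$ (using \eqref{connectionwithgosspoly} and $G_w(u)=\widetilde\pi^{-w}\cdot\widetilde\pi^w G_w(u)$, or rather $\psi(w;\boldsymbol1)_a=\widetilde\pi^w G_w(u_a)$ after the normalization in \eqref{goss-polynomials-identity}); \eqref{fourthformula} is the further degeneration $\Sigma=\emptyset$, which is Gekeler's classical formula.

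Finally, for the identification of $\mathcal{E}(w;\rho^*_\Sigma)$ with an element of $\mathfrak{O}_\Sigma^{N\times 1}$: I would invoke Lemma \ref{defi-general-eisenstein-series} to see that $\mathcal{E}(w;\rho^*_\Sigma)\in M_w(\rho^*_\Sigma;\TT_\Sigma)$ is a genuine modular form (hence in particular a regular $\rho$-quasi-periodic tempered function, $\rho=\rho^*_\Sigma$ being of the first kind), and then apply Theorem \ref{theorem-u-expansions} (or directly Proposition \ref{quasiperiodictempered}), which says precisely that a modular form for a representation of the first kind lies in $\mathfrak{O}_\Sigma^{N\times 1}$. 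Alternatively — and this is perhaps more in the spirit of the formulas just derived — one checks convergence of \eqref{V-expansion} directly in $\mathfrak{K}_\Sigma$: each $\psi_a(w;\sigma_J)$ lies in $\mathfrak{O}_\Sigma$ with $v(\psi_a(w;\sigma_J))=|a|\,v(\psi(w;\sigma_J))$ and $v(\psi(w;\sigma_J))>0$ by (the proof of) Theorem \ref{theopsi}/Theorem \ref{moregeneralcomputation}, so $v(\psi_a)\to\infty$ as $|a|\to\infty$ while the prefactors $\sigma_I(a)$ and $\rho_\Sigma(\cdot)$ have $v=0$ (residual-field entries); hence the series converges $v$-adically to an element of $\mathfrak{O}_\Sigma^{N\times 1}$. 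The main obstacle I anticipate is purely bookkeeping: getting the signs $(-1)^{|J|}$ and the placement of $\zeta_A(w;\sigma_\Sigma)$ in exactly the last slot right, i.e. correctly matching the combinatorics of the Kronecker-product indexing $(f^J)_{J\subset\Sigma}$ introduced in \S\ref{v-valuation-eisenstein} with the $\binom{0}{1}$ versus $\binom{1}{0}$ components appearing when $a$ or $b$ vanishes modulo $\FF_q^\times$; everything else is a routine rearrangement of an absolutely (here: uniformly, then $v$-adically) convergent sum.
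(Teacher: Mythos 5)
Your proof is correct and follows essentially the same route as the paper's: the same split of the index set into $a=0$ and $a\neq 0$, the same change of variables $a=\lambda a'$ with $\sum_{\lambda\in\FF_q^\times}\lambda^{s-w}=-1$, the identification of the entries of $V(w;\rho^*_\Sigma)$ with $(-1)^{|J|}\widetilde{\pi}^{-w}\psi(w;\sigma_J)$, and the $v$-adic convergence argument $v(\psi_a(w;\sigma_J))=|a|\,v(\psi(w;\sigma_J))\to\infty$ for the membership in $\mathfrak{O}_\Sigma^{N\times 1}$. The only cosmetic remark is that the sign $(-1)^{|J|}$ arises simply from $\sigma_J(-b)=(-1)^{|J|}\sigma_J(b)$ under the substitution $b\mapsto -b$ relating the definitions of $V$ and of the Perkins series, rather than from any $\binom{0}{1}$-type contribution.
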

We deduce, in yet another way, that $\mathcal{E}(w;\rho^*_\Sigma)\in M_{w}(\rho^*_\Sigma;\KK_\Sigma)$. Additionally, we see that it does not belong to $S_{w}(\rho^*_\Sigma;\KK_\Sigma)$ because of the non-vanishing of $\zeta_A(w;\sigma_\Sigma)$ in (\ref{secondformula}). Note that writing $\mathcal{E}(w;\rho^*_\Sigma)={}^t(\mathcal{E}_1,\ldots,\mathcal{E}_{N-1},\mathcal{E}_N)$, we have $v(\mathcal{E}_i)>0$
for $i=1,\ldots,N-1$ and $v(\mathcal{E}_N)=0$. See Lemma \ref{lemmavaluationswhichvanish} for a more general statement.
\begin{proof}[Proof of Proposition \ref{seriesofEisensteinseries}]
The sum defining $\mathcal{E}(w;\rho^*_\Sigma)$ splits in two pieces, a sum over the couples $(a,b)\in A\times A$ with $a\neq0$ and a sum over the couples $(0,b)$ with $b\neq0$. While the second sum is easily seen to be equal to $-\mathcal{Z}(w;\rho^*_\Sigma)$,
for the first sum we have
\begin{eqnarray*}
\lefteqn{\sum_{\begin{smallmatrix}a\in A\setminus\{0\}\\ b\in A\end{smallmatrix}}\frac{1}{(az+b)^w}\bigotimes_{i\in\Sigma}\binom{\chi_{t_i}(a)}{\chi_{t_i}(b)}=}\\
&=&\sum_{a\in A\setminus\{0\}}\rho_\Sigma\Big(\begin{smallmatrix} a & 0 \\ 0 & 1\end{smallmatrix}\Big)\sum_{b\in A}\frac{1}{(az+b)^w}\bigotimes_{i\in\Sigma}\binom{1}{\chi_{t_i}(b)}\\
&=&\sum_{a'\in A^+}\rho_\Sigma\Big(\begin{smallmatrix} a' & 0 \\ 0 & 1\end{smallmatrix}\Big)\left(\sum_{\lambda\in\FF_q^\times}\lambda^{-w}\rho_\Sigma\Big(\begin{smallmatrix} \lambda & 0 \\ 0 & 1\end{smallmatrix}\Big)\cdot\rho_\Sigma\Big(\begin{smallmatrix} 1 & 0 \\ 0 & \lambda\end{smallmatrix}\Big)\right)\sum_{b'\in A}\frac{1}{(a'z+b')^w}\Big(\sigma_J(b')\Big)_{J\subset \Sigma}\\
&=&-\widetilde{\pi}^w\sum_{a'\in A^+}\rho_\Sigma\Big(\begin{smallmatrix} a' & 0 \\ 0 & 1\end{smallmatrix}\Big)V(w;\rho^*_\Sigma)_a,
\end{eqnarray*} 
where we made the change of variables $a=\lambda a'$, $b=\lambda b'$ in the summation, and used that 
$|\Sigma|\equiv w\pmod{q-1}$ because $\sum_{\lambda\in\FF_q^\times}\lambda^{|\Sigma|-w}=-1$. Now note that
$$V(w;\rho^*_\Sigma)=\frac{1}{\widetilde{\pi}^w}\Big((-1)^{|J|}\psi(w;\sigma_J)\Big)_{J\subset\Sigma}.$$
The identity concerning the case $J=\emptyset\neq\Sigma$ is clear, and the last identity, concerning the scalar Eisenstein series, is well known; see, for instance, \cite[(6.3)]{GEK}.
The last assertion of the proposition is a direct consequence of the
fact that $\psi_a(w;\sigma_\Sigma)\in\mathfrak{O}_\Sigma$ for all $a\in A$ and $w\in\NN^*$ and the fact that 
$v(\psi_a(w;\sigma_\Sigma))=|a|v(\psi_a(w;\sigma_\Sigma))\rightarrow\infty$ as $a$ runs in $A^+$.
\end{proof}

Thanks to Theorem \ref{theopsi} we can compute the $v$-valuations of the entries of $\mathcal{E}(1;\rho_\Sigma^*)$ (recall that $\kappa$ has been introduced in (\ref{def-kappa})).
The corresponding problem for $\mathcal{E}(w;\rho_\Sigma^*)$ for general $w$ is at the moment unsolved but the reader can apply Theorem \ref{moregeneralcomputation} to some specific cases.

\begin{Corollary}\label{valuations-eisenstein}
If $|\Sigma|\equiv1\pmod{q-1}$ and $\mathcal{E}(1;\rho_\Sigma^*)=(\mathcal{E}^J)_{J\subset\Sigma}$, we have $v(\mathcal{E}^J)=\kappa(J)$ if $J\subsetneq\Sigma$
and $v(\mathcal{E}^\Sigma)=0$.
\end{Corollary}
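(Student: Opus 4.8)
The strategy is to read off the $v$-valuations directly from the explicit formulas \eqref{firstformula} and \eqref{secondformula} of Proposition~\ref{seriesofEisensteinseries}, combined with the weight computation of Theorem~\ref{theopsi}. First I would recall that for $J\subsetneq\Sigma$ we have
$$\mathcal{E}^J=-(-1)^{|J|}\sum_{a\in A^+}\sigma_I(a)\,\psi_a(1;\sigma_J),$$
where $I=\Sigma\setminus J$, and that each $\psi_a(1;\sigma_J)$ lies in $\mathfrak{O}_\Sigma$ with $v(\psi_a(1;\sigma_J))=|a|\,v(\psi(1;\sigma_J))$. Since $v$ restricted to $\Tamecirc{\KK_\Sigma}((e^{-1}))$ is the weight valuation, Theorem~\ref{theopsi} gives $v(\psi(1;\sigma_J))=-w(\phi(1;\sigma_J))+1$; more precisely, writing $\phi(1;\sigma_J)=e_0\psi(1;\sigma_J)$ we get $v(\psi(1;\sigma_J))=1-w(\phi(1;\sigma_J))=1-w_{\max}(|J|)=\kappa(J)$ by \eqref{def-w-max} and \eqref{def-kappa}. (When $J=\emptyset$ this reads $v(\psi(1;\boldsymbol 1))=\kappa(\emptyset)=1$, consistent with $\psi(1;\boldsymbol 1)=S_{1,A}=\widetilde\pi u$.)

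Next I would isolate the term $a=1$ in the sum for $\mathcal{E}^J$: its valuation is exactly $\kappa(J)$, while for $a\in A^+$ with $|a|>1$ the valuation $|a|\,\kappa(J)$ is strictly larger, and $\sigma_I(a)$ has $v$-valuation $0$ in the residual field $\FF_q(\underline t_\Sigma)$ (it is a nonzero polynomial, hence a unit for $v$). Therefore the ultrametric inequality forces
$$v(\mathcal{E}^J)=v\bigl(\sigma_I(1)\,\psi_1(1;\sigma_J)\bigr)=v(\psi(1;\sigma_J))=\kappa(J),$$
since $\sigma_I(1)=1$ and $\psi_1(1;\sigma_J)=\psi(1;\sigma_J)$. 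This disposes of all proper subsets $J$.

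For $J=\Sigma$ I would argue from \eqref{secondformula}:
$$\mathcal{E}^\Sigma=-\zeta_A(1;\sigma_\Sigma)-(-1)^{|\Sigma|}\sum_{a\in A^+}\psi_a(1;\sigma_\Sigma).$$
The term $\zeta_A(1;\sigma_\Sigma)$ is a nonzero element of $\TT_\Sigma^\times\cap\EE_\Sigma$, in particular a unit, so $v(\zeta_A(1;\sigma_\Sigma))=0$; meanwhile every $\psi_a(1;\sigma_\Sigma)$ has $v(\psi_a(1;\sigma_\Sigma))=|a|\,\kappa(\Sigma)>0$, so the entire sum over $a\in A^+$ has strictly positive valuation. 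By the ultrametric inequality the dominant term is $-\zeta_A(1;\sigma_\Sigma)$, whence $v(\mathcal{E}^\Sigma)=0$.

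\textbf{Main obstacle.} The only delicate point is justifying that the leading ($a=1$) contribution to $\mathcal{E}^J$ is not cancelled — i.e. that all the other terms $\sigma_I(a)\psi_a(1;\sigma_J)$ with $|a|>1$ genuinely have strictly larger $v$-valuation, and that $v$ is well-behaved under the infinite sum. This follows because the series defining $\mathcal{E}^J$ converges in $\mathfrak{O}_\Sigma$ (the last assertion of Proposition~\ref{seriesofEisensteinseries}, using $v(\psi_a(w;\sigma_\Sigma))\to\infty$) and $v$ is continuous for the $v$-adic topology, so the valuation of the sum equals the minimum of the valuations of the summands as soon as that minimum is attained uniquely, which it is at $a=1$. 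One should also note that $\sigma_I(a)$ contributes valuation $0$ for every $a\in A^+$ (a nonzero polynomial in $\FF_q[\underline t_I]$ reduces to a nonzero element of the residual field), so it never affects the valuation count; the whole computation is then a bookkeeping consequence of Theorem~\ref{theopsi}.
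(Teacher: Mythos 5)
Your proof is correct and follows exactly the route the paper intends: the corollary is stated as an immediate consequence of the explicit expansions \eqref{firstformula}--\eqref{secondformula} of Proposition~\ref{seriesofEisensteinseries}, the rescaling rule $v(\psi_a(1;\sigma_J))=|a|\,v(\psi(1;\sigma_J))$, and the weight computation $w(\phi(1;\sigma_J))=w_{\max}(|J|)$ of Theorem~\ref{theopsi}, with the $a=1$ term dominating ultrametrically and $\zeta_A(1;\sigma_\Sigma)\in\TT_\Sigma^\times$ supplying the valuation-zero constant term for $J=\Sigma$. Nothing is missing; your bookkeeping of $v(\psi(1;\sigma_J))=1-w_{\max}(|J|)=\kappa(J)$ and of the unit $\sigma_I(a)$ is exactly what the paper leaves implicit.
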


\subsection{Application to modular forms of weight one for $\rho^*_\Sigma$}\label{modular-forms-weight-1}

In this subsection we prove Theorem D of the introduction.
We recall that $N=2^s$.
We have:

\begin{Theorem}\label{coro-pel-per}
Assuming that $|\Sigma|\equiv1\pmod{q-1}$, $M_{1}(\rho^*_\Sigma;\LL_\Sigma)$ is of dimension one over $\LL_\Sigma$, generated by 
the Eisenstein series $\mathcal{E}(1;\rho^*_\Sigma)$.
\end{Theorem}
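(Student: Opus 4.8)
The strategy is to combine the general finiteness/dimension bound of Theorem \ref{rankoneforweighone} with an explicit lower bound coming from the non-vanishing Eisenstein series $\mathcal{E}(1;\rho^*_\Sigma)$. By Theorem \ref{rankoneforweighone} we already know that $\dim_{\LL_\Sigma} M_1(\rho^*_\Sigma;\LL_\Sigma)\leq \delta_{\rho^*_\Sigma}$, where $\delta_{\rho^*_\Sigma}=\dim_{\FF_q(\underline t_\Sigma)} H(\rho^*_\Sigma;\FF_q(\underline t_\Sigma))$ is the dimension of the space of common fixed vectors of the matrices $\rho^*_\Sigma(T_a)$, $a\in A$. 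So the heart of the matter is to prove two things: first, that $\delta_{\rho^*_\Sigma}=1$ under the hypothesis $|\Sigma|\equiv 1\pmod{q-1}$; and second, that $\mathcal{E}(1;\rho^*_\Sigma)$ is a nonzero element of $M_1(\rho^*_\Sigma;\LL_\Sigma)$, so that the one remaining dimension is actually attained (and is attained over $\LL_\Sigma$, not merely over $\KK_\Sigma$).

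\textbf{Step 1: the Eisenstein series lies in $M_1(\rho^*_\Sigma;\LL_\Sigma)$ and is nonzero.} By construction $\mathcal{E}(w;\rho^*_\Sigma)$ is the Eisenstein series attached to the data $(\rho^*_\Sigma,\mu)$ with $\mu(a,b)=\bigotimes_{i\in\Sigma}\binom{\chi_{t_i}(a)}{\chi_{t_i}(b)}$, so Lemma \ref{defi-general-eisenstein-series} applies with $w=1$: since $|\Sigma|\equiv 1\pmod{q-1}$ the relevant congruence is satisfied, hence $\mathcal{E}(1;\rho^*_\Sigma)\in M_1(\rho^*_\Sigma;\TT_\Sigma)\subset M_1(\rho^*_\Sigma;\LL_\Sigma)$, and the sub-sum $\sum_{b\in A\setminus\{0\}}b^{-1}\mu(0,b)$ contributes $-\mathcal Z(1;\rho^*_\Sigma)$ whose last entry is $-\zeta_A(1;\sigma_\Sigma)\neq 0$ (this is exactly (\ref{secondformula}) in Proposition \ref{seriesofEisensteinseries}), so $\mathcal{E}(1;\rho^*_\Sigma)$ is nonzero. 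Moreover Corollary \ref{valuations-eisenstein} tells us precisely that $v(\mathcal{E}^\Sigma)=0$ while $v(\mathcal{E}^J)=\kappa(J)>0$ for $J\subsetneq\Sigma$; in particular the reduction $\overline{\mathcal{E}(1;\rho^*_\Sigma)}$ modulo $\mathfrak{M}_\Sigma$ is a nonzero vector in $H(\rho^*_\Sigma;\FF_q(\underline t_\Sigma))$ (using, as in \S\ref{weight-one-first-part}, that the reduction map sends $M_1$ into $H$).

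\textbf{Step 2: $\delta_{\rho^*_\Sigma}=1$.} This is the representation-theoretic core. The matrices $T_a=\begin{pmatrix}1&a\\0&1\end{pmatrix}$ generate (together with their $\FF_q^\times$-conjugates) the unipotent group $U(A)$, and $\rho_\Sigma=\bigotimes_{i\in\Sigma}\rho_{t_i}$ sends $T_a$ to $\bigotimes_{i\in\Sigma}\begin{pmatrix}1&0\\ \chi_{t_i}(a)&1\end{pmatrix}$ (lower unitriangular in each factor), so $\rho^*_\Sigma(T_a)={}^t\rho_\Sigma(T_a)^{-1}$ is $\bigotimes_{i\in\Sigma}\begin{pmatrix}1&-\chi_{t_i}(a)\\0&1\end{pmatrix}$. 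A vector $l\in L^{N\times 1}$, indexed by subsets $J\subset\Sigma$, is fixed by all $\rho^*_\Sigma(T_a)$ iff the corresponding polynomial identities in the $\chi_{t_i}(a)$ hold for all $a\in A$; since the $\chi_{t_i}(a)$, as $a$ ranges over $A$, span $\FF_q(\underline t_\Sigma)$ in an appropriate sense (indeed $\chi_{t_i}(\theta)=t_i$ and $\chi_{t_i}$ is injective), one extracts that $l^J=0$ for all $J\neq\Sigma$ and $l^\Sigma$ arbitrary — provided the $s$ variables $t_i$ remain algebraically independent, which forces $s\geq 1$; the congruence $s\equiv 1\pmod{q-1}$ guarantees $s\geq 1$. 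The cleanest way to organize this computation is by induction on $s=|\Sigma|$ using the tensor decomposition: $\rho^*_\Sigma=\rho^*_{t_{i_0}}\otimes\rho^*_{\Sigma\setminus\{i_0\}}$, and the fixed space of a tensor product of this shape is the tensor product of the fixed spaces, each of which (for a single $\rho^*_t$) is one-dimensional spanned by $\binom{0}{1}$ by a direct check. This gives $\delta_{\rho^*_\Sigma}=1$. I expect this step to be the main obstacle: the statement is intuitively clear but a clean argument requires care about the base ring (one wants the conclusion over $\FF_q(\underline t_\Sigma)$, using independence of the variables) and about correctly identifying the tensor-factor fixed spaces; alternatively one can cite \cite{PEL2} for the irreducibility of $\rho^*_\Sigma$, from which $\delta_{\rho^*_\Sigma}\leq 1$ is almost immediate (an irreducible non-trivial representation has no nonzero invariants unless it is trivial, and here the determinant-of-degree computation shows $\rho^*_\Sigma$ is non-trivial), and $\delta_{\rho^*_\Sigma}\geq 1$ follows from Step 1.

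\textbf{Step 3: conclusion.} Combining, $1\leq \dim_{\LL_\Sigma} M_1(\rho^*_\Sigma;\LL_\Sigma)\leq\delta_{\rho^*_\Sigma}=1$, so equality holds throughout and $M_1(\rho^*_\Sigma;\LL_\Sigma)$ is one-dimensional. Since $\mathcal{E}(1;\rho^*_\Sigma)$ is a nonzero element of this space (Step 1), it is a generator. This also yields, as noted after the theorem in the introduction, the positive answer to \cite[Problem 1.1]{PEL&PER3} and the fact that $\mathcal{E}(1;\rho^*_\Sigma)$ is a Hecke eigenform (the Hecke operators of Theorem \ref{theo-hecke-operators} act on a one-dimensional space, hence by scalars).
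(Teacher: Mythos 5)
Your proof is correct and follows essentially the same route as the paper: the upper bound $\dim M_1(\rho^*_\Sigma;\LL_\Sigma)\leq\delta_{\rho^*_\Sigma}$ from Theorem \ref{rankoneforweighone} (which already gives $S_1=\{0\}$), the identification $\delta_{\rho^*_\Sigma}=1$ with fixed space the last coordinate line (the paper's identity for $H(\rho^*_\Sigma;\KK_\Sigma)$, which it asserts and you actually verify), and the non-vanishing of $\mathcal{E}(1;\rho^*_\Sigma)$ via its constant term $-\zeta_A(1;\sigma_\Sigma)$. One cosmetic slip: $\rho_{t_i}(T_a)=\bigl(\begin{smallmatrix}1&\chi_{t_i}(a)\\0&1\end{smallmatrix}\bigr)$ is upper unitriangular and $\rho^*_{t_i}(T_a)$ is lower unitriangular (you have the two transposed), but your stated fixed vector $\binom{0}{1}$ is the correct one for the correct matrices, so nothing downstream is affected.
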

\begin{proof}
We note that in the case $\rho=\rho_\Sigma^*$ we have the following identity for the space $H(\rho;\KK_\Sigma)$ defined in \S\ref{weight-one-first-part}: 
\begin{equation}\label{identity-H-K}
H(\rho;\KK_\Sigma)=\begin{pmatrix}0 \\ \vdots \\ 0 \\ \KK_\Sigma\end{pmatrix}.
\end{equation} 
We claim that if $f=(f_1,\ldots,f_N)$ is a modular form for $\rho^*_\Sigma$, we can identify $f_1,\ldots,f_{N-1}$ with elements of $\mathfrak{M}_\Sigma$ and 
$f_N$ with an element of $\mathfrak{O}_\Sigma$. Indeed
we know already that $f\in\mathfrak{O}^{N\times 1}_\Sigma$. In particular, there exists $\underline{\alpha}\in\KK_\Sigma^{N\times 1}$
such that $f\equiv\underline{\alpha}\pmod{\mathfrak{M}^{N\times 1}}$. But note that for all $a\in A$,
$f(z+a)=\rho^*_\Sigma(T_a) f(z)$ for all $z\in \Omega$ so that $\underline{\alpha}=\rho^*_\Sigma(T_a)\underline{\alpha}$ for all $a\in A$. Identity (\ref{identity-H-K}) allows to deduce the claim.

We conclude by observing that $\mathcal{E}(1;\rho^*_\Sigma)\in M_{1}(\rho^*_\Sigma;\LL_\Sigma)\setminus S_{1}(\rho^*_\Sigma;\LL_\Sigma)$ and applying Theorem \ref{rankoneforweighone} knowing that in this case,
$\delta_\rho=1$.
\end{proof}

This yields a positive answer to \cite[Problem 1.1]{PEL&PER3}. By Theorem 
\ref{theo-hecke-operators}, $\mathcal{E}(1;\rho^*_\Sigma)$ is an eigenform 
for all the Hecke operators defined in \S \ref{section-Hecke-operators}. We deduce:

\begin{Corollary}\label{coro-7-6}
For all $a\in A\setminus\{0\}$ we have $T_a(\mathcal{E}(1;\rho^*_\Sigma))=\mathcal{E}(1;\rho^*_\Sigma)$.
\end{Corollary}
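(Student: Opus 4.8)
\textbf{Proof plan for Corollary \ref{coro-7-6}.}

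The plan is to combine Theorem \ref{coro-pel-per} (which identifies $M_1(\rho^*_\Sigma;\LL_\Sigma)$ as the one-dimensional $\LL_\Sigma$-vector space generated by $\mathcal{E}(1;\rho^*_\Sigma)$) with the fact, established in Theorem \ref{theo-hecke-operators}, that for each $a\in A\setminus\{0\}$ the Hecke operator $T_a$ acts as a $\LL_\Sigma$-linear endomorphism of $M_1(\rho^*_\Sigma;\LL_\Sigma)$. First I would reduce to the case $a=P$ irreducible, since the map $a\mapsto T_a$ is totally multiplicative (as recalled in \S \ref{section-Hecke-operators}). Since $M_1(\rho^*_\Sigma;\LL_\Sigma)$ is one-dimensional, $T_P$ must act on the generator $\mathcal{E}=\mathcal{E}(1;\rho^*_\Sigma)$ as multiplication by a scalar $\lambda_P\in\LL_\Sigma$, i.e. $T_P(\mathcal{E})=\lambda_P\mathcal{E}$. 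It then remains to show $\lambda_P=1$ for every irreducible $P$.

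To pin down $\lambda_P$, the natural approach is to evaluate both sides of $T_P(\mathcal{E})=\lambda_P\mathcal{E}$ on a well-chosen entry and extract a distinguished coefficient. I would use the last entry $\mathcal{E}^\Sigma=\mathcal{E}_N$, for which Proposition \ref{seriesofEisensteinseries} gives the explicit expansion $\mathcal{E}_N=-\zeta_A(1;\sigma_\Sigma)-(-1)^{|\Sigma|}\sum_{a\in A^+}\psi_a(1;\sigma_\Sigma)$, and in particular the constant term (the $v$-valuation zero part, i.e. the image modulo $\mathfrak{M}_\Sigma$) of $\mathcal{E}_N$ equals $-\zeta_A(1;\sigma_\Sigma)$, while all other entries lie in $\mathfrak{M}_\Sigma$ (this is the statement recorded right after Proposition \ref{seriesofEisensteinseries}, and follows from Corollary \ref{valuations-eisenstein} when $|\Sigma|\equiv1\pmod{q-1}$). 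Now apply formula (\ref{last-entry}) for $(T_P(f))_N$ with $f=\mathcal{E}$ and $w=1$: I would compute the effect of $T_P$ on the constant term of $\mathcal{E}_N$ by reducing modulo $\mathfrak{M}_\Sigma$, using the automorphism description of $T_P$ on $\mathfrak{K}_\Sigma$ from Lemma \ref{aendomorphisms} to see how the pieces $f_N(Pz)$ and $P^{-1}\sum_{|b|<|P|}(\cdots)f((z+b)/P)$ behave on the residual field. The term $f_N(Pz)$ contributes $-\zeta_A(1;\sigma_\Sigma)$ (rescaling $z\mapsto Pz$ fixes the constant part), and the remaining sum, after reduction, should produce the Euler-factor correction that exactly cancels — concretely, one expects the residual identity to reproduce the Euler product $\zeta_A(1;\sigma_\Sigma)=\prod_Q(1-\sigma_\Sigma(Q)/Q)^{-1}$ being stable under removing and re-inserting the factor at $P$, yielding $\lambda_P\cdot(-\zeta_A(1;\sigma_\Sigma))=-\zeta_A(1;\sigma_\Sigma)$ and hence $\lambda_P=1$.

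The main obstacle I anticipate is the bookkeeping in the last step: making the reduction of (\ref{last-entry}) modulo $\mathfrak{M}_\Sigma$ rigorous requires controlling which Perkins-series terms $\psi_a$ survive modulo $\mathfrak{M}_\Sigma$ after the substitutions $z\mapsto Pz$ and $z\mapsto (z+b)/P$, and verifying that the Kronecker-product coefficients $\bigotimes_{i\in\Sigma}(\chi_{t_i}(b),\chi_{t_i}(P))$ combine with the $P^{-1}$ and the sum over $|b|<|P|$ to give precisely the expected arithmetic of $\sigma_\Sigma$ at $P$. An alternative, cleaner route that avoids most of this computation: argue directly on the defining series $\mathcal{E}(1;\rho^*_\Sigma)=\sum'_{(a,b)}(az+b)^{-1}\bigotimes_i\binom{\chi_{t_i}(a)}{\chi_{t_i}(b)}$, rewrite it via Lemma \ref{connection-eisenstein-poincare} as $\zeta_A(1;\sigma_\Sigma)\,\mathcal{P}^{(0)}_1(\Phi_{\rho^*_\Sigma})$, and observe that the Hecke action $T_P$ on the level of the lattice sum permutes and rescales the index set $A^2\setminus\{(0,0)\}=\mathcal{I}A^+$ in a way that, since we are in characteristic $p\mid q$ and (by the computation of \S \ref{section-Hecke-operators}) $T(P)T(P^n)=T(P^{n+1})$, leaves the sum invariant; this makes $T_P(\mathcal{E})=\mathcal{E}$ transparent and then $\lambda_P=1$ follows a fortiori. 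I would write the proof using whichever of these two is shorter to make fully rigorous, most likely the second.
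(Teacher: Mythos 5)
Your first route is essentially the paper's proof: one-dimensionality of $M_1(\rho^*_\Sigma;\LL_\Sigma)$ from Theorem \ref{coro-pel-per} gives $T_P(\mathcal{E})=\lambda_P\mathcal{E}$ for $P$ irreducible (the general case following by total multiplicativity), and $\lambda_P$ is then read off by reducing (\ref{TP}) modulo $\mathfrak{M}_\Sigma$ using the known constant term of the last entry. One correction to your anticipated bookkeeping: there is no Euler-factor cancellation to track — the whole sum $P^{-1}\sum_{|b|<|P|}\rho(\begin{smallmatrix}1 & b\\ 0 & P\end{smallmatrix})^{-1}f(\frac{z+b}{P})$ already lies in $\mathfrak{M}_\Sigma^{N\times 1}$, since the first $N-1$ entries of $f$ are in $\mathfrak{M}_\Sigma$ and, for the last entry, the $|P|\equiv 0\pmod{p}$ summands each contribute the same constant term, which therefore cancels in characteristic $p$; hence only $\rho(\begin{smallmatrix}P & 0\\ 0 & 1\end{smallmatrix})^{-1}f(Pz)$ survives modulo $\mathfrak{M}_\Sigma$ and $\lambda_P$ is the lower-right coefficient of $\rho^*_\Sigma(\begin{smallmatrix}P & 0\\ 0 & 1\end{smallmatrix})^{-1}$, namely $1$. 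Your alternative lattice-sum route is not what the paper does and, as sketched (invoking $T(P)T(P^n)=T(P^{n+1})$ to conclude invariance of the series), does not actually yield the claim without redoing the coset computation, so the first route is the one to write up.
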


\begin{proof}
By Theorem \ref{coro-pel-per} $M_{1}(\rho^*_\Sigma;\LL_\Sigma)$ is one-dimensional generated by $\mathcal{E}(1;\rho^*_\Sigma)$
and we have $T_a(\mathcal{E}(1;\rho^*_\Sigma))=\lambda_a\mathcal{E}(1;\rho^*_\Sigma)$ for all $a\in A\setminus\{0\}$ for elements $\lambda_a\in\LL_\Sigma$.
It suffices to show that $\lambda_P=1$ for every irreducible element $P\in A$ by using the Hecke operators $T_P$ described in (\ref{TP}). We set
$f=\mathcal{E}(1;\rho^*_\Sigma)$. In (\ref{TP}), 
$g:=\sum_{|b|<|P|}\rho\Big(\begin{smallmatrix} 1 & b \\ 0 & P\end{smallmatrix}\Big)^{-1}f\left(\frac{z+b}{P}\right)\in\mathfrak{M}_\Sigma^{N\times 1}$. Indeed, let $f_1,\ldots,f_N$ be the entries of
$f$. We have $f_1,\ldots,f_{N-1}\in\mathfrak{M}_\Sigma$ and $f_N\in\mathfrak{O}_\Sigma$. This implies that 
the first $N-1$ coefficients of $g$ are in $\mathfrak{M}_\Sigma$ and  
by (\ref{last-entry}) the last coefficient of $g$ is
$$P^{-1}\sigma_\Sigma(P)\sum_{|b|<|P|}f_N\Big(\frac{z+b}{P}\Big)$$ so it is an element of $\Tamecirc{\LL_\Sigma}$ with zero constant term. Hence $\lambda_P$ equals the lower right coefficient of $\rho^*_\Sigma\Big(\begin{smallmatrix}P & 0 \\ 0 & 1\end{smallmatrix}\Big)^{-1}$ which is equal to $1$.
\end{proof}

\subsubsection{Digression: another class of Eisenstein series}
One of the main motivations for the introduction of the Eisenstein series $\mathcal{E}(w;\rho^*_\Sigma)$, for which they have been initially considered in \cite{PEL1}, 
is that the non-zero entry (which is the last one, in the prescribed ordering) tends to $-\zeta_A(w;\sigma_\Sigma)$ (the zeta values defined in (\ref{zeta-value-tate})) as $z\in\Omega$ approaches the cusp infinity or, in other words, it is congruent to $-\zeta_A(w;\sigma_\Sigma)$ modulo $\mathfrak{M}_\Sigma$. These are not the only Eisenstein series which enjoy this property. Another example is discussed in this remark; further investigations will lead to a better understanding of these examples. We consider the $\FF_q$-algebra morphism $\chi:A\rightarrow \FF_q[\underline{t}_\Sigma]^{s\times s}$ (with $s=|\Sigma|$) defined by 
$$\vartheta=\chi(\theta)=\begin{pmatrix} 0 & 1 & \cdots & 0 \\ 0 & 0 & \cdots & 0 \\ \vdots & \vdots &  & \vdots \\ 
0 & 0 & \cdots & 1 \\
-P_0 & -P_1 & \cdots & -P_{s-1}\end{pmatrix},$$ where $P_0,\ldots,P_{s-1}\in\FF_q[\underline{t}_\Sigma]$ are defined by 
$\prod_{i\in\Sigma}(X-t_i)=X^s+P_{s-1}X^{s-1}+\cdots+P_0$. Then, for all $a\in A$,
$\det(\chi(a))=\sigma_\Sigma(a)$ (see \cite[\S 2.1]{PEL2}). We consider the representation of the 
first kind $\varphi_\Sigma^*=\wedge^s\rho^*_\chi$, of dimension $N:=\binom{2s}{s}$. We suppose that $w\equiv s\pmod{q-1}$ and $w>0$.
The last column of the Poincar\'e series $\mathcal{P}_{w}(\Phi_{\varphi^*_\Sigma})$
multiplied by $\zeta_A(w;\sigma_\Sigma)$ equals $$\mathcal{E}(w;\varphi_\Sigma^*):=\sum_{(a,b)\in A\setminus\{(0,0)\}}(az+b)^{-w}\bigwedge^s\binom{\chi(a)}{\chi(b)}.$$

This defines an element of $\operatorname{Hol}(\Omega\rightarrow\EE_\Sigma^{1\times N})$ and a modular form in $M_w(\varphi_\Sigma^*;\KK_\Sigma)\setminus S_w(\varphi_\Sigma^*;\KK_\Sigma)$. Moreover, the only entry $\mathcal{E}_N$ of $\mathcal{E}(w;\varphi_\Sigma^*)$ which does not vanish at infinity, which is the last one, satisfies
$$\mathcal{E}_N\equiv-\zeta_A(w;\sigma_\Sigma)\pmod{\mathfrak{M}_\Sigma}.$$
In other words, $-\zeta_A(w;\sigma_\Sigma)$ is the 'constant term' of the last entry of $\mathcal{E}(w;\varphi^*_\Sigma)$.

\subsection{Integrality properties of coefficients}\label{integrality-eisenstein-coefficients}

We investigate integrality properties of coefficients of Eisenstein series. Our main result in this subsection is Theorem \ref{integrality-results}, in the same vein as classical results of Gekeler \cite[\S 5]{GEK}.

\begin{Definition}\label{integral-forms}{\em An element $f\in M_w^!(\rho;\KK_\Sigma)$ is said to be {\em rationally definable} if there exists a matrix $M\in\GL_N(\KK_\Sigma)$ such that the image of $Mf$ by the embedding $\iota_\Sigma$ of Theorem \ref{theorem-u-expansions}
is an element of $\Tamecirc{K(\underline{t}_\Sigma)}((u))^{N\times 1}$. It is {\em integrally definable} if this image lies in $\Tamecirc{A[\underline{t}_\Sigma]}[u^{-1}][[u]]^{N\times 1}$. If $\mathfrak{v}:K(\underline{t}_\Sigma)\rightarrow\ZZ\cup\{\infty\}$ is a valuation of $K(\underline{t}_\Sigma)$ we say that a rationally defined element $f\in M_w^!(\rho;\KK_\Sigma)$ is {\em $\mathfrak{v}$-integrally definable} if, writing $f_i$ for the $i$-th entry of $Mf$ with $M$ the above mentioned matrix and expanding it as a formal series 
$f_i=\sum_{j\geq j_0}f_{i,j}u^{j}$ with $f_{i,j}\in\Tamecirc{K(\underline{t}_\Sigma)}$, which can be done in a unique way after Proposition \ref{descriptionfieldofunif}, we have $\mathfrak{v}(f_{i,j})\geq 0$ for all $i,j$.
}\end{Definition}
Note that if $N=1$ and $\Sigma=\emptyset$, this coincides, up to multiplication by a proportionality factor, with the scalar modular forms
having $u$-expansions in $K((u))$ and $A[[u]]$, or $\mathfrak{v}$-integral respectively.

We recall from Proposition \ref{seriesofEisensteinseries} the notation $\mathcal{E}^I$ that designates the 
$I$-th entry of $\mathcal{E}=\mathcal{E}(m;\rho^*_\Sigma)$ with $I\subset\Sigma$, $|\Sigma|\equiv m\pmod{q-1}$. Also, we recall that $\omega_I=\prod_{i\in I}\omega(t_i)\in\TT_\Sigma^\times$.
We have:

\begin{Theorem}\label{integrality-results}
For all $I\subset\Sigma$ we have 
$$\omega_I\widetilde{\pi}^{-m}\mathcal{E}_I\in K(\underline{t}_\Sigma)+u\Tamecirc{K(\underline{t}_\Sigma)}[[u]]$$
and $\mathcal{E}(m;\rho^*_\Sigma)$ is $\mathfrak{v}$-integrally definable for the valuations of $K(\underline{t}_\Sigma)$ associated with a non-zero prime ideal $\mathfrak{p}$ of $A$, and this for all but finitely many $\mathfrak{p}$.
\end{Theorem}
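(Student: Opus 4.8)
The plan is to exploit the explicit series expansion of the Eisenstein series provided by Proposition~\ref{seriesofEisensteinseries}, together with the computation of tame-series expansions of the quasi-periodic functions $\Phi_\rho$ coming from \S\ref{quasiperiodicfunctions}. Concretely, formulas (\ref{firstformula})--(\ref{secondformula}) express each entry $\mathcal{E}^I$ as a sum over $a\in A^+$ of rescalings of Perkins' series $\psi(m;\sigma_J)$ (times a scalar Dirichlet-type coefficient $\sigma_I(a)$, and, for the last entry, the zeta value $\zeta_A(m;\sigma_\Sigma)$). So the first step is to reduce everything to understanding the single function $\psi(m;\sigma_J)$ and its rescalings $\psi_a(m;\sigma_J)$ as elements of $\mathfrak{K}_\Sigma$.

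First I would establish the rationality statement. By Corollary~\ref{previouscorollary} and Lemma~\ref{rationality-of-xi}, the tame series expansion of $\Phi_{\rho^*_\Sigma}$ becomes rational (indeed with coefficients in $A[\vartheta]$ for suitable $\vartheta\in\FF_q[\underline{t}_\Sigma]$) after conjugation by $\omega_{\rho^*_\Sigma}$, whose diagonal blocks are built out of the Anderson--Thakur functions $\omega(t_i)$ according to (\ref{procedures-omega}). Since, via Perkins' formula (\ref{Perkins-formula}) and its higher analogues coming from Lemma~\ref{generalisation-gekeler}, the function $\widetilde{\pi}^{-1}\psi(1;\sigma_J)$ equals $u\cdot\chi$ with $\chi=\prod_{i\in J}\chi_{t_i}$, and more generally $\widetilde{\pi}^{-m}\psi(m;\sigma_J)$ is a $K$-linear combination of Goss-type polynomials in $u$ applied to $\Phi_{\rho_J}$, multiplying through by $\omega_J=\prod_{i\in J}\omega(t_i)$ clears exactly the denominators introduced by $\omega_\chi^{-1}$. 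This should give $\omega_I\widetilde{\pi}^{-m}\psi(m;\sigma_I)\in \Tamecirc{K(\underline{t}_\Sigma)}((u))$; summing over $a\in A^+$ (the sum converges in $\mathfrak{O}_\Sigma$ by Proposition~\ref{seriesofEisensteinseries}) and noting that the $\sigma_I(a)$ and $\zeta_A(m;\sigma_\Sigma)$ all lie in $K(\underline{t}_\Sigma)$ (in fact in $\FF_q[\underline{t}_\Sigma]$ and $\widehat{K_\infty(\underline{t}_\Sigma)}$ respectively, but the constant term $\zeta_A(m;\sigma_\Sigma)$ is handled separately since it is a cusp value) yields $\omega_I\widetilde{\pi}^{-m}\mathcal{E}_I\in K(\underline{t}_\Sigma)+u\,\Tamecirc{K(\underline{t}_\Sigma)}[[u]]$, with the matrix $M=\operatorname{Diag}(\omega_I:I\subset\Sigma)\in\GL_N(\KK_\Sigma)$ realizing rational definability.

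Next, for $\mathfrak{v}$-integrality: the scalar coefficients $\sigma_I(a)\in\FF_q[\underline{t}_\Sigma]\subset\mathcal{O}_{\mathfrak{v}}$ for every prime $\mathfrak{p}$, so the only denominators that can appear in the $u$-coefficients of $\omega_I\widetilde{\pi}^{-m}\mathcal{E}_I$ come from (i) the coefficients $d_n^{-1}, l_n^{-1}$ and the $l_n$'s occurring in the Carlitz exponential, Goss polynomials, and the normalization $\omega_\chi\chi(z)=\sum_i\vartheta^i e_{i+1}$ (all lying in $K$, hence $\mathfrak{p}$-integral for all but finitely many $\mathfrak{p}$), and (ii) the denominator $d$ with $d\chi(\theta)\in\FF_q[\underline{t}_\Sigma]^{n\times n}$ — but for $\rho^*_\Sigma$ one has $\vartheta=\chi_{t_i}(\theta)=t_i$, so no denominator $d$ is needed at all, and $\omega_\chi\chi_{t_i}(z)=\sum_{j\geq0}t_i^j e_{j+1}\in\Tamecirc{A[t_i]}$. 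Thus, after excluding the finitely many primes dividing the $l_n$'s and $d_n$'s that actually occur (there are only finitely many such $n$ below any fixed $u$-degree, and the tame coefficients $f_{i,j}$ themselves involve only finitely many of these — one must check this uniformity over $j$, which follows because the $q$-ary expansions of the weights are bounded for a tame series and the relevant $n$ are controlled by the depth), every $u$-coefficient lies in $\Tamecirc{A[\underline{t}_\Sigma,\tfrac1{\mathfrak{p}}]}$, i.e. $\mathfrak{v}(f_{i,j})\geq 0$. The main obstacle I anticipate is precisely this last uniformity point: one needs that the set of ``bad'' primes dividing denominators appearing \emph{across all} $u$-coefficients $f_{i,j}$ (for all $i,j$ simultaneously) is finite, not merely finite degree-by-degree. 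I expect this to follow from the structural description in Proposition~\ref{propdoublestructure} and Corollary~\ref{v-integrality} — which already packages the $x$-expansion of $\omega_\rho\boldsymbol{G}(\rho)\omega_\rho^{-1}$ with coefficients in $\Tame{K[\vartheta]}[u][[x]]$ — combined with the fact that the Goss-polynomial recursion (\ref{goss-poly-relations}) has coefficients with denominators only among the $d_n$, whose prime divisors are the finitely many $\theta^{q^k}-\theta^{q^j}$; but making the bound uniform over the summation in $a\in A^+$ and over all tame-monomial components is the delicate part, and I would spend most of the effort there.
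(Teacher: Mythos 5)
Your treatment of the positive-$u$-degree part is essentially the paper's route: the paper works with the matrix series $\boldsymbol{\mathcal{E}}(m;\rho^*_\Sigma)$ of (\ref{first-expansion-eisenstein}), rewrites its main term as $\widetilde{\pi}^m\sum_c'\rho_\Sigma\big(\begin{smallmatrix}c&0\\0&1\end{smallmatrix}\big)G_m(\rho)_c$ and conjugates by $\omega_{\rho^*_\Sigma}$, exactly as you propose to do entry by entry via Perkins' series. The uniformity issue you flag at the end is indeed the right thing to worry about, and it is precisely what Corollary \ref{v-integrality} settles: for a fixed weight $m$ only the order-$m$ truncation of the generating series $\Exp{D}{x}\big(\omega_\rho\boldsymbol{G}(\rho)\omega_\rho^{-1}\big)$ intervenes, its coefficients lie in $\Tame{A[\vartheta,\Theta][u]}$ for a \emph{single} $\Theta\in K$, and the sum over $c\in A^+$ only introduces the integral factors $\sigma(c)\in\FF_q[\underline{t}_\Sigma]$; hence the bad primes are the finitely many dividing the denominator of $\Theta$. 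So that half of your argument closes.

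The genuine gap is the constant term. You correctly note that $\zeta_A(m;\sigma_\Sigma)$ lives in $\widehat{K_\infty(\underline{t}_\Sigma)}$ (in fact in $\TT_\Sigma$), but then dismiss it as ``handled separately since it is a cusp value''. Being the value at the cusp does not make $\omega_\Sigma\widetilde{\pi}^{-m}\zeta_A(m;\sigma_\Sigma)$ an element of $K(\underline{t}_\Sigma)$; a priori it is only an element of $\KK_\Sigma$, whereas the statement requires the degree-zero coefficient of $\omega_\Sigma\widetilde{\pi}^{-m}\mathcal{E}^\Sigma$ to lie in $K(\underline{t}_\Sigma)$, and Definition \ref{integral-forms} then further requires $\mathfrak{v}(f_{i,0})\geq0$. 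The rationality of $\omega_\Sigma\widetilde{\pi}^{-m}\zeta_A(m;\sigma_\Sigma)$ is a non-trivial arithmetic theorem --- the function-field analogue of Euler's $\zeta(2n)/(2\pi i)^{2n}\in\QQ$ --- and the paper invokes \cite[Theorem 2]{ANG&PEL} precisely at this point (compare (\ref{formula-a-la-euler}), which is the weight-one instance, with $\mathbb{B}_\Sigma\in A[\underline{t}_\Sigma]$). Without this input your argument only controls the part of $\omega_\Sigma\widetilde{\pi}^{-m}\mathcal{E}^\Sigma$ in $u\Tamecirc{K(\underline{t}_\Sigma)}[[u]]$, so neither the first assertion nor the $\mathfrak{v}$-integral definability is established for the last entry of the Eisenstein series.
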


To prove this result we introduce another class of matrix-valued functions.
As seen in \S \ref{poincareseries} Poincar\'e series naturally occur as square matrix functions. On the other hand, Eisenstein series, following our constructions in \S \ref{intro-eisenstein-series}, are defined as vector functions. The following matrix function is very useful in studying Eisenstein series for the representation of the first kind $\rho_\Sigma^*$:
\begin{equation}\label{first-expansion-eisenstein}
\boldsymbol{\mathcal{E}}(m;\rho^*_\Sigma):=\sideset{}{'}\sum_{c\in A}\rho_\Sigma\Big(\begin{smallmatrix}c & 0 \\ 0 & 1\end{smallmatrix}\Big)\Psi_m(cz)+E_\Sigma\sideset{}{'}\sum_{d\in A}d^{-m}\rho^*_\Sigma(T_{-d}),
\end{equation}
where $m>0$, $E_\Sigma$ denotes, with $N=2^s$, $s=|\Sigma|$, the $N\times N$-matrix with zero coefficients, except the bottom-right coefficient which is equal to $1$, $\Psi_m(z)=\Psi_m(\rho^*_\Sigma)(z)$ (as defined in \S \ref{section-Psi}) and the sums over $c,d$ run in $A\setminus\{0\}$. We have, as it is easily seen, 
$$\boldsymbol{\mathcal{E}}(m;\rho^*_\Sigma)\in\operatorname{Hol}_{\KK_\Sigma}(\Omega\rightarrow\EE_\Sigma^{N\times N}).$$ There is a bijection between the columns of 
$\boldsymbol{\mathcal{E}}(m;\rho^*_\Sigma)$ and the subsets $I$ of $\Sigma$. We use the ordering described at the beginning of \S \ref{expansion-eisenstein} and we denote by 
$\boldsymbol{\mathcal{E}}_I$ the $I$-th column in such a way that the first column corresponds to $I=\Sigma$ and the last one to $I=\emptyset$. It is easy to show that
$$\boldsymbol{\mathcal{E}}_I=\mathcal{E}(m;\rho^*_I)\otimes\bigotimes_{j\in\Sigma\setminus I}I_2\in M_m\Big(\rho_\Sigma^*\otimes\bigotimes_{j\in\Sigma\setminus I}\boldsymbol{1}_2;\KK_\Sigma\Big),$$
where $\boldsymbol{1}_2$ is the representation (of the first kind) $\gamma\in\Gamma\mapsto I_2=\Big(\begin{smallmatrix}1 & 0 \\ 0 & 1\end{smallmatrix}\Big)$, so that the first column $\boldsymbol{\mathcal{E}}_\Sigma$ equals
$\mathcal{E}(m;\rho^*_\Sigma)$ (compare with (\ref{V-expansion}). 

\begin{proof}[Proof of Theorem \ref{integrality-results}]
By (\ref{def-Gm}) we see ($G_m(\rho)_c:=G_m(\rho)_{z\mapsto cz}$) that 
$$\boldsymbol{\mathcal{E}}(m;\rho^*_\Sigma)=\widetilde{\pi}^m\sideset{}{'}\sum_{c\in A}\rho_\Sigma\Big(\begin{smallmatrix}c & 0 \\ 0 & 1\end{smallmatrix}\Big)G_m(\rho)_c+E_\Sigma\sideset{}{'}\sum_{d\in A}d^{-m}\rho^*_\Sigma(T_{-d}).$$
We have that
$$\widetilde{\pi}^{m}\omega_{\rho^*_\Sigma}\left(\sideset{}{'}\sum_{c\in A}\rho_\Sigma\Big(\begin{smallmatrix}c & 0 \\ 0 & 1\end{smallmatrix}\Big)G_m(\rho)_c\right)\omega_{\rho^*_\Sigma}^{-1}$$ has rational expansion in $\mathfrak{K}_\Sigma^{N\times N}$ and the coefficients are $\mathfrak{v}$-integral for $\mathfrak{v}$ as in the statement of the theorem in virtue of Corollary \ref{v-integrality}. By \cite[Theorem 2]{ANG&PEL},
$$\widetilde{\pi}^{m}\omega_{\rho^*_\Sigma}\left(E_\Sigma\sideset{}{'}\sum_{d\in A}d^{-m}\rho^*_\Sigma(T_{-d})\right)\omega_{\rho^*_\Sigma}^{-1}\in K(\underline{t}_\Sigma)^{N\times N},$$
so that
$$\widetilde{\pi}^{m}\omega_{\rho^*_\Sigma}\boldsymbol{\mathcal{E}}(m;\rho_\Sigma^*)\omega_{\rho^*_\Sigma}^{-1}\in\Big(K(\underline{t}_\Sigma)+u\Tamecirc{K(\underline{t}_\Sigma)}[[u]]\Big)^{N\times N}$$ and the theorem follows.
\end{proof}

\subsection{Some applications to quasi-modular and $\mathfrak{v}$-adic modular forms}\label{Quasimodular-forms-from-Eisenstein}

In this subsection we illustrate how constructions of Drinfeld modular forms defined over $\Omega$ with values in $\CC_\infty$ having `$A$-expansions' as considered by Petrov in \cite{PET} can be naturally carried out as evaluations of our Eisenstein series $\mathcal{E}(m;\rho_\Sigma^*)$
at certain specific points. This also leads to some properties of $\mathfrak{v}$-adic modular forms with $\mathfrak{v}$ a valuation of $K(\underline{t}_\Sigma)$ that will be sketched at the end of the present subsection to illustrate further directions of research.

Consider a finite subset $\Sigma\subset\NN^*$ of cardinality $s$ and, for $i\in\Sigma$, integers $k_i\in\NN$. With $\underline{k}=(k_i)_{i\in\Sigma}\in\NN^\Sigma$, set $\ev=\ev_{\underline{\theta}^{q^{\underline{k}}}}$ the evaluation map that sends an element $f$ of $\EE_\Sigma^{M\times N}$ for integers $M,N$ to $$\ev(f)=(f)_{t_i=\theta^{q^{k_i}},\forall i\in\Sigma}\in\CC_\infty^{M\times N}.$$ the family $\underline{k}\in\NN^\Sigma$ is fixed all along the subsection. 

The Eisenstein series $\mathcal{E}(m;\rho^*_\Sigma)$ defines a non-zero rigid analytic function
$\Omega\rightarrow\EE_{\Sigma}^{N\times 1}$ with $N=2^s$. Hence the evaluation $\ev(\mathcal{E}(m;\rho^*_\Sigma))$ can be viewed as a rigid analytic function $\Omega\rightarrow\CC_\infty^{N\times 1}$. 
We recall, from \cite{PET}, the series 
\begin{equation}\label{A-series-petrov}
f_{k,m}=\sum_{a\in A^+}a^{k-m}G_m(u_a)\in K[[u]],
\end{equation}
where the sum runs over the monic polynomials in $A$.
This series converges in $K[[u]]$ (for the $u$-adic valuation) for every $m>0$ and $k\in\ZZ$.

We show the following result, where we use the notion of {\em Drinfeld quasi-modular form} introduced in \cite[Definition 2.1]{BOS&PEL0}, answering a question of D. Goss on the general nature of the 
$A$-series defined in (\ref{A-series-petrov}). We suppose that $s=|\Sigma|$ and the integer $m>0$ are chosen so that $s\equiv m\pmod{q-1}$. We also set $l=\sum_{i\in\Sigma}q^{k_i}$, so that $l\equiv s\pmod{q-1}$.

\begin{Theorem}\label{to-give-Petrov} 
The first entry of $\ev(\mathcal{E}(m;\rho^*_\Sigma))$ equals 
$-\widetilde{\pi}^m f_{l+m,m}$ and is a non-zero quasi-modular form of weight $l+m$ type $m$ and depth $\leq l$. 
\end{Theorem}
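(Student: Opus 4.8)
The plan is to compute the first entry of $\mathcal{E}(m;\rho^*_\Sigma)$ explicitly, specialize, and then recognize the resulting $A$-series as a quasi-modular form. First I would use formula (\ref{thirdformula}) of Proposition \ref{seriesofEisensteinseries}, which gives the first entry (corresponding to $J=\emptyset$) of $\mathcal{E}(m;\rho^*_\Sigma)$ in the form
$$\mathcal{E}^\emptyset=-\widetilde{\pi}^m\sum_{a\in A^+}\sigma_\Sigma(a)G_m(u_a(z))\in\KK_\Sigma[[u]].$$
Applying the evaluation $\ev=\ev_{\underline{\theta}^{q^{\underline{k}}}}$ and using that $\ev(\sigma_\Sigma(a))=\ev\bigl(\prod_{i\in\Sigma}\chi_{t_i}(a)\bigr)=\prod_{i\in\Sigma}a(\theta^{q^{k_i}})=\prod_{i\in\Sigma}a^{q^{k_i}}=a^{l}$ where $l=\sum_{i\in\Sigma}q^{k_i}$ (here I use that $\chi_{t_i}(a)|_{t_i=\theta^{q^{k_i}}}=a(\theta^{q^{k_i}})$ and that $a\mapsto a(\theta^{q^{k_i}})$ is the $q^{k_i}$-th power Frobenius on $A$ since $a$ has $\FF_q$-coefficients), I obtain
$$\ev(\mathcal{E}^\emptyset)=-\widetilde{\pi}^m\sum_{a\in A^+}a^l G_m(u_a(z))=-\widetilde{\pi}^m f_{l+m,m},$$
matching the series (\ref{A-series-petrov}) with $k=l+m$, $k-m=l$. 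One needs the $u$-adic convergence of this series, which is granted by the remark following (\ref{A-series-petrov}), and to check that the evaluation commutes with the $u$-adic limit; this is routine since $\ev$ is continuous for the relevant topologies (it is a bounded $\CC_\infty$-algebra morphism on $\EE_\Sigma$, and the partial sums lie in $\TT_\Sigma[d^{-1}]^\wedge$).

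Next I would establish the modular transformation behaviour. The matrix-valued function $\mathcal{E}(m;\rho^*_\Sigma)$ lies in $M_m(\rho^*_\Sigma;\KK_\Sigma)$, so it satisfies $\mathcal{E}(m;\rho^*_\Sigma)(\gamma(z))=J_\gamma(z)^m\rho^*_\Sigma(\gamma)\mathcal{E}(m;\rho^*_\Sigma)(z)$. Evaluating at $\underline{t}_\Sigma=\underline{\theta}^{q^{\underline{k}}}$ replaces $\rho^*_\Sigma(\gamma)$ by the matrix $\ev(\rho^*_\Sigma(\gamma))$, whose entries are explicit polynomials in the entries of $\gamma$ and their $q^{k_i}$-th powers; in particular $\ev(\rho^*_\Sigma(\gamma))$ is unipotent lower-triangular (inherited from the structure of $\rho^*_\Sigma$), and its first row is $(1,0,\ldots,0)$ — this is exactly the point of choosing the first entry, which corresponds to the line $\otimes_{i\in\Sigma}(\chi_{t_i}(a),\chi_{t_i}(b))$ of $\rho_\Sigma$. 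Reading off the first coordinate of the functional equation gives
$$\ev(\mathcal{E}^\emptyset)(\gamma(z))=J_\gamma(z)^m\Bigl(\ev(\mathcal{E}^\emptyset)(z)+\text{(sum of }\ev(\rho^*_\Sigma(\gamma))\text{-entries against lower components of }\ev(\mathcal{E})\text{)}\Bigr).$$
The correction terms are polynomial in $c=J'$-data; more precisely, expanding $\bigotimes_{i\in\Sigma}\binom{\chi_{t_i}(a)}{\chi_{t_i}(b)}$ specialized, one sees that each lower component of $\ev(\mathcal{E}(m;\rho^*_\Sigma))$ is itself, up to $\widetilde{\pi}^m$, an $A$-series of the shape $\sum_{a\in A^+}(\text{monomial in }a,c\ldots)G_m(u_a)$, and the whole correction is a polynomial of degree $\le l$ in $\frac{c}{cz+d}=\widetilde{\pi}^{-1}\cdot(\text{something in }u)$; consequently $\ev(\mathcal{E}^\emptyset)$ is a Drinfeld quasi-modular form of weight $l+m$, type $m$ (the type being read from the scalar $\mu$-action $\nu\mapsto\nu^{-m}$ on the first coordinate) and depth at most $l$, in the sense of \cite[Definition 2.1]{BOS&PEL0}. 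Regularity at the cusp and finiteness of the pole order follow because $\ev(\mathcal{E}(m;\rho^*_\Sigma))$ is the evaluation of an element of $M_m(\rho^*_\Sigma;\widehat{\TT_\Sigma[d^{-1}]})$ (Proposition \ref{evaluation-of-modular}-type reasoning), hence has all entries in $\CC_\infty[[u]]$ after composition with any $\gamma$.

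Finally I would prove non-vanishing: $\ev(\mathcal{E}^\emptyset)=-\widetilde{\pi}^mf_{l+m,m}$ is non-zero because its $u$-expansion $-\widetilde{\pi}^m\sum_{a\in A^+}a^lG_m(u_a)$ has a nonzero lowest-order term. Indeed $G_m(u_a)=u_a^{?}+\cdots$ with $u_a=u_a(z)=e_C(az)^{-1}$, and the $a=1$ term contributes $-\widetilde{\pi}^mG_m(u)$ whose lowest-degree monomial in $u$ is not cancelled by the higher $a$ (for $a$ of positive degree, $u_a$ has strictly larger $u$-adic valuation than $u$), so the $u$-expansion is nonzero; alternatively, non-vanishing is immediate from $\mathcal{E}(m;\rho^*_\Sigma)\notin S_m(\rho^*_\Sigma;\KK_\Sigma)$ combined with the explicit leading term, but the direct $u$-expansion argument is cleanest. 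I expect the main obstacle to be the bookkeeping for the depth bound: one must verify carefully that the correction terms in the quasi-modular transformation law are genuinely polynomials of degree $\le l$ in the relevant variable and not of larger degree, which requires tracking how the unipotent part of $\ev(\rho^*_\Sigma(\gamma))$ interacts with the tensor structure and the exponent $l=\sum q^{k_i}$; everything else is a routine transcription of Proposition \ref{seriesofEisensteinseries} and the definitions.
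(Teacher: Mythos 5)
Your derivation of the identity $\ev(\mathcal{E}^\emptyset)=-\widetilde{\pi}^m f_{l+m,m}$ is correct and is actually more direct than the paper's: you read it off from (\ref{thirdformula}) together with $\ev(\sigma_\Sigma(a))=a^l$, whereas the paper obtains it only at the end of a longer computation. The non-vanishing argument via the $a=1$ term of the $u$-expansion is also fine.

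The quasi-modularity and depth bound, however, contain a genuine error and a genuine gap. The error: the first row of $\rho^*_\Sigma(\gamma)$ is \emph{not} $(1,0,\ldots,0)$ for general $\gamma=(\begin{smallmatrix}*&*\\ c&d\end{smallmatrix})\in\Gamma$; it equals $\det(\gamma)^{-s}\bigotimes_{i\in\Sigma}\big(\chi_{t_i}(d),-\chi_{t_i}(c)\big)$, which after evaluation becomes $\det(\gamma)^{-s}\bigotimes_{i}\big(d^{q^{k_i}},-c^{q^{k_i}}\big)$. (The matrices $\rho^*_\Sigma(T_a)$ are unipotent triangular, but $\rho^*_\Sigma(\gamma)$ for arbitrary $\gamma$ is not.) Consequently the first coordinate of the functional equation pairs this full row against \emph{all} entries of $\ev(\mathcal{E}(m;\rho^*_\Sigma))$, and to conclude anything you must know what those entries are. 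This is the gap you yourself flag as "the main obstacle": to get weight $l+m$ and depth $\leq l$ one needs the explicit decomposition $\ev(\mathcal{E}(m;\rho^*_\Sigma))=-\widetilde{\pi}^m\sum_{I\sqcup J=\Sigma,\ l_I<m}F_I\big(\bigotimes_{i\in J}\binom{1}{-z^{q^{k_i}}}\otimes\bigotimes_{h\in I}\binom{0}{\widetilde{\pi}^{-q^{k_h}}}\big)$ minus the constant vector carrying $\zeta_A(m-l)$, where $F_I=f_{m-l_I+l_J,\,m-l_I}$ involves Goss polynomials $G_{m-l_I}$ of \emph{lower} order, not $G_m$ as you assert. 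The paper obtains this decomposition by computing the divided derivatives $D_{m-1}(u\,\widetilde{\Xi}_{\rho^*_\Sigma})$ of the evaluated quasi-periodic matrix (which becomes a polynomial matrix in $z$ under Hypothesis H), and then the key identity $\bigotimes_i(d^{q^{k_i}},-c^{q^{k_i}})\big(\bigotimes_{i\in J}\binom{1}{-z^{q^{k_i}}}\otimes\bigotimes_{h\in I}\binom{0}{x^{-q^{k_h}}}\big)=J_\gamma(z)^{l_J}(-cx)^{l_I}$ converts the pairing into $J_\gamma(z)^{m+l}$ times a polynomial of degree $\leq l$ in $L_\gamma(z)=-c/(cz+d)$. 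Without establishing this structure of the lower entries, the depth bound (and even the weight $l+m$ rather than $m$) does not follow; this computation is the actual content of the paper's proof and is missing from your proposal.
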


\subsubsection{Preliminaries, Hypothesis H}

We choose a representation of the first kind $\rho:\Gamma\rightarrow\GL_N(\FF_q(\underline{t}_\Sigma))$ satisfying the next:

\medskip

\noindent {\bf Hypothesis H}. {\em We suppose that $\rho$ is constructed starting from the basic representations $\rho_{t_i}$ with $i\in\Sigma$ applying the usual elementary operations $\oplus,\otimes,S^\alpha,\wedge^\beta,(\cdot)^*$.} 

\medskip

Assuming the Hypothesis H amounts to make an initial restriction on the basic representations used to define $\rho$. This condition can be relaxed but is convenient for our exposition because it allows us to refer to existing literature, so we assume it, but many properties described here extend to the general setting of arbitrary representations of the first kind, with appropriate modifications of the statements.

We note that the matrix functions $\Xi_\rho,\Phi_\rho$ introduced in \S \ref{here-defi-phi-rho} and \S \ref{section-Phi-rho} belong to $\GL_N(\Tamecirc{\EE_\Sigma})$ for $N\geq 1$ so that
$$\widetilde{\Phi}_\rho:=\ev(\Phi_\rho),\quad \widetilde{\Xi}_\rho:=\ev(\Xi_\rho)$$
define entire functions $\CC_\infty\rightarrow\CC_\infty^{N\times N}$.

\begin{Lemma}\label{lemmaAhypH}
Assuming the Hypothesis H we have $\widetilde{\Phi}_\rho,\widetilde{\Xi}_\rho\in\GL_N(\CC_\infty[z])$.
\end{Lemma}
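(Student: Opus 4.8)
\textbf{Proof proposal for Lemma \ref{lemmaAhypH}.} The plan is to reduce everything to the case of the basic representations $\rho_{t_i}$ and then check that each admissible operation $\oplus,\otimes,S^\alpha,\wedge^\beta,(\cdot)^*$ preserves the property of having entries in $\CC_\infty[z]$ (for $\Phi_\rho$, equivalently $\Xi_\rho$, since by Proposition \ref{propgeneralitiestame}(c) and Corollary \ref{previouscorollary} we have $\Phi_\rho=U_1\Xi_\rho$ with $U_1\in\mathfrak{E}[e_C(z)]^{N\times N}$; after $\ev$ at the points $t_i=\theta^{q^{k_i}}$ the entries of $U_1$ become polynomials in $e_C(z)$, which is \emph{not} a polynomial in $z$, so in fact I should argue directly with $\widetilde\Xi_\rho$ and then show $\widetilde\Phi_\rho$ coincides with it; indeed for a basic representation Corollary \ref{caseofthebasicrepresentation} already gives $\Phi_\rho=\Xi_\rho$, and this propagates through the operations (\ref{procedures}) since $\Phi_{\rho\oplus\psi}=\Phi_\rho\oplus\Phi_\psi$ etc. by Corollary \ref{previouscorollary}'s compatibility — so $\Phi_\rho=\Xi_\rho$ whenever $\rho$ is built without $(\cdot)^*$, and for the general Hypothesis H case one checks $\ev(\Phi_{\rho^*})=\ev((\Xi_\rho)^*)=(\widetilde\Xi_\rho)^{-1}{}^{t}$, which is polynomial as soon as $\widetilde\Xi_\rho$ is unipotent or at least has polynomial inverse).

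First I would treat a basic representation $\rho=\rho_\chi$ with $\chi:A\to\FF_q(\underline t_\Sigma)^{n\times n}$. By (\ref{def-xi}) we have $\Xi_\rho(z)=\left(\begin{smallmatrix} I_n & \chi(z)\\ 0 & I_n\end{smallmatrix}\right)$, and by (\ref{chidefi}), $\chi(z)=\exp_C\!\big(\widetilde\pi z(\theta I_n-\vartheta)^{-1}\big)\omega_\chi^{-1}=\sum_{i\ge0}e_{i+1}(z)\,\vartheta^{\,i}\omega_\chi^{-1}$, where $e_{i+1}(z)=\exp_C(\widetilde\pi z/\theta^{i+1})$. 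The key point is that upon specialising the variables to $t_i=\theta^{q^{k_i}}$, the matrix $\vartheta=\chi(\theta)$ becomes a matrix $\widetilde\vartheta\in\CC_\infty^{n\times n}$ all of whose eigenvalues lie in $\{\theta^{q^{k}}:k\in\NN\}$ (this is where Hypothesis H together with the definition of the $\rho_{t_i}$ enters: $\chi_{t_i}(\theta)=t_i\mapsto\theta^{q^{k_i}}$). Now $\widetilde\pi z(\theta I_n-\widetilde\vartheta)^{-1}$ has spectrum $\{\widetilde\pi z/(\theta-\theta^{q^{k}})\}$, and each such scalar lies in the kernel-modulo-scaling region where $\exp_C$ behaves polynomially: more precisely $\exp_C(\widetilde\pi z/(\theta-\theta^{q^k}))$ is, by the torsion property $C_{\theta-\theta^{q^k}}(\cdot)$ analogue, an $\FF_q$-linear combination of $z,z^q,\dots,z^{q^{k}}$ — that is, $\ev(\chi(z))$ has entries in $\CC_\infty[z]$ because $\ev(e_{j+1}(z))=\exp_C(\widetilde\pi z/\theta^{j+1})$ and the series $\sum_j \ev(e_{j+1}(z))\widetilde\vartheta^{\,j}\,\ev(\omega_\chi^{-1})$ telescopes, via $\exp_C(\widetilde\pi z/\theta^{j+1})$ being killed by $\tau$-towers, to a finite sum; a clean way to see this is that $\ev(\chi(a))=\chi(a)|_{\ul t=\ul\theta^{q^{\ul k}}}$ is a polynomial in $a$ of bounded degree for $a\in A$, that $\ev(\chi)$ is entire of weight $1/q$ by Proposition \ref{chi-is-tame}, and that an entire function of weight $<1$ which is polynomial on $A$ and $\FF_q$-linear must be a polynomial (combine Proposition \ref{entire} with a growth estimate, exactly as in the uniqueness part of Proposition \ref{chi-is-tame}). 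Hence $\widetilde\Xi_{\rho_\chi}\in\GL_N(\CC_\infty[z])$, with inverse $\left(\begin{smallmatrix} I_n & -\ev(\chi(z))\\ 0 & I_n\end{smallmatrix}\right)$ also polynomial, and $\widetilde\Phi_{\rho_\chi}=\widetilde\Xi_{\rho_\chi}$ by Corollary \ref{caseofthebasicrepresentation}.

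Next I would run the induction on the construction of $\rho$. The class $\GL_N(\CC_\infty[z])$, intersected with matrices whose inverse is also in $\CC_\infty[z]$ (equivalently, with $\det\in\CC_\infty^\times$ — and here $\det\widetilde\Xi_\rho=1$ by the argument in Proposition \ref{somepropertiesxirho} and Corollary \ref{previouscorollary}), is visibly stable under $\oplus$, $\otimes$ (Kronecker product of polynomial matrices is polynomial), $S^\alpha$ and $\wedge^\beta$ (these are polynomial functors in the matrix entries), and under $(\cdot)^*$: if $M\in\GL_N(\CC_\infty[z])$ with $\det M=1$ then ${}^tM^{-1}={}^t\mathrm{Co}(M)\in\CC_\infty[z]^{N\times N}$. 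Since $\ev$ is an $\FF_q$-algebra homomorphism commuting with all these operations (by the definitions (\ref{procedures}), (\ref{procedures-omega})), we get $\widetilde\Xi_\rho=\ev(\Xi_\rho)\in\GL_N(\CC_\infty[z])$ for every $\rho$ satisfying Hypothesis H, and likewise $\widetilde\Phi_\rho$: when no $(\cdot)^*$ occurs, $\Phi_\rho=\Xi_\rho$ so $\widetilde\Phi_\rho=\widetilde\Xi_\rho$; in general $\widetilde\Phi_\rho$ is obtained from the $\widetilde\Phi_{\rho_i}=\widetilde\Xi_{\rho_i}$ by the same operations by Corollary \ref{previouscorollary} (the $U_1,U_2$ there being $I_N$ at the basic level), so $\widetilde\Phi_\rho=\widetilde\Xi_\rho$ throughout.

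\textbf{Main obstacle.} The delicate step is the basic case: showing that $\ev(\chi(z))$ is genuinely a \emph{polynomial} in $z$ and not merely an entire function. The cleanest route is the one sketched above — $\ev(\chi)$ is $\FF_q$-linear, entire, of weight $1/q$ by Proposition \ref{chi-is-tame}, hence of the form $\sum_{j\ge0}c_j z^{q^j}$ with $|c_j|$ controlled, and it agrees with a bounded-degree polynomial on the infinite set $A$; subtracting that polynomial and dividing by $e_C(\widetilde\pi z)$ yields a bounded entire function, which is constant by Proposition \ref{entire}, forcing the difference to vanish. One must be slightly careful that the ``bounded-degree polynomial on $A$'' claim really follows from $t_i\mapsto\theta^{q^{k_i}}$ and the structure of the $\rho_{t_i}$ (so that $\ev(\chi_{t_i}(a))=a(\theta^{q^{k_i}})=a^{q^{k_i}}$ has degree $q^{k_i}\deg a$, but as a function of $z$ the interpolating polynomial has degree $q^{\max k_i}$ — bounded independently of $a$), but this is exactly the content of the Carlitz-torsion computation and causes no real trouble.
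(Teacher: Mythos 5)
Your treatment of $\widetilde{\Xi}_\rho$ is sound and close to the paper's: the basic case reduces to $\ev(\chi_{t_i}(z))=z^{q^{k_i}}$, and $\Xi$ is by definition (\ref{procedures}) compatible with the admissible operations, under which $\GL_N(\CC_\infty[z])\cap\{\det=1\}$ is stable. The problem is the other half. Your key claim that ``$\Phi_{\rho\oplus\psi}=\Phi_\rho\oplus\Phi_\psi$ etc.\ by Corollary \ref{previouscorollary}'s compatibility, so $\Phi_\rho=\Xi_\rho$ whenever $\rho$ is built without $(\cdot)^*$'' is false for the Kronecker product (and for $S^\alpha,\wedge^\beta$). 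The function $\Phi_{\rho\otimes\psi}$ is the representative of $\Xi_{\rho\otimes\psi}=\Xi_\rho\otimes\Xi_\psi$ in $(\Tamecirc{\mathfrak{E}})^{NM\times NM}$ modulo $(e_0)$, and the entries of $\Xi_\rho\otimes\Xi_\psi$ are products of tame series of weight $<1$, whose weight can reach or exceed $1$; the reduction modulo $e_0$ is then nontrivial. Concretely, for $\rho_\Sigma=\otimes_{i\in\Sigma}\rho_{t_i}$ with $|\Sigma|\geq q$ one has $\Phi_{\rho_\Sigma}\neq\Xi_{\rho_\Sigma}$: the entry $\prod_{i\in\Sigma}\chi_{t_i}(z)$ of $\Xi_{\rho_\Sigma}$ has weight $\geq 1$, while the corresponding entry of $\Phi_{\rho_\Sigma}$ is the Perkins series $e_0\psi(1;\sigma_\Sigma)$ of weight $w_{\max}(|\Sigma|)<1$ (compare Theorem \ref{theopsi} and the example following it). So your argument establishes nothing about $\widetilde{\Phi}_\rho$ beyond depth $q-1$.

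You cannot repair this by invoking $\widetilde{\Phi}_\rho=\widetilde{\Xi}_\rho$, since that is Lemma \ref{lemmaBhypH}, whose proof uses the present lemma (it needs both $\widetilde{\Phi}_\rho$ and $\widetilde{\Xi}_\rho$ to lie in $\GL_N(\CC_\infty[z])$ before concluding $\mathcal{N}_2=0$ from the algebraic independence of $z$ and $e_0(z)$). Nor does a growth argument come for free: the evaluation points $t_i=\theta^{q^{k_i}}$ have absolute value $>1$, so the estimate $\|\Phi_\rho(z)\|\leq C|e_C(z)|^{w}$, $w<1$, does not automatically transfer to $\ev(\Phi_\rho)$. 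What the paper actually does is identify each entry of $\Phi_\rho$, under Hypothesis H, as an $\FF_p$-polynomial combination of normalized Perkins series $e_0\psi(1;\sigma)$ and then invoke the explicit formula of \cite[Theorem 2]{PEL&PER}, which writes $e_0\psi(1;\sigma_\Sigma)=\widetilde{\pi}\prod_{i\in\Sigma}\chi_{t_i}(z)+e_0g(z)$ with $g$ vanishing identically under the evaluation $t_i\mapsto\theta^{q^{k_i}}$; this is the nontrivial input that makes $\widetilde{\Phi}_\rho$ polynomial in $z$, and it is missing from your proposal.
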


\begin{proof}
We begin by proving the property for $\widetilde{\Phi}_\rho$. The Hypothesis H implies that every entry of $\Phi_\rho$ `comes from Perkins' series' in that they are of the type
$$\sum_{a\in A}(z-a)^{-1}\Theta(a)$$ 
where $\Theta:A\rightarrow\FF_q[\underline{t}_\Sigma]$ is a map such that there exists a polynomial 
$P\in\FF_p[X_1,\ldots,X_r]$ (for some $r$) and semi-characters $\sigma_1,\ldots,\sigma_r:A\rightarrow\FF_q[\underline{t}_\Sigma]$ (see Definition \ref{def-semicharacters}) such that 
$\Theta(a)=P(\sigma_1(a),\ldots,\sigma_r(a))$ for all $a\in A$. Hence, to prove the lemma, it suffices to show that, with $f=e_0\psi(1;\sigma_\Sigma)$ ($\psi$ is a Perkins series, see Definition \ref{defperkinsseries}). We have
$$\widetilde{f}:=\ev(f)\in\CC_\infty[z].$$ To justify this we appeal to \cite[Theorem 2]{PEL&PER}. After this result we see that
$$f(z)=\frac{\widetilde{\pi}\prod_{i\in\Sigma}\exp_C\Big(\frac{\widetilde{\pi}z}{\theta-t_i}\Big)}{\prod_{i\in\Sigma}\omega(t_i)}+e_0g(z)$$
where $g:\CC_\infty\rightarrow\EE_\Sigma$ is an entire function which vanishes identically after evaluation
$\ev$ at $\underline{t}_\Sigma=\underline{\theta}^{q^{\underline{k}}}$. Recall that
$$\chi_{t_i}(z)=\omega(t_i)^{-1}\exp_C\Big(\frac{\widetilde{\pi}z}{\theta-t_i}\Big)$$ for all $i\in\Sigma$ and for all $z\in\CC_\infty$.
It is therefore easily seen that $$\ev(\chi_{t_i}(z))=z^{q^{k_i}}.$$ Hence the claimed property of $\widetilde{f}$ follows, and together with it, that of $\widetilde{\Phi}_\rho$.

To show that $\widetilde{\Xi}_\rho\in\GL_N(\CC_\infty[z])$ it suffices to verify it for $\rho=\rho_{t_i}$ with $i\in\Sigma$ so we assume now $\Sigma=\{i\}$ and $\underline{k}=k\geq 0$. In this case however, $\widetilde{\Phi}_\rho=\widetilde{\Xi}_\rho=(\begin{smallmatrix}1 & z^{q^k} \\ 0 & 1\end{smallmatrix})$,
and thanks to the Hypothesis H, 
\begin{equation}\label{link-with-Ttheta}
\widetilde{\Xi}_\rho=\ev\Big(\rho(T_\theta)\Big)_{\theta\mapsto z}\in\GL_N(\FF_q[z]).
\end{equation}

The proof of the lemma is complete.
\end{proof}

We can now prove:

\begin{Lemma}\label{lemmaBhypH}
Under the Hypothesis H we have $\widetilde{\Phi}_\rho=\widetilde{\Xi}_\rho$.
\end{Lemma}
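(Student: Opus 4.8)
The plan is to reduce the identity $\widetilde\Phi_\rho=\widetilde\Xi_\rho$ to the single equality $\Phi_\rho=\Xi_\rho$ after evaluation, and to exploit the fact that both sides are now \emph{polynomial} matrix functions by Lemma \ref{lemmaAhypH}. Recall from Corollary \ref{previouscorollary} that $\Phi_\rho$ is the unique representative in $(\Tamecirc{\mathfrak{E}})^{N\times N}$ of the class of $\Xi_\rho$ modulo the principal ideal generated by $e_0I_N$, equivalently (Proposition \ref{propgeneralitiestame}(c)) there is a matrix $U_1\in\mathfrak{E}[e_C(z)]^{N\times N}$ with $U_1-I_N\in e_C(z)\mathfrak{E}[e_C(z)]^{N\times N}$ and $\Phi_\rho=U_1\Xi_\rho$. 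So the difference $\Phi_\rho-\Xi_\rho=(U_1-I_N)\Xi_\rho$ lies in $e_C(z)\cdot(\Tame{\mathfrak{E}})^{N\times N}$, i.e. every entry is divisible by $e_0=e_C(z)$ in the tame-series sense.

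First I would apply $\ev=\ev_{\underline\theta^{q^{\underline k}}}$ to this relation. Since $\ev$ is a $\CC_\infty$-algebra morphism compatible with the entire-function structure (it sends $\EE_\Sigma$-valued entire functions to $\CC_\infty$-valued ones, as noted before Lemma \ref{lemmaAhypH}), we get $\widetilde\Phi_\rho-\widetilde\Xi_\rho=\ev(U_1-I_N)\cdot\widetilde\Xi_\rho$, and $\ev(U_1-I_N)$ is divisible by $\ev(e_C(z))=e_C(z)\in\CC_\infty[z]$ — more precisely $\ev(U_1-I_N)\in e_C(z)\CC_\infty[[e_C(z)]]^{N\times N}$ a priori, but by Lemma \ref{lemmaAhypH} both $\widetilde\Phi_\rho$ and $\widetilde\Xi_\rho$ are polynomials in $z$, hence so is their difference. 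Thus $\widetilde\Phi_\rho-\widetilde\Xi_\rho$ is simultaneously a polynomial in $z$ and a multiple of $e_C(z)=\widetilde\pi z+(\text{higher }q\text{-powers})$ inside the ring of entire functions. The key point is that $e_C(z)$ is \emph{not} a polynomial — it is a nonzero entire function with infinitely many zeroes (the $A$-torsion of the Carlitz module scaled by $\widetilde\pi$) — so any polynomial multiple of it that is itself a polynomial must be zero: if $P(z)=e_C(z)Q(z)$ with $P$ polynomial and $Q$ entire, then $Q=P/e_C$ is entire and tends to $0$ at infinity along $|z|=|z|_\Im$ (since $|e_C(z)|$ grows super-polynomially), forcing $Q$ bounded hence constant hence zero by Proposition \ref{entire}; therefore $P=0$.

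Carrying this out entrywise gives $\widetilde\Phi_\rho=\widetilde\Xi_\rho$ directly. The only subtlety I anticipate is bookkeeping: one should check that $\ev$ really does commute with the decomposition $\Phi_\rho=U_1\Xi_\rho$ and with the passage to entire functions, i.e. that $U_1\in\GL_N(\Tame{\mathfrak{E}})$ has $\ev(U_1)$ well-defined and entire — this follows because $U_1$ and $U_1^{-1}$ have entries in $\Tamecirc{\mathfrak{E}}$ (Corollary \ref{previouscorollary} gives $\det\Phi_\rho=1$, $\Phi_\rho,\Phi_\rho^{-1}\in(\Tamecirc{\mathfrak{E}})^{N\times N}$, and $\Xi_\rho^{\pm1}\in(\Tame{\KK_\Sigma})^{N\times N}$), and $\ev$ preserves entireness as already used in the proof of Lemma \ref{lemmaAhypH}. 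An alternative, perhaps cleaner, route avoiding the $U_1$-decomposition entirely: both $\widetilde\Phi_\rho$ and $\widetilde\Xi_\rho$ are polynomial matrix functions (Lemma \ref{lemmaAhypH}) that agree at every $a\in A$ — indeed $\Phi_\rho(a)=\rho(T_a)=\Xi_\rho(a)$ for $a\in A$ by Proposition \ref{propgeneralitiestame}(d) and the definition of $\Xi_\rho$, and evaluation at $\underline t_\Sigma=\underline\theta^{q^{\underline k}}$ is compatible with evaluation at $a\in A$ since the semi-characters $\chi_{t_i}(a)$ specialize to $a^{q^{k_i}}$. Two polynomials in $z$ agreeing on the infinite set $A\subset\CC_\infty$ are equal, so $\widetilde\Phi_\rho=\widetilde\Xi_\rho$. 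I expect the main obstacle to be purely expository — making the compatibility of $\ev$ with the various specializations precise under Hypothesis H — rather than any genuine analytic difficulty.
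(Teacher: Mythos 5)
Your proposal is correct, and your main route is essentially the paper's: both start from the decomposition of Proposition \ref{propgeneralitiestame}(c) (the paper writes $\Xi_\rho=\Phi_\rho(I_N+\mathcal{N}_1)$ with $\mathcal{N}_1\in e_0\EE_\Sigma[e_0]^{N\times N}$, you write $\Phi_\rho=U_1\Xi_\rho$), apply $\ev$, invoke Lemma \ref{lemmaAhypH} to see that everything in sight lies in $\GL_N(\CC_\infty[z])$, and then rule out a nontrivial $e_0$-contribution. The only divergence is in that last step: the paper kills $\mathcal{N}_2\in e_0\CC_\infty[e_0]^{N\times N}$ by the algebraic independence of $z\mapsto z$ and $z\mapsto e_0(z)$ over $\CC_\infty$, which is purely algebraic, whereas you argue by growth that a polynomial in $z$ divisible by $e_C(z)$ in the entire functions must vanish. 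Your growth argument is fine but the step ``$Q$ tends to $0$ as $|z|_\Im\to\infty$, forcing $Q$ bounded'' deserves one more line: one should pass through circles $|z|=R$ with $R\notin q^{\ZZ}$, on which $|z|_\Im=|z|$, and use the maximum principle of Lemma \ref{in-disks} to see that $\sup_{|z|\le R}|Q|$ is controlled there; as stated, boundedness near $K_\infty$ at large $|z|$ is not immediate. Your second, alternative route --- both $\widetilde\Phi_\rho$ and $\widetilde\Xi_\rho$ are polynomial matrices by Lemma \ref{lemmaAhypH} and agree on the infinite set $A$ (both interpolate $\ev(\rho(T_a))$, by Proposition \ref{propgeneralitiestame}(d) and (\ref{xi-left-right}) with $\Xi_\rho(0)=I_N$) --- is genuinely different from the paper's argument and is arguably the cleanest of the three, since it bypasses the multiplicative decomposition entirely; its only cost is the (easy) check that $\ev$ commutes with evaluation at $z=a$.
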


\begin{proof}
By Proposition \ref{propgeneralitiestame} (c) we have $\Xi_\rho=\Phi_\rho(I_N+\mathcal{N}_1)$ with 
$\mathcal{N}_1$ a function belonging to $e_0\EE_\Sigma[e_0]^{N\times N}$. evaluating we get
$$\widetilde{\Xi}_\rho=\widetilde{\Phi}_\rho(I_N+\mathcal{N}_2)$$ for $\mathcal{N}_2\in e_0\CC_\infty[e_0]^{N\times N}$. By Lemma \ref{lemmaAhypH} we see that $I_N+\mathcal{N}_2\in\GL_N(\CC_\infty[z])$
and this shows that $\mathcal{N}_2=0_N$ because the functions $z\mapsto z$ and $z\mapsto e_0(z)$
are algebraically independent over $\CC_\infty$, as is easy to see.
\end{proof}

We now choose an integer $n>0$ and we study $\ev(G_m(\rho))$ where $G_m(\rho)$ has been defined 
in (\ref{def-Gm}). We recall that $G_1(\rho)=\widetilde{\pi}^{-1}\Psi_1(\rho)=u\Phi_\rho$. It is easy to see (we leave the verification to the reader) that for all $\rho$ satisfying the Hypothesis H, $\Phi_\rho$ can be expanded into an $N\times N$ matrix of entire functions of the variables $z$ and $\underline{t}_\Sigma$
($|\Sigma|+1$ variables). It follows that for all $m\geq 1$, 
$$D_{m-1}(\widetilde{\Xi}_\rho)=D_{m-1}(\widetilde{\Phi}_\rho)=\ev(D_{m-1}(\Phi_\rho)),$$
so we have:
\begin{equation}\label{link-G-xi}
\ev\Big(G_m(\rho)\Big)=D_{m-1}\Big(u\widetilde{\Xi}_\rho\Big).
\end{equation}

\subsubsection{Matrix functions and proof of Theorem \ref{to-give-Petrov}}
From now on we suppose that $\rho=\rho^*_\Sigma$ and that $|\Sigma|\equiv m\pmod{q-1}$ with $m>0$. 
Recalling the matrix functions $\boldsymbol{\mathcal{E}}$ of \S \ref{integrality-eisenstein-coefficients},
from (\ref{link-G-xi}) we obtain the series expansion:
\begin{equation}\label{starting-identity}
\widetilde{\pi}^{-m}\ev\Big(\boldsymbol{\mathcal{E}}(m;\rho^*_\Sigma)\Big)=\sideset{}{'}\sum_{c\in A}\ev\Big(\rho_\Sigma(\begin{smallmatrix}c & 0 \\ 0 & 1\end{smallmatrix})\Big)D_{m-1}\Big(u\widetilde{\Xi}_{\rho^*_\Sigma}\Big)_c+\widetilde{\pi}^{-m}E_\Sigma\ev\Big(\sideset{}{'}\sum_{d\in A}d^{-m}\rho^*_\Sigma(T_{-d})\Big),\end{equation}
where $(\cdot)_c$ indicates that we have applied the substitution $z\mapsto cz$. 
We rewrite the identity (\ref{starting-identity}) at the level of the first columns in a more convenient way. Our next task is to show the subsequent identities (\ref{constant-term}) and (\ref{first-term-in-E}).
We note that
the first column of
$$E_\Sigma\sideset{}{'}\sum_{d\in A}d^{-m}\rho^*_\Sigma(T_{-d})$$
equals $$-\begin{pmatrix}0 \\ \vdots \\ 0 \\ \mathcal{Z}\end{pmatrix}$$
where $\mathcal{Z}=\zeta_A(m;\sigma_\Sigma)$ is the $\zeta$-value (\ref{zeta-value-tate}), and we get $$\ev(\mathcal{Z})=\zeta_A(m-l)=\sum_{d\geq 0}\sum_{a\in A^+(d)}a^{l-m}$$ (sum over the polynomials of $A$ which are monic of degree $n$), a special value of the Goss zeta function. We resume the computation as follows (the index $1$ indicates that we are extracting the first column):
\begin{equation}\label{constant-term}
\widetilde{\pi}^{-m}\left(E_\Sigma\sideset{}{'}\sum_{d\in A}d^{-m}\rho^*_\Sigma(T_{-d})\right)_1=-\begin{pmatrix}0 \\ \vdots \\ 0 \\ \widetilde{\pi}^{-m}\zeta_A(m-l)\end{pmatrix}.
\end{equation}

We now compute $D_{m-1}(u\widetilde{\Xi}_{\rho^*_\Sigma})$. For this, set, with $I\subset\Sigma$,
$l_I=\sum_{i\in I}q^{k_i}$ (note that $I\subset J$ implies $l_I\leq l_J$ and if $I\sqcup J=\Sigma$, $l_I+l_J=l$). In place of (\ref{link-with-Ttheta}) we have the explicit formula
$$\widetilde{\Xi}_{\rho^*_\Sigma}=\bigotimes_{i\in\Sigma}\Big(\begin{smallmatrix} 1 & 0\\  -z^{q^{k_i}} & 1\end{smallmatrix}\Big),$$
and $D_j(\widetilde{\Xi}_{\rho^*_\Sigma})=\sum\otimes_{k\in\Sigma} D_{i_k}(\begin{smallmatrix} 1 & 0\\  -z^{q^{k_i}} & 1\end{smallmatrix})$ where the sum runs over the families $(i_k:k\in\Sigma)\subset\NN$ such that $\sum_{k}i_k=j$. Since $D_i(z^{q^k})=1,(-\widetilde{\pi})^{q^k}z^{q^k},0$
depending on whether $i=q^k,0$ or another value distinct from the previous ones, we see that
$$D_j\Big(\widetilde{\Xi}_{\rho^*_\Sigma}\Big)=\sum_{\begin{smallmatrix} I\sqcup J=\Sigma\\ l_I=j\end{smallmatrix}}\bigotimes_{i\in J}\Big(\begin{smallmatrix} 1 & 0\\  -z^{q^{k_i}} & 1\end{smallmatrix}\Big)\otimes\bigotimes_{h\in I}\Big(\begin{smallmatrix} 0 & 0\\  \widetilde{\pi}^{-q^{k_h}} & 0\end{smallmatrix}\Big).$$

By $D_{m-1-l_I}(u)=G_{m-l_I}(u)$ we deduce the formula
\begin{equation}\label{higher-der-m-1}
D_{m-1}\Big(u\widetilde{\Xi}_{\rho^*_\Sigma}\Big)=\sum_{\begin{smallmatrix} I\sqcup J=\Sigma\\ l_I<m\end{smallmatrix}}G_{m-l_I}(u)\bigotimes_{i\in J}\Big(\begin{smallmatrix} 1 & 0\\  -z^{q^{k_i}} & 1\end{smallmatrix}\Big)\otimes\bigotimes_{h\in I}\Big(\begin{smallmatrix} 0 & 0\\  \widetilde{\pi}^{-q^{k_h}} & 0\end{smallmatrix}\Big).
\end{equation}

Note that, with $c\in A$,
\begin{multline*}
\ev\Big(\rho_\Sigma(\begin{smallmatrix}c & 0 \\ 0 & 1\end{smallmatrix})\Big)\left(\bigotimes_{i\in J}\Big(\begin{smallmatrix} 1 & 0\\  -z^{q^{k_i}} & 1\end{smallmatrix}\Big)\otimes\bigotimes_{h\in I}\Big(\begin{smallmatrix} 0 & 0\\  \widetilde{\pi}^{-q^{k_h}} & 0\end{smallmatrix}\Big)\right)=\\ =
\bigotimes_{i\in J}\left(\Big(\begin{smallmatrix} c^{q^{k_i}} & 0\\  0 & 1\end{smallmatrix}\Big)\Big(\begin{smallmatrix} 1 & 0\\  -z^{q^{k_i}} & 1\end{smallmatrix}\Big)\right)\otimes\bigotimes_{h\in I}\left(\Big(\begin{smallmatrix} c^{q^{k_h}} & 0\\  0 & 1\end{smallmatrix}\Big)\Big(\begin{smallmatrix} 0 & 0\\  \widetilde{\pi}^{-q^{k_h}} & 0\end{smallmatrix}\Big)\right)=\\
=\left(\bigotimes_{i\in J}\Big(\begin{smallmatrix} c^{q^{k_i}} & 0\\  -(cz)^{q^{k_i}} & 1\end{smallmatrix}\Big)\right)\otimes\bigotimes_{h\in I}\Big(\begin{smallmatrix} 0 & 0\\  \widetilde{\pi}^{-q^{k_h}} & 0\end{smallmatrix}\Big).\end{multline*}

Considering (\ref{higher-der-m-1}) we get (remember that the index $(\cdot)_1$ means that we are extracting the first column):
\begin{multline*}
\left(\sideset{}{'}\sum_{c\in A}\ev\Big(\rho_\Sigma(\begin{smallmatrix}c & 0 \\ 0 & 1\end{smallmatrix})\Big)D_{m-1}\Big(u\widetilde{\Xi}_{\rho^*_\Sigma}\Big)_c\right)_1=\\ =\sum_{\begin{smallmatrix} I\sqcup J=\Sigma\\ l_I<m\end{smallmatrix}}\left(\sideset{}{'}\sum_{c\in A}c^{l_J}G_{m-l_I}(u_c)\right)
\left(\bigotimes_{i\in J}\Big(\begin{smallmatrix} 1 & 0\\  -z^{q^{k_i}} & 1\end{smallmatrix}\Big)\otimes\bigotimes_{h\in I}\Big(\begin{smallmatrix} 0 & 0\\  \widetilde{\pi}^{-q^{k_h}} & 0\end{smallmatrix}\Big)\right)_1.
\end{multline*}

Note that $\sum_{c\in A\setminus\{0\}}c^{l_J}G_{m-l_I}(u_c)=-f_{m-l_I+l_J,m-l_I}$ (see (\ref{A-series-petrov})) if $l=l_\Sigma\equiv m\pmod{q-1}$, and it equals zero otherwise (because $l=l_I+l_J$). But $|\Sigma|\equiv l\pmod{q-1}$. Hence, writing $F_I$ for $f_{m-l_I+l_J,m-l_I}$ for simplicity:
\begin{equation}\label{first-term-in-E}
\left(\sideset{}{'}\sum_{c\in A}\ev\Big(\rho_\Sigma(\begin{smallmatrix}c & 0 \\ 0 & 1\end{smallmatrix})\Big)D_{m-1}\Big(u\widetilde{\Xi}_{\rho^*_\Sigma}\Big)_c\right)_1=-\sum_{\begin{smallmatrix} I\sqcup J=\Sigma\\ l_I<m\end{smallmatrix}}F_I
\left(\bigotimes_{i\in J}\Big(\begin{smallmatrix} 1\\  -z^{q^{k_i}}\end{smallmatrix}\Big)\otimes\bigotimes_{h\in I}\Big(\begin{smallmatrix} 0 \\  \widetilde{\pi}^{-q^{k_h}} \end{smallmatrix}\Big)\right).
\end{equation}

\begin{proof}[Proof of Theorem \ref{to-give-Petrov}] We study the first column of 
$\widetilde{\pi}^{-m}\ev(\boldsymbol{\mathcal{E}}(m;\rho^*_\Sigma))$. Gathering together (\ref{constant-term}) and (\ref{first-term-in-E}) we find
\begin{equation}\label{final-identity-for-modularity}
\widetilde{\mathcal{E}}:=\ev\Big(\mathcal{E}(m;\rho^*_\Sigma)\Big)=-\widetilde{\pi}^{m}\sum_{\begin{smallmatrix} I\sqcup J=\Sigma\\ l_I<m\end{smallmatrix}}F_I\left(\bigotimes_{i\in J}\Big(\begin{smallmatrix} 1\\  -z^{q^{k_i}}\end{smallmatrix}\Big)\otimes\bigotimes_{h\in I}\Big(\begin{smallmatrix} 0 \\  \widetilde{\pi}^{-q^{k_h}} \end{smallmatrix}\Big)\right)-
\begin{pmatrix}0 \\ \vdots \\ 0 \\ \zeta_A(m-l)\end{pmatrix}.
\end{equation}

Observe, from the modularity of $\mathcal{E}:=\mathcal{E}(m;\rho^*_\Sigma)$, the following identity, where $(v)_1$ now denotes the first entry of an element $v\in R^{N\times 1}$ for some ring $R$, and where $\gamma=(\begin{smallmatrix}* & * \\ c & d\end{smallmatrix})\in\Gamma$:
\begin{equation*}
\Big(\mathcal{E}(\gamma(z))\Big)_1=J_\gamma(z)^m\Big(\rho_\Sigma^*(\gamma)\mathcal{E}(z)\Big)_1=\det(\gamma)^{-m}J_\gamma(z)^m\bigotimes_{i\in\Sigma}\Big(\chi_{t_i}(d),-\chi_{t_i}(c)\Big)\mathcal{E}(z).\end{equation*}
This is obtained by noticing that $\det(\gamma)^{-|\Sigma|}\otimes_{i\in\Sigma}(\chi_{t_i}(d),-\chi_{t_i}(c))$
is the first row of $\rho_\Sigma^*(\gamma)$, and $|\Sigma|\equiv m\pmod{q-1}$. Evaluating at $t_i=\theta^{q^{k_i}}$ for all $i\in\Sigma$ this becomes $\det(\gamma)^{-m}\otimes_{i\in\Sigma}(d^{q^{k_i}},-c^{q^{k_i}})$.
Observe that, for any $x\in\CC_\infty^\times$,
$$\bigotimes_{i\in\Sigma}(d^{q^{k_i}},-c^{q^{k_i}})\left(\bigotimes_{i\in J}\Big(\begin{smallmatrix} 1\\  -z^{q^{k_i}}\end{smallmatrix}\Big)\otimes\bigotimes_{h\in I}\Big(\begin{smallmatrix} 0 \\ x^{-q^{k_h}} \end{smallmatrix}\Big)\right)=J_\gamma(z)^{l_J}(-cx)^{l_I}.$$ Moreover,
$$\bigotimes_{i\in\Sigma}(d^{q^{k_i}},-c^{q^{k_i}})\begin{pmatrix}0 \\ \vdots \\ 0 \\ \zeta_A(m-l)\end{pmatrix}=(-c)^{l}\zeta_A(m-l).$$
Using (\ref{final-identity-for-modularity}) yields the identity for $f:=(\widetilde{\mathcal{E}})_1$ (first entry):
 \begin{multline}\label{second-step-cal-E}
f(\gamma(z))=\det(\gamma)^{-m}J_\gamma(z)^m\bigotimes_{i\in\Sigma}\Big(d^{q^{k_i}},-c^{q^{k_i}}\Big)\cdot\\ \cdot \left(-\widetilde{\pi}^{m}\sum_{\begin{smallmatrix} I\sqcup J=\Sigma\\ l_I<m\end{smallmatrix}}F_I\left(\bigotimes_{i\in J}\Big(\begin{smallmatrix} 1\\  -z^{q^{k_i}}\end{smallmatrix}\Big)\otimes\bigotimes_{h\in I}\Big(\begin{smallmatrix} 0 \\  \widetilde{\pi}^{-q^{k_h}} \end{smallmatrix}\Big)\right)-
\begin{pmatrix}0 \\ \vdots \\ 0 \\ \zeta_A(m-l)\end{pmatrix}\right)=\\
=\det(\gamma)^{-m}J_\gamma(z)^{m+l}\left(-\sum_{\begin{smallmatrix} I\sqcup J=\Sigma\\ l_I<m\end{smallmatrix}}\widetilde{\pi}^{m-l_I}F_IL_\gamma(z)^{l_I}-\zeta_A(m-l)L_\gamma(z)^l\right),\end{multline} 
where $L_\gamma(z)=-\frac{c}{cz+d}$. This implies that $f$ is a {\em Drinfeld quasi-modular form of weight
$l+m$ type $m$ and depth $\leq l$} in the sense of \cite[Definition 2.1]{BOS&PEL0}. Basic properties of 
quasi-modular forms imply that $f=-\widetilde{\pi}^mF_\emptyset$ and the proof of our theorem is complete.\end{proof}

These results overlap, at least partially, with Petrov's work \cite{PET}. In his Theorem 1.3 Petrov shows that if in addition to the necessary conditions $l>m$ and $l\equiv m\pmod{q-1}$ we also impose $m\leq p^{-v_p(l)}$ where $v_p$ is the $p$-adic valuation of $\ZZ$, then 
$f_{l+m,m}$ is the $u$-expansion of a Drinfeld cusp form in $S_{l}(\det^{-m};\CC_\infty)$, a Drinfeld cusp form of weight $l+m$ and type $m$ in the terminology of \cite{GEK} and therefore a quasi-modular form of depth zero. The reader can easily deduce the following result which is however slightly weaker than Petrov's
(note that $p^{v_p(l)}=q^{v_q(l)}p^{v_p(\ell_q(l))}$, with $v_q$ denoting the order of divisibility by $q$ and $\ell_q$ denoting the sum of the digits in the $q$-ary expansion).

\begin{Corollary}\label{corollary-a-la-petrov}
If $l>m$ with $l\equiv m\pmod{q-1}$ and $m\leq q^{v_q(l)}$ then $f_{l+m,m}$ is the $u$-expansion of a
modular form in $S_{l}(\det^{-m};\CC_\infty)$. 
\end{Corollary}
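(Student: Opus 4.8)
\textbf{Proof proposal for Corollary \ref{corollary-a-la-petrov}.}

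The plan is to deduce the statement directly from Theorem \ref{to-give-Petrov} together with a careful analysis of the depth of the quasi-modular form obtained there. By Theorem \ref{to-give-Petrov}, for a finite subset $\Sigma\subset\NN^*$ of cardinality $s$ with $s\equiv m\pmod{q-1}$ and a family $\underline{k}=(k_i)_{i\in\Sigma}\in\NN^\Sigma$ with $l=\sum_{i\in\Sigma}q^{k_i}$, the first entry of $\ev_{\underline{\theta}^{q^{\underline{k}}}}(\mathcal{E}(m;\rho_\Sigma^*))$ equals $-\widetilde{\pi}^m f_{l+m,m}$ and is a non-zero Drinfeld quasi-modular form of weight $l+m$, type $m$ and depth $\leq l$ in the sense of \cite[Definition 2.1]{BOS&PEL0}. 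So $f_{l+m,m}\in K[[u]]$ is (up to the factor $-\widetilde{\pi}^{-m}$) the $u$-expansion of a quasi-modular form of weight $l+m$ and type $m$. To conclude that it is in fact a genuine \emph{cusp} form in $S_l(\det^{-m};\CC_\infty)$ — recalling that Gekeler's weight normalisation in \cite{GEK} differs from the one used here, so that a form of "weight $l+m$, type $m$" in the present paper's convention is a form of Gekeler-weight $l$, type $m$ — I need to show two things: that the depth is actually zero under the hypothesis $m\leq q^{v_q(l)}$, and that the resulting modular form is cuspidal.

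First I would recall the structure of the depth filtration. From the identity \eqref{second-step-cal-E} in the proof of Theorem \ref{to-give-Petrov}, the obstruction to $f:=f_{l+m,m}$ being genuinely modular is the polynomial $-\sum_{I\sqcup J=\Sigma,\, l_I<m}\widetilde{\pi}^{-l_I}F_I L_\gamma(z)^{l_I}-\widetilde{\pi}^{-m}\zeta_A(m-l)L_\gamma(z)^l$ in $L_\gamma(z)=-c/(cz+d)$, and the terms of positive degree in $L_\gamma$ record the higher-depth part. The key point is to choose, for a given $l$ with $l\equiv m\pmod{q-1}$ and $l>m$, a specific set $\Sigma$ and family $\underline{k}$: take the $q$-ary expansion $l=\sum_j l_j q^j$ with $0\le l_j\le q-1$, and pick $\Sigma$ of cardinality $\ell_q(l)$ distributing the indices so that exactly $l_j$ of the $k_i$ equal $j$. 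Then $\sum_i q^{k_i}=l$ as required, and $|\Sigma|=\ell_q(l)\equiv l\equiv m\pmod{q-1}$, so Theorem \ref{to-give-Petrov} applies. With this choice, $l_I<m$ forces $I$ to be contained in the set of indices $i$ with $q^{k_i}<m$; under the hypothesis $m\leq q^{v_q(l)}$, all nonzero "digit positions" $q^{k_i}$ dividing-structure of $l$ are $\geq q^{v_q(l)}\geq m$ except possibly when $l_I=0$, i.e. $I=\emptyset$. Hence the only surviving term in the sum over $I$ is $I=\emptyset$, $J=\Sigma$, giving $l_I=0$, and the $\zeta_A(m-l)L_\gamma^l$ term has $l>m$ so it does not appear either — wait, more carefully: $\zeta_A(m-l)$ is a value of the Goss zeta function which vanishes when $m-l<0$ is a "negative even" in the appropriate sense, or rather one must invoke the classical vanishing $\zeta_A(-n)=0$ for $n>0$ with $q-1\nmid n$ and $\zeta_A$ at negative integers; here $m-l<0$ and $l-m\equiv 0\pmod{q-1}$ so $\zeta_A(m-l)=\zeta_A(-(l-m))$ is a "trivial zero" and vanishes. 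Thus the entire obstruction polynomial in $L_\gamma$ collapses to its degree-zero term, $f(\gamma(z))=\det(\gamma)^{-m}J_\gamma(z)^{m+l}\cdot(\text{constant})\cdot f(z)$, so $f$ is genuinely modular of the stated weight and type, of depth zero.

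Finally, for cuspidality: the $u$-expansion $f_{l+m,m}=\sum_{a\in A^+}a^{l-m}G_m(u_a)$ has no constant term because each Goss polynomial $G_m$ with $m\geq 1$ satisfies $G_m(0)=0$ (from the recursion \eqref{goss-poly-relations}, $G_m(u)\in uK[u]$) and $u_a=e_C(az)^{-1}$ tends to $0$ at the cusp; one also needs the analogous vanishing at the other cusps of $\Gamma=\GL_2(A)$, but since $\Gamma$ has a single cusp this is automatic once the constant term of the $u$-expansion vanishes and the form is holomorphic at infinity (which it is, the series converging in $K[[u]]$). Therefore $f_{l+m,m}$ is the $u$-expansion of an element of $S_l(\det^{-m};\CC_\infty)$, which is the claim. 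The main obstacle I anticipate is the bookkeeping in the step that forces all terms with $0<l_I<m$ to vanish: one must verify that the hypothesis $m\leq q^{v_q(l)}$ genuinely excludes every proper nonempty $I$ with $l_I<m$, which requires knowing that any nonempty sub-sum $l_I=\sum_{i\in I}q^{k_i}$ of the chosen digit-powers is divisible by $q^{v_q(l)}$ hence $\geq q^{v_q(l)}\geq m$ — this is where the precise choice of $\Sigma$ (spreading the digits of $l$) and the identity $p^{v_p(l)}=q^{v_q(l)}p^{v_p(\ell_q(l))}\geq q^{v_q(l)}$ are used, and it is the one place the argument is not purely formal.
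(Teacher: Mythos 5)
Your proof is correct and takes essentially the same route as the paper's, which likewise observes that under the hypothesis $m\leq q^{v_q(l)}$ the only $I$ with $l_I<m$ in (\ref{second-step-cal-E}) is $I=\emptyset$ and that $\zeta_A(m-l)$ is a trivial zero, so the depth collapses to zero. You merely make explicit two points the paper leaves implicit: the choice of $\Sigma$ and $\underline{k}$ via the $q$-ary expansion of $l$ (which is exactly what makes every nonempty sub-sum $l_I$ at least $q^{v_q(l)}$), and the routine cuspidality check via $G_m(u)\in uK[u]$.
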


\begin{proof} Indeed with this hypothesis on the order of divisibility by $q$ in the sums in (\ref{second-step-cal-E}) there is no $I$ such that $l_I<m$, unless $I=\emptyset$. Moreover, $\zeta_A(m-l)=0$ (trivial zero) and the depth of $f$ is zero. \end{proof}

\subsubsection{An example of Hecke eigenform} Consider $\Sigma$ such that $s=|\Sigma|\equiv1\pmod{q-1}$ and 
set $m=1$. Both Corollary \ref{corollary-a-la-petrov} and Petrov's \cite[Theorem 3.1]{PET} imply that 
$f:=f_{l+1,1}$ is the $u$-expansion of an element of $S_{l+1}(\det^{-1};\CC_\infty)\setminus\{0\}$. It is proportional to an entry of $\ev(\mathcal{E}(1;\rho^*_\Sigma))$. It is easy to see that this cusp form is not doubly cuspidal.
It is also well known that $f$ is the $u$-expansion of an Hecke eigenform. We can deduce this property from 
the fact that $\mathcal{E}:=\mathcal{E}(1;\rho^*_\Sigma)$ is a Hecke eigenform. We come back to (\ref{Hecke-first-entry}). We have, for all $P\in A^+$ irreducible, that
$$\Big(T_P(\mathcal{E})\Big)_1=\sigma_\Sigma(P)\Big(\mathcal{E}(Pz)\Big)_1+P^{-1}\sum_{|b|<|P|}\Big(\mathcal{E}\Big(\frac{z+b}{P}\Big)\Big)_1$$ and this equals $(\mathcal{E})_1$ by Corollary \ref{coro-7-6}.
Evaluating at $t_i=\theta^{q^{k_i}}$ for all $i\in\Sigma$ implies the identity
$$f(Pz)+P^{-1-l}\sum_{|b|<|P|}f\Big(\frac{z+b}{P}\Big)=P^{-l}f(z)$$
which tells us that $f$ is a Hecke eigenform for all the Hecke operators $\mathcal{T}_P$, with eigenvalue $P\in A^+$ irreducible (the operators $\mathcal{T}_P$ are those of \cite{GEK}, we use the normalisation of \cite{GEK} to allow an easier comparison with existing results).

\subsubsection{Examples of quasi-modular forms}\label{few-examples-quasimodular}

The content of this subsection is also related to the sequence of {\em extremal quasi-modular forms} $(x_k)_{k\geq 0}$ introduced in \cite{BOS&PEL}, where the initial explicit elements are $x_0=-E$, $x_1=-Eg-h$, in the notations of \cite{GEK}, and where $E$ is the normalized false Eisenstein series of weight $2$ already used in \S \ref{serre-derivatives}, which is a quasi-modular form of weight $2$, type $1$ and depth $1$ in the sense of \cite{BOS&PEL0}. From Theorem \ref{to-give-Petrov} we deduce that 
$\mathcal{E}(q^n;\rho^*_t)_{t=\theta}=-\widetilde{\pi}^{q^n}f_{q^n+1,q^n}$ for all $n\geq0$ and $x_n=-f_{q^n+1,q^n}$. If $n=0$, we deduce Gekeler's series expansion \cite[p. 686]{GEK}: 
 \begin{equation}\label{seriesfalse}E=\sum_{a\in A^+}au_a.\end{equation} 
Taking $\mathcal{E}(1;\rho^*_t)_{t=\theta^{q^n}}$ for $n\geq 1$ we get, up to a proportionality factor, Petrov's sequence of Hecke eigenforms $$F_{n}=\sum_{a\in A^+}a^{q^n}u_a$$ of weight $q^n+1$ and type $1$, notably the initial values $F_1=h$ and $F_2=hg^q$ (see \cite[\S 3.2]{PET} and the proof of Theorem 3.6 ibid.).

\subsubsection{$\mathfrak{v}$-adic modular forms from Eisenstein series}\label{v-adic-modular}

In this short subsection we quickly introduce further desirable directions of investigation, with few details to preserve the flow of the main topics of the present work.  Consider an element $f\in K(\underline{t}_\Sigma)+u\Tamecirc{K(\underline{t}_\Sigma)}[[u]]$. We say that $f$ is {\em an entry of a rational Drinfeld modular form} if there exist $w\in\ZZ$, $\rho:\Gamma\rightarrow\GL_N(K(\underline{t}_\Sigma))$ a representation of the first kind, $F\in M_w(\rho:\KK_\Sigma)$ and a linear map $\lambda:\KK_\Sigma^N\rightarrow\KK_\Sigma$ such that $f=\lambda(F)$. We denote by $\mathcal{X}$ the set of all entries of rational Drinfeld modular forms.  

For $f\in K(\underline{t}_\Sigma)+u\Tamecirc{K(\underline{t}_\Sigma)}s[[u]]$ we write $f=f_0+\sum_{i>0}f_iu^i$
with $f_0\in K(\underline{t}_\Sigma)$ and $f_i\in\Tamecirc{K(\underline{t}_\Sigma)}$ for $i>0$. This expansion exists and is unique (see Proposition \ref{descriptionfieldofunif}). Let $\mathfrak{v}:K(\underline{t}_\Sigma)\rightarrow\ZZ\cup\{\infty\}$ be an additive valuation. We say that $f$ is {\em $\mathfrak{v}$-integral} if 
$f_i\in\Tamecirc{\mathcal{O}_\mathfrak{v}}$, where $\mathcal{O}_\mathfrak{v}$ is the subring of 
$K(\underline{t}_\Sigma)$ of elements with non-negative $\mathfrak{v}$-valuation, i. e. 
$f\in\mathcal{O}_\mathfrak{v}+u\Tamecirc{\mathcal{O}_\mathfrak{v}}[[u]]$. Over the ring $\mathcal{O}_\mathfrak{v}+u\Tamecirc{\mathcal{O}_\mathfrak{v}}[[u]]$ of $\mathfrak{v}$-integral series we have the infimum $\mathfrak{v}$-valuation (relative to the series expansion $f=\sum_if_iu^i$) and we denote by 
$\mathcal{X}_{\mathfrak{v}}$ the matric space of all entries of rational Drinfeld modular forms which are 
$\mathfrak{v}$-integral.

\begin{Definition}\label{v-adic-modular}
A $\mathfrak{v}$-adic Drinfeld modular form is an element of the completed space $\widehat{\mathcal{X}}_{\mathfrak{v}}$.
\end{Definition}

Following the ideas of Goss in \cite{GOS4} the reader can check the following explicit example. Consider 
$\Sigma=\Sigma'\sqcup\{1\}$ with $s'=|\Sigma'|$ and set $\mathfrak{v}$ to be the $\chi_{t_1}(\mathfrak{p})$-adic valuation of $K(\underline{t}_\Sigma)$ with $\mathfrak{p}=(P)$ a prime ideal of $A$ of degree $d$ (and $P$ monic). We choose $m>0$.
We consider a sequence of positive integers $(k_i)_{i\geq 0}$ with $k_i=r+\alpha_i(q^d-1)$, with $r\in\{0,\ldots,q^d-2\}$ with $k_i\rightarrow\infty$ as $i\rightarrow\infty$ and with $\alpha_i$ converging $p$-adically. We also suppose that for all $i$, $k_i+s'\equiv m\pmod{q-1}$. Then, as $i\rightarrow\infty$,
the sequence of series
$$\sum_{\begin{smallmatrix}a\in A^+\\ P\nmid a\end{smallmatrix}}\chi_{t_1}(a)^{k_i}\sigma_{\Sigma'}(a)G_m(u_a)\in K(\underline{t}_\Sigma)[[u]],$$
all $\mathfrak{v}$-integral, defines a $\mathfrak{v}$-adic Drinfeld modular form which is non-zero. Of course, it is related to an Eisenstein series $\mathcal{E}(m;\rho^*_{\Sigma''})$, for a suitable $\Sigma''$, after an appropriate evaluation.

\subsubsection*{A remark} It is an interesting problem to determine an appropriate complete topological group of weights for $\mathfrak{v}$-adic modular forms in the sense of our Definition \ref{v-adic-modular}. We note indeed that the union $$\bigcup_{w,\Sigma,\rho}M_{w}(\rho;\KK_\Sigma),\quad w>0,\quad \Sigma\subset\NN^*,\quad \rho\text{ of the first kind},$$ $\Sigma$ being finite,
generates an algebra over $\cup_\Sigma\KK_\Sigma$ with multiplication $\otimes$. It is not difficult to show that this algebra is graded over 
the monoid $(\ZZ,+)\oplus(\{\rho:\text{ of the first kind}\},\otimes)$. To define his $\infty$-adic and $\mathfrak{v}$-adic zeta and $L$-functions, Goss introduced several complete topological spaces containing a copy of $\ZZ$, see \cite[Chapter 8]{GOS}. For instance, the complete topological group $\mathbb{S}$ projective limit of the groups
$\ZZ/((q^d-1)p^n)\ZZ$ as $n\rightarrow\infty$ with $d=\deg_\theta(P)$ and $q=p^e$, isomorphic to $\ZZ/(q^d-1)\ZZ\times \ZZ_p$, contains the weights of the $\mathfrak{p}$-adic modular forms of \cite{GOS5},
with $\mathfrak{p}$ the ideal of $A$ generated by $P$ irreducible. The same question arises when one wants to define a topological space over which interpolate the $L$-series of \cite{PEL0}, see \cite{GOS5}. At the time being, there is no complete topological group containing $(\{\rho:\text{ of the first kind}\},\otimes)$ behaving as nicely as $\mathbb{S}$, allowing to give rise to a nice space of weights for our $\mathfrak{v}$-adic modular forms. A similar question has been addressed in connection with multiple zeta values in Tate algebras, see \cite[Remark 3.1.2]{GEZ&PEL}.

\section{Modular forms for the representations $\rho_\Sigma^*$}\label{modular-certain-repr}

In this section we consider modular forms associated to representations of the first kind, with values in vector spaces over $\KK_\Sigma$ rather than vector spaces over $\LL_\Sigma$. To classify them we cannot use the techniques of specialization at roots of unity of \S \ref{evaluation-banach}. We are therefore led to introduce other techniques which, however, are harder to apply in the general setting of all the representations of the first kind. At least, they lead to proofs of Theorems E, F in the introduction.
We will focus on the representations $\rho=\rho^*_\Sigma$ only, as they seem to have a larger spectrum of applications. We are going to determine the complete structure of the spaces $M_{w}^!(\rho;\KK_\Sigma)$ in Theorem \ref{weakstructureresult}.
An important tool introduced in this section (see \S \ref{stronglyregularsection} is the notion of {\em strongly regular modular form}. 
the $v$-valuations of the entries of a strongly regular modular form are submitted to certain strong lower bounds making them into a module over the scalar modular forms, the structure of which can be easily computed, see Theorem \ref{theorem2}. If $|\Sigma|\leq q-1$, then the notions of modular form and strongly regular modular form agree (Corollary \ref{regular-equals-modular}). If $|\Sigma| \geq q$, this is no longer true but in Theorem \ref{regularitystrongregularity} we show that twisting an element of $M_w(\rho^*_\Sigma;\KK_\Sigma)$
by a large enough power of the operator $\tau$ defined in the corresponding section (the exponent depending on $w$ and $\Sigma$) yields a strongly regular modular form. Besides these properties, the precise structure of the $\KK_\Sigma$-vector spaces $M_w(\rho^*_\Sigma;\KK_\Sigma)$
for a general choice of $\Sigma$ subset of $\NN^*$ remains presently unknown.

\subsection{Structure of weak modular forms}

We consider a finite non-empty subset $\Sigma\subset\NN^*$. The structure of the $\KK_\Sigma$-vector space $M_{w}^!(\rho;\KK_\Sigma)$ is quite simple to describe. 
The main result of this subsection is the following.
\begin{Theorem}\label{weakstructureresult}
Assuming that $\rho=\rho^*_\Sigma\det\!^{-m}$, we have:
$$M_{w}^!(\rho;\KK_\Sigma)=M_{w-1}^!(\rho;\KK_\Sigma)\otimes\mathcal{E}(1;\rho^*_{t_k})+M_{w-q}^!(\rho;\KK_\Sigma)\otimes\mathcal{E}(q;\rho^*_{t_k}).$$
\end{Theorem}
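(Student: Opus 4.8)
The statement is a Rankin--Cohen type / multiplication-by-Eisenstein-series filtration result for weak modular forms valued in $\KK_\Sigma$ for the representation $\rho = \rho^*_\Sigma\det^{-m}$. Fix an index $k\in\Sigma$; write $\rho^*_\Sigma = \rho^*_{t_k}\otimes\rho^*_{\Sigma\setminus\{k\}}$. The two building blocks $\mathcal{E}(1;\rho^*_{t_k})\in M_1(\rho^*_{t_k};\TT)$ and $\mathcal{E}(q;\rho^*_{t_k})\in M_q(\rho^*_{t_k};\TT)$ are the weight-$1$ and weight-$q$ Eisenstein series in two-dimensional target space, and $\otimes$ here denotes the Kronecker product of a modular form for $\rho$ with a modular form for $\rho^*_{t_k}$, landing in $M_{w'}(\rho\otimes\rho^*_{t_k};\KK_\Sigma)$ --- but one has to be careful: the product $\rho\otimes\rho^*_{t_k}$ is \emph{not} $\rho$. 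The correct reading, consistent with the $\Sigma$-tensor structure, is that multiplying by $\mathcal{E}(1;\rho^*_{t_k})$ (resp.\ $\mathcal{E}(q;\rho^*_{t_k})$) reincorporates the $t_k$-slot: the target of the product is again naturally identified with $\KK_\Sigma^{N\times 1}$ carrying $\rho^*_\Sigma\det^{-m}$, because the $k$-th basic factor of $\rho^*_\Sigma$ is absorbed by the Eisenstein slot. So the first thing I would do is set up this identification precisely, checking that the Kronecker product of an $f\in M^!_{w-1}(\rho;\KK_\Sigma)$ with $\mathcal{E}(1;\rho^*_{t_k})$ transforms correctly for $\rho$ of weight $w$, and similarly for weight $q$. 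This is the bookkeeping step; it uses only the cocycle computations of Definition \ref{firstkindrepresentations} and the elementary operation rules on degrees.

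\textbf{Reduction to the field of uniformizers.} By part (1) of Theorem A, equivalently Theorem \ref{theorem-u-expansions}, the embedding $\iota_\Sigma$ identifies $M^!_w(\rho;\KK_\Sigma)$ with a $\KK_\Sigma$-subspace of $\mathfrak{K}_\Sigma^{N\times 1}$, and by Proposition \ref{quasiperiodictempered} every such $f$ writes uniquely as $f=\Phi_\rho g$ with $g\in\KK_\Sigma((u))^{N\times 1}$. The plan is to transport the whole problem to the $g$-side: multiplication by $\mathcal{E}(1;\rho^*_{t_k})$ corresponds, after dividing out by the quasi-periodic matrix factors, to multiplication in $\KK_\Sigma((u))$ by a \emph{scalar} Laurent series, namely the $u$-expansion of (the relevant entry of) $\mathcal{E}(1;\rho^*_{t_k})$. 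Using Proposition \ref{seriesofEisensteinseries} (or its weight-$1$ specialization, and the Perkins-series formula \eqref{Perkins-formula}), the leading term of $\mathcal{E}(1;\rho^*_{t_k})$ in $u$ is a unit times $u^0$ in one entry and starts at $u^1$ in the other; more precisely, by Corollary \ref{caseofthebasicrepresentation}, $\Phi_{\rho_{t_k}}=\Xi_{\rho_{t_k}}$ has $v$-valuation $-\tfrac1q$ and a very explicit leading tame monomial $\omega_\chi^{-1}e_1$. The key arithmetic fact I want to extract is that $\mathcal{E}(1;\rho^*_{t_k})$ and $\mathcal{E}(q;\rho^*_{t_k})$ have, respectively, $u$-orders $1$ and $q$ in the "small" entry and order $0$ in the distinguished entry, with leading coefficients that are units in $\KK_\Sigma$ up to the explicit tame factors. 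Because $\gcd(1,q)$-style reasoning: any integer $n\ge 0$ can be written $n = \alpha\cdot 1 + \beta\cdot q$ with $\alpha,\beta\ge 0$ unless $n$ is in the finite exceptional set $\{\,\}$ (in fact every $n\ge 0$ works since $1\mid$ everything), so the two Eisenstein series generate all nonnegative $u$-shifts.

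\textbf{The induction.} With this in hand the theorem is an induction on $w$. Given $f\in M^!_w(\rho;\KK_\Sigma)$, look at its image $g=\Phi_\rho^{-1}f\in\KK_\Sigma((u))^{N\times 1}$ and at the $v$-valuations / $u$-orders of the entries relative to the "type" constraints forced by the diagonal torus action (exactly the congruence-mod-$(q-1)$ argument used in the proof of Theorem \ref{theorem-structure-tate-algebras}, where $g_i\in u^{m_i}\KK_\Sigma[[u^{q-1}]]$-type statements appear). One subtracts off an appropriate $\KK_\Sigma$-multiple of $f_0\otimes\mathcal{E}(1;\rho^*_{t_k})$ --- where $f_0\in M^!_{w-1}(\rho;\KK_\Sigma)$ is chosen to match the bottom coefficient of $g$ --- so that the difference $f - f_0\otimes\mathcal{E}(1;\rho^*_{t_k})$ has all entries vanishing to higher order; iterating, or rather, peeling off the order-$1$ part with $\mathcal{E}(1;\cdot)$ and whatever remains at order dictated by the weight drop with $\mathcal{E}(q;\cdot)$, one expresses $f$ as a combination of a weight-$(w-1)$ form times $\mathcal{E}(1;\rho^*_{t_k})$ and a weight-$(w-q)$ form times $\mathcal{E}(q;\rho^*_{t_k})$. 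The reverse inclusion (that such products are weak modular forms of weight $w$) is the easy direction, immediate from $\mathcal{E}(1;\rho^*_{t_k})\in M_1$, $\mathcal{E}(q;\rho^*_{t_k})\in M_q$ and the fact that tensoring preserves temperedness (boundedness of $\|u^Mf\|$) --- here one uses that $\Phi_\rho\in\GL_N(\Tame{\KK_\Sigma})$ and Proposition \ref{leading} to control weights under products.

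\textbf{Main obstacle.} The delicate point is the surjectivity half: showing that \emph{every} weak modular form is captured, i.e.\ that the two Eisenstein series really do generate, over $M^!_\bullet(\rho;\KK_\Sigma)$ as a $\KK_\Sigma[u]$-module, enough $u$-shifts to reconstruct an arbitrary $g\in\KK_\Sigma((u))^{N\times 1}$ satisfying the modularity. Concretely one must rule out that $\mathcal{E}(1;\rho^*_{t_k})$ and $\mathcal{E}(q;\rho^*_{t_k})$ have a common nontrivial factor in $\mathfrak{K}$, or more precisely check that the leading (tame-series) coefficients of their $u$-expansions in the relevant entries are invertible in $\Tamecirc{\KK_\Sigma}$ up to explicit units, which forces the "division algorithm" to terminate. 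I expect this to come down to the explicit weight computation $w(\phi(1;\sigma_\Sigma))=w_{\max}(|\Sigma|)$ of Theorem \ref{theopsi} together with Corollary \ref{caseofthebasicrepresentation}, ensuring the leading tame monomials are the maximal monomials $M_s$ and hence do not vanish; the bookkeeping of which entry of the $2$-dimensional $\rho^*_{t_k}$-slot carries the unit and which carries the order-$\ge 1$ term (governed by the ordering convention of \S\ref{expansion-eisenstein}) is where the argument is most error-prone and will need to be done carefully.
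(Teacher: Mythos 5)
Your easy direction (that products of lower-weight weak forms with $\mathcal{E}(1;\rho^*_{t_k})$ and $\mathcal{E}(q;\rho^*_{t_k})$ land in $M^!_w$) is fine, and you correctly flag the bookkeeping that the $t_k$-slot must be absorbed so that the right-hand side is really built from forms for $\rho^*_{\Sigma'}\det^{-m}$ with $\Sigma'=\Sigma\setminus\{k\}$. The problem is the surjectivity half. Your induction does not descend: after subtracting $f_0\otimes\mathcal{E}(1;\rho^*_{t_k})$ the remainder still has weight $w$, only its $u$-order has gone up, so the ``peeling'' is an infinite process in $\mathfrak{K}$ with no termination and no reason why the accumulated coefficients should assemble into a single element of $M^!_{w-1}$. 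Worse, the step ``choose $f_0\in M^!_{w-1}(\rho;\KK_\Sigma)$ to match the bottom coefficient of $g$'' presupposes that a weak modular form of weight $w-1$ with a prescribed leading coefficient exists, which is essentially what you are trying to prove. The gcd$(1,q)$ remark about generating $u$-shifts does not repair this, because the issue is not which $u$-orders are reachable but whether the reached objects are modular.

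The paper's proof avoids all of this with a one-shot linear-algebra argument that you do not have: form the $2\times 2$ matrix $\mathfrak{E}$ whose rows are ${}^t\mathcal{E}(1;\rho^*_{t_k})$ and ${}^t\mathcal{E}(q;\rho^*_{t_k})=\tau({}^t\mathcal{E}(1;\rho^*_{t_k}))$. By \cite[Theorem 3.9]{PEL\&PER3}, $\det(\mathfrak{E})=-\widetilde{\pi}\zeta_A(q;\chi_{t_k})h$, and since $h$ has no zeros on $\Omega$, the normalized adjugate $\boldsymbol{E}=\tau^2(\omega(t_k))^{-1}\mathfrak{E}^*$ is holomorphic on all of $\Omega$. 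Multiplying ${}^t\mathcal{G}$ by $I_{N'}\otimes\boldsymbol{E}_{t_k}$ and using the mixed-product rule for Kronecker products, the cocycle for $\rho^*_\Sigma\det^{-m}$ of weight $w$ splits on the nose into two cocycles for $\rho^*_{\Sigma'}\det^{-m}$ of weights $w-1$ and $w-q$, giving the two components $\mathcal{H}_1,\mathcal{H}_2$ directly; temperedness survives because $\det(\mathfrak{E})^{-1}\approx u^{-1}$ only adds a bounded pole at the cusp (which is exactly why the statement is for $M^!$ and not $M$). The essential input you are missing is this pointwise non-vanishing of the Eisenstein ``Wronskian'' on $\Omega$; your appeal to leading tame monomials controls behaviour at the cusp only and cannot substitute for it.
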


We choose $k\in\Sigma$.
We set $\Sigma'=\Sigma\setminus\{k\}$. We denote by $\rho^*_{\Sigma'}$ the Kronecker factor of the representation $\rho^*_\Sigma$. Hence:
\begin{equation}\label{writingordering}
\rho^*_{\Sigma}=\rho^*_{\Sigma'}\otimes\rho^*_{t_k}.\end{equation} We can suppose, without loss of generality, that $k=\min(\Sigma)$.
The natural ordering of $\Sigma\subset\NN^*$ has to be considered to write the Kronecker product.
We set $\rho=\rho^*_\Sigma\det\!^{-m}$.

\begin{proof}[Proof of Theorem \ref{weakstructureresult}]
We consider the Eisenstein series $\mathcal{E}(1;\rho^*_t)$ of weight $1$ associated with the representation 
$\rho^*_t$.
Explicitly, this is the series: 
$$\mathcal{E}(1;\rho^*_t)=\sideset{}{'}\sum_{a,b\in A}(az+b)^{-1}\binom{\chi_t(a)}{\chi_t(b)},\quad n>0.$$
We denote by $\mathcal{E}$ the transposition (row function) of $\mathcal{E}(1;\rho^*_t)$.
We also set $$\mathfrak{E}=\binom{\mathcal{E}}{\tau(\mathcal{E})}\in\operatorname{Hol}(\Omega\rightarrow\KK_\Sigma^{2\times 2}).$$
Note that $\tau(\mathcal{E})={}^t\mathcal{E}(q;\rho^*_t)$. Let $h=-u+o(u)$ be as in \S \ref{Gekeler-Poincare}. By \cite[Theorem 3.9]{PEL&PER3}:
$$\det(\mathfrak{E})=-\widetilde{\pi}\zeta_A(q;\chi_t)h(z),$$
which is also equal to $$-\frac{\widetilde{\pi}^{q+1}h(z)}{(\theta^q-t)(\theta-t)\omega(t)}$$ by the formula 
\begin{equation}\label{first-formula-pel}
\zeta_A(1;\chi_t)=\frac{\widetilde{\pi}}{(\theta-t)\omega(t)}
\end{equation} which holds in $\TT$ and  can be found in \cite{PEL1}, after application of $\tau$. 
The function $h$ does not vanish on $\Omega$ and $v(h)=1$.
Since the function $\det(\mathfrak{E})$ can vanish identically for certain values of $t$ with $|t|>1$, 
the matrix function $\mathfrak{E}(z)^{-1}$ belongs to 
$\operatorname{Hol}(\Omega\rightarrow \TT^{2\times 2})$ but not to $\operatorname{Hol}(\Omega\rightarrow \EE^{2\times 2})$. Note that $\tau^2(\omega)^{-1}\mathfrak{E}(z)^{-1}$ defines 
a function of $\operatorname{Hol}(\Omega\rightarrow \EE^{2\times 2})$.
We are going to generalize some aspects of the proof of \cite[Theorem 3.9]{PEL&PER3}. 
Let $\mathcal{G}$ be an element of $M_{w}^!(\rho;\KK_\Sigma)$. Then by definition for all $\gamma\in\Gamma$ and $z\in\Omega$, we have 
$$\mathcal{G}(\gamma(z))=J_\gamma^w\det(\gamma)^{-m}\rho^*_\Sigma(\gamma)\mathcal{G}(z).$$
We now set $\boldsymbol{E}=\tau^2(\omega(t))^{-1}\mathfrak{E}^*$, $\boldsymbol{E}_{t_k}$ the same function in the variable $t_k$ instead of $t$, and
\begin{equation}\label{defF}
\boldsymbol{F}:=I_{N'}\otimes\boldsymbol{E}_{t_k}\in\operatorname{Hol}(\Omega\rightarrow\EE_{\{k\}}^{N\times N}),\end{equation} with $N=2^s$, $s'=s-1$, and $N'=2^{s'}$.
We have:
$$\boldsymbol{F}(\gamma(z))=(\boldsymbol{1}_{N'}\otimes\rho_{t_k}(\gamma))\boldsymbol{F}(z)\left(\boldsymbol{1}_{N'}\otimes\begin{pmatrix} J_\gamma(z)^{-1} & 0 \\ 0 & J_\gamma(z)^{-q}\end{pmatrix}\right).$$
Now setting $\boldsymbol{G}={}^t\mathcal{G}$ and, denoting with 
$\boldsymbol{H}$ the row function $\boldsymbol{GF}$, with values in $\KK_\Sigma^{1\times N}$, we have:
\begin{eqnarray*}
\lefteqn{\boldsymbol{H}(\gamma(z))=}\\ &=&J_\gamma(z)^w\det(\gamma)^{-m}\boldsymbol{G}(z)\rho^{-1}_{\Sigma}(\gamma)(\boldsymbol{1}_{N'}\otimes\rho_{t_k}(\gamma))(\boldsymbol{1}_{N'}\otimes\boldsymbol{E}_{t_k}(z)^{-1})\times \\ & & \times\left(\boldsymbol{1}_{N'}\otimes\begin{pmatrix} J_\gamma(z)^{-1} & 0 \\ 0 & J_\gamma(z)^{-q}\end{pmatrix}\right)\\
&=&\det(\gamma)^{-m}\boldsymbol{G}(z)(\rho^{-1}_{\Sigma'}(\gamma)\otimes\boldsymbol{1}_2)(\boldsymbol{1}_{N'}\otimes\boldsymbol{E}_{t_k}(z)^{-1})\left(\boldsymbol{1}_{N'}\otimes\begin{pmatrix} J_\gamma(z)^{w-1} & 0 \\ 0 & J_\gamma(z)^{w-q}\end{pmatrix}\right)\\
&=&\det(\gamma)^{-m}\boldsymbol{G}(z)(\boldsymbol{1}_{N'}\otimes\boldsymbol{E}_{t_k}(z)^{-1})(\rho^{-1}_{\Sigma'}(\gamma)\otimes\boldsymbol{1}_2)\left(\boldsymbol{1}_{N'}\otimes\begin{pmatrix} J_\gamma(z)^{w-1} & 0 \\ 0 & J_\gamma(z)^{w-q}\end{pmatrix}\right)\\
&=&\det(\gamma)^{-m}\boldsymbol{H}(z)(\rho^{-1}_{\Sigma'}(\gamma)\otimes\boldsymbol{1}_2)\left(\boldsymbol{1}_{N'}\otimes\begin{pmatrix} J_\gamma(z)^{w-1} & 0 \\ 0 & J_\gamma(z)^{w-q}\end{pmatrix}\right).
\end{eqnarray*}
In the above computation, we have observed the distributive property of the mixed product $(A\otimes B)(C\otimes D)=(AC)\otimes(BD)$ (for matrices $A,B,C,D$). This identity that we have found,
$$\boldsymbol{H}(\gamma(z))=\det(\gamma)^{-m}\boldsymbol{H}(z)(\rho^{-1}_{\Sigma'}(\gamma)\otimes\boldsymbol{1}_2)\left(\boldsymbol{1}_{N'}\otimes\begin{pmatrix} J_\gamma(z)^{w-1} & 0 \\ 0 & J_\gamma(z)^{w-q}\end{pmatrix}\right)$$
means the following. The column holomorphic function $\mathcal{H}:={}^t\boldsymbol{H}$, with values in $\KK_\Sigma^{N\times 1}$ can be written as
$\mathcal{H}=\mathcal{H}_1\odot\mathcal{H}_2$ with both $\mathcal{H}_1$ and $\mathcal{H}_2$ columns of size
$N'=2^{\Sigma'}$, where the symbol $\odot$ is defined, if $a={}^t(a_1,\ldots,a_{N'})$
and $b={}^t(b_1,\ldots,b_{N'})$, by $a\odot b=(a_1,b_1,a_2,b_2,\ldots,a_{N'},b_{N'})$. 
Then, both $\mathcal{H}_1,\mathcal{H}_2$ are separately weak modular forms for  $\rho^*_{\Sigma'}\det^{-m}$, with values in $\KK_\Sigma$ and the weights are respectively $w-1$ and $w-q$.
\end{proof}
We have:
\begin{Theorem}\label{completeexpression}
The following equality of $\KK_\Sigma$-vector spaces holds, for any $w\in\ZZ$, $m\in\ZZ/(q-1)\ZZ$ and finite $\Sigma\subset\NN^*$:
\begin{equation}\label{decompositionshrek}
M_{w}^!(\rho^*_\Sigma\det\!{}^{-m};\KK_\Sigma)=\bigoplus_{I\sqcup J=\Sigma}\left(\bigotimes_{i\in I}\mathcal{E}(1;\rho^*_{t_i})\right)\otimes\left(\bigotimes_{j\in J}\mathcal{E}(q;\rho^*_{t_i})\right)M_{w-|I|-q|J|}^!(\det\!{}^{-m};\KK_\Sigma).\end{equation}
\end{Theorem}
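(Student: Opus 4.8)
\textbf{Proof plan for Theorem \ref{completeexpression}.}

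The plan is to iterate Theorem \ref{weakstructureresult} over all the elements of $\Sigma$, and then verify that the sum on the right-hand side of (\ref{decompositionshrek}) is actually direct. First I would fix an ordering $\Sigma=\{k_1<k_2<\cdots<k_s\}$ and argue by induction on $s=|\Sigma|$. The base case $s=0$ is the trivial identity $M_w^!(\det^{-m};\KK_\Sigma)=M_w^!(\det^{-m};\KK_\Sigma)$. For the inductive step, apply Theorem \ref{weakstructureresult} with $k=k_1$ (which is legitimate since the proof of that theorem only requires choosing $k\in\Sigma$, and we took $k=\min(\Sigma)$ there): this writes $M_w^!(\rho_\Sigma^*\det^{-m};\KK_\Sigma)$ as the sum of $M_{w-1}^!(\rho_{\Sigma'}^*\det^{-m};\KK_\Sigma)\otimes\mathcal{E}(1;\rho_{t_{k_1}}^*)$ and $M_{w-q}^!(\rho_{\Sigma'}^*\det^{-m};\KK_\Sigma)\otimes\mathcal{E}(q;\rho_{t_{k_1}}^*)$, where $\Sigma'=\Sigma\setminus\{k_1\}$. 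Here I must be a little careful about bookkeeping: the statement of Theorem \ref{weakstructureresult} as displayed has the coefficient spaces still carrying the full $\rho$, but its proof shows the factored-out components $\mathcal{H}_1,\mathcal{H}_2$ are weak modular forms for $\rho_{\Sigma'}^*\det^{-m}$ with values in $\KK_\Sigma$ of weights $w-1$ and $w-q$ respectively, which is exactly what the induction needs. Applying the inductive hypothesis to each of $M_{w-1}^!(\rho_{\Sigma'}^*\det^{-m};\KK_\Sigma)$ and $M_{w-q}^!(\rho_{\Sigma'}^*\det^{-m};\KK_\Sigma)$, and using that for a subset $J'\subset\Sigma'$ the weight shift $w-1-|I'|-q|J'|$ (resp. $w-q-|I'|-q|J'|$) equals $w-|I'\sqcup\{k_1\}^c\cap I|-q|J|$ with the appropriate re-partitioning, one recovers the sum over all partitions $I\sqcup J=\Sigma$ with $k_1\in I$ (first summand) or $k_1\in J$ (second summand); these two families of partitions are disjoint and exhaust all partitions of $\Sigma$, giving the claimed right-hand side.

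The second and more delicate part is to upgrade the sum to a direct sum. The natural way is to use the $v$-valuations computed earlier: by Corollary \ref{caseofthebasicrepresentation} (or Corollary \ref{valuations-eisenstein} applied to a singleton), the entries of $\mathcal{E}(1;\rho_{t_i}^*)$ have a controlled leading behaviour, and $\mathcal{E}(q;\rho_{t_i}^*)=\tau(\mathcal{E}(1;\rho_{t_i}^*))$ has $v$-valuation multiplied by $q$. More concretely, the components $\mathcal{H}_1$ and $\mathcal{H}_2$ in the proof of Theorem \ref{weakstructureresult} are recovered from $\mathcal{G}$ by multiplying by $\boldsymbol{F}=I_{N'}\otimes\boldsymbol{E}_{t_k}$, whose inverse involves $h$ and $\omega(t_k)$; the key point is that $h$ does not vanish on $\Omega$ and $v(h)=1$, so the decomposition $\mathcal{H}=\mathcal{H}_1\odot\mathcal{H}_2$ is reversible — given $\mathcal{G}$, the pair $(\mathcal{H}_1,\mathcal{H}_2)$ is uniquely determined (this is essentially the content of \cite[Theorem 3.9]{PEL&PER3}, adapted). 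Thus the map $\mathcal{G}\mapsto(\mathcal{H}_1,\mathcal{H}_2)$ provides a $\KK_\Sigma$-linear isomorphism $M_w^!(\rho_\Sigma^*\det^{-m};\KK_\Sigma)\xrightarrow{\sim} M_{w-1}^!(\rho_{\Sigma'}^*\det^{-m};\KK_\Sigma)\oplus M_{w-q}^!(\rho_{\Sigma'}^*\det^{-m};\KK_\Sigma)$, and iterating this isomorphism (rather than merely the equality of spaces) yields the direct sum decomposition by construction. I would phrase the induction directly in terms of these isomorphisms, so that directness is automatic.

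The main obstacle I anticipate is precisely the bookkeeping in making the iterated isomorphism respect the indexing by partitions $I\sqcup J=\Sigma$ and the tensor product structure $\bigl(\bigotimes_{i\in I}\mathcal{E}(1;\rho_{t_i}^*)\bigr)\otimes\bigl(\bigotimes_{j\in J}\mathcal{E}(q;\rho_{t_j}^*)\bigr)$ with the correct ordering of Kronecker factors. One must check that the $\odot$-interleaving operations at each stage compose to give exactly the tensor-ordering induced by the natural order on $\Sigma$, and that the ``coefficient'' space that survives at the end, after all $s$ variables have been peeled off, is $M_{w-|I|-q|J|}^!(\det^{-m};\KK_\Sigma)$ with the right weight. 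This is purely combinatorial/functorial and should present no conceptual difficulty, but it is the part that requires care; I would isolate it as a lemma on compatibility of the $\odot$ operation with iterated Kronecker products. A minor additional point to verify is that the isomorphism of Theorem \ref{weakstructureresult} is insensitive to which $k\in\Sigma$ is peeled off first, which follows since the final decomposition (\ref{decompositionshrek}) is manifestly symmetric in $\Sigma$; alternatively one simply fixes the order $k_1<\cdots<k_s$ once and for all and never revisits the choice.
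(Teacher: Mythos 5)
Your proposal is correct, and the first half (the induction on $|\Sigma|$ via Theorem \ref{weakstructureresult} to obtain the identity with $\sum$ in place of $\bigoplus$) is exactly what the paper does. Where you diverge is in establishing directness. The paper reduces this to showing that the $N=2^s$ vectors $\bigl(\bigotimes_{i\in I}\mathcal{E}(1;\rho^*_{t_i})\bigr)\otimes\bigl(\bigotimes_{j\in J}\mathcal{E}(q;\rho^*_{t_j})\bigr)$ are linearly independent over $\KK_\Sigma((u))$ (into which each $M^!_{w-|I|-q|J|}(\det^{-m};\KK_\Sigma)$ embeds, its elements being $A$-periodic and tempered); it assembles them into an $N\times N$ matrix $\mathcal{N}$, observes via Proposition \ref{seriesofEisensteinseries} that $\mathcal{E}(1;\rho^*_{t_i})\approx\binom{u}{1}$ and $\mathcal{E}(q;\rho^*_{t_i})\approx\binom{u^q}{1}$ so that $\mathcal{N}\approx\bigl(\begin{smallmatrix}u^q & u\\ 1 & 1\end{smallmatrix}\bigr)^{\otimes s}$, and then checks that a single anti-diagonal monomial strictly minimises the $v$-valuation in $\det(\mathcal{N})$, whence $\det(\mathcal{N})\approx u^{a_s}\neq 0$. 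You instead propose to iterate the isomorphism $\mathcal{G}\mapsto(\mathcal{H}_1,\mathcal{H}_2)$ coming from right-multiplication by the invertible matrix $\boldsymbol{F}=I_{N'}\otimes\boldsymbol{E}_{t_k}$ (invertible because $\det(\mathfrak{E})=-\widetilde{\pi}\zeta_A(q;\chi_t)h$ with $h$ nowhere vanishing on $\Omega$), so that directness is built in at each stage. The two arguments are morally the same — the paper's determinant of $\mathcal{N}$ is in effect the Kronecker product of the $2\times 2$ determinants you invert one at a time — but your version trades the paper's single global valuation computation for the $\odot$-versus-$\otimes$ bookkeeping across $s$ stages, which you correctly flag as the part needing care; the paper's route avoids that bookkeeping entirely by only having to exhibit one monomial of minimal valuation in $\det(\mathcal{N})$. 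Either way the argument closes, so there is no gap.
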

Denoting by $M^!(\det^\bullet;\KK_\Sigma)$ the $\ZZ\times\ZZ/(q-1)\ZZ$-graded $B$-algebra of 
scalar weak $\KK_\Sigma$-valued Drinfeld modular forms for $\Gamma$ of any weight and type, and setting $M^!(\rho^*_\Sigma\det^\bullet;\KK_\Sigma)=\oplus_{w,m}
M_{w}^!(\rho^*_\Sigma\det^{-m};\KK_\Sigma)$, which is a graded module over $M^!(\rho_\emptyset;\KK_\Sigma)$, we obtain:
\begin{Corollary}\label{coroweakstructure}
The $\KK_\Sigma$-vector space $M^!(\rho^*_\Sigma\det^\bullet;\KK_\Sigma)$ is a graded free $M^!(\det^\bullet;\KK_\Sigma)$-module of rank $N=2^{s}$.
\end{Corollary}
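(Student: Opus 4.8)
The plan is to deduce Corollary~\ref{coroweakstructure} directly from Theorem~\ref{completeexpression} by reading off the module structure. First I would observe that, by definition, $M^!(\rho^*_\Sigma\det^\bullet;\KK_\Sigma)=\bigoplus_{w,m}M_{w}^!(\rho^*_\Sigma\det^{-m};\KK_\Sigma)$ is a module over the algebra $M^!(\det^\bullet;\KK_\Sigma)$: the Kronecker product of a scalar weak modular form of weight $v$, type $n$, with an element of $M_w^!(\rho^*_\Sigma\det^{-m};\KK_\Sigma)$ lands in $M_{w+v}^!(\rho^*_\Sigma\det^{-(m+n)};\KK_\Sigma)$, because a scalar form contributes only a factor of automorphy $J_\gamma^v\det(\gamma)^{-n}$. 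This respects the $\ZZ\times\ZZ/(q-1)\ZZ$-grading, so the module is graded.

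Next, I would fix $m$ and let $w$ run, and rewrite the right-hand side of \eqref{decompositionshrek} as a statement about generators: for each partition $I\sqcup J=\Sigma$, set
$$\mathcal{F}_{I,J}:=\left(\bigotimes_{i\in I}\mathcal{E}(1;\rho^*_{t_i})\right)\otimes\left(\bigotimes_{j\in J}\mathcal{E}(q;\rho^*_{t_j})\right).$$
Theorem~\ref{completeexpression} says precisely that the $2^s$ elements $\mathcal{F}_{I,J}$ (as $(I,J)$ ranges over ordered partitions of $\Sigma$ into two parts) generate $M^!(\rho^*_\Sigma\det^\bullet;\KK_\Sigma)$ as a module over $M^!(\det^\bullet;\KK_\Sigma)$, and that moreover the module is the \emph{direct sum} of the $2^s$ submodules $\mathcal{F}_{I,J}\cdot M^!(\det^\bullet;\KK_\Sigma)$. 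Since a direct sum decomposition with $2^s$ cyclic summands each isomorphic (as graded module, after a degree shift by $(|I|+q|J|,\,0)$) to $M^!(\det^\bullet;\KK_\Sigma)$ is exactly the assertion that the module is graded free of rank $2^s$ with basis $(\mathcal{F}_{I,J})_{I\sqcup J=\Sigma}$, the corollary follows once I check that each summand is free of rank one, i.e. that $\mathcal{F}_{I,J}$ is not a zero divisor: if $\phi\cdot\mathcal{F}_{I,J}=0$ with $\phi\in M^!(\det^\bullet;\KK_\Sigma)$ nonzero, then since $\mathcal{E}(1;\rho^*_{t_i})$ and $\mathcal{E}(q;\rho^*_{t_i})$ are nonzero holomorphic functions $\Omega\to\EE_\Sigma^{2\times1}$ (Lemma~\ref{defi-general-eisenstein-series}) and $\Omega$ is connected, at least one entry of $\mathcal{F}_{I,J}$ is a nonzero holomorphic function, so $\phi$ times it vanishes identically on $\Omega$, forcing $\phi=0$ by connectedness of $\Omega$ again. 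Hence each summand is free of rank one over $M^!(\det^\bullet;\KK_\Sigma)$.

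The main obstacle, as I see it, is not in this last bookkeeping step but in making sure the direct sum in \eqref{decompositionshrek} is literally a direct sum and not merely a spanning statement; however this is supplied by Theorem~\ref{completeexpression} itself, which is stated as an equality of $\KK_\Sigma$-vector spaces with a $\bigoplus$. So the only genuine work left for the corollary is: (i) recording that the decomposition is $M^!(\det^\bullet;\KK_\Sigma)$-linear and grading-compatible, which is the routine computation with factors of automorphy sketched above; and (ii) the non-zero-divisor argument, which reduces to the connectedness of $\Omega$ and the non-vanishing of the weight-$1$ and weight-$q$ Eisenstein series. I would present the proof in that order: state that the grading and module structure are immediate, invoke Theorem~\ref{completeexpression} for the direct-sum decomposition, and close with the two-line connectedness argument showing each cyclic summand is free of rank one, so the whole module is graded free of rank $N=2^s$.
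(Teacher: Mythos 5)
Your proposal is correct and follows the paper's route exactly: the corollary is deduced immediately from the direct-sum decomposition of Theorem \ref{completeexpression}, and the only content to add is the (routine) compatibility of the grading with the $M^!(\det^\bullet;\KK_\Sigma)$-module structure together with freeness of each cyclic summand. Note that your non-zero-divisor argument via connectedness of $\Omega$ is fine, but the freeness is in fact already supplied by the proof of Theorem \ref{completeexpression} itself, where the generators $\mathcal{F}_{I,J}$ are shown to be linearly independent over $\KK_\Sigma((u))$ (via the determinant estimate $\det(\mathcal{N})\approx u^{a_s}$), and the scalar weak modular forms embed into $\KK_\Sigma((u))$.
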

Observe that further, the generators of this module are explicitly described in Theorem \ref{completeexpression}.
Denoting by $M^!(\rho^*_\Sigma;\KK_\Sigma)=\oplus_{w\in\ZZ}M_{w}^!(\rho^*_\Sigma;\KK_\Sigma)$ the sub-module of $M^!(\det^\bullet;\KK_\Sigma)$ of weak modular forms for $\rho^*_\Sigma$ and setting $M^!(\rho_\emptyset;\KK_\Sigma)=\oplus_wM^!_w(\rho_\emptyset;\KK_\Sigma)$,
We also deduce the following corollary:
\begin{Corollary}\label{coroweakstructuretypezero}
The $\KK_\Sigma$-vector space $M^!(\rho^*_\Sigma;\KK_\Sigma)$ is a graded free $M^!(\rho_\emptyset;\KK_\Sigma)$-module of rank $N$.
\end{Corollary}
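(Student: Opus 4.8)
The statement to be proved is Corollary \ref{coroweakstructuretypezero}: that $M^!(\rho^*_\Sigma;\KK_\Sigma)=\bigoplus_{w\in\ZZ}M_{w}^!(\rho^*_\Sigma;\KK_\Sigma)$ is a graded free module of rank $N=2^s$ over the graded algebra $M^!(\rho_\emptyset;\KK_\Sigma)=\bigoplus_{w}M^!_w(\rho_\emptyset;\KK_\Sigma)$ of scalar weak $\KK_\Sigma$-valued Drinfeld modular forms of type $0$. The key point is that this is essentially the ``type $0$'' slice of Corollary \ref{coroweakstructure}, which already asserts that $M^!(\rho^*_\Sigma\det^\bullet;\KK_\Sigma)$ is graded free of rank $N$ over the full graded algebra $M^!(\det^\bullet;\KK_\Sigma)$ (weak scalar forms of \emph{any} type). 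So the plan is to restrict the explicit basis produced in Theorem \ref{completeexpression} — the family of Kronecker products $\bigotimes_{i\in I}\mathcal{E}(1;\rho^*_{t_i})\otimes\bigotimes_{j\in J}\mathcal{E}(q;\rho^*_{t_j})$ indexed by partitions $I\sqcup J=\Sigma$ — and show it still freely generates, now over the smaller ring $M^!(\rho_\emptyset;\KK_\Sigma)$, once we forget the type grading and keep only type $0$.

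First I would fix the degrees. Each basic Eisenstein factor $\mathcal{E}(1;\rho^*_{t_i})$ lies in $M_1(\rho^*_{t_i};\KK_\Sigma)$, which has degree $1$ and (as a representation of the first kind) type $0$ — indeed $\rho^*_{t_i}$ has degree $1$, and the determinant twist in the decomposition (\ref{decompositionshrek}) is what carries the type. Similarly $\mathcal{E}(q;\rho^*_{t_i})=\tau({}^t\mathcal{E}(1;\rho^*_{t_i}))$ — up to transposition — sits in degree $q$, again with the representation $\rho^*_{t_i}$ and no determinant twist. Hence for each $I\sqcup J=\Sigma$ the element $e_{I,J}:=\bigotimes_{i\in I}\mathcal{E}(1;\rho^*_{t_i})\otimes\bigotimes_{j\in J}\mathcal{E}(q;\rho^*_{t_j})$ is a weak modular form of weight $|I|+q|J|$ for $\rho^*_\Sigma$ (Kronecker product of the $\rho^*_{t_i}$, matching (\ref{writingordering})), with \emph{no} determinant twist, i.e.\ it is a genuine element of $M^!_{|I|+q|J|}(\rho^*_\Sigma;\KK_\Sigma)$. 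This is the point where one must be careful to track that the ordering of the Kronecker factors is the one inherited from the natural ordering of $\Sigma\subset\NN^*$, so that all the $e_{I,J}$ live in the \emph{same} copy of $\rho^*_\Sigma$ and the direct sum over $w$ makes sense. Then I would set up the map
\[
\Phi:\bigoplus_{I\sqcup J=\Sigma}M^!(\rho_\emptyset;\KK_\Sigma)\cdot e_{I,J}\longrightarrow M^!(\rho^*_\Sigma;\KK_\Sigma),
\]
$(f_{I,J})_{I,J}\mapsto\sum_{I,J}f_{I,J}\,e_{I,J}$, which is a graded $M^!(\rho_\emptyset;\KK_\Sigma)$-module homomorphism, and argue it is bijective.

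Surjectivity is immediate from Theorem \ref{completeexpression} with $m=0$: that theorem already gives $M^!_{w}(\rho^*_\Sigma;\KK_\Sigma)=\bigoplus_{I\sqcup J=\Sigma}e_{I,J}\cdot M^!_{w-|I|-q|J|}(\rho_\emptyset;\KK_\Sigma)$ for every $w$, so summing over $w$ does the job. For injectivity, suppose $\sum_{I,J}f_{I,J}e_{I,J}=0$ with $f_{I,J}\in M^!(\rho_\emptyset;\KK_\Sigma)$; decomposing each $f_{I,J}$ into its graded pieces and comparing weights, we may assume all $f_{I,J}$ are homogeneous with $\deg f_{I,J}+|I|+q|J|=w$ a common value; but then $0=\sum_{I,J}f_{I,J}e_{I,J}$ is the very relation that the direct-sum decomposition of Theorem \ref{completeexpression} (in weight $w$, type $0$) rules out, forcing each $f_{I,J}=0$. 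So freeness and rank $2^s$ follow formally. The main obstacle — and it is a bookkeeping obstacle rather than a deep one — is to verify carefully that the restriction from ``all types'' (Corollary \ref{coroweakstructure}) to ``type $0$'' is compatible with the grading and the module structure: concretely, that $M^!(\rho_\emptyset;\KK_\Sigma)$ really is the type-$0$ graded subalgebra of $M^!(\det^\bullet;\KK_\Sigma)$ on which the $e_{I,J}$ act without introducing a type, and that no type-$0$ form for $\rho^*_\Sigma$ secretly requires a determinant-twisted scalar coefficient. Once that identification is pinned down, the corollary is an immediate consequence of Theorem \ref{completeexpression}, exactly as Corollary \ref{coroweakstructure} is.
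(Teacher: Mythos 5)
Your proposal is correct and follows essentially the same route as the paper: the corollary is deduced there, without further argument, as the $m=0$ case of Theorem \ref{completeexpression} summed over all weights $w$, with surjectivity and freeness both read off from the direct-sum decomposition (\ref{decompositionshrek}), exactly as you spell out. The bookkeeping you flag (that the generators $e_{I,J}$ carry no determinant twist and that the scalar coefficients stay in type $0$) is already built into the statement of Theorem \ref{completeexpression} for fixed $m$, so no extra verification is needed.
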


\begin{proof}[Proof of Theorem \ref{completeexpression}]
 We deduce from Theorem \ref{weakstructureresult}, by induction on $|\Sigma|$, that a weaker version of (\ref{decompositionshrek}) holds, with $\sum$ in place of $\bigoplus$.
It remains to show that the sum is a direct sum. For this, it suffices to show that the $N=2^s$ functions
$\otimes_{i\in I}\mathcal{E}(1;\rho^*_{t_i})\otimes\bigotimes_{j\in J}\mathcal{E}(q;\rho^*_{t_j})$, for $I\sqcup J=\Sigma$, which define elements of $\mathfrak{O}_\Sigma^{N\times 1}$ are linearly independent over the field $\KK_\Sigma((u))$. Note indeed that $M^!_{w-|I|-q|J|}(\det^{-m};\KK_\Sigma)\hookrightarrow \KK_\Sigma((u))$ because all the elements of the space on the left are $A$-periodic and tempered.

Let $a,b$ be two elements of 
$\mathfrak{K}_\Sigma$. We write $a\approx b$ if $v(a)=v(b)$  (note that if $a=0$ and $a\approx b$ then $b=0$) and we extend the definition to vectors and matrices whose entries are all in $\mathfrak{K}^\times$ by saying that $(a_{i,j})\approx(b_{i,j})$ if for all $i,j$, $v(a_{i,j})=v(b_{i,j})$. Then by Proposition \ref{seriesofEisensteinseries}, we have 
$\mathcal{E}(1;\rho^*_{t_i})\approx\binom{u}{1}$ and $\mathcal{E}(q;\rho^*_{t_i})\approx\binom{u^q}{1}$.
Hence, up to permutation of rows and columns, we have the $\approx$-equivalence of $N\times N$-matrices in $\mathfrak{O}_\Sigma^{N\times N}$:
$$\mathcal{N}:=\left(\bigotimes_{i\in I}\mathcal{E}(1;\rho^*_{t_i})\right)\otimes\left(\bigotimes_{j\in J}\mathcal{E}(q;\rho^*_{t_j})\right)_{I\sqcup J=\Sigma}\approx\begin{pmatrix}u^q & u \\ 1 & 1 \end{pmatrix}^{\otimes s}.$$ The anti-diagonal of the matrix on the right is 
equal to $(1,u)^{\otimes s}$ (up to reordering). This corresponds to a unique  monomial which minimises the $v$-valuation in the series expansion of the determinant of $\mathcal{N}$. We deduce that 
$\det(\mathcal{N})\approx u^{a_s}$, where $(a_s)_{s\geq 1}$ is the sequence defined, inductively, by $a_1=1$ and $a_s=2a_{s-1}+2^{s-1}$ for $s>1$. The matrix $\mathcal{N}$ is therefore non-singular, and the functions $\otimes_{i\in I}\mathcal{E}(1;\rho^*_{t_i})\otimes\bigotimes_{j\in J}\mathcal{E}(q;\rho^*_{t_j})$ for $I\sqcup J=\Sigma$ are linearly independent over $\KK_\Sigma((u))$, from which the result follows.
 \end{proof}
 
 \subsection{Strongly regular modular forms}\label{stronglyregularsection}

We keep considering a finite non-empty subset $\Sigma\subset\NN^*$ of cardinality $s$, the representation $\rho=\rho^*_\Sigma$, $k:=\max(\Sigma)$. We discuss quite a restricted but useful class of modular forms which have a particularly simple behaviour at infinity.
\begin{Definition}\label{defstronglyholomorphic}
{\em A tempered $\rho^*_\Sigma$-quasi-periodic holomorphic function
$$\mathcal{G}:\Omega\rightarrow \KK_\Sigma^{N\times 1}$$
is called {\em strongly regular at infinity} if
$$\begin{pmatrix} u^{-1} & 0 \\ 0 & 1\end{pmatrix}^{\otimes s}\mathcal{G}(z)\in\mathfrak{O}_\Sigma^{N\times 1}.$$}
\end{Definition}
Note, with $\operatorname{Diag}$ denoting a diagonal matrix, that
\begin{eqnarray*}
\begin{pmatrix} u^{-1} & 0 \\ 0 & 1\end{pmatrix}^{\otimes 2}&=&\operatorname{Diag}(u^{-2},u^{-1},u^{-1},1)\\
\begin{pmatrix} u^{-1} & 0 \\ 0 & 1\end{pmatrix}^{\otimes 3}&=&\operatorname{Diag}(u^{-3},u^{-2},u^{-2},u^{-1},u^{-2},u^{-1},u^{-1},1).
\end{eqnarray*}

Note also that writing
\begin{equation}\label{thesequenceni}
\begin{pmatrix} u^{-1} & 0 \\ 0 & 1\end{pmatrix}^{\otimes s}=\operatorname{Diag}(u^{-s},\ldots,u^{-n_1},u^{-n_0}),
\end{equation} and letting $s$ tend to infinity, an integer sequence $(n_i)_{i\geq 0}$ is defined and coincides with the one's-counting sequence (compare with the sequence $(a_i)_i$ in the proof of Theorem \ref{completeexpression}).
We need the next Lemma, where we use the sequence introduced in (\ref{thesequenceni}) and 
the notation $\odot$ introduced in the course of the proof of Theorem \ref{weakstructureresult}.
\begin{Lemma}\label{veryelementary}
We have $(n_{i})_{i\geq 0}=(n_{2i})_{i\geq 0}\odot(n_{2i+1}+1)_{i\geq0}$.
\end{Lemma}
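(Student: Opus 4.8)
The content of the lemma is just the pair of elementary recursions satisfied by the one's‑counting sequence, and the plan is to spell them out. Recall from \eqref{thesequenceni} and the remark following it that $n_i$ is the number of $1$'s in the base‑$2$ expansion of $i$, i.e. $n_i=\ell_2(i)$. Concretely, the diagonal of $\bigl(\begin{smallmatrix} u^{-1} & 0 \\ 0 & 1\end{smallmatrix}\bigr)^{\otimes s}$ of Definition \ref{defstronglyholomorphic} has, at the position indexed by $\varepsilon=(\varepsilon_1,\dots,\varepsilon_s)\in\{0,1\}^s$, the entry $u^{-\#\{k:\,\varepsilon_k=0\}}$; reading this diagonal starting from the bottom‑right corner is precisely the reading for which the exponent list stabilises to $(n_i)_{i\ge 0}=(\ell_2(i))_{i\ge 0}$ as $s\to\infty$. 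So I would begin by restating this description of $(n_i)_{i\ge 0}$.

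Next I would record the two one‑step recursions. Appending a $0$ to the binary expansion of $i$ produces $2i$, whence $n_{2i}=n_i$; appending a $1$ produces $2i+1$, whence $n_{2i+1}=n_i+1$. In particular $n_{2i+1}=n_{2i}+1$ for all $i\ge 0$. (Equivalently, these follow from the block identity $\bigl(\begin{smallmatrix} u^{-1} & 0 \\ 0 & 1\end{smallmatrix}\bigr)^{\otimes s}=\bigl(\begin{smallmatrix} u^{-1} & 0 \\ 0 & 1\end{smallmatrix}\bigr)^{\otimes(s-1)}\otimes\bigl(\begin{smallmatrix} u^{-1} & 0 \\ 0 & 1\end{smallmatrix}\bigr)$, which exhibits the $s$‑level exponent list as the interleaving of the $(s-1)$‑level list shifted by $+1$ with the unshifted $(s-1)$‑level list.)

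Finally I would split the sequence $(n_i)_{i\ge 0}$ into its subsequence of even‑index terms $(n_{2i})_{i\ge 0}$ and its subsequence of odd‑index terms $(n_{2i+1})_{i\ge 0}=(n_{2i}+1)_{i\ge 0}$, and observe that reassembling these two by the interleaving operation $\odot$ introduced in the proof of Theorem \ref{weakstructureresult} returns $(n_i)_{i\ge 0}$ term by term, which is exactly the asserted equality. There is essentially no obstacle: the whole argument is index bookkeeping, and the only point worth stating with care is the orientation in which the diagonal of $\bigl(\begin{smallmatrix} u^{-1} & 0 \\ 0 & 1\end{smallmatrix}\bigr)^{\otimes s}$ is read, so that $(n_i)_{i\ge 0}$ is well defined independently of $s$ and the two recursions above are the ones that actually govern it.
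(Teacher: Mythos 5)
Your argument is the same elementary one the paper intends: its entire proof is the one-line remark that adding $1$ to an even integer produces no binary carries, and your reduction to the recursions $n_{2i}=n_i$, $n_{2i+1}=n_i+1$ for the one's-counting sequence, together with your reading of the diagonal of $\bigl(\begin{smallmatrix}u^{-1} & 0 \\ 0 & 1\end{smallmatrix}\bigr)^{\otimes s}$ from the bottom-right corner, is correct. One caveat, though: what your last step actually establishes is $(n_i)_{i\geq 0}=(n_{2i})_{i\geq 0}\odot(n_{2i+1})_{i\geq 0}=(n_{2i})_{i\geq 0}\odot(n_{2i}+1)_{i\geq 0}$, and this is \emph{not} literally the printed identity, whose second slot is $(n_{2i+1}+1)_{i\geq 0}$. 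As printed the lemma already fails at the second position: the right-hand side there is $n_1+1=2$ while the left-hand side is $n_1=1$. The printed formula is evidently a misprint (the ``$+1$'' belongs outside the subscript), and the version you prove --- equivalently $(n_i)_{i\geq 0}=(n_i)_{i\geq 0}\odot(n_i+1)_{i\geq 0}$, using $n_{2i}=n_i$ --- is precisely what is needed to derive (\ref{Us}) in the proof of Theorem \ref{theorem2}. So your proof is right and matches the paper's approach, but you should flag the discrepancy explicitly rather than assert that your reassembly ``is exactly the asserted equality.''
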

\begin{proof}
Straightforward computation of the carry over in binary addition when we add one to an integer.
\end{proof}

The above serves to make the next definition.
\begin{Definition}\label{defstronglyholomorphicvect}
{\em A weak modular form $\mathcal{G}\in M_{w}^!(\rho^*_\Sigma\det^{-m};\KK_\Sigma)$
is said {\em strongly regular} (of weight $w$) if
it is strongly regular at infinity after definition \ref{defstronglyholomorphic}.}
\end{Definition}
The $\KK_\Sigma$-vector spaces of strongly regular modular forms have quite a simple structure which can be described essentially by adapting the proof of Theorem \ref{weakstructureresult}; see Theorem \ref{theorem2}.
Also, regarding the Definition \ref{defstronglyholomorphic} of strongly regular functions, if we want to use the indexation of the components of $\mathcal{G}$, $\mathcal{G}=(\mathcal{G}^J)_{I\sqcup J=\Sigma}$ (so that the first entry $\mathcal{G}^\emptyset$ has a $u$-expansion) 
we then get that the above condition is equivalent to
\begin{equation}\label{equivalentasymptotic}
\mathcal{G}^J(z)u^{-|I|}\in\mathfrak{O}_\Sigma,\quad \forall I,J\text{ such that }I\sqcup J=\Sigma.
\end{equation}
We denote by $M_{w}^\dag(\rho^*_{\Sigma}\det^{-m};\KK_\Sigma)$ 
the $\KK_\Sigma$-sub-vector space of $M_{w}^!(\rho^*_{\Sigma}\det^{-m};\KK_\Sigma)$ generated by the strongly regular  modular forms of weight $w$ for $\rho^*_{\Sigma}\det^{-m}$ (with values in $\KK_\Sigma$).

\subsubsection*{Examples of strongly regular modular forms}
Any scalar Drinfeld modular form 
is strongly regular. In fact, we have $M_{w}^\dag(\det^{-m};\KK_\Sigma)=M_{w}(\det^{-m};\KK_\Sigma)$
for all $w,m$. From Proposition \ref{seriesofEisensteinseries} we immediately see that 
$\mathcal{E}(1;\rho^*_t)\in M_{1}^\dag(\rho^*_t;\KK)$ and $\mathcal{E}(q;\rho^*_t)\in M_{q}^\dag(\rho^*_t;\KK)$. In particular, after Theorem \ref{completeexpression} and Corollary \ref{coroweakstructure}, the generators of the module $M^!(\rho^*_\Sigma\det^\bullet;\KK_\Sigma)$ described in the statements are all strongly regular modular forms.

\subsubsection{Structure of strongly regular modular forms} We shall prove:
\begin{Theorem}\label{theorem2}
The following equality of $\KK_\Sigma$-vector spaces holds, for any $w\in\ZZ$, $m\in\ZZ/(q-1)\ZZ$, finite $\Sigma\subset\NN^*$:
\begin{equation}\label{theodag}
M_{w}^\dag(\rho^*_{\Sigma}\det\!{}^{-m};\KK_\Sigma)=\bigoplus_{I\sqcup J=\Sigma}\left(\bigotimes_{i\in I}\mathcal{E}(1;\rho^*_{t_i})\right)\otimes\left(\bigotimes_{j\in J}\mathcal{E}(q;\rho^*_{t_j})\right)M_{w-i-qj}(\det\!{}^{-m};\KK_\Sigma).\end{equation}
\end{Theorem}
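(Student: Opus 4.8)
The statement to prove, Theorem \ref{theorem2}, is the analogue of the weak-modular-form structure result Theorem \ref{completeexpression}, but now with $M^!$ replaced everywhere by $M$ (and $M^!_w(\det^{-m};\KK_\Sigma)$ replaced by $M_w(\det^{-m};\KK_\Sigma)$, i.e. by honest scalar Drinfeld modular forms). The plan is to run the same inductive argument on $|\Sigma|$ as in the proof of Theorem \ref{weakstructureresult}, but to track, at each step, the strong-regularity condition of Definition \ref{defstronglyholomorphicvect} rather than mere temperedness. So the first thing I would do is establish the analogue of Theorem \ref{weakstructureresult} for the $\dag$-spaces, namely
\begin{equation*}
M_{w}^\dag(\rho^*_\Sigma\det\!{}^{-m};\KK_\Sigma)=M_{w-1}^\dag(\rho^*_{\Sigma'}\det\!{}^{-m};\KK_\Sigma)\otimes\mathcal{E}(1;\rho^*_{t_k})\oplus M_{w-q}^\dag(\rho^*_{\Sigma'}\det\!{}^{-m};\KK_\Sigma)\otimes\mathcal{E}(q;\rho^*_{t_k}),
\end{equation*}
with $k=\max(\Sigma)$ and $\Sigma'=\Sigma\setminus\{k\}$. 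The $\subseteq$ inclusion is the heart of the matter, and it reuses verbatim the computation in the proof of Theorem \ref{weakstructureresult}: starting from $\mathcal{G}\in M_w^\dag(\rho;\KK_\Sigma)$, form $\boldsymbol{H}=\boldsymbol{G}\boldsymbol{F}$ with $\boldsymbol{F}=I_{N'}\otimes\boldsymbol{E}_{t_k}$, obtain $\mathcal{H}={}^t\boldsymbol{H}=\mathcal{H}_1\odot\mathcal{H}_2$ with $\mathcal{H}_1,\mathcal{H}_2$ weak modular forms for $\rho^*_{\Sigma'}\det^{-m}$ of weights $w-1$ and $w-q$ respectively.

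\medskip

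The new work is to show: (i) if $\mathcal{G}$ is strongly regular, then $\mathcal{H}_1,\mathcal{H}_2$ are strongly regular, and conversely (ii) if $\mathcal{H}_1,\mathcal{H}_2$ are strongly regular of the correct weights, then $\mathcal{G}=\mathcal{H}\boldsymbol{F}^{-1}$, suitably recombined, is strongly regular. For (i), I would use the equivalent formulation (\ref{equivalentasymptotic}): strong regularity of $\mathcal{G}$ says $v(\mathcal{G}^J)\geq|I|$ for all $I\sqcup J=\Sigma$, where I write $|I|$ for the cardinality. The matrix $\boldsymbol{E}_{t_k}=\tau^2(\omega(t_k))^{-1}\mathfrak{E}_{t_k}^*$ has, by Proposition \ref{seriesofEisensteinseries}, the $v$-valuation pattern $\boldsymbol{E}_{t_k}\approx\left(\begin{smallmatrix}1 & u^q\\ u & 1\end{smallmatrix}\right)$ up to units (this follows from $\mathcal{E}(1;\rho^*_{t_k})\approx\binom{u}{1}$, $\mathcal{E}(q;\rho^*_{t_k})\approx\binom{u^q}{1}$, using $\omega(t_k)^{-1}\in\TT^\times$ so it does not affect valuations, together with the formula $\det(\mathfrak{E})=-\widetilde{\pi}\zeta_A(q;\chi_t)h$ with $v(h)=1$ and $v(\zeta_A(q;\chi_t))=0$, whence $\boldsymbol{E}_{t_k}^{-1}\approx\left(\begin{smallmatrix}u^{-1} & 1\\ u^{-q} & u^{-1}\end{smallmatrix}\right)$ up to units). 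Then a direct valuation bookkeeping on the tensor decomposition, using Lemma \ref{veryelementary} $(n_i)_i=(n_{2i})_i\odot(n_{2i+1}+1)_i$ to match the weighting of the $u$-powers in $\left(\begin{smallmatrix}u^{-1}&0\\0&1\end{smallmatrix}\right)^{\otimes s}$ against those in $\left(\begin{smallmatrix}u^{-1}&0\\0&1\end{smallmatrix}\right)^{\otimes (s-1)}$ tensored with $\left(\begin{smallmatrix}u^{-1}&0\\0&1\end{smallmatrix}\right)$, converts the condition $\left(\begin{smallmatrix}u^{-1}&0\\0&1\end{smallmatrix}\right)^{\otimes s}\mathcal{G}\in\mathfrak{O}_\Sigma^{N\times1}$ into $\left(\begin{smallmatrix}u^{-1}&0\\0&1\end{smallmatrix}\right)^{\otimes (s-1)}\mathcal{H}_i\in\mathfrak{O}_\Sigma^{N'\times1}$, $i=1,2$. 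The converse direction (ii) is the same computation read backwards, using that $\boldsymbol{F}$ and $\boldsymbol{F}^{-1}$ are inverse to each other and the $\approx$-patterns are exactly reciprocal; one must also check that recombining the two strongly regular quasi-periodic functions via $\boldsymbol{F}$ actually lands in $M_w^!(\rho;\KK_\Sigma)$, which is the modularity computation already in the proof of Theorem \ref{weakstructureresult} (it is reversible since $\mathfrak{E}$ is invertible over $\operatorname{Hol}(\Omega\to\TT^{2\times2})$).

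\medskip

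With the one-step recursion in hand, I would iterate it $s$ times: after $s$ steps $\rho^*_\Sigma$ is peeled down to the trivial representation, the tensor factors $\mathcal{E}(1;\rho^*_{t_i})$ and $\mathcal{E}(q;\rho^*_{t_i})$ accumulate over all splittings $I\sqcup J=\Sigma$, and the weight drops by $|I|+q|J|$. This gives $M_w^\dag(\rho^*_\Sigma\det^{-m};\KK_\Sigma)=\sum_{I\sqcup J=\Sigma}\bigl(\bigotimes_{i\in I}\mathcal{E}(1;\rho^*_{t_i})\bigr)\otimes\bigl(\bigotimes_{j\in J}\mathcal{E}(q;\rho^*_{t_j})\bigr)M^\dag_{w-|I|-q|J|}(\det^{-m};\KK_\Sigma)$, and then I invoke the already-noted fact $M_w^\dag(\det^{-m};\KK_\Sigma)=M_w(\det^{-m};\KK_\Sigma)$ (every scalar Drinfeld modular form is trivially strongly regular, since the $\otimes s=0$ condition is vacuous). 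Finally, the sum is direct by exactly the non-singularity argument at the end of the proof of Theorem \ref{completeexpression}: the matrix $\mathcal{N}$ whose columns are the products $\otimes_{i\in I}\mathcal{E}(1;\rho^*_{t_i})\otimes\bigotimes_{j\in J}\mathcal{E}(q;\rho^*_{t_j})$ satisfies $\mathcal{N}\approx\left(\begin{smallmatrix}u^q&u\\1&1\end{smallmatrix}\right)^{\otimes s}$, hence $\det(\mathcal{N})\approx u^{a_s}\neq0$, so the generators are $\KK_\Sigma((u))$-linearly independent, and since each $M^\dag_{w-|I|-q|J|}(\det^{-m};\KK_\Sigma)\hookrightarrow\KK_\Sigma((u))$ (scalar forms are $A$-periodic and tempered), the sum is direct. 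The main obstacle I anticipate is the valuation bookkeeping in step (i)/(ii): one must verify that the left-multiplication by $\boldsymbol{E}_{t_k}^{-1}$ (resp. $\boldsymbol{E}_{t_k}$) interacts with the tensor structure so that the per-component lower bounds $v(\mathcal{G}^J)\ge|I|$ transfer cleanly, and this is where Lemma \ref{veryelementary} and a careful choice of ordering $k=\max(\Sigma)$ (so that the last tensor slot is the one being split off) are essential; getting the indices to line up is routine but error-prone.
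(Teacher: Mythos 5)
Your proposal is correct and follows essentially the same route as the paper: reduce to the one-step peel-off of the factor $\rho^*_{t_k}$ via $\boldsymbol{F}=I_{N'}\otimes\boldsymbol{E}_{t_k}$ as in the proof of Theorem \ref{weakstructureresult}, transfer the strong-regularity bounds through the explicit $\approx$-valuation patterns of $e_1,e_2,\tau(e_1),\tau(e_2),h$ together with the recursion $U_s=u^{-1}U_{s-1}\odot U_{s-1}$ of Lemma \ref{veryelementary}, iterate down to the scalar case where $M^\dag_w=M_w$, and obtain directness from the non-singularity of $\mathcal{N}\approx\left(\begin{smallmatrix}u^q&u\\1&1\end{smallmatrix}\right)^{\otimes s}$. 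The only (immaterial) divergence is your choice $k=\max(\Sigma)$ versus the paper's $k=\min(\Sigma)$; what matters, and what you correctly arrange, is that the split-off factor occupies the last tensor slot so that the $\odot$ bookkeeping applies.
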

The direct sum $M^\dag(\rho^*_{\Sigma}\det\!{}^{\bullet};\KK_\Sigma):=\oplus_{w,m}M_{w}^\dag(\rho^*_{\Sigma}\det\!{}^{-m};\KK_\Sigma)$ is a graded module over 
the graded algebra $M(\det^\bullet;\KK_\Sigma)$ of scalar Drinfeld modular forms $\Omega\rightarrow\KK_\Sigma$ for any power of the determinant character.
We immediately deduce:
\begin{Corollary}\label{explicitgeneratorsforstrongregularity}
the $M(\det^\bullet;\KK_\Sigma)$-module $M^\dag(\rho^*_{\Sigma}\det\!{}^{\bullet};\KK_\Sigma)$ is free of rank $N$ generated by the functions $(\otimes_{i\in I}\mathcal{E}(1;\rho^*_{t_i}))\otimes(\otimes_{j\in J}\mathcal{E}(q;\rho^*_{t_j}))$, for $I,J\subset\Sigma$ such that $I\sqcup J=\Sigma$.
\end{Corollary}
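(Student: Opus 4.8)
\begin{proof}
Write $s=|\Sigma|$, $N=2^s$, and for $I\sqcup J=\Sigma$ set
$$G_{I,J}:=\left(\bigotimes_{i\in I}\mathcal{E}(1;\rho^*_{t_i})\right)\otimes\left(\bigotimes_{j\in J}\mathcal{E}(q;\rho^*_{t_j})\right)\in M_{|I|+q|J|}^\dag(\rho^*_\Sigma;\KK_\Sigma),$$
which is bihomogeneous of weight $|I|+q|J|$ and type $0$ (it lies in $M^\dag$ by the examples following Definition \ref{defstronglyholomorphicvect}). First I note that $M^\dag(\rho^*_\Sigma\det\!^\bullet;\KK_\Sigma)$ really is a module over $M(\det\!^\bullet;\KK_\Sigma)$: if $\mathcal{G}$ is strongly regular and $\varphi$ is a scalar Drinfeld modular form, then the Kronecker product $\mathcal{G}\otimes\varphi=\varphi\mathcal{G}$ still satisfies the inequalities (\ref{equivalentasymptotic}) defining strong regularity, since multiplication by $\varphi$ adds a non-negative $v$-valuation. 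Now summing the decomposition of Theorem \ref{theorem2} over all $(w,m)\in\ZZ\times\ZZ/(q-1)\ZZ$ gives
$$M^\dag(\rho^*_\Sigma\det\!^\bullet;\KK_\Sigma)=\bigoplus_{I\sqcup J=\Sigma}G_{I,J}\cdot M(\det\!^\bullet;\KK_\Sigma),$$
a direct sum of graded $M(\det\!^\bullet;\KK_\Sigma)$-submodules (the grading shift being precisely the bidegree of $G_{I,J}$). In particular the $N$ functions $G_{I,J}$ generate $M^\dag(\rho^*_\Sigma\det\!^\bullet;\KK_\Sigma)$ as a graded $M(\det\!^\bullet;\KK_\Sigma)$-module.

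It remains to prove freeness, i.e. that a relation $\sum_{I\sqcup J=\Sigma}a_{I,J}G_{I,J}=0$ with $a_{I,J}\in M(\det\!^\bullet;\KK_\Sigma)$ forces $a_{I,J}=0$ for all $I,J$. As was already observed in the proof of Theorem \ref{completeexpression}, the $N$ columns $G_{I,J}$ are linearly independent over the field $\KK_\Sigma((u))$; indeed, writing $\mathcal{N}=(G_{I,J})_{I\sqcup J=\Sigma}$ one has, up to reordering of rows and columns, $\mathcal{N}\approx\big(\begin{smallmatrix}u^q & u\\ 1 & 1\end{smallmatrix}\big)^{\otimes s}$ and hence $\det(\mathcal{N})\approx u^{a_s}\neq 0$. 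Since every scalar modular form is $A$-periodic and bounded at infinity, it has a $u$-expansion, so $M(\det\!^\bullet;\KK_\Sigma)$ embeds into $\KK_\Sigma((u))$; therefore any $M(\det\!^\bullet;\KK_\Sigma)$-linear relation among the $G_{I,J}$ is a $\KK_\Sigma((u))$-linear relation, which must be trivial. Thus the generating family $(G_{I,J})_{I\sqcup J=\Sigma}$ is a free basis, and the module has rank $N$.
\end{proof}

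The only substantial ingredient here is Theorem \ref{theorem2}; everything else is bookkeeping with the $\ZZ\times\ZZ/(q-1)\ZZ$-grading together with the non-vanishing of the determinant $\det(\mathcal{N})$ recorded during the proof of Theorem \ref{completeexpression}. The point requiring a little care is the passage from the finite-dimensional graded pieces in Theorem \ref{theorem2} to a genuine module decomposition, and the verification that multiplication by a scalar modular form preserves strong regularity; both are immediate from the definitions, so I do not expect any real obstacle.
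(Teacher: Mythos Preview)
Your proof is correct and follows the same route the paper has in mind: the paper simply writes ``We immediately deduce'' after Theorem \ref{theorem2}, and you have spelled out exactly what that deduction consists of --- summing the graded pieces of Theorem \ref{theorem2} and reading off freeness from the linear independence of the $G_{I,J}$ over $\KK_\Sigma((u))$ established in the proof of Theorem \ref{completeexpression}. Note that freeness could also be extracted directly from the $\bigoplus$ in Theorem \ref{theorem2} together with the observation that each $G_{I,J}$ has a nonzero entry in the field $\mathfrak{K}_\Sigma$, but your determinant argument is just as clean.
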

After the work of Marks and Mason \cite{MAR&MAS} and Bantay and Gannon \cite{BAN&GAN} in the setting of complex vector-valued modular forms, this is expected. These authors prove that vector spaces of vector valued modular forms for $\operatorname{SL}_2(\ZZ)$ associated to an indecomposable finite dimensional complex representation of this group (and satisfying some additional mild technical conditions) all are free of  dimension that of the representation. 

Similarly, we have, writing $M(\boldsymbol{1};\KK_\Sigma)$ for the graded algebra of scalar Drinfeld modular forms for $\Gamma$ (it is equal to the graded algebra $\KK_\Sigma[g,\Delta]$ see \cite[Corollary (6.5)]{GEK}) and $M^\dag(\rho^*_\Sigma;\KK_\Sigma)$ the $M(\boldsymbol{1};\KK_\Sigma)$-module of strongly regular modular forms for $\rho^*_\Sigma$:

\begin{Corollary}\label{corostrongstructuretypezero}
The graded $M(\boldsymbol{1};\KK_\Sigma)$-module $M^\dag(\rho^*_\Sigma;\KK_\Sigma)$ is free of rank $N=2^{s}$.
\end{Corollary}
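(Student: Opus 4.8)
\textbf{Proof plan for Corollary \ref{corostrongstructuretypezero}.}

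The plan is to deduce this from Theorem \ref{theorem2} and Corollary \ref{explicitgeneratorsforstrongregularity} by simply restricting the grading to type $0$. Recall that Corollary \ref{explicitgeneratorsforstrongregularity} already asserts that the full bigraded module $M^\dag(\rho^*_{\Sigma}\det^{\bullet};\KK_\Sigma)$ is free of rank $N=2^s$ over the bigraded algebra $M(\det^\bullet;\KK_\Sigma)$, with the $N$ explicit generators $(\otimes_{i\in I}\mathcal{E}(1;\rho^*_{t_i}))\otimes(\otimes_{j\in J}\mathcal{E}(q;\rho^*_{t_j}))$ indexed by the decompositions $I\sqcup J=\Sigma$. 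The key observation is that each of these generators is \emph{homogeneous of type} $0$: indeed $\mathcal{E}(1;\rho^*_{t_i})$ lies in $M^\dag_1(\rho^*_{t_i};\KK_\Sigma)$ and $\mathcal{E}(q;\rho^*_{t_i})$ lies in $M^\dag_q(\rho^*_{t_i};\KK_\Sigma)$, both spaces of type $0$ (no determinant twist), and Kronecker products of type-$0$ forms are again of type $0$. So all $N$ module generators already sit inside $M^\dag(\rho^*_\Sigma;\KK_\Sigma)$, the type-$0$ part of $M^\dag(\rho^*_\Sigma\det^\bullet;\KK_\Sigma)$.

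First I would record that $M(\boldsymbol{1};\KK_\Sigma)$, the graded algebra of scalar Drinfeld modular forms of type $0$ for $\Gamma$ (equal to $\KK_\Sigma[g,\Delta]$ by \cite[Corollary (6.5)]{GEK} extended to $\KK_\Sigma$-coefficients via Proposition \ref{proposition-scalars}), is precisely the type-$0$ graded subalgebra of $M(\det^\bullet;\KK_\Sigma)$. Then I would observe that the bigrading of $M^\dag(\rho^*_\Sigma\det^\bullet;\KK_\Sigma)$ by $(w,m)\in\ZZ\times\ZZ/(q-1)\ZZ$ is compatible with the module structure, and that since the free generators are homogeneous of type $0$, the decomposition $M^\dag(\rho^*_\Sigma\det^\bullet;\KK_\Sigma)=\bigoplus_{g}M(\det^\bullet;\KK_\Sigma)\,g$ (sum over the $N$ generators $g$) refines to a decomposition on each type-homogeneous component. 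Taking the type-$0$ component of both sides yields
$$M^\dag(\rho^*_\Sigma;\KK_\Sigma)=\bigoplus_{I\sqcup J=\Sigma}M(\boldsymbol{1};\KK_\Sigma)\left(\bigotimes_{i\in I}\mathcal{E}(1;\rho^*_{t_i})\right)\otimes\left(\bigotimes_{j\in J}\mathcal{E}(q;\rho^*_{t_j})\right),$$
which exhibits $M^\dag(\rho^*_\Sigma;\KK_\Sigma)$ as a free $M(\boldsymbol{1};\KK_\Sigma)$-module of rank $N=2^s$ on the stated generators.

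The only point requiring a little care — and what I expect to be the main (minor) obstacle — is the bookkeeping of types: one must check that in the weight-and-type-graded decomposition of Theorem \ref{theorem2}, the scalar coefficient spaces $M_{w-i-qj}(\det^{-m};\KK_\Sigma)$ of type $m$ pair with the type-$0$ generators to produce exactly the type-$m$ part of the target, so that setting $m=0$ isolates $M(\boldsymbol{1};\KK_\Sigma)$ on the coefficient side without mixing components. This is immediate once one notes that Kronecker product of representations adds degrees $\pmod{q-1}$ and that $\rho^*_\Sigma$ itself has degree $s$, so the type-$m$ condition on $\mathcal{G}\in M^\dag_w(\rho^*_\Sigma\det^{-m})$ forces the scalar factors to have type $m$ and the Eisenstein generators to have type $0$; the direct-sum assertion (linear independence over $\KK_\Sigma((u))$) is already established in the proof of Theorem \ref{completeexpression} via the $v$-valuation computation $\det(\mathcal{N})\approx u^{a_s}$, and restricting to type $0$ does not affect it.
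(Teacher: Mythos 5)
Your proposal is correct and matches the paper's (implicit) argument: the paper treats this corollary as immediate from Theorem \ref{theorem2}, obtained by setting $m=0$ there and summing over the weights $w$, with the same $N=2^s$ generators $\bigl(\bigotimes_{i\in I}\mathcal{E}(1;\rho^*_{t_i})\bigr)\otimes\bigl(\bigotimes_{j\in J}\mathcal{E}(q;\rho^*_{t_j})\bigr)$. Your extra bookkeeping through the bigraded module $M^\dag(\rho^*_\Sigma\det^{\bullet};\KK_\Sigma)$ is a correct elaboration of the same one-line deduction.
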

We can take  in the above result the generators of Corollary \ref{explicitgeneratorsforstrongregularity}.

\begin{proof}[Proof of Theorem \ref{theorem2}]
It is easily seen that the left-hand side of (\ref{theodag}) is contained in the right-hand side and we have to prove the reverse inclusion. Corollary \ref{completeexpression} ensures the equality of the corresponding $\KK_\Sigma$-vector spaces 
of weak modular forms (``when $\dag$ is replaced with $!$"). This means that if $\mathcal{G}\in 
M_{w}^\dag(\rho^*_{\Sigma}\det\!{}^{-m};\KK_\Sigma)$, then
$$\mathcal{G}\in M_{w}^!(\rho^*_{\Sigma}\det\!{}^{-m};\KK_\Sigma)=\bigoplus_{I\sqcup J=\Sigma}\left(\bigotimes_{i\in I}\mathcal{E}(1;\rho^*_{t_i})\right)\otimes\left(\bigotimes_{j\in J}\mathcal{E}(q;\rho^*_{t_j})\right)M_{w-i-qj}^!(\det\!{}^{-m};\KK_\Sigma).$$
All we need to prove is that the coefficients occurring in the various spaces of scalar weak modular forms $M_{w-i-qj}^!(\det\!{}^{-m};\KK_\Sigma)$
are in fact Drinfeld modular forms (regular at infinity). To see this it suffices to show that 
$$\mathcal{G}\in M_{w-1}(\rho^*_{\Sigma'}\det\!{}^{-m};\KK_\Sigma)\otimes\mathcal{E}(1;\rho^*_{t_k})+M_{w-q}(\rho^*_{\Sigma'}\det\!{}^{-m};\KK_\Sigma)\otimes\mathcal{E}(q;\rho^*_{t_k}),$$
where $k$ is an integer such that $k<\min(\Sigma')$ with $\Sigma=\Sigma'\sqcup\{k\}$. A simple induction will then allow to complete the proof.

Lemma \ref{veryelementary} implies that for all $s\geq 1$, writing
$$\begin{pmatrix}u^{-1} & 0 \\ 0 & 1\end{pmatrix}^{\otimes s}=\operatorname{Diag}(U_s),$$
then
\begin{equation}\label{Us}
U_s=u^{-1}U_{s-1}\odot U_{s-1}.\end{equation} Now, we set $\boldsymbol{G}=\boldsymbol{G}_1\odot\boldsymbol{G}_2$ with $\boldsymbol{G}={}^t\mathcal{G}$ an element of 
$M_{w}^\dag(\rho^*_{\Sigma}\det^{-m};\KK_\Sigma).$ We know by the proof of Theorem \ref{weakstructureresult} that 
$$\boldsymbol{H}=\boldsymbol{H}_1\odot\boldsymbol{H}_2=\boldsymbol{GF}$$
(with $\boldsymbol{F}$ as in (\ref{defF})) is such that 
$$\mathcal{H}_1={}^t\boldsymbol{H}_1\in M_{w-1}^!(\rho^*_{\Sigma'}\det\!{}^{-m};\KK_\Sigma),\quad \text{ and }\mathcal{H}_2={}^t\boldsymbol{H}_2\in M_{w-q}^!(\rho^*_{\Sigma'}\det\!{}^{-m};\KK_\Sigma).$$
It remains to prove that $\mathcal{H}_1$ and $\mathcal{H}_2$ are both strongly regular. We have to show that 
$$\boldsymbol{H}_j(z)\operatorname{Diag}(U_{s-1})\in\mathfrak{O}_\Sigma^{1\times N'},\quad j=1,2.$$
By hypothesis, we know that the entries of $\boldsymbol{G}(z)\operatorname{Diag}(U_s)$ 
are in $\mathfrak{O}_\Sigma$.
Explicitly, the entries of $u(z)^{-1}\boldsymbol{G}_1(z)\operatorname{Diag}(U_{s-1})$ and 
of $\boldsymbol{G}_2(z)\operatorname{Diag}(U_{s-1})$ are in $\mathfrak{O}_\Sigma$.
We recall the relation
$a\approx b$, for elements of $\mathfrak{K}_\Sigma^\times$, and its extension to matrices with non-zero entries.
We note that $\boldsymbol{H}_1,\boldsymbol{H}_2$ are given, explicitly, by the formulas:
$$\boldsymbol{H}_1=\frac{-\boldsymbol{G}_1\tau(e_2)+\boldsymbol{G}_2\tau(e_1)}{\widetilde{\pi}\zeta_A(q;\chi_{t_k})h},\quad \boldsymbol{H}_2=\frac{\boldsymbol{G}_1e_2-\boldsymbol{G}_2e_1}{\widetilde{\pi}\zeta_A(q;\chi_{t_k})h},$$
where $\mathcal{E}=(e_1,e_2)$ (\footnote{The reader will not mix these functions with the functions $e_i$ of \S \ref{tameseriestheory}.}).
By the well-known $u$-expansion $h=-u+o(u)$ (which tells us that $v(h)=1$ and $h\approx u$), we thus have
$$u\boldsymbol{H}_1\approx-\boldsymbol{G}_1\tau(e_2)+\boldsymbol{G}_2\tau(e_1),\quad u\boldsymbol{H}_2\approx\boldsymbol{G}_1e_2-\boldsymbol{G}_2e_1.$$ We first study $\mathcal{H}_1$.
We have:
\begin{eqnarray*}
\boldsymbol{H}_1\operatorname{Diag}(U_{s-1})&\approx&u^{-1}(-\boldsymbol{G}_1\tau(e_2)+\boldsymbol{G}_2\tau(e_1))\operatorname{Diag}(U_{s-1})\\
&\approx&-u^{-1}\boldsymbol{G}_1\operatorname{Diag}(U_{s-1})\tau(e_2)+\boldsymbol{G}_2\operatorname{Diag}(U_{s-1})u^{-1}\tau(e_1).
\end{eqnarray*}
Now, by hypothesis $u^{-1}\boldsymbol{G}_1\operatorname{Diag}(U_{s-1})\in
\mathfrak{O}_\Sigma^{1\times N'}$, while $v(\tau(e_2))=0$, from which we deduce that $u^{-1}\boldsymbol{G}_1\operatorname{Diag}(U_{s-1})\tau(e_2)\in \mathfrak{O}_\Sigma^{1\times N'}$. On the other hand, we have that $\tau(e_1)\approx u^q$. hence, we have that
$\boldsymbol{G}_2\operatorname{Diag}(U_{s-1})u^{-1}\tau(e_1)\approx\boldsymbol{G}_2\operatorname{Diag}(U_{s-1})u^{q-1}\in 
\mathfrak{M}_\Sigma^{1\times N'}\subset 
\mathfrak{O}_\Sigma^{1\times N'}$. Therefore all entries of $\boldsymbol{H}_1\operatorname{Diag}(U_{s-1})$ are in $\mathfrak{O}_\Sigma$ and $\mathcal{H}_1$ is strongly regular.

Let us now deal with $\mathcal{H}_2$. Similarly, we have that
\begin{eqnarray*}
\boldsymbol{H}_2\operatorname{Diag}(U_{s-1})&\approx&u^{-1}(\boldsymbol{G}_1e_2-\boldsymbol{G}_2e_1)\operatorname{Diag}(U_{s-1})\\
&\approx&u^{-1}\boldsymbol{G}_1\operatorname{Diag}(U_{s-1})e_2-\boldsymbol{G}_2\operatorname{Diag}(U_{s-1})u^{-1}e_1.
\end{eqnarray*}
Since $v(e_2)=0$, we have that the term $u^{-1}\boldsymbol{G}_1\operatorname{Diag}(U_{s-1})e_2$ has all the entries in $\mathfrak{O}_\Sigma$. Moreover, $e_1\approx u$ so that all the entries of
$\boldsymbol{G}_2\operatorname{Diag}(U_{s-1})u^{-1}e_1$ are in $\mathfrak{O}_\Sigma$ by the hypothesis on $\mathcal{G}_2$. Hence, $\boldsymbol{H}_2\operatorname{Diag}(U_{s-1})\in\mathfrak{O}_\Sigma^{1\times N'}$ and $\mathcal{H}_2$ is strongly regular. This completes the proof of the Theorem.
\end{proof}

\subsection{More structure properties} In contrast with that of strongly regular modular forms, the structure of the vector spaces $M_w(\rho^*_\Sigma\det^{-m};\KK_\Sigma)$ is more difficult to describe. In this subsection, we give some properties of them. 
Let $r\geq 0$ be the unique integer such that $r(q-1)+1\leq s\leq (r+1)(q-1)$. We want to show:

\begin{Theorem}\label{regularitystrongregularity} Let $f\in M_w(\rho^*_\Sigma\det^{-m};\KK_\Sigma)$.
Then, $\tau^{r}(f)\in M_{wq^r}^\dag(\rho^*_\Sigma\det^{-m};\KK_\Sigma)$.
\end{Theorem}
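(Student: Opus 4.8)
The plan is to reduce the statement to a \emph{pointwise} check on the $v$-valuations of the entries of $\tau^r(f)$, using the key fact that $\tau$ multiplies $v$-valuations by $q$. Recall that by Theorem \ref{theorem-u-expansions} (more precisely Proposition \ref{quasiperiodictempered}, which gives the sharpened statement $f\in\mathfrak{O}_{\TT_\Sigma[1/d]^\wedge}^{N\times 1}$ used in the proof of Theorem \ref{theorem-structure-tate-algebras}), every $f\in M_w(\rho^*_\Sigma\det^{-m};\KK_\Sigma)$ can be identified with an element of $\mathfrak{O}_\Sigma^{N\times 1}$. Writing $f=(f^J)_{J\subset\Sigma}$ in the $\Sigma$-indexing of \S \ref{expansion-eisenstein} with $J$ running over subsets of $\Sigma$ and $I=\Sigma\setminus J$, strong regularity is, by \eqref{equivalentasymptotic}, the condition $v(f^J)\geq |I|=|\Sigma\setminus J|$ for all $J$. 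So what I must show is: if $v(f^J)\geq 0$ for all $J$ then $v(\tau^r(f^J))=q^r v(f^J)\geq |\Sigma\setminus J|$ for all $J$. This cannot follow from $v(f^J)\geq 0$ alone (that would only give $\geq 0$), so the real content is a \emph{lower bound $v(f^J)\geq c_J>0$ for $J\subsetneq\Sigma$} that is strong enough that multiplication by $q^r$ pushes it past $|\Sigma\setminus J|$.

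The mechanism for that lower bound should come from the $\rho^*_\Sigma$-quasi-periodicity together with Proposition \ref{quasiperiodictempered}: we have $f=\Phi_{\rho^*_\Sigma}g$ with $g\in\KK_\Sigma[[u]]^{N\times 1}$, and by Corollary \ref{caseofthebasicrepresentation} and the Kronecker-product construction of $\Phi_{\rho^*_\Sigma}$, the matrix $\Phi_{\rho^*_\Sigma}$ is lower triangular with $1$'s on the diagonal and its off-diagonal entries are tame series built, via $\otimes$, from the basic blocks $\Phi_{\rho^*_{t_i}}=\bigl(\begin{smallmatrix}I & 0\\ \chi(z) & I\end{smallmatrix}\bigr)$ (with $v(\chi(z))=-1/q$, i.e.\ weight $1/q$). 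Concretely, the entry $\Phi^{J,J'}$ of $\Phi_{\rho^*_\Sigma}$ indexed by subsets $J'\subset J$ of $\Sigma$ is (up to a unit) $\prod_{i\in J\setminus J'}\chi_{t_i}(z)$-type, which has weight $\tfrac{1}{q}|J\setminus J'|$ roughly; more precisely its $v$-valuation is controlled by Theorem \ref{theopsi}/\eqref{def-w-max}. Then $f^J=\sum_{J'\subset J}\Phi^{J,J'}g_{J'}$ with $g_{J'}\in\KK_\Sigma[[u]]$, i.e.\ $v(g_{J'})\geq 0$; combining this with the type considerations in the proof of Theorem \ref{theorem-structure-tate-algebras} (which force $g_{J'}\in u^{m_{J'}}\KK_\Sigma[[u^{q-1}]]$ for suitable $m_{J'}$) should yield $v(f^J)\geq w_{\max}(|\Sigma\setminus J|)$ when $J\subsetneq\Sigma$, matching the Eisenstein computation in Corollary \ref{valuations-eisenstein}. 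Actually the cleanest route is: by Theorem \ref{completeexpression}, $f$ is an $M^!(\det^\bullet;\KK_\Sigma)$-combination of the $N$ generators $\bigotimes_{i\in I}\mathcal{E}(1;\rho^*_{t_i})\otimes\bigotimes_{j\in J}\mathcal{E}(q;\rho^*_{t_j})$, and the modularity (regularity, not just weak modularity) of $f$ forces the scalar coefficients to be genuine modular forms, hence of non-negative $v$-valuation; then one reads off $v(f^J)$ directly from the $\approx$-equivalences $\mathcal{E}(1;\rho^*_{t_i})\approx\binom{u}{1}$ and $\mathcal{E}(q;\rho^*_{t_i})\approx\binom{u^q}{1}$ established in the proof of that theorem.

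So the steps I would carry out, in order, are: (1) write $f$ in the basis of Theorem \ref{completeexpression}, $f=\sum_{I\sqcup J=\Sigma}c_{I,J}\cdot\bigl(\bigotimes_{i\in I}\mathcal{E}(1;\rho^*_{t_i})\otimes\bigotimes_{j\in J}\mathcal{E}(q;\rho^*_{t_j})\bigr)$ with $c_{I,J}\in M^!_{w-|I|-q|J|}(\det^{-m};\KK_\Sigma)$, and argue that $f\in M_w$ (regular) forces each $c_{I,J}\in M_{w-|I|-q|J|}(\det^{-m};\KK_\Sigma)$ --- this is the "un-wrapping" that Theorem \ref{theorem2} does for strongly regular forms, but here run in reverse to \emph{detect} regularity of the coefficients from regularity of $f$; (2) use the scalar $u$-expansion bound $v(c_{I,J})\geq 0$; (3) compute, for each subset-entry position, the $v$-valuation contribution of the generator using the $\approx$-matrix $\bigl(\begin{smallmatrix}u^q & u\\ 1 & 1\end{smallmatrix}\bigr)^{\otimes s}$ from the proof of Theorem \ref{completeexpression}, obtaining a lower bound of the shape $v(f^J)\geq (\text{something involving }|\Sigma\setminus J|)$ with the "something" being a positive rational $\leq 1$ for $J\subsetneq\Sigma$; (4) apply $\tau^r$, multiply all valuations by $q^r$, and verify the arithmetic inequality $q^r\cdot(\text{bound})\geq |\Sigma\setminus J|$ for the given $r$ with $r(q-1)+1\leq s\leq (r+1)(q-1)$ --- here one uses that the worst case is a single factor contributing weight $\tfrac{1}{q^{\,\cdot}}$ and that $|\Sigma\setminus J|\le s\le(r+1)(q-1)$; (5) invoke \eqref{equivalentasymptotic} to conclude $\tau^r(f)\in M^\dag_{wq^r}(\rho^*_\Sigma\det^{-m};\KK_\Sigma)$, noting $\tau^r(f)$ is still modular of weight $wq^r$ by the injectivity statement $M_w\xrightarrow{\tau}M_{qw}$ recalled in \S \ref{structureofvmf}. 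The main obstacle I anticipate is step (3)--(4): getting the bookkeeping of the $v$-valuations of the mixed Kronecker tensor exactly right (the positions indexed by subsets, the carry-over behaviour encoded in the one's-counting sequence $(n_i)$ and Lemma \ref{veryelementary}) and then checking that the chosen $r$ is exactly what makes the inequality tight, rather than an $r$ that is off by one; I expect that a careful induction on $s$ along the lines of the proof of Theorem \ref{theorem2} --- splitting $\Sigma=\Sigma'\sqcup\{k\}$ and tracking how $\tau$ interacts with the factor $\boldsymbol F$ of \eqref{defF} --- will make the inductive step transparent and pin down the value of $r$.
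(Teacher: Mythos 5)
There is a genuine gap, and it sits at your step (1), not at steps (3)--(4) where you anticipate trouble. You propose to write $f=\sum_{I\sqcup J=\Sigma}c_{I,J}\cdot\bigl(\bigotimes_{i\in I}\mathcal{E}(1;\rho^*_{t_i})\otimes\bigotimes_{j\in J}\mathcal{E}(q;\rho^*_{t_j})\bigr)$ via Theorem \ref{completeexpression} and to ``detect'' from the regularity of $f$ that every coefficient $c_{I,J}$ is a regular modular form. If that implication held, then, since the generators are strongly regular and $M(\det^\bullet;\KK_\Sigma)$-multiples of strongly regular forms are strongly regular (the easy inclusion in Theorem \ref{theorem2}), every $f\in M_w(\rho^*_\Sigma\det^{-m};\KK_\Sigma)$ would already be strongly regular and the theorem would hold with $r=0$. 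This is false as soon as $|\Sigma|\geq q$: for $s=|\Sigma|=q$ the first entry of $\mathcal{E}(1;\rho^*_\Sigma)$ is $-\widetilde{\pi}\sum_{a\in A^+}\sigma_\Sigma(a)u_a$ by (\ref{thirdformula}), of $v$-valuation $1<s$, so this form is regular but not strongly regular. The ``un-wrapping'' in the proof of Theorem \ref{theorem2} takes strong regularity as its hypothesis (it is exactly what controls $u^{-1}\boldsymbol{G}_1\operatorname{Diag}(U_{s-1})$ there); run from mere regularity, the scalar coefficients $c_{I,J}$ may be weak modular forms with poles at infinity that cancel in the sum, and no valuation bound on the individual entries of $f$ can be extracted this way.

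The correct mechanism is the one you mention in your second paragraph and then abandon as ``less clean''. The paper works with the factorization $f=\Phi_{\rho^*_\Sigma}g$, $g\in\KK_\Sigma[[u]]^{N\times 1}$, of Proposition \ref{quasiperiodictempered}, and the two points you leave vague are precisely the content: (i) by Lemma \ref{computationcoefficientsvarphi} the off-diagonal entries of $\Phi_{\rho^*_\Sigma}$ are Perkins-type series whose valuations are computed by Theorem \ref{theopsi}, giving the bound (\ref{valuation-Phi}), namely $v(\Phi_{I,J})\geq\kappa(I)-1$ --- note this is $\kappa$ of the indexing subset itself, not $w_{\max}$ of a complement as you write (compare Corollary \ref{valuations-eisenstein}, where $v(\mathcal{E}^J)=\kappa(J)$); and (ii) the lower triangularity of $\Phi_{\rho^*_\Sigma}$ forces, inductively, $g^{J}\in u\KK_\Sigma[[u]]$ for $J\subsetneq\Sigma$, which upgrades $v(g^J)\geq 0$ to $v(g^J)\geq 1$ and yields the uniform pointwise bound $v(f^J)\geq\kappa(J)$ for all $J\subsetneq\Sigma$ (Lemma \ref{valuations-rho-star}). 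Once this bound is available, your steps (4)--(5) go through: $\tau^r$ multiplies $v$-valuations by $q^r$, and the conclusion reduces to the arithmetic inequality comparing $q^r\kappa$ with the cardinalities appearing in (\ref{equivalentasymptotic}), which is where the definition $r(q-1)+1\leq s\leq(r+1)(q-1)$ is used. Without that pointwise bound the argument does not close, and the route you commit to cannot supply it.
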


To prove this result we need some preliminary results with some tools to handle the representations $\rho_\Sigma$ and $\rho_\Sigma^*$.
We order, for $\gamma\in\gamma$, the columns of $\rho_\Sigma(\gamma)$ from $\emptyset$ to $\Sigma$ along the total order described in \S \ref{expansion-eisenstein}, and we order the rows from $\Sigma$ to $\emptyset$ along the opposite of this order. Let $M=(M_{I,J})_{I,J\subset\Sigma}\in B^{N\times N}$ be a matrix with entries in some ring $B$, with rows and columns indexed as above (the first index always indicates rows).
Since the opposite order of the inclusion order on the subsets of $\Sigma$ is obtained by computing complementaries $I\mapsto I^c:=\Sigma\setminus I$, we have the following transposition rule:
\begin{equation}\label{transpose}
{}^tM=(M_{J^c,I^c})_{I,J\subset\Sigma}\in B^{N\times N}.
\end{equation}
Now we write with $a\in A$:
$$\rho_\Sigma(T_a)=(\rho_{I,J}(T_a))_{I,J\subset\Sigma}\in\FF_q(\underline{t}_\Sigma)^{N\times N},$$ and we do similarly for $\rho^*_\Sigma(T_a)=(\rho^*_{I,J}(T_a))_{I,J\subset\Sigma}$. For $U\subset\Sigma$, we recall the semi-character $\sigma_U=\prod_{i\in U}\chi_{t_i}$.
An elementary computation, the fact that
the inverse of $\rho_{t_i}(T_a)$ is $\rho_{t_i}(T_{-a})$, and an application of (\ref{transpose}), lead to:
\begin{Lemma}\label{computationcoefficientsvarphi}
For $I,J\subset\Sigma$, we have:
$$\rho_{I,J}(T_a)=\left\{\begin{matrix}0 \text{ if }I\cup J\subsetneq \Sigma\\ \sigma_{I\cap J}(a)\text{ if }I\cup J=\Sigma\end{matrix}\right.,\quad 
\rho_{I,J}^*(T_a)=\left\{\begin{matrix}0 \text{ if }J\cap I\neq\emptyset\\ (-1)^{|(J\cup I)^c|}\sigma_{(J\cup I)^c}(a)\text{ if }J\cap I=\emptyset\end{matrix}\right..$$
\end{Lemma}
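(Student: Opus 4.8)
The plan is to reduce the lemma to a direct computation with Kronecker products. First I would record that $T_a=(\begin{smallmatrix}1 & a\\ 0 & 1\end{smallmatrix})$ gives $\rho_{t_i}(T_a)=(\begin{smallmatrix}1 & \chi_{t_i}(a)\\ 0 & 1\end{smallmatrix})$, and that by definition $\rho_\Sigma(T_a)=\bigotimes_{i\in\Sigma}\rho_{t_i}(T_a)$. With the ordering conventions fixed above (columns ordered from $\emptyset$ to $\Sigma$, rows from $\Sigma$ to $\emptyset$), the entry of this Kronecker product in row $I$ and column $J$ equals $\prod_{i\in\Sigma}\big(\rho_{t_i}(T_a)\big)_{\epsilon_i,\eta_i}$, where $\epsilon_i=0$ if $i\in I$ and $\epsilon_i=1$ if $i\notin I$, while $\eta_i=1$ if $i\in J$ and $\eta_i=0$ if $i\notin J$; the complementation on the row slot is precisely the effect of reading the rows in the reversed order (equivalently, relabelling each row by the complement of its subset).

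Then I would carry out the elementary computation. The matrix $(\begin{smallmatrix}1 & x\\ 0 & 1\end{smallmatrix})$ has a single vanishing entry, the lower-left one, i.e.\ in row $1$, column $0$. Hence the $i$-th factor vanishes exactly when $\epsilon_i=1$ and $\eta_i=0$, that is when $i\notin I$ and $i\notin J$; so the whole product vanishes unless $I\cup J=\Sigma$. When $I\cup J=\Sigma$, the $i$-th factor equals $\chi_{t_i}(a)$ precisely when $(\epsilon_i,\eta_i)=(0,1)$, i.e.\ when $i\in I\cap J$, and it equals $1$ otherwise; multiplying these together gives $\prod_{i\in I\cap J}\chi_{t_i}(a)=\sigma_{I\cap J}(a)$. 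This establishes the first formula, $\rho_{I,J}(T_a)=\sigma_{I\cap J}(a)$ if $I\cup J=\Sigma$ and $0$ otherwise.

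For the contragredient I would use $\rho^*_\Sigma(T_a)={}^t\rho_\Sigma(T_a)^{-1}$ together with $\rho_\Sigma(T_a)^{-1}=\rho_\Sigma(T_a^{-1})=\rho_\Sigma(T_{-a})$ (as $T_a^{-1}=T_{-a}$), and then apply the transposition rule (\ref{transpose}), which gives $\rho^*_{I,J}(T_a)=\rho_{J^c,I^c}(T_{-a})$ with $I^c:=\Sigma\setminus I$. Substituting the formula just proved, this vanishes unless $J^c\cup I^c=\Sigma$, i.e.\ unless $(I\cap J)^c=\Sigma$, i.e.\ unless $I\cap J=\emptyset$; and in that case it equals $\sigma_{J^c\cap I^c}(-a)=\sigma_{(I\cup J)^c}(-a)$. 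Finally, each $\chi_{t_i}$ is $\FF_q$-linear and we work in characteristic $p$, so $\sigma_U(-a)=\prod_{i\in U}\big(-\chi_{t_i}(a)\big)=(-1)^{|U|}\sigma_U(a)$ for every $U\subset\Sigma$; taking $U=(I\cup J)^c$ yields the asserted value $(-1)^{|(J\cup I)^c|}\sigma_{(J\cup I)^c}(a)$.

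The computation itself is entirely routine; the only thing requiring care is the bookkeeping with the two ordering conventions, namely ensuring that the complement lands on the correct index (on the row index for $\rho_\Sigma$, which in the contragredient case becomes the swap $I\mapsto J^c$, $J\mapsto I^c$ via (\ref{transpose})), together with the bijection between subsets of $\Sigma$ and their binary labels. I do not anticipate any genuine obstacle beyond this.
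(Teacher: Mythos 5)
Your proof is correct and follows exactly the route the paper indicates (the paper only sketches it as "an elementary computation, the fact that the inverse of $\rho_{t_i}(T_a)$ is $\rho_{t_{i}}(T_{-a})$, and an application of the transposition rule"): you compute the Kronecker-product entry with the reversed row labelling, then pass to the contragredient via $T_a^{-1}=T_{-a}$ and $\rho^*_{I,J}(T_a)=\rho_{J^c,I^c}(T_{-a})$, with the sign coming from $\FF_q$-linearity of the $\chi_{t_i}$. The bookkeeping with the two orderings is handled correctly.
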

Note that $\rho_\Sigma(T_a)$ is symmetric with respect to the anti-diagonal (we can switch $I,J$) and that the entries in the diagonal are all equal to 
$1$ because these are the entries indexed by $I,J$ with $I\sqcup J=\Sigma$. The coefficient of $\rho_\Sigma(T_a)$ in the upper-right corner
is equal to $\sigma_\Sigma(a)=\prod_{i\in\Sigma}\chi_{t_i}(a)$. We deduce the explicit expression of the coefficients of $\Phi_{\rho^*_\Sigma}=(\Phi_{I,J})_{I,J}$ (defined in \S \ref{section-Phi-rho}) in term of Perkins' series. In particular, since the function $\kappa$ in (\ref{def-kappa}) is strictly decreasing, we deduce from Theorem \ref{theopsi} the following property. If 
$I,J\subset\Sigma$ with $I\cap J=\emptyset$ and $I\cup J\neq\Sigma$ (not corresponding to a diagonal coefficient), then
\begin{equation}\label{valuation-Phi}
v(\Phi_{I,J})\geq \kappa(I)-1.
\end{equation}
We set $\rho=\rho^*_\Sigma\det^{-m}$.
The above properties can be used to prove:
\begin{Lemma}\label{valuations-rho-star}
Let $f=(f^I)_I$ be a $\rho$-quasi-periodic function with $\iota_\Sigma(f)\in\mathfrak{O}_\Sigma$. Then, if $I\subsetneq\Sigma$, $v(f^I)\geq\kappa(I)$.
\end{Lemma}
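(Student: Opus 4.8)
The plan is to combine the decomposition $f=\Phi_{\rho^*_\Sigma}g$ furnished by Proposition \ref{quasiperiodictempered} with a depth — hence weight — estimate on the entries of $\Phi_{\rho^*_\Sigma}$. First I would observe that, since $\det(T_a)=1$ for all $a\in A$, the matrix $\rho(T_a)$ coincides with $\rho^*_\Sigma(T_a)$, so $\Phi_\rho=\Phi_{\rho^*_\Sigma}$ and $H(\rho;\KK_\Sigma)=H(\rho^*_\Sigma;\KK_\Sigma)$; the twist by $\det^{-m}$ is thus irrelevant throughout. The hypothesis that $f$ is $\rho$-quasi-periodic with $\iota_\Sigma(f)\in\mathfrak{O}_\Sigma^{N\times 1}$ means precisely that $f$ is tempered (so that $\iota_\Sigma(f)$ is defined) and regular, whence Proposition \ref{quasiperiodictempered} applies: we may write $f=\Phi_{\rho^*_\Sigma}\,g$ with $g={}^t(g^J)_{J\subseteq\Sigma}$ and every $g^J\in\KK_\Sigma[[u]]$, say $g^J=\sum_{i\geq 0}c^J_iu^i$ with $c^J_i\in\KK_\Sigma$, and the (unique) expansion of $f$ reads $f=f_0+\sum_{i>0}f_iu^i$ with $f_0\in\KK_\Sigma^{N\times 1}$ and $f_i\in(\Tamecirc{\KK_\Sigma})^{N\times 1}$ for $i>0$.

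The key point is the depth of the entries of $\Phi_{\rho^*_\Sigma}$. By Proposition \ref{propgeneralitiestame}(d), Corollary \ref{previouscorollary} and Lemma \ref{computationcoefficientsvarphi}, the entry of $\Phi_{\rho^*_\Sigma}$ in the row indexed by $I$ and the column indexed by $J$ vanishes unless $J\subseteq I$, and otherwise equals, up to sign, the reduction modulo the ideal $(e_0)$ of $\Tame{\KK_\Sigma}$ of a product of $|I\setminus J|$ of the depth-one tame series $\chi_{t_i}(z)$; by Lemma \ref{lemmanumu} such a reduction has depth at most $|I\setminus J|\leq|I|$, and it involves no $e_0$. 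Fixing $I\subsetneq\Sigma$ and expanding $f^I=\sum_{J\subseteq I}(\Phi_{\rho^*_\Sigma})_{I,J}\,g^J$ in powers of $u$, the $u^i$-coefficient is therefore $f^I_i=\sum_{J\subseteq I}c^J_i\,(\Phi_{\rho^*_\Sigma})_{I,J}$, a $\KK_\Sigma$-linear combination of tame series of depth $\leq|I|$; hence $f^I_i\in\bigoplus_{s\leq|I|}\Tamecirc{\KK_\Sigma}_s$, so Lemma \ref{for-the-next-proof} together with the monotonicity of $w_{\max}$ gives $w(f^I_i)\leq w_{\max}(|I|)$, that is $v(f^I_i)\geq\kappa(I)-1$ for every $i$ (using $w_{\max}(|I|)=1-\kappa(I)$ from Theorem \ref{theopsi}).

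To finish I would pin down the constant term. Reducing the relation $f(z+a)=\rho^*_\Sigma(T_a)f(z)$ modulo $\mathfrak{M}_\Sigma$ — equivalently, letting $|z|_\Im\to\infty$ — shows $f_0\in H(\rho^*_\Sigma;\KK_\Sigma)$, which by \eqref{identity-H-K} has all coordinates zero except the last; as $I\neq\Sigma$ this forces $f^I_0=(f_0)^I=0$, so $f^I=\sum_{i\geq 1}f^I_iu^i\in\mathfrak{M}_\Sigma$. Writing $i_0\geq 1$ for the least $i$ with $f^I_i\neq 0$, Proposition \ref{descriptionfieldofunif} then yields $v(f^I)=v(f^I_{i_0})+i_0\geq(\kappa(I)-1)+1=\kappa(I)$, which is the claim. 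The step I expect to require the most care is the depth bound $\lambda\big((\Phi_{\rho^*_\Sigma})_{I,J}\big)\leq|I\setminus J|$: one must check that normalising the product $\prod_{i\in I\setminus J}\chi_{t_i}(z)$ via the rewriting rules $e_k^q=e_{k-1}-\Theta e_k$ and then discarding the $e_0$-part cannot raise the depth, which is exactly the content of the depth super-additivity in Lemma \ref{lemmanumu} and the depth-grading \eqref{grading-T} of $\Tame{\KK_\Sigma}$.
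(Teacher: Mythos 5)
Your proof is correct and follows essentially the same route as the paper's: write $f=\Phi_{\rho^*_\Sigma}g$ via Proposition \ref{quasiperiodictempered}, bound the $v$-valuations of the relevant entries of $\Phi_{\rho^*_\Sigma}$ below by $\kappa(I)-1$ (the paper cites \eqref{valuation-Phi} directly where you re-derive it from the depth bound and Lemma \ref{for-the-next-proof}), and then gain the extra $+1$. The only cosmetic difference is where that $+1$ comes from — you kill the constant term $f^I_0$ via $f_0\in H(\rho^*_\Sigma;\KK_\Sigma)$ and \eqref{identity-H-K}, while the paper observes inductively that $g^J\in u\KK_\Sigma[[u]]$ for $J\subsetneq\Sigma$ — and these are equivalent.
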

\begin{proof}
By the proof of Proposition \ref{quasiperiodictempered}, we have 
$$f=\Phi_\rho g$$
where $g=(g^I)_I\in\KK_\Sigma[[u]]^{N\times 1}$. Since the entries of $\Phi_{\rho}$
are in $\Tamecirc{\KK_\Sigma}$ (valuations in $]-1,0]\cup\{\infty\}$) we see, inductively,
that $g^I\in u\KK_\Sigma[[u]]$ if $I\subsetneq\Sigma$ (while $g^\Sigma\in\KK_\Sigma[[u]]$) and (\ref{valuation-Phi}) allows to conclude.
\end{proof}
This generalizes Corollary \ref{valuations-eisenstein}. Theorem 
\ref{regularitystrongregularity} now follows easily.
Thanks to the alternative condition for strong regularity (\ref{equivalentasymptotic}) and 
Lemma \ref{valuations-rho-star}, the property of the Theorem is verified taking into account that if $I\subsetneq\Sigma$ then $q^r\kappa(I)\geq |I|$, which is easily seen.

Note that if $s=1$, every Drinfeld modular form for $\rho^*_t$ is strongly regular, which is a restatement of Theorem 3.9 of \cite{PEL&PER3}. We have 
$$M_{w}^\dag(\rho^*_\Sigma\det{}^{-m};\KK_\Sigma)\subset M_{w}(\rho^*_\Sigma\det{}^{-m};\KK_\Sigma)\subset M_{w}^!(\rho^*_\Sigma\det{}^{-m};\KK_\Sigma),$$ and the inclusions are in general strict. However, as an immediate consequence of Theorem \ref{regularitystrongregularity}, we have:
\begin{Corollary}\label{regular-equals-modular}
If $s=|\Sigma|<q$, then $M_{w}^\dag(\rho^*_\Sigma\det^{-m};\KK_\Sigma)=M_{w}(\rho^*_\Sigma\det^{-m};\KK_\Sigma)$. For any $s$, $M_{w}(\rho^*_\Sigma\det^{-m};\KK_\Sigma)$ is of finite dimension over $\KK_\Sigma$.
\end{Corollary}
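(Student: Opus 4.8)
The plan is to deduce Corollary \ref{regular-equals-modular} from Theorem \ref{regularitystrongregularity} together with the structure results already obtained for strongly regular modular forms. The first assertion is essentially immediate: given $f\in M_w(\rho^*_\Sigma\det^{-m};\KK_\Sigma)$ with $s=|\Sigma|<q$, the integer $r$ of Theorem \ref{regularitystrongregularity} is $0$ (since then $r(q-1)+1\le s\le(r+1)(q-1)$ forces $r=0$), so $\tau^0(f)=f$ is already strongly regular; combined with the obvious inclusion $M_{w}^\dag\subset M_{w}$ this gives the claimed equality. So the content of the first statement is just a matter of reading off the value of $r$.

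For the finite-dimensionality statement for arbitrary $s$, the plan is as follows. First I would invoke Theorem \ref{regularitystrongregularity}: for $f\in M_w(\rho^*_\Sigma\det^{-m};\KK_\Sigma)$ we have $\tau^r(f)\in M_{wq^r}^\dag(\rho^*_\Sigma\det^{-m};\KK_\Sigma)$. Since $\tau$ is injective on the relevant spaces of modular forms (as recorded at the start of \S\ref{structureofvmf} and again in \S\ref{changing}/\S\ref{structure-tate}), the map $f\mapsto\tau^r(f)$ embeds $M_w(\rho^*_\Sigma\det^{-m};\KK_\Sigma)$ into $M_{wq^r}^\dag(\rho^*_\Sigma\det^{-m};\KK_\Sigma)$ as an $\FF_q(\underline{t}_\Sigma)$-vector space. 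It therefore suffices to show that $M_{wq^r}^\dag(\rho^*_\Sigma\det^{-m};\KK_\Sigma)$ is finite-dimensional over $\KK_\Sigma$, and then to upgrade the resulting $\FF_q(\underline{t}_\Sigma)$-linear bound to a $\KK_\Sigma$-dimension bound for the source.

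The finite-dimensionality of the strongly regular space is exactly what Theorem \ref{theorem2} (equivalently Corollary \ref{explicitgeneratorsforstrongregularity}) provides: $M^\dag(\rho^*_\Sigma\det^{-m};\KK_\Sigma)$ is a free module of rank $N=2^s$ over the algebra $M(\det^\bullet;\KK_\Sigma)=\KK_\Sigma[g,\Delta]$ of scalar Drinfeld modular forms, with the explicit generators $(\otimes_{i\in I}\mathcal{E}(1;\rho^*_{t_i}))\otimes(\otimes_{j\in J}\mathcal{E}(q;\rho^*_{t_j}))$. In a fixed weight $wq^r$ and type, only finitely many monomials $g^a\Delta^b$ times these $N$ generators land in that weight, so $M_{wq^r}^\dag(\rho^*_\Sigma\det^{-m};\KK_\Sigma)$ is finite-dimensional over $\KK_\Sigma$ (with an explicit bound in terms of $\dim_{\KK_\Sigma} M_{wq^r-i-qj}(\det^{-m};\KK_\Sigma)$, itself finite by the classical theory recalled via Gekeler's $M(\boldsymbol{1};\CC_\infty)=\CC_\infty[g,\Delta]$ and base change $\widehat\otimes_{\CC_\infty}\KK_\Sigma$ of Proposition \ref{proposition-scalars}).

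The one point that needs a little care — and the step I expect to be the main obstacle — is passing from an $\FF_q(\underline{t}_\Sigma)$-linear embedding $\tau^r\colon M_w\hookrightarrow M_{wq^r}^\dag$ to a genuine bound on $\dim_{\KK_\Sigma} M_w(\rho^*_\Sigma\det^{-m};\KK_\Sigma)$, since $\tau$ is only $\FF_q(\underline{t}_\Sigma)$-linear, not $\KK_\Sigma$-linear. The clean way around this is to observe that if $f_1,\dots,f_k\in M_w(\rho^*_\Sigma\det^{-m};\KK_\Sigma)$ are $\KK_\Sigma$-linearly independent, then $\tau^r(f_1),\dots,\tau^r(f_k)$ are $\KK_\Sigma$-linearly independent in $M_{wq^r}^\dag$: indeed a relation $\sum c_i\tau^r(f_i)=0$ with $c_i\in\KK_\Sigma$ gives, applying $\tau^{-r}$ (available after enlarging to a perfect coefficient field, or by the commutation $\tau^r(\lambda g)=\tau^r(\lambda)\tau^r(g)$ and the fact that $\tau^r$ maps $\KK_\Sigma$ onto $\KK_\Sigma^{q^r}$ bijectively), a relation $\sum \tau^{-r}(c_i) f_i=0$, contradicting independence. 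Hence $\dim_{\KK_\Sigma} M_w(\rho^*_\Sigma\det^{-m};\KK_\Sigma)\le \dim_{\KK_\Sigma} M_{wq^r}^\dag(\rho^*_\Sigma\det^{-m};\KK_\Sigma)<\infty$, which completes the argument.
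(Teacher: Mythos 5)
Your argument is correct and is exactly the route the paper intends: the corollary is stated there as an immediate consequence of Theorem \ref{regularitystrongregularity} (with $r=0$ when $s\le q-1$) combined with the finite-dimensionality of the strongly regular spaces from Theorem \ref{theorem2}. Your extra care with the $\tau$-semilinearity is sound, since $\tau$ is an $\FF_q(\underline{t}_\Sigma)$-linear automorphism of $\KK_\Sigma$, so $\tau^{-r}$ is available without enlarging the coefficient field.
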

In particular, one can easily check that, in the above hypotheses,
\begin{equation}\label{eisenstein-small-sigma}
\mathcal{E}(s;\rho^*_\Sigma)=(-1)^{s}\bigotimes_{i\in\Sigma}\mathcal{E}(1;\rho^*_{t_i}).\end{equation}
In fact, the formula \ref{eisenstein-small-sigma} can be proved also for $s=q$ by using the methods of \S \ref{shufflerelations}. This implies and generalizes \cite[Theorem 4.4]{GUI&PET} (see the identity at the level of the first coefficients).

We also deduce the next result which asserts, in particular, that there are no non-zero modular forms of negative weight:
\begin{Corollary}
We have $M_{w}(\rho^*_\Sigma\det^{-m};\KK_\Sigma)=\{0\}$ for $w<0$, for $w=0$ and $m\neq0$, or
for $w=0$ and $\Sigma\neq\emptyset$. 
\end{Corollary}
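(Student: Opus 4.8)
The plan is to reduce everything to the known vanishing of scalar Drinfeld modular forms together with the structure results already established, in particular Theorem \ref{regularitystrongregularity}, Theorem \ref{theorem2}, and the explicit generator description in Corollary \ref{explicitgeneratorsforstrongregularity}. The three cases ($w<0$; $w=0$ with $m\neq 0$; $w=0$ with $\Sigma\neq\emptyset$) will be handled uniformly through one device: pass from an arbitrary $f\in M_w(\rho^*_\Sigma\det^{-m};\KK_\Sigma)$ to a strongly regular twist $\tau^r(f)\in M_{wq^r}^\dag(\rho^*_\Sigma\det^{-m};\KK_\Sigma)$ (Theorem \ref{regularitystrongregularity}), and then exploit the fact that $\tau$ is injective on these spaces, so that $f=0$ if and only if $\tau^r(f)=0$. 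Note $wq^r$ has the same sign as $w$ and is zero exactly when $w=0$, so no generality is lost by this twisting.

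First I would treat $w<0$. After twisting we may assume $f$ is a strongly regular modular form of negative weight $wq^r$. By Theorem \ref{theorem2} (equivalently Corollary \ref{explicitgeneratorsforstrongregularity}), $f$ is an $M(\det^\bullet;\KK_\Sigma)$-linear combination of the generators $\bigl(\bigotimes_{i\in I}\mathcal{E}(1;\rho^*_{t_i})\bigr)\otimes\bigl(\bigotimes_{j\in J}\mathcal{E}(q;\rho^*_{t_j})\bigr)$, where the scalar coefficient of the generator indexed by $I\sqcup J=\Sigma$ lies in $M_{wq^r-|I|-q|J|}(\det^{-m};\KK_\Sigma)$. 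Since $|I|+q|J|\geq 0$ and $wq^r<0$, each such scalar space carries weight $<0$. But scalar Drinfeld modular forms of negative weight for $\Gamma$ vanish — this is exactly the fact used in Corollary \ref{corollary-negative} (it reduces, via $\ev_{\underline\zeta}$ or directly, to the vanishing of negative weight forms for congruence subgroups). Because the generators are linearly independent over $\KK_\Sigma((u))$ (proof of Theorem \ref{completeexpression}), all coefficients are zero, hence $f=0$, hence the original $f=0$.

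Next, $w=0$. After twisting we have $f$ strongly regular of weight $0$ for $\rho^*_\Sigma\det^{-m}$. Again decompose via Theorem \ref{theorem2}: the coefficient attached to $I\sqcup J=\Sigma$ lies in $M_{-|I|-q|J|}(\det^{-m};\KK_\Sigma)$. If $\Sigma\neq\emptyset$, then for every term we have $|I|+q|J|>0$ (as $\Sigma=I\sqcup J$ is nonempty), so every scalar space has negative weight and thus vanishes, giving $f=0$. If $\Sigma=\emptyset$ but $m\neq 0$, then $N=1$, $f$ is a scalar modular form of weight $0$ and type $m\neq 0$; a weight-zero scalar Drinfeld modular form is constant (this is Lemma \ref{weight-zero} in the scalar case, or the classical fact), and a nonzero constant has type $0$, so $f$ must be $0$. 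Together with injectivity of $\tau$ this gives the claim in both subcases.

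The main obstacle — really the only subtle point — is making sure the twisting step is legitimate and that the weight arithmetic survives it: one must check that $\tau^r$ genuinely lands in $M^\dag_{wq^r}(\rho^*_\Sigma\det^{-m};\KK_\Sigma)$ rather than merely $M_{wq^r}$, which is precisely the content of Theorem \ref{regularitystrongregularity} with $r$ the integer satisfying $r(q-1)+1\le s\le (r+1)(q-1)$ (and $r=0$ when $\Sigma=\emptyset$, in which case the twist is trivial and strong regularity is automatic). Everything else is bookkeeping: the linear independence of the generators over $\KK_\Sigma((u))$ is already proved, and the vanishing of negative-weight and the constancy of weight-zero scalar forms are standard inputs invoked earlier in the paper. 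I would close by remarking that this also recovers, for $\rho^*_\Sigma$, the negative-weight vanishing asserted in general in Theorem \ref{theorem-structure-tate-algebras}, now with $\KK_\Sigma$-coefficients rather than only $\LL_\Sigma$-coefficients.
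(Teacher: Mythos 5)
Your proof is correct and follows essentially the same route as the paper's: the paper likewise deduces the $w<0$ case by combining Theorem \ref{regularitystrongregularity} with the vanishing of $M^{\dag}_{w}(\rho^*_\Sigma\det^{-m};\KK_\Sigma)$ for $w<0$ (which rests on Theorem \ref{theorem2} and the vanishing of negative-weight scalar forms), and dismisses the $w=0$ cases as "easy" — precisely the bookkeeping you carry out. Your write-up simply makes explicit the details the paper leaves to the reader.
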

\begin{proof}
Note that $M_{w}^\dag(\rho^*_\Sigma\det^{-m};\KK_\Sigma)=\{0\}$ if $w<0$. Hence we obtain the first assertion, combining with Theorem \ref{regularitystrongregularity}. The other properties are easy.
\end{proof}

\section{Harmonic product and Eisenstein series}\label{shufflerelations}

In this section we study another aspect of the Eisenstein series of \S \ref{eisensteinseries} associated to representations of the form $\rho^*_\Sigma$ with $\Sigma$ a finite subset of $\NN^*$. The first entries of these Eisenstein series are proportional, by Proposition \ref{seriesofEisensteinseries}, to combinations of series such as
\begin{equation}\label{prototype-first-entry-eisenstein}
\sum_{a\in A^+}\sigma_\Sigma(a)G_m(u_a)\in A[\underline{t}_\Sigma][[u]]\end{equation}
where $\sigma_\Sigma$ is the semi-character $a\mapsto \prod_{i\in\Sigma}\chi_{t_i}(a)$ and $G_m$ the $m$-th Goss polynomial associated to the lattice $\widetilde{\pi}A\subset\CC_\infty$. In \cite{PEL3,GEZ&PEL} an $\FF_p$-algebra structure is described, over the set of {\em multiple zeta series in the Tate algebras} $\TT_\Sigma$ (or more precisely, in $\EE_\Sigma\subset\TT_\Sigma$) generalizing Thakur's multiple zeta values (see for example \cite{AND&THA,THA2}). We will see, in this section, that this algebra structure determines a multiplication rule for the series \ref{prototype-first-entry-eisenstein} and can be viewed as a  source of explicit relations connecting Eisenstein series.

The results of the present section cover various aspects of an {\em harmonic product formula} (Theorem \ref{anewsumshuffle} and complements) generalizing \cite[Theorems 2.3, 3.1]{PEL3}. We present now the basic tools.

We recall that, as usual in this text, $\Sigma$ denotes a finite subset of $\NN^*$ of cardinality $s$ (the empty set is allowed).
Let $L/\FF_q$ be a field extension. 

\begin{MainData}\label{MaindataA}
Let us suppose we are given with:
{\em \begin{enumerate}
\item Injective $\FF_q$-linear maps $\delta_i:A\rightarrow L$, for $i\in\Sigma$.
\item For $\alpha_{i,j}\in\NN$ ($i\in\Sigma$ and $j=1,\ldots,r$), maps $\sigma_j:A\rightarrow L$ defined by $\sigma_j(a):=\prod_{i\in\Sigma}\delta_i(a)^{\alpha_{i,j}}$. We call {\em semi-characters} such maps  $A\rightarrow L$ (\footnote{Note that they generalize the semi-characters that we have discussed so far.}).
\item Injective $\FF_q$-linear map $\gamma:A\rightarrow L$ (we adopt the notation $\gamma_a$ for the evaluation of $\gamma$ in $a\in A$).
\end{enumerate}
}
\end{MainData}

We consider a semi-character $\sigma=\prod_{i\in\Sigma}\delta_i^{\alpha_i}$ with linear maps $\delta_i$ as above, $i\in\Sigma$ (empty products are allowed). 
The map $\boldsymbol{1}$ sending $A$ to $1\in L$ is the trivial semi-character.

Together with the objects that we have introduced so far, we consider, for integers $n_i\in\NN^*$ with $i=1,\ldots,r$ {\em composition arrays}
\begin{equation}\label{compositionarray}
\mathcal{C}:=\begin{pmatrix} \sigma_1 & \cdots & \sigma_r \\ n_1 & \cdots & n_r\end{pmatrix}.
\end{equation}
When $r=1$, we may sometimes write $(n;\sigma)$ instead of $\binom{\sigma}{n}$. 
If $\mathcal{C}=(\begin{smallmatrix} \boldsymbol{1} & \cdots &\boldsymbol{1} \\ n_1 & \cdots & n_r\end{smallmatrix})$ we simplify it to $\mathcal{C}=(n_1,\ldots,n_r)$. 
The {\em degree} of $\mathcal{C}$ is $\binom{\sigma}{n}$ where $\sigma=\sigma_1\cdots\sigma_r$ and 
$n=\sum_in_i$. The {\em weight} is $n$ and the {\em type} is $\sigma$. If $\sigma=\boldsymbol{1}$ we say that the type is {\em trivial}. For a composition array as in (\ref{compositionarray}), we introduce the {\em twisted power sum}
$$S_d(\mathcal{C}):=
\sum_{\begin{smallmatrix}d_1>\cdots>d_r\geq 0\\
a_1,\ldots,a_r\in A^+\\
\deg_\theta(a_i)=d_i,\forall i=1,\ldots,r\end{smallmatrix}}
\frac{\sigma_1(a_1)\cdots\sigma_r(a_r)}{\gamma_{a_1}^{n_1}\cdots\gamma_{a_r}^{n_r}}\in L.$$
These twisted power sums generalize the classical power sums of Thakur in \cite{THA}, as well as the twisted power sums of \cite{PEL&PER2}. 
We shall show the following generalization of \cite[Theorem 3.1]{PEL3}:
\begin{Theorem}\label{simplesumshuffle}
Let $\sigma,\psi$ be two semi-characters and $m,n$ two positive integers. For any $\alpha,\beta$
semi-characters and $i,j\in \NN^*$ there is an element $f_{\alpha,\beta,i,j}\in\FF_p$ such that, for all $d\geq 0$,
$$S_d\left(\begin{matrix} \sigma \\ m \end{matrix}\right)
S_d\left(\begin{matrix} \psi \\ n \end{matrix}\right)-S_d\left(\begin{matrix} \sigma\psi \\ m+n \end{matrix}\right)=\sum_{\begin{smallmatrix}\alpha\beta=\sigma\psi\\ i+j=m+n\end{smallmatrix}}f_{\alpha,\beta,i,j}S_d\left(\begin{matrix} \alpha & 
\beta\\ i & j \end{matrix}\right).$$
\end{Theorem}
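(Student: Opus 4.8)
The strategy is to reduce the identity to a finite-field analogue of the classical harmonic/stuffle product for power sums, which is essentially a polynomial identity in the variables $\gamma_{a_1}^{-1},\dots$ after one isolates a fixed degree $d$. First I would fix $d\geq 0$ and split the double sum $S_d(\binom{\sigma}{m})S_d(\binom{\psi}{n})$ over pairs $(a_1,a_2)\in A^+(d)^2$ into three ranges: $\deg_\theta(a_1)>\deg_\theta(a_2)$, $\deg_\theta(a_1)<\deg_\theta(a_2)$, and $\deg_\theta(a_1)=\deg_\theta(a_2)=d$. The first two ranges are, by definition, $S_d(\binom{\sigma\ \ \psi}{m\ \ n})$ and $S_d(\binom{\psi\ \ \sigma}{n\ \ m})$ respectively, so both are already of the desired shape on the right-hand side (with $f=1$ there, up to reindexing). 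Hence the whole content is concentrated in the diagonal range $\deg_\theta(a_1)=\deg_\theta(a_2)=d$, where $a_1,a_2$ run over monic polynomials of the same degree $d$.

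\textbf{Main step: the equal-degree contribution.} On the diagonal I would write, for $a_1,a_2\in A^+(d)$,
\[
\frac{1}{\gamma_{a_1}^{m}\gamma_{a_2}^{n}}=\frac{1}{\gamma_{a_1}^{m}\gamma_{a_2}^{n}},
\]
and reorganize the sum by a change of summation variable. The natural move, exactly as in \cite[Theorem 3.1]{PEL3}, is to write $a_2=a_1+b$ with $b\in A$ of degree $<d$ (since two monic polynomials of the same degree differ by a lower-degree polynomial, and every such $b$ occurs), expand $\gamma_{a_2}=\gamma_{a_1}+\gamma_b$ by $\FF_q$-linearity of $\gamma$, and use the binomial theorem $\gamma_{a_2}^{-n}=\gamma_{a_1}^{-n}(1+\gamma_b/\gamma_{a_1})^{-n}$ together with the identity $(1+X)^{-n}=\sum_{k\geq 0}\binom{-n}{k}X^k$ read in characteristic $p$ via Lucas' theorem, so that $\binom{-n}{k}\in\FF_p$. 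Simultaneously, $\sigma_j$ being a semi-character (a monomial in the linear maps $\delta_i$), one expands each $\delta_i(a_2)^{\alpha_i}=(\delta_i(a_1)+\delta_i(b))^{\alpha_i}$ by the (characteristic-$p$) binomial theorem as well; the products of these expansions are again finite $\FF_p$-linear combinations of terms $\sigma'(a_1)\sigma''(b)$ where $\sigma'\sigma''=\sigma\psi$ (the exponents split additively). Collecting all of this, the diagonal contribution becomes a finite $\FF_p$-linear combination of expressions of the form
\[
\sum_{\begin{smallmatrix}a\in A^+(d)\\ b\in A,\ \deg_\theta b<d\end{smallmatrix}}\frac{\alpha(a)\,\beta(b)}{\gamma_{a}^{i}\,\gamma_{b}^{j}},
\]
with $\alpha\beta=\sigma\psi$ semi-characters and $i+j=m+n$. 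The inner sum over $b$ of fixed degree $d'<d$ is $S_{d'}(\binom{\beta}{j})$, so summing over $d'$ from $0$ to $d-1$ and over $a\in A^+(d)$ gives exactly $S_d(\binom{\alpha\ \ \beta}{i\ \ j})$. One must also separately account for the term $b=0$ in the expansion (i.e. $a_2=a_1$), which produces $S_d(\binom{\sigma\psi}{m+n})$; this is precisely the term subtracted on the left-hand side, so it cancels. The upshot is an identity of the stated form, with the coefficients $f_{\alpha,\beta,i,j}$ being explicit products of binomial coefficients mod $p$ — in particular independent of $d$, which is the key point of the statement.

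\textbf{The obstacle.} The genuinely delicate part is bookkeeping: showing that after expanding both the $\gamma$-powers and all the semi-character factors, every surviving term really has total weight $i+j=m+n$ and total type $\alpha\beta=\sigma\psi$, and that the resulting coefficients are finite sums lying in $\FF_p$ and do not depend on $d$. The weight balance is automatic because each binomial expansion of $\gamma_{a_2}^{-n}$ or $\delta_i(a_2)^{\alpha_i}$ redistributes exponents additively; the type balance is likewise automatic from the additivity of exponents of the $\delta_i$'s; and $d$-independence holds because the binomial coefficients $\binom{-n}{k}$ and $\binom{\alpha_i}{l}$ do not involve $d$, while the summation ranges over $b$ of each fixed degree $<d$ assemble cleanly into the twisted power sums $S_{d'}$. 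The only care needed is to truncate the $b$-expansions correctly (there is no convergence issue since everything is a finite sum for fixed $d$, $b$ ranging over a finite set), and to verify that terms with $j=0$ or $i=0$, should they appear, are either absent or can be absorbed — in our setting $i,j\geq 1$ because the $b=0$ term has already been removed and the $a=0$ situation does not occur for monic $a$. I would then record the finitely many $f_{\alpha,\beta,i,j}$ as the explicit $\FF_p$-coefficients produced by this expansion, completing the proof. This is exactly the mechanism of \cite[Theorems 2.3, 3.1]{PEL3}, now carried out with the extra semi-character factors $\sigma_j$ present, and it is routine once the combinatorial bookkeeping is set up.
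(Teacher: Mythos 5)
Your proposal has a genuine gap in the main step. First, a preliminary confusion: $S_d\bigl(\begin{smallmatrix}\sigma\\ m\end{smallmatrix}\bigr)$ sums over monic polynomials of degree exactly $d$, so the product $S_d\bigl(\begin{smallmatrix}\sigma\\ m\end{smallmatrix}\bigr)S_d\bigl(\begin{smallmatrix}\psi\\ n\end{smallmatrix}\bigr)$ runs only over pairs $(a_1,a_2)\in A^+(d)\times A^+(d)$; your first two ``ranges'' with unequal degrees are empty, and all the depth-two terms $S_d\bigl(\begin{smallmatrix}\alpha & \beta\\ i & j\end{smallmatrix}\bigr)$ on the right-hand side must be manufactured from the equal-degree, off-diagonal sum. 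That is where you concentrate your effort, but the mechanism you propose does not work. Writing $a_2=a_1+b$ and expanding $\gamma_{a_2}^{-n}=\gamma_{a_1}^{-n}(1+\gamma_b/\gamma_{a_1})^{-n}$ by the binomial series is illegitimate: $(1+X)^{-n}=\sum_{k\geq 0}\binom{-n}{k}X^k$ has infinitely many nonzero terms in characteristic $p$, and the theorem is stated over an arbitrary field $L$ with no valuation, so there is no sense in which this series converges or terminates (the finiteness of the $b$-range is irrelevant to the infinitude in $k$). Worse, even formally the resulting terms are $\binom{-n}{k}\gamma_b^{\,k}\gamma_{a_1}^{-(n+k)}$: they carry \emph{positive} powers of $\gamma_b$ and unboundedly large negative powers of $\gamma_{a_1}$, so they cannot be reassembled into the twisted power sums $S_{d'}\bigl(\begin{smallmatrix}\beta\\ j\end{smallmatrix}\bigr)$ with $j\in\NN^*$ required on the right-hand side, and they do not satisfy the weight constraint $i+j=m+n$ with $i,j\geq 1$.

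The actual proof (following Thakur, as in \cite[Theorem 3.1]{PEL3}) replaces your binomial expansion by two ingredients that are missing from your argument. One partitions $(A^+(d)\times A^+(d))\setminus\Delta$ into the finite packets $S_{m,n}=\{(n+\mu m,n+\nu m):\mu\neq\nu\in\FF_q\}$ indexed by $n\in A^+(d)$ and $m\in A^+(<d)$, and correspondingly partitions $A^+(d)\times A^+(<d)$ into the packets $S'_{m,n}=\{(n+\mu m,m):\mu\in\FF_q\}$. Within a single packet, after factoring out $\sigma(m)\psi(m)\gamma_m^{-(m+n)}$ and using the $\FF_q$-linearity and injectivity of the maps $\delta_i$ and $\gamma$, the sum over $(\mu,\nu)\in\FF_q^2\setminus\Delta$ is converted into a sum over single $\mu\in\FF_q$ with coefficients in $\FF_p$ by an exact rational-function identity over $\FF_q$ (Corollary \ref{thecorollary}). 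It is this finite-field identity, not the binomial theorem, that produces terms $\alpha(a)\beta(b)\gamma_a^{-i}\gamma_b^{-j}$ with $i+j=m+n$, $i,j\geq 1$, $\alpha\beta=\sigma\psi$, with coefficients visibly independent of $d$; the packets $S'_{m,n}$ then reassemble into $S_d\bigl(\begin{smallmatrix}\alpha & \beta\\ i & j\end{smallmatrix}\bigr)$. Without this ingredient your bookkeeping cannot close.
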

In the theorem, the sum is on the couples of semi-characters $(\alpha,\beta)$ such that $\alpha\beta=\sigma\psi$, and over the decompositions $n+m=i+j$, so there are only finitely many terms in it.

\begin{MainData}\label{MaindataB}
{\em Let us assume that: 
\begin{enumerate}
\item $L$ is endowed with a valuation $\nu:L\rightarrow\QQ\cup\{\infty\}$, it is complete for this valuation
\item $\nu(\delta_{i,j}(a))\in\{0,\infty\}$ for all $i,j$ and $a\in A$  
\item (3) $\gamma_a^{-1}\rightarrow0$ as $a$ runs in $A$ (for the valuation). 
\end{enumerate}
}
\end{MainData}

Then, the series
\begin{equation}\label{definition-sums-fa}
f_A(\mathcal{C}):=\sum_{d\geq 0}S_d(\mathcal{C})
\end{equation}
converges in $L$ for any composition array $\mathcal{C}$ as in (\ref{compositionarray}).
Let $n$ be a positive integer, and let $\sigma:A\rightarrow L$ be a semi-character such that $\nu$ is trivial over its image.
We denote by $\mathcal{F}_n^\sigma$ the $\FF_p$-sub-vector space of $L$ generated by the elements $f_A(\begin{smallmatrix}\sigma_1 & \cdots & \sigma_r \\ n_1 & \cdots & n_r\end{smallmatrix})$ with 
$r>0$, $\prod_i\sigma_i=\sigma$, $\sum_in_i=n$ (with $n_i>0$ for all $i$). We also set $\mathcal{F}^{\boldsymbol{1}}_0:=\FF_p$ and $\mathcal{F}^{\sigma}_0:=(0)$ if $\sigma\neq\boldsymbol{1}$.
We consider the sum 
$\mathcal{F}:=\sum_{n,\sigma}\mathcal{F}_n^\sigma$.
The above result can be used, in a lengthy but straightforward way very similar to that of \cite{PEL3}, to prove the next result.
\begin{Theorem}\label{anewsumshuffle} For all $m,n>0$ and $\sigma,\psi$ semi-characters,
We have that $\mathcal{F}_m^\sigma\mathcal{F}_n^\psi\subset\mathcal{F}^{\sigma\psi}_{m+n}$, and the $\FF_p$-vector space $\mathcal{F}$ is an $\FF_p$-algebra. 
\end{Theorem}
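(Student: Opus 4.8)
The plan is to deduce Theorem \ref{anewsumshuffle} directly from the pointwise identity of Theorem \ref{simplesumshuffle} by summing over $d$ and iterating. Concretely, I would first observe that under Data \ref{MaindataB} all the series $f_A(\mathcal{C})$ converge, and that summing the identity of Theorem \ref{simplesumshuffle} over $d\geq 0$ gives, for semi-characters $\sigma,\psi$ and positive integers $m,n$,
$$
f_A\!\left(\begin{matrix}\sigma\\ m\end{matrix}\right)f_A\!\left(\begin{matrix}\psi\\ n\end{matrix}\right)-f_A\!\left(\begin{matrix}\sigma\psi\\ m+n\end{matrix}\right)=\sum_{\begin{smallmatrix}\alpha\beta=\sigma\psi\\ i+j=m+n\end{smallmatrix}}f_{\alpha,\beta,i,j}\,f_A\!\left(\begin{matrix}\alpha & \beta\\ i & j\end{matrix}\right),
$$
where on the left I have used that $f_A(\binom{\sigma}{m})f_A(\binom{\psi}{n})=\sum_d S_d(\binom{\sigma}{m})\sum_{d'}S_{d'}(\binom{\psi}{n})$ and the Cauchy product is justified because $S_d(\mathcal{C})\to 0$ (each twisted power sum is bounded in $\nu$ by the contribution of the smallest admissible degrees, which goes to $\infty$ by Data \ref{MaindataB}(3)). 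This shows $\mathcal{F}_m^\sigma\cdot\mathcal{F}_n^\psi\subset\mathcal{F}_{m+n}^{\sigma\psi}$ \emph{on generators of depth one in each factor}: the right-hand side is a sum of $f_A$ of composition arrays of weight $m+n$ and type $\sigma\psi$, hence lies in $\mathcal{F}_{m+n}^{\sigma\psi}$.

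The second step is the reduction of the general product to the depth-one case. Given generators $F=f_A\binom{\sigma_1\ \cdots\ \sigma_r}{n_1\ \cdots\ n_r}$ and $G=f_A\binom{\psi_1\ \cdots\ \psi_s}{m_1\ \cdots\ m_s}$, I would proceed exactly as in the classical stuffle argument: write the product $S_d(\mathcal{C})S_d(\mathcal{C}')$ for the corresponding twisted power sums and split the sum over the two descending degree tuples $d>d_1>\cdots>d_r\geq 0$ and $d>d'_1>\cdots>d'_s\geq 0$ according to the relative order of the $d_i$'s and $d'_j$'s (interleavings, with the coincidence terms handled by Theorem \ref{simplesumshuffle} applied at the level of a single degree). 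Coincidences of a $d_i$ with a $d'_j$ produce, via Theorem \ref{simplesumshuffle}, an $\FF_p$-linear combination of twisted power sums whose type is the product of the two types and whose weight is the sum of the two weights, and which have strictly smaller depth; this is the point where one needs the full strength of Theorem \ref{simplesumshuffle} rather than just a naive stuffle. One then sums over $d$ and, crucially, observes that every term produced is again of the form $f_A$ of a composition array with type $\sigma_1\cdots\sigma_r\psi_1\cdots\psi_s$ and weight $\sum n_i+\sum m_j$. Hence $FG\in\mathcal{F}_{(\sum n_i)+(\sum m_j)}^{(\prod\sigma_i)(\prod\psi_j)}$, which is the graded statement. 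Summing over all bidegrees gives that $\mathcal{F}=\sum_{n,\sigma}\mathcal{F}_n^\sigma$ is closed under multiplication; since it is by definition an $\FF_p$-vector space containing $\mathcal{F}_0^{\boldsymbol 1}=\FF_p$ (the unit $1$), it is an $\FF_p$-algebra.

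The induction organizing the second step is on the total depth $r+s$: the base case $r=s=1$ is exactly the displayed identity above, and the inductive step expresses $FG$ as an $\FF_p$-combination of (i) $f_A$ of interleaved composition arrays, which are themselves generators of $\mathcal{F}_{\,*}^{\,*}$ of the correct bidegree, and (ii) products of generators of strictly smaller total depth, to which the inductive hypothesis applies. The main obstacle I anticipate is purely bookkeeping: making the interleaving-with-coincidences decomposition of the double sum precise and checking that the coincidence contributions, after inserting the finite $\FF_p$-linear expansion from Theorem \ref{simplesumshuffle}, really do land in the claimed graded piece with the stated type and weight (the type multiplies because $\sigma_i\psi_j$ appears as the type of the merged entry, and Theorem \ref{simplesumshuffle} preserves $\alpha\beta=\sigma_i\psi_j$ and $i+j=n_i+m_j$). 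This is the same mechanism as in \cite[Theorems 2.3, 3.1]{PEL3}; I would cite that reference for the combinatorial skeleton and only spell out the modifications needed to carry the semi-characters $\sigma_j=\prod_i\delta_i^{\alpha_{i,j}}$ and the twisting map $\gamma$ through the argument, since none of those ingredients interacts with the degree-interleaving combinatorics. Convergence and the legitimacy of all rearrangements are guaranteed throughout by Data \ref{MaindataB} together with the ultrametric inequality.
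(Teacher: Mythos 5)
Your route is the one the paper intends: after proving Theorem \ref{simplesumshuffle}, the text merely remarks that the deduction of Theorem \ref{anewsumshuffle} is ``standard'', ``very similar to that of \cite{PEL3}'', and omits it; your proposal spells out exactly that stuffle-with-coincidences argument. Your displayed base case is consistent with the explicit instance recorded in Theorem \ref{corsumshuffle}, provided one remembers that it is not obtained by summing Theorem \ref{simplesumshuffle} over $d$ alone: the off-diagonal part of the Cauchy product contributes the two terms $f_A\big(\begin{smallmatrix}\sigma&\psi\\ m&n\end{smallmatrix}\big)+f_A\big(\begin{smallmatrix}\psi&\sigma\\ n&m\end{smallmatrix}\big)$, which must be absorbed into the right-hand side (they do have the correct type $\sigma\psi$ and weight $m+n$, so the stated inclusion is unaffected). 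Convergence and all rearrangements are indeed harmless under Data \ref{MaindataB} by ultrametric unconditional convergence.

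One point in your inductive scheme needs adjustment: the induction cannot be run on the total depth $r+s$ of a product of \emph{two} generators as literally stated. When the top degrees coincide, Theorem \ref{simplesumshuffle} replaces the product of the two leading power sums by terms $S_d\big(\begin{smallmatrix}\alpha&\beta\\ i&j\end{smallmatrix}\big)$ whose second entry is itself an iterated sum over degrees $<d$; multiplying by the two remaining tails therefore produces a product of \emph{three} truncated iterated sums. Resolving two of these by the two-factor inductive hypothesis and then multiplying by the third does not obviously terminate, because the harmonic product can output composition arrays of depth as large as the total input depth, so the residual two-factor product need not have total depth $<r+s$. The standard repair (and what the skeleton of \cite{PEL3} amounts to) is to prove the statement for products of arbitrarily many truncated iterated sums, inducting on the sum of all their depths and peeling off the maximal degree level of all factors simultaneously: after merging the leading entries of the factors sharing that degree via Theorem \ref{simplesumshuffle}, the residual product, however many factors it acquires, has strictly smaller total depth. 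With that adjustment your argument closes, and the unit $1\in\mathcal{F}_0^{\boldsymbol{1}}$ makes $\mathcal{F}$ an $\FF_p$-algebra as you say.
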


\subsection{Existence of the harmonic product}
We prove Theorem \ref{simplesumshuffle}.
We will use the methods of \cite[\S 3.1.2 and \S 3.1.3]{PEL3} which deeply borrow from Thakur in \cite{THA2}. The following result can be found in \cite{PEL3}. 
\begin{Proposition} Let $\Sigma$ be a finite subset of $\NN^*$.
Consider $U,V$ such that $U\sqcup V=\Sigma$. Let $L/\FF_q$ be a field extension and let us suppose that
$x_i$ ($i\in \Sigma$) are elements of $L$ and let $z$ be an element of $L\setminus\FF_q$.
Then, the following formula holds:
$$\sum_{\mu,\nu\in\FF_q^2\setminus\Delta}\frac{\prod_{i\in U}(x_i+\mu)\prod_{j\in V}(x_j+\nu)}{(z+\mu)(z+\nu)}=-\sum_{\begin{smallmatrix}I\sqcup J=\Sigma
\\
|J|\equiv1\pmod{q-1}\\
J\subset U\text{ or }J\subset V\end{smallmatrix}}\sum_{\mu\in\FF_q}\frac{\prod_{k\in I}(x_k+\mu)}{(z+\mu)}.$$
\end{Proposition}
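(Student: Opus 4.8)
The plan is to reduce the identity, via partial fractions over $\FF_q$, to a single purely combinatorial polynomial identity, which is the real content. Throughout write $\Delta=\{(\mu,\mu):\mu\in\FF_q\}$, so that $\FF_q^2\setminus\Delta=\{(\mu,\nu):\mu\neq\nu\}$, and set $P(\mu)=\prod_{i\in U}(x_i+\mu)$, $Q(\mu)=\prod_{j\in V}(x_j+\mu)$, $F(\mu)=\prod_{i\in\Sigma}(x_i+\mu)=P(\mu)Q(\mu)$ and $G_I(\mu)=\prod_{k\in I}(x_k+\mu)$. Separating the diagonal,
$$\sum_{(\mu,\nu)\in\FF_q^2\setminus\Delta}\frac{P(\mu)Q(\nu)}{(z+\mu)(z+\nu)}=\Bigl(\sum_{\mu\in\FF_q}\frac{P(\mu)}{z+\mu}\Bigr)\Bigl(\sum_{\nu\in\FF_q}\frac{Q(\nu)}{z+\nu}\Bigr)-\sum_{\mu\in\FF_q}\frac{F(\mu)}{(z+\mu)^2}.$$
The key elementary lemma I would establish first: for $R\in L[\mu]$ and $z\in L\setminus\FF_q$, denoting by $\widetilde R$ the remainder of $R$ on division by $\mu^q-\mu$ (so $\deg\widetilde R\le q-1$), one has $\sum_{\mu\in\FF_q}\frac{R(\mu)}{z+\mu}=-\widetilde R(-z)/(z^q-z)$. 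Indeed $R\equiv\widetilde R\pmod{\mu^q-\mu}$ and $\mu^q-\mu$ vanishes on $\FF_q$, so writing $\widetilde R(\mu)=\widetilde R(-z)+(\mu+z)B(\mu)$ with $\deg B\le q-2$ one gets $\sum_{\mu\in\FF_q}B(\mu)=0$ (since $\sum_{\mu\in\FF_q}\mu^j=0$ for $0\le j\le q-2$), and $\prod_{\mu\in\FF_q}(z+\mu)=z^q-z$ gives $\sum_{\mu\in\FF_q}(z+\mu)^{-1}=-(z^q-z)^{-1}$. Differentiating in $z$ yields the companion identity $\sum_{\mu\in\FF_q}F(\mu)(z+\mu)^{-2}=\tfrac{d}{dz}\bigl(\widetilde F(-z)/(z^q-z)\bigr)$.

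Next I would carry out the bookkeeping. Applying the lemma, the first summand equals $\widetilde P(-z)\widetilde Q(-z)/(z^q-z)^2$ and the second equals, after using $qz^{q-1}=0$ in characteristic $p$, the quantity $\bigl(\widetilde F(-z)-(z^q-z)\widetilde F'(-z)\bigr)/(z^q-z)^2$. Since $\widetilde P\widetilde Q\equiv PQ=F\equiv\widetilde F\pmod{\mu^q-\mu}$ one may write $\widetilde P(\mu)\widetilde Q(\mu)-\widetilde F(\mu)=(\mu^q-\mu)K(\mu)$ with $\deg K\le q-2$; evaluating at $\mu=-z$ gives $\widetilde P(-z)\widetilde Q(-z)=\widetilde F(-z)-(z^q-z)K(-z)$, and the two summands combine to
$$\sum_{(\mu,\nu)\in\FF_q^2\setminus\Delta}\frac{P(\mu)Q(\nu)}{(z+\mu)(z+\nu)}=\frac{\widetilde F'(-z)-K(-z)}{z^q-z}.$$
The lemma applied termwise to the right-hand side of the Proposition rewrites it as $\bigl(\sum_{I,J}\widetilde{G_I}(-z)\bigr)/(z^q-z)$, the sum running over the prescribed pairs. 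Thus the Proposition is equivalent to the polynomial identity in $\mu$ (with coefficients polynomial in the $x_i$, hence provable for arbitrary $x_i$)
$$\widetilde F'(\mu)-K(\mu)=\sum_{\substack{I\sqcup J=\Sigma\\ |J|\equiv1\ (q-1)\\ J\subseteq U\ \text{or}\ J\subseteq V}}\widetilde{G_I}(\mu),$$
and it implies the Proposition after substituting $\mu=-z$ and dividing by $z^q-z\neq0$.

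This last identity is the main obstacle. I would prove it by comparing coefficients of $\mu^{r-1}$ for $1\le r\le q-1$: from the definitions one computes $[\mu^{r-1}](\widetilde F'-K)=(r-1)[\mu^r]\widetilde F+[\mu^r](\widetilde P\widetilde Q)$, and $[\mu^r]\widetilde F$, $[\mu^r](\widetilde P\widetilde Q)$ and $[\mu^{r-1}]\widetilde{G_I}$ all expand through the ``folded'' elementary symmetric polynomials $[\mu^a]\widetilde{G_W}=\sum_{m\ge1,\ m\equiv a\ (q-1)}e_{|W|-m}(x_k:k\in W)$ for $a\ge1$ and $[\mu^0]\widetilde{G_W}=\prod_{k\in W}x_k$, where $G_W(\mu)=\prod_{k\in W}(x_k+\mu)$. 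On the right one first replaces $\{J\subseteq U\text{ or }J\subseteq V\}$ by $\{J\subseteq U\}\sqcup\{J\subseteq V\}$ minus the single pair with $J=\varnothing$ (this correction being inactive when $q>2$, since then $0\not\equiv1\pmod{q-1}$, and needed only for $q=2$); for $J\subseteq U$ one has $I=V\sqcup(U\setminus J)$, so $G_I=Q\cdot G_{U\setminus J}$, and summing over such $J$ with $|J|\equiv1$ amounts to the root-of-unity extraction $\sum_{W'\subseteq U,\ |U\setminus W'|\equiv1\ (q-1)}G_{W'}=-\sum_{\lambda\in\FF_q^{\times}}\lambda^{-1}\prod_{i\in U}(x_i+\mu+\lambda)$ (and symmetrically for $V$). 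Matching the two expansions is then an explicit, somewhat intricate, bookkeeping computation; I expect this to be the only genuinely laborious step, and it can be sanity-checked on the cases $|\Sigma|\le1$ and $|\Sigma|=2$ with $U,V$ singletons, where the mechanism is already visible.
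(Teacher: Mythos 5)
First, a remark on context: the paper does not prove this Proposition at all — it is quoted from \cite{PEL3} with the words ``the following result can be found in \cite{PEL3}'' — so there is no in-paper proof to compare against. Your reduction machinery is correct and I could verify it: the diagonal separation, the lemma $\sum_{\mu\in\FF_q}R(\mu)/(z+\mu)=-\widetilde R(-z)/(z^q-z)$, the differentiated companion, the introduction of $K$ with $\widetilde P\widetilde Q-\widetilde F=(\mu^q-\mu)K$, and the resulting formula $\mathrm{LHS}=(\widetilde F'(-z)-K(-z))/(z^q-z)$ all check out (including the sign from $(\mu^q-\mu)|_{\mu=-z}=-(z^q-z)$). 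The problem is that the proof stops exactly where the Proposition's content begins. The terminal polynomial identity $\widetilde F'-K=\sum_{I,J}\widetilde{G_I}$ \emph{is} the theorem — everything preceding it is routine partial-fraction bookkeeping — and you do not prove it: you describe a strategy (coefficient extraction via folded elementary symmetric polynomials and a root-of-unity filter) and then write that you ``expect this to be the only genuinely laborious step.'' An expectation plus sanity checks on $|\Sigma|\le 2$ is not a proof; as written, the argument is incomplete at its only nontrivial point. (For what it is worth, I verified your target identity in several nondegenerate cases, including one with $K\neq 0$, so the approach is not doomed — it is simply unfinished. Note also that your coefficient comparison $[\mu^{r-1}](\widetilde F'-K)=(r-1)[\mu^r]\widetilde F+[\mu^r](\widetilde P\widetilde Q)$ only covers degrees $0,\dots,q-2$; you must separately show that the coefficient of $\mu^{q-1}$ on the right-hand side vanishes, since the $\widetilde{G_I}$ can have degree $q-1$.)

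A second, smaller issue: your own framework detects that the statement as literally written fails for $q=2$, and you should say so rather than absorb it into an inclusion--exclusion correction. For $q=2$ the congruence $|J|\equiv 1\pmod{q-1}$ is vacuous, so the pair $J=\emptyset$, $I=\Sigma$ enters the right-hand side; taking $\Sigma=U=\{1\}$, $V=\emptyset$ one computes directly that the left side is $1/(z^2+z)$ while the right side is $(x_1+z+1)/(z^2+z)$, so the identity only holds if the $J=\emptyset$ term is excluded (equivalently, under the convention $|J|\in\{1,q,2q-1,\dots\}$ used elsewhere in the paper). Your proof must either adopt that convention explicitly or restrict to $q>2$; as drafted, the sentence treating $J=\emptyset$ as merely a double-counting adjustment in passing from ``or'' to a disjoint union conflates two different corrections and would leave you trying to prove a false identity when $q=2$.
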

With appropriate choices of the set $\Sigma$, of the subsets $U,V$, of the elements $x_i$ and $z$ and applying 
a power of an endomorphism of $L$ which is $\FF_q(x_i:i\in \Sigma)$-linear and which sends $z$ to $z^q$, and specialization of some 
$x_i$ to $z$, we deduce:
\begin{Corollary}\label{thecorollary}
Considering a finite set $\Sigma\subset\NN^*$, a partition $\Sigma=U\sqcup V$, a positive integer $N$
and two integers $\alpha,\beta$ such that $N=\alpha+\beta$, for all $1\leq k\leq N$ and $I\subset \Sigma$, there exists $c_{I,k}\in\FF_p$ such that 
$$\sum_{\mu,\nu\in\FF_q^2\setminus\Delta}\frac{\prod_{i\in U}(x_i+\mu)\prod_{j\in V}(x_j+\nu)}{(z+\mu)^\alpha(z+\nu)^\beta}=\sum_{\begin{smallmatrix} k=1,\ldots,N\\ I\subset\Sigma\end{smallmatrix}}c_{I,k}\sum_{\mu\in \FF_q}\frac{\prod_{i\in I}(x_i+\mu)}{(z+\mu)^k}.$$
\end{Corollary}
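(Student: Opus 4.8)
\textbf{Plan for the proof of Corollary \ref{thecorollary}.}

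The plan is to derive the corollary from the displayed Proposition immediately preceding it by three successive maneuvers: (i) iterated application of the Proposition to raise the exponents in the denominator, (ii) application of a suitable $\FF_q(x_i:i\in\Sigma)$-linear power-of-an-endomorphism $\varphi$ with $\varphi(z)=z^q$ to convert products of $(z+\mu)$-type factors into powers, and (iii) specialization of certain of the free variables $x_i$ to $z$ so as to recover the exact shape of the left-hand side. The text of the excerpt already announces precisely this strategy, so the task is to make the three steps explicit and to verify that no step introduces denominators outside $\FF_p$.

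First I would record the base case: for $\alpha=\beta=1$ the Proposition itself gives the identity with coefficients in $\FF_p$ (indeed the right-hand side there has coefficients $-1$), which matches the claim with $N=2$. For the inductive step on $\alpha+\beta$, I would treat a factor $(z+\mu)^{-\alpha}$ with $\alpha\ge 2$ as $(z+\mu)^{-1}\cdot(z+\mu)^{-(\alpha-1)}$ and reintroduce it as an extra pair of summation variables: concretely, one picks an auxiliary index, applies the Proposition (or rather, a version of it obtained by the endomorphism trick below) to peel off one unit of the exponent, and observes that the combinatorial data $(U,V)$, the partition, and the specializations are preserved under this operation, with all structure constants multiplying within $\FF_p$. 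The key bookkeeping point is that each application of the Proposition outputs only \emph{single}-sum terms $\sum_{\mu\in\FF_q}\prod_{k\in I}(x_k+\mu)/(z+\mu)^k$ with $1\le k$, so after finitely many steps (bounded by $\alpha+\beta$) every term has collapsed to the desired single-sum form and $k$ never exceeds $N$.

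The endomorphism-and-specialization mechanism is what lets one pass from the ``all exponents $1$'' shape of the Proposition to genuine powers $(z+\mu)^\alpha$. Here I would argue as in \cite[\S 3.1.2--3.1.3]{PEL3}: choose an enlarged finite set $\widetilde\Sigma\supset\Sigma$, introduce fresh variables $x_i$ for $i\in\widetilde\Sigma\setminus\Sigma$, apply the Proposition in $\widetilde\Sigma$, then apply a power $\varphi^{j}$ of the $\FF_q$-algebra endomorphism of $L$ fixing all the $x_i$ and sending $z\mapsto z^q$ --- this turns a factor $\prod(x_i+\mu)$ into $\prod(x_i+\mu)^{q^j}$ and, crucially, a factor $(z+\mu)$ into $(z^{q^j}+\mu)$ --- and finally specialize the auxiliary $x_i$ to $z$ (or to $z^{q^j}$ as appropriate) so that $\prod(x_i+\mu)\big/\text{(something)}$ becomes $(z+\mu)^{-r}$ for the desired $r$. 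Since $\varphi$ is an $\FF_q$-algebra map it acts as the identity on $\FF_p\subset\FF_q$, so structure constants stay in $\FF_p$; and specialization is just evaluation, which also does not disturb $\FF_p$-coefficients. Carrying this out in the right order produces exactly the left-hand side of the corollary, with the right-hand side a finite $\FF_p$-linear combination of terms $\sum_{\mu\in\FF_q}\prod_{i\in I}(x_i+\mu)/(z+\mu)^k$, $I\subset\Sigma$, $1\le k\le N$.

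The main obstacle I anticipate is purely combinatorial rather than conceptual: tracking precisely which subsets $I\subset\Sigma$ and which exponents $k$ survive after the iterated peeling, and checking that the constraint $J\subset U$ or $J\subset V$ in the Proposition, together with the congruence $|J|\equiv 1\pmod{q-1}$, is compatible with the specializations so that the final indexing set is indeed all of $\{(I,k): I\subset\Sigma,\ 1\le k\le N\}$ (with many $c_{I,k}$ possibly zero). A secondary subtlety is ensuring that the introduced auxiliary variables can legitimately be specialized to $z$ without creating a $0/0$ ambiguity --- this is handled exactly as in \cite{PEL3} by first clearing the relevant factor before specializing, i.e. by organizing the identity so that the variable to be set equal to $z$ appears only in numerators at the moment of specialization. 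Once these two points are checked, the statement follows, and one notes for later use (in \S\ref{shufflerelations}) that the $c_{I,k}$ are the structure constants feeding into Theorem \ref{simplesumshuffle}.
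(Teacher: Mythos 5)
Your proposal follows exactly the route the paper takes: the paper's entire proof of this corollary is the one-line remark that it follows from the preceding Proposition "with appropriate choices of $\Sigma$, $U$, $V$, the $x_i$ and $z$, applying a power of an $\FF_q(x_i:i\in\Sigma)$-linear endomorphism sending $z\mapsto z^q$, and specialization of some $x_i$ to $z$," which is precisely your enlargement--Frobenius--specialization mechanism (note only the small slip that such an endomorphism fixes $\prod(x_i+\mu)$ rather than raising it to the $q^j$-th power; the essential point, that $(z+\mu)\mapsto(z+\mu)^{q^j}$ since $\mu\in\FF_q$, you state correctly). The approach and the level of detail match the paper's, so nothing further is needed.
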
 
In the above formula, $\Delta$ denotes the diagonal subset.
We can now prove Theorem \ref{simplesumshuffle}. 
We recall that we have denoted by $A^+(d)$ the set of monic polynomials of degree $d$ in $A$. We also denote by $A^+(<d)$
the set of monic polynomials of $A$ which have degree $<d$. For $n\in A^+(d)$ and
$m\in A^+(<d)$, we write
$$S_{m,n}=\{(n+\mu m,n+\nu m);\mu,\nu\in\FF_q,\mu\neq\nu\}\subset \Big(A^+(d)\times A^+(d)\Big)\setminus\Delta,$$
where $\Delta$ is the diagonal 
of $A^+(d)\times A^+(d)$.
Similarly, we define for $n\in A^+(d)$ and $m\in A^+(<d)$:
$$S'_{m,n}=\{(n+\mu m,m);\mu\in\FF_q\}\subset A^+(d)\times A^+(<d).$$
From \cite[Lemmas 3.10 and 3.11]{PEL3} and following the original ideas of Thakur in \cite{THA2},
we deduce that 
the sets $S_{m,n}$ determine a partition of $A^+(d)\times A^+(d)\setminus\Delta$ and the sets $S'_{m,n}$ determine a partition of $A^+(d)\times A^+(<d)$. Moreover,
$S'_{m,n}=S'_{m',n'}$ if and only if $S_{m,n}=S_{m',n'}$.

Now, let us choose $d>0$. We write $\sigma\psi=\prod_{i\in\Sigma}\delta_i$ with $\delta_i$ an injective
$\FF_q$-linear map $A\rightarrow L$ for all $i\in\Sigma$ (there can be repetitions), and $\sigma=\prod_{i\in U}\delta_i$,
$\psi=\prod_{i\in V}\delta_i$ with $U\sqcup V=\Sigma$.
We have, with $\mathcal{U}$ a set of representatives of the above-mentioned partition:
\begin{multline*}
S_d\left(\begin{matrix} \sigma \\ \alpha \end{matrix}\right)S_d\left(\begin{matrix} \psi \\ \beta \end{matrix}\right)-S_d\left(\begin{matrix} \sigma\psi \\ N \end{matrix}\right)=
\sum_{(a,b)\in A^+(d)\times A^+(d)\setminus\Delta}\frac{\sigma(a)\psi(b)}{\gamma_a^\alpha \gamma_b^\beta}=\\ =
\sum_{(m,n)\in \mathcal{U}}\sum_{(a,b)\in S_{m,n}}\frac{\sigma(a)\psi(b)}{\gamma_a^\alpha \gamma_b^\beta}.
\end{multline*}
We focus on the sub-sum corresponding to the choice of a set $S_{m,n}$. We want now to compute:
\begin{eqnarray*}
\lefteqn{\sum_{(a,b)\in S_{m,n}}\frac{\sigma(a)\psi(b)}{\gamma_a^\alpha \gamma_b^\beta}=}\\
&=&\sum_{(\mu,\nu)\in\FF_q^2\setminus\Delta}\frac{\sigma(n+\mu m)\psi(n+\nu m)}{\gamma_{n+\mu m}^\alpha \gamma_{n+\nu m}^\beta}\\
&=&\sum_{(\mu,\nu)\in\FF_q^2\setminus\Delta}\frac{\prod_{i\in U}\delta_i(n+\mu m)\prod_{j\in V}\delta_i(n+\nu m)}{(\gamma_n+\mu \gamma_m)^\alpha (\gamma_n+\nu \gamma_m)^\beta}\\
&=&\frac{\sigma(m)\psi(m)}{\gamma_m^N}\sum_{(\mu,\nu)\in\FF_q^2\setminus\Delta}\frac{\prod_{i\in U}\left(\frac{\delta_i(n)}{\delta_i(m)}+\mu\right)\prod_{j\in V}\left(\frac{\delta_j(n)}{\delta_j(m)}+\nu\right)}{\left(\frac{\gamma_n}{\gamma_m}+\mu\right)^\alpha \left(\frac{\gamma_n}{\gamma_m}+\nu\right)^\beta}.
\end{eqnarray*}
Note that we have used the $\FF_q$-linearity of $\delta_i$ for all $i\in\Sigma$ so that
$\delta_i(n+\mu m)=\delta_i(n)+\mu\delta_i(m)$ and the hypothesis of injectivity, to divide by $\delta_i(m)$ which needs to be non-zero. Similarly, we have used the $\FF_q$-linearity of the map $a\mapsto \gamma_a$ and the fact that $\gamma_n+\lambda\gamma_m$ does not vanish, because $n,m$, in the above computation, have distinct degrees.
Applying Corollary 
\ref{thecorollary} with $x_i=\frac{\delta_i(n)}{\delta_i(m)}$ for $i\in\Sigma$ and $z=\frac{\gamma_n}{\gamma_m}$
which does not belong to $\FF_q$, we obtain the identity:
\begin{multline}\label{crucial-step}
\sum_{(a,b)\in S_{m,n}}\frac{\sigma(a)\psi(b)}{\gamma_a^\alpha \gamma_b^\beta}=\\ =
\sigma(m)\psi(m)\gamma_m^{-N}\sum_{\begin{smallmatrix}I\subset\Sigma\\ k=1,\ldots,N\end{smallmatrix}}c_{I,k}\sum_{\mu\in\FF_q}\frac{\prod_{i\in I}\left(\frac{\delta_i(n)}{\delta_i(m)}+\mu\right)}{\left(\frac{\gamma_n}{\gamma_m}+\mu\right)^k}=\\ =\sum_{\begin{smallmatrix}I\sqcup J=\Sigma\\ k=1,\ldots,N\end{smallmatrix}}c_{I,k}\sum_{\mu\in\FF_q}
\frac{\prod_{i\in I}\delta_i(n+\mu m)\prod_{j\in J}\delta_j(m)}{\gamma_{n+\mu m}^k\gamma_m^{N-k}}.
\end{multline}
The latter is a sum over $S'_{m,n}$.
In view of our previous observations, this concludes the proof of our Theorem.
The deduction of Theorem \ref{anewsumshuffle} from Theorem \ref{simplesumshuffle} is standard and we omit it. If we choose $\delta_i=\chi_{t_i}$ for $i\in\Sigma$ and $\gamma_a=e_C(az)$, and we follow closely the above proof of Theorem \ref{anewsumshuffle} in conjonction with \cite[Theorem 3.1]{PEL3}, we deduce the following explicit result that will be used later, with $\sigma_\Sigma=\prod_{i\in\Sigma}\chi_{t_i}$ and $\gamma_a=e_C(az)$ for $a\in A\setminus\{0\}$.

\begin{Theorem}\label{corsumshuffle}
The following formula holds, for all $\Sigma\subset\NN^*$ and $U\sqcup V=\Sigma$:
\begin{multline*}
f_A\left(\begin{matrix} \sigma_U \\
1\end{matrix}\right)f_A\left(\begin{matrix} \sigma_V \\
1\end{matrix}\right)-f_A\left(\begin{matrix}\sigma_\Sigma \\
2\end{matrix}\right)= \\ f_A\left(\begin{matrix}\sigma_U & \sigma_V \\
1 & 1\end{matrix}\right)+f_A\left(\begin{matrix}\sigma_V & \sigma_U \\
1 & 1\end{matrix}\right)-\sum_{\begin{smallmatrix}I\sqcup J= \Sigma \\
|J|\equiv1\pmod{q-1}\\
J\subset U\text{ or }J\subset V\end{smallmatrix}}f_A\left(\begin{matrix}\sigma_{I} & \sigma_{J} \\
1 & 1\end{matrix}\right).\end{multline*}
\end{Theorem}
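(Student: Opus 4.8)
\textbf{Proof plan for Theorem \ref{corsumshuffle}.} The plan is to specialize the abstract harmonic product of Theorem \ref{simplesumshuffle} (and its infinite-sum consequence Theorem \ref{anewsumshuffle}) to the concrete data $\delta_i=\chi_{t_i}$ for $i\in\Sigma$ and $\gamma_a=e_C(az)$, and then to track the combinatorial constants $c_{I,k}$ explicitly when the two exponents $\alpha=\beta=1$ (so $N=2$). First I would recall from the proof of Theorem \ref{simplesumshuffle} that the key local identity is \eqref{crucial-step}, obtained from Corollary \ref{thecorollary} applied with $x_i=\chi_{t_i}(n)/\chi_{t_i}(m)$ and $z=e_C(nz)/e_C(mz)$; here $N=2$ and the right-hand side of Corollary \ref{thecorollary} ranges over $k\in\{1,2\}$ and $I\subset\Sigma$. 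So the entire content of the theorem is the determination of the constants $c_{I,k}$ in that corollary for $N=2$, $\alpha=\beta=1$.

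Second I would carry out this determination directly from the Proposition preceding Corollary \ref{thecorollary} (the sum over $\FF_q^2\setminus\Delta$), which for $\alpha=\beta=1$ is exactly the stated closed form: the left side equals $-\sum_{I\sqcup J=\Sigma,\ |J|\equiv1\,(q-1),\ J\subset U\text{ or }J\subset V}\sum_{\mu\in\FF_q}\frac{\prod_{k\in I}(x_k+\mu)}{z+\mu}$. Thus in Corollary \ref{thecorollary} with $N=2$, $\alpha=\beta=1$, the only $k$ that appears is $k=1$, with $c_{I,1}=-1$ precisely when $|J|=|\Sigma\setminus I|\equiv 1\pmod{q-1}$ and $J$ is contained entirely in $U$ or entirely in $V$, and $c_{I,1}=0$ otherwise. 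Feeding this back into \eqref{crucial-step} and then summing over the partition representatives $(m,n)$ (using that the $S_{m,n}$ partition $A^+(d)^2\setminus\Delta$, the $S'_{m,n}$ partition $A^+(d)\times A^+(<d)$, and $S_{m,n}=S_{m',n'}\iff S'_{m,n}=S'_{m',n'}$, exactly as in the proof of Theorem \ref{simplesumshuffle}), one obtains at each fixed degree $d$ the identity
$$S_d\!\begin{pmatrix}\sigma_U\\1\end{pmatrix}S_d\!\begin{pmatrix}\sigma_V\\1\end{pmatrix}-S_d\!\begin{pmatrix}\sigma_\Sigma\\2\end{pmatrix}=\sum_{\substack{I\sqcup J=\Sigma\\ (a,b):\deg b<\deg a=d}}\cdots,$$
where the double twisted power sum on the right, summed over $d$, becomes $f_A(\begin{smallmatrix}\sigma_I & \sigma_J\\ 1 & 1\end{smallmatrix})$. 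Here the two "off-diagonal" terms $\binom{\sigma_U\ \sigma_V}{1\ \ 1}$ and $\binom{\sigma_V\ \sigma_U}{1\ \ 1}$ come from the pairs $(I,J)=(U,V)$ and $(I,J)=(V,U)$ together with the $k=2$ contribution disappearing; I would double-check the bookkeeping of which ordered pair $(I,J)$ produces $\sigma_I$ on top and $\sigma_J$ below, and verify the overall sign against the convention in Theorem \ref{simplesumshuffle}.

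Third, I would pass to the infinite sums: since $\gamma_a=e_C(az)$ satisfies Data \ref{MaindataB}(3) for $z$ in a suitable region of $\Omega$ (indeed $|e_C(az)|\to\infty$ as $|a|\to\infty$ when $|z|_\Im$ is large, by the Weierstrass estimates of \S\ref{tameseriessect}), the series $f_A(\mathcal C)=\sum_d S_d(\mathcal C)$ converge, and summing the degree-$d$ identities over $d\ge 0$ yields the displayed formula of Theorem \ref{corsumshuffle}. The main obstacle I anticipate is not analytic but purely combinatorial: getting the constants $c_{I,k}$ correct, in particular confirming that for $\alpha=\beta=1$ the coefficient at $k=2$ genuinely vanishes and that the surviving $k=1$ coefficients are exactly $-1$ on the stated index set — this is where a sign or a missed term would propagate. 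Everything else (the partition lemmas, the $\FF_q$-linearity manipulations dividing by $\chi_{t_i}(m)$ and by $e_C(nz)+\lambda e_C(mz)$, the convergence) is routine and already established earlier in the excerpt, so I would treat it as such and refer to \cite{PEL3} for the deduction mechanics.
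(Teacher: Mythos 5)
Your overall strategy is the paper's own: the paper proves Theorem \ref{corsumshuffle} precisely by specializing Data \ref{MaindataA} to $\delta_i=\chi_{t_i}$ and $\gamma_a=e_C(az)$ and rerunning the proof of Theorem \ref{simplesumshuffle}, replacing the abstract constants $c_{I,k}$ of Corollary \ref{thecorollary} by the fully explicit Proposition that precedes it (the case $\alpha=\beta=1$, where indeed no $k=2$ term occurs and the surviving coefficients are $-1$ exactly on the stated index set); the paper offers no more detail than ``follow closely the above proof \dots in conjunction with [PEL3, Theorem 3.1]''. Your treatment of the local identity, of the partition lemmas, and of the ($u$-adic) convergence is correct and matches the intended argument.

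The one point where your bookkeeping is wrong is the origin of the two terms $f_A\big(\begin{smallmatrix}\sigma_U & \sigma_V\\ 1 & 1\end{smallmatrix}\big)+f_A\big(\begin{smallmatrix}\sigma_V & \sigma_U\\ 1 & 1\end{smallmatrix}\big)$. They do \emph{not} come from the pairs $(I,J)=(U,V)$ and $(V,U)$ inside the correction sum: those pairs belong to that index set only when $|V|$ (resp.\ $|U|$) is $\equiv 1\pmod{q-1}$, and in any case would enter with a minus sign. They come instead from the unequal-degree part of the Cauchy product. Writing $f_A\big(\begin{smallmatrix}\sigma_U\\ 1\end{smallmatrix}\big)f_A\big(\begin{smallmatrix}\sigma_V\\ 1\end{smallmatrix}\big)=\sum_{d}S_dS_d+\sum_{d>d'}(\cdots)+\sum_{d<d'}(\cdots)$, the last two sums are by definition $f_A\big(\begin{smallmatrix}\sigma_U & \sigma_V\\ 1 & 1\end{smallmatrix}\big)$ and $f_A\big(\begin{smallmatrix}\sigma_V & \sigma_U\\ 1 & 1\end{smallmatrix}\big)$, while the equal-degree sum is what the specialized Theorem \ref{simplesumshuffle} (a statement at a single degree $d$) converts, after summation over $d$, into $f_A\big(\begin{smallmatrix}\sigma_\Sigma\\ 2\end{smallmatrix}\big)-\sum_{I\sqcup J}f_A\big(\begin{smallmatrix}\sigma_I & \sigma_J\\ 1 & 1\end{smallmatrix}\big)$. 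With that correction the computation closes exactly as you describe, and the rest of your plan is sound.
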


In the next three short subsections we give the three main sets of Data \ref{MaindataA} that are considered in this paper (we will mainly consider the second one, described in \S \ref{exa2}).

\subsubsection{Multiple zeta values}\label{exa1} To choose the Data \ref{MaindataA} we consider variables $\underline{t}_\Sigma=\{t_i:i\in\Sigma\}$ and 
the field $L=\KK_\Sigma:=\widehat{K(\underline{t}_\Sigma)}_{v_\infty}$ obtained by completing 
$K(\underline{t}_\Sigma)$ with respect to the Gauss' valuation $\nu$ extending the valuation $v_\infty$ of $K$. We consider further the injective $\FF_q$-algebra morphisms $\delta_i(a):=\chi_{t_i}(a)$ for all $i\in\Sigma$ to build our semi-characters. As we did previously, we write, for $U$ a finite subset of $\NN^*$, 
$\sigma_U(a):=\prod_{i\in U}\chi_{t_i}(a)$. More generally, we can also consider elements in the {\em monoid of degrees} of \cite[\S 2.1]{GEZ&PEL} in place of $U$; this amounts in considering semi-characters $\sigma$ defined by 
\begin{equation}\label{semi-char-monoid}
\sigma(a)=\prod_{i\in\Sigma}\chi_{t_i}(a)^{n_i}
\end{equation}
with $\Sigma\subset\NN^*$ finite and $n_i\geq 0$. 
Finally, we choose $\gamma$ the identity map, so that for all $a\in A$, $\gamma_a=a\in L$. Then
we also have the Data \ref{MaindataB} and we are in the settings of \cite{PEL3}. In the notations of ibid., we have 
$\zeta_A(\mathcal{C})=f_A(\mathcal{C})$ for any $\mathcal{C}$ as in (\ref{compositionarray}) and we can speak about degree, weight and type of $\zeta_A(\mathcal{C})$. One proves (see \cite[Corollary 3.3]{GEZ&PEL}) that the $K[\underline{t}_\NN]$-algebra they generate is graded by the degrees. Note also that  for any such element there exists a finite subset $\Sigma$ of $\NN^*$ such that it belongs to $\EE_\Sigma\subset\KK_\Sigma$ . If we consider the particular case of 
composition arrays $\mathcal{C}$ as in (\ref{compositionarray}) such that the semi-characters $\sigma_i$ are all equal 
to the trivial semi-character $\boldsymbol{1}$ (trivial type), then it is easy to see that the series $\zeta_A(\mathcal{C})\in K_\infty$ are the {\em multiple zeta values} of Thakur (the reader can find more in the papers \cite{AND&THA,THA2} and the survey \cite{THA3} also provides a wider set of references).

\subsubsection{$A$-periodic multiple sums}\label{exa2} 

These are related to first entries of Eisenstein series for $\rho^*_\Sigma$.
We choose, for the Data \ref{MaindataA}:
$$\gamma_a:=e_C(az),\quad a\in A\setminus\{0\}.$$
This choice leads us to work with the same semi-characters as in \S \ref{exa1}, and in the field 
$L=K(\underline{t}_\Sigma)((u))$ which is complete for the valuation $\nu=v$, giving the order at $u=0$ of a formal power series of $u$. We also have the Data \ref{MaindataB}.
In this case, for $\mathcal{C}$ as in (\ref{compositionarray}), we set $\varphi_A(\mathcal{C})=f_A(\mathcal{C})$ and we can continue to speak about degree, weight and type of such a sum. Explicitly:
$$\varphi_A(\mathcal{C})=\sum_{\begin{smallmatrix}d_1>\cdots>d_r\geq 0\\
a_1,\ldots,a_r\in A^+\\
\deg_\theta(a_i)=d_i,\\ \forall i=1,\ldots,r\end{smallmatrix}}
\sigma_1(a_1)\cdots\sigma_r(a_r)u_{a_1}^{n_1}\cdots u_{a_r}^{n_r}\in L,$$ (with $u_a=e_C(az)^{-1}$). These series define formal series of $K(\underline{t}_\Sigma)[[u]]$ and each of them is also converging for $u$ in a non-empty disk of $\CC_\infty$ of radius $\leq c$ for some $c\in|\CC_\infty|\cap]0,1[$, containing $0$. From Theorem \ref{anewsumshuffle} we deduce:

\begin{Corollary}\label{coro-varphi-A}
The $\FF_p$-vector space spanned by $1$ and the series $\varphi_A(\mathcal{C})$ with $\mathcal{C}$ as in (\ref{compositionarray}) is an $\FF_p$-algebra. The multiplication rule is compatible with the filtration induced by the semigroup of the elements $(w,\sigma)$ with $w\in\ZZ$ and $\sigma$ semi-characters
as in \S \ref{exa1}.
\end{Corollary}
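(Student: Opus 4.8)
The plan is to deduce Corollary \ref{coro-varphi-A} from Theorem \ref{anewsumshuffle} by specializing the abstract Data \ref{MaindataA}--\ref{MaindataB} to the concrete situation fixed in \S \ref{exa2}: $L = K(\underline{t}_\Sigma)((u))$ with $\nu = v$ the $u$-adic valuation, $\delta_i = \chi_{t_i}$ for $i \in \Sigma$, and $\gamma_a = e_C(az)$ for $a \in A \setminus \{0\}$. First I would check that these choices satisfy the three conditions of Data \ref{MaindataB}. Condition (1) is clear: a field of formal Laurent series is complete for its natural valuation. For condition (2), $\chi_{t_i}$ is injective with image in $\FF_q[t_i] \subset K(\underline{t}_\Sigma)$, so for $a \neq 0$ the element $\chi_{t_i}(a)$ is a nonzero scalar relative to $u$, whence $v(\chi_{t_i}(a)) = 0$, and $v(\chi_{t_i}(a)) \in \{0,\infty\}$ for every $a$. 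For condition (3), one uses $e_C(az) = C_a(e_C(z))$: for $a \in A^+$ of degree $d$ the Carlitz operator $C_a \in K[\tau]$ has $\tau^d$-coefficient equal to $1$, so $e_C(az)$ is the polynomial $a\,e_C(z) + \cdots + e_C(z)^{q^d}$ in $e_C(z) = u^{-1}$; since $v(u) = 1$, its term of least valuation is $u^{-q^d}$, hence $v(u_a) = v(\gamma_a^{-1}) = q^d = |a| \to \infty$ as $\deg_\theta a \to \infty$. With Data \ref{MaindataB} in force, the series $\varphi_A(\mathcal{C}) = f_A(\mathcal{C})$ of (\ref{definition-sums-fa}) converge in $L$ for every composition array $\mathcal{C}$ as in (\ref{compositionarray}).

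Next I would observe that the $\FF_p$-vector space appearing in the statement is exactly the space $\mathcal{F} = \sum_{n,\sigma}\mathcal{F}_n^\sigma$ of \S \ref{shufflerelations}: by construction $\mathcal{F}_0^{\boldsymbol 1} = \FF_p$, $\mathcal{F}_0^\sigma = (0)$ for $\sigma \neq \boldsymbol 1$, and for $n > 0$ the piece $\mathcal{F}_n^\sigma$ is the $\FF_p$-span of those $\varphi_A(\mathcal{C})$ of weight $n$ and type $\sigma$; summing over all $(n,\sigma)$ gives the $\FF_p$-span of $1$ together with all the $\varphi_A(\mathcal{C})$, which is the space in the corollary.

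Then the conclusion is immediate: Theorem \ref{anewsumshuffle}, which rests on the harmonic product formula of Theorem \ref{simplesumshuffle}, asserts that $\mathcal{F}$ is an $\FF_p$-algebra and that $\mathcal{F}_m^\sigma \mathcal{F}_n^\psi \subset \mathcal{F}_{m+n}^{\sigma\psi}$ for all $m,n$ and all semi-characters $\sigma,\psi$. The latter inclusion is precisely the claimed compatibility of the multiplication with the filtration of $\mathcal{F}$ indexed by the semigroup of pairs $(w,\sigma)$ under $(w_1,\sigma_1)\ast(w_2,\sigma_2) = (w_1+w_2,\sigma_1\sigma_2)$: the product of the weight-$m$, type-$\sigma$ part by the weight-$n$, type-$\psi$ part lies in the weight-$(m+n)$, type-$\sigma\psi$ part. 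In contrast to the multiple zeta value case of \S \ref{exa1}, we do not claim that $\sum_{n,\sigma}\mathcal{F}_n^\sigma$ is a direct sum, so one speaks of a filtration rather than a grading. There is no genuine obstacle in this argument, since Theorems \ref{simplesumshuffle} and \ref{anewsumshuffle} carry all the weight; the only point deserving care is the verification that $\gamma_a^{-1} = u_a \to 0$ in the $u$-adic topology, which relies on the identity $e_C(az) = C_a(e_C(z))$ together with $v(u) = 1$.
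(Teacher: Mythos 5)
Your proposal is correct and follows exactly the route the paper intends: the paper deduces the corollary directly from Theorem \ref{anewsumshuffle} applied to the Data of \S \ref{exa2}, and your verification of the conditions of Data \ref{MaindataB} (in particular that $v(u_a)=|a|\to\infty$ via $e_C(az)=C_a(e_C(z))$) together with the identification of the span with $\mathcal{F}=\sum_{n,\sigma}\mathcal{F}_n^\sigma$ is precisely the content left implicit there.
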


Again with $\mathcal{C}$ as in (\ref{compositionarray}), we consider a variant of the above sums based on Goss' polynomials:
\begin{equation}\label{variant-Goss-sums}
\phi_A(\mathcal{C})=\sum_{|a_1|>\cdots>|a_r|>0}\sigma_1(a_1)\cdots\sigma_r(a_r)G_{n_1}(u_{a_1})\cdots G_{n_r}(u_{a_r}),
\end{equation} with the sum running over elements $a_1,\ldots,a_r\in A^+$. These sums are more closely related to the first entries of our Eisenstein series. We have the next result.

\begin{Corollary}\label{coro-Phi-A}
The $K$-vector space spanned by $1$ and the series $\phi_A(\mathcal{C})$ with $\mathcal{C}$ as in (\ref{compositionarray}) is a $K$-algebra and equals the $K$-vector space spanned by the series
$\varphi_A(\mathcal{C})$.
\end{Corollary}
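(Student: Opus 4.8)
The plan is to reduce the statement to a unipotent change of basis among the Goss polynomials together with Corollary \ref{coro-varphi-A}. Write $V$ for the $K$-vector space spanned by $1$ and all the series $\phi_A(\mathcal{C})$, and $W$ for the $K$-vector space spanned by $1$ and all the series $\varphi_A(\mathcal{C})$, the composition arrays $\mathcal{C}$ ranging as in (\ref{compositionarray}); the goal is to show $V=W$ and that this space is a $K$-algebra.

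First I would record the relevant structure of the Goss polynomials from the generating identity (\ref{goss-poly-relations}). Expanding $\frac{uX}{1-u\exp_C(X)}=\sum_{k\ge 0}u^{k+1}X\exp_C(X)^k$ and using $\exp_C(X)=X+d_1^{-1}X^q+\cdots$, one reads off that for every $m\ge 1$ one has $G_m(u)=u^m+\sum_{j=1}^{m-1}g_{m,j}u^j$ with all $g_{m,j}\in K$. Hence the matrix expressing $(G_1(u),\dots,G_m(u))$ in terms of $(u,\dots,u^m)$ is unipotent lower triangular, so it is invertible over $K$, and there exist $\widetilde g_{m,j}\in K$ (with $\widetilde g_{m,m}=1$) such that $u^m=\sum_{j=1}^m\widetilde g_{m,j}\,G_j(u)$ for all $m\ge 1$.

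Next I would compare the two families term by term. The crucial point is that the index sets in the definitions of $\varphi_A(\mathcal{C})$ (see \S \ref{exa2}) and of $\phi_A(\mathcal{C})$ in (\ref{variant-Goss-sums}) are literally the same, namely tuples $a_1,\dots,a_r\in A^+$ with $|a_1|>\cdots>|a_r|$ (equivalently, strictly decreasing degrees). Substituting $G_{n_i}(u_{a_i})=\sum_{1\le m_i\le n_i}g_{n_i,m_i}u_{a_i}^{m_i}$ into (\ref{variant-Goss-sums}) and interchanging the \emph{finite} sum over $(m_1,\dots,m_r)$ with the multiple sum over $(a_1,\dots,a_r)$ (legitimate in $K(\underline t_\Sigma)[[u]]$) gives
$$\phi_A\begin{pmatrix}\sigma_1 & \cdots & \sigma_r\\ n_1 & \cdots & n_r\end{pmatrix}=\sum_{\substack{1\le m_i\le n_i\\ i=1,\dots,r}}\Big(\prod_{i=1}^r g_{n_i,m_i}\Big)\,\varphi_A\begin{pmatrix}\sigma_1 & \cdots & \sigma_r\\ m_1 & \cdots & m_r\end{pmatrix},$$
so that $V\subseteq W$. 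Running the same computation with the inverse relation $u_{a_i}^{n_i}=\sum_{1\le j_i\le n_i}\widetilde g_{n_i,j_i}G_{j_i}(u_{a_i})$ expresses $\varphi_A(\mathcal{C})$ as a finite $K$-linear combination of the $\phi_A(\mathcal{C}')$, whence $W\subseteq V$; therefore $V=W$.

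Finally I would transfer the algebra structure. By Corollary \ref{coro-varphi-A}, the $\FF_p$-vector space $W'$ spanned by $1$ and the $\varphi_A(\mathcal{C})$ is an $\FF_p$-algebra, in particular closed under multiplication. Since $\FF_p\subseteq K$ and $W$ is exactly the $K$-span of $W'$, a product of two elements of $W$ is a $K$-linear combination of products of elements of $W'$, each of which lies in $W'\subseteq W$; hence $W$ is closed under multiplication and is a $K$-algebra, and so is $V=W$. The whole argument is bookkeeping; the only steps needing any care are the unipotence of the Goss-polynomial change of basis (so that its inverse still has coefficients in $K$) and the verification that the two summation ranges coincide exactly, so that the expansions above hold term by term with no reindexing — neither presents a real obstacle.
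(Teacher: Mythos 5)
Your proof is correct and follows essentially the same route as the paper: both arguments reduce the statement to the fact that $(G_m)_{m>0}$ and $(u^m)_{m>0}$ span the same $K$-subspaces of $XK[X]$, and then conclude by transporting the harmonic-product structure from Corollary \ref{coro-varphi-A}. The only minor difference is in how that fact is justified — you read off the unitriangularity of the change of basis directly from the generating series (\ref{goss-poly-relations}), whereas the paper proves linear independence of the $G_m(u(z))$ via their distinct pole orders at points of $A$ and proves spanning by induction using the relation $uG_m(u)\in V$ from Gekeler's Proposition (3.4)(ii).
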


\begin{proof}
We claim that the family $(G_m(X))_{m>0}$ is a $K$-basis of $XK[X]$. 
First of all, these polynomials are 
linearly independent over $K$ because the functions $z\mapsto G_m(u(z))$, meromorphic over $\CC_\infty$,
have poles of distinct orders at the elements $a\in A\subset\CC_\infty$. To show that these polynomials span $XK[X]$ it suffices to prove that for $k>0$, $u^k$ belongs to the $K$-span $V$ of the polynomials $G_m(u)$ with $m>0$. This is clear for $k=1$. Now assuming that $u^{k-1}$ belongs to $V$, by the fact that $u^k=uu^{k-1}$, it suffices to show that $uG_m(u)\in V$ for all $m$, but this easily follows from \cite[Proposition (3.4) (ii)]{GEK} and induction on $m$ hence proving the claim. The result now follows from Corollary \ref{coro-varphi-A}. \end{proof}

\subsubsection*{Remark} 
The product rule of Corollary \ref{coro-Phi-A} does not seem to be compatible with a filtration involving the composition arrays in a simple way, unlike Corollary \ref{coro-varphi-A}. Note however the following formula, which is homogeneous in the orders of the Goss' polynomials:
$$\sum_{m+n=k}G_m(X)G_n(X)=\left(\binom{k}{1}-1\right)G_k(X),\quad k\geq 0.$$
To prove this formula we use (\ref{G-rho}) and Lemma \ref{generalisation-gekeler}, and 
$$\boldsymbol{G}(\boldsymbol{1})=x\Exp{D}{x}(G_1(u))=\frac{ux}{1-u\exp_C(x)}.$$
Hence we obtain the next Riccati-like differential equation from which the above identities can be derived:
$$\boldsymbol{G}(\boldsymbol{1})^2=x\frac{\partial}{\partial x}\Big(\boldsymbol{G}(\boldsymbol{1})\Big)-\boldsymbol{G}(\boldsymbol{1}).$$

\subsubsection{Multiple sums in $\mathfrak{K}_\Sigma$}\label{exa3} 

There is a third important type of multiple sums that is determined by making the following choice of Data \ref{MaindataA}, it will be only used in \S \ref{Multiple-Eisenstein-series}. We consider $L=\mathfrak{K}$
the field of uniformizers with the valuation $\nu=v$. 
As in \S \ref{exa2} we use $\gamma_a:=e_C(az)$ for $a\in A\setminus\{0\}$. Instead of the semi-characters of \S \ref{exa1},
we use, for $i\in\NN^*$, $\delta_i:A\rightarrow L$ defined by $$\delta_i(a)=\chi_{t_i}(az)=\frac{\exp_C\left(\frac{\widetilde{\pi}z}{\theta-t}\right)}{\omega(t)},$$
seen as a tame series. These maps are clearly $\FF_q$-linear and injective, and they give rise to semi-characters $$\widetilde{\sigma}_U(a):=\prod_{i\in U}\chi_{t_i}(az)$$ with $U$ a finite subset of $\NN^*$ (\footnote{Or more generally, an element of the monoid of degrees as in \S \ref{exa1}.}). With them we can construct the formal series
\begin{equation}\label{third-kind-multiple-sum}
\widetilde{\varphi}_A\Big(\begin{smallmatrix}\widetilde{\sigma}_1 & \cdots & \widetilde{\sigma}_r \\ n_1 & \cdots & n_r\end{smallmatrix}\Big)=\sum_{|a_1|>\cdots>|a_r|>0}\widetilde{\sigma}_1(a_1)\cdots\widetilde{\sigma}_r(a_r)u_{a_1}^{n_1}\cdots u_{a_r}^{n_r},
\end{equation} where the semi-characters $\widetilde{\sigma}_i$ are of the above form, where $n_1,\ldots,n_r$ are positive integers, and with the sum running over elements $a_1,\ldots,a_r\in A^+$.
This time however, we do not have a consistent set of Data \ref{MaindataB}. The condition (2) does not hold in general. We cannot guarantee the convergence of the series in (\ref{third-kind-multiple-sum}) for the $v$-valuation.
However, when these series converge for the $v$-valuation (this can happen), they give rise to well defined elements of 
$L$.

We have:

\begin{Corollary}\label{coro-varphi-A-tilde}
There is a multiplication rule on the series (\ref{third-kind-multiple-sum}) that are convergent for the $v$-valuation. Choosing the correspondence $\chi_{t_i}\leftrightarrow\delta_i$ identifies, if all the terms are well defined, the multiplication 
rule with that of Corollary \ref{coro-varphi-A} and that of \S \ref{exa1}.
\end{Corollary}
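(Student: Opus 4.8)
The plan is to transport the harmonic product structure from the ``$\varphi_A$-world'' of \S\ref{exa2} to the ``$\widetilde{\varphi}_A$-world'' of \S\ref{exa3} via the tame-series formalism of \S\ref{tameseriessect}. The key observation is that the only difference between the Data \ref{MaindataA} of \S\ref{exa2} and those of \S\ref{exa3} is that the semi-characters $\sigma_i = \prod_{k\in\Sigma_i}\chi_{t_k}$ acting on $a\in A^+$ have been replaced by $\widetilde\sigma_i = \prod_{k\in\Sigma_i}\chi_{t_k}(a\,\cdot)$, i.e. by the \emph{tame series} $z\mapsto \chi_{t_k}(az)$; the factor of automorphy weight $\gamma_a = e_C(az)$ and the combinatorics of the index sets $|a_1|>\cdots>|a_r|>0$ are identical. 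So the entire computation that proves Theorem \ref{simplesumshuffle}, and hence Theorem \ref{anewsumshuffle} and Corollary \ref{coro-varphi-A}, is a formal identity between sums indexed by the \emph{same} index sets, in which each summand is rewritten using only two ingredients: (i) the $\FF_q$-linearity of the maps $a\mapsto\delta_i(a)$ and $a\mapsto\gamma_a$ (used in the crucial step (\ref{crucial-step}) to expand $\delta_i(n+\mu m) = \delta_i(n)+\mu\delta_i(m)$ and similarly for $\gamma$), and (ii) the injectivity of those maps (used to divide by $\delta_i(m)$ and to ensure $\gamma_n + \lambda\gamma_m \neq 0$ when $\deg n \neq \deg m$). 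Both ingredients hold verbatim for $\delta_i(a) = \chi_{t_i}(az)$ and $\gamma_a = e_C(az)$ as elements of the integral domain $\Tame{\KK_\Sigma}$ (equivalently $\operatorname{Hol}_{\KK_\Sigma}(\CC_\infty\to\KK_\Sigma)$): these maps are $\FF_q$-linear in $a$ since $\exp_C$ and $\chi_t$ are, and injective since $z\mapsto\chi_{t_i}(z)/z$ is entire without zeroes (\S\ref{a-class-chi}) and $e_C$ is injective on $A^+\setminus\{0\}$.

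Concretely, I would proceed as follows. First, I would establish a formal, convergence-free version of the harmonic product: working inside the ring of formal series $\Tame{\KK_\Sigma}[[u]]$ (or a suitable localization allowing the index sets above), the partition arguments of \cite{PEL3} based on the sets $S_{m,n}$ and $S'_{m,n}$, together with Corollary \ref{thecorollary}, go through unchanged with $x_i = \delta_i(n)/\delta_i(m)$ and $z = \gamma_n/\gamma_m$ — the only point to check being that $z = e_C(nz)/e_C(mz)$ does not lie in $\FF_q$, which holds because $e_C(nz)$ and $e_C(mz)$ are $\FF_q$-linearly independent entire functions when $\deg n\neq\deg m$ (their leading tame monomials differ, by Proposition \ref{leading}). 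This yields the depth-$d$ identity of Theorem \ref{simplesumshuffle} with $S_d$ replaced by the tame-series-valued truncated sums, and summing over $d$ (formally, in $\Tame{\KK_\Sigma}[[u]]$) gives the analogue of Theorem \ref{anewsumshuffle}: the $\FF_p$-span of $1$ and the formal series $\widetilde\varphi_A(\mathcal{C})$ is closed under multiplication, with the product of a series of weight $m$, type $\widetilde\sigma$ by one of weight $n$, type $\widetilde\psi$ landing in weight $m+n$, type $\widetilde\sigma\widetilde\psi$. Second, I would observe that the substitution $\chi_{t_i}\leftrightarrow\delta_i$ (i.e. $\chi_{t_i}(a)\leftrightarrow\chi_{t_i}(az)$, $a\leftrightarrow e_C(az)$) is precisely a homomorphism between the two difference-algebra presentations: it sends the universal identity underlying the $\varphi_A$-product to the universal identity underlying the $\widetilde\varphi_A$-product, since both are consequences of the same $\FF_q$-linearity/injectivity data. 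Hence whenever all the series $\widetilde\varphi_A(\mathcal{C})$ occurring in a given instance of the product formula converge for the $v$-valuation — so that they define genuine elements of $\mathfrak K$ — the formal identity specializes to an identity in $\mathfrak K$, and the structure constants $f_{\alpha,\beta,i,j}\in\FF_p$ are literally the same as those appearing in Theorem \ref{simplesumshuffle} and in \S\ref{exa1}.

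The main obstacle is the convergence bookkeeping in the last step: unlike in \S\ref{exa2}, where $L = K(\underline{t}_\Sigma)((u))$ is complete and Data \ref{MaindataB} holds, here the maps $\delta_i(a) = \chi_{t_i}(az)$ do \emph{not} satisfy $\nu(\delta_i(a))\in\{0,\infty\}$ (their $v$-valuations are the weights of the tame series $\chi_{t_i}(az)$, which depend on $a$ and tend to $1/q$ only asymptotically), so the individual summands of $\widetilde\varphi_A(\mathcal C)$ do not automatically tend to $0$ for $v$ and the series can genuinely diverge. The careful statement — which is exactly what Corollary \ref{coro-varphi-A-tilde} asserts, ``if all the terms are well defined'' — is therefore: the product formula holds as an identity in $\mathfrak K$ \emph{provided} every $\widetilde\varphi_A(\mathcal C)$ that appears on either side converges for the $v$-valuation. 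I would make this precise by noting that the set of convergent such series is an $\FF_p$-subspace of $\mathfrak K$ stable under the formal product (since the product of two $v$-convergent series of the relevant shape is again $v$-convergent, the tame monomials being organized by weight and the truncations at a given $v$-level being finite), and then citing the formal identity of the first step. The remaining points — that the correspondence $\chi_{t_i}\leftrightarrow\delta_i$ matches the multiplication rules with those of Corollary \ref{coro-varphi-A} and of \S\ref{exa1}, under the respective evaluations — are then immediate from the fact that all three products are governed by the single family of structure constants $(f_{\alpha,\beta,i,j})$ produced by Corollary \ref{thecorollary}, together with the compatibility of evaluation at $t_i = \theta$ (resp. specialization $z\mapsto$ point) with the algebra operations. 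I expect the write-up to be short, amounting essentially to ``run the proof of Theorem \ref{simplesumshuffle} with the new Data \ref{MaindataA}, observe it is formal, and intersect with the subspace of $v$-convergent series''.
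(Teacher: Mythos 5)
Your proposal is correct and follows essentially the same route the paper (implicitly) takes: the paper gives no separate proof because Theorem \ref{simplesumshuffle} is a depth-by-depth identity valid for \emph{any} choice of Data \ref{MaindataA}, and since $\delta_i(a)=\chi_{t_i}(az)$ and $\gamma_a=e_C(az)$ are injective $\FF_q$-linear maps into the field $\mathfrak K$, the universal structure constants of Corollary \ref{thecorollary} yield the same multiplication rule on the finite sums $S_d$, which one then sums over $d$ whenever all the series involved converge for $v$. The only remark is that your detour through formal series in $\Tame{\KK_\Sigma}[[u]]$ and the claim that the $v$-convergent series form a set stable under the product are unnecessary (and the latter is stronger than what is asserted), since the corollary is conditional on all terms being well defined.
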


\subsubsection*{Example} We have the following formulas expressing the same harmonic product rule in the three different settings of \S \ref{exa1}, \ref{exa2} and \ref{exa3}. We use $\delta$ the semi-character defined by $\delta(a)=\chi_t(az)$ (\footnote{Note that for coherence with other references, we render in different ways the multiple series of `depth' $r=1$ or with trivial semi-character (scalar). In particular, we sometimes write, for the arguments of multiple sums,
$(n;\sigma)$ instead of $\binom{\sigma}{n}$, and $(n_1,\ldots,n_r)$ instead of $(\begin{smallmatrix}\boldsymbol{1} & \cdots & \boldsymbol{1} \\ n_1 & \cdots & n_r\end{smallmatrix})$.}):
\begin{multline}\label{three-special-formulas} \zeta_A(1;\chi_t)\zeta_A(q-1)=
\zeta_A(q;\chi_t)+\zeta_A\Big(\begin{smallmatrix}\chi_t & \boldsymbol{1} \\
1 & q-1\end{smallmatrix}\Big),\\ \varphi_A(1;\chi_t)\varphi_A(q-1)=
\varphi_A(q;\chi_t)+\varphi_A\Big(\begin{smallmatrix}\chi_t & \boldsymbol{1} \\
1 & q-1\end{smallmatrix}\Big),\\ \widetilde{\varphi}_A(1;\delta)\widetilde{\varphi}_A(q-1)=
\widetilde{\varphi}_A(q;\delta)+\widetilde{\varphi}_A\Big(\begin{smallmatrix}\delta & \boldsymbol{1} \\
1 & q-1\end{smallmatrix}\Big).\end{multline}
It is not difficult to verify that all the multiple series involved in the third formula converge for the $v$-valuation.
To prove the identities (\ref{three-special-formulas}) one observes that the first identity follows from identities on multiple power sums (see \cite[\S 7.2]{GEZ&PEL}), then uses that the product rules of \S \ref{exa1}, \ref{exa2}, \ref{exa3} are the same upon choice of the appropriate correspondence between the semi-characters.

\subsection{Examples of formulas}

The harmonic product can be applied to obtain identities for certain modular forms, notably Eisenstein series.
We give three examples. In \S \ref{example-1-eisenstein} an identity for Eisenstein series of weight $q+1$ for $\rho^*_{\{1,2\}}$, in \S \ref{example-2-eisenstein} an identity for Eisenstein series of weight $2$ for $\rho^*_\Sigma$ with $|\Sigma|\equiv2\pmod{q-1}$ and in \S \ref{example-3-eisenstein} we present a question on Serre's derivatives of Eisenstein series of weight $1$ and their possible relation with Poincar\'e series of weight $3$. 

We begin with two lemmas.

\begin{Lemma}\label{condition-for-vanishing}
Let $\rho:\Gamma\rightarrow\GL_N(\FF_q(\underline{t}_\Sigma))$ be a representation of the first kind. Assume that $\rho$ is irreducible
and let $f$ be an element in $M^!_w(\rho;\KK_\Sigma)$. If the entries of $f$ are linearly dependent over $\KK_\Sigma$, then $
f$ vanishes identically.
\end{Lemma}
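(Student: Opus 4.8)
The plan is to argue by linear algebra on the space of $\KK_\Sigma$-linear relations among the entries of $f$, using only the modular transformation law and the irreducibility of $\rho$. Write $f={}^t(f_1,\dots,f_N)$ with $f_i\in\operatorname{Hol}_{\KK_\Sigma}(\Omega\rightarrow\KK_\Sigma)$ and set
\[
V:=\{\ell\in\KK_\Sigma^{1\times N}\ :\ \ell\, f(z)=0\ \text{for all }z\in\Omega\},
\]
the $\KK_\Sigma$-subspace of linear relations among $f_1,\dots,f_N$. The hypothesis that the entries are linearly dependent over $\KK_\Sigma$ says precisely that $V\neq\{0\}$, and what we must prove is that in fact $f\equiv 0$, equivalently $V=\KK_\Sigma^{1\times N}$.

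First I would show that $V$ is stable under the right action $\ell\mapsto\ell\,\rho(\gamma)$ of $\Gamma$. Indeed, for $\ell\in V$ and $\gamma\in\Gamma$ the functional equation $f(\gamma(z))=J_\gamma(z)^w\rho(\gamma)f(z)$ gives $0=\ell\, f(\gamma(z))=J_\gamma(z)^w\big(\ell\,\rho(\gamma)\big)f(z)$ for all $z\in\Omega$; since $J_\gamma(z)\in\CC_\infty^\times$ on $\Omega$, this forces $\big(\ell\,\rho(\gamma)\big)f(z)=0$ for all $z$, i.e. $\ell\,\rho(\gamma)\in V$. Then I would introduce the annihilator
\[
W:=\{v\in\KK_\Sigma^{N\times1}\ :\ \ell\, v=0\ \text{for all }\ell\in V\},
\]
so that $\dim_{\KK_\Sigma}W=N-\dim_{\KK_\Sigma}V$. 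By associativity of matrix multiplication, for $v\in W$, $\ell\in V$ and $\gamma\in\Gamma$ one has $\ell\big(\rho(\gamma)v\big)=\big(\ell\,\rho(\gamma)\big)v=0$ because $\ell\,\rho(\gamma)\in V$; hence $\rho(\gamma)W\subseteq W$, so $W$ is a $\KK_\Sigma$-subrepresentation of $\rho$. Finally, by the very definition of $V$ we have $f(z)\in W$ for every $z\in\Omega$.

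Now I invoke irreducibility: since $\rho$ is irreducible over $\KK_\Sigma$, the subrepresentation $W$ is either $\{0\}$ or all of $\KK_\Sigma^{N\times1}$. If $W=\{0\}$ then $V=\KK_\Sigma^{1\times N}$, so every $f_i$ vanishes identically and $f\equiv0$, as desired. If instead $W=\KK_\Sigma^{N\times1}$ then $V=\{0\}$, contradicting the assumption that the entries of $f$ are linearly dependent. This completes the argument (equivalently, it shows that a nonzero $f\in M_w^!(\rho;\KK_\Sigma)$ with $\rho$ irreducible has $\KK_\Sigma$-linearly independent entries).

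The only delicate point is the passage in the last step from irreducibility of $\rho$ over $\FF_q(\underline{t}_\Sigma)$ to irreducibility over the much larger field $\KK_\Sigma$, i.e. one really needs $\rho$ to be \emph{absolutely} irreducible. This is harmless for the representations to which the Lemma is applied: for $\rho=\rho^*_\Sigma$ (and its twists $\rho^*_\Sigma\det^{-m}$), each factor $\rho^*_{t_i}$ is absolutely irreducible --- it is two-dimensional and carries the nontrivial unipotent $\rho_{t_i}(T_1)$ whose unique fixed line is not stable under all of $\Gamma$ --- a Kronecker product of absolutely irreducible representations is absolutely irreducible, and twisting by a one-dimensional character does not affect (ir)reducibility; alternatively this follows from \cite{PEL2}. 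Hence ``irreducible'' may be read throughout as ``irreducible over $\KK_\Sigma$'', and the proof goes through verbatim.
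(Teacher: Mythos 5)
Your proof is correct and is essentially the paper's own argument: the paper likewise forms the space $V$ of linear relations, shows it is $\Gamma$-stable via the automorphy equation, and concludes from irreducibility; your passage through the annihilator $W\subset\KK_\Sigma^{N\times1}$ (invariant under $\rho$) is just the dual formulation of the paper's statement that ${}^tV$ is an invariant subspace for $\rho^*$. Your closing caveat about irreducibility over $\KK_\Sigma$ versus $\FF_q(\underline{t}_\Sigma)$ is well taken and is exactly what the paper supplies separately (Lemma \ref{irreducibility}, proved by reducing to the residual field and citing \cite{PEL2}).
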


\begin{proof}
This is straightforward but we prefer to give the details. Let $V$ be the $\KK_\Sigma$-subspace of $\KK_\Sigma^{1\times N}$ the elements of which are the $v$'s such that $vf=0$. Assume that $V\neq\{0\}$ and let us consider $\gamma\in\Gamma$. Then
$$0=vf(\gamma^{-1}(z))=vJ_{\gamma^{-1}}(z)^w\rho(\gamma^{-1})f(z).$$
Hence $v\rho(\gamma^{-1})\in V$ and this, for all $\gamma\in \Gamma$. This means that $\rho^*$ has the space $W={}^tV$
which is invariant that is, for all $\gamma\in\Gamma$, $\rho^*(\gamma)W\subset W$ with $W\neq\{0\}$. But $\rho$ is irreducible if and only if $\rho^*$ is irreducible.
\end{proof}

\begin{Lemma}\label{irreducibility}
For all $\Sigma$ finite subset of $\NN^*$ the representations $$\rho_\Sigma,\rho_\Sigma^*:\Gamma\rightarrow\GL_N(\KK_\Sigma)$$ are irreducible.
\end{Lemma}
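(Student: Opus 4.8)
The statement to prove is that for every finite $\Sigma\subset\NN^*$, the representations $\rho_\Sigma$ and $\rho_\Sigma^*=\bigotimes_{i\in\Sigma}\rho_{t_i}^*$ of $\Gamma=\GL_2(A)$ over $\KK_\Sigma$ are irreducible. Since $\rho$ is irreducible if and only if its contragredient $\rho^*$ is, it suffices to treat $\rho_\Sigma$. The plan is to reduce to a statement about the tensor product of the basic representations $\rho_{t_i}$, each of which is a "deformation of the standard representation," and then run an orbit/weight argument using the action of the diagonal torus, the unipotent $T_a$, and the Weyl element $S=\left(\begin{smallmatrix}0&-1\\1&0\end{smallmatrix}\right)$.

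First I would record that each factor $\rho_{t_i}\colon\Gamma\to\GL_2(\FF_q[t_i])$ given by $\gamma=\left(\begin{smallmatrix}a&b\\c&d\end{smallmatrix}\right)\mapsto\left(\begin{smallmatrix}\chi_{t_i}(a)&\chi_{t_i}(b)\\\chi_{t_i}(c)&\chi_{t_i}(d)\end{smallmatrix}\right)$ is itself irreducible over $\FF_q(t_i)$, hence over $\KK_\Sigma$: if $W\subset\KK_\Sigma^{2\times1}$ were invariant of dimension $1$, it would be spanned by a common eigenvector of all $\rho_{t_i}(T_a)=\left(\begin{smallmatrix}1&\chi_{t_i}(a)\\0&1\end{smallmatrix}\right)$ and of $\rho_{t_i}(S)$; the former forces $W=\KK_\Sigma\cdot{}^t(1,0)$ (since $\chi_{t_i}(\theta)=t_i\neq0$ kills any other line), but $\rho_{t_i}(S)$ does not stabilise that line. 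Then, for the tensor product $\rho_\Sigma=\bigotimes_{i\in\Sigma}\rho_{t_i}$, I would argue as follows. Because the variables $t_i$ ($i\in\Sigma$) are algebraically independent and $\KK_\Sigma$ is the completion of $\CC_\infty(\underline t_\Sigma)$, the images $\rho_{t_i}(\Gamma)$ generate, after extension of scalars to an algebraic closure, essentially "independent" copies of $\SL_2$-type data; the cleanest route is to invoke the classical fact that an external tensor product of absolutely irreducible representations is absolutely irreducible, once one checks that the $\rho_{t_i}$ become absolutely irreducible over $\ol{\KK_\Sigma}$ — which again follows from the eigenvector argument above, now over the algebraic closure, using that $\chi_{t_i}(\theta)=t_i$ are distinct transcendentals and in particular none of them lies in $\FF_q$.

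The key technical step, and the one I expect to be the main obstacle, is making the "independence" of the factors precise enough to conclude absolute irreducibility of the Kronecker product $\bigotimes_{i\in\Sigma}\rho_{t_i}$ purely from irreducibility of the factors, since $\Gamma$ is a single group acting diagonally (this is not literally an external tensor product of representations of a product group). The way I would handle this: use the commutant criterion. A representation is absolutely irreducible iff its commutant is the scalars. Let $U\in\KK_\Sigma^{N\times N}$ commute with all $\rho_\Sigma(\gamma)$. Restricting to the torus elements $\rho_\Sigma\left(\begin{smallmatrix}\mu&0\\0&1\end{smallmatrix}\right)$ for $\mu\in\FF_q^\times$ only gives a coarse decomposition, so instead I would exploit that the images $\rho_{t_i}(\Gamma)$, $i\in\Sigma$, pairwise "commute up to the diagonal" after a suitable change of basis — more efficiently, I would specialise: for each $j\in\Sigma$ fix $i\in\Sigma\setminus\{j\}$ and send all $t_i\mapsto$ a fixed element of $\FF_q^{ac}$ of large multiplicative order, which (as in Lemma \ref{lemma-reduction-n}) makes $\rho_{t_i}$ factor through a finite quotient where $\SL_2(\FF_{q^k})$-type irreducibility is available, thereby isolating the $j$-th tensor slot; combining over all $j$ forces $U$ to be a tensor product of scalars in each slot, i.e. scalar. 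I would then note that Lemma \ref{lemma-di-serre} (or rather the density of $\CC_\infty(\underline t_\Sigma)$ and Lemma \ref{locally-constant-implies-constant}) lets one lift the conclusion $U\in\KK_\Sigma\cdot I_N$ from all these specialisations back to $\KK_\Sigma$. Finally, absolute irreducibility of $\rho_\Sigma$ gives irreducibility of $\rho_\Sigma$, and passing to contragredients (which preserves irreducibility) yields the claim for $\rho_\Sigma^*$, completing the proof. An alternative, and perhaps shorter, closing move is to cite the already-recorded fact from \cite{PEL2} that $\rho_\Sigma^*$ is irreducible in $\GL_N(\FF_q[\underline t_\Sigma])$ and observe that irreducibility over the smaller field $\FF_q(\underline t_\Sigma)$ together with the commutant computation above upgrades to irreducibility over $\KK_\Sigma$; I would present the self-contained commutant argument as the main line and mention the citation as a shortcut.
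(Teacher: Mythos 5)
Your overall strategy (reduce to $\rho_\Sigma$, prove absolute irreducibility, deduce the contragredient case) is reasonable, but the main line as written has a genuine logical gap. You assert that ``a representation is absolutely irreducible iff its commutant is the scalars'' and then aim your entire specialization argument at computing the commutant. That equivalence is false: scalar commutant does not exclude a reducible, indecomposable representation. For instance, the standard $2$-dimensional representation of the Borel subgroup $\{(\begin{smallmatrix}*&*\\0&*\end{smallmatrix})\}\subset\GL_2(\FF_q)$ has an invariant line, yet any matrix commuting with all $\operatorname{Diag}(a,d)$ and all $T_a$ is scalar. The correct statement is that absolute irreducibility is equivalent to \emph{irreducibility together with} scalar commutant (equivalently, by Burnside, to the image of the group algebra spanning all of $M_N$). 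Since you never establish irreducibility of $\rho_\Sigma$ over any field independently of the commutant computation — and, as you yourself note, irreducibility of the factors $\rho_{t_i}$ does not give irreducibility of the internal Kronecker product — the argument does not close. A second, more technical problem: you apply the evaluation maps $t_i\mapsto\zeta_i$ to an arbitrary commuting matrix $U\in\KK_\Sigma^{N\times N}$, but evaluation at roots of unity is only defined on $\widehat{\TT_\Sigma[d^{-1}]}$, i.e.\ on $\LL_\Sigma$, not on all of $\KK_\Sigma$ (the paper stresses this limitation elsewhere). This is repairable — the commutant is the solution space of linear equations with coefficients in $\FF_q(\underline{t}_\Sigma)$, so it is spanned by its $\FF_q(\underline{t}_\Sigma)$-rational points and you may compute it there — but you would need to say so.

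For comparison, the paper's proof sidesteps all of this with a reduction to the residue field: the representation takes values in $\GL_N(\FF_q(\underline{t}_\Sigma))$, which sits inside the residual field $\FF_q^{ac}(\underline{t}_\Sigma)$ of $\KK_\Sigma$; a nontrivial invariant subspace of $\KK_\Sigma^{N\times 1}$ would therefore reduce modulo the maximal ideal to a nontrivial invariant subspace of $\FF_q^{ac}(\underline{t}_\Sigma)^{N\times 1}$, contradicting the irreducibility statement of \cite[Theorem 14]{PEL2}. Your ``shortcut'' variant is close to this in spirit and is the salvageable route: irreducibility over $\FF_q(\underline{t}_\Sigma)$ (from \cite{PEL2}) \emph{plus} a scalar-commutant computation over that field would indeed upgrade to irreducibility over $\KK_\Sigma$, since that implication (irreducible with trivial endomorphisms implies absolutely irreducible) is the true direction of the criterion. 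If you want a self-contained proof not resting on \cite{PEL2}, you should instead argue directly with invariant subspaces: starting from a nonzero invariant $W$, use differences of the operators $\rho_\Sigma(T_a)$ to produce a highest-weight-type vector in $W$ and then apply $S$ and the $T_a$ again to generate all of $\KK_\Sigma^{N\times 1}$; the commutant alone will not do the job.
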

\begin{proof}
Since $\FF_q(\underline{t}_\Sigma)$ is contained in the residual field $\FF_q^{ac}(\underline{t}_\Sigma)$, if the statement of the lemma were false there would exist a non-trivial subvector space $\{0\}\subsetneq U\subsetneq \FF_q^{ac}(\underline{t}_\Sigma)^{N\times 1}$ such that $\rho_\Sigma(\gamma)U\subset U$ for all $\gamma\in \Gamma$. This would be, however, in contradiction with  \cite[Theorem 14]{PEL2}.
\end{proof}
In particular, the representations $\rho^*_\Sigma$ are irreducible for $\Sigma$ a finite subset of $\NN^*$. We are going to use Lemma \ref{condition-for-vanishing} by means of the following consequence: if an element 
of $M_w(\rho^*_\Sigma;\KK_\Sigma)$ has vanishing first entry, then it vanishes identically. This can be applied to prove (\ref{eisenstein-small-sigma}) for $s\leq q$. To see this we choose $k\in\Sigma$ and we write $\Sigma':=\Sigma\setminus\{k\}$, with $\Sigma$ non-empty finite subset of $\NN^*$ of cardinality $s\leq q$. 
The harmonic product formula of Theorem \ref{corsumshuffle} yields inductively $$\varphi_A(s-1,\sigma_{\Sigma'})\varphi_A(1,\chi_{t_k})=\varphi_A(s,\sigma_\Sigma).$$ 
This formula can also be written more explicitly in the following way:
$$\prod_{i\in\Sigma}\left(\sum_{a\in A^+}\chi_{t_i}(a)u_a\right)=\sum_{a\in A^+}\sigma_\Sigma(a)u_a^s.$$
This implies (\ref{eisenstein-small-sigma}); we leave the details to the reader.

\subsubsection{An identity for Eisenstein series of weight $q+1$}\label{example-1-eisenstein} We use $\Sigma=\{1,2\}$ and we suppose that $q>2$. We denote by $g$ the (scalar) normalized Eisenstein series of weight $q-1$ for $\boldsymbol{1}$ (following Gekeler's notations in \cite{GEK}).
\begin{Proposition}\label{newformula}
The following identity holds when $q>2$:
$$-\mathcal{E}(q+1;\rho^*_\Sigma)=\mathcal{E}(1;\rho^*_{t_1})\otimes\mathcal{E}(q;\rho^*_{t_2})+
\mathcal{E}(q;\rho^*_{t_1})\otimes\mathcal{E}(1;\rho^*_{t_2})+(\theta^q-\theta)^{-1}g\mathcal{E}(1,\rho_{t_1}^*)\otimes\mathcal{E}(1,\rho_{t_2}^*).$$
\end{Proposition}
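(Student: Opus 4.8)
The plan is to place both sides of the asserted identity inside the (small, explicitly known) space $M_{q+1}(\rho^*_\Sigma;\KK_\Sigma)$, $\Sigma=\{1,2\}$, and then to pin the coefficients down by looking only at the first entries. First the weight--type bookkeeping: $|\Sigma|=2\equiv q+1\pmod{q-1}$, so $\mathcal{E}(q+1;\rho^*_\Sigma)\in M_{q+1}(\rho^*_\Sigma;\KK_\Sigma)$; and since weights and Kronecker products of representations of the first kind are multiplicative, $\mathcal{E}(1;\rho^*_{t_1})\otimes\mathcal{E}(q;\rho^*_{t_2})$, $\mathcal{E}(q;\rho^*_{t_1})\otimes\mathcal{E}(1;\rho^*_{t_2})$ (weight $1+q$, type $\rho^*_{t_1}\otimes\rho^*_{t_2}=\rho^*_\Sigma$) and $g\,\mathcal{E}(1;\rho^*_{t_1})\otimes\mathcal{E}(1;\rho^*_{t_2})$ (weight $(q-1)+2$, type $\rho^*_\Sigma$, using $g\in M_{q-1}(\boldsymbol{1};\CC_\infty)$) all lie in $M_{q+1}(\rho^*_\Sigma;\KK_\Sigma)$ as well.

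Now since $q>2$ we have $|\Sigma|=2<q$, so Corollary \ref{regular-equals-modular} gives $M_{q+1}(\rho^*_\Sigma;\KK_\Sigma)=M_{q+1}^\dag(\rho^*_\Sigma;\KK_\Sigma)$, and Theorem \ref{theorem2} (with $m=0$, $w=q+1$) decomposes this as a direct sum over the partitions $I\sqcup J=\Sigma$: the choice $J=\emptyset$ contributes $M_{q-1}(\boldsymbol{1};\KK_\Sigma)\,\mathcal{E}(1;\rho^*_{t_1})\otimes\mathcal{E}(1;\rho^*_{t_2})=\KK_\Sigma\,g\,\mathcal{E}(1;\rho^*_{t_1})\otimes\mathcal{E}(1;\rho^*_{t_2})$; the two singleton choices of $J$ contribute $M_0(\boldsymbol{1};\KK_\Sigma)\,\mathcal{E}(1;\rho^*_{t_1})\otimes\mathcal{E}(q;\rho^*_{t_2})$ and $M_0(\boldsymbol{1};\KK_\Sigma)\,\mathcal{E}(q;\rho^*_{t_1})\otimes\mathcal{E}(1;\rho^*_{t_2})$; and $J=\Sigma$ contributes $M_{1-q}(\boldsymbol{1};\KK_\Sigma)=0$. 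Hence $M_{q+1}(\rho^*_\Sigma;\KK_\Sigma)$ is three-dimensional over $\KK_\Sigma$, with the three Kronecker products appearing on the right-hand side of the Proposition as a basis, and there are unique $a_1,a_2,a_3\in\KK_\Sigma$ with
$$-\mathcal{E}(q+1;\rho^*_\Sigma)=a_1\,\mathcal{E}(1;\rho^*_{t_1})\otimes\mathcal{E}(q;\rho^*_{t_2})+a_2\,\mathcal{E}(q;\rho^*_{t_1})\otimes\mathcal{E}(1;\rho^*_{t_2})+a_3\,g\,\mathcal{E}(1;\rho^*_{t_1})\otimes\mathcal{E}(1;\rho^*_{t_2}).$$
Moreover $\rho^*_\Sigma$ is irreducible (Lemma \ref{irreducibility}), so by Lemma \ref{condition-for-vanishing} a form in $M_{q+1}(\rho^*_\Sigma;\KK_\Sigma)$ whose first entry vanishes vanishes identically; thus it is enough to check the identity on first entries, where it becomes a single power-series identity in $u$ that determines $a_1,a_2,a_3$.

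To compute the first entries I would use that the first entry of a Kronecker product is the product of the first entries, together with Proposition \ref{seriesofEisensteinseries}, formula (\ref{thirdformula}), which gives $\mathcal{E}^\emptyset(w;\rho^*_U)=-\widetilde{\pi}^{\,w}\sum_{a\in A^+}\sigma_U(a)G_w(u_a)=-\widetilde{\pi}^{\,w}\phi_A(w;\sigma_U)$ in the notation of (\ref{variant-Goss-sums}). Using the Goss polynomials $G_1(X)=X$, $G_q(X)=X^q$ and $G_{q+1}(X)=X^{q+1}+(\theta^q-\theta)^{-1}X^2$, all read off from the recursion (\ref{goss-poly-relations}), the first-entry identity reduces to a relation among the $A$-periodic multiple sums $\varphi_A$ of \S \ref{exa2}. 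Here the harmonic product enters: Theorem \ref{corsumshuffle} with $U=\{1\}$, $V=\{2\}$ yields $\varphi_A(1;\chi_{t_1})\varphi_A(1;\chi_{t_2})=\varphi_A(2;\sigma_\Sigma)$, and it is precisely the hypothesis $q>2$ that makes the only subsets $J$ with $|J|\equiv1\pmod{q-1}$ and $J\subset U$ or $J\subset V$ the two singletons, whose contributions cancel the two depth-two terms; while the mixed products $\varphi_A(1;\chi_{t_1})\varphi_A(q;\chi_{t_2})$ and $\varphi_A(q;\chi_{t_1})\varphi_A(1;\chi_{t_2})$ are expanded by re-running the proof of Theorem \ref{simplesumshuffle}, i.e. applying Corollary \ref{thecorollary} with $\alpha=1$, $\beta=q$ and summing over Thakur's partition of $A^+(d)^2\setminus\Delta$. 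Feeding in the $u$-expansion of $g$ from Gekeler \cite[\S 6]{GEK} then produces the asserted values of $a_1,a_2,a_3$; and since $M_{q+1}(\rho^*_\Sigma;\KK_\Sigma)$ is only three-dimensional it in fact suffices to match a bounded number of low-order $u$-coefficients of the first entries.

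The main obstacle will be this last step. The weight-$(1,q)$ harmonic product is not the clean symmetric instance handled by Theorem \ref{corsumshuffle}, so one must carry out Corollary \ref{thecorollary} with $\beta=q$ explicitly and recognise that the leftover terms — including the scalar Eisenstein-type constant hidden in the $k=N$ summand of the Thakur decomposition — reassemble into exactly $(\theta^q-\theta)^{-1}$ times $g\,\varphi_A(2;\sigma_\Sigma)$. Keeping track of the normalisation of $g$ (its constant term and its first non-trivial $u$-coefficient) and of the characteristic-dependent constants that appear along the way is the delicate point; alternatively one sidesteps the general harmonic product by pushing the $u$-expansion comparison of the (three) basis first entries far enough to read off $a_1=a_2=1$ and $a_3=(\theta^q-\theta)^{-1}$ directly.
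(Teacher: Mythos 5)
Your proposal is correct and follows essentially the same route as the paper: reduce to the first entries via irreducibility of $\rho^*_\Sigma$ (Lemmas \ref{condition-for-vanishing} and \ref{irreducibility}), express those first entries through Goss polynomials and the $A$-periodic sums $\varphi_A$, and close the computation with the harmonic product. The dimension count via Theorem \ref{theorem2} is superfluous (once the first entry of the difference vanishes, the whole form vanishes, so no basis argument is needed), and the step you flag as the main obstacle --- the mixed weight-$(1,q)$ products --- is exactly what the paper's Lemma \ref{formula-harmonic-1} supplies, by three direct applications of the general harmonic product of Theorem \ref{simplesumshuffle} rather than any new instance of Corollary \ref{thecorollary}.
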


To prove it, we use the next Lemma in the settings of Theorem \ref{corsumshuffle}.
\begin{Lemma}\label{formula-harmonic-1} The following formula holds:
\begin{multline}\label{formula-qu-plus-one}
f_A(q+1,\sigma_\Sigma)=\\ =f_A(1,\chi_{t_1})f_A(q,\chi_{t_2})+f_A(q,\chi_{t_1})f_A(1,\chi_{t_2})-
f_A(q-1)f_A(1,\chi_{t_1})f_A(1,\chi_{t_2}).\end{multline}
\end{Lemma}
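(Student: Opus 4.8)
The plan is to derive (\ref{formula-qu-plus-one}) from the harmonic product formalism of \S \ref{shufflerelations}, by the same pattern as the proof of \cite[Theorems 2.3, 3.1]{PEL3}, but run in weight $q+1$ rather than weight $2$. Throughout I abbreviate $a_i=f_A(1,\chi_{t_i})$, $b_i=f_A(q,\chi_{t_i})$ and $\zeta=f_A(q-1,\boldsymbol{1})$, and I use the hypothesis $q>2$ freely. First I would dispose of the triple product. Applying Theorem \ref{corsumshuffle} with $\Sigma=\{1,2\}$, $U=\{1\}$, $V=\{2\}$, the constraint $|J|\equiv 1\pmod{q-1}$ with $J\subset\{1\}$ or $J\subset\{2\}$ forces $|J|=1$ (here $q>2$ is essential), so the correction sum is exactly $f_A\binom{\chi_{t_2}\ \chi_{t_1}}{1\ 1}+f_A\binom{\chi_{t_1}\ \chi_{t_2}}{1\ 1}$ and cancels the two ordered depth-$2$ terms; hence $a_1a_2=f_A(2,\sigma_\Sigma)$ and the last term on the right of (\ref{formula-qu-plus-one}) is $\zeta\,f_A(2,\sigma_\Sigma)$, a product of depth-$1$ sums of weights $q-1$ and $2$.

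Next I would expand each of the three weight-$(q+1)$ products $a_1b_2$, $b_1a_2$ and $\zeta\,f_A(2,\sigma_\Sigma)$ by the stuffle. For a product $f_A\binom{\sigma}{m}f_A\binom{\psi}{n}$ one splits the double sum over the summation pair according as $\deg_\theta a_1>\deg_\theta a_2$, $\deg_\theta a_1<\deg_\theta a_2$ or $\deg_\theta a_1=\deg_\theta a_2$: the first two ranges give the ordered depth-$2$ sums $f_A\binom{\sigma\ \psi}{m\ n}$ and $f_A\binom{\psi\ \sigma}{n\ m}$; on the diagonal one separates $a_1=a_2$, which yields $f_A\binom{\sigma\psi}{m+n}=f_A(q+1,\sigma_\Sigma)$, from $a_1\ne a_2$, which is treated as in the proof of Theorem \ref{simplesumshuffle} — the partition of $(A^+(d)\times A^+(d))\setminus\Delta$ into the sets $S_{m',n'}$, the $\FF_q$-linearity and injectivity of $\chi_{t_i}$ and of $a\mapsto e_C(az)$, and the Fermat-type identity of Corollary \ref{thecorollary} (with $x_i=\chi_{t_i}(n')/\chi_{t_i}(m')$, $z=e_C(n'z)/e_C(m'z)$, $N=q+1$, and $(\alpha,\beta)$ equal to $(1,q)$, $(q,1)$ and $(q-1,2)$ respectively) rewrite the collision as an $\FF_p$-combination of depth-$2$ sums $f_A\binom{\sigma_I\ \sigma_{I^c}}{k\ q+1-k}$. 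Summing over $d$ preserves these as honest elements of $L$. Thus each of the three products equals $f_A(q+1,\sigma_\Sigma)$ plus an explicit $\FF_p$-linear combination of depth-$2$ sums, so (\ref{formula-qu-plus-one}) becomes equivalent to the vanishing of
$$\bigl(\text{depth-}2\text{ part of }a_1b_2\bigr)+\bigl(\text{depth-}2\text{ part of }b_1a_2\bigr)-\bigl(\text{depth-}2\text{ part of }\zeta\,f_A(2,\sigma_\Sigma)\bigr).$$

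The heart of the matter — and the step I expect to be the main obstacle — is the explicit evaluation of these depth-$2$ contributions and the check that they cancel. The arithmetic that makes this work is the mod-$p$ vanishing $\binom{q+1}{k}\equiv 0$ for $2\le k\le q-1$: writing $q=p^e$ one has $q+1=p^e+1$, of base-$p$ digit sum $2$, so by Lucas' theorem $\binom{q+1}{k}\not\equiv 0$ only for $k\in\{0,1,q,q+1\}$. This collapses almost all the coefficients produced by Corollary \ref{thecorollary}, leaving only the extremal depth-$2$ sums — weights $(1,q)$ and $(q,1)$ with types $(\chi_{t_1},\chi_{t_2})$ or $(\chi_{t_2},\chi_{t_1})$ from $a_1b_2$ and $b_1a_2$, and weights among $(q-1,2),(2,q-1),(1,q),(q,1)$ with trivial or $\sigma_\Sigma$-type from the $\zeta\,f_A(2,\sigma_\Sigma)$ expansion — which then match term by term in the alternating sum after the substitution $I\leftrightarrow I^c$ and using $\chi_{t_1}\chi_{t_2}=\sigma_\Sigma$. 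This is precisely the mechanism of \cite[\S 3.1]{PEL3} in the weight-$2$ case (where $2$ also has base-$p$ digit sum $2$). If the coefficient bookkeeping proves unwieldy, I would instead bypass it entirely: combine the three instances of Corollary \ref{thecorollary} into a single identity among the inner $\sum_{\mu,\nu}$-sums, verify the cancellation of the right-hand sides directly at that level, and only then sum over the partition $\{S_{m',n'}\}$ of $(A^+(d)\times A^+(d))\setminus\Delta$ and over $d\ge 0$ — this is how I would present the final proof.
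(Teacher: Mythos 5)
Your overall strategy coincides with the paper's: establish $f_A(1,\chi_{t_1})f_A(1,\chi_{t_2})=f_A(2,\sigma_\Sigma)$ from Theorem \ref{corsumshuffle}, expand each of the three weight-$(q+1)$ products by the harmonic product into $f_A(q+1,\sigma_\Sigma)$ plus depth-two sums, and check that the depth-two contributions cancel in the alternating combination. The paper's proof consists precisely of writing down the three explicit expansions; for instance it records
\begin{equation*}
f_A(1,\chi_{t_1})f_A(q,\chi_{t_2})=f_A\left(\begin{smallmatrix}\sigma_\Sigma\\ q+1\end{smallmatrix}\right)+f_A\left(\begin{smallmatrix}\sigma_\Sigma & \boldsymbol{1}\\ 2 & q-1\end{smallmatrix}\right)+f_A\left(\begin{smallmatrix}\chi_{t_2} & \chi_{t_1}\\ 2 & q-1\end{smallmatrix}\right),
\end{equation*}
and similarly for the other two products, after which the cancellation is read off.

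The gap in your proposal is that this explicit evaluation --- which is the entire content of the lemma --- is not carried out, and your prediction of its outcome is wrong. You argue via Lucas that $\binom{q+1}{k}\equiv 0\pmod p$ for $2\le k\le q-1$ and conclude that only the ``extremal'' depth-two sums of weights $(1,q)$ and $(q,1)$ survive. But the coefficients $c_{I,k}$ of Corollary \ref{thecorollary} are not binomial coefficients of $q+1$: they arise by applying Frobenius twists $z\mapsto z^q$ and specializations $x_i\mapsto z$ to the depth-one identity, so the surviving weights are governed by a different combinatorics. In fact, in the expansions used in the paper the ordered stuffle terms of weights $(1,q)$ and $(q,1)$ are themselves absorbed by the diagonal corrections, and what survives are depth-two sums of weight decomposition $(2,q-1)$, such as $f_A\left(\begin{smallmatrix}\sigma_\Sigma & \boldsymbol{1}\\ 2 & q-1\end{smallmatrix}\right)$ and $f_A\left(\begin{smallmatrix}\chi_{t_i} & \chi_{t_j}\\ 2 & q-1\end{smallmatrix}\right)$ --- terms your matching scheme never considers, so the ``term-by-term matching after $I\leftrightarrow I^c$'' you describe would be matching terms that are not there. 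Your fallback (verify the cancellation directly at the level of the inner $\sum_{\mu,\nu}$-sums and then sum over the partition and over $d$) is viable in principle, but it is exactly the computation the proof requires, and the proposal does not contain it; as written the argument does not close.
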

\begin{proof}
We have the following formulas where we also observe, with $\Sigma=\{1,2\}$, the formula $f(1;\chi_{t_1})f_A(1;\chi_{t_2})=f_A(2;\sigma_\Sigma)$:
\begin{eqnarray*}
f_A(1,\chi_{t_1})f_A(q,\chi_{t_2})&=&f_A\binom{\sigma_\Sigma}{q+1}+f_A\begin{pmatrix}\sigma_\Sigma & \boldsymbol{1} \\ 2 & q-1\end{pmatrix}+f_A\begin{pmatrix}\chi_{t_2} & \chi_{t_1} \\ 2 & q-1\end{pmatrix}\label{eq11}\\
f_A(1,\chi_{t_2})f_A(q,\chi_{t_1})&=&f_A\binom{\sigma_\Sigma}{q+1}+f_A\begin{pmatrix}\sigma_\Sigma & \boldsymbol{1} \\ 2 & q-1\end{pmatrix}+f_A\begin{pmatrix}\chi_{t_1} & \chi_{t_2} \\ 2 & q-1\end{pmatrix}\label{eq12}\\
f_A(q-1)f_A(2,\sigma_\Sigma)&=&f_A\binom{\sigma_\Sigma}{q+1}+2f_A\begin{pmatrix}\sigma_\Sigma & \boldsymbol{1} \\ 2 & q-1\end{pmatrix}-f_A\begin{pmatrix}\chi_{t_2} & \chi_{t_1} \\ 2 & q-1\end{pmatrix}-\\ & & -f_A\begin{pmatrix}\chi_{t_1} & \chi_{t_2} \\ 2 & q-1\end{pmatrix}\label{eq13}.
\end{eqnarray*} The formula (\ref{formula-qu-plus-one}) follows easily; it also holds for $q=2$.
\end{proof}

\begin{proof}[Proof of Proposition \ref{newformula}] We note that since $q>2$, $\mathcal{E}(2,\rho_\Sigma^*)=\mathcal{E}(1,\rho_{t_1}^*)\otimes\mathcal{E}(1,\rho_{t_2}^*)$ by (\ref{eisenstein-small-sigma}).
The first coordinates of the modular forms
$\mathcal{E}(q+1;\varphi_\Sigma),\mathcal{E}(1;\varphi_{t_1})\otimes\mathcal{E}(q;\varphi_{t_2}),
\mathcal{E}(q;\varphi_{t_1})\otimes\mathcal{E}(1;\varphi_{t_2}),g\mathcal{E}(2,\varphi_\Sigma)$
 are proportional to the following $A$-expansions (where we recall once again that $G_n(X)$ denotes the $n$-th Goss polynomial \cite[\S (3.4)]{GEK}):
\begin{eqnarray*}
\mathcal{X}&:=&\sum_{a\in A^+}\sigma_\Sigma(a)G_{q+1}(u_a),\\
\mathcal{Y}_1&:=&\left(\sum_{a\in A^+}\chi_{t_1}(a)u_a\right)\left(\sum_{b\in A^+}\chi_{t_2}(b)u_b^q\right)=\varphi_A(1,\chi_{t_1})\varphi_A(q,\chi_{t_2}),\\
\mathcal{Y}_2&:=&\left(\sum_{a\in A^+}\chi_{t_2}(a)u_a\right)\left(\sum_{b\in A^+}\chi_{t_1}(b)u_b^q\right)=\varphi_A(q,\chi_{t_1})\varphi_A(q,\chi_{t_1}),\\
\mathcal{Z}&:=&\left(1-(\theta^q-\theta)\sum_{a\in A^+}u_a^{q-1}\right)\left(\sum_{a\in A^+}\sigma_\Sigma(a)u_a^2\right)=(1-(\theta^q-\theta)\varphi_A(q-1))\varphi_A(2,\sigma_\Sigma).
\end{eqnarray*}
Note that $\mathcal{Y}_1,\mathcal{Y}_2\in\mathcal{F}_{q+1}^{\sigma_\Sigma}$. A simple computation yields $G_{q+1}(X)=X^{q+1}+(\theta^q-\theta)^{-1}X^2$. Hence
$$\mathcal{X}=(\theta^q-\theta)^{-1}\varphi_A(2;\sigma_\Sigma)+\varphi_A(q+1;\sigma_\Sigma).$$
By using Lemma \ref{formula-harmonic-1} with $f_A=\varphi_A$, the first entry of the modular form given by the difference of both sides of the identity of our statements vanishes identically so this modular form vanishes identically by Lemmas \ref{condition-for-vanishing} and \ref{irreducibility}.
\end{proof}

\subsubsection{An identity for Eisenstein series of weight $2$}\label{example-2-eisenstein} We prove here a  more complicate identity involving Eisenstein series of weights $1$ and $2$ in the case of $q$ odd. We suppose that 
$|\Sigma|\equiv2\pmod{q-1}$ and we write $s=|\Sigma|=\alpha(q-1)+2$, $\alpha\in\NN$.
We have:
\begin{Proposition}\label{abeautifulidentity}
If $q$ is odd the following formula holds:
$$\sum_{\begin{smallmatrix}U\sqcup V=\Sigma\\ 
|U|\equiv1\pmod{q-1}\\
|V|\equiv1\pmod{q-1}
\end{smallmatrix}}\mathcal{E}(1;\varphi_U)\otimes\mathcal{E}(1;\varphi_V)=-\mathcal{E}(2;\varphi_\Sigma).$$
\end{Proposition}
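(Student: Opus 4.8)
The strategy mirrors the proof of Proposition \ref{newformula}: I will establish the identity at the level of first entries (which are scalar $A$-expansions) and then invoke Lemmas \ref{condition-for-vanishing} and \ref{irreducibility} to conclude that a modular form in $M_2(\rho_\Sigma^*;\KK_\Sigma)$ whose first entry vanishes identically must itself be zero. Recall from (\ref{thirdformula}) that, up to the common proportionality constant $-\widetilde{\pi}^{\,|W|}$, the first entry of $\mathcal{E}(1;\rho_W^*)$ equals $\sum_{a\in A^+}\sigma_W(a)G_1(u_a)=\varphi_A(1;\sigma_W)$, and the first entry of $\mathcal{E}(2;\rho_\Sigma^*)$ equals $-\widetilde{\pi}^{2}\sum_{a\in A^+}\sigma_\Sigma(a)G_2(u_a)$. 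Since $|U|+|V|=|\Sigma|\equiv 2\pmod{q-1}$ and $|U|\equiv |V|\equiv 1\pmod{q-1}$, the powers of $\widetilde{\pi}$ match on both sides once one checks $\widetilde{\pi}^{\,|U|}\widetilde{\pi}^{\,|V|}=\widetilde{\pi}^{\,|\Sigma|}=\widetilde{\pi}^{\,s}$. So it remains to prove the purely $u$-adic identity
\begin{equation*}
\sum_{\begin{smallmatrix}U\sqcup V=\Sigma\\ |U|\equiv1,\ |V|\equiv1\!\!\pmod{q-1}\end{smallmatrix}}\varphi_A(1;\sigma_U)\,\varphi_A(1;\sigma_V)=-\sum_{a\in A^+}\sigma_\Sigma(a)G_2(u_a).
\end{equation*}

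First I would apply the harmonic product formula of Theorem \ref{corsumshuffle}, in the setting of \S \ref{exa2} (so $f_A=\varphi_A$, $\gamma_a=e_C(az)$, $\delta_i=\chi_{t_i}$), to each product $\varphi_A(1;\sigma_U)\varphi_A(1;\sigma_V)$ with $U\sqcup V=\Sigma$. That theorem gives
\begin{equation*}
\varphi_A(1;\sigma_U)\varphi_A(1;\sigma_V)=\varphi_A(2;\sigma_\Sigma)+\varphi_A\!\begin{pmatrix}\sigma_U & \sigma_V\\ 1 & 1\end{pmatrix}+\varphi_A\!\begin{pmatrix}\sigma_V & \sigma_U\\ 1 & 1\end{pmatrix}-\sum_{\begin{smallmatrix}I\sqcup J=\Sigma\\ |J|\equiv1\pmod{q-1}\\ J\subset U\text{ or }J\subset V\end{smallmatrix}}\varphi_A\!\begin{pmatrix}\sigma_I & \sigma_J\\ 1 & 1\end{pmatrix}.
\end{equation*}
Now I sum over all pairs $(U,V)$ with $U\sqcup V=\Sigma$ and $|U|\equiv|V|\equiv 1\pmod{q-1}$. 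The key combinatorial bookkeeping is to count, for each ordered pair $(I,J)$ of disjoint subsets with $I\sqcup J=\Sigma$, how many admissible $(U,V)$ contribute the term $\varphi_A\binom{\sigma_I\ \sigma_J}{1\ \ 1}$, distinguishing the "depth-one'' contribution $\varphi_A(2;\sigma_\Sigma)$ (whose count is just the number $M$ of admissible ordered pairs $(U,V)$), the "double-sum of type $(I,J)$'' contributions coming from the two middle terms (where $(I,J)=(U,V)$ forces both $|I|,|J|\equiv 1$), and those coming from the last, subtracted, sum (where only $|J|\equiv 1$ is imposed and $J\subset U$ or $J\subset V$). Because $q$ is odd, $2$ is invertible mod $p$, and the parity/counting of these multiplicities is exactly what makes all the depth-two terms $\varphi_A\binom{\sigma_I\ \sigma_J}{1\ \ 1}$ cancel in pairs or collapse, leaving only a multiple of $\varphi_A(2;\sigma_\Sigma)$ on the left. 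One then identifies the resulting scalar multiple $M\cdot\varphi_A(2;\sigma_\Sigma)$ modulo $p$; using $s\equiv 2\pmod{q-1}$ and the hypothesis $q$ odd, $M$ should reduce to $-1$ in $\FF_p$ plus a correction matching the difference between $G_2(X)$ and $X^2$. Concretely, $G_2(u_a)=u_a^2-\text{(something)}\,u_a$ via (\ref{goss-poly-relations}), and the linear-in-$u_a$ defect is supplied precisely by the leftover depth-two multiple sums that do not fully cancel; tracking this defect term is where Theorem \ref{corsumshuffle} must be used a second time (or the relation $G_2(X)=X^2+c\,X$ for the appropriate $c\in K$ plugged in directly).

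The main obstacle I anticipate is the combinatorial multiplicity count in the step above: one must carefully enumerate, with signs and multiplicities reduced modulo $p$, the contributions of each composition array $\binom{\sigma_I\ \sigma_J}{1\ \ 1}$ across all admissible $(U,V)$, and verify that the "$q$ odd'' hypothesis is exactly what forces the non-$\varphi_A(2;\sigma_\Sigma)$ terms to disappear and the surviving coefficient to be $-1$. This is a finite but delicate count over subsets of $\Sigma$ stratified by residue of cardinality mod $q-1$; I would organize it by introducing, for a subset $W\subseteq\Sigma$, the indicator of $|W|\equiv 1\pmod{q-1}$ and using a generating-function (or inclusion–exclusion) argument with the roots-of-unity filter $\frac{1}{q-1}\sum_{\zeta^{q-1}=1}\zeta^{-|W|}$. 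Once the scalar identity on first entries is in hand, the passage to the full vector identity is immediate from Lemma \ref{condition-for-vanishing} (applicable since $\rho_\Sigma^*$ is irreducible by Lemma \ref{irreducibility}) together with $\mathcal{E}(2;\rho_\Sigma^*)\in M_2(\rho_\Sigma^*;\KK_\Sigma)$ and each $\mathcal{E}(1;\rho_U^*)\otimes\mathcal{E}(1;\rho_V^*)\in M_2(\rho_\Sigma^*;\KK_\Sigma)$ — the latter because $\rho_U^*\otimes\rho_V^*=\rho_\Sigma^*$ when $U\sqcup V=\Sigma$ and the weights add to $1+1=2$.
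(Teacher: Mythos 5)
Your overall strategy coincides with the paper's: reduce to the identity of first entries via Lemmas \ref{condition-for-vanishing} and \ref{irreducibility}, expand each product $\varphi_A(1;\sigma_U)\varphi_A(1;\sigma_V)$ by the harmonic product formula of Theorem \ref{corsumshuffle}, and sum over the admissible partitions. The difficulty is that the entire substance of the paper's argument lies precisely in the combinatorial count you defer as ``the main obstacle'': this is Lemma \ref{lemma-identitywithfA}, whose proof rests on the two binomial congruences (\ref{congruenceone}) and (\ref{congruencetwo}), established by expanding $(X+1)^l$ over $\FF_q$ and exploiting that a polynomial of degree $\leq q-2$ vanishing on all of $\FF_q$ is zero (this is essentially the roots-of-unity filter you propose, so your sketched method is viable). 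Concretely, the two facts you need and do not establish are: (a) the number of admissible ordered pairs $(U,V)$ equals $\sum_{k}\binom{s}{k(q-1)+1}\equiv 2\pmod p$, and (b) after the paper's regrouping, for each fixed ordered pair $(I,J)$ with $|J|\equiv 1\pmod{q-1}$ the net multiplicity of $\varphi_A\big(\begin{smallmatrix}\sigma_I & \sigma_J\\ 1 & 1\end{smallmatrix}\big)$ is the number of admissible $(U,V)$ with $J\subsetneq U$ or $J\subsetneq V$, which is $\equiv 0\pmod p$. Without these the proof is not complete.

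Moreover, several of your concrete predictions are off. The coefficient of $\varphi_A(2;\sigma_\Sigma)$ produced by the count in Lemma \ref{lemma-identitywithfA} is $2$, not $-1$. Next, $G_2(X)=X^2$ exactly (read this off from (\ref{goss-poly-relations}), using $q>2$), so there is no ``linear-in-$u_a$ defect'' and no second application of Theorem \ref{corsumshuffle} is needed: that part of your plan chases a term which is identically zero. Finally, the normalisation in (\ref{thirdformula}) is $-\widetilde{\pi}^{w}$ with $w$ the weight, not $-\widetilde{\pi}^{|W|}$; with your exponents the powers of $\widetilde{\pi}$ on the two sides would fail to match as soon as $|\Sigma|>2$. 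These slips are repairable, but together with the missing multiplicity count they mean the argument as written does not yet constitute a proof.
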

\begin{proof}
This is a simple combination of Lemmas \ref{condition-for-vanishing} and \ref{irreducibility} and the next Lemma \ref{lemma-identitywithfA}.
\end{proof}
\begin{Lemma}\label{lemma-identitywithfA} The following formula holds:
\begin{equation}\label{identitywithfA}
\sum_{\begin{smallmatrix}  (U,V)\text{ such that }U\sqcup V=\Sigma\\ 
|U|\equiv1\pmod{q-1}\\
|V|\equiv1\pmod{q-1}
\end{smallmatrix}}f_A\begin{pmatrix} \sigma_U \\ 1\end{pmatrix}f_A\begin{pmatrix} \sigma_V \\ 1\end{pmatrix}=2f_A\begin{pmatrix} \sigma_\Sigma \\ 2\end{pmatrix}.
\end{equation}
\end{Lemma}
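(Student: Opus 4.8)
The plan is to run the harmonic product machinery of Theorem \ref{simplesumshuffle} (in the explicit form of Theorem \ref{corsumshuffle}) over all the ways of splitting $\Sigma$ into two pieces of size $\equiv 1 \pmod{q-1}$ and then to show that the ``correction'' terms on the right-hand side telescope to zero when $q$ is odd. Concretely, fix a partition $U \sqcup V = \Sigma$ with $|U| \equiv |V| \equiv 1 \pmod{q-1}$ (this forces $s = |\Sigma| \equiv 2 \pmod{q-1}$, consistent with our hypothesis $s = \alpha(q-1)+2$). Applying Theorem \ref{corsumshuffle} with this $U, V$ gives
\[
f_A\begin{pmatrix}\sigma_U \\ 1\end{pmatrix} f_A\begin{pmatrix}\sigma_V \\ 1\end{pmatrix} = f_A\begin{pmatrix}\sigma_\Sigma \\ 2\end{pmatrix} + f_A\begin{pmatrix}\sigma_U & \sigma_V \\ 1 & 1\end{pmatrix} + f_A\begin{pmatrix}\sigma_V & \sigma_U \\ 1 & 1\end{pmatrix} - \sum_{\substack{I \sqcup J = \Sigma \\ |J| \equiv 1 \,(q-1) \\ J \subset U \text{ or } J \subset V}} f_A\begin{pmatrix}\sigma_I & \sigma_J \\ 1 & 1\end{pmatrix}.
\]
First I would sum this identity over the set $\mathcal{P}$ of \emph{ordered} pairs $(U,V)$ with $U \sqcup V = \Sigma$, $|U| \equiv |V| \equiv 1 \pmod{q-1}$. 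The left side becomes exactly the left side of \eqref{identitywithfA}; the term $f_A\binom{\sigma_\Sigma}{2}$ appears $|\mathcal{P}|$ times, and since $q$ is odd, $|\mathcal{P}| \equiv 2 \pmod p$ — this is where oddness enters, and I will need to check it carefully (see below). So I need to show that the depth-$2$ contributions cancel:
\[
\sum_{(U,V) \in \mathcal{P}} \left( f_A\begin{pmatrix}\sigma_U & \sigma_V \\ 1 & 1\end{pmatrix} + f_A\begin{pmatrix}\sigma_V & \sigma_U \\ 1 & 1\end{pmatrix} \right) = \sum_{(U,V) \in \mathcal{P}} \; \sum_{\substack{I \sqcup J = \Sigma \\ |J| \equiv 1 \,(q-1) \\ J \subset U \text{ or } J \subset V}} f_A\begin{pmatrix}\sigma_I & \sigma_J \\ 1 & 1\end{pmatrix}.
\]

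The key combinatorial step is a bookkeeping argument on the right-hand sum. For a fixed depth-$2$ array $\binom{\sigma_I\ \sigma_J}{1\ \ 1}$ with $I \sqcup J = \Sigma$, $|J| \equiv 1 \pmod{q-1}$, I would count the number of pairs $(U,V) \in \mathcal{P}$ such that $J \subset U$ or $J \subset V$. If $J \subset U$: then $V \subset I$, and since $|V| \equiv 1 \pmod{q-1}$ and $|I| = s - |J| \equiv 1 \pmod{q-1}$ as well, $V$ can be any subset of $I$ of size $\equiv 1 \pmod{q-1}$ — but wait, $U$ is then determined as $\Sigma \setminus V$, and we need $|U| \equiv 1$, which holds automatically. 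So the count of such pairs with $J \subset U$ equals the number $c(|I|)$ of subsets of a $|I|$-element set of cardinality $\equiv 1 \pmod{q-1}$. By the standard character-sum identity $\sum_{\mu \in \FF_q^\times} \mu^{-k} = -1$ if $q-1 \mid k$ and $0$ otherwise, one gets $c(n) = -\frac{1}{q-1}\sum_{\mu \in \FF_q^\times} (1+\mu)^n \pmod p$ after a binomial expansion; in particular $c(n) \bmod p$ is computable and, crucially, $c(n) \equiv 1 \pmod p$ when $n \equiv 1 \pmod{q-1}$ (this is the same computation that underlies $\delta_\rho$-type dimension counts). Symmetrically, the count with $J \subset V$ gives another $c(|I|) \equiv 1$. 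Hence each array $\binom{\sigma_I\ \sigma_J}{1\ \ 1}$ with $|J| \equiv 1$, $|I| \equiv 1$ occurs with total multiplicity $\equiv 2 \pmod p$ on the right. On the other hand, the left-hand correction terms $f_A\binom{\sigma_U\ \sigma_V}{1\ 1} + f_A\binom{\sigma_V\ \sigma_U}{1\ 1}$ summed over $(U,V) \in \mathcal{P}$ are \emph{exactly} twice the sum over all such arrays (each unordered split $\{I,J\}$ with both parts $\equiv 1 \pmod{q-1}$ contributes both orderings, once from $(U,V)=(I,J)$ and once from $(U,V)=(J,I)$). Matching coefficient $2 \equiv 2$ on both sides, the depth-$2$ terms cancel, leaving $(\text{LHS of }\eqref{identitywithfA}) = |\mathcal{P}| \cdot f_A\binom{\sigma_\Sigma}{2} = 2\, f_A\binom{\sigma_\Sigma}{2}$ in $\FF_p$-linear combination sense, which is the claim.

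The main obstacle I anticipate is getting the mod-$p$ multiplicity count $c(n)$ exactly right and making sure the ``$J \subset U$'' versus ``$J \subset V$'' cases are disjoint (they are, unless $J = \emptyset$, which is excluded since $|J| \equiv 1 \pmod{q-1}$ and $q > 2$; the edge case $q = 2$ I would handle separately or note it is vacuous here since $q$ is assumed odd) and that no array is double-counted when $I$ itself splits in several ways. A secondary subtlety is that the identity \eqref{identitywithfA} is asserted as an identity of elements of $L = K(\underline{t}_\Sigma)((u))$ (the setting of \S \ref{exa2}), so I must invoke Corollary \ref{coro-varphi-A} to know that the $\FF_p$-algebra structure is genuinely reflected in $L$ and that the formal manipulations of twisted power sums $S_d$ are legitimate termwise before summing over $d$; since all series converge $u$-adically this is routine but should be stated. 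Once \eqref{identitywithfA} is in hand, Proposition \ref{abeautifulidentity} follows by reading it off on first entries of the Eisenstein series (using Proposition \ref{seriesofEisensteinseries}, \eqref{firstformula} and \eqref{thirdformula}, together with $\varphi_A(1;\sigma_U) \leftrightarrow -\widetilde{\pi}^{-1}\widetilde{\pi}^{\cdot}$-scaled first entry of $\mathcal{E}(1;\rho^*_U)$) and then applying Lemmas \ref{condition-for-vanishing} and \ref{irreducibility} to conclude the full vector identity from the scalar one.
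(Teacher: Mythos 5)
Your proposal is correct and follows essentially the same route as the paper: sum the harmonic product identity of Theorem \ref{corsumshuffle} over the ordered partitions $(U,V)$, use the mod-$p$ binomial congruences (number of admissible partitions $\equiv 2$, number of subsets of size $\equiv 1 \pmod{q-1}$ in a set of size $\equiv 1 \pmod{q-1}$ equal to $1$ mod $p$) and a double-counting of the depth-two arrays to make the correction terms cancel; the paper merely organizes the bookkeeping by first absorbing the $J=U$, $J=V$ terms and showing the strict-inclusion count is $\equiv 0$, which is the same as your ``$2=2$'' matching. The one small slip is your displayed character-sum formula for $c(n)$ (it computes the count of subsets of size $\equiv 0$, not $\equiv 1$), but you flag it for checking and the congruence you actually use, $c(n)\equiv 1$ for $n\equiv 1\pmod{q-1}$, is exactly what the paper establishes via its polynomial-vanishing argument.
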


\begin{proof}
We set $m=\alpha(q-1)+2$  and $n=\alpha(q-1)+1$, for $\alpha\geq 0$.
We claim that
\begin{eqnarray}
\sum_{\begin{smallmatrix} k\equiv0\pmod{q-1}\\ 0<k\leq \alpha\end{smallmatrix}}\binom{n}{k}\equiv0\pmod{p}\label{congruenceone},\\
\sum_{\begin{smallmatrix} k\equiv1\pmod{q-1}\\ 0\leq k\leq \alpha\end{smallmatrix}}\binom{m}{k}\equiv2\pmod{p}\label{congruencetwo}.
\end{eqnarray}
To see this we consider more generally $N\in\NN$ and we write $N=\alpha(q-1)+l$ with $\alpha\geq 0$ and $0\leq l\leq q-2$.
Let $\lambda,\mu$ be in $\FF_q$. Then,
$$(\lambda+\mu)^l=(\lambda+\mu)^N=\sum_{r=0}^N\binom{N}{r}\lambda^r\mu^{N-r}=\sum_{r_0=0}^{q-2}\lambda^{r_0}\mu^{\nu(r_0)}\underbrace{\sum_{\begin{smallmatrix}r\equiv r_0\pmod{q-1}\\ 0\leq r\leq N\end{smallmatrix}}\binom{N}{r}}_{=:\beta_{r_0}},$$ where $\nu(r_0)$ is the unique integer in $\{0,\ldots,q-2\}$ such that $l-r_0\equiv \nu(r_0)\pmod{q-1}$. Setting further $\lambda=1$, 
we have the polynomial 
$$P(X)=(X+1)^l-\sum_{r_0=0}^{q-2}\beta_{r_0}X^{\nu(r_0)}\in\FF_p[X],$$
which vanishes identically over $\FF_q$, and has degree $\leq q-2$. This implies that it is identically zero. Taking $N=m=\alpha(q-1)+2$ we have $l=2$ and computing the coefficient of $X$ in $P$, we deduce (\ref{congruenceone}). Taking $N=n=\alpha(q-1)+1$ and computing the constant term of $P$, we deduce  (\ref{congruencetwo}). This shows the claim.
We can complete the proof of formula \ref{identitywithfA}.
We use Theorem \ref{corsumshuffle}, which tells us that if $U\sqcup V=\Sigma$ with $|U|\equiv|V|\equiv1\pmod{q-1}$, 
\begin{multline*}
f_A\left(\begin{matrix} \sigma_U \\
1\end{matrix}\right)f_A\left(\begin{matrix} \sigma_V \\
1\end{matrix}\right)-f_A\left(\begin{matrix}\sigma_\Sigma \\
2\end{matrix}\right)= \\ f_A\left(\begin{matrix}\sigma_U & \sigma_V \\
1 & 1\end{matrix}\right)+f_A\left(\begin{matrix}\sigma_V & \sigma_U \\
1 & 1\end{matrix}\right)-\sum_{\begin{smallmatrix}I\sqcup J= \Sigma \\
|J|\equiv1\pmod{q-1}\\
J\subset U\text{ or }J\subset V\end{smallmatrix}}f_A\left(\begin{matrix}\sigma_{I} & \sigma_{J} \\
1 & 1\end{matrix}\right).\end{multline*}
We sum these identities over all such partitions $\Sigma=U\sqcup V$. First of all, the number of such partitions is equal to 
$$\sum_{k=0}^\alpha\binom{s}{k(q-1)+1}$$ which is congruent to $2$ modulo $p$ by (\ref{congruenceone}). Let 
$$f:\mathcal{P}(\Sigma)^2\rightarrow L$$ be any map with values in a field $L$ of characteristic $p$, where $\mathcal{P}(\Sigma)$ is the set of subsets of $\Sigma$.
Then,
\begin{multline*}
\sum_{\begin{smallmatrix} U\sqcup V=\Sigma\\ 
|U|\equiv1\pmod{q-1}
\end{smallmatrix}}\left(\sum_{\begin{smallmatrix} I\sqcup J=\Sigma\\ 
|J|\equiv1\pmod{q-1}\\ J\subset U\text{ or }J\subset V\end{smallmatrix}}f(I,J)-f(U,V)-f(V,U)\right)=\\ 
=\sum_{\begin{smallmatrix} U\sqcup V=\Sigma\\ 
|U|\equiv1\pmod{q-1}\end{smallmatrix}}\sum_{\begin{smallmatrix} I\sqcup J=\Sigma\\ 
|J|\equiv1\pmod{q-1}\\ J\subsetneq U\text{ or }J\subsetneq V\end{smallmatrix}}f(I,J)=\\
=\sum_{\begin{smallmatrix} I\sqcup J=\Sigma\\ 
|J|\equiv1\pmod{q-1}\end{smallmatrix}}f(I,J)\sum_{\begin{smallmatrix} U\sqcup V=\Sigma\\ 
|U|\equiv1\pmod{q-1}\\ U\supsetneq J\text{ or }V\supsetneq J\end{smallmatrix}}1,
\end{multline*}
which vanishes by (\ref{congruencetwo}).
Observing that we can choose $f(I,J)=f_A(\begin{smallmatrix} \sigma_I & \sigma_J \\ 1 & 1\end{smallmatrix})$ terminates the proof.
\end{proof}

As a complement of Proposition \ref{abeautifulidentity} we propose the following question, to be compared with Cornelissen, \cite[Proposition (1.15)]{COR}. We assume that $|\Sigma|\equiv2\pmod{q-1}$.

\begin{Question}\label{conjectureweight2}
Do the forms $\mathcal{E}(1;\rho^*_U)\otimes\mathcal{E}(1;\rho^*_V)$, for $U\sqcup V=\Sigma$
and $|U|\equiv|V|\equiv1\pmod{q-1}$ generate 
the module $M_{2}(\rho^*_\Sigma;\KK_\Sigma)$?
\end{Question}

\subsubsection{Serre's derivatives of Eisenstein series}\label{example-3-eisenstein}

We return to the operators $\partial_n^{(w)}(f)$ introduced in \S \ref{serre-derivatives}. We suppose that $\Sigma\subset\NN^*$ is such that $s=|\Sigma|\equiv1\pmod{q-1}$ and we study the $u$-expansion of the first entry (indexed by $\emptyset$) of $$\partial_1^{(1)}(\mathcal{E}(1;\rho^*_\Sigma))\in S_{3}(\rho^*_\Sigma\det{}^{-1};\KK_\Sigma).$$ By Proposition \ref{seriesofEisensteinseries}, the first entry of $\mathcal{E}(1;\rho^*_\Sigma)$ is equal to $-\widetilde{\pi}\varphi_A(1;\sigma_\Sigma)$.
We compute, by setting $\Sigma'=\Sigma\sqcup\{0\}$:
\begin{eqnarray*}
\lefteqn{\partial^{(1)}_1(f_A(1;\sigma_\Sigma))=}\\
&=&\sum_{a\in A^+}\sigma_\Sigma(a)au_a^2-\sum_{a\in A^+}au_a\sum_{b\in A^+}\sigma_\Sigma(b)u_b\\
&=&-[\varphi_A(1;\chi_{t_0})\varphi_A(1;\sigma_\Sigma)-\varphi_A(2;\sigma_{\Sigma'})]_{t_0=\theta}.
\end{eqnarray*}
Hence:
\begin{Lemma}
We have the formula:
$$\partial^{(1)}_1(f_A(1;\sigma_\Sigma))=\left[\sum_{\begin{smallmatrix}I\sqcup J=\Sigma'\\ |J|\equiv1\pmod{q-1}\\J=\{0\}\text{ or }J\subset\Sigma'\end{smallmatrix}}\varphi_A\begin{pmatrix}\sigma_I & \sigma_J\\ 1 & 1\end{pmatrix}-
\varphi_A\begin{pmatrix}\chi_{t_0} & \sigma_\Sigma\\ 1 & 1\end{pmatrix}-
\varphi_A\begin{pmatrix} \sigma_\Sigma& \chi_{t_0}\\ 1 & 1\end{pmatrix}\right]_{t_0=\theta}.$$
\end{Lemma}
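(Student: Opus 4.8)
The statement to be proved is the formula for $\partial^{(1)}_1(f_A(1;\sigma_\Sigma))$ expressing it as a sum of weight-$2$ double $A$-periodic sums. The proof is a direct computation that combines the definition of the Serre-type operator $\partial^{(1)}_1$, the harmonic product for the $\varphi_A$-sums (Corollary \ref{corsumshuffle}), and the series expansion of $E$. First I would recall from \S\ref{serre-derivatives} that $\partial^{(1)}_1 = D_1 - E I_N$, so that on the first entry $-\widetilde{\pi}\,f_A(1;\sigma_\Sigma)$ of $\mathcal{E}(1;\rho^*_\Sigma)$ (using Proposition \ref{seriesofEisensteinseries}, equation (\ref{thirdformula}) with $w=1$, noting $G_1(u_a)=u_a$) one gets
\[
\partial^{(1)}_1\bigl(-\widetilde{\pi}\,f_A(1;\sigma_\Sigma)\bigr) = -\widetilde{\pi}\Bigl(D_1\bigl(f_A(1;\sigma_\Sigma)\bigr) - E\cdot f_A(1;\sigma_\Sigma)\Bigr).
\]
Here I use $D_n = (-\widetilde{\pi})^{-n}\mathcal{D}_n$ and that multiplication by the scalar $\widetilde{\pi}$ commutes with $D_1$ up to the twist already built into $D_1$; the point is that the bracketed quantity is what we call $\partial^{(1)}_1(f_A(1;\sigma_\Sigma))$ by slight abuse, and it is the object in the Lemma.

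The computation proceeds term by term. For $D_1$ applied to $f_A(1;\sigma_\Sigma)=\sum_{a\in A^+}\sigma_\Sigma(a)u_a$, I use that $u_a = u(az)$ and $D_1(u(az)) = a\,u_a^2$ (this is the normalisation chosen precisely so that $D_1(u)=u^2$, combined with the chain rule for divided derivatives under $z\mapsto az$; compare the discussion before Lemma \ref{generalisation-gekeler} and Gekeler's (3.6)). Hence $D_1(f_A(1;\sigma_\Sigma)) = \sum_{a\in A^+}\sigma_\Sigma(a)\,a\,u_a^2 = \varphi_A(2;\sigma_{\Sigma'})\big|_{t_0=\theta}$ where $\Sigma'=\Sigma\sqcup\{0\}$ and we use $\chi_{t_0}(a)\big|_{t_0=\theta}=a$. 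For the second term, $E = \sum_{a\in A^+}a\,u_a = \varphi_A(1;\chi_{t_0})\big|_{t_0=\theta}$ by the series expansion (\ref{seriesfalse}), so $E\cdot f_A(1;\sigma_\Sigma) = \bigl[\varphi_A(1;\chi_{t_0})\,\varphi_A(1;\sigma_\Sigma)\bigr]_{t_0=\theta}$. Thus
\[
\partial^{(1)}_1(f_A(1;\sigma_\Sigma)) = -\bigl[\varphi_A(1;\chi_{t_0})\,\varphi_A(1;\sigma_\Sigma) - \varphi_A(2;\sigma_{\Sigma'})\bigr]_{t_0=\theta}.
\]

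The final step is to expand the product $\varphi_A(1;\chi_{t_0})\,\varphi_A(1;\sigma_\Sigma)$ by the harmonic product formula of Theorem \ref{corsumshuffle}, applied with $\Sigma$ replaced by $\Sigma'=\Sigma\sqcup\{0\}$, $U=\{0\}$ and $V=\Sigma$ (note $|U|=1\equiv 1$, and $|V|=|\Sigma|\equiv 1\pmod{q-1}$ by hypothesis, so the symmetric pair $(U,V)$ is exactly one of the allowed shuffles). That formula gives
\[
\varphi_A(1;\sigma_{\{0\}})\varphi_A(1;\sigma_V) - \varphi_A(2;\sigma_{\Sigma'}) = \varphi_A\begin{pmatrix}\sigma_U & \sigma_V\\ 1 & 1\end{pmatrix} + \varphi_A\begin{pmatrix}\sigma_V & \sigma_U\\ 1 & 1\end{pmatrix} - \sum_{\begin{smallmatrix}I\sqcup J=\Sigma'\\ |J|\equiv 1\,(q-1)\\ J\subset U\text{ or }J\subset V\end{smallmatrix}}\varphi_A\begin{pmatrix}\sigma_I & \sigma_J\\ 1 & 1\end{pmatrix}.
\]
Rearranging: the two ``$\varphi_A\begin{pmatrix}\sigma_U&\sigma_V\\1&1\end{pmatrix}$'' and ``$\varphi_A\begin{pmatrix}\sigma_V&\sigma_U\\1&1\end{pmatrix}$'' terms are precisely $\varphi_A\begin{pmatrix}\chi_{t_0}&\sigma_\Sigma\\1&1\end{pmatrix}$ and $\varphi_A\begin{pmatrix}\sigma_\Sigma&\chi_{t_0}\\1&1\end{pmatrix}$ (before specialising $t_0=\theta$), and moving the minus sign through yields exactly the claimed bracket, with the sum indexed over $I\sqcup J=\Sigma'$, $|J|\equiv 1\pmod{q-1}$, $J=\{0\}$ or $J\subset\Sigma'$ (the condition ``$J\subset U=\{0\}$ or $J\subset V=\Sigma$'' unwinds to ``$J=\{0\}$ or $J\subset\Sigma$'', and the latter can be written $J\subset\Sigma'$ as stated, subtracting off the two ``diagonal'' shuffle terms $\chi_{t_0}\otimes\sigma_\Sigma$ and $\sigma_\Sigma\otimes\chi_{t_0}$ explicitly). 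Finally specialise $t_0=\theta$ throughout.

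The only genuinely delicate point — and the one I would double-check carefully — is bookkeeping with the index set of the shuffle sum: making sure the condition ``$J\subset U$ or $J\subset V$'' in Theorem \ref{corsumshuffle}, after the substitution $U=\{0\},V=\Sigma$, matches verbatim the condition ``$J=\{0\}$ or $J\subset\Sigma'$'' minus the two isolated terms $\varphi_A\begin{pmatrix}\chi_{t_0}&\sigma_\Sigma\\1&1\end{pmatrix}$, $\varphi_A\begin{pmatrix}\sigma_\Sigma&\chi_{t_0}\\1&1\end{pmatrix}$ that were pulled out on the right; one must be careful whether the full set $\Sigma'$ (i.e. $J=\Sigma'$, forcing $I=\emptyset$) is included, and about the $|J|\equiv 1\pmod{q-1}$ congruence interacting with $|\{0\}|=1$ and $|\Sigma|\equiv 1$. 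Everything else is a routine application of $D_1(u)=u^2$, the chain rule for divided derivatives, and the expansion $E=\sum_{a\in A^+}au_a$, all of which are available from the excerpt.
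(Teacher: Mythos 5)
Your proof is correct and follows essentially the same route as the paper: the displayed computation preceding the Lemma already reduces $\partial^{(1)}_1(f_A(1;\sigma_\Sigma))$ to $-[\varphi_A(1;\chi_{t_0})\varphi_A(1;\sigma_\Sigma)-\varphi_A(2;\sigma_{\Sigma'})]_{t_0=\theta}$ via $D_1(u_a)=au_a^2$ and $E=\sum_{a\in A^+}au_a$, and the paper's proof is then exactly your final step, namely a direct application of Theorem \ref{corsumshuffle} with $U=\{0\}$, $V=\Sigma$. Your care about the index set is warranted (the condition unwinds to ``$J=\{0\}$ or $J\subset\Sigma$'', so the ``$J\subset\Sigma'$'' in the statement is best read that way), but this does not affect the correctness of your argument.
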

\begin{proof}
This follows directly from Theorem \ref{corsumshuffle} (we interpret Serre's derivatives
in terms of specializations of the harmonic relations of \S \ref{shufflerelations}).
\end{proof}
In particular we have, applying again Lemmas \ref{condition-for-vanishing} and \ref{irreducibility}:
\begin{Lemma}
If $s=|\Sigma|\leq q-1$, then $\partial^{(s)}_1(\mathcal{E}(s;\rho^*_\Sigma))=0$.
\end{Lemma}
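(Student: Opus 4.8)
The plan is to combine the irreducibility of the representations $\rho^{*}_{\Sigma}$ with the harmonic product formula, so that everything is reduced to a statement about the first entry of the Eisenstein series. Set $\rho=\rho^{*}_{\Sigma}\det^{-1}$; this is again a representation of the first kind (for instance $\det^{-1}=\wedge^{2}\rho^{*}_{\chi_{t}}$), and it is irreducible by Lemma~\ref{irreducibility}, since twisting an irreducible representation by a one-dimensional character does not affect irreducibility. By Theorem~\ref{serre-derivatives} we have $\partial^{(s)}_{1}\bigl(\mathcal{E}(s;\rho^{*}_{\Sigma})\bigr)\in S_{s+2}(\rho;\KK_{\Sigma})\subset M^{!}_{s+2}(\rho;\KK_{\Sigma})$, so by Lemma~\ref{condition-for-vanishing} it suffices to prove that the first entry of $\partial^{(s)}_{1}(\mathcal{E}(s;\rho^{*}_{\Sigma}))$ vanishes identically.

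Since $\partial^{(s)}_{1}=D_{1}-sEI_{N}$ acts entrywise, that first entry is $\partial^{(s)}_{1}$ applied to the first entry $\mathcal{E}^{\emptyset}$ of $\mathcal{E}(s;\rho^{*}_{\Sigma})$. By Proposition~\ref{seriesofEisensteinseries}, formula~(\ref{thirdformula}), together with $G_{s}(X)=X^{s}$ for $s\le q$ (read off from the generating identity~(\ref{goss-poly-relations})), one has $\mathcal{E}^{\emptyset}=-\widetilde{\pi}^{s}\varphi_{A}(s;\sigma_{\Sigma})$. Moreover, iterating the harmonic product of Theorem~\ref{corsumshuffle} exactly as recorded just after Lemma~\ref{irreducibility} and using $s=|\Sigma|\le q$ gives $\varphi_{A}(s;\sigma_{\Sigma})=\prod_{i\in\Sigma}\varphi_{A}(1;\chi_{t_{i}})$. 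Now $\partial^{(\bullet)}_{1}$ obeys the Leibniz rule $\partial^{(w_{1}+w_{2})}_{1}(fg)=\partial^{(w_{1})}_{1}(f)\,g+f\,\partial^{(w_{2})}_{1}(g)$, because $D_{1}$ is a derivation and $E$ is a scalar, so applying it to this product of $s$ weight-one series yields
$$\partial^{(s)}_{1}\bigl(\varphi_{A}(s;\sigma_{\Sigma})\bigr)=\sum_{j\in\Sigma}\Bigl(\prod_{i\in\Sigma\setminus\{j\}}\varphi_{A}(1;\chi_{t_{i}})\Bigr)\,\partial^{(1)}_{1}\bigl(\varphi_{A}(1;\chi_{t_{j}})\bigr).$$
It therefore remains to show $\partial^{(1)}_{1}(\varphi_{A}(1;\chi_{t_{j}}))=0$ for each $j\in\Sigma$.

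This last vanishing is exactly the preceding Lemma specialised to the singleton $\Sigma=\{j\}$: in the harmonic relation of Theorem~\ref{corsumshuffle} taken with $U=\{0\}$, $V=\{j\}$, the sum over the partitions $I\sqcup J=\{0,j\}$ with $|J|\equiv 1\pmod{q-1}$ contributes only $J=\{0\}$ and $J=\{j\}$ (it is here that one uses $q>2$, to rule out $|J|=q$), and these two terms cancel against $\varphi_{A}\!\left(\begin{smallmatrix}\chi_{t_{0}} & \chi_{t_{j}}\\ 1 & 1\end{smallmatrix}\right)+\varphi_{A}\!\left(\begin{smallmatrix}\chi_{t_{j}} & \chi_{t_{0}}\\ 1 & 1\end{smallmatrix}\right)$; after substituting $t_{0}=\theta$ one gets $\partial^{(1)}_{1}(\varphi_{A}(1;\chi_{t_{j}}))=0$. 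Hence $\partial^{(s)}_{1}(\mathcal{E}(s;\rho^{*}_{\Sigma}))$ has zero first entry, and the irreducibility step concludes.

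I expect the main obstacle to be the combinatorial bookkeeping in this final cancellation: one must verify that, among the partitions of $\{0,j\}$ allowed by Theorem~\ref{corsumshuffle}, exactly the two terms appearing on the other side of the identity are produced, which is precisely what forces $q>2$; the degenerate case $q=2$, where necessarily $|\Sigma|=1$, has to be inspected on its own. A secondary, routine point to spell out is that both the reduction to the first entry and the Leibniz rule for $\partial^{(\bullet)}_{1}$ remain valid verbatim for the entrywise action of $\partial^{(\bullet)}_{1}$ on vector-valued forms.
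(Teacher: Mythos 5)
Your argument is correct and, as far as one can reconstruct it from the paper's one-line justification (``In particular \dots applying again Lemmas \ref{condition-for-vanishing} and \ref{irreducibility}''), it is exactly the intended proof: reduce to the first entry by irreducibility, use $G_s(X)=X^s$ and the factorization $\varphi_A(s;\sigma_\Sigma)=\prod_{i\in\Sigma}\varphi_A(1;\chi_{t_i})$ valid for $s\le q$, apply the Leibniz rule for $\partial_1^{(\bullet)}$, and kill each factor by the singleton case of the preceding Lemma. The loose end you flag at $q=2$ does close, though not quite for the reason you suggest (the problematic partition is $J=\emptyset$, not $|J|=q$): for $q=2$ one only needs $\partial_1^{(1)}(\varphi_A(1;\chi_t))=0$, and in characteristic $2$ the two off-diagonal pieces $\varphi_A\big(\begin{smallmatrix}\chi_{t_0}&\chi_t\\1&1\end{smallmatrix}\big)$ and $\varphi_A\big(\begin{smallmatrix}\chi_t&\chi_{t_0}\\1&1\end{smallmatrix}\big)$ each occur twice in the expansion of $\varphi_A(1;\chi_{t_0})\varphi_A(1;\chi_t)$ (once from the unequal-degree terms, once from the equal-degree off-diagonal terms) and hence cancel, so that this product equals $\varphi_A(2;\chi_{t_0}\chi_t)$ outright and $E\,\varphi_A(1;\chi_t)=\sum_{a\in A^+}a\chi_t(a)u_a^2=D_1(\varphi_A(1;\chi_t))$ after setting $t_0=\theta$. (This also shows that for $q=2$ the sum in Theorem \ref{corsumshuffle} must be read as excluding $J=\emptyset$; with the literal reading the identity is off by $f_A\big(\begin{smallmatrix}\sigma_\Sigma&\boldsymbol{1}\\1&1\end{smallmatrix}\big)$, which is what made your cancellation appear to fail there.)
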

We propose the next question if $s=|\Sigma|\equiv1\pmod{q-1}$:
\begin{Question}
Is the form $\partial^{(1)}_1(\mathcal{E}(1;\sigma_\Sigma))$ and the last column of the Poincar\'e series $\mathcal{P}_{3}(G)$ proportional with a proportionality factor in $\LL_\Sigma^\times$?
\end{Question}
In the above question, $G$ is as in Proposition \ref{proposition-poincare-bis} with $m=1$.
This is suggested by the fact that the scalar cusp forms $\partial^{(q-1)}_1(g)$ and $h$ are one proportional to the other (notations of \cite{GEK}). We do not know if, in the case $s=q$, $\partial^{(q-1)}_n(\mathcal{E}(1;\sigma_\Sigma))\neq0$
for $n=1,\ldots,q-2$. 

\subsection{A conjecture on multiple sums}

We write
$\mathcal{Z}_\zeta$ for the $\FF_p$-algebra 
$\mathcal{F}=\sum_{n,\sigma}\mathcal{F}_{n,\sigma}$ where $\mathcal{F}_{n,\sigma}$ is the 
$\FF_p$-subvector space of $\FF_p[t_i:i\in\NN][[\frac{1}{\theta}]]$ (with the Gauss norm $\|\cdot\|$ extending $|\cdot|$) generated by the sums $f_A(\mathcal{C})$ of (\ref{definition-sums-fa}) in the settings of \S \ref{exa1} so that the semi-characters
$\sigma$ involved in the compositions arrays (\ref{compositionarray}) are maps from $A$ to $\FF_q[t_i:i\in\NN]$ defined by 
\begin{equation}\label{first-sample-semicharacters}
\sigma(a)=\prod_{i\in\NN}\chi_{t_i}(a)^{n_i},\quad a\in A,
\end{equation}
with $n_i\in\NN$ and $n_i=0$ for all but finitely many $i\in\NN$ (so a variable $t_0$ is allowed). In this case 
we prefer to write $\zeta_A(\mathcal{C})$ instead of $f_A(\mathcal{C})$.
The algebra $\mathcal{Z}_\zeta$ is the $\FF_p$-algebra of the {\em multiple zeta values} (in Tate algebras). 

Similarly, we write $\mathcal{Z}_\varphi$ for the $\FF_p$-algebra $\mathcal{F}=\sum_{n,\sigma}\mathcal{F}_{n,\sigma}$ where $\mathcal{F}_{n,\sigma}$ is this time the 
$\FF_p$-subvector space of $\FF_p[\theta][t_i:i\in\NN^*][[u]]$ (with the $v$-valuation) generated by the 
sums $\varphi_A(\mathcal{C})$ of (\ref{exa2}). Theorem \ref{anewsumshuffle} implies that $\mathcal{Z}_\zeta$ and $\mathcal{Z}_\varphi$ are $\FF_p$-algebras. 
However,  we do not know if $\mathcal{Z}_\varphi$ is graded by the degrees like 
$\mathcal{Z}_\zeta$. The algebra $\mathcal{Z}_\varphi$ is the algebra of {\em $A$-periodic multiple sums}.
We propose:

\begin{Conjecture}\label{conj-zetavarphi}
The correspondence $\zeta_A(\mathcal{C})\leftrightarrow\varphi_A(\mathcal{C})$ induces an isomorphism of $\FF_p$-algebras $\mathcal{Z}_\zeta\cong\mathcal{Z}_\varphi$.
\end{Conjecture}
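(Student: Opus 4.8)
The plan is to recast Conjecture~\ref{conj-zetavarphi} as the coincidence of two "relation ideals" and then to isolate the step where genuinely new input is required. First I would introduce the \emph{universal harmonic algebra} $\mathcal{H}$: the free $\FF_p$-vector space on formal symbols $[\mathcal{C}]$ indexed by composition arrays $\mathcal{C}$ (with semi-characters drawn from the monoid of degrees of \S\ref{exa1}), equipped with the product determined by the structure constants $f_{\alpha,\beta,i,j}\in\FF_p$ of Theorem~\ref{simplesumshuffle}. The crucial point is that these constants are produced by the purely $\FF_q$-combinatorial identity of Corollary~\ref{thecorollary}, so they do \emph{not} depend on the particular choice in Data~\ref{MaindataA}: exactly the same formal relations are satisfied by the $\zeta_A(\mathcal{C})$ of \S\ref{exa1} and by the $\varphi_A(\mathcal{C})$ of \S\ref{exa2}. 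Hence the evaluation maps $\mathrm{ev}_\zeta\colon[\mathcal{C}]\mapsto\zeta_A(\mathcal{C})$ and $\mathrm{ev}_\varphi\colon[\mathcal{C}]\mapsto\varphi_A(\mathcal{C})$ are surjective $\FF_p$-algebra homomorphisms onto $\mathcal{Z}_\zeta$ and $\mathcal{Z}_\varphi$, giving $\mathcal{Z}_\zeta\cong\mathcal{H}/I_\zeta$ and $\mathcal{Z}_\varphi\cong\mathcal{H}/I_\varphi$ with $I_\zeta=\ker(\mathrm{ev}_\zeta)$ and $I_\varphi=\ker(\mathrm{ev}_\varphi)$; the correspondence $\zeta_A(\mathcal{C})\leftrightarrow\varphi_A(\mathcal{C})$ is the map induced by $\mathrm{id}_\mathcal{H}$, so it is a well-defined algebra isomorphism \emph{if and only if} $I_\zeta=I_\varphi$. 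This disposes of the "algebra morphism" content for free and shows that the whole Conjecture is the equality of the two relation lattices.

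Next I would try to prove the two inclusions $I_\zeta\subseteq I_\varphi$ and $I_\varphi\subseteq I_\zeta$ by realizing both algebras as \emph{injective} images of one common object, the natural candidate being the algebra of the multiple sums $\widetilde{\varphi}_A(\mathcal{C})$ of \S\ref{exa3}, which live in the field of uniformizers $\mathfrak{K}_\Sigma$: by Corollary~\ref{coro-varphi-A-tilde} its multiplication, wherever defined, is again governed by the same $\FF_p$-structure constants, so there is a third evaluation factoring through $\mathcal{H}$ into a suitable $v$-adic completion $\widehat{\mathcal{Z}}_{\widetilde\varphi}$ of the span of the convergent $\widetilde\varphi_A(\mathcal{C})$. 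The idea would be to construct (i) an embedding $\mathcal{Z}_\varphi\hookrightarrow\widehat{\mathcal{Z}}_{\widetilde\varphi}$ realizing $\varphi_A(\mathcal{C})$ as the "constant layer" of $\widetilde\varphi_A(\mathcal{C})$, using that $\chi_{t_i}(az)=\omega(t_i)^{-1}\exp_C(\widetilde{\pi}z/(\theta-t_i))$ has a tame-series expansion whose passage to constant terms of the coefficients turns $\widetilde\varphi_A$ into $\varphi_A$; and (ii) an analogous embedding $\mathcal{Z}_\zeta\hookrightarrow\widehat{\mathcal{Z}}_{\widetilde\varphi}$ obtained by interpolating the map $a\mapsto e_C(az)$ between its value and the linear map $a\mapsto\widetilde{\pi}az$ — the weight-graded/$\widetilde{\pi}$-adic degeneration already implicit in Perkins' identities such as (\ref{perkins-formulas}) — and then comparing the two. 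If both embeddings could be shown to have the \emph{same} image in $\widehat{\mathcal{Z}}_{\widetilde\varphi}$ under the $\mathcal{H}$-correspondence, the Conjecture would follow.

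The hard part, I expect, is precisely the injectivity of these evaluation maps: that no relation in $\mathcal{H}$ becomes trivial under $\mathrm{ev}_\zeta$ without already being trivial, and symmetrically for $\mathrm{ev}_\varphi$. This is a transcendence/period statement to the effect that the graded pieces of $\mathcal{Z}_\zeta$ and of $\mathcal{Z}_\varphi$ have the same (minimal) dimension. Already in the trivial-type case $\sigma=\boldsymbol{1}$ (ordinary Thakur multiple zeta values) such lower bounds rely on the Anderson–Brownawell–Papanikolas criterion and Chang–Papanikolas–Yu-type arguments, and no analogous motivic apparatus is presently available for the $A$-periodic sums $\varphi_A$; moreover there may be non-harmonic relations among the $\zeta_A$ in Tate algebras — a special case of one family of which is proved in \cite{HUN&NGO} — and one must check, relation by relation or by a uniform mechanism, that each of these has a counterpart among the $\varphi_A$. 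A realistic intermediate goal, probably all that is attainable with the tools of this paper together with \cite{GEK2}, is to establish $I_\zeta=I_\varphi$ in low weight and for semi-characters of small degree, where both sides are made completely explicit by the harmonic relations of Theorem~\ref{anewsumshuffle} and the known closed-form evaluations; this would already corroborate the companion Conjecture~\ref{conj-mult-eisentein}. A complete proof would, in effect, require first developing a "motivic" theory of $A$-periodic multiple sums parallel to the one available for multiple zeta values, and I regard this as the principal obstacle.
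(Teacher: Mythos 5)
The statement you are addressing is a \emph{conjecture}; the paper offers no proof of it, and your text is, by your own admission in the final paragraph, not a proof either. Your opening reduction is sound and consistent with how the paper frames the problem: since the structure constants $f_{\alpha,\beta,i,j}$ of Theorem \ref{simplesumshuffle} come from the purely combinatorial Corollary \ref{thecorollary} and are independent of the choice of Data \ref{MaindataA}, both $\mathcal{Z}_\zeta$ and $\mathcal{Z}_\varphi$ are quotients of one formal harmonic algebra, and the conjecture is exactly the statement that the two kernels coincide. (Two small caveats: to make $\mathcal{H}$ well defined you must \emph{fix} the constants produced by the proof, since Theorem \ref{simplesumshuffle} only asserts existence; and extending the product from depth one to arbitrary depth is the ``lengthy but straightforward'' step the paper omits after Theorem \ref{simplesumshuffle}, so your algebra structure on $\mathcal{H}$ is being taken on faith from Theorem \ref{anewsumshuffle}.)

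The genuine gap is everything after that reduction. Your proposed common receptacle $\widehat{\mathcal{Z}}_{\widetilde\varphi}$ is problematic at the outset: the paper explicitly notes in \S\ref{exa3} that Data \ref{MaindataB} fails for the $\widetilde\varphi_A(\mathcal{C})$, so these series need not converge for $v$, and Corollary \ref{coro-varphi-A-tilde} only gives a multiplication rule on the convergent ones; there is no algebra containing all of them into which both $\mathcal{Z}_\zeta$ and $\mathcal{Z}_\varphi$ could be embedded. Your embedding (ii) --- ``interpolating $a\mapsto e_C(az)$ towards $a\mapsto\widetilde{\pi}az$'' --- is not a construction: no map is defined, and it is precisely the absence of such a specialization sending $\varphi_A(\mathcal{C})$ (or $\widetilde\varphi_A(\mathcal{C})$) to $\zeta_A(\mathcal{C})$ compatibly with all relations that makes the conjecture open. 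Finally, as you note, even granting both embeddings one would still need lower bounds on the dimensions of the graded pieces on the $\varphi_A$ side, for which no transcendence machinery currently exists; and one would need to account for possible non-harmonic relations such as those of \cite{HUN&NGO}. So the proposal is a reasonable reformulation of the conjecture, not a proof of it, and should not be presented as closing the statement.
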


Conjecture \ref{conj-zetavarphi}
implies that $\mathcal{Z}_\varphi$ is graded by the degrees.
Moreover, all the identities for multiple zeta values in $\mathcal{Z}_\zeta$ correspond to identities for multiple $A$-periodic sums, many of which can be proved directly 
(e.g. Lemmas \ref{formula-harmonic-1} and \ref{lemma-identitywithfA}).
For example, assuming this conjecture, note that in the proof of Proposition \ref{newformula}, $\mathcal{X}$ and $\mathcal{Z}$ are not homogeneous for the degrees. By Conjecture \ref{conj-zetavarphi}, any linear dependence relation among $\mathcal{X}, \mathcal{Y}_1,\mathcal{Y}_2$ and $\mathcal{Z}$ must come from two homogeneous ones, one in $\mathcal{F}_{q+1}^{\sigma_\Sigma}$ and another one in $\mathcal{F}_{2}^{\sigma_\Sigma}$, both defined over $\FF_p$.
Through Conjecture \ref{conj-zetavarphi} we see that these relations are indeed derived from (\ref{formula-qu-plus-one}) and the identity $\varphi_A(2;\sigma_\Sigma)=\varphi_A(1;\chi_{t_1})\varphi_A(1;\chi_{t_2})$. 

\section{Perspectives on algebraic properties of Eisenstein series}\label{Some-conjectures}

We give here some conjectures which allow to produce examples of relations which can be in certain cases  verified by explicit computations. 
This section provides perspectives suggested by experimental investigations we did for modular forms associated to the representations $\rho^*_\Sigma$. Conjecture \ref{conj-mult-eisentein} using the notion of multiple Eisenstein series, and Conjectures \ref{conjecture-pellarin}, \ref{generalisation-thakur} and \ref{conj-zetavarphi} together provide a collection of identities between our Eisenstein series, introduced in \S \ref{eisensteinseries}. Some special cases can be verified by explicit computation.

\subsection{Multiple Eisenstein series}\label{Multiple-Eisenstein-series}

In \cite{CHE}, Chen introduces a function field variant of {\em Eisenstein} and {\em double Eisenstein series}
as initially defined by Gangl, Kaneko and Zagier in \cite{GAN&KAN&ZAG}. We propose here a generalization of her viewpoint. We begin with a description of the required settings, introducing a vector-valued generalization of multiple Eisenstein series. We state Conjecture \ref{conj-mult-eisentein} suggesting natural correspondences between multiple zeta values and multiple Eisenstein series. 

We consider $\rho_1,\ldots,\rho_r$ representations of the first kind which are constructed starting from basic representations by using the operations $\oplus,\otimes,\wedge^\alpha,S^\beta$ as well as the `comatrix operation' $\operatorname{Co}$, defined through the comatrix map. All these representations extend to monoid maps defined over $A^{2\times 2}$, with its standard matrix product. Before going on we need some notation: we need to work with composition arrays having the first line composed by representations of the first kind.

We consider positive integers $n_1,\ldots,n_r$ and composition arrays (with $(\cdot)^*$ contragredient)
$$\widehat{\mathcal{C}}=\begin{pmatrix} \rho_1 & \cdots & \rho_r\\ n_1 & \cdots & n_r\end{pmatrix},\quad \widehat{\mathcal{C}}^*=\begin{pmatrix} \rho_1^* & \cdots & \rho_r^*\\ n_1 & \cdots & n_r\end{pmatrix}.$$
We also set, for $j\in\{0,\ldots,r\}$
$$\widehat{\mathcal{C}}_{\leq j}=\begin{pmatrix} \rho_1 & \cdots & \rho_j\\ n_1 & \cdots & n_j\end{pmatrix},\quad \widehat{\mathcal{C}}_{>j}=\begin{pmatrix} \rho_{j+1} & \cdots & \rho_r\\ n_{j+1} & \cdots & n_r\end{pmatrix},$$
so that $\widehat{\mathcal{C}}_{\leq r}=\widehat{\mathcal{C}}$ and we set $\widehat{\mathcal{C}}_{<1}=\emptyset$. We now define:
$$\widetilde{\boldsymbol{\Phi}}(\widehat{\mathcal{C}}^*)=\sum_{|a_1|>\cdots>|a_r|>0}\rho_1\Big(\begin{smallmatrix} a_1 & 0 \\ 0 & 1\end{smallmatrix}\Big)\otimes\cdots\otimes\rho_r\Big(\begin{smallmatrix} a_r & 0 \\ 0 & 1\end{smallmatrix}\Big)\cdot
\Psi_{n_1}(\rho_1^*)_{a_1}\otimes\cdots\otimes\Psi_{n_r}(\rho_r^*)_{a_r},$$ with the sum running over elements $a_1,\ldots,a_r\in A^+$. The dot $\cdot$ is the usual matrix product, and the index $(\cdot)_{a}$ with $a\in A$ designates the substitution $z\mapsto az$. The matrices $(\begin{smallmatrix} a_j & 0 \\ 0 & 1\end{smallmatrix})$ do not belong to $\Gamma$ but all the terms of the series are well defined thanks to the hypothesis on $\rho_1,\ldots,\rho_r$. This series converges to a rigid analytic map
$\Omega\rightarrow\KK_\Sigma^{N\times N}$ for appropriate $\Sigma\subset\NN^*$ and $N>0$ and to 
an element of $\mathfrak{M}_\Sigma^{N\times N}$.

In the case $\rho_i=\rho_{U_i}$ with, for $i=1,\ldots,r$, $U_i$ finite subsets of $\NN^*$, the case that interests us the most in the present paper, we have:
$$\left[\widetilde{\boldsymbol{\Phi}}(\widehat{\mathcal{C}}^*)\right]_1=\widetilde{\pi}^{\sum_in_i}\sum_{\begin{smallmatrix}a_1,\ldots,a_r\in A^+\\ |a_1|>\cdots>|a_r|\end{smallmatrix}}\rho_{U_1}\Big(
\begin{smallmatrix} a_1 & 0 \\ 0 & 1\end{smallmatrix}\Big)\otimes\cdots\otimes\rho_{U_r}\Big(
\begin{smallmatrix} a_r & 0 \\ 0 & 1\end{smallmatrix}\Big)\cdot V(n_1;\rho^*_{U_1})_{a_1}\otimes\cdots\otimes
V(n_r;\rho^*_{U_r})_{a_r}$$
where $[\cdot]_1$ denotes the first column of a matrix and $V(n;\rho^*_U)$ is defined in 
(\ref{definition-V}).

We also set
$$\boldsymbol{\mathcal{Z}}(\widehat{\mathcal{C}}^*):=(\rho_1\otimes\cdots\otimes\rho_r)\Big(\begin{smallmatrix} 0 & 0 \\ 0 & 1\end{smallmatrix}\Big)\cdot\sum_{\begin{smallmatrix}
a_1,\ldots,a_r\in A^+\\
|a_1|>\cdots>|a_r|>0\end{smallmatrix}}a_1^{-n_1}\cdots a_r^{-n_r}\rho_1^*\Big(T_{-a_1}\Big)\otimes\cdots\otimes\rho_r^*\Big(T_{-a_r}\Big),$$ a series which converges in $\FF_q(\underline{t}_\Sigma)((\theta^{-1}))^{N\times N}$ (we follow the same conventions used in the definition of $\widetilde{\boldsymbol{\Phi}}(\widehat{\mathcal{C}}^*)$). In the case $\rho_i=\rho_{U_i}$ for all $i$, we have
$$\left[\boldsymbol{\mathcal{Z}}(\widehat{\mathcal{C}}^*)\right]_1=\begin{pmatrix} 0 \\ \vdots \\ 0 \\ \zeta_A\Big(\begin{smallmatrix}\sigma_{U_1} & \cdots & \sigma_{U_r} \\ n_1 & \cdots & n_r\end{smallmatrix}\Big)\end{pmatrix}\in\TT_{\Sigma}^{N\times 1}$$ where $\Sigma=\cup_iU_i$ and $N$ is the product of the dimensions of the representations $\rho_i$, agreeing with (\ref{definition-Z}).

\begin{Definition}
{\em The {\em multiple Eisenstein series} associated with the composition array $\mathcal{\widehat{\mathcal{C}}^*}$ is the series
$$\mathcal{E}_A(\widehat{\mathcal{C}}^*)=\left[\widetilde{\boldsymbol{\Phi}}(\widehat{\mathcal{C}}^*)+\widetilde{\boldsymbol{\Phi}}(\widehat{\mathcal{C}}_{\leq r-1}^*)\otimes\boldsymbol{\mathcal{Z}}(\widehat{\mathcal{C}}_{>r-1}^*)+\cdots+\boldsymbol{\mathcal{Z}}(\widehat{\mathcal{C}}^*)\right]_1\in\mathfrak{O}_\Sigma^{N\times 1}.$$ 
We say that $\mathcal{E}_A(\widehat{\mathcal{C}}^*)$ is of {\em degree} $\binom{\rho^*_1\otimes\cdots\otimes\rho^*_r}{n_1+\cdots+n_r}$.}
\end{Definition}

It is easy to verify that, if the representations $\rho_i$ are all equal to $\boldsymbol{1}$ (case in which $N=1$ and $\Sigma=\emptyset$) and $r=1,2$, this coincides with \cite[Definition 3.2]{CHE}, namely, the function
$E_k(z)$ defined in ibid. coincides with our $\mathcal{E}_A(k)$ (for $k>0$) and similarly, 
$E_{r',s'}(z)$ of ibid. coincides with our $\mathcal{E}_A(r',s')$. The case of depth $r=1$ can be resumed in the next formula which follows easily from (\ref{first-expansion-eisenstein}) and Proposition \ref{seriesofEisensteinseries}, where $m>0$ and $\Sigma\subset\NN^*$ a finite subset such that
$|\Sigma|\equiv m\pmod{q-1}$:
\begin{equation}\label{lemma-depth-one-eisenstein}
\mathcal{E}_A\binom{\rho^*_\Sigma}{m}=\left[\widetilde{\boldsymbol{\Phi}}\binom{\rho^*_\Sigma}{m}+\boldsymbol{\mathcal{Z}}\binom{\rho^*_\Sigma}{m}\right]_1=-\mathcal{E}(m;\rho_\Sigma^*).
\end{equation}
It is also easy to verify the next lemma, where $\rho_1^*,\ldots,\rho_r^*$ are representations of the form $\rho_{U_j}^*$ with $U_j\subset \Sigma$  for some finite subset $\Sigma$ of $\NN^*$, and where $\sigma_1,\ldots,\sigma_r$ denote the projections of 
$\rho_1,\ldots,\rho_r$ on their upper-right coefficients (these are semi-characters). We recall the multiple sums $\phi_A$ defined in (\ref{variant-Goss-sums}).
\begin{Lemma}\label{first-last-entries} Writing
$$\widehat{\mathcal{C}}^*=\begin{pmatrix}\rho_1^* & \cdots & \rho_r^* & \boldsymbol{1} & \cdots & \boldsymbol{1} \\ n_1 & \cdots & n_r & m_1 & \cdots & m_s\end{pmatrix},\quad \mathcal{C}=\begin{pmatrix}\sigma_1 & \cdots & \sigma_r & \boldsymbol{1} & \cdots & \boldsymbol{1} \\ n_1 & \cdots & n_r & m_1 & \cdots & m_s\end{pmatrix},$$
for $r>0,s\geq 0$, the first entry $\mathcal{E}_1$ of $\mathcal{E}=\mathcal{E}_A(\widehat{\mathcal{C}}^*)$ satisfies, with $n=\sum_in_i$,
\begin{multline*}
\mathcal{E}_1=\widetilde{\pi}^n\Big(\widetilde{\pi}^{\sum_{j\leq s}m_j}\phi_A\Big(\begin{smallmatrix}\sigma_1 & \cdots & \sigma_r & \boldsymbol{1} & \cdots & \boldsymbol{1} \\ n_1 & \cdots & n_r & m_1 & \cdots & m_s\end{smallmatrix}\Big)+\widetilde{\pi}^{\sum_{j\leq s-1}m_j}\phi_A\Big(\begin{smallmatrix}\sigma_1 & \cdots & \sigma_r & \boldsymbol{1} & \cdots & \boldsymbol{1} \\ n_1 & \cdots & n_r & m_1 & \cdots & m_{s-1}\end{smallmatrix}\Big)\zeta_A(m_s)+\cdots\\ \cdots+\widetilde{\pi}^{m_1}\phi_A\Big(\begin{smallmatrix}\sigma_1 & \cdots & \sigma_r & \boldsymbol{1} \\ n_1 & \cdots & n_r & m_1 \end{smallmatrix}\Big)\zeta_A(m_2,\ldots,m_s)+\phi_A\Big(\begin{smallmatrix}\sigma_1 & \cdots & \sigma_r \\ n_1 & \cdots & n_r \end{smallmatrix}\Big)\zeta_A(m_1,\ldots,m_s)\Big)\end{multline*}
and the last entry $\mathcal{E}_N\in\mathfrak{O}_\Sigma$ of $\mathcal{E}=\mathcal{E}_A(\widehat{\mathcal{C}}^*)$ satisfies
$$\mathcal{E}_N-\zeta_A(\mathcal{C})\in \mathfrak{M}_\Sigma.$$
\end{Lemma}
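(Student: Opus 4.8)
The plan is to unfold the definition of $\mathcal{E}_A(\widehat{\mathcal{C}}^{*})$ as the sum $\sum_{j=0}^{r+s}\widetilde{\boldsymbol{\Phi}}(\widehat{\mathcal{C}}_{\le j}^{*})\otimes\boldsymbol{\mathcal{Z}}(\widehat{\mathcal{C}}_{>j}^{*})$ over the ways of splitting the composition array into an initial $\widetilde{\boldsymbol{\Phi}}$-segment and a terminal $\boldsymbol{\mathcal{Z}}$-segment (with the conventions $\widetilde{\boldsymbol{\Phi}}(\emptyset)=\boldsymbol{\mathcal{Z}}(\emptyset)=1$), then to read off the first and last coordinates of its first column $[\,\cdot\,]_1$ separately. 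The only algebraic tool is the mixed-product identity $(M_1\otimes\cdots\otimes M_k)(X_1\otimes\cdots\otimes X_k)=(M_1X_1)\otimes\cdots\otimes(M_kX_k)$, which rewrites $\widetilde{\boldsymbol{\Phi}}(\widehat{\mathcal{D}}^{*})$ as a sum of Kronecker products of the square matrices $\rho_i(\begin{smallmatrix}a_i&0\\0&1\end{smallmatrix})\,\Psi_{\bullet}(\rho_i^{*})_{a_i}$, so that in particular the $(1,1)$-entry of $\widetilde{\boldsymbol{\Phi}}(\widehat{\mathcal{D}}^{*})\otimes\boldsymbol{\mathcal{Z}}(\widehat{\mathcal{E}}^{*})$ is the product of the $(1,1)$-entries; combined with the two column formulas for $[\widetilde{\boldsymbol{\Phi}}(\widehat{\mathcal{C}}^{*})]_1$ and $[\boldsymbol{\mathcal{Z}}(\widehat{\mathcal{C}}^{*})]_1$ stated just before the definition, and with Lemma \ref{computationcoefficientsvarphi}, this yields both identities.

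First I would settle the last coordinate $\mathcal{E}_N$. The matrices $\widetilde{\boldsymbol{\Phi}}(\widehat{\mathcal{D}}^{*})$ have all entries in $\mathfrak{M}_\Sigma$ (as recorded just after their definition) while those of $\boldsymbol{\mathcal{Z}}(\widehat{\mathcal{D}}^{*})$ lie in $\mathfrak{O}_\Sigma$; since $\mathfrak{M}_\Sigma$ is an ideal and entries of a Kronecker product are products of entries, every summand of $\mathcal{E}_A(\widehat{\mathcal{C}}^{*})$ carrying a non-empty $\widetilde{\boldsymbol{\Phi}}$-factor is entrywise in $\mathfrak{M}_\Sigma$. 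The one remaining summand, $j=0$, is $\boldsymbol{\mathcal{Z}}(\widehat{\mathcal{C}}^{*})$, whose first column equals ${}^t(0,\dots,0,\zeta_A(\mathcal{C}))$ by the formula recalled before the definition (itself a consequence of Lemma \ref{computationcoefficientsvarphi} and convergence in $\FF_q(\underline{t}_\Sigma)((\theta^{-1}))$); extracting its $N$-th coordinate gives $\mathcal{E}_N\equiv\zeta_A(\mathcal{C})\pmod{\mathfrak{M}_\Sigma}$, which is the second assertion.

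For $\mathcal{E}_1$, the same formula $[\boldsymbol{\mathcal{Z}}(\widehat{\mathcal{D}}^{*})]_1={}^t(0,\dots,0,\zeta_A(\cdot))$ shows that the $(1,1)$-entry of $\boldsymbol{\mathcal{Z}}(\widehat{\mathcal{D}}^{*})$ vanishes whenever $\widehat{\mathcal{D}}^{*}$ contains one of the non-trivial $\rho_i^{*}$ (so that $N>1$); hence a splitting $j$ contributes to $\mathcal{E}_1$ only when the $\boldsymbol{\mathcal{Z}}$-part is supported on the trivial tail, i.e. $j=r+k$ with $0\le k\le s$, and then that $\boldsymbol{\mathcal{Z}}$-factor is the scalar $\zeta_A(m_{k+1},\dots,m_s)$ (the empty value $1$ when $k=s$). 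It remains to evaluate the first coordinate of $[\widetilde{\boldsymbol{\Phi}}(\widehat{\mathcal{C}}_{\le r+k}^{*})]_1$: by the mixed-product identity applied to the displayed expression for $[\widetilde{\boldsymbol{\Phi}}(\widehat{\mathcal{C}}^{*})]_1$, it equals $\widetilde{\pi}^{\,n+m_1+\cdots+m_k}$ times the sum over $|a_1|>\cdots>|a_{r+k}|>0$ of the product of the $\emptyset$-components of the vectors $\rho_{U_i}(\begin{smallmatrix}a_i&0\\0&1\end{smallmatrix})V(n_i;\rho_{U_i}^{*})_{a_i}$ (for $i\le r$) and $V(m_l;\rho_\emptyset^{*})_{a_{r+l}}$ (for $l\le k$); since each $\rho_{U_i}(\begin{smallmatrix}a&0\\0&1\end{smallmatrix})$ is diagonal with $\emptyset$-entry $\sigma_{U_i}(a)$ — as is forced by comparing \eqref{V-expansion} with Proposition \ref{seriesofEisensteinseries} — $\rho_\emptyset(\begin{smallmatrix}a&0\\0&1\end{smallmatrix})=1$, and the $\emptyset$-component of $V(w;\rho_U^{*})$ is $G_w(u)$, this product is $\prod_{i\le r}\sigma_{U_i}(a_i)G_{n_i}(u_{a_i})\prod_{l\le k}G_{m_l}(u_{a_{r+l}})$, whose sum is $\phi_A\bigl(\begin{smallmatrix}\sigma_1&\cdots&\sigma_r&\boldsymbol{1}&\cdots&\boldsymbol{1}\\ n_1&\cdots&n_r&m_1&\cdots&m_k\end{smallmatrix}\bigr)$ by \eqref{variant-Goss-sums}. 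Summing over $k$ and pulling out $\widetilde{\pi}^{\,n}$ yields the asserted expression for $\mathcal{E}_1$.

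The only genuinely delicate point in writing this out in full is the index bookkeeping under the iterated Kronecker products: one must keep ``first coordinate'' synonymous with the $\emptyset$-label throughout, in the ordering of \S \ref{expansion-eisenstein}, verify this is compatible with the row/column conventions in which $[\widetilde{\boldsymbol{\Phi}}]_1$ and $[\boldsymbol{\mathcal{Z}}]_1$ were recorded, and track the single power of $\widetilde{\pi}$ supplied by each $V$-factor. I do not expect any substantive obstacle beyond this: everything reduces to the two column formulas preceding the definition of $\mathcal{E}_A$ and to Lemma \ref{computationcoefficientsvarphi}, which is precisely what the ``easy to verify'' in the statement anticipates.
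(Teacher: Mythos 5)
Your proof is correct and is exactly the verification the paper has in mind: the paper omits the argument entirely (declaring the lemma "easy to verify"), and your unfolding of the definition of $\mathcal{E}_A(\widehat{\mathcal{C}}^*)$ into prefix/suffix splittings, together with the mixed-product rule for Kronecker products and the two recorded column formulas for $\widetilde{\boldsymbol{\Phi}}$ and $\boldsymbol{\mathcal{Z}}$ (plus the membership $\widetilde{\boldsymbol{\Phi}}\in\mathfrak{M}_\Sigma^{N\times N}$ for the last entry), is precisely the intended route. The index bookkeeping you flag — keeping the first coordinate identified with the $\emptyset$-label and checking that the first diagonal entry of $\rho_U(\begin{smallmatrix}a&0\\0&1\end{smallmatrix})$ is $\sigma_U(a)$ — is consistent with the conventions of \S\ref{expansion-eisenstein} and with (\ref{thirdformula}), so no gap remains.
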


\subsubsection{Eulerian multiple zeta values}

We consider semi-characters $\sigma_1,\ldots,\sigma_r$ defined as in (\ref{semi-char-monoid}) and positive integers $n_1,\ldots,n_r$. We write $\sigma=\prod_i\sigma_i=\prod_j\chi_{t_j}^{\nu_j}$ for the type and $n=\sum_in_i$ for the weight of 
the multiple zeta value $\zeta_A(\begin{smallmatrix} \sigma_1 & \cdots & \sigma_r\\ n_1 & \cdots & n_r\end{smallmatrix})$. In this subsection we return to the settings of \S \ref{exa1} to make the following definition.

\begin{Definition}\label{eulerianity}
{\em Let $Z$ be a $K$-linear combination of multiple zeta values of degree $\binom{\sigma}{n}$. We say that $Z$ is {\em Eulerian} if $$Z\in K(\underline{t}_\Sigma)\frac{\widetilde{\pi}^n}{\prod_j\omega(t_j)^{\nu_j}}.$$}
\end{Definition}

This agrees with the notion of eulerian multiple zeta value of Thakur as in \cite[Definition 5.10.8]{THA0} because in the case of trivial type the product involving the Anderson-Thakur function is equal to one. See also \cite{CHA&PAP&YU}. Examples of Eulerian combinations of multiple zeta values in our settings are given by the elements $\zeta_A(n;\sigma_\Sigma)$ with $|\Sigma|\equiv n\pmod{q-1}$. By using \cite[(39)]{GEZ&PEL} we see that 
the elements $\zeta_A(\begin{smallmatrix} \chi_t & \boldsymbol{1} & \cdots &\boldsymbol{1} \\ 1 & q-1 & \cdots & q^{r}(q-1)\end{smallmatrix})$ are eulerian for all $r\geq 0$.

\subsubsection{A conjecture for multiple Eisenstein series}

We denote by $\mathcal{W}^{\rho^*}_n$ the $\FF_p$-vector space of multiple Eisenstein series of
degree $\binom{\rho^*}{n}$, with $n>0$ where $\rho^*$ a product of representations of the type $\rho^*_{U_i}$.
Writing $\rho^*=\otimes_j(\rho_{t_j}^*)^{\otimes \nu_j}$, we set $\sigma=\prod_j\chi_{t_j}^{\nu_j}$.
We consider  $\mathcal{C},\widehat{\mathcal{C}}^*$ as in Lemma \ref{first-last-entries}. We address the following:
\begin{Conjecture}\label{conj-mult-eisentein}
The following properties hold:
\begin{enumerate}
\item We have inclusions $\mathcal{W}^{\rho^*}_m\otimes\mathcal{W}^{\psi^*}_n\subset \mathcal{W}^{\rho^*\otimes\psi^*}_{m+n}$.
\item The correspondence $\zeta_A(\mathcal{C})\mapsto\mathcal{E}_A(\widehat{\mathcal{C}}^*)$ defines
an isomorphism $\eta$ of $\FF_p$-vector spaces between the space $\mathcal{Z}^{\sigma}_n$ of multiple zeta values of degree $\binom{\sigma}{n}$ and $\mathcal{W}^{\rho^*}_n$ which is compatible with the multiplication rules in such a way that the sum
$\mathcal{W}:=\sum_{n,\rho^*}\mathcal{W}^{\rho^*}_n$ is graded, and endowed with a structure of $\FF_p$-algebra with multiplication $\otimes$, isomorphic to the algebra $\bigoplus_{n,\sigma}\mathcal{Z}^\sigma_n$.
\item An element $f\in\mathcal{Z}^\sigma_n$ is eulerian if and only if $\eta(f)$ is a modular form in $M_n(\rho^*;\LL_\Sigma)$.
\end{enumerate}
\end{Conjecture}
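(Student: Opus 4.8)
\emph{Remark on a proof strategy.} Conjecture \ref{conj-mult-eisentein} is stated as a conjecture, and what follows is a plan of attack rather than a proof; with present techniques part (2) in particular seems out of reach, being at least as strong as Conjecture \ref{conj-zetavarphi}. The overall strategy is to reduce each of the three parts to statements about the scalar $A$-periodic multiple sums $\varphi_A,\phi_A$ of \S \ref{exa2}, for which a harmonic product is already available (Corollary \ref{coro-Phi-A}, Theorem \ref{anewsumshuffle}), combined with the constant-term information of Lemma \ref{first-last-entries} and the irreducibility of $\rho^*_\Sigma$ (Lemma \ref{irreducibility}).

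For part (1), the plan is to establish a harmonic product directly at the level of the matrix-valued series $\widetilde{\boldsymbol{\Phi}}(\widehat{\mathcal{C}}^*)$ and $\boldsymbol{\mathcal{Z}}(\widehat{\mathcal{C}}^*)$. Multiplying two such tensor series, indexed respectively by strictly decreasing chains $|a_1|>\cdots>|a_r|$ and $|b_1|>\cdots>|b_{r'}|$, one would partition the product index set according to the stuffle (interleaving) of the two chains of degrees, exactly as in the proof of Theorem \ref{simplesumshuffle}; the key point is that, since the $\rho_i$ are built only from $\oplus,\otimes,\wedge^\alpha,S^\beta$ and the comatrix operation, they extend to monoid homomorphisms on $A^{2\times 2}$, so that on a diagonal block ($a_i=b_j$) the factors $\rho_i(\begin{smallmatrix}a_i&0\\0&1\end{smallmatrix})$ and $\rho_j(\begin{smallmatrix}b_j&0\\0&1\end{smallmatrix})$ combine correctly and one reduces, entry by entry, to the scalar identity of Theorem \ref{simplesumshuffle} for the appropriate semi-characters. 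The $\boldsymbol{\mathcal{Z}}$-terms account for the boundary pieces where one chain is exhausted, and bundled into $\mathcal{E}_A(\widehat{\mathcal{C}}^*)$ they should reproduce exactly the quasi-shuffle relation, giving $\mathcal{W}^{\rho^*}_m\otimes\mathcal{W}^{\psi^*}_n\subset\mathcal{W}^{\rho^*\otimes\psi^*}_{m+n}$. I expect this to be the most tractable part, the main labour being the bookkeeping of the boundary terms.

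For part (2), injectivity of the candidate map $\eta$ would be immediate from Lemma \ref{first-last-entries}: the last entry of $\mathcal{E}_A(\widehat{\mathcal{C}}^*)$ is congruent to $\zeta_A(\mathcal{C})$ modulo $\mathfrak{M}_\Sigma$, so any vanishing $\FF_p$-combination of multiple Eisenstein series forces the corresponding combination of multiple zeta values to vanish. The substantial point is well-definedness, i.e.\ that every $\FF_p$-linear relation among the $\zeta_A(\mathcal{C})$ lifts to a relation among the $\mathcal{E}_A(\widehat{\mathcal{C}}^*)$. Here I would first transport relations from the $\zeta_A$-side to the $A$-periodic side (this is precisely Conjecture \ref{conj-zetavarphi}), then read off from the explicit first-entry formula of Lemma \ref{first-last-entries}, which writes that entry as a combination of $\phi_A$-sums times lower-weight multiple zeta values, that the first entries satisfy the lifted relation (an induction on the weight handling the lower-weight coefficients); finally Lemmas \ref{condition-for-vanishing} and \ref{irreducibility} would propagate vanishing of the first entry of an element of $M^!(\rho^*;\KK_\Sigma)$ to vanishing of all its entries. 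Once $\eta$ is a well-defined isomorphism, compatibility with the products follows from part (1) and the scalar harmonic product, giving the algebra isomorphism.

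For part (3), the ``only if'' direction would proceed by noting that the constant term of a modular form for $\rho^*_\Sigma$ lies in the one-dimensional space $H(\rho^*_\Sigma;\LL_\Sigma)$ of (\ref{identity-H-K}); using a structure theorem for $M_n(\rho^*;\LL_\Sigma)$ in the style of Theorem \ref{theorem2} — every such form is a combination of Kronecker products of the genuinely modular Eisenstein series $\mathcal{E}(1;\rho^*_{t_i})$, $\mathcal{E}(q;\rho^*_{t_i})$ and scalar modular forms — together with the fact (extending (\ref{eisenstein-small-sigma}) and Corollary \ref{valuations-eisenstein}) that the constant terms of these generators are eulerian, one would identify the constant term of $\eta(f)$, which by Lemma \ref{first-last-entries} is a combination of $\zeta_A(\mathcal{C})$'s, as eulerian. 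The ``if'' direction requires, for each eulerian $f$, producing an actual modular form with the prescribed eulerian constant term, again via the structure theory and finite-dimensionality (Corollary \ref{regular-equals-modular}, Theorem \ref{theorem-structure-tate-algebras}) to pin the form down, and then matching $u$-expansions with $\eta(f)$. The hard part, and the reason the statement remains conjectural, is the well-definedness step in (2): it is essentially the assertion that passing from multiple zeta values to multiple Eisenstein series introduces no new relations, which is of the same depth as Conjecture \ref{conj-zetavarphi} itself and, moreover, is entangled with the (still open) gradedness of $\mathcal{Z}_\varphi$ by the degrees.
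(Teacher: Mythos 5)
The statement you are asked about is stated in the paper as a Conjecture, and the paper contains no proof of it; the only supporting material is the special-case verification of Proposition \ref{a-very-special-identity} (whose proof the author explicitly calls ``quite ad hoc and not easily generalizable'') together with the surrounding remarks. Your proposal correctly recognizes this and offers a strategy rather than a proof, and your assessment of where the difficulty lies matches the paper's own: the paper likewise suggests that part (1) should follow from ``an appropriate generalization of the harmonic product'' of \S \ref{shufflerelations}, that part (2) is entangled with Conjecture \ref{conj-zetavarphi}, and that the eulerianity/modularity equivalence in part (3) ``may require deeper arithmetic tools.'' So there is nothing to compare your argument against, and no claim in your sketch contradicts what the paper establishes.

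Two caveats on the sketch itself. First, in part (3) you have the two directions interchanged: what you describe as the ``only if'' direction (reading off the constant term of a modular form via $H(\rho^*_\Sigma;\LL_\Sigma)$ and a structure theorem) is in fact the implication ``$\eta(f)$ modular $\Rightarrow$ $f$ eulerian.'' Second, and more substantively, any strategy for part (3) must exploit that $f$ ranges over the $\FF_p$-vector space $\mathcal{Z}^\sigma_n$ and not over $K$-linear combinations: the paper exhibits the nonzero cusp form $\mathcal{E}_A\bigl(\begin{smallmatrix}\chi_t & \boldsymbol{1}\\ 1 & q-1\end{smallmatrix}\bigr)-\frac{\theta-t}{\theta^q-\theta}\mathcal{E}_A\bigl(\begin{smallmatrix}\chi_t\\ q\end{smallmatrix}\bigr)$, showing that the corresponding $K$-linear statement is false. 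Your proposed route through a structure theorem \`a la Theorem \ref{theorem2} and matching of constant terms does not by itself see this distinction, so as written it would ``prove'' the false $K$-linear version as well; this is the point at which the genuinely new arithmetic input the paper alludes to would have to enter.
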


The next result describes a depth two identity which illustrates the pertinence of the above conjecture in a special case, interesting because lying outside the case of Eisenstein series. The reader will notice that the proof given is quite ad hoc and not easily generalizable. While the first item of the conjecture is likely to be at reach by an appropriate generalization of the harmonic product of \S \ref{shufflerelations}, the equivalence between eulerianity of constant terms and modularity of vector functions may require deeper arithmetic tools.

\begin{Proposition}\label{a-very-special-identity}
The following identity holds:
$$\mathcal{E}(1;\rho^*_t)\otimes\mathcal{E}(q-1;\boldsymbol{1})+\mathcal{E}(q;\rho^*_t)=\mathcal{E}_A\Big(\begin{smallmatrix}\rho^*_t & \boldsymbol{1}\\ 1 & q-1\end{smallmatrix}\Big).$$
\end{Proposition}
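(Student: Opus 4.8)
The plan is to prove the identity by reducing it, via the irreducibility of $\rho^*_t$ together with Lemma \ref{condition-for-vanishing} and Lemma \ref{irreducibility}, to a single scalar identity between the first entries of the two sides. Both sides are elements of $M_q(\rho^*_t;\KK)\oplus(\text{something})$; more precisely, the left-hand side lies in $M_q^!(\rho^*_t;\KK)$ by Lemma \ref{defi-general-eisenstein-series} (for $\mathcal{E}(q;\rho^*_t)$) and because $\mathcal{E}(q-1;\boldsymbol 1)$ is a scalar modular form of weight $q-1$ times the vector Eisenstein series of weight $1$; and the right-hand side $\mathcal{E}_A\bigl(\begin{smallmatrix}\rho^*_t & \boldsymbol 1\\ 1 & q-1\end{smallmatrix}\bigr)$ is, by construction, a $\rho^*_t$-modular-like function (the functional equation under $\Gamma$ for a multiple Eisenstein series of depth two with second representation trivial is the same as for weight $q = 1+(q-1)$, as one checks from the series definition of $\widetilde{\boldsymbol\Phi}$ and $\boldsymbol{\mathcal Z}$). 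So the difference of the two sides is a modular-like function for $\rho^*_t$ of weight $q$, and once I know its first entry vanishes identically, Lemma \ref{condition-for-vanishing} (applicable because $\rho^*_t$ is irreducible by Lemma \ref{irreducibility}) forces the whole thing to vanish.

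The heart of the matter is therefore the first-entry identity. Using Proposition \ref{seriesofEisensteinseries}, the first entry of $\mathcal{E}(m;\rho^*_t)$ is $-\widetilde\pi^m\varphi_A(m;\chi_t)$ up to identification, and the first entry of $\mathcal{E}(q-1;\boldsymbol 1)$ is $-\zeta_A(q-1)-\widetilde\pi^{q-1}\varphi_A(q-1;\boldsymbol 1)$ (the scalar case). On the other side, Lemma \ref{first-last-entries} with $r=1$, $s=1$, $\sigma_1=\chi_t$, $n_1=1$, $m_1=q-1$ gives the first entry of $\mathcal{E}_A\bigl(\begin{smallmatrix}\rho^*_t & \boldsymbol 1\\ 1 & q-1\end{smallmatrix}\bigr)$ as
\[
\widetilde\pi\Bigl(\widetilde\pi^{q-1}\phi_A\Bigl(\begin{smallmatrix}\chi_t & \boldsymbol 1\\ 1 & q-1\end{smallmatrix}\Bigr)+\phi_A\Bigl(\begin{smallmatrix}\chi_t\\ 1\end{smallmatrix}\Bigr)\zeta_A(q-1)\Bigr).
\]
Now I pass from the Goss-polynomial multiple sums $\phi_A$ to the $u$-power multiple sums $\varphi_A$. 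For depth one, $G_1 = X$, so $\phi_A(1;\chi_t)=\varphi_A(1;\chi_t)$. For depth two with $n_1=1$, $n_2=q-1$, one has $G_{q-1}(X)=X^{q-1}$ (since $G_m=X^m$ for $1\le m\le q-1$, a standard fact about Goss polynomials for $A$, e.g.\ from \cite[(3.6)]{GEK}), hence $\phi_A\bigl(\begin{smallmatrix}\chi_t & \boldsymbol 1\\ 1 & q-1\end{smallmatrix}\bigr)=\varphi_A\bigl(\begin{smallmatrix}\chi_t & \boldsymbol 1\\ 1 & q-1\end{smallmatrix}\bigr)$. So the required first-entry identity becomes, after dividing by $-\widetilde\pi$,
\[
\widetilde\pi^{q-1}\varphi_A(q;\chi_t) + \varphi_A(1;\chi_t)\bigl(\zeta_A(q-1)+\widetilde\pi^{q-1}\varphi_A(q-1;\boldsymbol 1)\bigr) = -\widetilde\pi^{q-1}\varphi_A\Bigl(\begin{smallmatrix}\chi_t & \boldsymbol 1\\ 1 & q-1\end{smallmatrix}\Bigr) - \varphi_A(1;\chi_t)\zeta_A(q-1),
\]
i.e., cancelling $\varphi_A(1;\chi_t)\zeta_A(q-1)$ and dividing by $\widetilde\pi^{q-1}$,
\[
\varphi_A(q;\chi_t) + \varphi_A(1;\chi_t)\varphi_A(q-1;\boldsymbol 1) = -\varphi_A\Bigl(\begin{smallmatrix}\chi_t & \boldsymbol 1\\ 1 & q-1\end{smallmatrix}\Bigr)\quad\Longleftrightarrow\quad \varphi_A(1;\chi_t)\varphi_A(q-1) = \varphi_A(q;\chi_t)+\varphi_A\Bigl(\begin{smallmatrix}\chi_t & \boldsymbol 1\\ 1 & q-1\end{smallmatrix}\Bigr),
\]
which is exactly the middle line of the triple formula \eqref{three-special-formulas} — the harmonic product identity in the setting of \S\ref{exa2}. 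This is already established (it follows from Theorem \ref{corsumshuffle} / \cite[Theorem 3.1]{PEL3}), so this step is a bookkeeping verification rather than new work.

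So the skeleton is: (i) check both sides are $\rho^*_t$-modular-like of weight $q$ — a direct computation from the series definitions, using that $\mathcal{E}_A$ of a depth-two composition array with trivial tail has the same $\Gamma$-transformation law as a weight-$q$ form; (ii) compute the first entries of all three vector series via Proposition \ref{seriesofEisensteinseries} and Lemma \ref{first-last-entries}; (iii) translate $\phi_A \leftrightarrow \varphi_A$ using $G_m = X^m$ for $m \le q-1$; (iv) invoke the harmonic product identity \eqref{three-special-formulas} to conclude the first entries of the two sides agree; (v) apply Lemma \ref{condition-for-vanishing} and Lemma \ref{irreducibility} to the difference. The main obstacle I anticipate is step (i): confirming carefully that $\mathcal{E}_A\bigl(\begin{smallmatrix}\rho^*_t & \boldsymbol 1\\ 1 & q-1\end{smallmatrix}\bigr)$ really does satisfy the modular transformation law for $\rho^*_t$ of weight $q$ and not merely some "quasi-modular" relation — the depth-two structure of $\widetilde{\boldsymbol\Phi}$ mixes the $\Gamma$-action across the two slots, and one must see that the $\boldsymbol{\mathcal Z}$-correction terms are exactly what is needed to absorb the defect, so that the total is genuinely modular-like. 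I expect this to follow from the same telescoping mechanism that makes ordinary Eisenstein series modular (the $\boldsymbol{\mathcal Z}$ terms play the role of "boundary" contributions), but it is the place where care is required; everything downstream is then essentially the harmonic product identity dressed up in vector form.
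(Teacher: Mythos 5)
Your first-entry computation (steps (ii)--(iv)) is correct and matches what the paper does for that coordinate: Proposition \ref{seriesofEisensteinseries}, the identification $G_m(X)=X^m$ for $m\le q-1$, and the $\varphi_A$-version of the harmonic product in (\ref{three-special-formulas}) do give agreement of the first entries. But your step (i), which you flag as the main obstacle, is a genuine gap and not merely "a place where care is required." You need the difference of the two sides to lie in $M^!_q(\rho^*_t;\KK)$ before Lemma \ref{condition-for-vanishing} applies, and for that you must know that $\mathcal{E}_A\bigl(\begin{smallmatrix}\rho^*_t & \boldsymbol 1\\ 1 & q-1\end{smallmatrix}\bigr)$ is modular-like of weight $q$. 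Nothing in the paper establishes modularity of depth-two multiple Eisenstein series; on the contrary, Conjecture \ref{conj-mult-eisentein}(3) predicts that such a series is modular \emph{if and only if} the associated multiple zeta value is Eulerian, and in the classical Gangl--Kaneko--Zagier setting double Eisenstein series are not modular. Indeed, the sentence immediately following the paper's proof reads "We deduce that $\mathcal{E}_A\bigl(\begin{smallmatrix}\rho^*_t & \boldsymbol 1\\ 1 & q-1\end{smallmatrix}\bigr)$ is in $M_q(\rho^*_t;\LL)$": the modularity is a \emph{consequence} of the identity, so assuming it in order to reduce to the first entry is circular.

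The paper's actual proof avoids this by computing \emph{both} entries of each side explicitly and checking they agree, with no appeal to irreducibility. Concretely, it writes out $\mathcal{E}(1;\rho^*_t)$ and $\mathcal{E}(q;\rho^*_t)$ as $2\times1$ vectors whose second entries involve the multiple sums $\widetilde\varphi_A$ of \S\ref{exa3} (built from the semi-character $a\mapsto\chi_t(az)$), using Perkins' formula (\ref{Perkins-formula}), the $\tau$-difference equation (\ref{taudifferechit}), and (\ref{first-formula-pel}); the second-entry identity then follows from the \emph{third} line of (\ref{three-special-formulas}), the $\widetilde\varphi_A$-version of the harmonic product, together with a convergence check for those series in the $v$-valuation. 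That second-entry computation is precisely the work your reduction was meant to bypass; without an independent proof of the transformation law for the depth-two series (which would itself require controlling the non-modular "boundary" defects of $\widetilde{\boldsymbol\Phi}$, not just observing that $\boldsymbol{\mathcal Z}$ is present), your argument does not close.
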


\begin{proof}
We claim that 
\begin{equation}\label{e-weight1}
\mathcal{E}(1;\rho^*_1)=\widetilde{\pi}\begin{pmatrix} -\varphi_A\binom{\chi_t}{1} \\
\widetilde{\varphi}_A\binom{\delta}{1}\end{pmatrix}u_a-\binom{0}{\zeta_A(1;\chi_t)},\end{equation}
where $\delta$ is the semi-character $a\mapsto \chi_t(az)$ and $\widetilde{\varphi}_A$ has been introduced in \S \ref{exa3}. This follows easily from (\ref{lemma-depth-one-eisenstein}), Perkins' identity (\ref{Perkins-formula}) and Proposition \ref{seriesofEisensteinseries}.
In a similar way, we can easily prove the identity
\begin{equation}\label{e-weightq}
\mathcal{E}(q;\rho^*_1)=\widetilde{\pi}^q\begin{pmatrix} -\varphi_A\binom{\chi_t}{q} \\
\widetilde{\varphi}_A\binom{\delta}{q}\end{pmatrix}+\frac{\widetilde{\pi}^q}{(t-\theta)\omega(t)}\binom{0}{\widetilde{\varphi}_A(q-1)}-\binom{0}{\zeta_A(q;\chi_t)}.\end{equation}
To see this, note that $\tau(\mathcal{E}(1;\rho^*_1))=\mathcal{E}(q;\rho^*_1)$ and use (\ref{taudifferechit}); all the series involved in these formulas are convergent for the $v$-valuation. We deduce from (\ref{e-weight1}) and (\ref{e-weightq}) that
\begin{multline*}
\mathcal{E}(1;\rho^*_t)\otimes\mathcal{E}(q-1;\boldsymbol{1})+\mathcal{E}(q;\rho^*_t)=\\ =\widetilde{\pi}^q\begin{pmatrix}\varphi_A\Big(\begin{smallmatrix}\chi_t & \boldsymbol{1} \\ 1 & q-1\end{smallmatrix}\Big)
\\ -\widetilde{\varphi}_A\Big(\begin{smallmatrix}\delta & \boldsymbol{1} \\ 1 & q-1\end{smallmatrix}\Big)\end{pmatrix}+\widetilde{\pi}\begin{pmatrix}\varphi_A\Big(\begin{smallmatrix}\chi_t  \\ 1 \end{smallmatrix}\Big)
\\ -\widetilde{\varphi}_A\Big(\begin{smallmatrix}\delta \\ 1 \end{smallmatrix}\Big)\end{pmatrix}\zeta_A(q-1)+
\begin{pmatrix}0\\ \zeta_A\Big(\begin{smallmatrix}\chi_t & \boldsymbol{1} \\ 1 & q-1\end{smallmatrix}\Big)\end{pmatrix}.\end{multline*}
This identity is reached applying the second and the third identities in (\ref{three-special-formulas}) and 
the formula (\ref{first-formula-pel}). But a direct computation
shows that
$$\mathcal{E}_A\begin{pmatrix} \rho^*_t & \boldsymbol{1} \\ 1 & q-1\end{pmatrix}=\left[
\widetilde{\boldsymbol{\Phi}}\begin{pmatrix} \rho^*_t & \boldsymbol{1} \\ 1 & q-1\end{pmatrix}+
\widetilde{\boldsymbol{\Phi}}\begin{pmatrix} \rho^*_t  \\ 1 \end{pmatrix}\otimes\boldsymbol{\mathcal{Z}}\begin{pmatrix} \boldsymbol{1} \\ q-1\end{pmatrix}
+\boldsymbol{\mathcal{Z}}\begin{pmatrix} \rho^*_t & \boldsymbol{1} \\ 1 & q-1\end{pmatrix}
\right]_1$$ equals the right-hand side of the above identity.
\end{proof}
We deduce that $\mathcal{E}_A(\begin{smallmatrix} \rho^*_t & \boldsymbol{1} \\ 1 & q-1\end{smallmatrix})$
is in $M_q(\rho^*_t;\LL)$. One further proves that it is non-zero and is not a cusp form. In fact we have that 
$$\mathcal{E}_A\Big(\begin{smallmatrix} \rho^*_t & \boldsymbol{1} \\ 1 & q-1\end{smallmatrix}\Big)-\binom{0}{\zeta_A\Big(\begin{smallmatrix}\chi_t & \boldsymbol{1} \\ 1 & q-1\end{smallmatrix}\Big)}\in\mathfrak{M}_\Sigma^{2\times 1},$$ with $\mathfrak{M}_\Sigma$ the maximal ideal of the valuation $v$, and $\Sigma$ a singleton, so that $\eta(\zeta_A(\begin{smallmatrix} \chi_t & \boldsymbol{1} \\ 1 & q-1\end{smallmatrix}))=
\mathcal{E}_A(\begin{smallmatrix} \rho^*_t & \boldsymbol{1} \\ 1 & q-1\end{smallmatrix})$ and 
we see, by \cite[Lemma 6.12]{GEZ&PEL} that $\zeta_A(\begin{smallmatrix} \chi_t & \boldsymbol{1} \\ 1 & q-1\end{smallmatrix})$ is Eulerian. One proves easily that 
$\zeta_A\Big(\begin{smallmatrix}\chi_t & \boldsymbol{1} \\ 1 & q-1\end{smallmatrix}\Big)=\frac{\theta-t}{\theta^q-\theta}\zeta_A\Big(\begin{smallmatrix}\chi_t \\ q \end{smallmatrix}\Big)$. However, the cusp form
$\mathcal{E}_A\Big(\begin{smallmatrix}\chi_t & \boldsymbol{1} \\ 1 & q-1\end{smallmatrix}\Big)-
\frac{\theta-t}{\theta^q-\theta}\mathcal{E}_A\Big(\begin{smallmatrix}\chi_t \\ q \end{smallmatrix}\Big)
$ does not vanish identically by Corollary \ref{regular-equals-modular}. Hence the item (3) of Conjecture \ref{conj-mult-eisentein} does not extend to $K$-linear combinations of multiple Eisenstein series.

\subsection{A conjecture for zeta values in Tate algebras}\label{conjecture-tate-algebras}
We now focus on zeta values in Tate algebras (\ref{zeta-value-tate}). Recall from the introduction that $q=p^e$ with $e>0$. Hence
$\tau=\mu^e$ where $\mu$ is the $\FF_p$-linear automorphism of $\CC_\infty$ given by $c\mapsto c^p$ for 
$c\in\CC_\infty$, which can be extended $\FF_p(\underline{t}_\Sigma)$-linearly to $\KK_\Sigma$ for any finite set $\Sigma$.
We introduce the following $\FF_p$-algebra
$$\mathbb{I}:=\FF_p\left[\mu^m(\zeta_A(1,\chi_{t_i})):\begin{matrix}i\in\NN^*\\ m\in\ZZ\end{matrix}\right]\subset\bigcup_{k\geq 0}\FF_p[t_i:i\in\NN^*][[\theta^{-\frac{1}{p^k}}]].$$ We set $\zeta_A(0):=1$.
The $\FF_p$-algebra $\mathbb{I}$ is thus generated by all the $\mu$-twists (negative or positive) of the functions $\zeta_A(1,\chi_{t_i})$ for $i\in\Sigma$. It is very important to allow negative values for $m$, and for this reason this $\FF_p$-algebra carries a structure of inversive $\mu$-difference algebra. We address the following
\begin{Conjecture}\label{conjecture-pellarin}
For all $n\in\NN^*$ and $\Sigma\subset\NN^*$ such that $|\Sigma|\equiv n\pmod{q-1}$ we have a unique expansion
\begin{equation}\label{identity-conjecture}
\zeta_A(n;\sigma_\Sigma)=\sum_{\begin{smallmatrix}0\leq k\leq n\\ k\equiv0\pmod{q-1}\end{smallmatrix}}\zeta_A(k)\eta_k,\quad \eta_k\in\mathbb{I}.\end{equation}
\end{Conjecture}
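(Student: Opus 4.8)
The plan is to transport (\ref{identity-conjecture}) to the modular side and then to induct on $s=|\Sigma|$. By Proposition~\ref{seriesofEisensteinseries}, formula (\ref{secondformula}), the element $-\zeta_A(n;\sigma_\Sigma)$ is the reduction modulo $\mathfrak{M}$ of the last entry $\mathcal{E}^\Sigma$ of $\mathcal{E}(n;\rho^*_\Sigma)$; likewise $-\zeta_A(k)$ is the constant term of the scalar Eisenstein series $\mathcal{E}(k;\boldsymbol{1})$, and $\zeta_A(1;\chi_{t_i})$, $\zeta_A(q;\chi_{t_i})=\tau(\zeta_A(1;\chi_{t_i}))=\mu^{e}(\zeta_A(1;\chi_{t_i}))$ are the constant terms of the bottom entries of $\mathcal{E}(1;\rho^*_{t_i})$, $\mathcal{E}(q;\rho^*_{t_i})$, both lying in $\mathbb{I}$. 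Writing $\mathrm{ct}$ for the map ``bottom entry, reduced mod $\mathfrak{M}$'', the statement becomes: $\mathrm{ct}(\mathcal{E}(n;\rho^*_\Sigma))$ lies in the $\mathbb{I}$-submodule generated by the $\zeta_A(k)$ with $0\le k\le n$ and $(q-1)\mid k$. First I would record the self-improving feature $\zeta_A(q^{r}n;\sigma_\Sigma)=\tau^{r}(\zeta_A(n;\sigma_\Sigma))$ and $\zeta_A(q^{r}k)=\tau^{r}(\zeta_A(k))$: since $\mathbb{I}$ is an inversive $\mu$-difference algebra, (\ref{identity-conjecture}) for $(n,\Sigma)$ is equivalent to (\ref{identity-conjecture}) for $(q^{r}n,\Sigma)$ for every $r$, which gives a little room to maneuver.

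Next, the induction step. Fixing $k\in\Sigma$ and $\Sigma'=\Sigma\setminus\{k\}$, the matrix $\boldsymbol{F}$ of (\ref{defF}), built from $\mathcal{E}(1;\rho^*_{t_k})$ and $\mathcal{E}(q;\rho^*_{t_k})$, carries $\mathcal{E}(n;\rho^*_\Sigma)$ to a pair of weak modular forms for $\rho^*_{\Sigma'}$ of weights $n-1$ and $n-q$, exactly as in the proof of Theorem~\ref{weakstructureresult}; at the level of $\mathrm{ct}$ this step is a harmonic-product relation among the corresponding constant terms, i.e. a multiplication inside the algebra $\mathcal{Z}_\zeta$ of multiple zeta values in Tate algebras, whose structure is governed by Theorems~\ref{simplesumshuffle}, \ref{corsumshuffle} and \ref{anewsumshuffle}. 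Iterating the harmonic product one obtains
$$\prod_{i\in\Sigma}\zeta_A(1;\chi_{t_i})=\pm\,\zeta_A(s;\sigma_\Sigma)+\sum_{\mathcal{C}} f_{\mathcal{C}}\,\zeta_A(\mathcal{C}),\qquad f_{\mathcal{C}}\in\FF_p,$$
the sum over composition arrays $\mathcal{C}$ of weight $s$, type $\sigma_\Sigma$ and depth $\ge 2$; the left side is in $\mathbb{I}$, so the step amounts to absorbing the depth-$\ge 2$ terms, and it is precisely the $d_1=d_2$ collisions in the product rule of \S\ref{shufflerelations} that produce the extra $\zeta_A(k)$ factors. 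The base case is (\ref{eisenstein-small-sigma}): for $s\le q$ one has $\mathcal{E}(s;\rho^*_\Sigma)=(-1)^{s}\bigotimes_{i\in\Sigma}\mathcal{E}(1;\rho^*_{t_i})$, hence $\zeta_A(s;\sigma_\Sigma)=-\prod_{i\in\Sigma}\zeta_A(1;\chi_{t_i})\in\mathbb{I}$, which is (\ref{identity-conjecture}) with only the $k=0$ term; Carlitz's formula (\ref{first-formula-pel}), $\zeta_A(1;\chi_t)=\widetilde{\pi}/((\theta-t)\omega(t))$, is what ties $\mathbb{I}$ to eulerianity and underlies the whole mechanism.

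An alternative, purely structural, route I would keep in reserve uses the finiteness results: by Theorem~\ref{regularitystrongregularity} a twist $\tau^{r}\mathcal{E}(n;\rho^*_\Sigma)=\mathcal{E}(q^{r}n;\rho^*_\Sigma)$ is strongly regular, Corollary~\ref{explicitgeneratorsforstrongregularity} then writes it as $\sum_{I\sqcup J=\Sigma}c_{I,J}\bigl(\bigotimes_{i\in I}\mathcal{E}(1;\rho^*_{t_i})\bigr)\otimes\bigl(\bigotimes_{j\in J}\mathcal{E}(q;\rho^*_{t_j})\bigr)$ with scalar Drinfeld modular forms $c_{I,J}$, and the rationality statement Theorem~\ref{integrality-results} forces $c_{I,J}\in K(\underline{t}_\Sigma)[g,\Delta]$. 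Reading the bottom entry modulo $\mathfrak{M}$ and using $g|_{u=0}=1$, $\Delta|_{u=0}=0$, this expresses $\zeta_A(q^{r}n;\sigma_\Sigma)$ as a $K(\underline{t}_\Sigma)$-combination of the constant terms $(c_{I,J})_{u=0}$ against the elements $\prod_{i\in I}\zeta_A(1;\chi_{t_i})\prod_{j\in J}\zeta_A(q;\chi_{t_j})\in\mathbb{I}$; one then descends to $(n,\Sigma)$ by applying $\mu^{-er}$, which is legitimate on $\mathbb{I}$ but requires care on the $\zeta_A(k)$-part, and the decisive point becomes showing that those $c_{I,J}$ are themselves Eisenstein-like, with constant terms that are $\FF_p$-combinations of single Carlitz values.

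That last point is the main obstacle, and the reason (\ref{identity-conjecture}) is still a conjecture: every step above naturally produces an identity with coefficients in $K(\underline{t}_\Sigma)$, whereas the conjecture asserts coefficients in $\FF_p$ and a pure ``one $\zeta_A(k)$'' shape with no products of Carlitz values. Establishing this amounts to proving that the only $K(\underline{t}_\Sigma)$-linear relations among the periods in play collapse to $\FF_p$-relations among $\zeta_A$-monomials — an eulerianity/transcendence input of Chang--Yu type, available in the Tate-algebra setting only in the special cases treated by Hung~Le and Ngo~Dac in \cite{HUN&NGO}; the uniqueness clause is the matching algebraic-independence statement for the family $(\zeta_A(k))_{(q-1)\mid k}$ over the fraction field of $\mathbb{I}$. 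Throughout I would use the specialization $t_i\mapsto\theta^{q^{k_i}}$ of \S\ref{Quasimodular-forms-from-Eisenstein} as both a sanity check and a source of the $\eta_k$: under it (\ref{identity-conjecture}) must degenerate to the expansion of the quasi-modular form $f_{l+m,m}$ in terms of the trivial-zero values $\zeta_A(m-l)$ and powers of $\widetilde{\pi}$.
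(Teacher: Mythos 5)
This statement is a \emph{conjecture} in the paper: no proof is given, only supporting evidence (the cases $m=0,1,2$ of $n=1$ in \S 9.2.1, and the Hung Le--Ngo Dac theorem covering $q>m$), so there is nothing to compare your argument against line by line. You correctly recognize this, and your strategic discussion tracks the paper's own framing closely: the paper asserts (via Conjecture \ref{generalisation-thakur}) that the identity should in principle follow from the harmonic product of \S \ref{shufflerelations} but that "carrying this program might be very difficult in practice due to the combinatorial computations involved," which is exactly the obstruction you isolate. Your base cases (the formula (\ref{first-formula-pel}) for $|\Sigma|=1$ and the factorization coming from (\ref{eisenstein-small-sigma})), your use of the inversive structure of $\mathbb{I}$ to reduce to $\tau$-twists, and your identification of the real gap --- that every natural manipulation produces coefficients in $K(\underline{t}_\Sigma)$ rather than $\FF_p$, and that both the existence of the pure ``one $\zeta_A(k)$'' shape and the uniqueness clause require a linear-independence input of Chang--Papanikolas--Yu type not available in this generality --- are all consistent with what the paper says. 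In short: you have not proved the statement, but neither does the paper, and your proposal is an honest and accurate account of why it remains open; just be careful not to present the inductive harmonic-product step as more than a heuristic, since the ``absorption'' of depth-$\geq 2$ terms into the prescribed shape is precisely the unproven combinatorial core.
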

Recall that in our conventions, $\zeta_A(k)=\zeta_A(k;\boldsymbol{1})$ are the usual Carlitz zeta values.
We are going to give some examples of relations along the predictions of this conjecture. Note that the factors $\eta_k$ need not to lie in $\FF_p[t_i:i\in\NN^*]((\frac{1}{\theta}))$. However, there exists $l\in\NN$ such that
$\mu^l(\eta_k)\in\FF_p[t_i:i\in\NN^*]((\frac{1}{\theta}))$ for all $k\equiv 0\pmod{q-1}$ in the range $0\leq k\leq n$ and all the terms involved are products of zeta values. Since $\mu^l(\zeta_A(k;\sigma_\Sigma))=
\zeta_A(kp^l;\sigma_\Sigma)$, the identity (\ref{identity-conjecture}) is equivalent to an algebraic identity of zeta values as in (\ref{zeta-value-tate}) defined over $\FF_p$. We recall from Thakur conjectures in \cite[\S 5.3]{THA2} that the only $\FF_p$-relations
among his multiple zeta values in $K_\infty$ are those which come from the harmonic product.

\begin{Conjecture}\label{generalisation-thakur}
The only $\FF_p$-algebraic relations in $\mathbb{I}$ are those coming from the harmonic product.
\end{Conjecture}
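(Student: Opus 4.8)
The plan is to attack Conjecture \ref{generalisation-thakur} by combining the harmonic product structure of \S\ref{shufflerelations} with the Tannakian theory of algebraic independence for $t$-motives in the style of Papanikolas (cf. \cite{PAP0}). First I would isolate the obvious source of relations. By Theorem \ref{anewsumshuffle}, applied in the incarnation of \S\ref{exa1}, the $\FF_p$-span of the $\zeta_A(\mathcal{C})$ is an $\FF_p$-algebra, and using \eqref{first-formula-pel} each generator $\mu^m(\zeta_A(1,\chi_{t_i})) = \mu^m\!\left(\widetilde{\pi}/((\theta - t_i)\,\omega(t_i))\right)$ of $\mathbb{I}$ is a $\mu$-twist of an explicit Carlitz-type period; hence every harmonic monomial in these generators is, by the harmonic product, an $\FF_p$-combination of zeta values in Tate algebras, and (after clearing denominators by a large power of $\mu$) lands in a single graded piece $\mathcal{Z}^\sigma_n$ of $\mathcal{Z}_\zeta$. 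Thus $\mathbb{I}$ is a quotient of a formal $\FF_p$-algebra $\widetilde{\mathbb{I}}$ presented by the harmonic relations alone, and the content of the statement is that this quotient map is an isomorphism, i.e. that the transcendence degree of each finite-dimensional piece of $\mathbb{I}$ over $\FF_p$ is exactly the combinatorial count of harmonic monomials surviving the relations — the latter count being a purely formal computation on composition arrays.

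For the matching lower bound on transcendence degree I would, for each finite $\Sigma$ and each bound on $m$, build a dual $t$-motive whose rigid analytic trivialization has entries among $\widetilde{\pi}^{\pm p^j}$, $\omega(t_i)^{\pm 1}$ and their Frobenius twists — all of which satisfy explicit $\tau$- or $\mu$-difference systems, cf. Lemma \ref{lemmaomegachi}, \eqref{tauomega} and \eqref{pitilde} — and then bound from below its motivic Galois group using the Anderson--Brownawell--Papanikolas criterion together with the algebraic independence results of Chang--Papanikolas--Yu type for Carlitz periods and their hyperderivatives (see also \cite{CHA&PAP&YU}). Compatibility of this program with the author's own Conjecture \ref{conjecture-pellarin} provides a useful consistency check: the expansions \eqref{identity-conjecture} must be recovered, over $\FF_p$, as identities among the harmonic monomials and the Carlitz zeta values $\zeta_A(k)$.

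The hard part — and the reason this is stated as a conjecture — is that the program subsumes two open problems. Restricting attention to the sub-family obtained by harmonic products one is already inside $\mathcal{Z}_\zeta$, and asserting that the only $\FF_p$-relations there come from the harmonic product is precisely Thakur's conjecture \cite[\S 5.3]{THA2}, which is open even for classical multiple zeta values; moreover the presence of infinitely many independent variables $t_i$ and the $\mu$-difference (rather than $\tau$-difference) structure places the relevant $t$-motivic category outside the range for which Galois groups have been computed. Consequently the realistic deliverables are: (i) a proof that Conjecture \ref{generalisation-thakur} is equivalent to, or follows from, Thakur's conjecture together with a statement of algebraic independence of the $\zeta_A(1,\chi_{t_i})$ and their twists over $\mathcal{Z}_\zeta$; and (ii) unconditional verification in low weight and depth, for instance reproving by the $t$-motivic machinery that the only relations among $\{\mu^m(\zeta_A(1,\chi_{t_i}))\}_{i,m}$ and $\zeta_A(q-1)$ in weight $\le q$ are those displayed in \eqref{three-special-formulas} and \eqref{identitywithfA}.
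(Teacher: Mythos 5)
The statement you are addressing is a \emph{conjecture}: the paper offers no proof of it, only supporting evidence (the explicit cases $m=0,1,2$ of \S\ref{Some-evidences}, formula (\ref{first-formula-pel}), and the Hung Le--Ngo Dac theorem, which settles a special case of the companion Conjecture \ref{conjecture-pellarin} rather than of Conjecture \ref{generalisation-thakur} itself). So there is no argument in the paper to compare yours against, and what you have written is a research program, not a proof --- as you yourself acknowledge in the final paragraph. That honesty is appropriate, but it means the proposal does not establish the statement.

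Two concrete gaps deserve naming. First, the lower bound on transcendence degree is exactly the inaccessible half: the generators $\mu^m(\zeta_A(1,\chi_{t_i}))$ involve $p$-power (not $q$-power) Frobenius twists in both directions, including $p$-th roots of $\theta$ for $m<0$, and range over infinitely many variables $t_i$; no Tannakian category of dual $t$-motives with computed motivic Galois groups covers this situation, and the Anderson--Brownawell--Papanikolas criterion as currently available does not apply to $\mu$-difference systems. Second, even your "upper bound" is overstated: determining the graded dimensions of the formal quotient $\widetilde{\mathbb{I}}$ presented by the harmonic relations is \emph{not} a purely formal computation on composition arrays --- the paper explicitly warns that carrying out the harmonic-product reductions "might be very difficult in practice due to the combinatorial computations involved," and the Hung Le--Ngo Dac proof of (\ref{formulaconj}) succeeds precisely because it \emph{avoids} the harmonic product. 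Moreover, your reduction of the conjecture to "Thakur's conjecture plus algebraic independence of the $\zeta_A(1,\chi_{t_i})$ and their twists over $\mathcal{Z}_\zeta$" presupposes that the harmonic span of the generators of $\mathbb{I}$ exhausts the relevant zeta values; that containment is essentially Conjecture \ref{conjecture-pellarin}, which is itself open outside the cases treated in \S\ref{Some-evidences}. Your deliverable (ii) --- unconditional low-weight verifications such as (\ref{three-special-formulas}) and (\ref{identitywithfA}) --- is sound and is consistent with what the paper actually proves, but it does not advance the conjecture beyond the evidence already recorded there.
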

After Conjecture \ref{generalisation-thakur}, all the algebraic relations defined over $\FF_p$ between 
the elements $\zeta_A(n;\sigma_\Sigma)$ with $n\equiv|\Sigma|\pmod{q-1}$ can be derived from the harmonic product and for each zeta value $\zeta_A(n;\sigma_\Sigma)$ it should be possible to derive explicit formulas like in (\ref{identity-conjecture}) by using the harmonic product of Theorem \ref{corsumshuffle} (or in \cite{PEL3}). However, carrying this program might be very difficult in practice due to the combinatorial computations involved. The challenge is to introduce other techniques to tackle it. This was accomplished by 
Hung Le and Ngo Dac in \cite{HUN&NGO}, where they proved a particular case of this conjecture hence proving a conjectural formula of the author of the present text. Their result is reviewed in the following \S \ref{Some-evidences}.

\subsubsection{Some evidences}\label{Some-evidences}

We focus on the case $n=1$ in Conjecture \ref{generalisation-thakur} so that we can now suppose that 
$|\Sigma|=m(q-1)+1$ with $m\geq 0$. We know from \cite{ANG&PEL,APTR} that
\begin{equation}\label{formula-a-la-euler}
\zeta_A(1;\sigma_\Sigma)=\frac{(-1)^m\widetilde{\pi}\mathbb{B}_\Sigma}{\omega_\Sigma},\quad |\Sigma|\equiv1\pmod{q-1},\quad |\Sigma|>1,
\end{equation}
where $\mathbb{B}_\Sigma\in A[\underline{t}_\Sigma]$ (\footnote{$\mathbb{B}$ stands for 'Bernoulli'.}) is a monic polynomial in $\theta$ of degree $m-1$
when $m\geq 1$ and $\omega_\Sigma=\prod_{i\in\Sigma}\omega(t_i)\in\TT_\Sigma^\times$.
If $m=0$, the conjecture is clearly verified thanks to the formula (\ref{first-formula-pel}). If $m=1$ then 
$\mathbb{B}_\Sigma=1$ by \cite[Corollary 7.3]{APTR} so that $$\zeta_A(1,\sigma_\Sigma)=\tau^{-1}\left(\prod_{i\in\Sigma}\zeta_A(1,\chi_{t_i})\right)\in\mathbb{I}$$
confirming Conjecture \ref{conjecture-pellarin} also in this case.

To describe the case $m=2$ (so that $|\Sigma|=2q-1$) we shall introduce the notation
$$\mathcal{L}_U^{(m)}:=\tau^m\left(\prod_{i\in U}\zeta_A(1,\chi_{t_i})\right),$$ for $U\subset\Sigma$.
Then it is possible to show the following explicit formula:
$$\zeta_A(1,\sigma_\Sigma)=\sum_{\begin{smallmatrix}\Sigma=U_1\sqcup U_2\\ |U_1|=q-1\\ |U_2|=q\end{smallmatrix}}\mathcal{L}_{U_1}^{(-1)}\mathcal{L}_{U_2}^{(-2)},$$ where $\sqcup$ denotes disjoint union.
Now, recall that the right-hand side is equal to 
$\frac{\widetilde{\pi}\mathbb{B}^*_\Sigma}{\omega_\Sigma},$
with $$\mathbb{B}^*_\Sigma=-\sum_{\begin{smallmatrix}U_2\subset \Sigma\\ |U_2|=q\end{smallmatrix}}\prod_{i\in U_2}\Big(t_i-\theta^{\frac{1}{q}}\Big),$$
while the left-hand side is easily seen to be equal to 
$\frac{\widetilde{\pi}\mathbb{B}_\Sigma}{\omega_\Sigma},$ with $$\mathbb{B}_\Sigma=\theta-\sum_{\begin{smallmatrix}V\subset\Sigma\\ |V|=q\end{smallmatrix}}\prod_{i\in V}t_i=-e_q\Big(t_i-\theta^{\frac{1}{q}}:i\in\Sigma\Big)$$ (with $e_n$ denoting here the $n$-th elementary symmetric polynomial), and it is easy to see that $\mathbb{B}_\Sigma=\mathbb{B}_\Sigma^*$ (all the terms defined over $\FF_p[\theta^{\frac{1}{q}}]$ but not over $\FF_p[\theta]$ cancel. More generally we have the next result (see \cite[Theorem 1.3]{HUN&NGO}):
\begin{Theorem}[Hung Le and Ngo Dac]\label{conj2}
For all $m\geq 0$ and for all $q>m$ the following formula holds:
\begin{equation}\label{formulaconj}
\zeta_A(1;\sigma_\Sigma)=\sum_{\begin{smallmatrix}U_1\sqcup\cdots\sqcup U_m=\Sigma\\
q^{-1}|U_1|+\cdots+q^{-m}|U_m|=1\end{smallmatrix}}\mathcal{L}_{U_1}^{(-1)}\cdots\mathcal{L}_{U_m}^{(-m)}.\end{equation}
\end{Theorem}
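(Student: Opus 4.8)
<br>

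The plan is to prove Theorem~\ref{conj2} (the identity \eqref{formulaconj} of Hung Le and Ngo Dac) by reducing it to a purely combinatorial identity of polynomials in $A[\underline{t}_\Sigma]$, after clearing the common transcendental factor $\widetilde{\pi}/\omega_\Sigma$. First I would record the closed forms of both sides. On the left, by \eqref{formula-a-la-euler} we have $\zeta_A(1;\sigma_\Sigma)=(-1)^m\widetilde{\pi}\mathbb{B}_\Sigma/\omega_\Sigma$ with $\mathbb{B}_\Sigma\in A[\underline{t}_\Sigma]$ monic in $\theta$ of degree $m-1$ (for $|\Sigma|=m(q-1)+1$, $m\ge 1$; the cases $m=0,1$ are already settled above). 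On the right, using the base case \eqref{first-formula-pel}, $\zeta_A(1;\chi_{t_i})=\widetilde{\pi}/((\theta-t_i)\omega(t_i))$, hence $\mathcal{L}_U^{(-j)}=\tau^{-j}\big(\prod_{i\in U}\zeta_A(1;\chi_{t_i})\big)=\widetilde{\pi}^{q^{-j}|U|}\prod_{i\in U}(\theta^{q^{-j}}-t_i)^{-1}\omega(t_i)^{-q^{-j}}$ as an element of the perfect closure $\cup_k\FF_q[\underline{t}_\Sigma]((\theta^{-1/q^k}))$, where $\tau^{-1}$ is the inverse of the $q$-Frobenius (available since we work in an inversive difference ring). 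The constraint $q^{-1}|U_1|+\cdots+q^{-m}|U_m|=1$ forces $\widetilde{\pi}^{\sum_j q^{-j}|U_j|}=\widetilde{\pi}$, and the $\omega$-factors reassemble: $\prod_{j}\prod_{i\in U_j}\omega(t_i)^{-q^{-j}}$ — after applying $\tau^m$ to both sides — becomes $\omega_\Sigma^{-1}$ up to a power of $\tau$. So the claim is equivalent, after a single application of $\tau^m$ to clear denominators of $p$-power roots, to an identity between honest polynomials.

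Next I would make the polynomial identity explicit. Applying $\tau^m$ to \eqref{formulaconj} and using $\tau^m(\zeta_A(1;\sigma_\Sigma))=\zeta_A(q^m;\sigma_\Sigma)$ and $\tau^m(\mathcal{L}_{U_j}^{(-j)})=\mathcal{L}_{U_j}^{(m-j)}$, the right side becomes, up to the common factor $\widetilde{\pi}^{q^m}/\omega_\Sigma^{q^m}$ (note $\tau^m(\omega_\Sigma^{-1})=\omega_\Sigma^{-q^m}$), a sum over ordered set partitions $U_1\sqcup\cdots\sqcup U_m=\Sigma$ with $\sum_j q^{m-j}|U_j| = q^m$ of the rational function $\prod_{j=1}^m\prod_{i\in U_j}(\theta^{q^{m-j}}-t_i)^{-1}$, each numerator being a suitable product of $\omega$-powers that recombine. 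The left side is $(-1)^m\widetilde{\pi}^{q^m}\mathbb{B}_\Sigma^{q^m}/\omega_\Sigma^{q^m}$ times an appropriate $\tau$-twist; matching, one is reduced to proving a symmetric-function identity of the shape
\[
\mathbb{B}_\Sigma \;=\; \pm\!\!\sum_{\substack{U_1\sqcup\cdots\sqcup U_m=\Sigma\\ |U_1|+q|U_2|+\cdots+q^{m-1}|U_m|=q^{m-1}(q-1)+\cdots}}\ \prod_{j=1}^m \prod_{i\in U_j}\big(t_i-\theta^{1/q^j}\big)\ \big/\ (\text{common factors}),
\]
where $\mathbb{B}_\Sigma$ is the Anderson–Thakur–type Bernoulli–Carlitz polynomial. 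Concretely, following the $m=2$ computation displayed above, I expect $\mathbb{B}_\Sigma$ to equal (up to sign) the evaluation at the shifted variables $t_i-\theta^{1/q}$, $t_i-\theta^{1/q^2}$, $\ldots$ of an explicit alternating sum of products of elementary symmetric polynomials, and the key point to verify is that all monomials involving genuine $p$-power roots $\theta^{1/q^j}$ ($j\ge 1$) cancel, leaving a polynomial in $\FF_p[\theta,\underline{t}_\Sigma]$.

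The main obstacle, and the crux of the whole argument, is exactly this cancellation of the fractional-power terms together with the combinatorial bookkeeping of which ordered partitions survive the weight constraint $\sum_j q^{-j}|U_j|=1$. I would handle it via generating functions: introduce the variable $X$ and consider $\prod_{i\in\Sigma}\big(1 + \sum_{j\ge 1}(t_i-\theta^{1/q^j})X^{?}\big)$-type products whose coefficient extraction reproduces both sides; the identity $C_{\theta-t}=\tau+(\theta-t)$ for the Carlitz module (equivalently $\omega(t)(\theta-t)=\tau^{-1}(\omega(t))^{-1}\cdot(\cdots)$, i.e. $\tau(\omega)=(t-\theta)\omega$ from \eqref{tauomega}) is what forces the $\omega$-powers to telescope and is the same mechanism that makes the spurious roots cancel. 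An alternative, closer to \cite{HUN&NGO}, is to prove \eqref{formulaconj} by descending induction on $m$ (or on $|\Sigma|$), peeling off the block $U_m$ of smallest ``$q$-weight'' and invoking the $m-1$ case plus the harmonic product relations of Theorem~\ref{corsumshuffle} specialized appropriately; the induction step then reduces to a single Carlitz-module functional equation. In either route, the hypothesis $q>m$ is used precisely to guarantee that the binomial/multinomial coefficients counting the partitions are units in $\FF_p$ and that no unexpected collapse occurs; I would verify this divisibility bound carefully, as it is the place where the statement genuinely fails for $q\le m$. Once the polynomial identity is established over $\FF_p[\theta,\underline{t}_\Sigma]$, reversing the $\tau^m$-twist and multiplying back by $\widetilde{\pi}/\omega_\Sigma$ yields \eqref{formulaconj} and hence Theorem~\ref{conj2}.
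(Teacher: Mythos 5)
First, a point of context: the paper does not prove this statement at all. It is quoted from Hung Le and Ngo Dac (\cite[Theorem 1.3]{HUN&NGO}); the surrounding text records that the formula was conjectured by the author, verified for $m\le 4$ by Ngo Dac in \cite{DAC}, and proved in \cite{HUN&NGO} by ``new tools'' which explicitly do \emph{not} go through the harmonic product. So there is no internal proof to compare against, and your argument has to stand on its own.

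Your reduction of (\ref{formulaconj}) to a polynomial identity is sound in outline and is essentially the reformulation the paper itself records in the corollaries following the theorem: using (\ref{formula-a-la-euler}), (\ref{first-formula-pel}) and $\tau(\omega)=(t-\theta)\omega$ from (\ref{tauomega}), both sides become $\widetilde{\pi}/\omega_\Sigma$ times explicit elements of $\FF_p[\theta^{1/q^{m-1}},\underline{t}_\Sigma]$, and the constraint $\sum_j q^{-j}|U_j|=1$ makes the powers of $\widetilde{\pi}$ match. (Two bookkeeping slips along the way: $\tau^{-j}(\omega(t_i)^{-1})$ equals $(t_i-\theta^{1/q^{j-1}})\cdots(t_i-\theta^{1/q})\,\omega(t_i)^{-1}$ by (\ref{tobeused}), not $\omega(t_i)^{-q^{-j}}$, and likewise $\tau^m(\omega_\Sigma^{-1})\neq\omega_\Sigma^{-q^m}$, since $\tau$ is the $\FF_q[\underline{t}_\Sigma]$-linear extension of Frobenius and is not the $q$-power map on $\TT_\Sigma$.) The genuine gap is that you never prove the resulting identity $\mathbb{B}_\Sigma=(-1)^{m-1}\sum B_1^*(\underline{t}_{U_1})\cdots B_m^*(\underline{t}_{U_m})$, which carries the entire content of the theorem. ``Handle it via generating functions'' and ``descending induction invoking the harmonic product relations of Theorem \ref{corsumshuffle}'' are strategies, not arguments: you assert without justification that the induction step ``reduces to a single Carlitz-module functional equation,'' and you give no mechanism for the cancellation of the monomials in $\theta^{1/q^j}$ beyond re-invoking $\tau(\omega)=(t-\theta)\omega$. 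Moreover the harmonic-product route is precisely the one the paper flags as most likely out of reach (Conjecture \ref{generalisation-thakur} predicts such a derivation exists, but nobody has carried it out, and \cite{HUN&NGO} deliberately avoids it), and your explanation of the hypothesis $q>m$ as a unit condition on multinomial coefficients is an unchecked guess. As written, the proposal establishes the easy equivalences surrounding the theorem but leaves the theorem itself unproved.
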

The formula (\ref{formulaconj}) has been conjectured by the author of the present manuscript and incorporated in a previous version of it. The work \cite{HUN&NGO} has been anticipated by the verification of the cases $m=1,2,3,4$ by Ngo Dac in \cite{DAC}. Although Conjecture \ref{generalisation-thakur} predicts that such formulas can all be derived from 
the harmonic product the method of Hung Le and Ngo Dac does not use it, and introduces new tools which do not reduce it to a mere computational verification, the latter being most likely out of reach.

\subsubsection{More about Theorem \ref{conj2}} 
 It is not hard to show that Hung Le and Ngo Dac's Theorem is equivalent to the following corollaries that were stated as Conjectures in the earlier versions of the present manuscript:

\begin{Corollary}\label{conj3}
Assuming that $m\geq 2$ and that $q>m$, we have the formula
$$\zeta_A(1,\sigma_\Sigma)=\sum_{r=0}^{m-2}\sum_{\begin{smallmatrix}U\sqcup V\sqcup {\Sigma'}=\Sigma\\ |V|=q-r-1\\
|U|=rq\end{smallmatrix}}
\tau^{-1}(\zeta_A(1,\sigma_{\Sigma'}))\mathcal{L}^{(-2)}_U\mathcal{L}^{(-1)}_V.$$
\end{Corollary}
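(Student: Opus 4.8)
The plan is to deduce Corollary \ref{conj3} from Theorem \ref{conj2} by a \emph{refolding} of the latter's sum. Write $|\Sigma| = m(q-1)+1$ and recall two elementary facts about the normalised products $\mathcal{L}^{(j)}_U = \tau^j\big(\prod_{i\in U}\zeta_A(1;\chi_{t_i})\big)$: they are multiplicative along disjoint unions, $\mathcal{L}^{(j)}_U\mathcal{L}^{(j)}_{U'} = \mathcal{L}^{(j)}_{U\sqcup U'}$ for $U\cap U' = \emptyset$ (because $\tau$ is a ring endomorphism), and $\mathcal{L}^{(j)}_{\emptyset} = 1$. For $0 \le r \le m-2$ the set $\Sigma'$ appearing in Corollary \ref{conj3} has $|\Sigma'| = |\Sigma| - rq - (q-r-1) = k(q-1)+1$ with $k := m-1-r \in \{1,\dots,m-1\}$, and since $k < q$ one may apply Theorem \ref{conj2} to expand $\zeta_A(1;\sigma_{\Sigma'})$, hence $\tau^{-1}(\zeta_A(1;\sigma_{\Sigma'}))$, as a sum of products $\mathcal{L}^{(-2)}_{W_1}\mathcal{L}^{(-3)}_{W_2}\cdots\mathcal{L}^{(-(k+1))}_{W_k}$ over ordered partitions $\Sigma' = W_1\sqcup\cdots\sqcup W_k$ with $\sum_{i=1}^k q^{k-i}|W_i| = q^k$.

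Substituting this into the right-hand side of Corollary \ref{conj3} and merging the factor $\mathcal{L}^{(-2)}_U$ with the leading factor $\mathcal{L}^{(-2)}_{W_1}$ by multiplicativity, each resulting summand takes the shape $\mathcal{L}^{(-1)}_{U_1}\mathcal{L}^{(-2)}_{U_2}\cdots\mathcal{L}^{(-(m-r))}_{U_{m-r}}$ with $U_1 = V$, $U_2 = U\sqcup W_1$ and $U_{i+1} = W_i$ for $2\le i\le k$; appending $r$ empty parts (which contribute factors $1$) one obtains a term of the shape occurring in Theorem \ref{conj2}'s expansion of $\zeta_A(1;\sigma_\Sigma)$. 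Moreover the weight constraint is respected: dividing $\sum_{i=1}^{m-r} q^{m-i}|U_i|$ by $q^r$ and inserting $|U_1| = q-r-1$, $|U_2| = rq + |W_1|$, the sum telescopes to $q^{m-r}$ thanks to $\sum_i q^{k-i}|W_i| = q^k$, so the $m$-tuple $(U_1,\dots,U_{m-r},\emptyset,\dots,\emptyset)$ satisfies $\sum_i q^{m-i}|U_i| = q^m$. Thus every summand of the right-hand side of Corollary \ref{conj3} is a summand of the fully expanded right-hand side of Theorem \ref{conj2}, and what remains is to check that the total multiplicity with which each such term is produced matches the coefficient it carries in Theorem \ref{conj2}.

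This last verification is the step I expect to be the main obstacle. Unwinding the bookkeeping, a target tuple $(U_1,\dots,U_m)$ forces $r = q-1-|U_1|$, and is then hit once for each way of writing $U_2 = U\sqcup W_1$ with $|U| = rq$, so the induced coefficient is a sum of binomial coefficients modulo $p$ attached to the decompositions of $U_2$; one must also show that no tuple outside those produced above can contribute to Theorem \ref{conj2}'s sum, a purely numerical statement about the Diophantine system $\sum_i q^{m-i}|U_i| = q^m$, $\sum_i|U_i| = m(q-1)+1$. The congruence evaluation of the resulting binomial sums is of the same nature as the one carried out in the proof of Lemma \ref{lemma-identitywithfA}: reduce, via Lucas' theorem, to extracting a prescribed coefficient from a polynomial identity over $\FF_p$ (vanishing of a polynomial of degree $\le q-2$ on $\FF_q$, as in that proof) and read off that the admissible binomial sums collapse to the required value. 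Concretely I would organise the double sum of Corollary \ref{conj3} so that, for fixed $V$ and fixed $U_2$, the inner summation over the admissible $r$ and over the splitting of $U_2$ is exactly one such coefficient extraction, after which the outer sum over $(V, U_2, W_2,\dots,W_k)$ becomes literally the Theorem \ref{conj2} sum; the already verified base cases $m=2,3$ (the second of which already carries the characteristic binomial congruence) serve as a consistency check on the combinatorial normalisation.
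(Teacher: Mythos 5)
The paper gives no proof of this Corollary --- it merely asserts, just before Theorem \ref{conj2}, that the equivalence ``is not hard to show'' --- so your refolding strategy is the only candidate on the table, and most of it is sound. The produced tuples have the right shape, the weight constraint telescopes to $\sum_i q^{m-i}|U_i|=q^m$ exactly as you compute, and the two Diophantine facts you defer are true: one checks $q-m+1\le|U_1|\le q-1$ (so $r=q-1-|U_1|\in\{0,\dots,m-2\}$ is forced) by bounding $\sum_{i\ge2}q^{m-i}|U_i|\le q^{m-2}\sum_{i\ge2}|U_i|$, and the vanishing $U_j=\emptyset$ for $j>m-r$ follows from a carry count: the digit sum $m(q-1)+1$ of the representation $q^m=\sum_iq^{m-i}|U_i|$ forces exactly $m-1$ carries, of which $r+1$ leave the top position, so at most $m-r-2$ remain to push the support downwards.

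The step you flag as the main obstacle is indeed where the argument breaks, and it breaks for real. For a target tuple $(U_1,\dots,U_m)$ both $r$ and $|W_1|=|U_2|-rq$ are forced, so the multiplicity is the single binomial coefficient $\binom{|U_2|}{rq}$, not a sum of them. When $k:=m-1-r\ge 2$ one has $|W_1|\le q-1<q=p^e$, the base-$p$ digits of $rq$ and of $|W_1|$ occupy disjoint positions, and Lucas gives $\binom{rq+|W_1|}{rq}\equiv1\pmod p$, as needed. But for $r=m-2$ (so $k=1$, $|W_1|=|\Sigma'|=q$, target type $(q-m+1,(m-1)q,0,\dots,0)$) the multiplicity is $\binom{(m-1)q}{(m-2)q}\equiv m-1\pmod p$, not $1$. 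Hence the right-hand side of the Corollary, expanded via Theorem \ref{conj2}, differs from $\zeta_A(1,\sigma_\Sigma)$ by $(m-2)\sum_{|U_1|=q-m+1}\mathcal{L}^{(-1)}_{U_1}\mathcal{L}^{(-2)}_{\Sigma\setminus U_1}$, which by (\ref{tobeused}) is a nonzero multiple of $\widetilde{\pi}\,e_{(m-1)q}\big(t_i-\theta^{1/q}:i\in\Sigma\big)/\omega_\Sigma$ whenever $p\nmid m-2$. Already for $m=3$ the $r=1$ block carries the factor $\binom{2q}{q}\equiv 2$ for $p$ odd ($\equiv 0$ for $p=2$) against the coefficient $1$ in Theorem \ref{conj2}. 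No Lucas-type evaluation will repair this single offending family of terms: either the statement needs correcting (e.g.\ the inner sum for $r=m-2$ must be renormalised), or a proof must proceed by a route other than refolding Theorem \ref{conj2}. As outlined, your proof cannot be completed.
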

The interest of Corollary \ref{conj3} is that it can be considered in parallel with similar (but not analogue) classical formulas by Euler. We recall that the well-known Riccati's differential equation
$f'=-1-f^2$ satisfied by the cotangent function $f(x)=\cot(x)$ implies, via the formula
$-\frac{\pi x}{2}\cot(\pi x)=\sum_{i\geq 0}\zeta(2i)x^{2i}$:
$$\left(n+\frac{1}{2}\right)\zeta(2n)=\sum_{i=1}^{n-1}\zeta(2i)\zeta(2n-2i),\quad n>1.$$ Note that the coefficients in the quadratic expression on the right-hand side are all equal to $1$. 

Theorem \ref{conj2} implies nice formulas for the polynomials $\mathbb{B}_\Sigma\in A[\underline{t}_\Sigma]$  (when $|\Sigma|>q$.
Indeed, observe that for all $m\geq 1$, 
\begin{equation}\label{tobeused}
\tau^{-m}((t-\theta)\omega)^{-1}=\Big(t-\theta^{\frac{1}{q^{m-1}}}\Big)\cdots
\Big(t-\theta^{\frac{1}{q}}\Big)\omega^{-1}.\end{equation}
Hence, $$\tau^{-m}\Big(\zeta_A(1,\chi_t)\Big)=-\frac{\widetilde{\pi}^{\frac{1}{q^m}}\Big(t-\theta^{\frac{1}{q^{m-1}}}\Big)\cdots
\Big(t-\theta^{\frac{1}{q}}\Big)}{\omega},\quad m\geq1.$$ Setting $b_m^*:=\Big(t-\theta^{\frac{1}{q^{m-1}}}\Big)\cdots
\Big(t-\theta^{\frac{1}{q}}\Big)$ (again for $m\geq 1$) and $B_m^*(\underline{t}_\Sigma)=\prod_{i\in\Sigma}b^*_m(t_i)$, we thus have:
\begin{Corollary}
The following formula holds, when $q>m$.
$$\mathbb{B}_\Sigma=(-1)^{m-1}\sum_{\begin{smallmatrix}U_1\sqcup\cdots\sqcup U_m=\Sigma\\
q^{-1}|U_1|+\cdots+q^{-m}|U_m|=1\end{smallmatrix}}B_1^*(\underline{t}_{U_1})\cdots B_m^*(\underline{t}_{U_m}).$$
\end{Corollary}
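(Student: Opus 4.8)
The final statement to prove is the last corollary, expressing the polynomial $\mathbb{B}_\Sigma$ as a sum over ordered set partitions. Let me think about how this follows from the results just established, particularly Theorem \ref{conj2} (the theorem of Hung Le and Ngo Dac).

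The theorem states: for $q > m$ where $|\Sigma| = m(q-1)+1$,
$$\zeta_A(1;\sigma_\Sigma) = \sum \mathcal{L}_{U_1}^{(-1)} \cdots \mathcal{L}_{U_m}^{(-m)}$$
over ordered partitions $U_1 \sqcup \cdots \sqcup U_m = \Sigma$ with $q^{-1}|U_1| + \cdots + q^{-m}|U_m| = 1$.

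Now $\mathcal{L}_U^{(-j)} = \tau^{-j}(\prod_{i \in U} \zeta_A(1,\chi_{t_i}))$. Each $\zeta_A(1,\chi_t) = \widetilde{\pi}/((t-\theta)\omega(t))$ by formula (\ref{first-formula-pel}). Applying $\tau^{-j}$ and using the identity (\ref{tobeused}), namely $\tau^{-j}((t-\theta)\omega)^{-1} = (t-\theta^{1/q^{j-1}})\cdots(t-\theta^{1/q})\omega^{-1}$, gives $\tau^{-j}(\zeta_A(1,\chi_t)) = -\widetilde{\pi}^{1/q^j} b_j^*(t)/\omega(t)$ where $b_j^*(t) = (t-\theta^{1/q^{j-1}})\cdots(t-\theta^{1/q})$.

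So I would proceed as follows.

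\textbf{Step 1.} Observe that $\widetilde{\pi}$ satisfies $\tau(\widetilde{\pi}) = -\theta\,\omega(\theta)^{-1}\cdots$ — more precisely, $\tau^{-1}(\widetilde{\pi}) = \widetilde{\pi}^{1/q}$ under the convention that $\tau$ acts as $c\mapsto c^q$ and extends to $q$-th roots on the perfection; thus $\widetilde{\pi}^{1/q^j}$ is literally $\tau^{-j}(\widetilde{\pi})$. For a subset $U$ with $|U| = u_j$, we get $\mathcal{L}_{U}^{(-j)} = \tau^{-j}\big(\prod_{i\in U}\widetilde{\pi}(t_i-\theta)^{-1}\omega(t_i)^{-1}\big) = (-1)^{u_j}\widetilde{\pi}^{u_j/q^j} B_j^*(\underline{t}_U)\,\omega_U^{-1}$, where $B_j^*(\underline{t}_U) = \prod_{i\in U} b_j^*(t_i)$ and $\omega_U = \prod_{i\in U}\omega(t_i)$.

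\textbf{Step 2.} Multiply the $m$ factors in a term of the Hung Le–Ngo Dac sum. Since the $U_j$ partition $\Sigma$, $\prod_j \omega_{U_j}^{-1} = \omega_\Sigma^{-1}$ and $\prod_j (-1)^{|U_j|} = (-1)^{|\Sigma|}$, which is $(-1)^{m(q-1)+1} = -1$ when $q$ is odd or trivially handled when $q$ is even (note $(-1)^{m(q-1)+1} = -1$ in both cases since $m(q-1)+1$ is odd when $q$ is even, and when $q$ is odd $m(q-1)$ is even). The power of $\widetilde{\pi}$ collected is $\widetilde{\pi}^{\sum_j |U_j|/q^j} = \widetilde{\pi}^1 = \widetilde{\pi}$ by the constraint $\sum_j q^{-j}|U_j| = 1$. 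Therefore each term equals $-\widetilde{\pi}\,\omega_\Sigma^{-1}\prod_j B_j^*(\underline{t}_{U_j})$, and the whole sum is $-\widetilde{\pi}\,\omega_\Sigma^{-1}\sum B_1^*(\underline{t}_{U_1})\cdots B_m^*(\underline{t}_{U_m})$.

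\textbf{Step 3.} Combine with the known Euler-type formula (\ref{formula-a-la-euler}): $\zeta_A(1;\sigma_\Sigma) = (-1)^m\widetilde{\pi}\,\mathbb{B}_\Sigma\,\omega_\Sigma^{-1}$. Equating the two expressions and cancelling the common nonzero factor $\widetilde{\pi}\,\omega_\Sigma^{-1}$ yields $(-1)^m\mathbb{B}_\Sigma = -\sum B_1^*(\underline{t}_{U_1})\cdots B_m^*(\underline{t}_{U_m})$, i.e. $\mathbb{B}_\Sigma = (-1)^{m-1}\sum B_1^*(\underline{t}_{U_1})\cdots B_m^*(\underline{t}_{U_m})$, which is exactly the claimed formula.

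The plan is essentially a bookkeeping computation once Theorem \ref{conj2} is granted; the only genuine subtlety — and the step I would double-check most carefully — is the sign tracking in Step 2, namely confirming that $(-1)^{|\Sigma|} \cdot (-1)^m = (-1)^{m-1}$ uniformly in $q$ (which reduces to checking parity of $m(q-1)+1+m = m q + 1$, always odd, so the combined sign is indeed $(-1)^{m-1}$ after moving the $(-1)^m$ from (\ref{formula-a-la-euler}) to the other side), and making sure the convention $\widetilde{\pi}^{1/q^j} = \tau^{-j}(\widetilde{\pi})$ together with (\ref{tobeused}) is applied with the correct exponent of $q$ so that the displayed formula for $b_m^*$ (with $m-1$ factors, exponents $1/q$ through $1/q^{m-1}$) matches. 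I do not expect any deeper obstacle, since finiteness of all sums and convergence in the perfection of $K_\infty$-adic completions are already guaranteed by the cited results.
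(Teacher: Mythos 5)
Your proposal is correct and follows exactly the route the paper intends: apply $\tau^{-j}$ to each factor using (\ref{tobeused}) to get $\mathcal{L}_U^{(-j)}=(-1)^{|U|}\widetilde{\pi}^{|U|/q^j}B_j^*(\underline{t}_U)\omega_U^{-1}$, use the constraint $\sum_j q^{-j}|U_j|=1$ to collect a single factor $\widetilde{\pi}$, and compare with (\ref{formula-a-la-euler}); the sign check $(-1)^{|\Sigma|}(-1)^{-m}=(-1)^{m-1}$ is exactly the point the paper leaves implicit. The only blemish is that in Step 1 you quote (\ref{first-formula-pel}) with denominator $(t-\theta)\omega(t)$ rather than $(\theta-t)\omega(t)$, but your subsequent sign $(-1)^{u_j}$ is the one consistent with the correct formula, so the bookkeeping and the final result are right.
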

Similarly, Corollary \ref{conj3} is equivalent to:
\begin{Corollary}\label{lastconjecture}
The following formula holds, for $|\Sigma|=m(q-1)+1$ with $q>m\geq 2$.
$$\mathbb{B}_\Sigma=\sum_{r=0}^{m-2}(-1)^{r+1}\sum_{\begin{smallmatrix}U\sqcup V\sqcup \Sigma'=\Sigma\\ |U|=qr\\
|V|=q-r-1\\
|\Sigma'|=(m-r-1)(q-1)+1\end{smallmatrix}}\tau^{-1}(\mathbb{B}_{\Sigma'})\prod_{i\in U\sqcup\Sigma'}\Big(t_i-\theta^{\frac{1}{q}}\Big).$$
\end{Corollary}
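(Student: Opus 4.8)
The plan is to derive Corollary~\ref{lastconjecture} from Corollary~\ref{conj3} (equivalently, from the theorem of Hung Le and Ngo Dac, Theorem~\ref{conj2}) by translating the identity for the zeta value $\zeta_A(1;\sigma_\Sigma)$ into an identity for the ``Bernoulli'' polynomial $\mathbb{B}_\Sigma$. Fix $\Sigma$ with $|\Sigma|=m(q-1)+1$, $m\ge 2$, $q>m$. By the Euler-type evaluation (\ref{formula-a-la-euler}) one has $\zeta_A(1;\sigma_\Sigma)=(-1)^m\widetilde{\pi}\,\mathbb{B}_\Sigma\,\omega_\Sigma^{-1}$, and every subset $\Sigma'$ appearing in Corollary~\ref{conj3} satisfies $|\Sigma'|=(m-r-1)(q-1)+1$, so with $m':=m-r-1\ge 1$ also $\zeta_A(1;\sigma_{\Sigma'})=(-1)^{m'}\widetilde{\pi}\,\mathbb{B}_{\Sigma'}\,\omega_{\Sigma'}^{-1}$. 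Using $\zeta_A(1;\chi_t)=-\widetilde{\pi}\,((t-\theta)\omega(t))^{-1}$ together with (\ref{tauomega}) and (\ref{tobeused}), a direct computation gives $\tau^{-j}(\zeta_A(1;\chi_t))=-\widetilde{\pi}^{1/q^j}b_j^*(t)\,\omega(t)^{-1}$ (so $b_1^*=1$, $b_2^*(t)=t-\theta^{1/q}$), whence
\[
\mathcal{L}_V^{(-1)}=(-1)^{|V|}\widetilde{\pi}^{|V|/q}\,\omega_V^{-1},\qquad \mathcal{L}_U^{(-2)}=(-1)^{|U|}\widetilde{\pi}^{|U|/q^2}\Big(\prod_{i\in U}(t_i-\theta^{1/q})\Big)\omega_U^{-1},
\]
\[
\tau^{-1}\big(\zeta_A(1;\sigma_{\Sigma'})\big)=(-1)^{m'}\widetilde{\pi}^{1/q}\,\tau^{-1}(\mathbb{B}_{\Sigma'})\Big(\prod_{i\in\Sigma'}(t_i-\theta^{1/q})\Big)\omega_{\Sigma'}^{-1}.
\]

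I would then substitute these into the right-hand side of Corollary~\ref{conj3} and match the three kinds of factors. The $\omega$-factors multiply to $\omega_{\Sigma'}^{-1}\omega_U^{-1}\omega_V^{-1}=\omega_\Sigma^{-1}$, cancelling the $\omega_\Sigma^{-1}$ on the left; the total exponent of $\widetilde{\pi}$ in each summand is $\tfrac1q+\tfrac{|U|}{q^2}+\tfrac{|V|}{q}=\tfrac1q+\tfrac rq+\tfrac{q-r-1}{q}=1$, using $|U|=rq$, $|V|=q-r-1$, matching the single $\widetilde{\pi}$ on the left; and after dividing through by $(-1)^m$ the sign of the $r$th block is $(-1)^{m'+|U|+|V|-m}=(-1)^{(q-2)(r+1)}=(-1)^{r+1}$. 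What remains is exactly
\[
\mathbb{B}_\Sigma=\sum_{r=0}^{m-2}(-1)^{r+1}\sum_{\substack{U\sqcup V\sqcup\Sigma'=\Sigma\\ |U|=qr,\ |V|=q-r-1\\ |\Sigma'|=(m-r-1)(q-1)+1}}\tau^{-1}(\mathbb{B}_{\Sigma'})\prod_{i\in U\sqcup\Sigma'}(t_i-\theta^{1/q}),
\]
the assertion of Corollary~\ref{lastconjecture}. I would also settle the base case $m=2$ by hand: there only $r=0$ occurs, $\Sigma'$ has $m'=1$ so $\mathbb{B}_{\Sigma'}=1$, and the formula collapses to $\mathbb{B}_\Sigma=-\sum_{|\Sigma'|=q}\prod_{i\in\Sigma'}(t_i-\theta^{1/q})$, which is the explicit identity already recorded in the text.

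Granting Corollary~\ref{conj3}, the above is just a change of variables and a sign count, so the substantive work lies one level below: the equivalence of Corollary~\ref{conj3} with Theorem~\ref{conj2}. The natural route is to peel off the outermost summation index $W_1$ in the closed form of Theorem~\ref{conj2}, identify the residual inner sum as a $\tau^{-1}$-twist of a lower-cardinality instance (hence of $\tau^{-1}(\zeta_A(1;\sigma_{\Sigma'}))$), and reabsorb the remaining levels; here the hypothesis $q>m$ is exactly what forces $|W_1|\ge q+1-m$, so that $r:=q-1-|W_1|$ lands in $\{0,\dots,m-2\}$ and every term of Theorem~\ref{conj2} is accounted for. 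The hard part will be the combinatorial bookkeeping in this step: the reassembly fuses two levels into one (an $rq$-element piece being absorbed at level $2$), so one must control the mod-$p$ multiplicities produced by this fusion — binomial coefficients of the shape $\binom{|W_2|}{rq}$ — and check, using the weighted-cardinality constraints together with $q>m$, that the two $\FF_p$-linear combinations of products of $\mathcal{L}$'s genuinely coincide. This is where the real content of the Hung Le--Ngo Dac argument enters; once it is in hand, Corollary~\ref{lastconjecture} follows as above, and the companion closed formula for $\mathbb{B}_\Sigma$ in terms of the $B_j^*$ comes out of Theorem~\ref{conj2} by the very same translation.
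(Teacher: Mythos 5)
Your proposal is correct and follows exactly the route the paper intends: the paper derives Corollary \ref{lastconjecture} from Corollary \ref{conj3} by precisely this translation through (\ref{formula-a-la-euler}) and the twist formula (\ref{tobeused}), and your bookkeeping of the $\omega$-factors, the $\widetilde{\pi}$-exponents, and the sign $(-1)^{(q-2)(r+1)}=(-1)^{r+1}$ in $\FF_p$ is accurate. The concern you raise about the equivalence of Corollary \ref{conj3} with Theorem \ref{conj2} lies outside the statement being proved, since the paper takes Corollary \ref{conj3} as already established from the cited theorem of Hung Le and Ngo Dac.
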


\subsection{A modular analogue}

We end this work with a conjectural formula which can be derived from Theorem \ref{conj2}.
We set, with $U\subset\NN^*$ a finite subset and $j\in\ZZ$:
$$\mathcal{E}_U^{(j)}:=\tau^j\left(\bigotimes_{i\in U}\mathcal{E}(1;\rho_{t_i}^*)\right),$$ for $U\subset\Sigma$.
Note that this needs not to represent an analytic function $\Omega\rightarrow\LL_\Sigma^{N\times 1}$ for $N\geq 1$ if $j<0$.
\begin{Conjecture}\label{conjE}
For all $m\geq 0$, $|\Sigma|=s=m(q-1)+1$ and for all $q>m$, the following formula holds:
\begin{equation}\label{formulaconjE}
\mathcal{E}(1;\rho_\Sigma^*)=\sum_{\begin{smallmatrix}U_1\sqcup\cdots\sqcup U_m=\Sigma\\
q^{-1}|U_1|+\cdots+q^{-m}|U_m|=1\end{smallmatrix}}\mathcal{E}_{U_1}^{(-1)}\otimes\cdots\otimes\mathcal{E}_{U_m}^{(-m)}.\end{equation}
\end{Conjecture}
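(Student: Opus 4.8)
\textbf{Proof plan for Conjecture \ref{conjE}.} The plan is to derive the modular identity \eqref{formulaconjE} from its scalar counterpart, Theorem \ref{conj2} (the Hung Le--Ngo Dac formula \eqref{formulaconj}), by exploiting the rigidity of the space $M_1(\rho_\Sigma^*;\LL_\Sigma)$ established in Theorem \ref{coro-pel-per}. First I would argue that both sides of \eqref{formulaconjE}, after applying a suitable power $\tau^{k}$ of the Frobenius operator so as to clear the negative $\tau$-twists, are genuine holomorphic functions $\Omega\rightarrow\KK_\Sigma^{N\times 1}$. For the right-hand side, note that $\mathcal{E}_{U_j}^{(-j)}=\tau^{-j}\bigl(\bigotimes_{i\in U_j}\mathcal{E}(1;\rho^*_{t_i})\bigr)$, and since the map $\tau$ induces injective maps $M_w(\rho^*_\Sigma;\KK_\Sigma)\rightarrow M_{qw}(\rho^*_\Sigma;\KK_\Sigma)$ (see \S\ref{changing} and the discussion before Theorem \ref{regularitystrongregularity}), applying $\tau^m$ to the whole displayed identity turns every summand into a Kronecker product of honest weight-one Eisenstein series composed with $\tau$-powers, hence into an element of $M_{q^m}(\rho_\Sigma^*\,;\KK_\Sigma)$ by \eqref{eisenstein-small-sigma} and the multiplicativity of the weight under $\otimes$. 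The constraint $q^{-1}|U_1|+\cdots+q^{-m}|U_m|=1$ with $|U_1|+\cdots+|U_m|=s=m(q-1)+1$ is exactly what makes the total weight of $\tau^{m}$ applied to each summand equal to $q^{m}$, so the weights match: after the $\tau^m$-twist, the left side is $\tau^m(\mathcal{E}(1;\rho_\Sigma^*))=\mathcal{E}(q^m;\rho_\Sigma^*)\in M_{q^m}(\rho_\Sigma^*;\KK_\Sigma)$ by iterating $\tau(\mathcal{E}(1;\rho^*_\Sigma))=\mathcal{E}(q;\rho^*_\Sigma)$.

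Next I would compare first entries. By Lemma \ref{condition-for-vanishing} and Lemma \ref{irreducibility}, a modular form in $M_{q^m}(\rho_\Sigma^*;\KK_\Sigma)$ whose first entry (the $\emptyset$-indexed one, which has a $u$-expansion) vanishes identically is itself zero; so it suffices to check \eqref{formulaconjE} on first entries. By Proposition \ref{seriesofEisensteinseries} the first entry of $\mathcal{E}(1;\rho^*_\Sigma)$ equals $-\widetilde{\pi}\varphi_A\binom{\sigma_\Sigma}{1}=-\widetilde{\pi}\sum_{a\in A^+}\sigma_\Sigma(a)u_a$, and the first entry of a Kronecker product $\mathcal{E}_{U_1}^{(-1)}\otimes\cdots\otimes\mathcal{E}_{U_m}^{(-j)}$ is the product of the first entries of the factors (the $\emptyset$-entry of a Kronecker product of vectors is the product of the $\emptyset$-entries). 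After the $\tau^m$-twist, using $\tau^m(u_a)=u_a^{q^m}$ and the $\tau$-difference equation \eqref{taudifferechit} governing $\chi_t$, the first entry of $\tau^{m}\bigl(\mathcal{E}_{U_j}^{(-j)}\bigr)$ becomes $-\widetilde{\pi}^{q^{m-j}}\cdot\tau^{m-j}\!\bigl(\prod_{i\in U_j}\sum_{a}\chi_{t_i}(a)u_a\bigr)$, and the identity to be proved reduces to a $\KK_\Sigma[[u]]$-identity among $\tau$-twisted $A$-periodic multiple sums $\varphi_A$, $\widetilde{\varphi}_A$ in the sense of \S\ref{exa2}--\ref{exa3}. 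The plan is then to transport Theorem \ref{conj2}: the scalar identity \eqref{formulaconj}, read in the completion $\KK_\Sigma$, is an $\FF_p$-algebraic identity among products of $\tau$-twists of $\zeta_A(1;\chi_{t_i})=\frac{\widetilde{\pi}}{(\theta-t_i)\omega(t_i)}$, and under the correspondence $\zeta_A(\mathcal C)\leftrightarrow\varphi_A(\mathcal C)$ (and its $\tau$-twisted avatar $\widetilde{\varphi}_A$, Corollary \ref{coro-varphi-A-tilde}), the very same combinatorial identity should hold among the $A$-periodic sums, provided one knows that the multiplication rules coincide; this is exactly the content of the compatibility assertions in Corollaries \ref{coro-varphi-A} and \ref{coro-varphi-A-tilde}, combined with Conjecture \ref{conj-zetavarphi} in the range where all terms are well defined.

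More concretely, the combinatorial heart is the identity $\sum_{\substack{U_1\sqcup\cdots\sqcup U_m=\Sigma\\ q^{-1}|U_1|+\cdots+q^{-m}|U_m|=1}}\prod_{j}\widetilde{\varphi}_A^{(m-j)}\bigl(\bigotimes_{i\in U_j}\delta_i\bigr)=\widetilde{\varphi}_A^{(0)}\bigl(\bigotimes_{i\in\Sigma}\delta_i\bigr)^{?}$, where $\delta_i(a)=\chi_{t_i}(az)$; rather than re-deriving it from the harmonic product (which, as the paper itself notes, would be combinatorially forbidding), I would establish the needed $u$-expansion identity by appealing to the fact that both sides, after the $\tau^m$-twist, are first entries of elements of the one-dimensional space $M_{q^m}(\rho_\Sigma^*;\LL_\Sigma)$ only in the weight-one case via Theorem \ref{coro-pel-per}; for $m\geq 2$ one instead uses that the right-hand side of \eqref{formulaconjE}, being built from weight-one Eisenstein series and $\tau$-powers, lies in $M_{q^m}(\rho_\Sigma^*;\KK_\Sigma)$, and that the \emph{constant term} (the image modulo $\mathfrak M_\Sigma$ of the last entry, $\mathcal{E}^\Sigma$) on both sides equals $-\zeta_A(q^m;\sigma_\Sigma)$ by Proposition \ref{seriesofEisensteinseries} together with Theorem \ref{conj2}; one then invokes that $M_{q^m}(\rho_\Sigma^*;\KK_\Sigma)\setminus S_{q^m}(\rho_\Sigma^*;\KK_\Sigma)$ modulo cusp forms is controlled by $H(\rho_\Sigma^*;\KK_\Sigma)$, which by \eqref{identity-H-K} is one-dimensional, so the difference of the two sides is a cusp form; finally one shows this cusp form has vanishing first entry by a direct comparison of $u$-expansions reducing to \eqref{formulaconj}, and concludes with Lemmas \ref{condition-for-vanishing} and \ref{irreducibility}.

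\textbf{Main obstacle.} The delicate point is the step $m\geq 2$, where the target space $M_{q^m}(\rho_\Sigma^*;\KK_\Sigma)$ is not one-dimensional, so one genuinely needs the full strength of the scalar identity \eqref{formulaconj} at the level of $u$-expansions rather than merely matching constant terms and weights; concretely, one must verify that the $A$-periodic multiple sums $\widetilde\varphi_A$ satisfy the exact analogue of \eqref{formulaconj}, which — absent a proof of Conjecture \ref{conj-zetavarphi} — requires checking that the specialization $\delta_i\mapsto\chi_{t_i}$ and the completion maps intertwine the harmonic products \emph{and} that all the $v$-convergence issues flagged in \S\ref{exa3} are harmless for the particular arrays appearing here (those with all $n_j$ forced by the weight constraint). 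I expect this convergence/compatibility bookkeeping, together with making the $\tau^m$-twist rigorous so that negative $\tau$-powers cancel cleanly, to be the main technical hurdle; the modular-forms rigidity arguments (Theorems \ref{coro-pel-per}, \ref{rankoneforweighone}, Lemmas \ref{condition-for-vanishing}, \ref{irreducibility}) are then essentially bookkeeping once the $u$-expansion identity is in hand.
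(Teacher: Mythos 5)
The statement you are addressing is a \emph{conjecture} in the paper, and the paper offers no proof of it: the author states explicitly that beyond the trivially verified cases $s=1$ and $s=q$, the first nontrivial case $s=2q-1$ (i.e.\ $m=2$) is unsolved, and that only agreement of the $u$-expansions of the $\emptyset$-entries of the two sides up to finite order could be checked. Your plan reproduces, in more detail, exactly the reduction the author already records immediately after stating the conjecture: Theorem \ref{conj2} together with Conjecture \ref{conj-zetavarphi} (or Conjecture \ref{conj-mult-eisentein}) implies Conjecture \ref{conjE}. The outer layers of your argument are sound --- applying $\tau^m$ to clear the negative twists, checking that the constraint $\sum_j q^{-j}|U_j|=1$ makes the total weight $q^m$ on both sides, and using Lemmas \ref{condition-for-vanishing} and \ref{irreducibility} to reduce everything to an identity of first entries --- but the step you first call the ``combinatorial heart'' and later dismiss as ``bookkeeping'' is the open problem itself.

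Concretely, the only mechanism the paper provides for transferring relations from the zeta values $\zeta_A(\mathcal{C})$ to the $A$-periodic sums $\varphi_A(\mathcal{C})$, $\widetilde{\varphi}_A(\mathcal{C})$ is the harmonic product (Theorem \ref{anewsumshuffle} and Corollaries \ref{coro-varphi-A}, \ref{coro-varphi-A-tilde}), which transfers because the same abstract Data \ref{MaindataA} underlie all three families. But the Hung Le--Ngo Dac identity \eqref{formulaconj} is \emph{not} known to be a consequence of the harmonic product: the paper stresses that their proof ``does not use it, and introduces new tools,'' and that deriving it from the harmonic product is ``most likely out of reach.'' So there is no proven bridge carrying \eqref{formulaconj} over to the $\widetilde{\varphi}_A$-identity you need; that bridge is precisely Conjecture \ref{conj-zetavarphi} (equivalently, Conjecture \ref{generalisation-thakur} applied to this relation), which is open. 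Your fallback route via constant terms also fails for $m\geq 2$: matching the images modulo $\mathfrak{M}_\Sigma$ only shows that the difference of the two sides is a cusp form, and $S_{q^m}(\rho_\Sigma^*;\KK_\Sigma)$ is nonzero in general --- the paper exhibits exactly this failure at the end of \S \ref{Multiple-Eisenstein-series}, where a $K$-linear combination of multiple Eisenstein series with Eulerian constant term is a nonvanishing cusp form. Your proposal is therefore a correct conditional reduction, one already noted in the paper, but not a proof.
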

We note that (\ref{formulaconjE}) expresses the analytic function $\mathcal{E}(1;\rho_\Sigma^*)$ as a combination 
of non-analytic functions if $s\geq 2q-1$.
Clearly, Theorem \ref{conj2} and Conjecture \ref{conj-zetavarphi}, or Conjecture \ref{conj-mult-eisentein}
imply Conjecture \ref{conjE} (and the latter implies Theorem \ref{conj2}). The cases $s=1,q$ are obviously verified, see (\ref{eisenstein-small-sigma}). The case
$s=2q-1$ is at the moment unsolved. The author was only able to see that the $u$-expansions of the $\emptyset$-coordinates of both sides in (\ref{formulaconjE}) agree up to a certain order but this is not enough to conclude.

\section*{Addendum}

Between the second and the third version of the present manuscript the author wrote \cite{GEZ&PEL}, in collaboration with O. Gezmi\c s. The reader may notice that some results in that preprint partially depend on results written here (for instance on the content of our \S \ref{shufflerelations}) and that, in this third version, references to \cite{GEZ&PEL} have been introduced. However, there is no loop in the chains of deductions and the references to \cite{GEZ&PEL} have been introduced for the sake of completeness.

\end{document}